\setlist{noitemsep}
\DeclareSymbolFontAlphabet{\mathbb}{AMSb} 
\DeclareSymbolFontAlphabet{\mathbbl}{bbold}
\numberwithin{figure}{subsection}
\numberwithin{equation}{subsection}
\newtheorem{theorem}[figure]{Theorem}
\newtheorem{lemma}[figure]{Lemma}
\newtheorem{corollary}[figure]{Corollary}
\newtheorem{proposition}[figure]{Proposition}
\newtheorem{question}[figure]{Question}
\theoremstyle{definition}
\newtheorem{definition}[figure]{Definition}
\newtheorem{notation}[figure]{Notation}
\theoremstyle{definition}
\newtheorem{remark}[figure]{Remark}
\theoremstyle{definition}
\newtheorem{example}[figure]{Example}
\theoremstyle{definition}
\newtheorem{construction}[figure]{Construction}
\DeclareMathOperator{\Aff}{Aff}
\DeclareMathOperator{\Coker}{Coker}
\DeclareMathOperator{\Et}{\acute{E}t}
\DeclareMathOperator{\Ext}{Ext}
\DeclareMathOperator{\Fil}{Fil}
\DeclareMathOperator{\gr}{gr}
\DeclareMathOperator{\Hom}{Hom}\DeclareSymbolFontAlphabet{\mathbb}{AMSb} 
\DeclareSymbolFontAlphabet{\mathbbl}{bbold}
\DeclareMathOperator{\height}{height}
\DeclareMathOperator{\im}{Im}
\DeclareMathOperator{\Ker}{Ker}
\DeclareMathOperator{\Map}{Map}
\DeclareMathOperator{\Pro}{Pro}
\DeclareMathOperator{\PShv}{PShv}
\DeclareMathOperator{\QCoh}{QCoh}
\DeclareMathOperator{\rk}{rank}
\DeclareMathOperator{\Shv}{Shv}
\DeclareMathOperator{\Spec}{Spec}
\DeclareMathOperator{\Sym}{Sym}
\DeclareMathOperator{\uni}{uni}
\newcommand{\fm}{\mathfrak{m}}
\renewcommand{\AA}{\mathbf{A}}
\newcommand{\FF}{\mathbf{F}}
\newcommand{\GG}{\mathbf{G}}
\newcommand{\LL}{\mathbf{L}}
\newcommand{\NN}{\mathbf{N}}
\newcommand{\PP}{\mathbf{P}}
\newcommand{\QQ}{\mathbf{Q}}
\newcommand{\UU}{\mathbf{U}}
\newcommand{\ZZ}{\mathbf{Z}}
\newcommand{\w}{\text}
\newcommand{\Hh}{H}
\newcommand{\cC}{\mathcal{C}}
\newcommand{\cH}{\mathscr{H}}
\newcommand{\cI}{\mathscr{I}}
\newcommand{\cM}{\mathcal{M}}
\newcommand{\cO}{\mathscr{O}}
\newcommand{\cS}{\mathcal{S}}
\newcommand{\cV}{\mathscr{V}}
\newcommand{\cX}{\mathcal{X}}
\newcommand{\rU}{\mathrm{U}}
\newcommand{\Bb}{{B}}
\newcommand{\SSS}{{S}}
\newcommand{\ab}{\mathrm{ab}}
\newcommand{\Ab}{\mathrm{Ab}}
\newcommand{\Alg}{\mathrm{Alg}}
\newcommand{\alg}{\mathrm{alg}}
\newcommand{\Art}{\mathrm{Art}}
\newcommand{\coh}{\mathrm{coh}}
\newcommand{\et}{\mathrm{\acute{e}t}}
\newcommand{\Mod}{\mathrm{Mod}}
\newcommand{\op}{\mathrm{op}}
\newcommand{\perf}{\mathrm{perf}}
\newcommand{\Prism}{{\mathlarger{\mathbbl{\Delta}}}}
\newcommand{\proet}{\mathrm{pro\acute{e}t}}
\newcommand{\qc}{\mathrm{qc}}
\newcommand{\id}{\mathrm{id}}
\newcolumntype{C}[1]{>{\centering\arraybackslash}p{#1}}
\newcommand*{\llbrace}{\{\mskip-5mu\{}
\newcommand*{\rrbrace}{\}\mskip-5mu\}}
\mathchardef\mhyphen="2D
\newcommand\restr[2]{{\left.\kern-\nulldelimiterspace#1\vphantom{\big|}\right|_{#2}}}
\newcommand{\suchthat}{\;\ifnum\currentgrouptype=16 \middle\fi\vert\;}
\newcommand{\cosimp}[3]{\xymatrix@1{#1 \ar@<.4ex>[r] \ar@<-.4ex>[r] & {\ }#2 \ar@<0.8ex>[r] \ar[r] \ar@<-.8ex>[r] & {\ } #3  \cdots }} 
   \def\MR#1{}
\title{Unipotent homotopy theory of schemes}
\author{Shubhodip Mondal}
\author{Emanuel Reinecke}
\address[Shubhodip Mondal]{Purdue University, Mathematical Sciences Bldg, 150 N University St, West Lafayette, IN 47907, USA}
\email{mondalsh@purdue.edu}
\address[Emanuel Reinecke]{Institut des
Hautes \'Etudes Scientifiques, 35 route de Chartres, 91440 Bures-sur-Yvette, France}
\email{reinecke@ihes.fr}
\begin{document}

\begin{abstract}
Building on To\"en's work on affine stacks, we develop a certain homotopy theory for schemes, which we call ``unipotent homotopy theory.''
Over a field of characteristic $p>0$, we prove that the unipotent homotopy group schemes $\pi_i^{\mathrm{U}}(\,\cdot\,)$ introduced in our paper recover the unipotent Nori fundamental group scheme \cite{Nori1st}, the $p$-adic étale homotopy groups \cite{etalehomo}, as well as certain formal groups \cite{MR457458} introduced by Artin and Mazur. We prove a version of the classical Freudenthal suspension theorem as well as a profiniteness theorem for unipotent homotopy group schemes. We also introduce the notion of a formal sphere and use it to show that for Calabi--Yau varieties of dimension $n$, the group schemes $\pi_i^{\mathrm{U}}(\,\cdot\,)$ are derived invariants for all $i \ge 0$; the case $i=n$ is related to  recent work of Antieau and Bragg \cite{AB1} involving topological Hochschild homology. 
Using the unipotent homotopy group schemes, we establish a correspondence between formal Lie groups and certain higher algebraic structures. 
\end{abstract}
\maketitle
\vspace{-8.5mm}
\tableofcontents
\newpage
\section{Introduction}
In this paper, we develop a notion of ``unipotent homotopy theory" for schemes. For schemes over fields of positive characteristic, the resulting
unipotent homotopy group schemes contain a lot of topological as well as arithmetic information.
We will, in particular, see that the unipotent homotopy group schemes provide interesting new invariants for varieties whose cohomology is especially simple, a phenomenon perhaps more familiar in topology, for example, in the case of spheres. Before delving into the precise definitions and applications, we discuss some relevant contexts.
\vspace{2mm}

One of the most classical homotopical notions for schemes is the \'etale fundamental group introduced by Grothendieck \cite{EtaleGro}, which is a profinite group. 
 Grothendieck also conjectured in \cite[\S~X.2]{Etalegro1} that the \'etale fundamental group should admit natural enhancement of a \emph{group scheme}, which would contain more information than only the finite \'etale coverings of a scheme.
The first construction of such an object was given by Nori \cite{Nori1st}, who introduced the fundamental group scheme $\pi_1^{\mathrm{N}}(X, x)$ for a pointed scheme $(X,x)$ defined over a field. Nori's construction uses the notion of essentially finite vector bundles introduced by him and the Tannakian formalism (see, e.g., \cite{tan1,tannaka,Deligne2007}).
\vspace{2mm}

In \cite{etalehomo}, Artin and Mazur introduced the notion of \'etale homotopy type of a scheme (see also \cite{etalfry});
for some notable applications, see, e.g., \cite{etale3,etale1, etale2,etalfry,MR3549624}.
The theory of \'etale homotopy types in particular gives rise to the higher \'etale homotopy groups of a scheme, which generalize the notion of \'etale fundamental groups from \cite{EtaleGro}. However, the natural analogous question of constructing higher homotopy {group schemes} that would extend Nori's fundamental group scheme $\pi_1^{\mathrm{N}}(X, x)$ \cite{Nori1st} has not been addressed. 
\vspace{2mm}

Note that there are several fundamental difficulties in using the Tannakian formalism to define higher homotopy group schemes.
Let us briefly illustrate one of the issues. If $G$ is a commutative group scheme, then as a consequence of the Tannakian formalism, one can recover $G$ from the category $\mathrm{Rep}(G)$ of representations of $G$, or equivalently, the category of vector bundles on $BG$.
From a homotopical point of view, this recovers $G$ as ``$\pi_1^{\mathrm{N}}(K(G,1))$.'' However, a similar construction fails to recover $G$ as ``$\pi_2^{\mathrm{N}}(K(G,2))$''; this is because the category of quasi-coherent sheaves on $K(G,2)$ is trivial for \emph{all} group schemes $G$. In fact, the derived $\infty$-category $D_{\mathrm{qc}}(K(G,2))$ of quasi-coherent sheaves on $K(G,2)$ can often be trivial, i.e., $D_\qc(\Spec k)$;
for a concrete example, one can take $G = \mathbf{G}_m$ (see \cref{exaqmpleintroc}). Moreover, even if $D_{\mathrm{qc}}(K(G,2))$ is nontrivial, e.g., when $G= \mathbf{G}_a$, it is unclear in positive characteristic how to recover $G$ from it.
\vspace{2mm}

In our paper, we resolve the question of constructing higher homotopical analogues of the Nori fundamental group scheme ``up to unipotent completion.''
More precisely, we introduce the \textit{unipotent homotopy group schemes} $\pi_i^{\mathrm{U}}(X,x)$ for a pointed scheme $(X,x)$ over a field $k$ that extend the unipotent Nori fundamental group scheme from \cite{MR682517} (which is simply the unipotent completion of $\pi_1^{\mathrm{N}}(X,x)$ when $k$ has characteristic $p>0$). Our method of construction takes an entirely different approach. We completely bypass the Tannakian machinery used in the existing construction of the Nori fundamental group scheme by employing To\"en's work on higher stacks and affine stacks \cite{Toe} instead.
\vspace{2mm}

Affine stacks were introduced by To\"en as a solution to Grothendieck's schematization problem, which proposed to extend the foundations of algebraic stacks suitably in order to accommodate homotopy types. In particular, To\"en showed that affine stacks can be used to model simply connected rational and $p$-adic homotopy types.
Roughly speaking, the classical results \cite{Quillen-rational,Sullivan-infinitesimal,Bousfield-Gugenheim, MR1243609, Goerss-simplicial, Mandell} in rational and $p$-adic homotopy theory allow one to embed simply connected rational and $p$-adic homotopy types in a suitable category of cosimplicial or $E_{\infty}$-algebras. To\"en proved that algebras of the former kind can further be embedded contravariantly into the category of \emph{higher stacks};
a higher stack that lies in the essential image of this embedding is called an \emph{affine stack}.
For a more precise discussion of the definitions, see \cref{affine-stacks}. Note that the category of higher stacks contains the category of schemes as well. Our work was fueled by the following observation. 
\begin{theorem}\label{thm1intro}
    Let $(X,x)$ be a pointed scheme of finite type over a field $k$ such that $H^0 (X, \cO) \simeq k$.\footnote{
    A scheme $X$ over $k$ such that $H^0 (X, \cO) \simeq k$ will be called cohomologically connected.}
    Let $\mathbf{U}(X)$ be the affine stack which is universal with respect to the property of receiving a map from $X$. Then there is a natural isomorphism 
$$\pi_1(\mathbf{U}(X), x) \xrightarrow{\sim} \pi_1^{\mathrm{U,N}}(X,x),$$ where the right-hand side denotes the unipotent fundamental group scheme constructed by Nori. (\textit{cf.}~\cref{prop:Nori-affine-pi1})
\end{theorem}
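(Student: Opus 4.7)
The plan is to identify both $\pi_1(\mathbf{U}(X), x)$ and $\pi_1^{\mathrm{U,N}}(X,x)$ as representing the same functor on the category of unipotent affine group schemes over $k$, and then invoke Yoneda.

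First, I would invoke the universal property of the affinization: for any affine stack $Y$, the map $X \to \mathbf{U}(X)$ induces an equivalence of mapping spaces $\mathrm{Map}(\mathbf{U}(X), Y) \xrightarrow{\sim} \mathrm{Map}(X, Y)$. The key input from To\"en's work I would use is that for any unipotent affine group scheme $G$ over $k$, the classifying stack $BG$ is itself an affine stack. Specializing the universal property to $Y = BG$ and passing to pointed components yields a bijection between pointed homotopy classes of maps $X \to BG$ and pointed homotopy classes of maps $\mathbf{U}(X) \to BG$. The hypothesis $H^0(X, \cO) \simeq k$ enters at this point to ensure that $\mathbf{U}(X)$ is connected, so that $\pi_1(\mathbf{U}(X), x)$ is well-defined; by the structural theory of affine stacks, it is moreover a (pro-)unipotent affine group scheme over $k$.

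Next, since $BG$ behaves as a $K(G,1)$ in the affine-stack world, pointed maps from the connected affine stack $\mathbf{U}(X)$ into $BG$ are classified by group scheme homomorphisms $\pi_1(\mathbf{U}(X), x) \to G$. On the other side, pointed maps $X \to BG$ classify pointed $G$-torsors on $X$, and by the defining universal property of the unipotent Nori fundamental group scheme of \cite{MR682517} these are in natural bijection with homomorphisms $\pi_1^{\mathrm{U,N}}(X, x) \to G$. Chaining these identifications yields, functorially in the unipotent affine group scheme $G$, a bijection
\[
\mathrm{Hom}\bigl(\pi_1(\mathbf{U}(X), x), G\bigr) \;\simeq\; \mathrm{Hom}\bigl(\pi_1^{\mathrm{U,N}}(X, x), G\bigr),
\]
from which Yoneda in the category of (pro-)unipotent affine group schemes over $k$ produces the desired natural isomorphism.

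I expect the main obstacle to lie in justifying the identification of $\mathrm{Map}_*(\mathbf{U}(X), BG)$ with $\mathrm{Hom}(\pi_1(\mathbf{U}(X), x), G)$: this requires on the one hand that $BG$ has no higher homotopy in the affine-stack sense, and on the other hand a clean notion of $\pi_1$ for a pointed connected affine stack, together with the fact that maps out of it to a $1$-type factor through its $\pi_1$. A secondary point of care is that all bijections above be functorial in $G$ and compatible with the multiplicative structure so that Yoneda can be applied; this should be inherited from the naturality of the affinization functor $X \mapsto \mathbf{U}(X)$ and of the classifying stack construction $G \mapsto BG$, but will require careful bookkeeping when $G$ varies over all unipotent affine group schemes rather than just the finite ones used by Nori.
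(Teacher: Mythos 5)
Your proposal is correct and follows essentially the same route as the paper: identify $\mathrm{Map}_*(\UU(X),BG) \simeq \mathrm{Map}_*(X,BG)$ via the universal property of affinization and the fact that $BG$ is affine for unipotent $G$, translate the latter into pointed $G$-torsors and apply Nori's universal property, and conclude by representability. Where you invoke Yoneda on the functor $G \mapsto \mathrm{Hom}(-,G)$, the paper phrases the same conclusion as uniqueness of the object satisfying Nori's universal property (after restating that property in the language of pointed stacks in its \cref{lem:Nori-universal-property-restated}), but the argument is the same.
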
{}
Our observation in \cref{thm1intro} led us to take the following perspective: affine stacks are in some sense an abstract notion of a ``unipotent homotopy type.'' This motivates the following definitions. 
\begin{definition}
Let $X$ be a scheme over a field $k$. The affine stack $\mathbf{U}(X)$ from \cref{thm1intro} is called the \emph{unipotent homotopy type} of $X$.
\end{definition}{}

\begin{definition}\label{introdef}
    Let $(X,x)$ be a pointed, cohomologically connected scheme over $k$.
    We define 
$$\pi_i^\mathrm{U}(X) \colonequals \pi_i (\mathbf{U}(X), x).$$
The sheaves $\pi_i^\mathrm{U}(X)$ are representable by unipotent group schemes and will be called the $i$-th unipotent homotopy group scheme of $X$. 
\end{definition}{}
\begin{remark}
    For us, a group scheme $G$ over a field $k$ is unipotent if it is affine and every nonzero representation of $G$ over $k$ has a nonzero fixed vector. Note that we do not assume the group schemes to be of finite type. For example, an infinite product of $\mathbf{G}_a$ is a unipotent group scheme.
\end{remark}{}

\begin{remark}
  The notion of unipotent homotopy type can be defined for any higher stack $X$ over an arbitrary base ring, and it is determined by $R\Gamma(X, \mathscr{O})$ naturally equipped with the structure of a derived commutative ring (see \cref{aboutrings} and \cref{cope}). However, the geometric language of affine stacks plays a crucial role in our paper, which is already manifested in \cref{introdef}. Moreover, our definition via the universal property (as in \cref{thm1intro}) and the geometric approach taken in this paper play an important role in the proofs.
\end{remark}{}

Let us now explain our main results regarding the unipotent homotopy group schemes introduced above.
\begin{theorem}[Profiniteness theorem]\label{thm2intro}
Let $X$ be a proper, cohomologically connected, pointed scheme over a field $k$ of characteristic $p>0$. Then for all $i$, the unipotent homotopy group schemes $\pi_i^{\mathrm{U}}(X)$ are profinite group schemes. (\textit{cf.} \cref{profiniteunipotent}.)
\end{theorem}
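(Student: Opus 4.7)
The plan is to exploit the identification of $\mathbf{U}(X)$ with Toën's affinization of the bounded augmented $E_\infty$-algebra $R\Gamma(X, \cO_X)$. Since $X$ is proper, this algebra has finite-dimensional cohomology $H^i(X, \cO_X)$ in each degree and vanishes in degrees above $\dim X$; the profiniteness of each $\pi_i^{\mathrm{U}}(X)$ should then arise by propagating this finiteness through the Postnikov filtration of the affine stack $\mathbf{U}(X)$.

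I would argue by induction on $n$ that each Postnikov truncation $\tau_{\le n}\mathbf{U}(X)$ has profinite unipotent homotopy group schemes and finite-dimensional coherent cohomology in each degree. The base case $n=1$ follows from \cref{thm1intro} together with Nori's classical result that the unipotent fundamental group scheme of a proper cohomologically connected scheme is profinite. For the inductive step, consider the principal fibration sequence of affine stacks
\[
K(\pi_n^{\mathrm{U}}(X), n) \longrightarrow \tau_{\le n}\mathbf{U}(X) \longrightarrow \tau_{\le n-1}\mathbf{U}(X)
\]
and the associated Eilenberg--Moore (or Serre) spectral sequence for coherent cohomology. The inductive hypothesis gives that the cohomology of the base is finite-dimensional in each degree; since the cohomology of $\tau_{\le n}\mathbf{U}(X)$ is likewise finite-dimensional in each degree (being controlled by $H^*(X, \cO_X)$), the spectral sequence forces the cohomology of the fiber $K(\pi_n^{\mathrm{U}}(X), n)$ to satisfy the same finiteness.

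The main obstacle lies in converting this cohomological finiteness into the group-theoretic profiniteness of $\pi_n^{\mathrm{U}}(X)$: one must show that a commutative pro-unipotent group scheme $G$ over $k$ of characteristic $p$ is profinite whenever $H^j(K(G, n), \cO)$ is finite-dimensional in each degree $j$. Via the Dieudonné correspondence this translates into an explicit finiteness condition on the associated Dieudonné module; the delicate point is verifying that any non-profinite $G$ must exhibit an infinite-dimensional cohomological contribution, using as the model case the fact that $H^*(B\GG_a, \cO)$ is infinite-dimensional in positive characteristic (via a divided-power / polynomial structure on Frobenius twists), and arguing that any non-profinite $G$ inherits this pathology via a suitable subquotient. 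A secondary subtlety is ensuring convergence and strong enough control of the Eilenberg--Moore spectral sequence in the affine-stack setting, for which Toën's equivalence between affine stacks and cosimplicial algebras should provide the needed homotopical framework.
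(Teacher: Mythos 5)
Your high-level plan — inducting along the Postnikov tower of $\mathbf{U}(X)$, using the fibration $K(\pi_n^{\mathrm{U}}(X), n) \to \tau_{\le n}\mathbf{U}(X) \to \tau_{\le n-1}\mathbf{U}(X)$ and its spectral sequence, and then converting cohomological finiteness into profiniteness of the homotopy group scheme — is indeed the skeleton of the paper's proof. But the invariant you propagate, finite-dimensionality of coherent cohomology, does not survive the inductive step once $\pi_1^{\mathrm{U}}(X)$ is nontrivial, which is the typical situation. Concretely, in the Serre-type spectral sequence
\[ E_2^{i,j} = H^i\bigl(\tau_{\le n-1}\mathbf{U}(X), \mathscr{H}^j(K(\pi_n^{\mathrm{U}}(X), n), \cO)\bigr) \Longrightarrow H^{i+j}\bigl(\tau_{\le n}\mathbf{U}(X), \cO\bigr), \]
the coefficient sheaf corresponds via \cref{heartrep} to a representation of $\pi_1^{\mathrm{U}}(X)$ on the vector space $V_j = H^j(K(\pi_n^{\mathrm{U}}(X), n), \cO)$. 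What the spectral sequence and the finiteness of $H^*(X,\cO)$ actually deliver is finite-dimensionality of $E_2^{0,n} = V_n^{\pi_1^{\mathrm{U}}(X)}$ (the $\pi_1$-invariants), not of $V_n = \Hom(\pi_n^{\mathrm{U}}(X), \GG_a)$ itself. The implication ``$V^G$ finite-dimensional $\Rightarrow V$ finite-dimensional'' is false for a profinite unipotent group scheme $G$: the regular representation $\cO(G)$ has one-dimensional invariants but is infinite-dimensional whenever $G$ is. So your induction does not close.

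The paper addresses this with two ingredients absent from your sketch. First, it propagates the weaker condition of being a torsion $k_{\sigma}[F]$-module, which by \cref{profinitenesschar} is still equivalent to profiniteness of the group scheme. Second, it observes that the $\pi_1^{\mathrm{U}}(X)$-representations on fiber cohomology are not arbitrary but $F$-representations, i.e.\ they carry a Frobenius-semilinear operator compatible with the Frobenius on the group scheme; for $F$-representations of a profinite unipotent group scheme, the needed permanence holds (\cref{fixedprof}): if $V^G$ is torsion then $V$ is torsion. That lemma is the load-bearing step your argument is missing, and its proof genuinely uses the semilinear structure. When $\pi_1^{\mathrm{U}}(X)$ is trivial, your argument reduces to \cref{advv} in the paper, which does work, but that hypothesis fails for any curve of positive genus or any abelian variety. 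A smaller point: the spectral sequence required here is the Leray/Serre one, not Eilenberg--Moore; the latter computes fiber cohomology via Tor over the base cohomology and is not what propagates the finiteness.
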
{}

One may think of \cref{thm2intro} as an analogue of a result of Artin--Mazur \cite[Thm.~11.1]{etalehomo} who proved that the \'etale homotopy groups of geometrically unibranch varieties are profinite. While Artin and Mazur use geometric and simplicial arguments to deduce their result from the profiniteness of Galois groups, our proof uses the theory of Frobenius modules on higher stacks that we discuss and is closer in spirit to the Riemann--Hilbert correspondence in characteristic $p>0$; \textit{cf}.~\cite{Emerton-Kisin,MR2561048,BL-RHcharp}.
Additionally, our proof requires certain results on profinite unipotent group schemes and their representations which are developed and studied in \cref{posto4}. If $k$ is of characteristic zero, then \cref{thm2intro} does not hold. Indeed, if $E$ is an elliptic curve over a characteristic zero field, then $\pi_1^{\mathrm{U}}(E) \simeq \mathbf{G}_a,$ which is not profinite.
\vspace{2mm}

Our next result shows that the unipotent homotopy group schemes actually refine the Artin--Mazur \'etale homotopy groups \cite{etalehomo} in the $p$-adic context.

\begin{theorem}[Unipotent--\'etale comparison theorem]\label{thm3intro}
Let $X$ be a proper, cohomologically connected, pointed scheme over an algebraically closed field of characteristic $p > 0$. Then for all $i$, the pro-$p$-finite completed $i$-th Artin--Mazur $p$-adic \'etale homotopy group $\pi_i^{\et,\, \mathrm{AM}}(X)_p$ (see the discussion following \cref{defetalehom}) is naturally isomorphic to the maximal pro-\'etale quotient of $\pi^{\mathrm U}_i(X)$. (\textit{cf}.~\cref{unipotent-etale-comparison}.)
\end{theorem}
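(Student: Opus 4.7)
The strategy is to exhibit both sides as outputs of the affine-stack version of Mandell's $p$-adic homotopy theory, as set up by To\"en. By the universal property of $\mathbf{U}(X)$ and the description of affine stacks as spectra of cosimplicial $k$-algebras, one has $\mathbf{U}(X) \simeq \Spec R\Gamma(X, \cO_X)$. The Artin--Schreier sequence
$$0 \to \mathbf{F}_p \to \cO_{X_\et} \xrightarrow{\Frob - 1} \cO_{X_\et} \to 0$$
on $X_\et$ yields a map of $E_\infty$-$k$-algebras $R\Gamma(X_\et, \mathbf{F}_p) \otimes_{\mathbf{F}_p} k \to R\Gamma(X, \cO_X)$ (using that $k$ is algebraically closed, so the Artin--Schreier map on $k$ is surjective with kernel $\mathbf{F}_p$), and hence a morphism of affine stacks $\mathbf{U}(X) \to \mathbf{U}_\et(X) \colonequals \Spec\bigl(R\Gamma(X_\et, \mathbf{F}_p) \otimes_{\mathbf{F}_p} k\bigr)$.

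For the second step, I would invoke Mandell's theorem --- or, more precisely, its affine-stack incarnation due to To\"en --- which asserts that $Y \mapsto \Spec\bigl(C^*(Y, \mathbf{F}_p) \otimes_{\mathbf{F}_p} k\bigr)$ is fully faithful on pointed connected pro-$p$-finite homotopy types of finite type, and moreover recovers $\pi_i(Y)$ as the $i$-th homotopy group scheme of the corresponding affine stack. Applying this to $Y = X_\et^{\mathrm{AM}}$, whose $\mathbf{F}_p$-cochain algebra is naturally $R\Gamma(X_\et, \mathbf{F}_p)$, identifies $\mathbf{U}_\et(X)$ with the affine stack associated to the pro-$p$-completion of the Artin--Mazur \'etale homotopy type, giving a natural isomorphism $\pi_i(\mathbf{U}_\et(X), x) \simeq \pi_i^{\et, \mathrm{AM}}(X)_p$ for every $i \ge 0$.

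The final step is to show that the map $\pi_i^{\mathrm{U}}(X) \to \pi_i(\mathbf{U}_\et(X), x)$ realizes the target as the maximal pro-\'etale quotient of the source. Homotopy group schemes of affine stacks whose cochain algebras are base-changed from $\mathbf{F}_p$ are automatically pro-\'etale (i.e., pro-$p$-finite) unipotent group schemes, essentially because the Frobenius on such cochains is linearized and acts as an isomorphism. It remains to prove universality: every morphism from $\mathbf{U}(X)$ to an affine stack with pro-\'etale unipotent homotopy group schemes should factor uniquely through $\mathbf{U}_\et(X)$. I expect this universality to be the main obstacle; establishing it would combine the theory of Frobenius modules on higher stacks employed in the profiniteness theorem with an inductive Postnikov-tower argument and the structural results on pro-\'etale unipotent group schemes developed earlier in the paper, in order to ensure maximality of the quotient on each $\pi_i^{\mathrm{U}}(X)$ individually rather than on the homotopy type as a whole.
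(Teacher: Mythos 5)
Your first two steps reproduce the paper's setup accurately: the Artin--Schreier sequence gives the comparison map $\mathbf{U}(X) \to \mathbf{U}_{\et}(X) := \Spec\bigl(R\Gamma_{\et}(X,\FF_p)\otimes_{\FF_p} k\bigr)$ (this is Construction~\ref{etale-comparison}), and $p$-adic homotopy theory identifies the homotopy group schemes of $\mathbf{U}_\et(X)$ with the affine group schemes attached to the pro-$p$-completed Artin--Mazur homotopy groups (Proposition~\ref{compareetaleuni}). Your step 3 (pro-\'etaleness of the $\pi_i(\mathbf{U}_\et(X))$) is also correct, though it needs a slight correction: the Frobenius-isomorphism observation alone does not directly give pro-\'etaleness without first knowing the group schemes are \emph{profinite} (via Proposition~\ref{chicago16}, which uses the finiteness of $H^i_\et(X,\FF_p)$) and then applying Corollary~\ref{bhattob}.

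The genuine gap is in your final step. You propose to finish by proving a universality statement for $\mathbf{U}_\et(X)$ among affine stacks with pro-\'etale homotopy group schemes, then deducing maximality of the quotient ``$\pi_i$-by-$\pi_i$'' via a Postnikov induction, and you correctly flag that you do not know how to carry this out. The paper avoids this altogether. The ingredient you are missing is that, because $X$ is proper, the $k_{\sigma}[F]$-modules $H^i(X,\cO)$ are finite-dimensional, so by Corollary~\ref{Frobenius-iterates} (and its algebra version, Proposition~\ref{jacket1}) the natural map $R\Gamma(X,\cO)^{F=\id}\otimes_{\FF_p} k \to \varinjlim_F R\Gamma(X,\cO)$ is an isomorphism. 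Combined with the Artin--Schreier identification $R\Gamma_\et(X,\FF_p) \simeq R\Gamma(X,\cO)^{F=\id}$, this shows $\mathbf{U}_\et(X) \simeq \Spec\bigl(\varinjlim_F R\Gamma(X,\cO)\bigr) \simeq \varprojlim_F \mathbf{U}(X) =: \mathbf{U}(X)^{\mathrm{perf}}$. With that identification, the comparison map on $\pi_i$ becomes concrete: the composite $\pi_i(\mathbf{U}(X)^{\mathrm{perf}}) \to \pi_i^{\mathrm U}(X) \to \pi_i^\et(X)_p$ is visibly an isomorphism, and it factors through $\pi_i^{\mathrm U}(X)^{\mathrm{perf}} \to \pi_i^{\mathrm U}(X)$, where by Proposition~\ref{tired1} the group-scheme perfection $\pi_i^{\mathrm U}(X)^{\mathrm{perf}}$ \emph{is} the maximal pro-\'etale quotient (this uses profiniteness of $\pi_i^{\mathrm U}(X)$ from Theorem~\ref{thm2intro}). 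The remaining surjectivity of $\pi_i(\varprojlim_F \mathbf{U}(X)) \to \varprojlim_F \pi_i(\mathbf{U}(X))$ comes for free from the Milnor sequence for inverse limits of stacks in a replete topos. So the paper replaces your abstract universality claim by a direct computation built on two reductions you did not identify: (i) $\mathbf{U}_\et(X)$ is the Frobenius-inverse-limit of $\mathbf{U}(X)$, and (ii) maximal pro-\'etale quotient = group-scheme perfection for profinite group schemes; the Milnor sequence then controls the interchange of $\pi_i$ with $\varprojlim_F$.
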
{}

The first step in the proof of \cref{thm3intro} is a different definition of $p$-adic \'etale homotopy group schemes introduced in \cref{defetalehom} and its comparison with $\pi_i^{\mathrm{\et, AM}}(X)_p$ established in \cref{compareetaleuni}; the latter result crucially relies on $p$-adic homotopy theory. We also need to use some Frobenius semi-linear algebra adapted to the ``derived" setting that we study in \cref{posto3}.
Additionally, we use \cref{thm2intro} and certain results on profinite group schemes developed in \cref{posto4}.
Lastly, we invoke the general theory of Milnor sequences for replete topoi, which has been studied recently by the authors in \cite{MR100}.
\vspace{2mm}

In another paper \cite{MR457458}, Artin--Mazur constructed certain commutative formal groups from algebraic varieties.
These formal groups have been a major source of interest, especially in the study of Calabi--Yau varieties;
see, e.g., \cite{Artin-supersingular,Shioda-supersingular,vdG-Kat}.
Our next results show that in many cases of interest, these formal groups can be recovered by dualizing certain unipotent homotopy group schemes. By the duality discussed in \cref{durevision}, one can ask whether the dual of an affine group scheme corresponds to a commutative formal Lie group (\cref{usethisdefasrefsays2}) or a non-commutative formal Lie group (\cref{usethisdefasrefsays}). To this end, we first prove the following general statement, which allows us to construct formal Lie groups via unipotent homotopy theory. 

\begin{theorem}\label{thm4intro}
Let $n \ge 1$ be an integer and $X$ be a pointed higher stack over a field $k$ satisfying the conditions
\begin{equation}\label{conditionintro}
H^0 (X, \cO) \simeq k, \quad H^i(X,\cO) = 0 \text{ for all } 0 < i < n, \quad \text{and} \quad H^{n+1}(X, \cO)=0.
\end{equation}
Further, let us assume that $\dim _k H^n (X, \cO) =g$.
Then the dual of $\pi_n^{\mathrm{U}}(X)$ is a commutative formal Lie group of dimension $g$ if $n>1$ and is a non-commutative formal Lie group of dimension $g$ if $n=1$. (\textit{cf.}~\cref{cof6}.)
\end{theorem}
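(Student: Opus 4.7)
The plan is a Postnikov tower analysis of the unipotent homotopy type $\mathbf{U}(X)$. By the universal property defining $\mathbf{U}(X)$, the natural map $X \to \mathbf{U}(X)$ induces isomorphisms $H^i(\mathbf{U}(X), \cO) \xrightarrow{\sim} H^i(X, \cO)$ for all $i$, so the hypotheses \eqref{conditionintro} transfer verbatim to the affine stack $\mathbf{U}(X)$, and the whole computation can be carried out intrinsically on $\mathbf{U}(X)$.

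First I would prove that $\mathbf{U}(X)$ is $(n-1)$-connected, i.e.\ that $\pi_i^{\mathrm{U}}(X) = 0$ for $1 \le i < n$. This is the Hurewicz-type assertion in To\"en's world: under the correspondence between pointed cohomologically connected affine stacks and augmented cosimplicial $k$-algebras, the lowest degree in which $\cO$ has nontrivial cohomology controls the lowest degree in which the stack has nontrivial homotopy. The vanishing of $H^i(X,\cO)$ for $0<i<n$ therefore kills $\pi_i^{\mathrm{U}}$ in that range, and so the $n$-th Postnikov section takes the form $\tau_{\le n}\mathbf{U}(X) \simeq K(G,n)$ with $G := \pi_n^{\mathrm{U}}(X)$.

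Next, the fibre sequence $\tau_{>n}\mathbf{U}(X) \to \mathbf{U}(X) \to K(G, n)$ gives rise to a Serre-type spectral sequence in $\cO$-cohomology. Because $\tau_{>n}\mathbf{U}(X)$ is $n$-connected with no $\cO$-cohomology below degree $n+1$, and because $H^{n+1}(\mathbf{U}(X), \cO) = 0$, a direct spectral sequence inspection forces $H^n(K(G,n), \cO) \xrightarrow{\sim} H^n(\mathbf{U}(X), \cO)$, so $\dim_k H^n(K(G,n), \cO) = g$. The proof is thereby reduced to a structural statement about unipotent group schemes: if $G$ is a (pro-)unipotent group scheme with $\dim_k H^n(K(G,n), \cO) = g$ together with the vanishing of $H^{n+1}(K(G,n), \cO)$ inherited from the spectral sequence, then the Cartier dual of $G$ is a formal Lie group of dimension $g$.

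The main obstacle will be this last identification. For $n \ge 2$, $G$ is automatically commutative, Cartier duality exchanges (pro-)unipotent commutative group schemes with formal groups, and a direct cohomology calculation yields $H^n(K(G,n), \cO) \cong \mathrm{Lie}(G^\vee)$, pinning down the tangent-space dimension; the inherited vanishing in degree $n+1$ then rules out non-smooth contributions and upgrades $G^\vee$ to a formal Lie group of dimension $g$. For $n = 1$, $G$ need not be commutative, and one instead works with the completed distribution Hopf algebra of $G$: the vanishing of $H^2$ (applied via the bar construction) constrains the associated graded for the lower central series filtration, forcing the Hopf algebra to be the completed enveloping algebra of a $g$-dimensional Lie algebra, whose Cartier dual is a (possibly non-commutative) formal Lie group of dimension $g$. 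Carrying out this smoothness step cleanly in the pro-unipotent, positive-characteristic setting, where exponentials need not make sense, is where the bulk of the genuine work will lie, and it is where I expect to need the profiniteness and structural results on unipotent group schemes developed earlier in the paper.
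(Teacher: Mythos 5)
The reduction you propose---pass to $\mathbf{U}(X)=\Spec R\Gamma(X,\cO)$, use a Hurewicz argument to kill $\pi_i^{\mathrm U}(X)$ for $i<n$ so that $\tau_{\le n}\mathbf{U}(X)\simeq K(G,n)$ with $G=\pi_n^{\mathrm U}(X)$, and transfer the hypotheses to get $\dim_k H^n(K(G,n),\cO)=g$ and $H^{n+1}(K(G,n),\cO)=0$---is exactly the paper's route (\cref{hurewicz}, \cref{postnikov}, \cref{formalgroup}); the Serre spectral sequence you invoke for the Postnikov fibre can be replaced by the direct comparison in \cref{postnikov}, but either way this part is fine. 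The gap is entirely in the step you yourself flag as ``the bulk of the genuine work'': the structural implication that the cohomological data on $K(G,n)$ forces $G^\vee$ to be a formal Lie group. You need, and have not sketched, two concrete lemmas. First, the translation $H^n(K(G,n),\cO)\simeq\Hom(G,\GG_a)$ and, for $n\ge 2$, $H^{n+1}(K(G,n),\cO)\simeq\Ext^1(G,\GG_a)$ (\cref{gait1}); it is $\Hom(G,\GG_a)$, not a priori a Lie algebra, that gets dimension $g$. Second, the actual representability criterion: for a commutative unipotent $G$ with $\dim\Hom(G,\GG_a)=g$ and $\Ext^1(G,\GG_a)=0$, the paper (\cref{fine}) shows that the topological dual Hopf algebra $A$ is a noetherian complete local $k$-algebra cut out by a regular sequence, computes $\dim\Tor_2^A(k,k)=\binom{g}{2}+\dim\Ext^1(G,\GG_a)$ via the co-Lie complex and Grothendieck's formula, and then invokes Gulliksen's dichotomy for Poincar\'e series of local rings to conclude $A\simeq k\llbracket x_1,\dots,x_g\rrbracket$. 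Your phrase ``rules out non-smooth contributions'' is pointing in this direction but hides that the decisive input is a nontrivial theorem in local commutative algebra, not a soft deformation-theoretic fact.

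For $n=1$ the route you suggest would not go through in the setting of the theorem. Passing to the completed distribution Hopf algebra, filtering by the lower central series, and recognizing a completed universal enveloping algebra is the Malcev/Quillen picture, and it relies on the exponential correspondence between nilpotent Lie algebras and unipotent groups, which fails over a field of characteristic $p>0$ (where the interesting formal Lie groups live). The paper instead follows Nori's argument (\cref{bgcurve1}, and the preparation in \cref{dualizinggpsch}--\cref{c(v)}): starting from $\dim H^1(BG,\cO)=g$ one builds a continuous surjection $k\llbrace X_1,\dots,X_g\rrbrace\twoheadrightarrow A$, and the vanishing $H^2(BG,\cO)=0$ together with an induction on universal vector extensions of vector bundles on $BG$ forces $\dim A/J_n = 1+g+\dots+g^{n-1}$, so the surjection is an isomorphism of topological rings. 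That universal-vector-extension bookkeeping is the missing idea in your outline for $n=1$, and it makes no appeal to Lie theory.
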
{}

Unlike the construction in \cite{MR457458}, our construction of the formal Lie groups in \cref{thm4intro} does not rely on the \'etale cohomology groups of $\mathbf{G}_m$ and is also able to recover \emph{non-commutative} formal Lie groups.
Instead, the proof of \cref{thm4intro} relies on establishing certain foundational results on representability of duals of unipotent group schemes by formal Lie groups---this is the subject of \cref{cof0}. Using our work in \cref{cof0}, we prove the following result which establishes a correspondence between formal Lie groups and certain ``higher algebraic" structures.

\begin{theorem}\label{thm5intro}
Let $k$ be a field.
The full subcategory of the $\infty$-category $(\mathrm{DAlg}^{\mathrm{ccn}}_k)_{/k}$ (see \cref{derivedvscosimp}) spanned by those $B \in (\mathrm{DAlg}^{\mathrm{ccn}}_k)_{/k} $ that satisfy $H^0(B) = k$, $\dim_k H^2 (B) = g$ and $H^* (B) \simeq \Sym^*H^2 (B)$ is equivalent to the category of commutative formal Lie groups over $k$ of dimension $g$ via the functor that sends a formal Lie group $ E \mapsto R\Gamma(K(E^\vee, 2), \cO)$. (\textit{cf.}~\cref{classifycommfgl2}.)
\end{theorem}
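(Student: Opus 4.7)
The plan is to construct a candidate inverse functor to $\Psi \colon E \mapsto R\Gamma(K(E^\vee, 2), \cO)$ using \cref{thm4intro} and then verify that the two functors are mutually quasi-inverse by a Postnikov/cohomology argument. Define
\[
  \Phi(B) := \bigl(\pi_2^{\mathrm{U}}(\Spec B)\bigr)^{\vee},
\]
where $\Spec B$ denotes the affine stack associated to the derived algebra $B$ in To\"en's framework. The hypothesis $H^*(B) \simeq \Sym^*H^2(B)$, interpreted as graded rings with generators in degree~2, forces $H^0(B) = k$, $H^1(B) = 0$, $\dim_k H^2(B) = g$, and $H^3(B) = 0$, so \cref{thm4intro} applied to $X := \Spec B$ with $n = 2$ produces a commutative formal Lie group $\Phi(B)$ of dimension $g$, with naturality in $B$ built in.

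To check $\Phi \circ \Psi \simeq \mathrm{id}$, the key input is the cohomology identification
\[
  R\Gamma\bigl(K(E^\vee, 2),\cO\bigr) \simeq \Sym^*\bigl(\omega_E[-2]\bigr),
\]
where $\omega_E$ denotes the $g$-dimensional cotangent space of $E$ at the identity. I would establish this via an iterated bar spectral sequence for the classifying stack of the profinite unipotent group scheme $E^\vee$, combined with the representability results on duals of unipotent group schemes developed in \cref{cof0}. Granting this, $\Psi(E)$ already lies in the prescribed subcategory of derived algebras, and $\Spec \Psi(E) \simeq K(E^\vee, 2)$ as affine stacks; hence $\pi_2^{\mathrm{U}}(\Spec \Psi(E)) \simeq E^\vee$, and dualizing gives $\Phi\Psi(E) \simeq E$ naturally in $E$.

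For the other direction $\Psi \circ \Phi \simeq \mathrm{id}$, let $B$ satisfy the hypotheses and set $E := \Phi(B)$ and $X := \Spec B$. The vanishing $H^1(X,\cO) = 0$ implies $\pi_1^{\mathrm{U}}(X) = 0$ (the $n = 1$ case of \cref{thm4intro} applied to an ambient thickening where $g = 0$, or equivalently by the Hurewicz-type observation underlying that theorem), so the second Postnikov stage of the unipotent homotopy type produces a canonical map $X \to K(\pi_2^{\mathrm{U}}(X), 2) = K(E^\vee, 2)$. Applying $R\Gamma(\,\cdot\,, \cO)$ yields a morphism $\Psi\Phi(B) \to B$ of derived algebras which is an isomorphism on $H^2$ by the very definition of $\pi_2^{\mathrm{U}}$; since both source and target are polynomial algebras on their respective $H^2$ (the source by the previous step, the target by assumption), a free-algebra comparison upgrades this to an equivalence.

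The principal obstacle is the cohomology computation $R\Gamma(K(E^\vee,2),\cO) \simeq \Sym^*(\omega_E[-2])$. Over a field of characteristic zero this is essentially a formality/Koszul-duality statement, but in characteristic $p > 0$ it is more delicate: one must show that the iterated bar spectral sequence degenerates onto a genuine polynomial algebra rather than a divided-power algebra, using the structure theory of profinite unipotent group schemes dual to formal Lie groups from \cref{cof0} and carefully tracking the augmentation. Ensuring naturality of this identification in $E$, so that it assembles into an equivalence of $\infty$-categories, is where I would expect the bulk of the technical work to lie.
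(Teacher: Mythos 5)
Your proposal takes essentially the same route as the paper's proof of \cref{classifycommfgl2}: the inverse functor is $B \mapsto \pi_2^{\mathrm{U}}(\Spec B)^\vee$, the Hurewicz/Postnikov analysis shows $\tau_{\le 2}\Spec B$ is a $K(H,2)$ for a commutative unipotent $H$ whose dual is a formal Lie group (via what is effectively \cref{fine}), and the crux is the cohomology identification $H^*(K(E^\vee,2),\cO) \simeq \Sym^* H^2$.

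The one place you stop short is exactly where the paper does real work: the claim $R\Gamma(K(E^\vee,2),\cO) \simeq \Sym^*(\omega_E[-2])$. You correctly flag this as delicate in characteristic $p$ (one must get a genuine polynomial algebra, not divided powers), but you leave it as an "obstacle" rather than carrying it out. The paper handles it by first computing $H^*(BE^\vee,\cO) \simeq \wedge^* H^1(BE^\vee,\cO)$ (that is \cref{paderborn1}, via the Koszul complex over the power series ring), then running the descent spectral sequence along $\Spec k \to K(E^\vee,2)$ and explicitly identifying the $E_1$-columns $E_1^{\bullet,n} \simeq \Sym^n(H^1(BE^\vee,\cO))[-n]$, citing \cite[\S 3.3]{Mon21} for the row-by-row analysis. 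Without that step the argument is incomplete, so you should not treat it as a black box. Two smaller points: your phrase "the $n=1$ case of \cref{thm4intro} applied to an ambient thickening where $g=0$" is an odd way to deduce $\pi_1^{\mathrm{U}} = 0$ from $H^1 = 0$; the clean statement is the Hurewicz equivalence (\cref{hurewicz2} or \cref{simplyconnected}). And to get an equivalence of a sub-$\infty$-category with a $1$-category of formal Lie groups, you need to know the relevant mapping spaces are discrete; the paper addresses this by delooping and reducing to \cref{lemma:group-homomorphism-classifying-stack}, a step your proposal omits.
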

In \cref{classifynoncommfgl}, we prove a similar classification result for non-commutative formal Lie groups, proposing an answer to a problem posed by Nori \cite[p.~75]{MR682517}; also see \cref{classifycommfgl} for a variant of \cref{thm5intro}.
The proofs of these results rely on unipotent homotopy theory; more precisely, the inverse to the functor described in \cref{thm5intro} is obtained by taking $\pi_2$ of the affine stack associated with $B$.
\vspace{2mm}

Finally, let us state the result that compares our formal Lie groups arising from \cref{thm4intro} with the ones constructed by Artin--Mazur in a geometric context, proposing a homotopical answer to \cite[p.~92, Qn.~(a)]{MR457458}.

\begin{theorem}\label{thm6intro}
Let $n \ge 1$ be an integer.
Let $X$ be a pointed proper scheme over an algebraically closed field $k$ of characteristic $p>0$ satisfying the conditions in \cref{conditionintro}. Let $\Phi_X^n$ denote the $n$-th Artin--Mazur formal group defined in this context. 
Then if $n > 1$, $\Phi^n_X$ is naturally isomorphic to the dual $\pi^{\mathrm{U}}_n(X)^\vee$ of the $n$-th unipotent homotopy group scheme of $X$. 
If $n=1$, $\Phi^n_X$ is naturally isomorphic to $(\pi_1^{\mathrm{U}}(X)^{\mathrm{ab}})^{\vee}$ (\textit{cf.}~\cref{cycomp}.)    
\end{theorem}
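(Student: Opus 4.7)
The plan is to compare the two formal Lie groups via their functors of points on Artin local $k$-algebras $R$ with residue field $k$. By definition,
$$\Phi^n_X(R) = \ker\bigl(H^n_{\et}(X_R, \mathbf{G}_m) \to H^n_{\et}(X, \mathbf{G}_m)\bigr).$$
On the Cartier dual side, the description of Cartier duals of profinite commutative unipotent group schemes developed in \cref{cof0} identifies the $R$-points of $\pi_n^{\mathrm{U}}(X)^\vee$ (resp.~of $(\pi_1^{\mathrm{U}}(X)^{\mathrm{ab}})^\vee$ in the case $n=1$) with the group of characters $\pi_n^{\mathrm{U}}(X)_R \to \mathbf{G}_{m,R}$ (resp.~of the abelianization) whose restriction along $\Spec k \hookrightarrow \Spec R$ is trivial.

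I would then construct a natural transformation $\pi_n^{\mathrm{U}}(X)^\vee \to \Phi^n_X$ as follows. When $n \ge 2$, the hypotheses of \cref{conditionintro} combined with the Hurewicz-type results underlying \cref{thm4intro} yield $\pi_i^{\mathrm{U}}(X) = 0$ for $0 < i < n$, producing a canonical Postnikov map $\mathbf{U}(X) \to K(\pi_n^{\mathrm{U}}(X), n)$; when $n = 1$, I would instead use the composition $\mathbf{U}(X) \to B\pi_1^{\mathrm{U}}(X) \to B\pi_1^{\mathrm{U}}(X)^{\mathrm{ab}}$. Precomposition with the universal map $X \to \mathbf{U}(X)$ yields a universal class $c \in H^n_{\et}(X, \pi_n^{\mathrm{U}}(X))$ (or with abelianized coefficients when $n=1$). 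Given a character $\chi$ as above, pushforward along $\chi$ produces an element $\chi_* c \in H^n_{\et}(X_R, \mathbf{G}_m)$; the triviality of $\chi$ modulo $\mathfrak{m}_R$ forces $\chi_* c$ to restrict trivially to $X_k$, so $\chi_* c \in \Phi^n_X(R)$. Because multiplication in $\mathbf{G}_m$ induces---via the $H$-space structure on $K(\mathbf{G}_m, n)$---the canonical abelian group structure on $H^n_{\et}(-, \mathbf{G}_m)$, the assignment $\chi \mapsto \chi_* c$ is a homomorphism of abelian-group-valued functors and hence defines a morphism of formal Lie groups.

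To verify this morphism is an isomorphism, I would check it induces an isomorphism on tangent spaces; since both sides are formal Lie groups of the same dimension $g$, this suffices by the formal inverse function theorem. Evaluated at $R = k[\epsilon]/(\epsilon^2)$, the tangent space of $\pi_n^{\mathrm{U}}(X)^\vee$ equals $\Hom(\pi_n^{\mathrm{U}}(X), \mathbf{G}_a)$, which via the Postnikov identification and the universal property of $\mathbf{U}(X)$ agrees with $H^n(X, \mathbf{G}_a) = H^n(X, \cO)$; this is the identification already used in the proof of \cref{thm4intro}. The tangent space of $\Phi^n_X$ is computed from the short exact sequence
$$1 \to 1 + \epsilon \cO_{X_R} \to \mathbf{G}_{m, X_R} \to \mathbf{G}_{m, X} \to 1$$
and the natural isomorphism $1 + \epsilon \cO \simeq \cO$; the vanishing hypotheses of \cref{conditionintro} kill the boundary contributions of the associated long exact sequence and again produce $H^n(X, \cO)$. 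A direct trace through the construction shows that the natural transformation induces the identity on $H^n(X, \cO)$.

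The main anticipated obstacle lies in the case $n = 1$: here $\pi_1^{\mathrm{U}}(X)$ is typically non-commutative while $\Phi^1_X$ is commutative, so one must carefully justify replacing $\pi_1^{\mathrm{U}}(X)$ by its abelianization in the construction. Characters into the abelian group $\mathbf{G}_m$ automatically factor through $\pi_1^{\mathrm{U}}(X)^{\mathrm{ab}}$, but verifying that every class in $\Phi^1_X(R)$ arises this way---equivalently, that the tangent space identification $\Hom(\pi_1^{\mathrm{U}}(X)^{\mathrm{ab}}, \mathbf{G}_a) = H^1(X, \cO)$ persists in the non-commutative setting---requires handling subtle compatibilities between the Postnikov truncation of the affine stack $\mathbf{U}(X)$ and the abelianization of its fundamental group.
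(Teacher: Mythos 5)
Your approach is essentially correct but takes a genuinely different route from the paper. Both arguments hinge on a universal class produced by the Postnikov truncation $\UU(X) \to \tau_{\le n}\UU(X) \simeq K(\pi^{\mathrm{U}}_n(X),n)$ together with the affine-stack universal property (this is precisely the Artin--Mazur--Hurewicz class $\xi^{\mathrm{AM}}_{\mathrm{H}}$ of \cref{amhclass}). Where you diverge: the paper never compares functors of points directly. It instead passes to Dieudonn\'e modules, using Artin--Mazur's identification $\mathbf{M}(\Phi^n_X) \simeq H^n(X,W)$ (Serre's Witt vector cohomology) and constructing, by a finite-length induction on $r$ driven by the vanishing hypotheses of \cref{conditionintro}, a compatible system of isomorphisms $v_r \colon \Hom(\pi^{\mathrm{U}}_n(X),W_r) \xrightarrow{\sim} H^n(X,W_r)$; the key input is that $K(W_r,n)$ is an affine stack, so maps out of $X$ into it factor through $\UU(X)$. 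You, by contrast, push forward the universal class along characters valued in $\GG_m$, land directly in the Artin--Mazur functor, and close the argument by the formal inverse function theorem once the tangent maps match. The Dieudonn\'e route buys naturality in a strong sense (the $v_r$ are visibly $W_r(k)$-linear and $F$,$V$-compatible) and avoids having to reidentify the tangent space of $\Phi^n_X$; your route is closer to the classical Artin--Mazur deformation-theoretic picture and only needs a first-order computation. Two points you should flag to make your proof airtight: your pushforwards a priori produce classes in fpqc cohomology, so you need Grothendieck's theorem that fppf and \'etale cohomology with $\GG_m$-coefficients agree in order to land in the \'etale group $\Phi^n_X(R)$; and the identification of the tangent space of $\Phi^n_X$ with $H^n(X,\cO)$, including the vanishing of the relevant connecting maps, is part of Artin--Mazur's pro-representability theorem \cite[\S~II]{MR457458} and should be cited rather than rederived. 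Your worry about $n=1$ dissolves: $\Hom(G^{\mathrm{ab}},\GG_a) = \Hom(G,\GG_a)$ for any group scheme $G$ since $\GG_a$ is commutative, so the tangent space identification persists through abelianization with no extra work.
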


Note that \cref{thm3intro} and \cref{thm6intro} show that the unipotent homotopy group schemes recover the $p$-adic \'etale homotopy groups from \cite{etalehomo} as well as the formal groups from \cite{MR457458} in many interesting cases. 
Roughly speaking, according to the principle in which homotopy groups relate to homology groups by the classical Hurewicz theorem in algebraic topology, \cref{thm6intro} says that the dual group schemes of the Artin--Mazur formal groups can now be thought of as a ``homology theory" for the unipotent homotopy theory.
While one needs to impose certain hypotheses to guarantee that certain deformation functors defined by Artin--Mazur are pro-representable by formal groups, the unipotent homotopy group schemes we consider are always representable. Further, \cref{thm6intro} makes it clear that for a scheme $X$ over a field $k$ of characteristic $p>0$, the unipotent homotopy group schemes $\pi_i^{\mathrm{U}}(X)$ contain nontrivial arithmetic information.
For example, if $X$ is a $K3$ surface, the dual of $\pi_2^{\mathrm{U}}(X)$ recovers the formal Brauer group and thus the height of $X$. 
Applying \cref{thm4intro} to certain stacks introduced in \cite{drinfeld2018stacky} (see also \cite{BL,Monrec}), in many cases of interest (e.g., $K3$ surfaces), one can moreover recover the formal groups arising from de Rham cohomology that were constructed by Artin and Mazur \cite[\S~III]{MR457458} using the multiplicative de Rham complex; see \cref{multideRham1}.
\vspace{2mm}

We discuss more examples in \cref{curveabelian}.
We show that the unipotent homotopy types of curves and abelian varieties are of the form $K(\pi_1^{\mathrm{U}}, 1)$, as one might expect. 
Further, in \cref{norisquestion}, we show using the methods from our paper that the unipotent fundamental group scheme admits a flat variation for certain families of curves and abelian varieties, giving a construction for a claim made in \cite{MR682517}.
As an application of \cref{norisquestion}, we explain in \cref{cold2} how the filtered circle from \cite{moulinos2019universal} can simply be obtained as the unipotent homotopy type of the classical family of curves that degenerates nodal curves to a cuspidal curve. 
In \cref{cold3}, we construct the unipotent fundamental group scheme of the universal curve over $\overline{\cM}_{g,1}$, which could be of possible further interest.
\vspace{2mm}

Next, we discuss applications of unipotent homotopy theory to questions surrounding derived equivalences.
In \cite{MR1818984}, Bondal--Orlov showed that if two smooth projective varieties $X$ and $Y$ with ample (anti-)canonical bundle are derived equivalent, then $X$ and $Y$ are isomorphic as varieties.
A classical question in algebraic geometry arising from the foundational work of Bondal and Orlov is the following: if $X$ and $Y$ are two derived equivalent algebraic varieties over $k$, how are certain invariants associated with $X$ and $Y$ related?
For example, a conjecture of Orlov \cite{Orlov2} asks whether the rational Chow motive $M(\cdot)_{\mathbf
Q}$ of a smooth projective variety is a derived invariant.
In positive characteristic, the work from our paper prompts the following line of investigation:
\begin{question}\label{musing}
For what class of smooth projective varieties $X$ and $Y$ defined over an algebraically closed field $k$ of characteristic $p>0$ does $X$ and $Y$ being derived equivalent imply that there is an isomorphism of unipotent homotopy types $\mathbf{U}(X) \simeq \mathbf{U}(Y)?$
\end{question}
One can also ask variants of this question by demanding that in addition to derived equivalence, certain other invariants associated to $X$ and $Y$ coincide.
Note that if $X$ and $Y$ are derived equivalent smooth projective varieties with ample (anti-)canonical bundle, then $\mathbf{U}(X) \simeq \mathbf{U}(Y)$ because \cite{MR1818984} implies that $X \simeq Y$.
In \cref{not-derived-invariant}, we show that there exist abelian threefolds $X$ for which $\pi_1^{\mathrm{U}}(X)$ and $\pi_1^{\mathrm{U}}(X^\vee)$ are not isomorphic;
we construct such abelian varieties by carefully analyzing the moduli space of certain Dieudonn\'e modules of dimension $3$ (see \cref{a=1}), extending work of Oda--Oort \cite[Prop.~4.1]{Ooo} to certain non-supersingular cases.
This shows that $\mathbf{U}(X)$ and $\mathbf{U}(X^{\vee})$ cannot be isomorphic, even though the abelian varieties $X$ and $X^\vee$ are derived equivalent.
\vspace{2mm}

\cref{musing} is particularly interesting in the case of Calabi--Yau varieties (\cref{CY}). 
Since the canonical bundle of a Calabi--Yau variety is trivial, the techniques from \cite{MR1818984} do not yield useful conclusions.
In this paper, we use our homotopical techniques to give a positive answer to \cref{musing} in the case when $X$ and $Y$ are derived equivalent Calabi--Yau varieties.
\begin{theorem}\label{thm7intro}
    Let $X$ and $Y$ be two derived equivalent Calabi--Yau varieties of dimension $n$ over an algebraically closed field $k$ of characteristic $p>0$. Then there exists an isomorphism $\mathbf{U}(X) \simeq \mathbf{U}(Y)$ of unipotent homotopy types of $X$ and $Y$. (\textit{cf.}~\cref{thmCY}.)
\end{theorem}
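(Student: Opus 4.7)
The plan is to exploit Toën's contravariant equivalence between affine stacks and an appropriate category of derived algebras: under this equivalence $\mathbf{U}(X)$ corresponds to $R\Gamma(X, \cO_X)$ (with augmentation coming from the $k$-point $x$), so the desired isomorphism $\mathbf{U}(X) \simeq \mathbf{U}(Y)$ will follow from an equivalence of augmented derived algebras $R\Gamma(X, \cO_X) \simeq R\Gamma(Y, \cO_Y)$. Because $X$ and $Y$ are strict Calabi--Yau $n$-folds, these algebras have cohomology concentrated in degrees $0$ and $n$, each of dimension one, so they fall within the formal-sphere classification developed in the paper (an $n$-dimensional analogue of \cref{thm5intro}), which identifies such augmented derived algebras with $1$-dimensional formal Lie groups via $B \mapsto \pi_n$ of the associated affine stack.

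The first step is to transport the structure sheaf. Given a Fourier--Mukai kernel $P \in D(X \times Y)$ inducing the derived equivalence $\Phi_P\colon D(X) \xrightarrow{\sim} D(Y)$, set $F \colonequals \Phi_P(\cO_X) \in D(Y)$. Since Fourier--Mukai equivalences respect derived endomorphism $E_\infty$-algebras, we obtain
\[
R\Gamma(X, \cO_X) \;\simeq\; R\mathrm{End}_X(\cO_X) \;\simeq\; R\mathrm{End}_Y(F),
\]
and the task reduces to recognising the right-hand side as $R\Gamma(Y,\cO_Y)$, even though a priori $F \not\simeq \cO_Y$.

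The key technical input is the derived invariance of the formal Lie group $\pi_n^{\mathrm U}(X)^\vee$. By \cref{thm6intro}, this formal group is naturally isomorphic to the Artin--Mazur formal group $\Phi_X^n$. The work of Antieau and Bragg \cite{AB1} extracts $\Phi_X^n$ from topological Hochschild homology $\mathrm{THH}(X/k)$ together with its $S^1$-action, and thus exhibits $\Phi_X^n$ as a genuine derived invariant of $X$. Combining this with the formal-sphere classification produces a canonical equivalence $R\Gamma(X,\cO_X) \simeq R\Gamma(Y,\cO_Y)$ of augmented derived algebras, which by Toën's embedding yields $\mathbf{U}(X) \simeq \mathbf{U}(Y)$.

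The main obstacle is upgrading the derived invariance of the single group scheme $\pi_n^{\mathrm U}$ to derived invariance of the full unipotent homotopy type. This is exactly what the formal-sphere machinery must accomplish, and it requires showing that the classification is sufficiently natural---that the classifying formal group determines not merely the isomorphism class of $R\Gamma(X, \cO_X)$ but also enough functoriality to identify it with $R\Gamma(Y,\cO_Y)$ along the transported comparison. Additional care is required when $n = 1$, where one must pass through the non-commutative analogue of \cref{thm5intro}, and one must also ensure that any auxiliary unipotent homotopy group schemes of a strict Calabi--Yau are either trivial or themselves derived invariant; the counterexample for abelian threefolds in \cref{not-derived-invariant} clarifies why the strict Calabi--Yau hypothesis cannot be dropped.
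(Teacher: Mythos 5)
Your overall strategy coincides with the paper's: establish derived invariance of the Artin--Mazur formal group $\Phi^n_X$ via $\mathrm{TR}$ (Antieau--Bragg), then argue that $\Phi^n_X$ determines $\mathbf{U}(X)$ up to isomorphism by comparison with a formal $n$-sphere. Two points, however, need correction.

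First, your Fourier--Mukai paragraph asserts that $\Phi_P$ respects derived endomorphism $E_\infty$-algebras, so that $R\Gamma(X,\cO) \simeq R\mathrm{End}_Y(F)$ as $E_\infty$-algebras. This is false: a Fourier--Mukai equivalence is $k$-linear exact but in general not symmetric monoidal (the paper remarks on this after \cref{defncy}), so only the $E_1$-structure transports. The observation that a derived equivalence does \emph{not} obviously preserve $R\Gamma(\cdot,\cO)$ as an $E_\infty$-algebra is precisely what makes the theorem nontrivial. Since you abandon that line of reasoning in the next paragraph, the error is not fatal to your plan, but the claim as written is wrong.

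Second, the ``$n$-dimensional analogue of \cref{thm5intro}'' you appeal to is not what the paper establishes, and is not a routine generalization. \Cref{thm5intro} classifies $B$ with \emph{polynomial} cohomology $H^*(B) \simeq \mathrm{Sym}^* H^2(B)$, in which case $\Spec B \simeq K(E^\vee,2)$ and there is nothing in higher Postnikov degrees. For a Calabi--Yau $n$-fold the cohomology is concentrated in degrees $0$ and $n$ only --- which for $n$ even is not $\mathrm{Sym}^*$ of a degree-$n$ class --- and the affinized sphere $\mathbf{U}(S^n_E)$ has nontrivial higher homotopy group schemes $\pi^{\mathrm U}_{n+1}, \pi^{\mathrm U}_{n+2},\ldots$; so one cannot reduce to a two-term Postnikov truncation. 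What the paper actually proves is the existence statement \cref{hach}: a map $S^n_E \to \mathbf{U}(X)$ is built by lifting inductively up the Postnikov tower of $\mathbf{U}(X)$, the crucial input being the cohomological vanishing $H^i(BE^\vee, G)=0$ for $i>2$ and $G$ unipotent commutative (\cref{newyr2}), which kills all obstructions. Furthermore \cref{hach}(2) shows the set of such isomorphisms is a nontrivial torsor under $H^2(B\pi^{\mathrm{U}}_n(X), \pi^{\mathrm{U}}_{n+1}(X))$, so the correspondence is not an equivalence of categories in the naive sense (pinning down a canonical choice is exactly what forces the stronger hypotheses $n>2$, $p>2$ in \cref{thm8into}). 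Since \cref{thm7intro} asks only for existence of \emph{some} isomorphism, your roadmap does point at the right result, but the lemma you need is \cref{hach}'s obstruction-theoretic argument, not the classification statement you invoke.
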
{}

In order to state our next result, we will need a definition.

\begin{definition}\label{defncy}
   Let $\mathrm{CY}_n$ denote the $1$-category of Calabi--Yau varieties of dimension $n$ over an algebraically closed field of characteristic $p>0$. Let $\mathcal{N}\mathrm{CY}_n$ denote the $1$-category whose objects are the objects of $\mathrm{CY}_n$; for any two Calabi--Yau varieties $X$ and $Y$, the set of morphisms is defined to be the set of isomorphism classes of objects of $D_{\mathrm{perf}}(X \times_k Y)$ and the composition is given by convolution (\textit{cf.} \cite[\href{https://stacks.math.columbia.edu/tag/0G0F}{Tag~0G0F}]{stacks}). 
   Note that there is a natural functor $\mathcal{N} \colon \mathrm{CY}_n \to \mathcal{N}\mathrm{CY}_n^{\mathrm{op}}$ that sends a map $f \colon X \to Y$ of Calabi--Yau varieties to the isomorphism class of the structure sheaf of the graph $\cO_{\Gamma_f} \in D_{\mathrm{perf}} (Y \times_k X)$.
\end{definition}{}

\begin{remark}
    Any object $K \in D_{\mathrm{perf}}(X \times_k Y)$ induces a $k$-linear exact functor $D_{\mathrm{perf}}(X) \to D_{\mathrm{perf}}(Y)$, which is called the associated Fourier--Mukai transform \cite{MR607081};
    note that these Fourier--Mukai functors are generally not symmetric monoidal.
    Conversely, by a result of Orlov \cite[Thm.~2.19]{Orlov}, any $k$-linear exact equivalence arises uniquely (up to natural isomorphism) as a Fourier--Mukai functor.
\end{remark}{}
Now we can formulate the next result, which proves that in odd characteristic, the unipotent homotopy type of Calabi--Yau varieties of dimension at least $3$ is functorial even in the ``non-commutative sense.''
In particular, if $X$ and $Y$ are derived equivalent, then $\UU(X)$ and $\UU(Y)$ are canonically isomorphic in the homotopy category of higher stacks over $k$, which we denote by $\mathrm{h}\mathrm{Shv}(k)$---this gives a functorial strengthening of \cref{thm7intro}.
More precisely, we have the following statement:
\begin{theorem}\label{thm8into}
Let $k$ be an algebraically closed field of characteristic $p>2$ and $n>2$ be an integer. Let $\UU \colon \mathrm{CY}_n \to \mathrm{h}\Shv(k)$ be the functor obtained by sending $X$ to the unipotent homotopy type $\UU(X)$. Then there is a canonical functor $\widetilde{\UU}: \mathcal{N}\mathrm{CY}_n^{\mathrm{op}}\to \mathrm{h}\Shv(k)$ such that $\widetilde{\UU}\circ\mathcal{N}$ is naturally equivalent to $\UU$. (\textit{cf.}~\cref{thmCY1}.)
 \end{theorem}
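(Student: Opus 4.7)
The strategy is to leverage the classification of Calabi--Yau unipotent homotopy types via formal Lie groups (\cref{thm4intro}--\cref{thm6intro}) to define $\widetilde{\UU}$ first at the level of the Artin--Mazur formal groups $\Phi_X^n \cong \pi_n^{\mathrm U}(X)^{\vee}$, and then to promote this to a morphism of affine stacks by inverting the classification. As a preliminary step, I would establish a degree-$n$ analog of \cref{thm5intro}: for $n > 2$ and $p > 2$, the full subcategory of $(\mathrm{DAlg}^{\mathrm{ccn}}_k)_{/k}$ consisting of $B$ with $H^0(B) \simeq k$, $\dim_k H^n(B) = 1$, and $H^i(B) = 0$ for $i \notin \{0,n\}$ is equivalent to the category of one-dimensional formal Lie groups over $k$, via $E \mapsto R\Gamma(K(E^{\vee}, n), \cO)$. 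For $n > 2$ the strict commutative ring structure on $H^{*}(B)$ is forced to be trivial by degree reasons, while the higher $E_{\infty}$-operations---in particular the Dyer--Lashof operations, whose $p = 2$ version would obstruct the argument---are controlled entirely by the formal Lie group via the same deformation-theoretic mechanism underlying \cref{thm5intro}. For any CY $n$-fold $X$ this identifies $R\Gamma(X, \cO)$ with the algebra corresponding to $\Phi_X^n$ by \cref{thm6intro}.

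Next, for each Fourier--Mukai kernel $K \in D_{\mathrm{perf}}(Y \times_k X)$ I would construct a morphism of formal Lie groups $\Phi_X^n \to \Phi_Y^n$ via the action of $K$ on topological Hochschild / topological periodic cyclic homology; this is the tool used by Antieau--Bragg \cite{AB1} to establish the derived invariance of $\Phi_X^n$ in the case $n=2$, and it is manifestly functorial in the convolution of kernels. When $K = \cO_{\Gamma_f}$, this action reduces to the ordinary pullback $f^{*}$, giving the desired compatibility with $\UU(f)$. Combining the two steps, I would define $\widetilde{\UU}$ on objects by $X \mapsto \UU(X)$ and on a kernel $K$ as the map $\UU(X) \to \UU(Y)$ obtained by applying the inverse of the preliminary equivalence to the morphism of formal groups produced from $K$. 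The natural equivalence $\widetilde{\UU} \circ \mathcal{N} \simeq \UU$ then follows from the compatibility check for graph kernels together with the naturality of the classification.

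The main obstacle is the first step: one must verify that for $n > 2$ and $p > 2$ no additional $E_{\infty}$-data on $R\Gamma(X, \cO)$ of a CY $n$-fold is present beyond that captured by the one-dimensional formal Lie group, and that the resulting correspondence is genuinely functorial (not merely an equivalence of underlying groupoids of objects). A secondary but delicate point is to ensure that the Hochschild-theoretic functoriality of the second step lifts to the level of formal \emph{groups}, rather than merely their Lie algebras or underlying cohomology, and that this lift is compatible with the identification of \cref{thm6intro} on both sides. Once these ingredients are in place, the construction of $\widetilde{\UU}$ is essentially formal.
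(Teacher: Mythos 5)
Your high-level plan — classify $\UU(X)$ for Calabi--Yau $n$-folds by the one-dimensional formal Lie group $\Phi_X^n$, then use the functoriality of a cyclotomic-spectrum invariant to get functoriality in Fourier--Mukai kernels — is indeed the paper's route (the paper uses $\pi_{-n}\mathrm{TR}(\cdot)$ specifically). But your preliminary classification step is misstated in a way that matters: the functor $E \mapsto R\Gamma(K(E^{\vee},n),\cO)$ does \emph{not} land in the subcategory of $B$ with $H^i(B)=0$ for $i \notin \{0,n\}$. Even for $n=2$, \cref{classifycommfgl2} computes $H^*(K(E^\vee,2),\cO) \simeq \Sym^*H^2$, a polynomial ring nonzero in every even degree; the picture for $n>2$ is no better. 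The stack whose cohomology is concentrated in degrees $0$ and $n$ is the \emph{formal $n$-sphere} $S^n_E := \Sigma^{n-1}BE^\vee$ (\cref{usefullemma2}), which agrees with $K(E^\vee,n)$ only after $n$-truncation. Keeping these apart is the whole point: $\mathbf{U}(S^n_E)$ carries infinitely many nontrivial homotopy group schemes — exactly as the topological $n$-sphere does — and you must control them to pass from an isomorphism on $\pi_n$ to an isomorphism of unipotent homotopy types.

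Your appeal to Dyer--Lashof operations correctly intuits \emph{where} the $p=2$ obstruction lives heuristically (in topology $\pi_{n+1}(S^n)\simeq\ZZ/2$ is detected by a secondary operation), but it does not substitute for the two concrete inputs the proof needs. First, the existence of a lift $S^n_E \to \UU(X)$ extending the map on $n$-truncations given by the Artin--Mazur--Hurewicz class requires obstruction theory up the Postnikov tower: the obstruction groups are $H^{r+3}(BE^\vee,\pi^{\mathrm U}_{n+r+1}(X))$, killed by \cref{newyr2}, and the residual ambiguity in the homotopy category is a torsor under $H^2(B\pi_n^{\mathrm U}(X),\pi^{\mathrm U}_{n+1}(X))$ (\cref{hach}). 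Second, the vanishing of that group — which makes the identification \emph{canonical}, and hence makes $\widetilde{\UU}$ well defined — comes from the computation $\pi^{\mathrm U}_{n+1}(X)\simeq((\Phi_X^n)^\vee\curlywedge(\Phi_X^n)^\vee)^{\mathrm{uni}}=0$ for $p>2$ (\cref{retire1}). That is not a statement about Dyer--Lashof operations on $R\Gamma(X,\cO)$; it is proved via the Freudenthal suspension theorem in unipotent homotopy theory (\cref{freudenthalsusp}), the explicit Whitehead product $W_{2,2}$ on $\pi_3(\Sigma BG)$ (\cref{computewhite}), and the case analysis of the weak wedge square $(G^\vee\curlywedge G^\vee)^{\mathrm{uni}}$ in \cref{haircut11}. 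Without these ingredients the equivalence of categories you postulate is an assertion rather than a proof, and the ``main obstacle'' you flag remains open.
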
{}
\Cref{thm7intro} implies that if $X$ and $Y$ are two derived equivalent Calabi--Yau varieties of dimension $n$, then $\pi_i^{\mathrm{U}}(X) \simeq \pi_i^{\mathrm{U}}(Y)$ for all $i \ge 0$; in particular, one has $\pi_n^{\mathrm{U}}(X) \simeq \pi_n^{\mathrm{U}}(Y)$.
Concretely, the latter isomorphism means that the Artin--Mazur formal Lie groups of $X$ and $Y$ are isomorphic (see~\cref{thm6intro}), which had previously been observed by Antieau--Bragg \cite{AB1} using certain constructions from \cite{TR}. Our proof builds on their observation regarding the use of \cite{TR}, but requires a significant amount of new homotopical ingredients.
The main insight is that from the standpoint of unipotent homotopy theory, Calabi--Yau varieties behave in some sense like spheres. More precisely, for every $1$-dimensional commutative formal Lie group $E$, we construct a (higher) stack $S^n_{E} \colonequals \Sigma^{n-1}B E^\vee$, which one may think of as a \emph{formal $n$-sphere}.
In \cref{hach}, we prove that if $X$ is a Calabi--Yau variety of dimension $n$ with Artin--Mazur formal group $\Phi^n_X$, then $\mathbf{U}(X)$ is (noncanonically) isomorphic to the unipotent homotopy type of $S^n_{\Phi^n_X}$. Relatedly, we also prove in \cref{ordinaryCY} that the $p$-completion of the Artin--Mazur \'etale homotopy type of an ordinary Calabi--Yau variety of dimension $n$ is equivalent to the $p$-completion of the $n$-sphere; this gives an ``algebraic model" for the $p$-adic unstable homotopy of spheres. 
\vspace{2mm}

However, to construct the canonical functor $\widetilde{\UU}$ as in \cref{thm8into}, one needs further ingredients. We use our construction of the Artin--Mazur--Hurewicz class (\cref{amhclass}) as well as the notion of $\mathrm{TR}$ from \cite{TR}; additionally, we need to establish the vanishing $\pi_{n+1}^{\mathrm{U}} (S^n_{\Phi^n_X}) = 0$ for $n>2$.
The intuition behind this vanishing statement comes from the stable homotopy groups of spheres:
if $S^n$ is the usual $n$-sphere, then for $n>2$, the classical homotopy group $\pi_{n+1}(S^n) \simeq \mathbf Z/2\mathbf Z$.
If $k$ is a field of odd characteristic, one may therefore hope that the unipotent homotopy group scheme $\pi_{n+1}^{\mathrm{U}} (S^n_{\Phi^n_X}) = 0$. To turn this hope into a proof, we first prove a generalization of the classical Freudenthal suspension theorem in the world of unipotent homotopy theory---this plays an important role in understanding the unipotent homotopy theory of Calabi--Yau varieties similar to the role the classical version plays in understanding the homotopy theory of spheres.
We prove the following version of the Freudenthal suspension theorem that is general enough to be used for studying unipotent homotopy of algebraic varieties as well as homotopy theory of spaces.
\begin{theorem}[Freudenthal suspension theorem in unipotent homotopy theory]\label{thm9intro} Let $n \ge 0$ be an integer. Let $X$ be a pointed connected higher stack over a field $k$ such that $H^i (X, \mathscr{O})$ is finite-dimensional for all $i \ge 0$ and $\pi_i^{\mathrm{U}}(X) = \left \{* \right \}$ for $i \le n$. Then there are natural maps $\pi_i^{\mathrm{U}}(X) \to \pi_{i+1}^{\mathrm{U}}(\Sigma X)$ which are isomorphisms of group schemes for $i \le 2n$ and a surjection for $i = 2n+1$. (cf.\ \cref{freudenthalsusp}.)
\end{theorem}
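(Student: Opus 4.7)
The plan is to translate the theorem into a statement about augmented $E_\infty$-algebras via the affine stack / section algebra correspondence, and then establish the relevant bar--cobar range estimate, mimicking the classical proof of Freudenthal through the $(\Sigma, \Omega)$ adjunction.

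Since $\mathbf{U}$ is a left adjoint, it preserves colimits; in particular, $\mathbf{U}(\Sigma X) \simeq \Sigma \mathbf{U}(X)$, where the right-hand suspension is computed inside the category of affine stacks. The loop functor $\Omega$ preserves affine stacks, so the Freudenthal map $\pi_i^{\mathrm{U}}(X) \to \pi_{i+1}^{\mathrm{U}}(\Sigma X)$ is induced by the unit $\mathbf{U}(X) \to \Omega \Sigma \mathbf{U}(X)$ of the affine-stack $(\Sigma, \Omega)$ adjunction. Setting $A \colonequals R\Gamma(X, \mathscr{O})$, the dictionary between pointed affine stacks and augmented $E_\infty$-algebras turns suspensions into homotopy fiber products and loops into derived tensor products, so the algebraic incarnation of the Freudenthal map is $A \to k \otimes^L_{k \times^h_A k} k$. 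The Hurewicz-type principle for affine stacks, which is available from the machinery developed earlier in the paper, converts the hypothesis $\pi_i^{\mathrm{U}}(X) = \{*\}$ for $i \le n$ into the cohomological statement $H^i(A) = 0$ for $0 < i \le n$, which in turn makes $k \times^h_A k$ at least $(n+1)$-connected as an augmented $E_\infty$-algebra.

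The core of the proof is the cohomological range estimate: the map $A \to k \otimes^L_{k \times^h_A k} k$ should be an isomorphism on $H^i$ for $i \le 2n+1$ and injective on $H^{2n+2}$. I would establish this via an Eilenberg--Moore / bar spectral sequence whose $E_2$-page involves iterated $\mathrm{Tor}$'s of $k$ over $k \times^h_A k$; the terms corresponding to two or more copies of the reduced augmentation ideal live in cohomological degree $\ge 2n+2$ by a straightforward connectivity count, so only the linear term contributes in the relevant range and identifies with $A$ itself. The finite-dimensionality hypothesis on $H^i(X, \mathscr{O})$ ensures convergence of the spectral sequence and controls the duality arguments that follow. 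This cohomological comparison then translates back into the claimed statement about unipotent homotopy group schemes through the Postnikov tower of the affine stack $\Omega \Sigma \mathbf{U}(X)$ together with the Hurewicz comparison applied in reverse: an isomorphism on $H^i$ through $i \le 2n+1$ yields an isomorphism on $\pi_i^{\mathrm{U}}$ for $i \le 2n$, and the injection at $H^{2n+2}$ upgrades to the surjection at $\pi_{2n+1}^{\mathrm{U}}$.

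The main obstacle is the careful bookkeeping in positive characteristic. Eilenberg--MacLane affine stacks have substantially richer cohomology than their characteristic zero counterparts due to divided-power and Frobenius-twist phenomena for $\mathbf{G}_a$, so translating between cohomological ranges and ranges on representing group schemes is more delicate than in the topological setting; one must in particular verify that the edge terms of the bar spectral sequence interact well with the unipotent group scheme structure on $\pi_i^{\mathrm{U}}$ rather than merely with underlying cohomology groups. The finite-dimensionality of $H^i(X, \mathscr{O})$ is precisely the hypothesis that makes this translation tractable, enabling the Cartier-duality-type arguments needed to pass from estimates at the level of $E_\infty$-algebras to statements about the representing unipotent group schemes.
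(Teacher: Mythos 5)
Your high-level strategy --- estimate the cohomology of the Freudenthal map via a bar/cobar range count and then translate back to homotopy group schemes --- is a genuinely different route from the paper's, but as written it has two concrete gaps. First, the claimed injectivity of the comparison on $H^{2n+2}$ is false. Writing $B = k \times^h_A k$, the bar filtration on $k \otimes^L_B k \simeq R\Gamma(\Omega\UU(\Sigma X),\cO)$ has graded pieces $\bar{B}^{\otimes m}[m]$, and the $m \ge 2$ pieces first contribute in degree exactly $2n+2$ and die under the comparison to $A$; already for $n = 0$, $X = S^1$ the map $H^2(\Omega S^2, k) \to H^2(S^1, k)$ is $k \to 0$. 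This error is harmless for the theorem, since only the isomorphism on $H^i$ for $i \le 2n+1$ is needed, but it shows the numerology of your proposed translation is off.

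Second, and more seriously, the step ``iso on $H^i$ for $i \le 2n+1$ yields iso on $\pi_i^{\mathrm{U}}$ for $i \le 2n$, and an injection at $H^{2n+2}$ upgrades to a surjection at $\pi_{2n+1}^{\mathrm{U}}$'' is not a Postnikov-tower bookkeeping exercise in characteristic $p$: the Hurewicz correspondence of \cref{hurewicz} only identifies $H^{m+1}$ with $\Hom(\pi_{m+1}, \GG_a)$, and this does not determine the group scheme (already $\Hom(\alpha_p, \GG_a) \simeq \Hom(\alpha_{p^2}, \GG_a)$). The correct mechanism passes through the fiber $F$ of $\UU(X) \to \Omega\UU(\Sigma X)$: one shows $H^i(F,\cO) = 0$ for $0 < i \le 2n$ by a Serre--Leray comparison, applies the absolute Hurewicz to the affine stack $F$, and reads off both halves of the conclusion from the long exact sequence. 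But the Serre--Leray spectral sequence for this fibration --- i.e.\ the identification $R\Gamma(F) \simeq R\Gamma(\UU(X)) \otimes_{R\Gamma(\Omega\UU(\Sigma X))} k$ --- is exactly the Eilenberg--Moore base-change theorem of \cref{sohard139}, which fails without finiteness hypotheses. So the role of $\dim_k H^i(X,\cO) < \infty$ is not ``convergence'' or ``duality'': it is precisely what makes the base-change input hold, and proving it (together with \cref{advv}) is where the work lies. For contrast, the paper avoids any cohomological range estimate for the isomorphism part: by \cref{f} the classical Freudenthal theorem at the level of sheaves gives $\pi_i(X) \xrightarrow{\sim} \pi_{i+1}(\Sigma X)$ for $i \le 2n$, so $\tau_{\le 2n+1}\Sigma X$ already has representable unipotent homotopy sheaves and is itself affine --- no finiteness needed there at all. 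Finiteness enters only through the surjectivity at $i = 2n+1$, via \cref{freu1}, the Whitehead EHP sequence for sheaves, and pro-algebraic completion (\cref{tttttt}); your approach would instead need the base-change machinery for both halves of the statement.
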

The hardest part of the proof is the surjectivity of the map $\pi_{2n+1}^{\mathrm{U}}(X) \to \pi_{2n+2}^{\mathrm{U}}(\Sigma X)$;
here, the assumption that $H^i (X, \cO)$ is finite-dimensional is crucial.
For example, \cref{sohard} and \cref{advv} crucially rely on such finiteness assumptions and they play an important role in the proof of \cref{thm9intro}. 
To a certain extent, this is because the functor $\mathbf{U} (\cdot)$ does not preserve limits in general and we need to establish several results regarding cohomology and base change in the context of higher stacks as an essential ingredient in the proof of \cref{thm9intro}.
To this end, we also discuss the theory of quasi-coherent sheaves on affine stacks (which is not discussed in \cite{Toe}).
More generally, we concretely describe the derived $\infty$-category of quasi-coherent sheaves on a fairly general class of stacks that we call \emph{weakly affine} (see \cref{makiman34});
this description generalizes a result of Lurie in the case of affine stacks over fields of characteristic zero \cite[Prop.~4.5.2]{lurie2011quasi}.
Using \cref{makiman34}, in \cref{sec2.3}, we prove certain results regarding cohomology and base change.
\vspace{2mm}

Relying on \cref{thm9intro} along with a careful study of certain tensor constructions for group schemes (see \cref{heightformula} for the relation of the tensor product of group schemes from \cref{chris1} with the existing literature) carried out in \cref{worldcup1}, we prove the following (\textit{cf.}~\cref{retire1}):

\begin{theorem}\label{thm10intro}
    Let $X$ be a Calabi--Yau variety of dimension $n \ge 3$ over an algebraically closed field $k$ of characteristic $p>0$. Then 
$$ \pi_{n+1}^{\mathrm{U}}(X)
 \simeq
\begin{cases}
W[F] & \text{if} \,\,p=2 \,\text{and} \,\,X\,\, \text{is not weakly ordinary,}\,\, \\
\mathbf{Z}/2\mathbf{Z} & \text{if} \,\,p=2 \,\text{and} \,\,X \,\text{is weakly ordinary,} \\

0 & \text{otherwise, i.e., if}\,\, p \ne 2.
\end{cases}
$$
\end{theorem}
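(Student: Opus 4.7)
Write $E = \Phi_X^n$ for the Artin--Mazur formal Lie group of the Calabi--Yau variety $X$, and recall from \cref{hach} that there is a (noncanonical) equivalence of unipotent homotopy types $\mathbf{U}(X) \simeq \mathbf{U}(S^n_E)$, where $S^n_E = \Sigma^{n-1} BE^\vee$ is the formal $n$-sphere. The problem thus reduces to computing
\[\pi_{n+1}^{\mathrm U}(S^n_E) = \pi_{n+1}^{\mathrm U}(\Sigma^{n-1} BE^\vee).\]
The overall strategy is to imitate the classical calculation of $\pi_{n+1}(S^n) = \mathbf{Z}/2\mathbf{Z}$ via Freudenthal and the Hopf map $\eta$, but with careful bookkeeping of the unipotent tensor structure on group schemes developed in \cref{worldcup1}.

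First, I would apply the Freudenthal suspension theorem \cref{thm9intro} to $S^m_E$, which is $(m-1)$-connected unipotently with $\pi_m^{\mathrm U}(S^m_E) \simeq E^\vee$. The resulting suspension maps $\pi_i^{\mathrm U}(S^m_E) \to \pi_{i+1}^{\mathrm U}(S^{m+1}_E)$ are isomorphisms for $i \le 2(m-1)$ and surjections for $i = 2m - 1$. Specialising to $i = m+1$, this shows that $\pi_{n+1}^{\mathrm U}(S^n_E)$ is independent of $n$ as soon as $n \ge 4$; for $n = 3$ one obtains merely a surjection from $\pi_3^{\mathrm U}(S^2_E)$, which will need to be analysed separately.

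Second, I would identify the stable value using the tensor constructions of unipotent group schemes from \cref{worldcup1}. The classical first stable stem is $\mathbf{Z}/2\mathbf{Z}$ generated by the Hopf map, so one should expect
\[\pi_{n+1}^{\mathrm U}(S^n_E) \simeq E^\vee \otimes \mathbf{Z}/2\mathbf{Z},\]
where the tensor product is taken in the appropriate category of unipotent group schemes. Constructing this identification is the main analytical obstacle; I would approach it via the Artin--Mazur--Hurewicz class from \cref{amhclass} together with a controlled analysis of the $E_\infty$-cochain algebra $R\Gamma(S^n_E, \mathscr{O})$ using the cohomology and base-change results for weakly affine stacks from \cref{makiman34} and \cref{sec2.3}, allowing one to read off the relevant homotopy group from an appropriate mapping-space calculation into a $K(G,n+1)$.

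With this stable identification in hand, the case analysis in the theorem follows formally from the tensor structure. If $p \ne 2$, the constant group scheme $\mathbf{Z}/2\mathbf{Z}$ is not unipotent, so $E^\vee \otimes \mathbf{Z}/2\mathbf{Z} = 0$, yielding the vanishing statement. If $p = 2$ and $X$ is weakly ordinary, then $E^\vee$ is essentially étale and one computes $E^\vee \otimes \mathbf{Z}/2\mathbf{Z} \simeq \mathbf{Z}/2\mathbf{Z}$. If $p = 2$ and $X$ is not weakly ordinary, the Dieudonné-theoretic input combined with the height formula of \cref{heightformula} produces $W[F]$. Finally, for the borderline case $n = 3$ where Freudenthal only gives a surjection $\pi_3^{\mathrm U}(S^2_E) \twoheadrightarrow \pi_4^{\mathrm U}(S^3_E)$, I would upgrade it to an isomorphism by directly bounding $\pi_4^{\mathrm U}(S^3_E)$ from above through an explicit cosimplicial model of $S^3_E$, verifying that no extra classes can appear beyond those already accounted for in the stable range. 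The hardest step is clearly the identification with $E^\vee \otimes \mathbf{Z}/2\mathbf{Z}$: controlling the $2$-primary behaviour of cochains on formal spheres in positive characteristic is substantially more delicate than the classical topological analogue and is where the machinery of \cref{worldcup1} and \cref{sec2.3} has to do its real work.
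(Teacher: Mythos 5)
Your reduction to the formal sphere $S^n_E$ via \cref{hach} and your use of \cref{thm9intro} to stabilize in $n$ are correct, and you rightly isolate $n=3$ as the delicate case where Freudenthal gives only a surjection $\pi_3^{\mathrm U}(S^2_E) \twoheadrightarrow \pi_4^{\mathrm U}(S^3_E)$. The serious gap is in the stable identification: you propose $\pi_{n+1}^{\mathrm U}(S^n_E) \simeq E^\vee \otimes \mathbf{Z}/2\mathbf{Z}$, i.e.\ something linear in $E^\vee$, by analogy with the classical first stable stem. The correct formula (this is \cref{haircut12}, resting on \cref{homotopyformalsphere}) is \emph{quadratic} in $E^\vee$:
\[\pi_{n+1}^{\mathrm U}(S^n_E) \simeq (E^\vee \curlywedge E^\vee)^{\mathrm{uni}},\]
the unipotent completion of the weak wedge square of \cref{weakwedge}. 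The two expressions happen to coincide when $E^\vee \simeq \mathbf{Z}_p$ — i.e.\ for the ordinary ($\widehat{\GG}_m$) case, since $\mathbf{Z} \curlywedge_{\mathbf{Z}} \mathbf{Z} \simeq \mathbf{Z}/2\mathbf{Z}$ — but they disagree in general: at $p=2$ with height $>1$ your formula would produce a cokernel of multiplication by $2$ on $E^\vee$, a \emph{finite} group scheme of order bounded by $|E[2]|$, whereas the actual answer is $W[F]$, which is infinitesimal but infinite-dimensional. There is no smash product that lets you factor $\pi_{n+1}^{\mathrm U}$ of a formal sphere as $\pi_1^s \otimes E^\vee$; the quadratic dependence is unavoidable.

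What replaces your heuristic in the paper's argument is the explicit identification of the kernel of the surjection at $n=3$: one first computes $\pi_3^{\mathrm U}(S^2_E) \simeq (E^\vee \otimes E^\vee)^{\mathrm{uni}}$ via \cref{freu1} and \cref{lentil}, then shows using Whitehead's EHP sequence (as in the proof of \cref{freudenthalsusp1}) that the kernel is the image of the Whitehead product $W_{2,2}$, which by \cref{computewhite} is the symmetrization $g \otimes h \mapsto g\otimes h + h\otimes g$, whose cokernel is by definition $(E^\vee \curlywedge E^\vee)^{\mathrm{uni}}$. The case analysis then follows from \cref{haircut11}: for $p \ne 2$ any "$\curlywedge$-compatible" bilinear form into $\GG_a$ must vanish since it is simultaneously symmetric and satisfies $2u = 0$; for $p=2$ and height $>1$ the surjection $E^\vee \to \alpha_2$ forces the answer to be $(\alpha_2 \otimes \alpha_2)^{\mathrm{uni}} \simeq W[F]$ (via \cref{compute098}); for $p=2$, height $1$ one gets $\mathbf{Z}_2/2\mathbf{Z}_2 \simeq \mathbf{Z}/2\mathbf{Z}$. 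Your proposed route of bounding $\pi_4^{\mathrm U}(S^3_E)$ from above via a cosimplicial model would not by itself identify the kernel of the Freudenthal surjection without the Whitehead product input.
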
{} 
The construction of $\widetilde{\UU}$ as in \Cref{thm8into} can now be completed by using \cref{hach} and the fact that  $\pi^{\mathrm{U}}_{n+1}(X) =0$ for an $n$-dimensional Calabi--Yau variery $X$ over an algebraically closed field of characteristic $p>2$ and $\dim X \ge 3$. The calculation in \cref{thm10intro} is really a consequence of a more general calculation for the homotopy of the formal $n$-spheres that we introduce (see \cref{haircut12}) and the fact that the unipotent homotopy type of a Calabi--Yau variety is isomorphic to a formal sphere. 
Finally, we mention two consequences of \cref{thm7intro} and \cref{thm8into}.
\begin{corollary}
   Let $X$ and $Y$ be two derived equivalent Calabi--Yau varieties of dimension $n$ over an algebraically closed field $k$ of characteristic $p>0$. Then $R\Gamma (X, \cO)$ and $R\Gamma (Y, \cO)$ are isomorphic as $E_\infty$-algebras over $k$. 
\end{corollary}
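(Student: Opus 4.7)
The plan is to deduce the corollary directly from \cref{thm7intro} by unwinding the definition of the unipotent homotopy type via To\"en's theory of affine stacks. The key observation is that by To\"en's equivalence, an affine stack is, up to equivalence, the same data as an $E_\infty$-algebra (equivalently, a cosimplicial algebra) over $k$, where the functor going from affine stacks to $E_\infty$-algebras is $R\Gamma(-,\cO)$.

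First I would invoke \cref{thm7intro} to produce an equivalence $\mathbf{U}(X) \simeq \mathbf{U}(Y)$ of higher stacks over $k$. Applying the functor $R\Gamma(-, \cO)$, one obtains an equivalence
\[
R\Gamma(\mathbf{U}(X), \cO) \simeq R\Gamma(\mathbf{U}(Y), \cO)
\]
of $E_\infty$-algebras over $k$.

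Next, I would identify $R\Gamma(\mathbf{U}(X), \cO)$ with $R\Gamma(X, \cO)$ as $E_\infty$-algebras, and similarly for $Y$. This is a formal consequence of the universal property defining $\mathbf{U}(X)$: the functor $X \mapsto \mathbf{U}(X)$ is by construction left adjoint (in the contravariant sense) to the inclusion of affine stacks into higher stacks, and this adjunction is implemented by $R\Gamma(-,\cO)$ on the side of $E_\infty$-algebras. Concretely, the canonical map $X \to \mathbf{U}(X)$ induces a map $R\Gamma(\mathbf{U}(X),\cO) \to R\Gamma(X,\cO)$; universality says that for any affine stack $\Spec^{\mathrm{aff}}(A)$ (with $A$ an $E_\infty$-algebra over $k$ in the appropriate subcategory), precomposition with $X \to \mathbf{U}(X)$ gives a bijection
\[
\Map(\mathbf{U}(X), \Spec^{\mathrm{aff}}(A)) \xrightarrow{\sim} \Map(X, \Spec^{\mathrm{aff}}(A)),
\]
which by adjunction translates into the claim that $R\Gamma(\mathbf{U}(X),\cO) \to R\Gamma(X,\cO)$ is an equivalence of $E_\infty$-algebras.

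Combining the two steps yields
\[
R\Gamma(X,\cO) \simeq R\Gamma(\mathbf{U}(X),\cO) \simeq R\Gamma(\mathbf{U}(Y),\cO) \simeq R\Gamma(Y,\cO)
\]
as $E_\infty$-algebras over $k$. There is no real obstacle here beyond \cref{thm7intro} itself; the only thing to be careful about is that the identification $R\Gamma(\mathbf{U}(X),\cO)\simeq R\Gamma(X,\cO)$ should be recorded at the level of $E_\infty$-algebra structures rather than merely as chain complexes, but this is built into To\"en's formulation of the adjunction defining affine stacks, which is reviewed earlier in the paper (cf.~\cref{affine-stacks}).
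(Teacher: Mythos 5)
Your proposal is correct and is exactly the route the paper intends: the corollary is stated as an immediate consequence of \cref{thm7intro}, obtained by applying $R\Gamma(-,\cO)$ to the equivalence $\mathbf{U}(X)\simeq\mathbf{U}(Y)$ and then using the identification $R\Gamma(\mathbf{U}(X),\cO)\simeq R\Gamma(X,\cO)$ (\cref{cope}, which records $R\Gamma(\Spec B,\cO)\simeq B$, so that $\mathbf{U}(X)\simeq\Spec R\Gamma(X,\cO)$).

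One small imprecision worth correcting: the parenthetical claim that an $E_\infty$-algebra is ``equivalently a cosimplicial algebra'' is false in positive characteristic. The $\infty$-category $\mathrm{DAlg}^{\mathrm{ccn}}_k$ of cosimplicial $k$-algebras is not equivalent to $\mathrm{CAlg}(D(k))$; there is only a forgetful functor $\mathrm{DAlg}^{\mathrm{ccn}}_k \to \mathrm{CAlg}(D(k))$, and affine stacks are classified by $(\mathrm{DAlg}^{\mathrm{ccn}}_k)^{\mathrm{op}}$ rather than by $E_\infty$-algebras. This does not damage the argument: the equivalence $\mathbf{U}(X)\simeq\mathbf{U}(Y)$ produces an isomorphism $R\Gamma(X,\cO)\simeq R\Gamma(Y,\cO)$ in $\mathrm{DAlg}^{\mathrm{ccn}}_k$, and the forgetful functor carries it to an isomorphism of $E_\infty$-algebras, which is the weaker statement of the corollary.
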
{}

\begin{corollary}
    Let $k$ be an algebraically closed field of characteristic $p>2$ and $n>2$. Then there is a canonical functor from ${\mathcal{N}\mathrm{CY}_n}$ to the homotopy category of $E_\infty$-algebras over $k$ that sends $X$ to $R\Gamma(X,\cO)$.
\end{corollary}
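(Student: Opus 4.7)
The plan is to post-compose the functor $\widetilde{\UU}\colon \mathcal{N}\mathrm{CY}_n^{\mathrm{op}} \to \mathrm{h}\Shv(k)$ constructed in \cref{thm8into} with the contravariant derived global sections functor $R\Gamma(-,\cO)$. For any higher stack $Y$ over $k$, the complex $R\Gamma(Y,\cO)$ carries a canonical $E_\infty$-algebra structure over $k$, and since $R\Gamma(-,\cO)$ inverts weak equivalences between higher stacks, it descends to a contravariant functor from $\mathrm{h}\Shv(k)$ to the homotopy category of $E_\infty$-algebras over $k$. The resulting composite
$$\mathcal{N}\mathrm{CY}_n \xrightarrow{\widetilde{\UU}^{\mathrm{op}}} \mathrm{h}\Shv(k)^{\mathrm{op}} \xrightarrow{R\Gamma(-,\cO)} \mathrm{h}(E_\infty\text{-algebras over } k)$$
is the candidate functor.

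It remains to verify that this composite sends $X$ to $R\Gamma(X,\cO)$. Since $\widetilde{\UU} \circ \mathcal{N}$ is naturally equivalent to $\UU$ by \cref{thm8into}, one has $\widetilde{\UU}(X) \simeq \UU(X)$, so the task reduces to producing a natural identification $R\Gamma(\UU(X),\cO) \simeq R\Gamma(X,\cO)$ of $E_\infty$-algebras. This is a direct consequence of the defining universal property of the affinization from \cite{Toe}: the canonical map $X \to \UU(X)$ is universal among maps to affine stacks, and because affine stacks correspond anti-equivalently (via $\Spec$ and $R\Gamma(-,\cO)$) to their algebras of functions, the induced map $R\Gamma(\UU(X),\cO) \to R\Gamma(X,\cO)$ is an equivalence of $E_\infty$-algebras.

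The main subtlety is formal rather than conceptual and concerns the compatibility of the two functorialities at the level of homotopy categories. Since \cref{thm8into} provides $\widetilde{\UU}$ only as a functor between $1$-categories---its source $\mathcal{N}\mathrm{CY}_n^{\mathrm{op}}$ records Fourier--Mukai kernels only up to isomorphism in $D_{\mathrm{perf}}(X\times_k Y)$---and $R\Gamma(-,\cO)$ already descends to such a functor, no additional coherence data is required. Combined with the naturality in $X$ of the unit of the affinization adjunction, this completes the construction of the desired functor from $\mathcal{N}\mathrm{CY}_n$ to the homotopy category of $E_\infty$-algebras over $k$ sending $X$ to $R\Gamma(X,\cO)$.
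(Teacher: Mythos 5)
Your proposal is correct and matches the paper's intent: the paper states this corollary as an immediate consequence of \cref{thm8into} without spelling out a proof, and your argument is precisely the unpacking of that implication — post-compose $\widetilde{\UU}$ with the contravariant $R\Gamma(-,\cO)$ and use $R\Gamma(\UU(X),\cO)\simeq R\Gamma(X,\cO)$. One small point: the identification $R\Gamma(\UU(X),\cO)\simeq R\Gamma(X,\cO)$ is most directly seen from the facts recorded in \cref{cope}, namely that $\UU(X)\simeq\Spec R\Gamma(X,\cO)$ and $R\Gamma(\Spec B,\cO)\simeq B$ for $B\in\mathrm{DAlg}^{\mathrm{ccn}}_k$; your appeal to the universal property plus the anti-equivalence of affine stacks with their function algebras is a slightly more roundabout way of saying the same thing, but it is correct.
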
{}

\subsection*{Further remarks}
In this subsection, we briefly mention some natural directions and questions arising from our paper.
The notion of unipotent homotopy type studied in this paper can be applied to any higher stack. Recently, the stacky approach to $p$-adic cohomology theories due to Bhatt--Lurie \cite{BL} and Drinfeld \cite{Dri20} constructs certain stacks such as $X^\Prism$ (resp. $X^{\mathrm{crys}}$, $X^{\mathrm{dR}}$) which can be used to understand prismatic (resp. crystalline, de Rham) cohomology theory along with the relevant notion of coefficients.
It seems interesting to further investigate the unipotent homotopy types of these stacks as well. As an instance of this perspective, in \cite[Prop.~6.6]{soon}, a version of \cref{thm5intro} has been used to reconstruct the $1$-dimensional formal group over $(\mathrm{Spf}\, \mathbf{Z}_p)^{\Prism}$ due to Drinfeld \cite{drinew} as the dual of $\pi_2^{\mathrm{U}}((B \mu_{p^\infty})^{\Prism})$;
here $B \mu_{p^\infty}$ is the classifying stack of the $p$-divisible group $\mu_{p^\infty}$ (\textit{cf.}~\cite[Def.~3.21]{Mon21}).
\vspace{2mm}

When $X$ is a $K3$ surface over an algebraically closed field of positive characteristic, the height of the Artin--Mazur formal group of $X$ is either $\le 10$ or $\infty$ (\cite{Artin-supersingular, Milne111}). 
At present, it is unknown if there is any such numerical bound on the height in the case of Calabi--Yau varieties. Note that from the perspective of unipotent homotopy theory, every $n$-dimensional Calabi--Yau variety is equivalent to a formal $n$-sphere (see \cref{hach}). 
The question of whether there exists a Calabi--Yau variety $X$ with Artin--Mazur formal group $E$ is therefore equivalent to asking if there is a Calabi--Yau variety $X$ of dimension $n$ such that $\mathbf{U}(X) \simeq \mathbf{U}(S^n_E)$.
We wonder if there is any reasonable way to ``approximate" the stack $S^n_E$ to at least produce a smooth proper scheme $X$ (possibly of large dimension) whose $n$-th Artin--Mazur formal group would be $E$.
\vspace{2mm}

The calculation of $\pi_{n+1}^{\mathrm{U}}(X)$ for a Calabi--Yau variety $X$ of dimension $n \ge 3$ plays a crucial role in producing the \emph{canonical} isomorphism in \cref{thm8into}. It would be interesting to obtain more computations of higher unipotent homotopy group schemes of Calabi--Yau varieties. As explained before, this is equivalent to computing higher unipotent homotopy group schemes of the formal spheres. Relatedly, it would be interesting to obtain a version of the EHP sequence in the context of unipotent homotopy theory, which is not pursued in this paper;
see \cite{EHP3} for the EHP sequence in the context of $\mathbf{A}^1$-homotopy theory \cite{MV1}.
\vspace{2mm}

\cref{thm5intro} establishes a correspondence between formal Lie groups of dimension $g$ and certain higher algebraic structures. Classically, formal Lie groups of dimension $1$ are also closely connected to complex-orientable cohomology theories \cite{CF66, MR253350}. It would be interesting to investigate how \cref{thm5intro} (in the case of $g=1$) interacts with this picture. 
\vspace{2mm}

Finally, we point out that the theory of \'etale homotopy types has been used in anabelian geometry for higher dimensional varieties; see \cite{MR3549624}. 
Moreover, certain unipotent fundamental groups appear in the work of Kim \cite{Kim1, Kim2}. We hope that the unipotent homotopy type of a scheme $X$ (as well as the unipotent homotopy types of the stacks $X^\Prism$, $X^{\mathrm{crys}}$, $X^{\mathrm{dR}}$, etc.) could have possible applications in this direction as well.

\subsection*{Outline of the paper}
We start by recalling To\"en's work on affine stacks in \cref{affine-stacks}. In \cref{quasilurie}, we discuss the theory of quasi-coherent sheaves on affine stacks and describe the derived $\infty$-category of quasi-coherent sheaves of weakly affine stacks (see \cref{makiman34}). In \cref{sec2.3}, we establish certain cohomology and base change type results in the context of affine stacks. 
We prove \cref{thm1intro} in \cref{recovernori} and discuss some properties of higher unipotent homotopy group schemes in \cref{posto1}. We study pro-algebraic completions of group valued sheaves in \cref{proalgcompletion}. 
These are used in \cref{makiman} to prove the Freudenthal suspension in unipotent homotopy theory (\cref{thm9intro}). \Cref{posto3} discusses some Frobenius semilinear algebra in the derived setup. \cref{posto4} is devoted to the theory of profinite group schemes, with a special focus on the theory of profinite unipotent group schemes and their representations. The techniques developed in \cref{posto3} and \cref{posto4} are then used in \cref{spri} to prove the profiniteness theorem (\cref{thm2intro}). Subsequently, in \cref{posto2}, we prove the unipotent-\'etale comparison theorem (\cref{thm3intro}). 

\vspace{2mm}

After establishing these foundational results, we move on to discussing more concrete applications. \cref{spri1} and \cref{spri2} describe the unipotent homotopy types of curves and abelian varieties, respectively. 
In \cref{cold6}, we discuss moduli of certain Dieudonn\'e modules and construct examples showing that the unipotent fundamental group scheme is not a derived invariant for abelian threefolds (\cref{not-derived-invariant}).
In \cref{cold1}, we study unipotent fundamental group schemes of certain families and applications to a question considered by Nori (\cref{norisquestion}) and a different construction of the filtered circle (\cref{cold2}). 
In \cref{cof0}, we prove certain foundational results on the representability of duals of unipotent group schemes by formal Lie groups, which are then used in \cref{cof1} to construct certain formal Lie groups and prove \cref{thm4intro} and \cref{thm5intro}. 
In \cref{cof3}, we recover the Artin--Mazur formal group (\cref{thm6intro}) via unipotent homotopy group schemes.
In \cref{spri4}, we discuss the unipotent homotopy type of ordinary Calabi--Yau varieties. Finally, we prove our main applications \cref{thm7intro} and \cref{thm8into} regarding derived equivalent Calabi--Yau varieties in \cref{worldcup1}.

\subsection*{Notations and conventions}
As in \cite{Toe} and \cite{Lur09}, we work with a certain Grothendieck universe (containing the set of natural numbers); to deal with the size-related aspects of certain constructions, \cite{Toe} and \cite{Lur09} choose an enlargement of the Grothendieck universe, which will be kept implicit in our paper similar to \cite{Lur09}. We will freely use the theory of $\infty$-categories as developed in \cite{Lur09, luriehigher, luriespectral}. We denote by $\mathcal{S}$ the $\infty$-category of spaces, which is also called the $\infty$-category of anima or $\infty$-groupoids. If $\mathcal{C}$ is a $\infty$-category and $X,Y$ are objects of $\mathcal{C}$, we will let $\mathrm{Map}(X,Y)$ denote the associated mapping space. 
For an $\infty$-category $\mathcal{C}$ that admits finite limits, $\Pro(\mathcal{C})$ denotes the $\infty$-category of pro-objects of $\mathcal{C}$ (\cite[\S~5.3]{Lur09}). For an $E_\infty$-ring $R$, we will let $\mathrm{Mod}_R$ denote the $\infty$-category of $R$-modules. 
An $E_\infty$-ring $R$ called discrete if $\pi_i(R) \simeq 0$ for all $i \ne 0$. In such a case, $\mathrm{Mod}_R$ is simply the derived $\infty$-category of $R$-modules $D(R)$ and we will sometimes prefer the latter notation in this context to avoid confusion. While discussing $t$-structures, we will follow the homological convention. Unless otherwise mentioned, limits and colimits are taken in the $\infty$-categorical sense and the tensor products are ``derived.'' A higher stack will often simply be called a stack. 
We will denote the derived $\infty$-category of quasi-coherent sheaves on a (higher) stack $X$ by $D_{\qc}(X)$.
Given a map $f \colon X \to Y$ of (higher) stacks, $f_*$ (resp. $f^*$) will always denote the ``derived" pushforward (resp. pullback) discussed in \cref{secc2}. A higher stack $X$ over a field $k$ is called connected if $\pi_0 (X) \simeq \left \{ * \right \}$; $X$ is called cohomologically connected if $H^0 (X, \cO) \simeq k$. For a scheme $X$ over $k$, we will let $D_{\mathrm{perf}}(X)$ denote the $k$-linear triangulated $1$-category of perfect complexes; note that if $X$ is smooth, then $D_{\mathrm{perf}}(X) \simeq D^{b}_{\coh}(X)$ (\cite[\href{https://stacks.math.columbia.edu/tag/0FDC}{Tag~0FDC}]{stacks}). For a discrete commutative ring $A$, we will let $\mathrm{Alg}_A$ denote the category of discrete (commutative) $A$-algebras and $\mathrm{DAlg}_A^{\mathrm{ccn}}$ denote the $\infty$-category constructed in \cref{derivedvscosimp}. For $B \in \mathrm{DAlg}_A^{\mathrm{ccn}}$, we will denote the associated (affine) higher stack by $\Spec B$ (see \cref{defofaffinestack}).
Note that if $B \in \mathrm{Alg}_A \subset \mathrm{DAlg}_A^{\mathrm{ccn}}$, then $\Spec B$ is equivalent to the classical affine scheme associated with $B$, so there is no clash of notation. We will let $\mathbf{G}_a$ (resp.\ $\mathbf{G}_m$) denote the additive (resp. multiplicative) group scheme and $\mathbf{\widehat{G}}_a$ (resp.\ $\mathbf{\widehat{G}}_m$) denote the additive (resp. multiplicative) formal group.

\subsection*{Acknowledgments}
We are very grateful to Ben Antieau, Bhargav Bhatt and Akhil Mathew for many helpful discussions and encouragement throughout the writing of this article.
We thank Denis-Charles Cisinski, Dustin Clausen, Vladimir Drinfeld, Lars Hesselholt, Luc Illusie, Srikanth B. Iyengar, Andy Jiang, Dimitry Kubrak, Shizhang Li, Jacob Lurie, Barry Mazur, Peter Scholze, Carlos Simpson, Bertrand To\"en and Marco Volpe for useful conversations. We also thank the anonymous referees for their comments on an earlier version of this manuscript.
\vspace{2mm}

The authors acknowledge funding through the Max Planck Institute for Mathematics in Bonn, Germany, as well as NSF Grants DMS \#1801689, FRG \#1952399 (S.M.) and DMS \#1926686 (E.R.).
In the earlier stages of this project, the first named author was a graduate student at the University of Michigan and the second named author was a member at the Institute for Advanced Study.
During the preparation of the paper, the first author also visited the University of Chicago and the University of Copenhagen. In the revision stages of the paper, the first author was supported by the University of British Columbia and the second author was supported by the Institut des Hautes \'Etudes Scientifiques. We thank all these institutes for their support and hospitality.

\newpage

\addtocontents{toc}{\protect\setcounter{tocdepth}{2}}

\section{Some results on affine stacks}\label{secc2}
In this section, we discuss some foundational material on affine stacks that will be useful to us later on. In \cref{affine-stacks}, we begin by recalling the definitions of higher stacks and affine stacks following \cite{Toe} in an $(\infty,1)$-categorical language. In \cref{quasilurie}, we discuss the theory of quasi-coherent sheaves on affine stacks that did not appear in \cite{Toe}. The main result in \cref{quasilurie} is \cref{makiman34}, which extends a similar result of Lurie for affine stacks in characteristic $0$ (\cite[Prop.~4.5.2~(7)]{lurie2011quasi}) to \textit{weakly affine} (see \cref{weaklyaffine}) stacks in all characteristics.

\subsection{Definition of affine stacks}\label{affine-stacks}
This subsection provides a quick reminder on To\"en's work on affine stacks \cite{Toe}.
\begin{notation}

For a fixed discrete commutative ring $A$, we will denote the category of discrete $A$-algebras by $\Alg_A$ and the category of affine schemes by $\mathrm{Aff}_A$. Let $D(A)$ denote the derived $\infty$-category of $A$-modules. Let $\cS$ denote the $\infty$-category of spaces. Let $\PShv(A) \colonequals \mathrm{Fun}(\mathrm{Aff}_A^{\mathrm{op}}, \cS)$ denote the $\infty$-category whose objects will be called presheaves (of spaces).
\end{notation}{}

\begin{definition}\label{toposA}
We let $\Shv(A)$ denote the full subcategory of $\PShv(A)$ spanned by objects that are sheaves with respect to the \textit{fpqc} topology on $\mathrm{Aff}_A^{\mathrm{}}$ (see \cite[Ch.~6]{Lur09}). The objects of $\Shv(A)$ will be called (higher) stacks over $A$. The natural inclusion $\Shv(A) \to \PShv(A)$ preserves all small limits. 
\end{definition}{}

\begin{remark}
Similar to \cite{Toe}, by our conventions, $\mathrm{Alg}_A$ is the category of $A$-algebras in a fixed universe; thus it can be regarded as a small category (by enlarging the universe). Therefore the functor $\mathrm{Shv}(A) \to \mathrm{PShv}(A)$ admits a left adjoint given by sheafification.    
\end{remark}{}

\begin{definition}
The category $\Shv(A)$ has a final object $\left \{* \right \}$ and we let $\Shv(A)_*$ denote the category of pointed objects, i.e., the coslice category $\left \{* \right\} /\Shv(A)$. Its objects will be called pointed (higher) stacks.
\end{definition}{}

\begin{definition}\label{pancake}
Let $X$ be a pointed (higher) stack. For $n\ge 0$, we define $\pi_n(X, *) \in \Shv(A)$ to be the sheafification of the presheaf on $\Aff_A$ that sends $\Spec S$ to $\pi_n (X(\Spec S),*)$. The pointed sheaf $\pi_n(X,*)$ is naturally a sheaf of groups for $n>0$, which is further commutative for $n>1$.
For a stack $X$, we define $\pi_0 (X) \in \Shv(A)$ to be the sheafification of the presheaf on $\Aff_A$ that sends $\Spec S \mapsto \pi_0(X(\Spec S))$. A stack $X$ is called \textit{connected} if $\pi_0(X) \simeq \left \{*\right \}$. A pointed  connected stack will refer to the data of a map $p \colon \left \{* \right \} \to X$ such that $p$ is an effective epimorphism in $\Shv(A)$.
\end{definition}{}

\begin{example}\label{ELstack}
For a commutative affine group scheme $G$ over $A$, one can define the Eilenberg--MacLane stacks $K(G,i)$, which have the property that $\pi_n(K(G, i), *) \simeq G$ for $n=i$ and is trivial otherwise.
This type of stacks will play a crucial role in our paper later on. Note that $K(G,1)$ is simply the classifying stack of $G$, which will also be denoted as $BG$.
\end{example}{}

\begin{definition}
We let $\Shv(A)^\wedge$ denote the full subcategory of hypercomplete objects of $\Shv(A)$ (see \cite[ Prop.~6.5.2.13]{Lur09}). This can also be defined as the $\infty$-category associated with the model category considered in \cite[\S~1.1]{Toe}.
\end{definition}{}

\begin{definition}\label{derivedvscosimp}
We let $\mathrm{DAlg}^{\mathrm{ccn}}_A$ denote the $\infty$-category arising from the simplicial model structure defined in \cite[Thm.~2.1.2]{Toe} on the category $\Alg^\Delta_A$ of cosimplicial algebras over $A$. By \cite[Cor.~4.2.4.8]{Lur09}, $\mathrm{DAlg}^{\mathrm{ccn}}_A$ has all small limits and colimits.
\end{definition}{}

\begin{remark}\label{aboutrings}Let us explain the motivation behind the notation $\mathrm{DAlg}^{\mathrm{ccn}}_A$ for the $\infty$-category defined above. The approach to affine stacks due to To\"en uses cosimplicial algebras. In \cite{soon}, Mathew and Mondal give a generalization to the theory of affine stacks, called ``affine derived stacks", that uses derived commutative rings \cite{Rak20}. They prove that the $\infty$-category of cosimplicial rings is \textit{equivalent} to the $\infty$-category of coconnective derived commutative rings; see \cite[Cor.~3.7]{soon} (\textit{cf.}\ \cite[Cor.~5.29]{partlie}). Note that the definition of derived commutative rings is purely $\infty$-categorical and does not require model structures. There is a natural functor from the $\infty$-category of derived commutative rings to the $\infty$-category of $E_\infty$-rings that preserves small limits and colimits (see \cite[Prop.~4.2.27]{Rak20}), but not an equivalence in general.

\end{remark}{}

\begin{definition}
The natural inclusion functor $\Alg_A \to \mathrm{DAlg}^{\mathrm{ccn}}_A$ gives rise to a functor $\mathrm{Aff}_A \to (\mathrm{DAlg}^{\mathrm{ccn}}_A)^{\mathrm{op}}$.
The \emph{derived global sections functor} $R\Gamma (\cdot, \cO) \colon \PShv(A) \to (\mathrm{DAlg}^{\mathrm{ccn}}_A)^{\mathrm{op}}$ is the left Kan extension of $\mathrm{Aff}_A \to (\mathrm{DAlg}^{\mathrm{ccn}}_A)^{\mathrm{op}}$ along $\mathrm{Aff}_A \to \PShv(A)$.
If $X \in \PShv(A)$, we will call $R\Gamma(X, \cO)$ the derived global sections of $X$. 
\end{definition}{}

\begin{definition}
There is a natural functor $\mathrm{DAlg}^{\mathrm{ccn}}_A \to D(A)$ that preserves all limits.
By the adjoint functor theorem, the above functor admits a left adjoint $L \colon D(A) \to \mathrm{DAlg}^{\mathrm{ccn}}_A$. For $i \in \mathbf{Z}_{\ge 0}$, we define $\Sym A[-i] \colonequals L (A[-i])$ (\textit{cf.}\ \cite[Lem.~2.2.5]{Toe}).
\end{definition}{}

\begin{definition}[Affine stacks]\label{defofaffinestack}Let $B \in \mathrm{DAlg}^{\mathrm{ccn}}_A$.
Let $h_{B} \colon \mathrm{DAlg}^{\mathrm{ccn}}_A \to \mathcal{S}$ denote the functor corepresented by $B$. Restricting $h_{B}$ along the inclusion $\Alg_A \to \mathrm{DAlg}^{\mathrm{ccn}}_A$, we obtain a functor $\restr{h_{B}}{\mathrm{Aff}_A} \colon \mathrm{Aff}_A^{\mathrm{op}} \to \mathcal{S}$. By faithfully flat descent, it follows that $\restr{h_{B}}{\mathrm{Aff}_A}$ is a (higher) stack, which will be denoted as $\Spec B$.
\vspace{2mm}

An object of $\Shv(A)$ will be called an \textit{affine stack} over $A$ if it is isomorphic to $\Spec B$ for some $B \in \mathrm{DAlg}^{\mathrm{ccn}}_A$. 
\end{definition}{}

\begin{example}
An affine scheme is an example of an affine stack. The stacks $K(\GG_a, i)$ are all affine. Indeed, $K(\GG_a, i) \simeq \Spec \Sym A[-i]$. However, for $i>0$, the stacks $K(\GG_m, i)$ are not affine.
\end{example}{}

\begin{example}
It is worth pointing out that zero truncated affine stacks are not necessarily affine schemes. In fact, an interesting class of such examples are given by quasi-affine schemes. This is essentially a consequence of \cite[Thm.~2.6.0.2]{luriespectral} (\textit{cf.}~\cite[Thm.~2.3]{BhaHar}). Indeed, if $X$ is a quasi-affine scheme over $A$, then \cite[Thm.~2.6.0.2]{luriespectral} implies that $X(B) \simeq \mathrm{Map}_{\mathrm{CAlg}(D(A))} (R\Gamma(X, \cO), B)$ for any $B \in \Alg_A$, where $\mathrm{CAlg}(D(A))$ denotes the $\infty$-category of $E_\infty$-algebras over $A$. Since the natural functor $\mathrm{DAlg}^{\mathrm{ccn}}_A \to \mathrm{CAlg}(D(A))$ preserves limits, it has a left adjoint. This implies that $X$ is an affine stack.
\end{example}

\begin{remark}\label{hypercompletea}
It follows from the definition that the inclusion of the category of affine stacks into the category of all stacks preserves limits. It also follows that affine stacks are actually hypercomplete, i.e., they are objects of $\Shv(A)^\wedge$. 
\end{remark}{}

\begin{remark}\label{cope}
Note that by adjoint functor theorem, the functor $\Spec \colon (\mathrm{DAlg}^{\mathrm{ccn}}_A)^{\mathrm{op}} \to \PShv(A)$ is a right adjoint and the left adjoint is given by the derived global section functor. By \cite[Cor.~2.2.3]{Toe}, we have $R\Gamma(\Spec B, \cO) \simeq B$. The category of affine stacks is therefore equivalent to $(\mathrm{DAlg}^{\mathrm{ccn}}_A)^{\mathrm{op}}$.
\end{remark}{}

To\"en proved the following result that gives a classification of pointed connected affine stacks over a field in terms of concrete structures such as group schemes.
\begin{theorem}[{\cite[Thm.~2.4.1, Thm.~2.4.5]{Toe}}]\label{thmoftoen}
Let $k$ be a field and $\ast \to X$ a pointed object of $\Shv(k)^\wedge$ such that $\pi_0(X) \simeq \{*\}$. Then $X$ is an affine stack if and only if the sheaves $\pi_i(X,\ast)$ are representable by unipotent affine group schemes over $k$ for all $i > 0$.
\end{theorem}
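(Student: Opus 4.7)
The plan is to prove the theorem via Postnikov towers, with both directions resting on the following intermediate assertion: for every unipotent affine group scheme $G$ over $k$ and every $n \geq 1$, the Eilenberg--MacLane stack $K(G,n)$ (see \cref{ELstack}) is an affine stack. I will first establish this intermediate assertion, which I expect to be the main obstacle, and then deduce both directions of the theorem from it.

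For the intermediate assertion, I would use that affine stacks form a full subcategory of $\Shv(k)^\wedge$ closed under all small limits, since by \cref{cope} and \cref{hypercompletea} the functor $\Spec$ is a right adjoint landing in $\Shv(k)^\wedge$. Every unipotent affine group scheme $G$ over $k$ is a cofiltered limit of its quotients of finite type, so closure under limits reduces us to $G$ finite-type. Such a $G$ admits a composition series whose successive quotients are among $\mathbf{G}_a$ and, in characteristic $p$, $\alpha_p$ or $\mathbf{Z}/p\mathbf{Z}$. A short exact sequence $0 \to G' \to G \to G'' \to 0$ yields a fiber sequence $K(G',n) \to K(G,n) \to K(G'',n)$, so by induction on the length of the composition series it suffices to treat the atomic cases. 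The case $G = \mathbf{G}_a$ follows from the adjunction defining $\Sym$: indeed $K(\mathbf{G}_a,n) \simeq \Spec \Sym k[-n]$. The remaining cases are then handled by the Artin--Schreier fiber sequence $K(\mathbf{Z}/p\mathbf{Z}, n) \to K(\mathbf{G}_a, n) \to K(\mathbf{G}_a, n)$ and the Frobenius fiber sequence $K(\alpha_p, n) \to K(\mathbf{G}_a, n) \to K(\mathbf{G}_a, n)$.

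For the ``if'' direction, let $X$ satisfy the hypotheses with each $\pi_i(X,\ast)$ unipotent. The truncation $\tau_{\leq n} X$ sits in a principal fibration over $\tau_{\leq n-1} X$ classified by a $k$-invariant map, giving a fiber sequence
\[
\tau_{\leq n} X \to \tau_{\leq n-1} X \to K(\pi_n(X,\ast),\, n+1).
\]
Starting from $\tau_{\leq 0} X \simeq \ast$ and inducting, each $\tau_{\leq n} X$ is a finite limit of affine stacks (using the intermediate assertion applied to $K(\pi_n(X,\ast), n+1)$), hence affine. Finally, hypercompleteness of $X$ yields $X \simeq \lim_n \tau_{\leq n} X$, and closure under limits gives that $X$ is affine.

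For the ``only if'' direction, suppose $X$ is affine. Each iterated based loop space $\Omega^i X$ is an iterated pullback of affine stacks over $\ast$, hence affine, and $\pi_i(X,\ast) \simeq \pi_0(\Omega^i X)$ is therefore an affine group scheme (identified with $\Spec H^0$ of the corresponding coconnective derived algebra). To upgrade this to unipotence, I would use the converse of the intermediate assertion: if $H$ is an affine group scheme and $BH$ is affine, then $H$ is unipotent. This converse follows from the identification of quasi-coherent sheaves on $BH$ with $\Rep(H)$; affineness of $BH$ forces the global sections functor on $\Rep(H)$ to behave in a way that supplies a nonzero fixed vector in every nonzero representation, matching the definition of unipotence adopted in the paper. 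Applying this converse inductively to $B\pi_i(X,\ast)$, which appears as a stage of the Postnikov tower of an affine stack built from $X$, yields unipotence of all $\pi_i(X,\ast)$. The most delicate point throughout is the atomic affineness computation in characteristic $p$ and the careful propagation of affineness through the infinite cofiltered limits required to extend from finite-type to arbitrary unipotent groups.
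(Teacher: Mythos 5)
The paper does not prove this theorem; it is quoted directly from To\"en \cite{Toe}, so there is no internal proof to compare against. Your strategy --- establish that $K(G,n)$ is an affine stack for unipotent $G$, then run a Postnikov induction in both directions --- is broadly To\"en's, but as written the argument has genuine gaps in both directions.

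In the ``if'' direction, you present $\tau_{\le n}X$ as the fiber of a $k$-invariant $\tau_{\le n-1}X \to K(\pi_n(X,\ast),n+1)$, hence as a pullback of affine stacks. This only works when $\pi_1(X)$ acts trivially on $\pi_n(X)$; otherwise the $k$-invariant is a section of a twisted Eilenberg--MacLane object fibered over $\tau_{\le n-1}X$, not a map to $K(\pi_n(X,\ast),n+1)$, and $\tau_{\le n}X$ is not the pullback you describe. The paper itself is careful about exactly this: in the proof of \cref{hach} the pullback square from the Postnikov tower is invoked only after first arranging $\pi_1\simeq\{*\}$. In the ``only if'' direction, the sentence ``affineness of $BH$ forces the global sections functor on $\Rep(H)$ to behave in a way that supplies a nonzero fixed vector'' is not an argument. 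What affineness of $BH$ gives is that $H \simeq \Omega BH$ is affine with $\cO(H) \simeq H^0(k\otimes_B k)$ for $B = R\Gamma(BH,\cO)$, a coconnective algebra with $\pi_0 B = k$; one then has to use the bar filtration on $k\otimes_B k$ to produce an exhaustive conilpotent filtration of the Hopf algebra $\cO(H)$ starting at $k$, which is what unipotence means. That derivation is the real content of To\"en's Theorem 2.4.1 and you have not supplied it. Two further technical points should be flagged. First, the reduction of the intermediate assertion from arbitrary unipotent $G$ to finite type implicitly uses $K(G,n)\simeq\varprojlim_i K(G_i,n)$, which requires $R^1\varprojlim G_i$ to vanish in the fpqc topos; this is a repleteness statement and needs care when the cofiltered system is not countably cofinal. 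Second, ``hypercompleteness of $X$ yields $X\simeq\varprojlim_n\tau_{\le n}X$'' is not correct as stated --- hypercompleteness is strictly weaker than Postnikov convergence --- and the convergence here uses the Milnor sequence in the replete fpqc topos (compare \cref{postnikov} and \cite{MR100}), not hypercompleteness alone.
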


\begin{remark}\label{connn}
By \cite[Thm.~2.4.5]{Toe}, it moreover follows that if $B \in \mathrm{DAlg}^{\mathrm{ccn}}_k$ is an augmented object such that $H^0(B) \simeq k$, then $\Spec B$ is a pointed connected stack. Note that the unipotent group schemes in \cref{thmoftoen} above are not necessarily of finite type.
\end{remark}{}

For any pointed higher stack $X$, one can associate the pro-system of $n$-truncations $(\tau_{\le n} X)$ which is called the Postnikov tower of $X$. If $X$ is a pointed and connected stack, then the fibre of $\tau_{\le n} X \to \tau_{\le n-1}X$ is given by the stack $K(\pi_n(X), n)$ from \cref{ELstack}. If $X$ is an affine stack, it follows that $X \to \varprojlim \tau_{\le n}X$ is an isomorphism (see \cite[Cor.~1.8]{MR100}). The following proposition records certain additional properties of the Postnikov tower in the context of pointed connected affine stacks over a field.

\begin{proposition}[Postnikov tower for affine stacks]\label{postnikov}
    Let $X$ be a pointed connected affine stack over $k$. In this situation
    \begin{enumerate}
        \item The stacks $\tau_{\le n}X$ are naturally pointed connected affine stacks.
        \item\label{postnikov-cohom} The natural maps $H^i (\tau_{\le n}X, \cO) \to H^i (X, \cO)$ induce isomorphisms for $i \le n$ and an injection for $i= n+1$.
        \item The natural map $X \to \varprojlim \tau_{\le n}X$ is an isomorphism.
    \end{enumerate}{}
\end{proposition}{}
\begin{proof}
    This is obtained by combining the statements appearing in the proof of Theorem 2.4.5 and Corollary 1.2.3 in \cite{Toe}.
\end{proof}{}

 We will end this section by noting the following property of affine stacks.

\begin{proposition}\label{sad}
Let $X \in \Shv(A)^\wedge$ be an affine stack. Then there is a simplicial affine scheme $X_{\bullet} \colonequals (\Spec A_n)$ such that $X \simeq \varinjlim_{[n]\in \Delta} \Spec A_n$ in $\Shv(A)^\wedge$. 
\end{proposition}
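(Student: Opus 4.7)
The plan is to read off a cosimplicial algebra model presenting the affine stack $X$ and to dualize it to a simplicial affine scheme. By \cref{cope}, we write $X \simeq \Spec B$ for some $B \in \mathrm{DAlg}^{\mathrm{ccn}}_A$. By \cref{derivedvscosimp}, the $\infty$-category $\mathrm{DAlg}^{\mathrm{ccn}}_A$ arises from the model category of cosimplicial algebras over $A$, so we may represent $B$ by a Reedy cofibrant cosimplicial algebra $A_\bullet = (A_n)_{n \in \Delta}$. Setting $X_\bullet \colonequals (\Spec A_n)$ yields a simplicial affine scheme, with face and degeneracy maps dual to the coface and codegeneracy maps of $A_\bullet$.

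I would then construct and analyze the natural comparison map
\[
\phi \colon \varinjlim_{[n] \in \Delta} \Spec A_n \longrightarrow \Spec B \simeq X
\]
in $\Shv(A)^\wedge$, induced by the universal property of the colimit together with the structural maps $\Spec A_n \to \Spec B$ corresponding to the components $B \to A_n$ of the totalization diagram for $B$. First, I would check that $R\Gamma(\phi, \cO)$ is an equivalence. Since $R\Gamma(-,\cO)$ is a left adjoint (\cref{cope}) and preserves colimits, and since $R\Gamma(\Spec A_n, \cO) \simeq A_n$, we obtain
\[
R\Gamma\bigl(\varinjlim_{[n] \in \Delta} \Spec A_n, \cO\bigr) \simeq \varprojlim_{[n] \in \Delta} A_n \simeq B,
\]
where the final equivalence identifies the $\infty$-categorical totalization of the Reedy cofibrant cosimplicial diagram $A_\bullet$ in $\mathrm{DAlg}^{\mathrm{ccn}}_A$ with the object $B$ it represents. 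Thus $\phi$ becomes an equivalence after the affinization reflector $\Spec \circ R\Gamma$.

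The main obstacle is to upgrade this to an equivalence in $\Shv(A)^\wedge$ itself, i.e., to show that $\varinjlim \Spec A_n$ is already affine, so that $\phi$ coincides with its own affinization unit. The subtlety here is that the inclusion of affine stacks into $\Shv(A)^\wedge$ is a right adjoint (to the affinization) and need not preserve colimits. To handle this, I would argue via the Postnikov description of \cref{postnikov}: both $\varinjlim \Spec A_n$ and $\Spec B$ are hypercomplete, so it suffices to verify that $\phi$ induces equivalences $\tau_{\le n}\varinjlim \Spec A_n \xrightarrow{\sim} \tau_{\le n} \Spec B$ for every $n \ge 0$. One proceeds by induction on $n$ using the Eilenberg--MacLane fiber sequences of the Postnikov tower on both sides; the homotopy group schemes of $\varinjlim \Spec A_n$ may be computed via the Bousfield--Kan spectral sequence associated to the cosimplicial space of mapping spaces out of $A_\bullet$, whose $E_2$-page recovers precisely the unipotent homotopy group schemes $\pi_i(\Spec B)$ read off from the underlying cochain complex of $A_\bullet$. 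Closure of affine stacks under extensions by $K(G,n)$ for unipotent $G$ (guaranteed by \cref{thmoftoen}) then propagates affineness across the Postnikov filtration, yielding $\phi$ an equivalence in $\Shv(A)^\wedge$.
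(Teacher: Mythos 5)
The first two paragraphs of your proposal are on the right track and agree with the paper's intent: pick a cofibrant cosimplicial algebra $A_\bullet \in \Alg^\Delta_A$ presenting $B$, set $X_\bullet = (\Spec A_n)$, and observe that $R\Gamma$ takes the colimit to $\varprojlim_{[n]} A_n \simeq B$ because $R\Gamma$ is a left adjoint. You also correctly isolate the real difficulty: $\Spec$ is a right adjoint, so there is no a priori reason the colimit $\varinjlim \Spec A_n$ should again be affine, and the computation of $R\Gamma$ alone only shows $\phi$ becomes an equivalence after applying $\Spec \circ R\Gamma$.

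The gap is in your third paragraph, where you try to close this by a Postnikov/Bousfield--Kan argument. First, the proposition is stated over an arbitrary base ring $A$, but the tools you invoke --- \cref{thmoftoen} (the characterization of affine stacks via unipotent homotopy group schemes) and \cref{postnikov} --- are specific to affine stacks over a \emph{field}. There is no analogue available over a general $A$, so your induction on the Postnikov tower does not get off the ground in the stated generality. Second, even over a field, the spectral sequence claim does not hold up: each $\Spec A_n$ is a \emph{discrete} affine scheme, so the $E_1$-page of any realization spectral sequence for $\varinjlim \Spec A_n$ would have nonzero entries only in the row $q = 0$, where they are set-valued (not group-valued) for $p > 1$; it is not a convergent spectral sequence of the advertised form, and in particular the assertion that its $E_2$-page ``recovers precisely $\pi_i(\Spec B)$'' is essentially the statement you are trying to prove, not an input to the argument. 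What is actually needed is a Quillen-adjunction argument: one compares the levelwise presheaf realization $\varinjlim_\Delta^{\mathrm{pre}} \Spec A_n$, which at an affine test object $\Spec T$ computes the strict mapping simplicial set $\Hom_{\Alg^\Delta_A}(A_\bullet, T)$, with the derived mapping space defining $\Spec B$; cofibrancy of $A_\bullet$ and the model structure of \cite[Thm.~2.1.2]{Toe} are precisely what make these agree, and one then checks compatibility with sheafification and hypercompletion. This is the content of \cite[Thm.~2.2.9]{Toe} together with its preparatory lemmas, which the paper's proof simply cites. Your argument would need to be replaced by, or reduced to, that model-categorical comparison.
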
{}

\begin{proof}This follows from choosing a cofibrant object of $\Alg^\Delta_A$ that models the affine stack $X$.
We refer to the proof of \cite[Thm.~2.2.9]{Toe} for more details.
\end{proof}{}

\subsection{Quasi-coherent sheaves on affine stacks}\label{quasilurie}

The main goal of this subsection is to understand the derived $\infty$-category of quasi-coherent sheaves on an affine stack.
In fact, as we prove in \cref{makiman34}, one can completely describe the derived $\infty$-category of quasi-coherent sheaves on a fairly general class of pointed connected higher stacks, which we call \emph{weakly affine}. 
\begin{definition}[Weakly affine stacks]\label{weaklyaffine}
Let $k$ be a field.
A pointed connected higher stack $X$ over $k$ is \emph{weakly affine} if 
\begin{enumerate}
    \item\label{weaklyaffine-simplicial} There exists a simplicial affine scheme $X_{\bullet} \colonequals (\Spec A_n)$ such that $X \simeq \varinjlim_{[n] \in \Delta}\Spec A_n$ in $\Shv(k)^\wedge$.
    \item\label{weaklyaffine-unipotent} $\pi_1(X, *)$ is representable by a unipotent affine group scheme over $k$.
\end{enumerate}{}
\end{definition}{}
\begin{example}
    By \cref{thmoftoen} and \cref{sad}, any pointed connected affine stack over a field $k$ is weakly affine. However, the class of weakly affine stacks is more general.
    For example, note that $K(G,n)$ satisfies \cref{weaklyaffine-simplicial} for any affine commutative group scheme $G$. 
    Indeed, under the Dold--Kan correspondence, the $D(k)$-valued presheaf $G[n]$ on $\Aff_k$ corresponds to a simplicial presheaf whose terms are finite products of $G$ and hence are representable by affine schemes.
    One can thus write $K(G,n)$ as a colimit of a simplicial affine scheme in $\Shv(k)^{\wedge}$.
    Therefore, $K(G, n)$ is weakly affine for $n \ge 2$.
    On the other hand, if $G$ is not unipotent, $K(G,1)$ is not weakly affine because it does not satisfy \cref{weaklyaffine-unipotent}.
\end{example}{}
A description of quasi-coherent sheaves and other derived categories associated with affine stacks does not appear explicitly in \cite{Toe}. Therefore, we will begin by documenting their constructions and some basic properties here. 
\vspace{2mm}

Given a discrete commutative ring $A$, we denote by $D(A)$ the derived $\infty$-category of $A$-modules.
This is a presentable stable $\infty$-category and is equipped with a $t$-structure whose connective part $D(A)_{\ge 0}$ (resp.\ coconnective part $D(A)_{\le 0}$) is the full subcategory of $D(A)$ spanned by those $M \in D(A)$ for which $\Hh^i(M) = 0$ for all $i > 0$ (resp. $i < 0$); see \cite[Prop.~1.3.5.9, Prop.~1.3.5.21]{luriehigher}.
The tensor product of chain complexes equips $D(A)$ with the structure of a symmetric monoidal $\infty$-category in the sense of \cite[Def.~2.0.0.7]{luriehigher}.
\vspace{2mm}

We fix a base ring $A$ as before and consider $\Shv(A)$ as in \cref{toposA}.
For any $X \in \Shv(A)$, we set $\Shv(X) \colonequals \Shv(A)_{/X}$.
\begin{definition}[\textit{cf.}~{\cite[Not.~6.3.5.16]{Lur09}}]
A \emph{$D(\ZZ)$-valued sheaf on $X$} is a functor $\Shv(X)^{\op} \to D(\ZZ)$ between $\infty$-categories that preserves all limits. The $\infty$-category  of $D(\ZZ)$-valued sheaves on $X$ is denoted by $\Shv_{D(\ZZ)}(X)$.
\end{definition}
One can prove exactly as in \cite[Prop.~1.3.4.6]{luriespectral} that $\Shv_{D(\ZZ)}(X)$ is a stable $\infty$-category equipped with a symmetric monoidal structure.
\begin{remark}\label{connective-truncation}
 Consider the limit preserving functor $D(\ZZ) \to \cS$ obtained as a composition of the truncation $D(\ZZ) \to D(\ZZ)_{\ge 0}$ with the forgetful functor $D(\ZZ)_{\ge 0} \to \mathcal{S}$.
Postcomposition with $D(\ZZ) \to \cS$ induces a functor $\tau_{\ge 0} \colon \Shv_{D(\ZZ)}(X) \to \Shv(X)$. 
Similarly, postcomposition with the shift $[n] \colon D(\ZZ) \to D(\ZZ)$ on the stable $\infty$-category $D(\ZZ)$ induces the shift functor $[n] \colon \Shv_{D(\ZZ)}(X) \to \Shv_{D(\ZZ)}(X)$.
\end{remark}
\begin{definition}
Let $\mathscr{F}$ be a $D(\ZZ)$-valued sheaf on $X$.
For any $n \in \ZZ$, we define the \emph{$n$-th cohomology sheaf of $\mathscr{F}$} as 
\[ \cH^n(\mathscr{F}) \colonequals (\tau_{\le 0} \tau_{\ge 0}(\mathscr{F}[n])) \in \Shv(X) \]
where $\mathscr{F}[n]$ and $\tau_{\ge 0}$ are defined as in \cref{connective-truncation} and $\tau_{\le 0} \colon \Shv(X) \to \tau_{\le 0} \Shv(X)$ is the $0$-truncation functor \cite[Prop.~5.5.6.18]{Lur09}.
In particular, $\cH^n(\mathscr{F})$ is a discrete object of $\Shv(X)$ (in the sense of \cite[Def.~5.5.6.1]{Lur09}).
\end{definition}
\begin{proposition}\label{t-structure-sheaves}
We consider the following two full subcategories of $\Shv_{D(\ZZ)}(X)$:
\begin{enumerate}
    \item The full subcategory $\Shv_{D(\ZZ)}(X)_{\ge 0} \subset \Shv_{D(\ZZ)}(X)$ spanned by those $F \in \Shv_{D(\ZZ)}(X)_{}$ such that the sheaves $\cH^n(F) = 0$ for $n > 0$;
    \item The full subcategory $\Shv_{D(\ZZ)}(X)_{\le 0} \subset \Shv_{D(\ZZ)}(X)$ spanned by those $F \in \Shv_{D(\ZZ)}(X)_{}$ such that $\tau_{\ge 0}F \in \Shv(X)$ is a discrete object.
\end{enumerate}{}
 Then $(\Shv_{D(\ZZ)}(X)_{\ge 0}, \Shv_{D(\ZZ)}(X)_{\le 0})$
defines a right complete $t$-structure on $\Shv_{D(\ZZ)}(X)$.
\end{proposition}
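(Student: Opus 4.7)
My plan is to verify the $t$-structure axioms via Lurie's general machinery for accessible $t$-structures on presentable stable $\infty$-categories, and then to establish right completeness by identifying $\Shv_{D(\ZZ)}(X)$ with modules over the Eilenberg--MacLane spectrum in the $\infty$-category of sheaves of spectra. The first observation needed is that $\Shv_{D(\ZZ)}(X)$ is a presentable stable $\infty$-category: this is inherited from $D(\ZZ)$ because limit-preserving functors into a presentable stable $\infty$-category form a presentable stable $\infty$-category, and alternatively one can use the equivalence $\Shv_{D(\ZZ)}(X) \simeq \Mod_{H\ZZ}(\Shv_{\Sp}(X))$.

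I would next show that $\Shv_{D(\ZZ)}(X)_{\ge 0}$ is presentable and closed under small colimits and extensions. Closure under colimits follows from standard properties of the cohomology sheaf functor $\cH^n$, which commutes with filtered colimits and sends coproducts to coproducts when restricted to the connective part. This in turn rests on the analogous properties of the truncation functors $\tau_{\ge 0}$ and $\tau_{\le 0}$ in the $\infty$-topos $\Shv(X)$, combined with the compatibility of the $t$-structure on $D(\ZZ)$ with colimits. Closure under extensions follows from the long exact sequence of cohomology sheaves induced by a cofiber sequence. Applying [Lurie HA, Prop.~1.4.4.11] then yields an accessible $t$-structure whose connective part is exactly $\Shv_{D(\ZZ)}(X)_{\ge 0}$, with coconnective part $(\Shv_{D(\ZZ)}(X)_{\ge 1})^{\perp}$. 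To match the latter with the definition in the proposition, one uses an orthogonality argument: $F$ lies in the right orthogonal if and only if $\tau_{\ge 0}F$ is discrete in $\Shv(X)$. One direction uses that maps from a $1$-connective object into a discrete object of $\Shv(X)$ vanish via the truncation adjunction, while the converse uses that $F$ being in the orthogonal forces $\tau_{\ge 1}F = 0$, so $\tau_{\ge 0}F \simeq \cH^0(F)$ is discrete.

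For right completeness, [Lurie HA, Prop.~1.2.1.19] reduces the question to showing $\bigcap_n \Shv_{D(\ZZ)}(X)_{\le -n} = 0$, i.e., that an object with all cohomology sheaves vanishing is zero. The cleanest route is to invoke the identification with $\Mod_{H\ZZ}(\Shv_{\Sp}(X))$: the forgetful functor to $\Shv_{\Sp}(X)$ is $t$-exact and conservative, and the $t$-structure on $\Shv_{\Sp}(X)$ is known to be right complete by [Lurie SAG, Prop.~1.3.2.7]; transporting this back yields the desired vanishing. The main obstacle is really the verification of the colimit-preservation and extension-stability properties of cohomology sheaves in the second step, together with the precise matching of the $t$-structure on $\Mod_{H\ZZ}(\Shv_{\Sp}(X))$ with the one defined in the proposition; both amount to technical checks once the right framework is in place, but are essential for reducing the question to established results for sheaves of spectra.
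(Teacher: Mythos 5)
The paper's proof is just the one-line citation ``follows in a way entirely similar to [SAG, Prop.~1.3.2.7],'' so any correct proof necessarily fills in the adaptation; your outline does that and is in the same spirit, but with one twist worth discussing. You propose to establish the $t$-structure axioms directly via [HA, Prop.~1.4.4.11] and then invoke the identification $\Shv_{D(\ZZ)}(X) \simeq \Mod_{H\ZZ}(\Shv_{\Sp}(X))$ only at the end, to import right completeness from [SAG, Prop.~1.3.2.7]. It would be cleaner and more economical to use that identification from the very start: once one knows $\Shv_{\Sp}(X)$ has a right complete $t$-structure (SAG 1.3.2.7) and that $\underline{\ZZ} \simeq H\ZZ$ is a connective $E_\infty$-algebra therein, the general theorem on $t$-structures for module categories over a connective algebra produces the $t$-structure on $\Shv_{D(\ZZ)}(X)$ together with its right completeness in one step, and one only has to match it with the description in the statement (via the $t$-exact conservative forgetful functor, exactly as you do). That route avoids re-proving the colimit-closure and presentability facts that SAG 1.3.2.7 already establishes.

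On the direct route you chose, two points deserve more care. First, [HA, Prop.~1.4.4.11] requires $\Shv_{D(\ZZ)}(X)_{\ge 0}$ to be \emph{presentable}, not merely closed under colimits and extensions, and your outline asserts this without argument; in Lurie's proof for sheaves of spectra this is supplied by identifying the connective part with group-like $E_\infty$-objects in $\Shv(X)$, and a $D(\ZZ)$-linear analogue of that identification (or the module-category detour above) is what is really doing the work here. Second, the claim that $\cH^n$ commutes with filtered colimits and coproducts is not automatic: colimits in $\Shv_{D(\ZZ)}(X)$ are computed by a reflection from the pointwise colimit, and one needs to know that this reflection is $t$-exact (the sheaf-theoretic analogue of ``sheafification is exact'') before the assertion goes through. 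Neither point is wrong, but both are precisely where the content of the proposition lies, so the proof should spell them out rather than defer them as ``technical checks.''
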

\begin{proof}
Follows in a way entirely similar to \cite[Prop.~1.3.2.7]{luriespectral}. 
\end{proof}

\begin{definition}\label{discrete-structure-sheaf}
In \cref{defofaffinestack}, we constructed a colimit preserving functor $\Spec \colon \mathrm{DAlg}^{\mathrm{ccn}}_A \to \Shv(A)^{\op}$.
By the adjoint functor theorem, $\Spec$ is left adjoint to a limit preserving functor $\Shv(A)^{\mathrm{op}} \to \mathrm{DAlg}^{\mathrm{ccn}}_A$.
Concretely, this functor is determined by sending an affine scheme $\Spec B$ to $B$.
For $X \in \Shv(A)$, it restricts to a limit preserving functor $\cO_X \colon \Shv(X) \to \mathrm{DAlg}^{\mathrm{ccn}}_A$, which can be viewed as an object of $\Shv_{D(\ZZ)}(X)$;
if no confusion is likely to arise, we simply denote it by $\cO$.
It follows from the construction that $\cO_X$ lies in the heart of the $t$-structure from \cref{t-structure-sheaves}.
Further, $\cO_X$ has the structure of a commutative algebra object (\cite[\S~2.1.3]{luriehigher}) of $\Shv_{D(\ZZ)}(X)$.
\end{definition}

\begin{definition}
The $\infty$-category $\Mod(\cO_X)$ is the category $\Mod_{\cO_X}(\Shv_{D(\ZZ)}(X))$ of modules over the commutative algebra object $\cO_X$ of $\Shv_{D(\ZZ)}(X)$ (\cite[\S~4.5.1]{luriehigher}).
\end{definition}

\begin{lemma}\label{t-structure-Mod}
We consider the following two full subcategories of $\Mod(\cO_X)$:
\begin{enumerate}
    \item The full subcategory $\Mod(\cO_X)_{\ge 0} \subset \Mod(\cO_X)$ spanned by those $F \in \Mod(\cO_X)$ such that $\cH^n(F) = 0$ for $n > 0$;
    \item The full subcategory $\Mod(\cO_X)_{\le 0} \subset \Mod(\cO_X)$ spanned by those $F \in \Mod(\cO_X)$ such that the underlying $D(\ZZ)$-valued sheaf is an object of $\Shv_{D(\ZZ)}(X)_{\le 0}$.
\end{enumerate}
Then $(\Mod(\cO_X)_{\ge 0}, \Mod(\cO_X)_{\le 0})$ defines a right complete $t$-structure on $\Mod(\cO_X)$.
\end{lemma}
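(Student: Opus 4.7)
The plan is to transport the $t$-structure on $\Shv_{D(\ZZ)}(X)$ from \cref{t-structure-sheaves} across the forgetful functor $U \colon \Mod(\cO_X) \to \Shv_{D(\ZZ)}(X)$. By the general theory of modules over commutative algebra objects developed in \cite[\S4.2]{luriehigher}, the functor $U$ is conservative, preserves and detects all small limits and colimits, and admits a left adjoint given by $\cO_X \otimes (-)$. In particular, $\Mod(\cO_X)$ is presentable and stable, which will let us check the $t$-structure axioms by reduction to the underlying category of $D(\ZZ)$-valued sheaves.

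To produce the $t$-structure, I would invoke the general result that for any connective commutative algebra object $A$ in a presentable stable symmetric monoidal $\infty$-category whose $t$-structure is compatible with the monoidal structure, $\Mod_A$ inherits a $t$-structure in which the (co)connective parts are detected by the underlying object, and which is right complete whenever the original $t$-structure is (\textit{cf.}~\cite[Prop.~7.1.1.13]{luriehigher}). Here $\cO_X$ lies in the heart of $\Shv_{D(\ZZ)}(X)$ by construction in \cref{discrete-structure-sheaf}, hence is connective as required. The preliminary step is to verify that the symmetric monoidal structure on $\Shv_{D(\ZZ)}(X)$ is compatible with its $t$-structure, namely that $\Shv_{D(\ZZ)}(X)_{\ge 0}$ contains the unit $\cO_X$ and is closed under tensor products. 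This follows because the tensor product on $\Shv_{D(\ZZ)}(X)$ is computed as the sheafification of the pointwise tensor product of $D(\ZZ)$-valued presheaves, both of which preserve the connective part: on $D(\ZZ)$ this is standard, and sheafification preserves connectivity since it is a left exact left adjoint to the inclusion of sheaves into presheaves.

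Right completeness can then either be read off from the cited proposition or checked directly: if $F \in \bigcap_n \Mod(\cO_X)_{\le -n}$, then $U(F) \in \bigcap_n \Shv_{D(\ZZ)}(X)_{\le -n}$, which vanishes by right completeness of the downstairs $t$-structure, and hence $F \simeq 0$ by conservativity of $U$. If a hands-on proof is preferred, one can verify the axioms directly: both subcategories are closed under the relevant operations because $U$ preserves cofiber sequences; the vanishing $\Map_{\Mod(\cO_X)}(F,G) \simeq 0$ for $F \in \Mod(\cO_X)_{\ge 0}$ and $G \in \Mod(\cO_X)_{\le -1}$ follows by resolving $F$ via the bar construction in free $\cO_X$-modules and reducing to the corresponding vanishing in $\Shv_{D(\ZZ)}(X)$; and the truncation $\tau_{\ge 0}^{\Mod}(F)$ is constructed by equipping the underlying truncation $\tau_{\ge 0} U(F) \in \Shv_{D(\ZZ)}(X)$ with its canonical $\cO_X$-module structure, which exists precisely because the connective part is closed under tensoring with $\cO_X$. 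The hard part is not conceptual but bookkeeping: one must unwind the definitions of $\cH^n$ and $\tau_{\ge 0}$ in both categories to ensure that the connective and coconnective subcategories described in \cref{t-structure-Mod} agree with those produced by the module-category machinery.
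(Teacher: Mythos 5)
Your approach is essentially the same as the paper's, which simply cites \cite[Prop.~2.1.1.1]{luriespectral}: both transport the $t$-structure along the forgetful functor $\Mod(\cO_X) \to \Shv_{D(\ZZ)}(X)$ using Lurie's general machinery for $t$-structures on module categories over a connective algebra in a stable symmetric monoidal $\infty$-category with compatible $t$-structure. One small slip: the unit of $\Shv_{D(\ZZ)}(X)$ is the constant sheaf $\underline{\ZZ}$, not $\cO_X$ (which is the algebra object), but both are connective, so the verification of compatibility goes through unchanged.
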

\begin{proof}
Follows in a way entirely similar to \cite[Prop.~2.1.1.1]{luriespectral}.
\end{proof}
Next, we define the derived category of quasi-coherent sheaves;
\textit{cf.} also \cite[\S~4.5]{lurie2011quasi} for a similar account in a slightly different setting.
\begin{definition}\label{defqcoh}
Let $X \in \Shv(A)$.
One defines the \emph{derived $\infty$-category of quasi-coherent sheaves} on $X$ to be $D_{\qc}(X) \colonequals \varprojlim_{\Spec T \to X} D(T)$, where the limit ranges over all maps from affine schemes $\Spec T$ to $X$ (over $A$) and $D(T)$ denotes the derived $\infty$-category of $T$-modules.
\end{definition}{}

\begin{remark}\label{sad1}
Note that if $X^\wedge \in \Shv(A)^\wedge$ denotes the hypercompletion of $X \in \Shv(A)$, we have a natural equivalence of categories $D_{\qc}(X^\wedge) \simeq D_{\qc}(X)$.
More generally, \cref{defqcoh} makes sense for any $X \in \PShv(A)$ and there is an equivalence $D_{\qc}(X) \to D_{\qc}(X^\wedge)$ where $X^\wedge \in \Shv(A)^\wedge$ is the hypercompletion of $X$;
see \cite[Rem.~6.2.3.3]{luriespectral}.
We use the notion of derived $\infty$-categories associated with presheaves in only one place in this paper in \cref{heartrep}.
\end{remark}{}

\begin{remark}\label{happy}
Let $X$ be a weakly affine stack over a field $k$.
Let $X_{\bullet} \colonequals (\Spec A_n)$ be a simplicial affine scheme such that $X \simeq \varinjlim_{[n]\in \Delta} \Spec A_n$ in $\Shv(k)^\wedge$.
Then it follows from \cref{sad1} that $D_{\qc}(X) \simeq \varprojlim_{[n] \in \Delta} D(A_n)$.
In particular, $D_{\qc}(X)$ is a presentable stable $\infty$-category;
\textit{cf.} \cite[Prop.~6.2.3.4]{luriespectral}.
\end{remark}
Note that the category $D_{\qc}(X)$ has all small colimits and for any map $f \colon X \to X'$ in $\Shv(A)$, there is a pullback functor $f^* \colon D_{\qc}(X') \to D_{\qc}(X)$ that preserves small colimits \cite[Prop.~6.2.3.4]{luriespectral}.
Moreover, for each affine scheme $\Spec T$, faithfully flat descent gives a natural fully faithful functor $\lambda_T \colon D(T) \simeq D_\qc(\Spec T) \to \Mod(\cO_{\Spec T})$ that preserves all small colimits.
Since $X \mapsto \Mod(\cO_X)$ is the right Kan extension of its restrictions along the Yoneda embedding $\Aff_A \hookrightarrow \Shv(A)$ (\textit{cf.} \cite[Rem.~2.1.0.5]{luriespectral}) and essentially by definition so is $X \mapsto D_\qc(X)$ (\textit{cf.}~\cite[Prop.~6.2.1.9]{luriespectral}), one obtains a natural colimit preserving, fully faithful functor $\lambda_X \colon D_{\qc}(X) \to \Mod(\cO_X)$ for all $X \in \Shv(A)$. Roughly speaking, this means that an object of $D_\qc(X)$ can be viewed as an fpqc sheaf of $\cO_X$-modules, in analogy with the classical situation.
\begin{definition}\label{qc-t-structure}
Let $X$ be a pointed connected stack over a field $k$.
\begin{enumerate}
    \item Let $D_{\qc}(X)_{\ge 0} \subset D_\qc(X)$ be the full subcategory spanned by those $K \in D_{\qc}(X)$ such that $\lambda_X(K) \in \Mod(\cO_X)_{\ge 0}$.
    \item Let $D_{\qc}(X)_{\le 0} \subset D_\qc(X)$ be the full subcategory spanned by those $K \in D_{\qc}(X)$ such that $\lambda_X(K) \in \w{Mod}(\cO_X)_{\le 0}$.
\end{enumerate}
\end{definition}
In \cref{switz2}, we show that \cref{qc-t-structure} actually defines a $t$-structure; note that this is not formal and we crucially use the assumption that $X$ is a pointed connected stack over a field $k$;
that is, there is an fpqc effective epimorphism $\Spec k \to X$.
\begin{proposition}\label{switz1}
Let $X$ be a pointed connected stack over a field $k$ and $K \in D_\qc(X)$.
The following are equivalent:
\begin{enumerate}
    \item\label{basel} $K \in D_{\qc}(X)_{\ge 0}$ (resp.\ $K \in D_{\qc}(X)_{\le 0}$).
    \item\label{zurich} $u^*K \in D(k)_{\ge 0}$ (resp.\ $u^*K \in D(k)_{\le 0}$) for the fpqc effective epimorphism $u \colon \Spec k \to X$.
    \item\label{bern} $v^*K \in D(T)_{\ge 0}$ (resp.\ $v^*K \in D(T)_{\le 0}$) for all maps $v \colon \Spec T \to X$.
\end{enumerate}
\end{proposition}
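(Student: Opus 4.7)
The plan is to establish the three implications $\cref{basel} \Leftrightarrow \cref{bern}$, $\cref{bern} \Rightarrow \cref{zurich}$, and $\cref{zurich} \Rightarrow \cref{bern}$.

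For the equivalence $\cref{basel} \Leftrightarrow \cref{bern}$, I would unwind the definitions. By \cref{qc-t-structure} and \cref{t-structure-Mod}, $K$ lies in $D_\qc(X)_{\ge 0}$ (resp.\ $D_\qc(X)_{\le 0}$) if and only if the cohomology sheaves $\cH^n(\lambda_X K)$ vanish on $X$ for $n > 0$ (resp.\ $n < 0$). A key intermediate claim I would verify is that the restriction of $\cH^n(\lambda_X K)$ to any affine $v \colon \Spec T \to X$ is naturally the quasi-coherent sheaf $\widetilde{H^n(v^* K)}$; this will follow from the fact that $\lambda_{\Spec T} \colon D(T) \to \Mod(\cO_{\Spec T})$ is a $t$-exact fully faithful embedding whose heart matches the category of quasi-coherent sheaves on $\Spec T$. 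Since a sheaf of $\cO_X$-modules is zero iff its restriction to every affine $\Spec T \to X$ is zero, the equivalence of \cref{basel} and \cref{bern} will then follow.

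The implication $\cref{bern} \Rightarrow \cref{zurich}$ is immediate upon taking $v = u$. The main content is the converse $\cref{zurich} \Rightarrow \cref{bern}$, which I would approach using the fpqc effective epimorphism property of $u$. Given any $v \colon \Spec T \to X$, this property will produce an fpqc cover $w \colon \Spec T' \to \Spec T$ together with a factorization of $v \circ w$ through $u$, giving in particular a $k$-algebra structure on $T'$. The resulting identification $w^* v^* K \simeq u^* K \otimes_k T'$ in $D(T')$, combined with the fact that $-\otimes_k T' \colon D(k) \to D(T')$ is exact (every $k$-module is flat) and conservative ($T' \ne 0$), gives $u^* K \otimes_k T' \in D(T')_{\ge 0}$ (resp.\ $\le 0$). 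Faithful flatness of $T \to T'$ then lets one conclude $v^* K \in D(T)_{\ge 0}$ (resp.\ $\le 0$).

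The hard part is the implication $\cref{zurich} \Rightarrow \cref{bern}$: it relies crucially on $u$ being an fpqc effective epimorphism, a hypothesis that would fail for a generic closed point of an arbitrary scheme. This is why the pointed connected assumption on $X$ is essential; already for $X = \Spec \ZZ$, pulling back a coconnective quasi-coherent sheaf along a closed immersion like $\Spec \FF_p \hookrightarrow \Spec \ZZ$ need not preserve coconnectivity, showing that some such hypothesis on $X$ is necessary for the statement to hold.
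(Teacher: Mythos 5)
Your proposal is correct and follows essentially the same approach as the paper. The only structural difference is that you go through the route $\cref{basel}\Leftrightarrow\cref{bern}\Rightarrow\cref{zurich}\Rightarrow\cref{bern}$ by first reducing to arbitrary affines, whereas the paper proves $\cref{basel}\Leftrightarrow\cref{zurich}$ directly via the effective epimorphism $u$ and then handles $\cref{zurich}\Leftrightarrow\cref{bern}$ by the same faithfully flat base change argument you describe; the two decompositions are logically equivalent given that base change step. One small gloss worth flagging in your $\cref{basel}\Leftrightarrow\cref{bern}$ step: the definition of $\Mod(\cO_X)_{\le 0}$ in \cref{t-structure-Mod} is that $\tau_{\ge 0}(\lambda_X K)$ is a \emph{discrete} object of $\Shv(X)$, not directly that the negative cohomology sheaves vanish; passing between these (and then checking discreteness after pullback) is exactly where the paper invokes \cite[Prop.~6.2.3.17]{Lur09} for the coconnective case, and your argument should cite something similar to justify detecting truncatedness locally.
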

\begin{proof}
Note that for any $L \in D(k)$, we have $L \in D(k)_{\ge 0}$ (resp.\ $L \in D(k)_{\le 0}$), if and only if $\lambda_k(L) \in \Mod(\cO_{\Spec k})_{\ge 0}$ (resp.\ $\lambda_k(L) \in \Mod(\cO_{\Spec k})_{\le 0}$).
Since $u$ is an effective epimorphism, one can use the definition of $\Mod(\cO_X)_{\ge 0}$ via cohomology sheaves (resp. use \cite[Prop.~6.2.3.17]{Lur09}, for example) to obtain the equivalence of $\cref{basel}$ and $\cref{zurich}$.
\vspace{2mm}

The implication $\cref{bern}$$\implies$$\cref{zurich}$ is clear; for the implication  $\cref{zurich}$$\implies$$\cref{bern}$, note that since $u \colon \Spec k \to X$ is an fpqc effective epimorphism, any map $v \colon \Spec T \to X$ admits a faithfully flat cover $\Spec T' \to \Spec T$ which fits into the square
\[ \begin{tikzcd}
\Spec T' \arrow[d] \arrow[r] & \Spec k \arrow[d,"u"] \\
\Spec T \arrow[r,"v"] & X.                        
\end{tikzcd} \]
Then $(u^*K) \otimes_k T' \simeq (v^*K) \otimes_T T'$.
This shows that $u^*K \in D(k)_{\ge 0}$ (resp.\ $u^*K \in D(k)_{\le 0}$) if and only if $v^*K \in D(T)_{\ge 0}$ (resp.\ $v^*K \in D(T)_{\le 0}$) because both $T \to T'$ and $k \to T'$ are faithfully flat.
\end{proof}
\begin{proposition}\label{switz2}
Let $X$ be a pointed connected stack over a field $k$.
Then the subcategories $(D_{\qc}(X)_{\ge 0}, D_{\qc}(X)_{\le 0})$ form a right complete $t$-structure on $D_{\qc}(X)$.
\end{proposition}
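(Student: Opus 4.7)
The plan is to construct the $t$-structure by applying the general machinery of \cite[Prop.~1.4.4.11]{luriehigher} to the presentable stable $\infty$-category $D_\qc(X)$ and the subcategory $D_\qc(X)_{\ge 0}$, then to identify the resulting coconnective part with $D_\qc(X)_{\le 0}$ via mapping space arguments and $t$-exactness of the pullback $u^*$ induced by $u \colon \Spec k \to X$. Right completeness will follow by transferring the corresponding property from the standard $t$-structure on $D(k)$ along $u^*$.

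To begin, $D_\qc(X) = \varprojlim_{\Spec T \to X} D(T)$ is a presentable stable $\infty$-category, and $u^* \colon D_\qc(X) \to D(k)$ is an exact, colimit-preserving, and conservative functor---conservativity follows from a faithfully flat descent argument analogous to the one used in the proof of \cref{switz1}. By \cref{switz1}(2), $D_\qc(X)_{\ge 0}$ is the preimage $(u^*)^{-1}(D(k)_{\ge 0})$, which is presentable by \cite[Prop.~5.5.3.12]{Lur09} (as a pullback of presentable $\infty$-categories along accessible functors), and closed under small colimits and extensions in $D_\qc(X)$ because $u^*$ preserves these and $D(k)_{\ge 0}$ enjoys the corresponding closure properties. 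Hence \cite[Prop.~1.4.4.11]{luriehigher} applies and produces an accessible $t$-structure on $D_\qc(X)$ with connective part $D_\qc(X)_{\ge 0}$; let $\mathcal{D}_{\le -1}$ denote its coconnective part.

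To verify $\mathcal{D}_{\le -1} = D_\qc(X)_{\le -1}$, the inclusion $D_\qc(X)_{\le -1} \subseteq \mathcal{D}_{\le -1}$ follows from the identification
\[ \Map_{D_\qc(X)}(K, L) \simeq \varprojlim_{\Spec T \to X} \Map_{D(T)}(K|_T, L|_T) \]
combined with \cref{switz1}(3): each $\Map_{D(T)}(K|_T, L|_T)$ vanishes for $K \in D_\qc(X)_{\ge 0}$ and $L \in D_\qc(X)_{\le -1}$, so the limit vanishes. For the reverse inclusion, it suffices to show that $u^*$ is $t$-exact for the newly constructed $t$-structure. Right $t$-exactness holds by construction, while left $t$-exactness is obtained by invoking the right adjoint $u_* \colon D(k) \to D_\qc(X)$ (existing by the adjoint functor theorem since $u^*$ preserves colimits) and using the adjunction $\Map_{D(k)}(u^*L, M) \simeq \Map_{D_\qc(X)}(L, u_* M)$, together with a descent argument along the Čech nerve of $u$ to compute $u_* M$ and verify that it lies in the appropriate part of the $t$-structure.

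Right completeness of $D_\qc(X)$ then follows from the right completeness of the standard $t$-structure on $D(k)$: any $K \in \bigcap_n D_\qc(X)_{\ge n}$ satisfies $u^*K \in \bigcap_n D(k)_{\ge n} = 0$, hence $K = 0$ by conservativity of $u^*$; the convergence of the canonical map $K \to \varprojlim \tau_{\le n}^X K$ is reduced to the analogous assertion in $D(k)$ through the same principles. The main obstacle is establishing left $t$-exactness of $u^*$ (equivalently, the inclusion $\mathcal{D}_{\le -1} \subseteq D_\qc(X)_{\le -1}$), since $u^*$ does not admit a clean left adjoint at the level of generality considered here---one must work instead with $u_*$ and descent along the Čech nerve of $u$.
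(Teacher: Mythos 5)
Your reduction to Lurie's existence theorem \cite[Prop.~1.4.4.11]{luriehigher} correctly handles the ``easy'' direction (the orthogonality inclusion $D_\qc(X)_{\le -1} \subseteq \mathcal{D}_{\le -1}$), and the right-completeness argument via conservativity of $u^*$ is sound in spirit (though you wrote $\bigcap_n D_\qc(X)_{\ge n}$ where the right-completeness criterion of \cite[Prop.~1.2.1.19]{luriehigher} asks about $\bigcap_n D_\qc(X)_{\le -n}$---both vanish by the same reasoning, but keep the directions straight). The genuine gap, which you yourself flag as ``the main obstacle,'' is the reverse inclusion $\mathcal{D}_{\le -1} \subseteq D_\qc(X)_{\le -1}$, i.e.\ left $t$-exactness of $u^*$ for the abstractly constructed $t$-structure, and the route you sketch to close it does not work. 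Invoking the right adjoint $u_*$ and the adjunction $\Map(u^*L, M) \simeq \Map(L, u_*M)$ cannot yield left $t$-exactness of $u^*$: for an adjoint pair $(u^*, u_*)$, right $t$-exactness of $u^*$ is equivalent to left $t$-exactness of $u_*$, but left $t$-exactness of $u^*$ is an independent condition that would correspond to right $t$-exactness of a \emph{left} adjoint $u_!$, which is not available here. Čech descent along $u$ computes $u_*$ and pushforwards, not the $t$-behavior of $u^*$, so this does not close the gap.

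The step you are missing is exactly the paper's key lemma: for any $v \colon \Spec T \to X$ and $a \colon \Spec S \to \Spec T$, the natural map $(\tau_{\le 0}(v^*K)) \otimes_T S \to \tau_{\le 0}((v\circ a)^*K)$ is an equivalence. This is proved by factoring through the fpqc epimorphism $u\colon \Spec k \to X$ and using that $k \to T'$ and $k \to S'$ are \emph{flat because $k$ is a field}, so that truncation commutes with these particular base changes; flatness of the base field is where the hypothesis is used essentially. With this lemma, the pointwise truncations assemble into a well-defined object $\tau_{\le 0}K \in D_\qc(X)$, and one identifies $\lambda_X(\tau_{\le 0}K)$ with the coconnective truncation of $\lambda_X(K)$ in $\Mod(\cO_X)$---the paper's approach, which simply restricts the already-known $t$-structure on $\Mod(\cO_X)$ along the fully faithful, colimit-preserving $\lambda_X$, is in this sense more economical than reconstructing the $t$-structure abstractly and then matching coconnective parts. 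If you want to keep your framework via \cite[Prop.~1.4.4.11]{luriehigher}, you still have to prove this base-change compatibility of truncation to identify $\mathcal{D}_{\le -1}$ with $(u^*)^{-1}(D(k)_{\le -1})$; the abstract existence theorem does not spare you that work.
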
{}
\begin{proof}
Since $(\Mod(\cO_X)_{\ge 0}, \Mod(\cO_X)_{\le 0})$ is a right complete $t$-structure and $\lambda_X$ is a fully faithful functor between stable $\infty$-categories which preserves small (co)limits, it suffices to show the following:
given $K \in D_\qc(X)$, the coconnective truncation of $\lambda_X(K)$ lies in the essential image of $\lambda_X$.
\vspace{2mm}

To analyze the coconnective truncation of $K$, we will prove that for any maps $v \colon \Spec T \to X$ and $a \colon \Spec S \to \Spec T$, the natural morphism
\begin{equation}\label{coconn}
(\tau_{\le 0}(v^*K)) \otimes_T S \longrightarrow \tau_{\le 0}((v \circ a)^* K)
\end{equation}
in $D(S)$ is an equivalence.
Once we show this, we naturally obtain an object of $D_\qc(X)$ denoted as $ \tau_{\le 0}K$, which is essentially uniquely determined by the requirement that for any map $v \colon \Spec T \to X$, we have $v^* (\tau_{\le 0} K) \simeq \tau_{\le 0} (v^* K)$. Since $\lambda_X(\tau_{\le 0} K)$ is coconnective, we obtain a natural map $\tau_{\le 0}\lambda_X(K) \to \lambda_X(\tau_{\le 0}K)$.
This natural map is an isomorphism because the functor $\Mod(\cO_X) \to \Shv_{D(\ZZ)}(X)$ is conservative and the coconnective truncation in $\Shv_{D(\ZZ)}(X)$ is given by applying the coconnective truncation in $\PShv_{D(\ZZ)}(X)$ (which is induced from the coconnective truncation on $D(\ZZ)$) and sheafifying.
Therefore, the coconnective truncation of $\lambda_X(K)$ lies in the essential image of $\lambda_X$, as desired.
\vspace{2mm}

It remains to prove that \cref{coconn} is an isomorphism.
For this, we can proceed similarly to the proof of \cref{switz1} using the fpqc effective epimorphism $u \colon \Spec k \to X$.
The map $v$ admits a faithfully flat cover $u' \colon \Spec T' \to \Spec T$ which fits into the diagram
\begin{equation}\label{trunc19}
\begin{tikzcd}
\Spec S' \arrow[r] \arrow[d] & \Spec T' \arrow[d] \arrow[r] & \Spec k \arrow[d] \\
\Spec S \arrow[r] & \Spec T \arrow[r] & X               \end{tikzcd} 
\end{equation}
where $S' \colonequals T' \otimes_T S$.
\vspace{2mm}

By the faithful flatness of $S \to S'$, it suffices to show that \begin{equation}\label{tru67}
    (\tau_{\le 0}(v^*K)) \otimes_T S' \simeq (\tau_{\le 0}((v \circ a)^* K)) \otimes_S S'.
\end{equation} Note that since $k$ is a field, the maps $k \to T'$ and $k \to S'$ are both flat. Since tensoring along flat maps is $t$-exact, by chasing through \cref{trunc19}, one directly verifies that the source and target of \cref{tru67} are both naturally isomorphic to $(\tau_{\le 0}(u^*K)) \otimes_k S'$.
This finishes the proof.
\end{proof}
\begin{remark}
By the construction of the $t$-structure in \cref{qc-t-structure}, the natural functor $\lambda_X \colon D_\qc(X) \to \Mod({\cO_X})$ is $t$-exact.
\end{remark}{}
\begin{definition}
For a pointed connected stack $X$ over a field $k$, we let $\QCoh(X) \colonequals D_\qc(X)^\heartsuit$ be the heart of the $t$-structure from \cref{switz2}.
\end{definition}{}

\begin{remark}\label{flatheart}
    Let $X$ be a pointed connected stack over a field $k$.
    Using faithfully flat descent (see the proof of \cref{switz1}), it follows that if $K \in \mathrm{QCoh}(X)$, and $v \colon \Spec T \to X$ is any map, then $v^* K$ corresponds to a discrete \emph{flat} module over $T$.
\end{remark}{}

\begin{proposition}\label{heartrep}
Let $X$ be a pointed connected stack over a field $k$.
Then the truncation $X \to \tau_{\le 1}X$ induces an equivalence of categories 
$$ \QCoh(\tau_{\le 1}X) \simeq \QCoh(X).$$
\end{proposition}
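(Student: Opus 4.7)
The plan is to verify the equivalence by fpqc descent along the pointings, exploiting the fact that heart-level descent data only involves the $\pi_0$-truncation of the Čech nerve. The truncation map $f \colon X \to \tau_{\le 1}X$ induces a pullback $f^* \colon D_\qc(\tau_{\le 1}X) \to D_\qc(X)$ that preserves the heart: by \cref{switz1} and \cref{flatheart}, objects of $\QCoh(\tau_{\le 1}X)$ and $\QCoh(X)$ are precisely those whose pullback to any affine is discrete and flat, and this condition is preserved under pullback. So $f^*$ restricts to a functor $\QCoh(\tau_{\le 1}X) \to \QCoh(X)$, which I aim to show is an equivalence.

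Next, I would apply fpqc descent for $D_\qc$ to the effective epimorphisms $u \colon \Spec k \to X$ and $\bar u \colon \Spec k \to \tau_{\le 1}X$, obtaining
\[ D_\qc(X) \simeq \lim_{[n] \in \Delta} D_\qc(U^n), \qquad D_\qc(\tau_{\le 1}X) \simeq \lim_{[n] \in \Delta} D_\qc(\bar U^n), \]
where $U^\bullet$ and $\bar U^\bullet$ are the respective Čech nerves. Using \cref{flatheart}, the heart $\QCoh(X)$ corresponds under this equivalence to pairs $(M, \phi)$ with $M = u^*K$ a flat $k$-module and $\phi$ compatible descent data on $U^\bullet$; the analogous description holds for $\QCoh(\tau_{\le 1}X)$. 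The truncation $f$ induces a morphism of Čech nerves $U^\bullet \to \bar U^\bullet$; at each level, $U^n \simeq (\Omega X)^n$ and $\bar U^n \simeq \pi_1(X)^n$ as $k$-stacks, related by a $\pi_0$-equivalence.

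The main step is to show that $f$ induces an equivalence on descent-data categories. For flat $k$-modules $M, M'$ and any $k$-stack $Y$, the heart-level Hom between their pullbacks along $Y \to \Spec k$ should equal $\Hom_k(M, M' \otimes_k H^0(Y, \cO))$ by the projection formula, and $H^0(Y, \cO)$ only depends on the $\pi_0$-truncation of $Y$. Applied to $Y = U^n$ and $Y = \bar U^n$, which share the same $\pi_0 \simeq \pi_1(X)^n$, this yields compatible equivalences of descent-data categories at each simplicial level, and passing to the limit gives $\QCoh(\tau_{\le 1}X) \simeq \QCoh(X)$.

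I expect the main obstacle to be rigorously carrying out the $\pi_0$-reduction on the Čech nerve, since $U^n$ is not itself a pointed connected stack and therefore the $t$-structure of \cref{qc-t-structure} does not directly apply at intermediate levels. A likely route is to combine the simplicial resolution of $U^n$ by affines afforded by \cref{sad} with a direct analysis of Hom-spaces of flat pullbacks, using that the projection formula reduces the question to the computation of $H^0$ (which only sees the $\pi_0$-truncation of a stack over a field).
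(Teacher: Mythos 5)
Your Čech-descent strategy along the pointing $u\colon\Spec k \to X$ is a different route from the paper's, which works with the universal descent over $\Aff_k$ via the straightening equivalence and observes that the classifying category $\mathrm{flMod}^\heartsuit$ (pairs $(R,M)$ with $M$ flat and discrete) is a $1$-category, so that sections of $\mathrm{flMod}^\heartsuit$ over $\cX$ and over the presheaf-level $1$-truncation $\cX'$ agree; this sidesteps any analysis of $\Omega X$ at all. Your idea that the descent data only sees $\pi_0$ of the Čech nerve is correct, but the two tools you propose to carry out the $\pi_0$-reduction do not apply in the generality of the statement. First, \cref{sad} requires the stack to be affine, and $(\Omega X)^n$ is affine only when $X$ is an affine stack — but \cref{heartrep} allows any pointed connected stack. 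Second, the projection-formula identity $H^0(Y, p^*N) \simeq N \otimes_k H^0(Y,\cO)$ needs $p_*$ to commute with infinite colimits (or $N$ to be finitely generated); neither is available for the general higher stacks $Y = (\Omega X)^n$ appearing here, so your formula $\Hom(p^*M,p^*M') \simeq \Hom_k(M, M'\otimes_k H^0(Y,\cO))$ is not justified as stated.

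The fortunate fact is that the conclusion you want from the projection formula (that these Hom-sets only depend on $\pi_0$ of the stack) holds by a more elementary argument that does not go through $H^0(Y,\cO)$: for flat discrete $k$-modules $M, M'$, unwinding $D_\qc(Y) = \varprojlim_{\Spec T \to Y} D(T)$ gives
\[
\Hom_{D_\qc(Y)}(p^*M, p^*M') \;\simeq\; \Hom_{\Shv(k)}\bigl(Y, F\bigr), \qquad F(T) \colonequals \Hom_k(M, M'\otimes_k T),
\]
and $F$ is a $0$-truncated fpqc sheaf (because $M'$ is flat, so $T\mapsto M'\otimes_k T$ satisfies descent, and products of sheaves are sheaves). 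Maps into a $0$-truncated sheaf factor uniquely through $\tau_{\le 0}Y = \pi_0 Y$, giving the $\pi_0$-reduction directly and without any projection formula or affinization of $\Omega X$. If you replace the projection-formula step with this observation and then carefully identify $\QCoh$ of both stacks with the $2$-limit of the resulting descent data (which requires verifying that the heart condition tested at level $0$ propagates to all simplicial levels, and that the comparison respects the face/degeneracy maps), your outline becomes a valid alternative proof. Still, be aware the bookkeeping here is noticeably heavier than the paper's short argument via $\mathrm{flMod}^\heartsuit$ and \cref{sad1}.
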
{}

\begin{proof}
The stack $X$ is classified by a left fibration $\mathcal{X} \to \Alg_k$ under the straightening equivalence (which is an $\infty$-categorical analogue of the Grothendieck construction, see e.g., \cite[\S~20]{joyal}, \cite[\S 2.2]{Lur09}). Let $\Mod^\heartsuit$ denote the category whose objects are pairs 
$(R, M)$ where $R$ is a $k$-algebra and $M \in D(R) ^\heartsuit$.
A morphism $(R_1,M_1) \to (R_2, M_2)$ is given by a pair of morphisms $f \colon R_1 \to R_2$ and $g \colon  M_1 \otimes_{R_1} R_2 \to M_2$ in the symmetric monoidal category $D(R_2)^\heartsuit$.
This way, we obtain a natural left fibration $q \colon \Mod^{\heartsuit} \to \Alg_k$. We note that $\Mod^\heartsuit$ is a $1$-category. Let $\mathrm{flMod}^{\heartsuit} \subset \mathrm{Mod}^{\heartsuit}$ denote the full subcategory spanned by $(R,M) \in \mathrm{Mod}^{\heartsuit}$ such that $M$ is a flat module over $R$. By restriction, we obtain a left fibration $q' \colon \mathrm{flMod}^{\heartsuit} \to \mathrm{Alg}_k$.
\vspace{2mm}

By \cref{flatheart}, $\QCoh(X)$ is equivalent to the full subcategory of $\w{Fun}_{\w{Alg}_k}(\mathcal{X}, \mathrm{flMod}^\heartsuit)$ spanned by those functors that send every morphism of $\mathcal{X}$ to a $q'$-coCartesian morphism.
\vspace{2mm}

Let $X' \colon \Alg_k \to \mathcal{S}$ be the composition of the functor $X \colon \Alg_k \to \mathcal{S}$ with $\tau_{\le 1} \colon \mathcal{S} \to \mathcal{S}$.
We denote the left fibration that classifies $X'$ by $\mathcal{X}' \to \Alg_k$.
Since $\mathrm{flMod}^\heartsuit$ is a $1$-category and the natural map $\cX \to \cX'$ induces an equivalence on homotopy categories, we have
\[ \w{Fun}_{\w{Alg}_k}(\mathcal{X'}, \mathrm{flMod}^\heartsuit) \simeq \w{Fun}_{\w{Alg}_k}(\mathcal{X}, \mathrm{flMod}^\heartsuit). \]
One can identify the full subcategory of $\w{Fun}_{\w{Alg}_k}(\mathcal{X'}, \mathrm{flMod}^\heartsuit)$ spanned by those functors that send every morphism in $\cX'$ to a $q'$-coCartesian morphism with a full subcategory of $D_{\qc}(X')\colonequals \varprojlim_{\Spec T \to X'} D(T)$ (\textit{cf.}~\cref{sad1}). Let us denote this full subcategory by $D_{\qc}^{\mathrm{fl}, \heartsuit}(X')$. Note that there is a natural map $X' \to \tau_{\le 1} X$ which induces an isomorphism $D_{\qc}(\tau_{\le 1}X) \simeq D_{\qc}(X')$ by \cref{sad1}. One observes that under this equivalence, the full subcategory $D_{\qc}^{\mathrm{fl}, \heartsuit}(X') \subset D_{\qc}(X')$ identifies with $\QCoh(\tau_{\le 1}X)$.
This proves the desired equivalence of categories
\[ \QCoh (\tau_{\le 1}X) \simeq \QCoh(X). \qedhere \]
\end{proof}{}
\begin{remark}\label{chicago14}
As a consequence of \cref{heartrep}, the set of global sections of any quasi-coherent sheaf on a pointed connected stack $X$ can be computed on $\tau_{\le 1}X$.
\end{remark}{}
\begin{remark}
Let $X$ be a pointed connected stack such that $D_{\qc}(X)$ is a presentable stable $\infty$-category. Then the functor $\lambda_X \colon D_{\qc}(X) \to \Mod({\cO_X})$ is colimit preserving and therefore admits a right adjoint which one may call ``coherator" and denote by 
$$Q_X \colon \w{Mod}(\cO_X) \to D_{\qc}(X).$$
As a right adjoint to a $t$-exact functor, $Q_X$ is left $t$-exact.
\end{remark}{}

 \begin{construction}\label{jap22}
If $B$ is any $E_\infty$-ring, one can consider the $\infty$-category $\w{Mod}_B$ of $B$-modules. Then $\w{Mod}_B$ has a $t$-structure such that $(\w{Mod}_B)_{\ge 0}$ is generated by $B$ under extensions and small colimits (see \cite[Prop.~1.4.4.11]{luriehigher}). By construction (see e.g., \cite[Rmk~1.2.1.3]{luriehigher}), under this $t$-structure, an object $M \in (\w{Mod}_B)_{\le 0}$ if and only if $\mathrm{Map}_{\mathrm{Mod}_B}(B, M[-1] ) \simeq 0$, i.e., the underlying spectrum of $M$ is coconnective. This implies that the $t$-structure on $\w{Mod}_B$ is right complete.
 \end{construction}{}

\begin{construction}\label{sad2}
Let $X$ be a pointed connected stack over a field $k$.
Every map of stacks $\Spec T \to X$ induces a map $R\Gamma(X, \cO) \to T$ in $\mathrm{DAlg}^{\mathrm{ccn}}_k$. This defines a pullback functor $\w{Mod}_{R\Gamma(X, \cO)} \to D(T)$ which further gives rise to a functor 

$$U^* \colon \w{Mod}_{R\Gamma(X, \cO)} \to \varprojlim_{\Spec T \to X}D(T) \simeq D_{\qc}(X).$$

For $M \in \w{Mod}_{R\Gamma(X, \cO)}$, we will denote the image of $M$ under the above functor by $\widetilde{M} \in D_{\qc}(X)$.
Note that $\widetilde{R\Gamma(X, \cO)} = \cO_X$ under the above construction. The functor $U^*$ preserves all small colimits. By the adjoint functor theorem, $U^*$ must have a right adjoint $U_* \colon D_{\qc}(X) \to \w{Mod}_{R\Gamma(X, \cO)}$ which identifies with the functor $\mathscr{F} \mapsto R\Gamma(X, \mathscr{F})$, when the latter is regarded as an $R\Gamma(X, \cO)$-module. Additionally, we point out that $U^*$ is a symmetric monoidal functor for the natural symmetric monoidal structures on $\Mod_{R\Gamma(X, \cO)}$ and $D_{\w{qc}}(X)$.
\end{construction}{}

\begin{proposition}
In the above set up, the functor $$U^* \colon \Mod_{R\Gamma(X, \cO)} \to D_{\qc}(X)$$ is $t$-exact.
\end{proposition}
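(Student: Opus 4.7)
The plan is to verify the two directions of $t$-exactness of $U^*$ separately, leveraging the adjunction $U^* \dashv U_*$ from \cref{sad2} and the explicit descriptions of both $t$-structures. For right $t$-exactness, the argument is largely formal: $U^*$ preserves all small colimits as a left adjoint, and by \cref{jap22}, $(\Mod_B)_{\ge 0}$ is by definition the smallest full subcategory of $\Mod_B$ containing $B \colonequals R\Gamma(X, \cO)$ and closed under small colimits and extensions. Since $U^*(B) = \cO_X$ lies in the heart $D_{\qc}(X)^\heartsuit \subset D_{\qc}(X)_{\ge 0}$, and $D_{\qc}(X)_{\ge 0}$ is closed under small colimits and extensions (\cref{switz2}), the inclusion $U^*((\Mod_B)_{\ge 0}) \subset D_{\qc}(X)_{\ge 0}$ follows.

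For left $t$-exactness, my first move is to apply \cref{switz1} to reduce to showing that $u^*U^*M \simeq M \otimes_B k$ lies in $D(k)_{\le 0}$ for every $M \in (\Mod_B)_{\le 0}$, where $u \colon \Spec k \to X$ is the pointing. I then use the right completeness of the $t$-structure on $\Mod_B$ (\cref{jap22}) to write $M \simeq \varprojlim_n \tau_{\ge -n} M$ as a limit of bounded coconnective truncations. The cofiber sequences $\pi_{-j}(M)[-j] \to \tau_{\ge -j}M \to \tau_{\ge -j+1}M$, together with closure of $D(k)_{\le 0}$ under extensions and non-positive shifts (and exactness of the derived tensor product $- \otimes_B k$), reduce the statement for bounded coconnective $M$ to the case where $M = N$ lies in the heart $(\Mod_B)^\heartsuit$. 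A guiding observation supporting this line is that $k \otimes_B k$ is automatically coconnective: it is identified with the derived global sections of the loop stack $\Spec k \times_X \Spec k$, hence belongs to $\mathrm{DAlg}^{\mathrm{ccn}}_k$.

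The hardest step will be the heart-level statement: showing $U^*(N) \in \QCoh(X) = D_{\qc}(X)^\heartsuit$ for every $N \in (\Mod_B)^\heartsuit$. My strategy here is to use \cref{heartrep} to pass to the $1$-truncation $\tau_{\le 1}X$, whose quasi-coherent sheaves admit a concrete description via descent along the fpqc effective epimorphism $u$, and to match this against a direct description of $(\Mod_B)^\heartsuit$ via iterated extensions of $B$ inside the heart. An additional subtlety I anticipate, since the left adjoint $U^*$ does not a priori commute with the Postnikov inverse limit $M = \varprojlim \tau_{\ge -n} M$, is the passage from the bounded case back to unbounded coconnective $M$; I plan to handle this by directly analyzing the cofiber sequences $\tau_{\le -n-1}M \to M \to \tau_{\ge -n} M$ and verifying that the positive homotopy of $(\tau_{\le -n-1}M) \otimes_B k$ vanishes in a range that grows with $n$, so that the target $M \otimes_B k$ inherits coconnectivity in the limit.
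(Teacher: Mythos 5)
Your right $t$-exactness argument agrees with the paper, as does the initial reduction (via \cref{switz1}) to the claim that $M \otimes_B k$ lies in $D(k)_{\le 0}$ for coconnective $B \colonequals R\Gamma(X,\cO)$-modules $M$. At that point the paper simply invokes \cref{lurielemma}: for an augmented coconnective $E_\infty$-algebra $B$ over $k$ with $\pi_0(B)\simeq k$, base change along $B \to k$ sends coconnective modules to coconnective modules. This single citation handles the heart case, the bounded case, and the unbounded case at once, so the Postnikov decomposition you set up is not needed.

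The part of your proposal that does not close is the heart case. Describing $(\Mod_B)^\heartsuit$ by ``iterated extensions of $B$'' is not correct as stated and does not obviously give what you need; the identification of the heart with $k$-vector spaces (via $V \mapsto B \otimes_k V$) is itself a consequence of precisely the connectivity estimate you are trying to prove, so the reduction is circular. The irreducible input, which your sketch does not contain, is that the augmentation ideal $I \colonequals \mathrm{fib}(B \to k)$ satisfies $\pi_i(I)=0$ for $i \ge 0$, so the skeletal filtration of the bar complex computing $k \otimes_B M$ has graded pieces $I^{\otimes_k j}[j] \otimes_k M$ that are all coconnective; this is the content (and proof) of \cref{lurielemma}. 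Your supporting observation that $k \otimes_B k \simeq R\Gamma(\Omega X,\cO)$ is coconnective is correct but does not bootstrap: $\Omega X$ can have cohomology in unboundedly many degrees, so $-\otimes_B k$ has no uniform finite Tor-amplitude that would make the estimate formal. Finally, a small slip worth noting: right completeness gives $M \simeq \varinjlim_n \tau_{\ge -n} M$, a filtered \emph{colimit}, not the inverse limit you wrote. Since $U^*$ preserves colimits and $D_{\qc}(X)_{\le 0}$ is closed under filtered colimits (by \cref{switz1}, because this holds in $D(k)$), the ``bounded-to-unbounded'' passage you anticipated as subtle is in fact immediate once the bounded case is in hand, and the extra cofiber-sequence argument you sketch for it is unnecessary.
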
{}

\begin{proof}
Since $U^*$ preserves small colimits and $U^*(R\Gamma(X, \mathscr O)) = \cO$, it follows from the construction of the $t$-structure on $\w{Mod}_{R\Gamma(X, \cO)}$ that $U^*$ is right $t$-exact. The left $t$-exactness of $U^*$ follows from the following lemma, which finishes the proof of the proposition.
\end{proof}
\begin{lemma}[{\cite[Cor.~4.1.12]{lurie2011quasi}}]\label{lurielemma}
Let $A \to B$ be a map of $E_\infty$-rings over $k$. Moreover, let us assume that $A$ (resp. $B$) has the property that the spectrum underlying $A$ (resp. $B$) is coconnective and the structure map $k \to A$ (resp. $k \to B$) induces isomorphism $k \xrightarrow{\sim} \pi_0(A)$ (resp. $k \xrightarrow{\sim} \pi_0 (B) $). Let $M$ be a left $A$-module such that $\pi_i(M) = 0$ for $i>0$. Then the homotopy groups $\pi_i (B \otimes_A{M})$ vanish for $i>0$ and $\pi_0(M) \to \pi_0(B \otimes_{A}M)$ is injective.
\end{lemma}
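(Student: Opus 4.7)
The plan is to resolve $B \otimes_A M$ by the two-sided bar construction and then analyze its homotopy via a K\"unneth--Tor spectral sequence, whose vanishing line is governed by the fact that all the relevant graded homotopy rings sit in non-positive degrees. Explicitly, I would write $B \otimes_A M \simeq |\mathrm{Bar}_\bullet(B, A, M)|$ with $\mathrm{Bar}_n = B \otimes_k A^{\otimes n} \otimes_k M$. Since $k$ is a field and each of $A$, $B$, $M$ has $\pi_i = 0$ for $i > 0$, the K\"unneth formula gives $\pi_*(\mathrm{Bar}_n) \cong \pi_*(B) \otimes_k \pi_*(A)^{\otimes n} \otimes_k \pi_*(M)$, which lies in degrees $\le 0$ and has $\pi_0 = \pi_0(M)$ for every $n$.

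The skeletal filtration of the geometric realization then produces a spectral sequence $E^1_{p,q} = \pi_q(\mathrm{Bar}_p) \Rightarrow \pi_{p+q}(B \otimes_A M)$ whose $d^1$ is the alternating sum of face maps. Via K\"unneth, $(E^1, d^1)$ becomes the two-sided bar complex computing graded Tor over $\pi_*(A)$, so
\[ E^2_{p, q} \;\cong\; \mathrm{Tor}^{\pi_*(A)}_p(\pi_*(B), \pi_*(M))_q. \]
The essential step is then the vanishing $E^2_{p, q} = 0$ whenever $p + q > 0$. To prove it, I would take a minimal graded free resolution $F_\bullet \to \pi_*(M)$ over $\pi_*(A)$: because $\pi_*(A)$ is a non-positively graded $k$-algebra whose augmentation ideal $\fm$ sits in degrees $\le -1$, Nakayama-minimality forces each successive syzygy to land in $\fm F_{p-1}$, so the generators of $F_p$ all lie in degrees $\le -p$, and hence so does $F_p$. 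Tensoring with $\pi_*(B)$, itself supported in degrees $\le 0$, preserves the bound, and $\mathrm{Tor}^{\pi_*(A)}_p$ is therefore supported in degrees $\le -p$.

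Given the bound, convergence of the spectral sequence is automatic in each relevant bidegree, since any incoming differential $d^r\colon E^r_{p+r, q-r+1} \to E^r_{p, q}$ vanishes on $E^2$ whenever $p + q \ge 0$. Hence $\pi_i(B \otimes_A M) = 0$ for $i > 0$, and $E^\infty_{0, 0} = E^2_{0, 0} = (\pi_*(B) \otimes_{\pi_*(A)} \pi_*(M))_0 = \pi_0(M)$ embeds as the bottom filtration piece of $\pi_0(B \otimes_A M)$; one verifies that this inclusion coincides with the natural edge map, giving the injectivity in the statement.

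The main obstacle I anticipate is proving the degree bound on graded Tor: this is where the hypotheses $\pi_0(A) = \pi_0(B) = k$ enter crucially, since the bound would fail if $\fm$ were allowed to extend into degree $0$ (a minimal syzygy could then sit in the same degree as its generator instead of being forced down by $\fm$). All other ingredients---K\"unneth over the field $k$, identification of $E^2$ with graded Tor, and the resulting strong convergence---either are standard or fall out of the degree bound itself.
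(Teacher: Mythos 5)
The paper does not prove this lemma but simply cites it from Lurie's DAG VIII (Corollary 4.1.12), so there is no in-paper argument to compare against; Lurie's own proof is, as far as I recall, also built around the bar construction, so your route is very much in the standard spirit. Your argument is correct and self-contained. Two points deserve to be made more precise. First, the convergence sentence is a little loose: the skeletal spectral sequence of $\mathrm{Bar}_\bullet(B,A,M)$ lives in a half-plane ($p \ge 0$, $q \le 0$), so for a fixed total degree $i$ there are in general infinitely many nonzero entries on the diagonal $p+q=i$, and the vanishing of incoming differentials is not by itself what makes the spectral sequence converge. What does the work is that the skeletal filtration $F_p\pi_i := \mathrm{im}\bigl(\pi_i(\mathrm{sk}_p) \to \pi_i|\mathrm{Bar}_\bullet|\bigr)$ is exhaustive and bounded below by $F_{-1}=0$; for such filtrations one always has injections $\mathrm{gr}_p\,\pi_i \hookrightarrow E^\infty_{p,\,i-p}$, and this, combined with your $E^2$-vanishing on the diagonals $p+q>0$ and the identification $E^\infty_{0,0}=E^2_{0,0}=\pi_0(M)$, yields both conclusions of the lemma. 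Second, the degree bound on the minimal graded free resolution does need graded Nakayama, which is legitimate here because $\pi_*(M)$ and each syzygy are bounded above (the syzygies being submodules of bounded-above free $\pi_*(A)$-modules) and because $\fm=\pi_{<0}(A)$ sits in degrees $\le -1$ --- this is precisely where the hypothesis $\pi_0(A)=k$ enters, as you note. With those two points spelled out, the proof is complete.
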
{}

\begin{lemma}\label{filco}
Let $X$ be a pointed connected weakly affine stack over a field $k$. The functor $$U_* \colon D_{\qc}(X) \to \Mod_{R\Gamma(X, \cO)}$$ from \cref{sad2} is left $t$-exact and therefore induces a functor denoted as $$ U^{\mathrm{ccn}}_* \colon D_{\qc}(X)_{\le 0} \to (\Mod_{R\Gamma(X, \cO)})_{\le 0}$$ which preserves filtered colimits.
\end{lemma}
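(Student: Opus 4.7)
The plan is to obtain left $t$-exactness of $U_*$ formally from the already established $t$-exactness of its left adjoint $U^*$, and then to derive the filtered-colimit statement on coconnective parts from the weakly affine presentation and a bounded convergence of the Tot tower.

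For the first half, since $U^*$ is right $t$-exact, its right adjoint $U_*$ is left $t$-exact: given $\mathscr F \in D_{\qc}(X)_{\le 0}$ and $M \in (\Mod_{R\Gamma(X,\cO)})_{\ge 1}$, the adjunction gives
\[
\Map_{\Mod_{R\Gamma(X,\cO)}}(M, U_*\mathscr F) \simeq \Map_{D_{\qc}(X)}(U^*M, \mathscr F),
\]
which is contractible because $U^*M \in D_{\qc}(X)_{\ge 1}$ by $t$-exactness of $U^*$. This immediately yields the induced functor $U^{\mathrm{ccn}}_*$ between coconnective parts.

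For the filtered-colimit statement, I would use \cref{weaklyaffine}\cref{weaklyaffine-simplicial} to choose a simplicial affine scheme $(\Spec A_n)$ with $X \simeq \varinjlim_{[n] \in \Delta} \Spec A_n$ in $\Shv(k)^\wedge$. Then by construction of $U_*$ in \cref{sad2} together with \cref{sad1}, one has a natural identification
\[
U_* \mathscr F \;\simeq\; \varprojlim_{[n] \in \Delta} R\Gamma(\Spec A_n, \mathscr F|_{\Spec A_n})
\]
in $D(k)$ (which is then upgraded with its $R\Gamma(X,\cO)$-module structure). Termwise, $R\Gamma(\Spec A_n, -)$ is just the restriction functor to $D(A_n)$ followed by the forgetful functor to $D(k)$, both of which preserve all small colimits; moreover, by \cref{switz1}, if $\mathscr F \in D_{\qc}(X)_{\le 0}$ then each $\mathscr F|_{\Spec A_n}$ lies in $D(A_n)_{\le 0}$, so the cosimplicial object above lies in $D(k)_{\le 0}$ termwise.

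The remaining and main point is thus a general assertion: for a cosimplicial object $N^\bullet$ in $D(k)_{\le 0}$, the Tot tower converges boundedly, in the sense that the fiber of $\varprojlim_{\Delta} N^\bullet \to \varprojlim_{\Delta_{\le m}} N^\bullet$ lies in $D(k)_{\le -(m+1)}$ (this is standard and follows, e.g., from analyzing the matching-object fiber sequence of the Reedy limit, or equivalently the Bousfield--Kan spectral sequence which has only finitely many possibly nonzero $E_2$-terms contributing to each fixed total degree). Consequently, for each fixed integer $i$, the canonical map $\pi_i(\varprojlim_{\Delta} N^\bullet) \to \pi_i(\varprojlim_{\Delta_{\le m}} N^\bullet)$ is an isomorphism for $m$ large enough depending only on $i$. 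Since $\varprojlim_{\Delta_{\le m}}$ is a finite limit and $\pi_i$ commutes with filtered colimits, applying this to the cosimplicial object $R\Gamma(\Spec A_\bullet, (\varinjlim_\alpha \mathscr F_\alpha)|_\bullet) \simeq \varinjlim_\alpha R\Gamma(\Spec A_\bullet, \mathscr F_\alpha|_\bullet)$ shows that each $\pi_i U^{\mathrm{ccn}}_*$ commutes with filtered colimits, and hence so does $U^{\mathrm{ccn}}_*$ itself (as $\Mod_{R\Gamma(X,\cO)}$ has $t$-structure detected on underlying spectra via the forgetful functor, which is conservative and commutes with filtered colimits). The delicate step to verify carefully is the bounded convergence of the Tot tower; everything else is formal.
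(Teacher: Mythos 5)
Your argument is correct and follows the same route as the paper: both reduce via the weakly affine presentation to expressing $U^{\mathrm{ccn}}_*(\mathscr F)$ as a totalization $\varprojlim_{\Delta} R\Gamma(\Spec A_\bullet, \mathscr F)$ of a cosimplicial coconnective object, and then commute filtered colimits past the totalization. The paper simply cites \cite[Cor.~4.3.7]{lurie2011quasi} for that last exchange (and leaves the left $t$-exactness, which you correctly derive by adjunction from the $t$-exactness of $U^*$, as implicit), whereas you reprove it via bounded convergence of the Tot tower — a valid and standard unwinding of the same fact.
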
{}

\begin{proof}
Since $X$ is weakly affine, we can choose a simplicial affine scheme $X_{\bullet} \colonequals (\Spec A_{n})$ such that $X \simeq \varinjlim_{[n]\in \Delta} \Spec A_n$ in $\Shv(\Alg_A)^\wedge$. Further, by \cref{sad1}, it follows that for any $\mathscr{F} \in D_{\qc}(X)_{\le 0}$, we have $$U^{\mathrm{ccn}}_*(\mathscr{F}) \simeq R\Gamma(X, \mathscr{F}) \simeq \varprojlim_{[n] \in \Delta}R\Gamma (\Spec A_n, \mathscr F).$$ Therefore, we will be done by the following well-known lemma.
\begin{lemma}
Let us consider the $\infty$-category $\mathrm{Fun}(\Delta, D(\mathbf{Z})_{\le 0})$, where $\Delta$ denotes the simplex category. Let $I$ be a filtered category and $T \colon I \to \mathrm{Fun}(\Delta, D(\mathbf{Z})_{\le 0})$ be a functor. Then $\varinjlim_{i \in I}\varprojlim_{[n] \in \Delta} T(i) \simeq \varprojlim_{[n]\in \Delta} \varinjlim_{i \in I} T(i)$.
\end{lemma}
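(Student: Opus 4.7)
The strategy is to reduce the totalization over $\Delta$ to a tower of finite limits via the skeletal filtration, exploiting that coconnectivity forces the tower to stabilize in each cohomological degree, and then to invoke the fact that finite limits commute with filtered colimits in $D(\mathbf{Z})$.

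First, for each cosimplicial object $X^{\bullet}$ in $D(\mathbf{Z})_{\le 0}$, write the totalization as
\[
\varprojlim_{[n] \in \Delta} X^{\bullet} \;\simeq\; \varprojlim_{n} \mathrm{Tot}_n(X^{\bullet}),
\]
where $\mathrm{Tot}_n(X^{\bullet})$ denotes the limit along the restriction of $X^{\bullet}$ to the finite full subcategory $\Delta_{\le n} \subset \Delta$ on objects $[0], \ldots, [n]$. Each transition map $\mathrm{Tot}_n(X^{\bullet}) \to \mathrm{Tot}_{n-1}(X^{\bullet})$ fits into a fiber sequence whose fiber is the $n$-th normalized Moore object $N^n(X^{\bullet})[-n]$; since $X^{m} \in D(\mathbf{Z})_{\le 0}$ for all $m$, the normalized term $N^n(X^{\bullet})$ also lies in $D(\mathbf{Z})_{\le 0}$, hence $N^n(X^{\bullet})[-n] \in D(\mathbf{Z})_{\le -n}$. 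Consequently, for each fixed integer $k$, the map
\[ H^k\bigl(\varprojlim_{[n]} X^{\bullet}\bigr) \longrightarrow H^k\bigl(\mathrm{Tot}_N(X^{\bullet})\bigr)\]
is an isomorphism for all $N$ sufficiently large (depending only on $k$, not on $X^{\bullet}$).

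Next, for any finite category $J$ (such as $\Delta_{\le N}$), the limit functor $\varprojlim_J \colon \mathrm{Fun}(J, D(\mathbf{Z})) \to D(\mathbf{Z})$ commutes with filtered colimits, because filtered colimits are exact in $D(\mathbf{Z})$ and a finite limit can be computed as an iterated (co)fiber of finite (co)products. Applying this to $J = \Delta_{\le N}$ and the functor $T \colon I \to \mathrm{Fun}(\Delta, D(\mathbf{Z})_{\le 0})$ post-composed with restriction to $\Delta_{\le N}$, we obtain for each $N$
\[
\varinjlim_{i \in I} \mathrm{Tot}_N(T(i)) \;\simeq\; \mathrm{Tot}_N\bigl(\varinjlim_{i \in I} T(i)\bigr).
\]
Note that the filtered colimit of a diagram of cosimplicial objects in $D(\mathbf{Z})_{\le 0}$ is again, termwise, in $D(\mathbf{Z})_{\le 0}$, so the stabilization statement of the previous paragraph applies uniformly to $\varinjlim_{i} T(i)$ as well as to each $T(i)$.

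To conclude, check the desired equivalence on each cohomology group $H^k$. Choose $N$ large enough that the stabilization bound from the first paragraph applies. Then
\[
H^k\!\bigl(\varprojlim_{[n]} \varinjlim_i T(i)\bigr) \simeq H^k\!\bigl(\mathrm{Tot}_N(\varinjlim_i T(i))\bigr) \simeq \varinjlim_i H^k\!\bigl(\mathrm{Tot}_N(T(i))\bigr) \simeq \varinjlim_i H^k\!\bigl(\varprojlim_{[n]} T(i)\bigr),
\]
where the middle equivalence uses commutation of $H^k$ and the finite limit with filtered colimits, and the outer equivalences use stabilization. The one subtle point is ensuring the stabilization bound is independent of $i \in I$, which is immediate from the uniform estimate $N^n(\,\cdot\,)[-n] \in D(\mathbf{Z})_{\le -n}$; this is the main thing to verify carefully, but it follows directly from the coconnectivity hypothesis on the target $D(\mathbf{Z})_{\le 0}$.
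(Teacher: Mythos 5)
Your argument is correct and self-contained, whereas the paper simply cites \cite[Cor.~4.3.7]{lurie2011quasi} for this fact; what you wrote is essentially the proof that lies behind that citation. The key estimate --- that the fiber of $\mathrm{Tot}_n(X^{\bullet}) \to \mathrm{Tot}_{n-1}(X^{\bullet})$ is $N^n(X^{\bullet})[-n]$, which lies in $D(\mathbf{Z})_{\le -n}$ because $N^n(X^{\bullet})$ is an iterated finite limit of objects of $D(\mathbf{Z})_{\le 0}$ --- is exactly right, and combined with exactness of filtered colimits in $D(\mathbf{Z})$ it gives the desired commutation. One step in your first paragraph deserves to be spelled out: passing from ``the transition maps $H^k(\mathrm{Tot}_n(X^{\bullet})) \to H^k(\mathrm{Tot}_{n-1}(X^{\bullet}))$ are isomorphisms for $n \gg 0$'' to ``$H^k(\varprojlim_n \mathrm{Tot}_n(X^{\bullet})) \to H^k(\mathrm{Tot}_N(X^{\bullet}))$ is an isomorphism for $N \gg 0$'' uses a Milnor sequence and requires the relevant $\varprojlim^1$ to vanish. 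Here that is immediate because the tower $(H^{k-1}(\mathrm{Tot}_n(X^{\bullet})))_n$ is eventually constant; alternatively, and more cleanly, the fiber of $\mathrm{Tot}(X^{\bullet}) \to \mathrm{Tot}_N(X^{\bullet})$ is a limit of objects of $D(\mathbf{Z})_{\le -(N+1)}$, and the coconnective subcategories $D(\mathbf{Z})_{\le m}$ are closed under all limits. With that point supplied, the argument is complete.
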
{}
\begin{proof}Follows from e.g.,~\cite[Cor.~4.3.7]{lurie2011quasi}.
\end{proof}{}
This finishes the proof of \cref{filco}.
\end{proof}{}

Finally, we are ready to prove our main result of this section, which gives a natural description of the derived category of quasi-coherent sheaves on pointed connected weakly affine stacks. The result below will be used in the proof of \cref{sohard139}.

\begin{proposition}\label{makiman34}
Let $X$ be a pointed connected weakly affine stack over a field $k$. Then $U^*$ from \cref{sad2} induces an equivalence of categories 
$$U^*_{\mathrm{ccn}} \colon (\Mod_{R\Gamma(X, \cO)})_{\le 0} \simeq D_{\qc}(X)_{\le 0}.$$

\end{proposition}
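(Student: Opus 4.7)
The plan is to verify that the unit $M \to U^{\mathrm{ccn}}_* U^*_{\mathrm{ccn}} M$ and the counit $U^*_{\mathrm{ccn}} U^{\mathrm{ccn}}_* \mathscr{F} \to \mathscr{F}$ of the induced adjunction on coconnective parts are both equivalences. Both functors preserve filtered colimits and cofiber sequences: $U^*$ is $t$-exact and colimit-preserving by construction, while $U^{\mathrm{ccn}}_*$ preserves filtered colimits by \cref{filco}. Using the weakly affine presentation $X \simeq \varinjlim_{[n] \in \Delta} \Spec A_n$ together with the identification $R\Gamma(X, \cO) \simeq \varprojlim_{[n] \in \Delta} A_n$, the unit can be rewritten concretely as
$$M \longrightarrow \varprojlim_{[n] \in \Delta} (M \otimes_{R\Gamma(X, \cO)} A_n),$$
so the assertion amounts to a descent-type result for coconnective modules along the cosimplicial presentation of $X$.

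I would reduce to the heart by Postnikov induction. Using the fiber sequences $\tau_{\ge -n+1} M \to \tau_{\ge -n} M \to \pi_{-n}(M)[-n]$ and the exactness of both $U^*$ and $U^{\mathrm{ccn}}_*$, the property ``the unit is an equivalence'' propagates from the heart (and its shifts) to all bounded coconnective objects; a convergence argument using right completeness of the $t$-structure then extends this to all coconnective modules. The analogous reasoning applies to the counit.

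For the base case, I would use \cref{heartrep} to identify $\QCoh(X) \simeq \QCoh(\tau_{\le 1} X) \simeq \Rep(\pi_1(X))$, where $\pi_1(X)$ is a unipotent affine group scheme by the weakly affine hypothesis. On the trivial representation $\cO_X$, corresponding under $U^*$ to $R\Gamma(X, \cO)$ itself, the unit is tautologically an equivalence. Since $\pi_1(X)$ is unipotent, every representation is a filtered colimit of finite-dimensional representations that are iterated extensions of the trivial representation; closure of the unit-equivalence property under filtered colimits and cofiber sequences then yields the result throughout the heart.

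The main obstacle is matching the heart of $\Mod_{R\Gamma(X, \cO)}$ with $\Rep(\pi_1(X))$ under $U^*$, together with a careful handling of Postnikov convergence in the coconnective setting. Both are controlled by \cref{lurielemma}, which guarantees that the cohomology of $M \otimes_{R\Gamma(X, \cO)} A_n$ is well-behaved in the coconnective range: this is what makes the $t$-structure descend correctly along the simplicial presentation of $X$, and it is the mechanism through which the algebraic heart of $\Mod_{R\Gamma(X, \cO)}$ is linked to the representation-theoretic heart $\Rep(\pi_1(X))$ on the geometric side.
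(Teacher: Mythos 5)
You set out to verify that both the unit $M \to U^{\mathrm{ccn}}_*U^*_{\mathrm{ccn}}M$ and the counit $U^*_{\mathrm{ccn}}U^{\mathrm{ccn}}_*\mathscr{F} \to \mathscr{F}$ are equivalences, treating the counit as ``analogous.'' Only the counit reduction you describe actually closes as stated: its base case lives in $\QCoh(X)$, which \cref{heartrep} identifies with $\Rep(\pi_1(X))$, and the unipotence-plus-filtered-colimits filtration argument reduces to the trivial module. But the Postnikov induction for the \emph{unit} would have to start from the heart of $(\Mod_{R\Gamma(X,\cO)})_{\le 0}$ on the algebraic side, and you never say what that heart is. Your base-case discussion is entirely about $\QCoh(X)$ (the geometric heart); the matching of this with the algebraic heart is exactly the thing you flag as ``the main obstacle\dots controlled by \cref{lurielemma}'' without carrying out. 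That matching is not transparent: the $t$-structure of \cref{jap22} takes $(\Mod_{R\Gamma(X,\cO)})_{\ge 0}$ to be generated under colimits by the coconnective ring $R\Gamma(X,\cO)$, so identifying its heart with $\Rep(\pi_1(X))$ is essentially the content of the proposition and cannot be assumed.

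The paper sidesteps this by replacing ``unit is an equivalence'' with the strictly weaker input that the left adjoint $U^*_{\mathrm{ccn}}$ is conservative. This is a short consequence of \cref{lurielemma}: if $M$ is coconnective and $M \otimes_{R\Gamma(X,\cO)} k \simeq 0$, then since $R\Gamma(X,\cO)$ is augmented coconnective with $\pi_0 \simeq k$, the base-changed module cannot kill $\pi_0(M)$, forcing $M \simeq 0$. Conservativity of the left adjoint together with the counit being an isomorphism then gives the equivalence by the triangle identities. Your descent reformulation of the unit and your heart-reduction strategy for the counit are both correct and match the paper's proof; the gap lies entirely in the unit step, and it is precisely the gap that makes the conservativity shortcut necessary.
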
{}
\begin{proof}
By construction, the functors $U^*_{\mathrm{ccn}}$ and $U^{\mathrm{ccn}}_*$ forms an adjoint pair where $U^*_{\mathrm{ccn}}$ is the left adjoint. 
\vspace{2mm}

First, we show that the functor $U^*_{\mathrm{ccn}}$ is conservative. In order to see this, let $v \colon \Spec k \to X$ denote the map from the point which is an fpqc effective epimorphism. It would be enough to show that if $M \in (\Mod_{R\Gamma(X, \cO)})_{\le 0}$ is such that $v^* (U^*(M)) \simeq 0$, then $M \simeq 0$. This amounts to showing that if $M \otimes_{R\Gamma(X, \cO)} k \simeq 0$ for some $M \in (\Mod_{R\Gamma(X, \cO)})_{\le 0}$, then $M \simeq 0$. If that were false, one can find some $N \in (\Mod_{R\Gamma(X, \cO)})_{\le 0}$ such that $\pi_0 (N) \ne 0$ and $N \otimes_{R\Gamma(X, \cO)} k \simeq 0$. However, since $X$ is a pointed and connected stack, $R\Gamma(X, \cO)$ is naturally augmented and satisfies the assumptions in \cref{lurielemma}; this gives a contradiction.
\vspace{2mm}

Since $U^*_{\mathrm{ccn}}$ is conservative, to prove that $U^*_{\mathrm{ccn}}$ is an equivalence, it would be enough to prove that the counit $U^*_{\mathrm{ccn}}U^{\mathrm{ccn}}_* \to \mathrm{id}$ is an isomorphism. Since $D_{\qc}(X)$ is right complete and $U^{\mathrm{ccn}}_*$ preserves filtered colimits by \cref{filco}, it would be enough to prove that for $\mathscr{F} \in \QCoh(X)$, we have $U^*_{\mathrm{ccn}}U^{\mathrm{ccn}}_* (\mathscr{F}) \simeq \mathscr{F}$. By \cref{heartrep}, we have an equivalence $\QCoh(X) \simeq \QCoh(\tau_{\le 1} X)$. Since $X$ is a pointed connected weakly affine stack, $\tau_{\le 1} X \simeq B \pi_1(X)$, where $\pi_1(X)$ is a unipotent affine group scheme over $k$. Therefore, one can identify $\mathscr{F}$ with a representation of $\pi_1(X)$. Since $\pi_1(X)$ is unipotent, any finite-dimensional representation has a filtration where the graded pieces correspond to the trivial representation. Since $U^{\mathrm{ccn}}_*$ preserves filtered colimits, in order to check $U^*_{\mathrm{ccn}}U^{\mathrm{ccn}}_*(\mathscr{F}) \simeq \mathscr{F}$, we can assume that $\mathscr{F}$ comes from a finite-dimensional representation of $\pi_1(X)$ (see \cite[\S~3.3]{waterhouse}). Therefore, by the unipotence of $\pi_1(X)$, we reduce to checking the same when $\mathscr{F} = \cO$, which is clear.
\end{proof}{}

\begin{corollary}\label{iy2}
    Let $X$ be a pointed connected weakly affine stack over $k$. Let $M, N \in D_{\mathrm{qc}}(X)_{\le 0}$. Then we have a natural isomorphism $R\Gamma (X, M) \otimes_{R\Gamma(X, \cO)} R\Gamma(X, N) \xrightarrow[]{\sim} R\Gamma(X, M \otimes N)$.
\end{corollary}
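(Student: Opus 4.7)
The approach is to combine Proposition~\ref{makiman34} (the symmetric monoidal equivalence $U^*_{\mathrm{ccn}}$) with the observation that the tensor product of two coconnective quasi-coherent sheaves on $X$ remains coconnective. Writing $B \colonequals R\Gamma(X, \cO)$, I would proceed as follows.

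First, I would verify that $M \otimes N \in D_{\qc}(X)_{\le 0}$. By Proposition~\ref{switz1} it suffices to check this after pullback along $u \colon \Spec k \to X$; since $u^*$ is symmetric monoidal and tensor over the field $k$ is $t$-exact on coconnective objects, $u^*(M \otimes N) \simeq u^*M \otimes_k u^*N \in D(k)_{\le 0}$. Next, invoking Proposition~\ref{makiman34}, I would write $M \simeq U^*(R\Gamma(X, M))$ and $N \simeq U^*(R\Gamma(X, N))$, and use the symmetric monoidality of $U^*$ from Construction~\ref{sad2} to obtain
\[ M \otimes N \simeq U^*(R\Gamma(X, M) \otimes_B R\Gamma(X, N)). \]
Finally, applying the inverse equivalence $U^{\mathrm{ccn}}_*$ from Proposition~\ref{makiman34} to $M \otimes N$ (which is coconnective by the first step) yields $R\Gamma(X, M \otimes N) \simeq U^{\mathrm{ccn}}_*(M \otimes N)$; by the full faithfulness of $U^*_{\mathrm{ccn}}$ this will agree with $R\Gamma(X, M) \otimes_B R\Gamma(X, N)$, giving the desired natural isomorphism.

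The main technical point is verifying that $R\Gamma(X, M) \otimes_B R\Gamma(X, N)$ itself lies in $(\Mod_B)_{\le 0}$ — in general the tensor product of two coconnective modules over a coconnective $E_\infty$-ring need not be coconnective, as simple examples show. To establish coconnectivity in our setting, I would use that $U^*(R\Gamma(X, M) \otimes_B R\Gamma(X, N)) \simeq M \otimes N$ is coconnective together with the $t$-exactness of $U^*$: applying $U^*$ to the fiber sequence $\tau_{\le 0}P \to P \to \tau_{>0}P$ with $P = R\Gamma(X, M) \otimes_B R\Gamma(X, N)$ shows that $U^*(\tau_{>0}P) = 0$. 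A conservativity argument — based on Proposition~\ref{makiman34} on coconnective parts, iterated via Postnikov truncations and the heart description of $\QCoh(X)$ from Proposition~\ref{heartrep} (under which $U^*$ restricts to the faithful functor $V \mapsto V \otimes_k \cO_X$ from $k$-vector spaces) — then forces $\tau_{>0}P$ to vanish, completing the proof.
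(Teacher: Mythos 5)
Your route is genuinely different from the paper's. The paper's proof fixes $M$ and reduces to the tautological case $N = \cO$ by running the devissage of \cref{makiman34} in reverse: right completeness and \cref{filco} reduce to $N \in \QCoh(X)$, filtered colimits reduce to finite-dimensional representations of $\pi_1(X)$, and unipotence reduces to the trivial representation $\cO$. You instead invoke the symmetric monoidal equivalence $U^*_{\mathrm{ccn}}$ directly and correctly isolate the crux as showing that $P \colonequals R\Gamma(X,M) \otimes_{R\Gamma(X,\cO)} R\Gamma(X,N)$ is coconnective; the rest of your argument would then go through.

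The conservativity argument you propose for that last step does not work, however. From the $t$-exactness of $U^*$ and the coconnectivity of $U^*(P) \simeq M \otimes N$ one does get $U^*(\tau_{\ge 1}P) = 0$, but $\tau_{\ge 1}P$ is \emph{connective}, while \cref{makiman34} only furnishes conservativity of $U^*$ on the \emph{coconnective} part of $\Mod_{R\Gamma(X,\cO)}$. And $U^*$ is genuinely not conservative on all of $\Mod_{R\Gamma(X,\cO)}$: by the remark following \cref{makiman34}, this $t$-structure need not be left complete, whereas $D_{\qc}(X)$ always is, so every nonzero object of $\bigcap_n (\Mod_{R\Gamma(X,\cO)})_{\ge n}$ is sent to $0$ by $U^*$. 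Iterating through Postnikov truncations and hearts (note, incidentally, that via $U^*$ the heart of $\Mod_{R\Gamma(X,\cO)}$ is all of $\Rep(\pi_1(X))$, not just $k$-vector spaces) only shows that the cohomology objects of $\tau_{\ge 1}P$ all vanish, which, absent left completeness, does not force $\tau_{\ge 1}P = 0$. The coconnectivity you want is nonetheless true: writing $P \simeq \bigl(R\Gamma(X,M) \otimes_k R\Gamma(X,N)\bigr) \otimes_{R\Gamma(X,\cO) \otimes_k R\Gamma(X,\cO)} R\Gamma(X,\cO)$ and applying \cref{lurielemma} to the multiplication map $R\Gamma(X,\cO) \otimes_k R\Gamma(X,\cO) \to R\Gamma(X,\cO)$ of coconnective $E_\infty$-$k$-algebras with $\pi_0 = k$ does it directly. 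With that substitution, your strategy works.
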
{}

\begin{proof}
    Follows in a manner similar to the proof of \cref{makiman34} by reducing to the case $N = \cO$.
\end{proof}{}

\begin{remark}
\cref{makiman34} extends a result of Lurie (\cite[Prop.~4.5.2.(7)]{lurie2011quasi}) for affine stacks in characteristic $0$ to weakly affine stacks in any characteristic. Note that $D_{\w{qc}}(X)$ is always left complete; however, $\Mod_{R\Gamma (X, \cO)}$ (see \cref{jap22}) need not be left complete. For a concrete example, one may take $X$ to be $B \GG_a$ over a perfect field of characteristic $p>0$, see \cite[Prop.~3.1]{hallrenam} and \cite[Rmk.~3.0.7]{Monrec}. This shows that \cref{makiman34} does not in general extend to an equivalence $\Mod_{R\Gamma(X, \cO)} \simeq D_{\w{qc}}(X)$ of $\infty$-categories.
\end{remark}{}

\begin{remark}
Our formulation of \cref{weaklyaffine} only applies to pointed connected stacks: 
even though the conditions \cref{weaklyaffine-simplicial} and \cref{weaklyaffine-unipotent} in \cref{weaklyaffine} hold for any separated scheme $X$, the definition does not apply (unless $X \simeq \Spec k$), and \cref{makiman34} need not hold. For a concrete example, one can take $X = \PP^1_k$. It would be interesting to isolate a more general class of stacks (containing e.g., affine schemes and pointed connected weakly affine stacks) for which \cref{makiman34} holds.
\end{remark}{}

\begin{example}\label{exaqmpleintroc}
Since $K(\mathbf{G}_m,n)$ is weakly affine for $n \ge 2$, by using \cref{makiman34}, one can conclude that $D_{\mathrm{qc}}(K(\mathbf{G}_m,n)) \simeq D(k)$ for $n \ge 2$. This follows from the fact that $R\Gamma (K(\mathbf{G}_m,n), \cO) \simeq k$ for $n \ge 1$. 
\end{example}{}

\begin{example}
Let $k$ be a perfect field of characteristic $p>0$. Let $W[F]$ denote the group scheme over $k$ that is obtained by considering kernel of the Frobenius operator $F$ on the ring scheme $W$ of $p$-typical Witt vectors. In this case, \cref{makiman34} extends to give an isomorphism $$\Mod_{R\Gamma(B W[F], \cO)} \simeq D_{\w{qc}}(B W[F]).$$ The latter claim follows from \cite[Lem.~3.0.3]{Monrec}; note that (by \cite[Lem. 3.2.6]{Dri20} or \cite[Prop.~2.4.10]{Monda}) $W[F] \simeq \GG_a^\sharp$ in \textit{loc.~cit}. In this case, one further has an isomorphism $D_{\w{qc}}(BW[F]) \simeq D_{\w{qc}}(\widehat{\mathbf G}_a)$, which can be seen by a suitable version of Cartier duality (\textit{cf.}~\cref{dualizinggpsch}). See \cite[\S 3.5]{BL} for a variant of the latter isomorphism and its application in $p$-adic Hodge theory.
\end{example}{}

\begin{example}
We will use \cref{makiman34} to extend the isomorphism $D_{\w{qc}}(BW[F]) \simeq D_{\w{qc}}(\widehat{\mathbf G}_a)$ to the case of the higher stacks $K(W[F], n)$ for $n>1$. Instead of using Cartier duality as in the case of $n=1$, we will appeal to Koszul duality for $n>1$ (see \cite[\S~14.1, Prop.~14.1.3.2]{luriespectral}). To this end, by \cref{makiman34}, $D_{\w{qc}}(K(W[F], n))$ is equivalent to the left completion of $\Mod_{R\Gamma(K(W[F], n), \cO)}$. One computes directly that the Koszul dual of $R\Gamma (K(W[F],n), \cO)$ is $\Sym k[n-1]$, where $\Sym k[n-1]$ is the free animated ring on a generator in degree $(n-1)$. By Koszul duality, we obtain an equivalence $D_{\w{qc}} (K(W[F], n)) \simeq \Mod_{\Sym k[n-1]}$. For $n=2$, the latter fact was proven before in \cite[\S~4.2]{moulinos2019universal} using different arguments.
\end{example}{}

\subsection{Cohomological Eilenberg--Moore spectral sequence for higher stacks}\label{sec2.3}
Now we will establish some results on cohomology and base change in the context of higher stacks.
More precisely, given a pullback square of higher stacks
\begin{center}
 \begin{tikzcd}
Y' \arrow[r, "f'"] \arrow[d, "g'"] & X' \arrow[d, "g"] \\
Y \arrow[r, "f"]                   & X               
\end{tikzcd}
\end{center}
over a field $k$, we will investigate when the natural map
\begin{equation}\label{makiman1}
    R\Gamma(Y,\cO)\otimes_{R\Gamma(X, \cO)} R\Gamma(X', \cO) \to R\Gamma(Y', \cO)
\end{equation}
is an isomorphism of $E_\infty$-algebras.
Whenever \cref{makiman1} is an isomorphism, we obtain an Eilenberg--Moore type spectral sequence $E_2^{p,q}= \mathrm{Tor}_p ^{H^* (X, \cO)}(H^* (Y, \cO), H^* (X', \cO))_q \implies H^* (Y', \cO)$; see, e.g., \cite[Prop.~7.2.1.19]{luriehigher}.
\vspace{2mm}

Note that \cref{makiman1} is only likely to be an isomorphism when one carefully imposes certain conditions on the maps and the stacks involved in the above pullback square.
For example, the desired isomorphism is already false when $X = \Spec k$ is a point, $X' = \AA^1_k$ and $Y$ is a scheme given by an infinite disjoint union of $\Spec\, k$.
Even if $X'$ and $Y$ are both points, but $X = B\GG_m$, the map \cref{makiman1} cannot be an isomorphism since $R\Gamma(B\GG_m, \cO) \simeq k$. In fact, the following example shows that \cref{makiman1} fails to be an isomorphism even when all the stacks involved are smooth proper schemes over $k$.
\begin{example}
Let $\pi \colon X \to \mathbf{P}^1$ be an elliptic $K3$ surface over an algebraically closed field $k$, i.e., $\pi$ is a flat surjection from a K3 surface $X$ and there exists a closed point $t \colon \Spec k \to \mathbf{P}^1$ such that the fibre $X_t$ over $t$ is a smooth integral curve of genus $1$.
In this case, the map \cref{makiman1} becomes $R\Gamma(X, \cO) \to R\Gamma (X_t, \cO)$, which cannot be an isomorphism because $H^2(X,\cO) \neq 0$.
\end{example}{}
In \cref{iy3} below, we give a general criterion for when the map \cref{makiman1} is an isomorphism in the world of pointed connected stacks.
In the context of pointed connected affine stacks, we reformulate this criterion in \cref{iy5}.
Note that in this context, our assumptions are slightly more general than those of \cite[Prop.~3.10]{Bzvi} and \cite[Prop.~9.1.5.7]{luriespectral} on certain related Beck--Chevalley transformations being isomorphisms (see also \cite[Prop.~A.1.5]{harper}).
\begin{proposition}\label{iy3}
    Let $X$ be a pointed connected weakly affine stack over $k$.
    Let $X',Y$ be pointed connected (higher) stacks.
    Consider a pullback diagram (of pointed stacks) of the form
\begin{center}
 \begin{tikzcd}
Y' \arrow[r, "f'"] \arrow[d, "g'"] & X' \arrow[d, "g"] \\
Y \arrow[r, "f"]                   & X              
\end{tikzcd}
\end{center}
such that $f^*g_* \cO \simeq g'_* f^{\prime *} \cO$ in $D_{\mathrm{qc}}(Y)$ and either one of the following two properties holds:
\begin{enumerate}
    \item\label{iy3-colim} $f_*$ preserves filtered colimits when viewed as a functor $f_* \colon \QCoh(Y) \to D_{\mathrm{qc}}(X)_{\le 0}$.
    \item\label{iy3-fin} For each $i \ge 0$, $\mathscr{H}^i (g_* \cO) \in \QCoh(X)$ corresponds to a finite-dimensional representation of $\pi_1(X)$ under the equivalence of \cref{heartrep}.
\end{enumerate}{}
Then the natural map $$R\Gamma(Y,\cO)\otimes_{R\Gamma(X, \cO)} R\Gamma(X', \cO) \to R\Gamma(Y', \cO)$$ is an isomorphism.
\end{proposition}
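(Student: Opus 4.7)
The overall strategy is to rewrite $R\Gamma(Y', \cO)$ as a quantity computed on $X$, apply the projection formula for $f$ to isolate $g_*\cO$, and then invoke \cref{iy2} (whose availability is the main content of $X$ being weakly affine) to split the tensor product of global sections. Concretely, combining $f \circ g' = g \circ f'$, the Leray identity $R\Gamma(Y, h_*\cO) \simeq R\Gamma(Z, \cO)$ for any $h \colon Z \to Y$, and the hypothesized base change $f^*g_*\cO \simeq g'_*f^{\prime *}\cO = g'_*\cO$ gives
\[
R\Gamma(Y', \cO) \simeq R\Gamma(Y, g'_*\cO) \simeq R\Gamma(Y, f^*g_*\cO) \simeq R\Gamma(X, f_*f^*g_*\cO).
\]
If the projection formula $f_*(f^*N) \simeq f_*\cO \otimes_{\cO_X} N$ holds for $N = g_*\cO$, then since both $f_*\cO$ and $g_*\cO$ lie in $D_\qc(X)_{\le 0}$ (pushforwards of structure sheaves along maps of pointed connected higher stacks being coconnective), \cref{iy2} applied on the weakly affine $X$ yields
\[
R\Gamma(X, f_*\cO \otimes g_*\cO) \simeq R\Gamma(X, f_*\cO) \otimes_{R\Gamma(X, \cO)} R\Gamma(X, g_*\cO) = R\Gamma(Y, \cO) \otimes_{R\Gamma(X,\cO)} R\Gamma(X', \cO),
\]
and one checks that the resulting composition is the natural map in the statement.

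The hard part is thus to verify the projection formula for $f$ against the specific object $N = g_*\cO$, since it does not hold for arbitrary maps of higher stacks. The standard natural map $f_*\cO \otimes_{\cO_X} N \to f_*(f^*N)$ is an isomorphism when $N = \cO_X$ and is stable under cofiber sequences and shifts in $N$; it is stable under filtered colimits in $N$ whenever $f_*$ preserves such colimits; and it is compatible with the limits that build Postnikov towers since $f_*$ is a right adjoint and $D_\qc(X)$ is left complete by \cref{switz2}. Using \cref{heartrep}, $\QCoh(X)$ is identified with $\Rep(\pi_1(X))$, and since $\pi_1(X)$ is unipotent by \cref{thmoftoen}, every finite-dimensional $\pi_1(X)$-representation admits a finite filtration whose associated graded consists of copies of the trivial representation $\cO_X$. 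The plan is thus to bootstrap the projection formula from $\cO_X$ to $g_*\cO$ via these operations.

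Under clause \cref{iy3-fin}, the cohomology sheaves $\mathscr H^i(g_*\cO)$ are themselves finite-dimensional, hence finite iterated extensions of $\cO_X$, so the projection formula propagates inductively to each truncation $\tau_{\ge -n}(g_*\cO)$ by exactness; one then passes to the limit $g_*\cO \simeq \holim_n \tau_{\ge -n}(g_*\cO)$ using left completeness. Under clause \cref{iy3-colim}, one first writes each $\mathscr H^i(g_*\cO)$ as the filtered colimit of its finite-dimensional subrepresentations, and the assumption that $f_*$ preserves filtered colimits on the coconnective part lets this colimit commute with $f_*$; the same Postnikov argument then applies. The remaining bookkeeping consists of checking that all intermediate objects stay in $D_\qc(X)_{\le 0}$ so that \cref{iy2} is applicable, which follows from the coconnectivity of structure sheaf pushforwards and the fact that tensoring coconnective objects with flat resolutions preserves coconnectivity.
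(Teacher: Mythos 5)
Your overall strategy is the same as the paper's: rewrite $R\Gamma(Y',\cO)$ via Leray and the base-change hypothesis as $R\Gamma(X, f_*f^*g_*\cO)$, establish the projection formula $f_*\cO \otimes g_*\cO \simeq f_*f^*g_*\cO$ by d\'evissage through unipotent $\pi_1(X)$-representations, and then split the global sections with \cref{iy2}. This matches the paper's proof in structure and in the role played by each hypothesis.

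However, there is a genuine gap in the step where you pass from the truncations $\tau_{\ge -n}(g_*\cO)$ to $g_*\cO$ itself. You justify this by saying the projection-formula map is ``compatible with the limits that build Postnikov towers since $f_*$ is a right adjoint and $D_\qc(X)$ is left complete.'' This does not work: the source of $\theta_M$, namely $f_*\cO \otimes_{\cO_X} M$, is a left-adjoint functor of $M$ and does not commute with the cofiltered Postnikov limit $g_*\cO \simeq \varprojlim_n \tau_{\ge -n}(g_*\cO)$; likewise $f^*$ is a left adjoint, so the target $f_*f^*M$ does not commute with this limit either. The fact that $f_*$ preserves limits and that the $t$-structure is complete is not enough. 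The paper resolves this with a different device: writing a cofiber sequence $K_n \to K \to K^n$ with $K^n \in D_\qc(X)_{\le -(n+1)}$, noting $\operatorname{cofib}\theta_K \simeq \operatorname{cofib}\theta_{K^n}$ because $\theta_{K_n}$ is already an isomorphism, then bounding the coconnectivity of $f_*\cO \otimes K^n$ (via pullback along $\Spec k \to X$, \cref{switz1}) and of $f_*f^*K^n$ (since $f_*$ is left $t$-exact) to conclude $\operatorname{cofib}\theta_K \in D_\qc(X)_{\le -n}$ for all $n$, hence zero by \emph{right} completeness (\cref{switz2}). You should replace your limit-passage argument by this cofiber argument; also note that \cref{switz2} asserts right completeness, not left completeness, which is what the final step actually needs.
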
{}
\begin{proof}
 Let us set $K \coloneqq g_* \mathscr{O} \in {D_{\qc}(X)}_{\le 0}$. We will start by proving that $K$ satisfies the projection formula
\begin{equation}\label{iy1} f_* \mathscr{O} \otimes{K} \simeq f_* f^* K .
\end{equation}More generally, let $M \in D_{\qc}(X)$ and consider the natural map
$$\theta_M \colon f_* \mathscr{O} \otimes{M} \to f_* f^* M.$$ When $M = \mathscr{O}$, the map $\theta_M$ is clearly an isomorphism. We claim that $\varphi_M$ is an isomorphism when $M = \mathscr{H}^i (g_* \cO)$. Note that $\mathscr{H}^i (g_* \cO)$ corresponds to a representation of the unipotent affine group scheme $\pi_1(X)$. Since every such representation is a filtered colimit of finite-dimensional representations (see \cite[\S~3.3]{waterhouse}), under the hypothesis of either $(1)$ or $(2)$ in the proposition, it would be enough to prove that $\theta_M$ is an isomorphism when $M \in \QCoh(X)$ corresponds to a finite-dimensional representation of $\pi_1(X)$. However, since $X$ is weakly affine, by definition, $\pi_1(X)$ is unipotent. Therefore, $M$ admits a finite filtration where the graded pieces are isomorphic to the structure sheaf $\cO$ (\textit{cf}.~\cite[Prop.~IV.2.2.5]{MR0302656}). Thus the claim follows.
\vspace{2mm}

Now we proceed onto showing that \cref{iy1} is an isomorphism. Let $n \ge 1$ be an integer. Let $K_n \coloneqq \tau_{\ge -n} K$. Since $K_n$ can be expressed as a finite limit of certain shifts of $\mathscr{H}^i (g_* \cO)$, the previous paragraph implies that $\theta_M$ is an isomorphism for $M= K_n$. Further, there is a natural map $K_n \to K$ and let $K^n$ denote the cofibre so that there is a cofibre sequence $K_n \to K \to K^n$.
This induces a cofibre sequence 
$$\w{cofib}\,\theta_{K_n} \to \w{cofib}\,\theta_{K} \to \w{cofib}\,\theta_{K^n}$$ for each $n \ge 1$. Since $\theta_{K_n}$ is already noted to be an isomorphism, one has $\w{cofib}\,\theta_{K_n} \simeq 0$. This implies that there is an isomorphism 
$$ \w{cofib}\,\theta_{K} \simeq \w{cofib}\,\theta_{K^n},$$ for each $n \ge 1$.
\vspace{2mm}

Note that by construction $K^n \in D_{\qc}(X)_{\le -(n+1)}$. Therefore, $f_* f^*K^n \in D_{\qc}(X)_{\le -(n+1)}$. By pulling back along the effective epimorphism $u \colon \Spec k \to X$ (see \cref{switz1}), one verifies that $f_* \mathscr{O} \otimes K^n \in D_{\qc}(X)_{\le - (n+1)}$. This implies that $\w{cofib}\,\theta_K \simeq \w{cofib}\,\theta_{K^n} \in D_{\qc}(X)_{\le -n}$. Since the $t$-structure of $D_{\qc}(X)$ is right complete and $n \ge 1$ was arbitrary, it follows that $\w{cofib}\,\theta_K \simeq 0$. Therefore, we have an isomorphism $\theta_K \colon f_* \mathscr{O} \otimes{K} \simeq f_* f^* K$, as desired in \cref{iy1}.
\vspace{2mm}

By \cref{iy2}, it follows that $R\Gamma(X,f_* \mathscr{O} \otimes {K}) \simeq R\Gamma (Y,\mathscr{O}) \otimes_{R\Gamma(X, \mathscr{O})} R\Gamma(X', \cO)$. Therefore, we have $R\Gamma (Y,\mathscr{O}) \otimes_{R\Gamma(X, \mathscr{O})} R\Gamma(X', \cO) \simeq R\Gamma (X,f_* f^* g_* \cO) \simeq R\Gamma (X,f_* g'_* \cO)$, where the last isomorphism follows from the hypothesis $f^*g_* \cO \simeq g'_* f'^* \cO$. This yields the desired assertion.
\end{proof}{}
\begin{remark}\label{ig90}
In the setup of \cref{iy3}, if $X$ and $Y$ are both assumed to be pointed connected weakly affine stacks, then hypothesis (\ref{iy3-colim}) is automatically satisfied.
This follows from \cref{makiman34}. Therefore, in this case, if we have $f^* g_* \cO \simeq g'_* f'^* \cO$, \cref{iy3} shows that the natural map $R\Gamma(Y,\cO)\otimes_{R\Gamma(X, \cO)} R\Gamma(X', \cO) \to R\Gamma(Y', \cO)$ is automatically an isomorphism.
\end{remark}{}
\begin{corollary}\label{iy5}
Let $g \colon X' \to X$ be a map of pointed connected affine stacks over $k$. 
Let $Y$ be a pointed connected stack.
Consider a pullback diagram (of pointed stacks) of the form
\[ \begin{tikzcd}
Y' \arrow[r, "f'"] \arrow[d, "g'"] & X' \arrow[d, "g"] \\
Y \arrow[r, "f"]                   & X               
\end{tikzcd} \]
such that either one of the following two properties holds:
\begin{enumerate}
    \item $f_*$ preserves filtered colimits when viewed as a functor $f_* \colon \QCoh(Y) \to D_{\mathrm{qc}}(X)_{\le 0}$.
    \item For each $i \ge 0$, $\mathscr{H}^i (g_* \cO) \in \QCoh(X)$ corresponds to a finite-dimensional representation of $\pi_1(X)$ under the equivalence \cref{heartrep}.
\end{enumerate}{}
Then the natural map $$R\Gamma(Y,\cO)\otimes_{R\Gamma(X, \cO)} R\Gamma(X', \cO) \to R\Gamma(Y', \cO)$$ is an isomorphism.    
\end{corollary}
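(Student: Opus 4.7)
The plan is to deduce this from \cref{iy3} by verifying the base change identification $f^* g_* \cO \simeq g'_* f'^* \cO$ under our extra assumption that $X'$ is also a pointed connected affine stack. Note that affine stacks are weakly affine, so the hypothesis on $X$ in \cref{iy3} is satisfied, and conditions~(1) and~(2) of the corollary match those of \cref{iy3} verbatim; hence the base change identification is all that remains.

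Write $X = \Spec B$ and $X' = \Spec B'$ with $B, B' \in \mathrm{DAlg}^{\mathrm{ccn}}_k$. The key observation is that $g$ is relatively affine in the following sense: for any $v \colon \Spec T \to X$, the fibre product $\Spec T \times_X X'$ is again an affine stack, identified via \cref{hypercompletea} and \cref{cope} with $\Spec(T \otimes_B^{\mathrm{DAlg}^{\mathrm{ccn}}} B')$. I would then verify the desired equivalence in $D_{\qc}(Y)$ after restriction to every affine chart $v \colon \Spec T \to Y$, which suffices by \cref{defqcoh}. Using \cref{makiman34}, one identifies $g_* \cO_{X'} \in D_{\qc}(X)_{\le 0}$ with $U^*(B')$ where $B'$ is regarded in $\Mod_{B}$, so the left-hand side computes as $v^* f^* g_* \cO_{X'} \simeq T \otimes_B^L B'$ in $D(T)$. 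On the other side, stack-level base change through the Cartesian square $\Spec T \times_Y Y' \simeq \Spec T \times_X X'$ gives $v^* g'_* \cO_{Y'} \simeq R\Gamma(\Spec T \times_X X', \cO)$, which is again $T \otimes_B^L B'$ in $D(T)$ once one identifies the coproduct in $\mathrm{DAlg}^{\mathrm{ccn}}_k$ with the derived tensor product of the underlying modules.

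With this base change isomorphism in hand, \cref{iy3} applies directly and yields the corollary. I expect the main technical point to be precisely the identification of the coproduct in $\mathrm{DAlg}^{\mathrm{ccn}}_k$ with the derived tensor product of underlying $D(k)$-modules, and tracking the naturality in $v$ so that these pointwise identifications assemble to an equivalence of quasi-coherent sheaves. The first should follow from Dold--Kan together with the cosimplicial model underlying \cref{derivedvscosimp}; the second from the construction of $U^*$ in \cref{sad2} together with the fact that both sides of the base change arrow are right Kan extensions of their restrictions to affine charts.
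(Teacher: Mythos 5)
Your reduction to the Beck--Chevalley identity $f^*g_*\cO \simeq g'_*f'^*\cO$ via \cref{iy3} (and the fact that $X$, being affine, is weakly affine) matches the paper's opening move, and the identification $g_*\cO_{X'} \simeq U^*(B')$ with $v^*f^*g_*\cO_{X'} \simeq T\otimes_B^L B'$ is correct. However, the remaining argument has a circularity that is not resolved by the points you flag at the end.

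The crucial unjustified step is ``stack-level base change through the Cartesian square $\Spec T \times_Y Y' \simeq \Spec T \times_X X'$ gives $v^*g'_*\cO_{Y'} \simeq R\Gamma(\Spec T \times_X X', \cO)$.'' Here $g'_*$ is the quasi-coherent pushforward, defined as an abstract right adjoint, and there is no a priori local formula for it; the assertion that its $T$-component is $R\Gamma(\Spec T\times_Y Y',\cO)$ is precisely a base-change statement of the type that \cref{iy5} is trying to establish, not something you may assume. In fact even the weaker claim that the assignment $T \mapsto R\Gamma(\Spec T\times_Y Y',\cO)$ defines a compatible family (hence an object of $D_\qc(Y)$) already requires, for each $\Spec S \to \Spec T$, the Künneth isomorphism $R\Gamma(\Spec S \times_{\Spec T} W,\cO) \simeq S\otimes_T^L R\Gamma(W,\cO)$ for an affine stack $W$ over an affine base, which is a special case of \cref{iy10} and hence downstream of the statement being proved. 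The secondary technical point you do flag --- identifying the pushout in $\mathrm{DAlg}^{\mathrm{ccn}}_k$ with the derived tensor product of the underlying complexes over a non-trivial base $B$ --- is also not something that the Dold--Kan/Eilenberg--Zilber heuristic obviously settles: the remark following \cref{rent14} explicitly cautions that the $E_\infty$-pushout and the $\mathrm{DAlg}^{\mathrm{ccn}}_k$-pushout are different, and this is exactly what makes \cref{rent14} a genuine corollary of \cref{iy5} rather than a triviality.

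The paper sidesteps both issues by changing the object of study. Rather than computing the quasi-coherent pushforward $g_*$ locally, it shows that the $\cO$-\emph{module} pushforward of $\cO_{X'}$ (which does have an honest local description via the topos) is automatically quasi-coherent, hence coincides with $g_*\cO_{X'}$; the base-change identity for $D_\qc$ is then inherited. The quasi-coherence is checked by restricting to the Čech nerve of the fpqc epimorphism $\Spec k \to X$ and observing that all terms $\Omega X^{\times n}$ and $\Omega X^{\times n}\times_k Z$ (with $Z = X'\times_X\Spec k$) are affine stacks, so the Künneth isomorphism $R\Gamma(X_1\times_k X_2,\cO)\simeq R\Gamma(X_1,\cO)\otimes_k R\Gamma(X_2,\cO)$ applies. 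The key advantage is that these tensor products are over the ground field $k$, where flatness holds automatically and the subtle comparison of pushouts over a general $B$ in $\mathrm{DAlg}^{\mathrm{ccn}}_k$ never arises.
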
{}
\begin{proof}
By \cref{iy3} and \cref{ig90}, we only need to prove that $f^*g_* \cO \simeq g'_* f'^* \cO$ in $D_{\mathrm{qc}}(Y)$.
To this end, it would be enough to show that the $\cO$-module pushforward of the structure sheaf along the map $g \colon X' \to X$ is automatically quasi-coherent. In order to see this, since $X$ is pointed and connected, we may compute the $\cO$-module pushforward of the structure sheaf of $X'$ by restricting to the Čech nerve of the fpqc epimorphism $u \colon \Spec k \to X$. Let us denote $Z \colonequals X' \times _X \Spec k$. Then the $\cO$-module pushforward can be represented by the cosimplicial $\cO$-module  $$\xymatrix{
  R\Gamma(Z, \cO) \ar[r]<1.5pt>\ar[r]<-1.5pt>  &  R\Gamma(\Omega X \times_k Z, \cO)   \ar[r]<3pt>\ar[r]\ar[r]<-3pt> & R\Gamma(\Omega X \times_k \Omega X \times_k Z, \cO)  \cdots\
}$$over the simplicial higher stack
$$\xymatrix{
 \cdots  \Omega X \times_k \Omega X   \ar[r]<3pt>\ar[r]\ar[r]<-3pt>  &  \Omega X  \ar[r]<1.5pt>\ar[r]<-1.5pt> & \Spec k\
}$$ which realizes $X$. Since affine stacks are closed under limits, the stacks $\Omega X^n$ and $\Omega X^n \times_k Z$ are all affine stacks for $n \ge 0$. Further, if $X_1$ and $X_2$ are any two affine stacks over a field $k$, it follows that $R\Gamma (X_1 \times_k X_2, \cO) \simeq R\Gamma (X, \cO) \otimes_k R\Gamma (X, \cO)$. These observations, along with the description of the derived category of quasi-coherent sheaves on $\Omega X$ from \cref{makiman34} and faithfully flat descent imply that the $\cO$-module pushforward is already quasi-coherent. This finishes the proof.
\end{proof}{}

\begin{corollary}\label{iy10}
Let $g \colon X' \to X$ be a map of pointed connected affine stacks over $k$. 
Let $Y$ be a pointed connected weakly affine stack.
Consider a pullback diagram (of pointed stacks) of the form
\begin{center}
 \begin{tikzcd}
Y' \arrow[r, "f'"] \arrow[d, "g'"] & X' \arrow[d, "g"] \\
Y \arrow[r, "f"]                   & X.               
\end{tikzcd}
\end{center}
Then the natural map $$R\Gamma(Y,\cO)\otimes_{R\Gamma(X, \cO)} R\Gamma(X', \cO) \to R\Gamma(Y', \cO)$$ is an isomorphism.  
\end{corollary}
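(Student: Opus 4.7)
The plan is to deduce this directly from \cref{iy5}. Since any affine stack is weakly affine (by combining \cref{sad} with \cref{thmoftoen}), the hypothesis that $g \colon X' \to X$ is a morphism of pointed connected affine stacks and the hypothesis that $Y$ is pointed connected weakly affine place us in the setup of \cref{iy5}. It therefore suffices to verify hypothesis (1) of that corollary: namely, that $f_* \colon \QCoh(Y) \to D_{\qc}(X)_{\le 0}$ preserves filtered colimits.

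To establish this, I will use that \emph{both} $X$ and $Y$ are weakly affine. By \cref{makiman34}, $U^*_{\mathrm{ccn}}$ induces equivalences
\[ (\Mod_{R\Gamma(X, \cO)})_{\le 0} \xrightarrow{\sim} D_{\qc}(X)_{\le 0} \quad \text{and} \quad (\Mod_{R\Gamma(Y, \cO)})_{\le 0} \xrightarrow{\sim} D_{\qc}(Y)_{\le 0}. \]
The morphism $f$ induces a map $\varphi \colon R\Gamma(X, \cO) \to R\Gamma(Y, \cO)$ in $\mathrm{DAlg}^{\mathrm{ccn}}_k$, and one verifies directly from the definition of $U^*$ in \cref{sad2} (pulling back $T$-modules along $R\Gamma(X, \cO) \to T$ for each affine $\Spec T \to X$) that the pullback $f^*$, restricted to coconnective objects, corresponds under these equivalences to extension of scalars along $\varphi$. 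By uniqueness of right adjoints, $f_*$ on coconnective objects then corresponds to restriction of scalars along $\varphi$, which manifestly preserves all small colimits and is $t$-exact. In particular, this also confirms that $f_*$ carries $D_{\qc}(Y)_{\le 0}$ into $D_{\qc}(X)_{\le 0}$.

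Since $\QCoh(Y) \subset D_{\qc}(Y)_{\le 0}$, the functor $f_* \colon \QCoh(Y) \to D_{\qc}(X)_{\le 0}$ therefore preserves filtered colimits. Hypothesis (1) of \cref{iy5} is thus satisfied, and the claimed equivalence follows immediately. The substantive input is \cref{makiman34}; the compatibility of $U^*$ with pullbacks and the subsequent identification of $f_*$ with module-theoretic restriction of scalars is essentially bookkeeping from \cref{sad2} together with uniqueness of adjoints, so no genuinely hard step arises beyond invoking the tools already developed.
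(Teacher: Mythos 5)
Your proof is correct and follows essentially the same route as the paper, whose proof simply cites \cref{ig90} together with \cref{iy5}. What you have done is spell out the content of Remark~\ref{ig90} (which asserts, without elaboration, that hypothesis~(1) of \cref{iy5} is automatic when both $X$ and $Y$ are weakly affine, by \cref{makiman34}) by identifying $f^*$ on coconnective objects with extension of scalars and hence $f_*$ with restriction of scalars under the equivalences of \cref{makiman34}.
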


\begin{proof}
Follows from \cref{ig90} and \cref{iy5}.    
\end{proof}{}
\begin{corollary}\label{rent14}
Let $X$ be a pointed connected affine stack over $k$. Let $\Omega X \colonequals \Spec k \times_{X} \Spec k$ denote the loop stack of $X$. Then the natural map $k \otimes_{R\Gamma(X,\cO)}k \to R\Gamma(\Omega X, \cO)$ is an isomorphism of $E_\infty$-algebras.
\end{corollary}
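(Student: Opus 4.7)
The plan is to deduce this as an immediate consequence of \cref{iy10}, which was set up precisely for situations of this type. The loop stack $\Omega X = \Spec k \times_X \Spec k$ sits in the pullback square
\[ \begin{tikzcd}
\Omega X \arrow[r, "f'"] \arrow[d, "g'"] & \Spec k \arrow[d, "g"] \\
\Spec k \arrow[r, "f"] & X,
\end{tikzcd} \]
where the vertical and horizontal maps on the right and bottom are the distinguished basepoint $\Spec k \to X$.

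The plan is to verify that this square satisfies the hypotheses of \cref{iy10}. The stacks $X$ and $X' \colonequals \Spec k$ in this square are both pointed connected affine stacks over $k$ (the second trivially so). The stack $Y \colonequals \Spec k$ is also a pointed connected weakly affine stack, since every pointed connected affine stack is weakly affine by the discussion following \cref{weaklyaffine}. Hence \cref{iy10} applies directly and yields an isomorphism
\[ R\Gamma(\Spec k, \cO) \otimes_{R\Gamma(X, \cO)} R\Gamma(\Spec k, \cO) \longrightarrow R\Gamma(\Omega X, \cO). \]
Since $R\Gamma(\Spec k, \cO) \simeq k$, the left-hand side is precisely $k \otimes_{R\Gamma(X, \cO)} k$, giving the desired isomorphism of $E_\infty$-algebras.

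There is essentially no obstacle here beyond unwinding the definitions; all the real work was carried out in establishing \cref{iy3}, \cref{iy5}, and \cref{iy10}, which in turn rested on the description of $D_{\qc}(X)_{\le 0}$ for weakly affine stacks from \cref{makiman34} (used to check the projection formula and to ensure that $f_*$ preserves filtered colimits on coconnective quasi-coherent sheaves) together with the automatic quasi-coherence of the $\cO$-module pushforward of $\cO$ along a map of pointed connected affine stacks.
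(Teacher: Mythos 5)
Your proposal is correct and is exactly how the paper obtains \cref{rent14}: the paper states \cref{rent14} immediately after \cref{iy10} with no separate proof, precisely because it is the specialization $Y = X' = \Spec k$ (so $Y' = \Omega X$ and $R\Gamma(\Spec k,\cO) \simeq k$), and you have verified the required hypotheses — $\Spec k \to X$ is a map of pointed connected affine stacks and $\Spec k$ is trivially a pointed connected weakly affine stack — so the application is immediate.
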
{}
Note that the $k \otimes_{R\Gamma(X,\cO)}k$ in \cref{rent14} refers to the pushout of $E_\infty$-algebras (as opposed to pushout in $\mathrm{DAlg}^{\mathrm{ccn}}_k$), so the statement does not simply follow from the fact that the functor $\Spec$ takes colimits in $\mathrm{DAlg}^{\mathrm{ccn}}_k$ to limits.
We will need the following variant in \cref{makiman}.
\begin{proposition}\label{sohard139}
Let $X$ be a pointed connected affine stack over $k$ such that $\pi_1(X)$ is trivial and $H^i(X, \cO)$ is finite-dimensional for all $i$. Let $Y$ be a pointed connected stack.
Then any pullback diagram (of pointed stacks)
\[ \begin{tikzcd}
Y' \arrow[r, "f'"] \arrow[d, "g'"] & * \arrow[d, "g"] \\
Y \arrow[r, "f"]                   & X               
\end{tikzcd} \]
induces an isomorphism 
$$k \otimes_{R\Gamma(X, \cO)}R\Gamma(Y, \cO) \xrightarrow[]{\sim} R\Gamma(Y', \cO)$$ of $E_\infty$-algebras.
\end{proposition}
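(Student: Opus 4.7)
The plan is to verify the hypotheses of \cref{iy3} with $X' = *$. The base-change hypothesis $f^* g_* \cO \simeq g'_* f'^* \cO$ in $D_{\qc}(Y)$ holds because $X$ and $*$ are both pointed connected affine stacks: arguing as in the proof of \cref{iy5}, $g_* \cO$ is automatically quasi-coherent on $X$ and the desired base change follows. Thus we are reduced to establishing condition~(2) of \cref{iy3}, namely that each cohomology sheaf $\mathscr{H}^i(g_* \cO) \in \QCoh(X)$ corresponds to a finite-dimensional representation of $\pi_1(X)$ under \cref{heartrep}; once this is in hand, \cref{iy3} produces the desired isomorphism.

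Because $\pi_1(X)$ is trivial, $\tau_{\le 1}X \simeq *$, so \cref{heartrep} identifies $\QCoh(X)$ with $\Vect_k$ through pullback along the basepoint $u \colon * \to X$. Since $u^*$ is $t$-exact by \cref{switz1}, one has $u^* \mathscr{H}^i(g_* \cO) \simeq H^i(u^* g_* \cO)$. Setting $A \coloneqq R\Gamma(X, \cO)$, the equivalence \cref{makiman34} identifies $g_* \cO \in D_{\qc}(X)_{\le 0}$ with the $A$-module $k$ (equipped with the augmentation action) via $U^*_{\mathrm{ccn}}$, since $U_*(g_*\cO) = R\Gamma(X, g_*\cO) = k$. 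Pulling back along $u$ therefore yields $u^* g_* \cO \simeq k \otimes_A k \simeq R\Gamma(\Omega X, \cO)$, where the last identification is \cref{rent14}. The problem thus reduces to showing that $H^i(\Omega X, \cO)$ is finite-dimensional over $k$ for every $i \ge 0$.

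For this finiteness, I would invoke the Tor (Eilenberg--Moore) spectral sequence
\[ E_2^{p, q} = \mathrm{Tor}^{H^*(X, \cO)}_{-p}(k, k)^q \Longrightarrow H^{p + q}(\Omega X, \cO), \]
coming from the $E_\infty$-pushout $k \otimes_A k$. The algebra $H^*(X, \cO)$ is graded-commutative, augmented, concentrated in cohomological degrees $\ge 0$ with $H^0 = k$, and finite-dimensional in each degree by hypothesis; accordingly a minimal projective resolution of $k$ over it has finite rank in each bidegree, so each $E_2^{p,q}$ is finite-dimensional, and only finitely many pairs $(p, q)$ with $p \le 0$ and $q \ge 0$ contribute to any fixed total degree $p + q = n$. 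The main obstacle will be establishing strong convergence of this spectral sequence in the setting of affine stacks. This is precisely where the hypothesis $\pi_1(X) = *$ must enter, in analogy with the classical Eilenberg--Moore theorem for simply connected spaces; alternatively, convergence can be organised through a Postnikov-tower induction on $X = \varprojlim \tau_{\le n} X$, exploiting that the tower starts trivially at $\tau_{\le 1} X = *$ and controlling $\Omega \tau_{\le n} X$ stage by stage.
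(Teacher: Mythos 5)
Your reduction is correct and matches the paper's: you rightly establish the base-change identity $f^*g_*\cO \simeq g'_*f'^*\cO$ as in \cref{iy5}, reduce via \cref{iy3}(2) to showing that each $\mathscr{H}^i(g_*\cO)$ corresponds to a finite-dimensional vector space under $\QCoh(X) \simeq \Vect_k$, and correctly identify this with the finite-dimensionality of $H^i(\Omega X, \cO) \simeq H^i(k \otimes_A k)$ via \cref{makiman34}, \cref{switz1}, and \cref{rent14}. Up to this point you are in lockstep with the paper.

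The gap is the finiteness of $H^i(\Omega X, \cO)$, which you do not actually establish. You propose the Tor/Eilenberg--Moore spectral sequence $\mathrm{Tor}^{H^*(X,\cO)}_{-p}(k,k)^q \Rightarrow H^{p+q}(\Omega X, \cO)$, but you explicitly flag convergence of this spectral sequence as ``the main obstacle'' and leave it unresolved, only gesturing at two possible remedies (a simply-connectedness argument, or an unelaborated Postnikov-tower induction). Neither is carried through. Convergence is not automatic here: one must exploit $H^1(X,\cO)=0$ to see that the augmentation ideal of $H^*(X,\cO)$ lives in degrees $\ge 2$, forcing only finitely many filtration strata to contribute in each total degree; making this precise in the coconnective/affine-stack setting is real work. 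As written, the proposal is therefore incomplete at exactly the step carrying the weight.

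For comparison, the paper avoids the Eilenberg--Moore spectral sequence entirely and instead runs the Leray spectral sequence for the composite $* \xrightarrow{g} X \to *$, namely $E_2^{p,q} = H^p(X, \mathscr{H}^q g_*\cO) \Rightarrow H^{p+q}(*, \cO)$. Because this is a first-quadrant cohomological spectral sequence abutting to the known $H^*(*, \cO) = k$, one can argue by contradiction: if $m$ is minimal with $S_m$ infinite, then $E_2^{0,m}$ is infinite-dimensional, all differentials out of $E_r^{0,m}$ land in finite-dimensional groups indexed by $q < m$, so $E_\infty^{0,m}$ stays infinite-dimensional, contradicting $H^m(*, \cO) = 0$. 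This minimal-$m$ contradiction is the key trick you are missing; it turns the unknown target of your spectral sequence into the known source of theirs, sidestepping the convergence analysis you would have to perform. Your approach is not wrong in spirit — it could likely be completed along the lines you sketch — but as a proof it has a genuine hole, and the paper's route is structurally different and more economical.
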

\begin{proof}
As in the proof of \cref{iy5}, we have $f^*g_* \cO \simeq g'_* f'^* \cO$ in $D_{\mathrm{qc}}(Y)$ because $X$ is an affine stack.
By \cref{iy3}, it would be enough to check that for each $i \ge 0$, $\mathscr{H}^i (g_* \cO)$ corresponds to a finite-dimensional representation of $\pi_1(X)$. Since $\pi_1(X)$ is trivial, $\mathscr{H}^i (g_* \cO) \simeq \mathscr{O}^{\oplus S_i}$ for some indexing set $S_i$. It would be enough to show that $S_i$ is a finite set for each $i \ge 0$. For the sake of contradiction, let us choose $m \ge 0$ such that $m$ is minimal with respect to the property that $S_m$ is infinite. The maps $* \xrightarrow{g} X \xrightarrow{h} *$ gives rise to an $E_2$-spectral sequence $E_2^{p,q}=H^p(X, \mathscr{H}^qg_* \cO) \Rightarrow H^{p+q}(*,\cO)$. Note that $E_2^{p,q} = E_r^{p,q}=0$ for all $r \ge 2$ if either $p<0$ or $q<0$. By the minimality of $m$ and the fact that $H^i (X, \cO)$ is finite-dimensional for all $i$, it follows that $E_2^{p,q}$ is finite-dimensional as a $k$-vector space for all $q<m$. Therefore, $E_r^{p,q}$ is finite-dimensional for all $q<m$ and $r \ge 2$. However, note that $E_2^{0,m}$ is infinite-dimensional by our choice of $m$. Combining the last two observations implies that $E_r^{0,m}$ is infinite-dimensional for all $r \ge 2$.
Since $E_{r}^{0,m} \simeq E_{m+2}^{0,m}$ for $r \ge m+2$, it follows that $E_{\infty}^{0,m}$ is infinite-dimensional as well. The latter observation implies that $H^m(*, \cO)$ is infinite-dimensional, which gives a contradiction.
\end{proof}{}

\newpage

\section{Unipotent homotopy types of schemes}\label{uhtfs}

In this section, we introduce the unipotent homotopy type of a scheme, which is the main object of interest in our paper.

\begin{definition}\label{defuht}
Let $X$ be a higher stack over $\Spec A$. Then there exists a higher stack denoted as $\mathbf{U}(X)$ which is equipped with a map $X \to \mathbf{U}(X)$ that is universal with respect to maps from $X$ to an affine stack. We will call $\mathbf{U}(X)$ the \textit{unipotent homotopy type} of $X$. Such a universal map exists because the inclusion of the $\infty$-category of affine stacks inside the $\infty$-category of higher stacks admits a left adjoint (see \cref{cope}).
\end{definition}{}

\begin{remark}
If $X$ is pointed, then $\UU(X)$ is also naturally pointed by the functoriality of the above construction. By \cref{cope}, it follows that $\UU(X) \simeq \Spec R\Gamma(X, \cO)$, where $R\Gamma(X, \cO)$ is naturally viewed as an object of $\mathrm{DAlg}^{\mathrm{ccn}}_A$.
\end{remark}

\begin{remark}
In \cite[\S~2.3]{Toe}, $\UU(X)$ is denoted by $(X \otimes A)^{\uni}$.
Let us explain our motivation behind calling $\UU(X)$ the unipotent homotopy type of $X$.
Our naming is prompted by the new realization that the nature of information captured by $\UU(X)$ reflects homotopical information about $X$. In \cref{recovernori} below, we make this claim precise by recovering the unipotent fundamental group scheme introduced by Nori from $\UU(X)$. 
\end{remark}

\begin{remark}\label{cold19}
    Let $X$ be an object of the $\infty$-category $\cS$. Then $X$ naturally defines an object in $\PShv(A)$ by considering the constant functor $X \colon \Alg_A \to \mathcal{S}$. After sheafification, we obtain a higher stack that we will denote by $\underline{X}$. We will simply use $\mathbf{U}(X)$ to denote the unipotent homotopy type of $X$. We set $C^* (X, A) \colonequals R\Gamma(\underline{X}, \cO) \in \mathrm{DAlg}^{\mathrm{ccn}}_A$ and call it the singular cochains of $X$ (with values in $A$).
\end{remark}{}

    \subsection{Recovering the unipotent Nori fundamental group scheme}\label{recovernori}
Let $X$ be a scheme of finite type over a field $k$ and $x \in X(k)$.
Assume that $H^0(X,\cO) \simeq k$. We will show that the unipotent homotopy type $\UU(X)$ of $X$ can be used to recover Nori's unipotent fundamental group scheme from \cite{MR682517}. As a concrete consequence, we will see (\cref{maincorro}) that one can write down a \emph{formula} that describes Nori's unipotent fundamental group scheme. We begin by recalling Nori's definition of the fundamental group scheme which makes use of the Tannakian formalism.
\begin{definition}[{\cite[\S~IV.1]{MR682517}}]\label{torsornori}
The \emph{unipotent Nori fundamental group scheme} $\pi^{\mathrm{U,N}}_1(X,x)$ of a pointed scheme $(X,x)$ is the (unipotent) affine group scheme over $k$ associated under Tannaka duality with the Tannakian category consisting of the tensor category of unipotent vector bundles on $X$ and the fibre functor coming from pullback along $x$.
\end{definition}

Using the above construction of $\pi_1^{\mathrm{U,N}}(X,x)$ and \cite[\S~1,~Prop.~2.9]{MR682517}, one obtains a tautological pointed $\pi_1^{\mathrm{U,N}}(X,x)$-torsor denoted as $(P,p) \to (X,x)$ which can be classified by the following universal property.
\begin{lemma}[{\cite[Prop.~IV.1]{MR682517}}]\label{lemma:Nori-universal-property}
For any unipotent affine group scheme $G$ over $k$ and any pointed $G$-torsor $(Q,q) \to (X,x)$, there exists a unique homomorphism $\rho \colon \pi_1^{\mathrm{U,N}}(X,x) \to G$ of affine group schemes over $k$ and a unique morphism of pointed $X$-schemes $f \colon (P,p) \to (Q,q)$ such that the diagram
\begin{equation}\label{equation:twisted-torsor-map} \begin{tikzcd}
P \times \pi_1^{\mathrm{U,N}}(X,x) \arrow[r] \arrow[d,"{(f,\rho)}"] & P \arrow[d,"f"] \\
Q \times G \arrow[r] & Q
\end{tikzcd} \end{equation}
commutes, where the horizontal arrows are given by the natural action maps.
\end{lemma}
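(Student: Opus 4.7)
The plan is to reduce the universal property to Tannaka duality applied to the Tannakian category of unipotent vector bundles on $X$ with fibre functor $x^*$, whose Tannaka dual is by definition $\pi_1^{\mathrm{U,N}}(X,x)$. First, I would attach to any pointed $G$-torsor $(Q,q) \to (X,x)$ a $k$-linear tensor functor $F_Q \colon \Rep(G) \to \mathrm{Vect}(X)$ via the usual associated-bundle construction $V \mapsto Q \times^G V$. Since $G$ is unipotent, every finite-dimensional $V \in \Rep(G)$ admits a filtration with trivial subquotients, so $F_Q(V)$ is a unipotent vector bundle on $X$; and the section $q$ above $x$ supplies a natural isomorphism between $x^* \circ F_Q$ and the forgetful functor $\Rep(G) \to \mathrm{Vect}_k$, giving the required fibre-functor compatibility.

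By the defining property of $\pi_1^{\mathrm{U,N}}(X,x)$, such a fibre-functor-compatible tensor functor out of $\Rep(G)$ corresponds to a unique homomorphism $\rho \colon \pi_1^{\mathrm{U,N}}(X,x) \to G$ of affine group schemes over $k$; this yields both existence and uniqueness of $\rho$. Given $\rho$, the morphism $f$ is obtained by forming the twisted torsor $Q' \colonequals P \times^{\pi_1^{\mathrm{U,N}}(X,x),\rho} G$: its associated tensor functor $F_{Q'}$ agrees with $F_Q$ by construction, so the Tannakian dictionary identifies $(Q', [p,1])$ with $(Q,q)$ as pointed $G$-torsors on $(X,x)$, and one defines $f$ as the composition $P \to Q' \xrightarrow{\sim} Q$. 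The commutativity of the action square and $f(p) = q$ then hold by construction.

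For uniqueness of the pair $(\rho, f)$, suppose $(\rho', f')$ is another such pair. The commutativity of the action square exhibits $f'$ as a $\rho'$-equivariant map of pointed schemes over $X$, so it factors as $P \to P \times^{\pi_1^{\mathrm{U,N}}(X,x),\rho'} G \to Q$, where the second arrow is forced to be an isomorphism of pointed $G$-torsors. Tannakian reconstruction applied to this isomorphism recovers $\rho'$ from the tensor functor $F_Q$ alone, forcing $\rho' = \rho$ and consequently $f' = f$. The main obstacle I anticipate is justifying Tannakian reconstruction at this level of generality, since $G$ is allowed to be an arbitrary unipotent affine group scheme, not necessarily of finite type; one has to extend the classical correspondence by writing $G$ as a cofiltered limit of its finite-type unipotent quotients and verifying stability of the correspondence between pointed torsors and compatible tensor functors under this limit.
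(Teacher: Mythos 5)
The paper does not prove this lemma; it is cited verbatim from Nori's original article \cite{MR682517} (his Prop.~IV.1), whose argument is Tannakian along exactly the lines you sketch, so your proposal essentially reconstructs Nori's own reasoning and is correct. One step worth making explicit is the final deduction that $\rho'=\rho$ forces $f'=f$: this requires uniqueness of an isomorphism of pointed $G$-torsors $P \times^{\pi_1^{\mathrm{U,N}}(X,x),\rho} G \xrightarrow{\sim} Q$, which holds because a tensor automorphism of $F_Q$ that restricts to the identity at $x$ must be the identity, by faithfulness of the fibre functor $x^*$ on the Tannakian category of unipotent vector bundles. Your concern about non-finite-type $G$ is well placed but already accommodated: the Tannakian duality the paper cites (e.g.\ \cite{tannaka}) applies to arbitrary affine group schemes over a field, so no separate limit argument over finite-type unipotent quotients is strictly needed, though that is a valid alternative route.
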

We will rephrase this statement in the language of stacks that is more suitable to our applications (see \cref{lem:Nori-universal-property-restated}).
To that end, we begin by reinterpreting some of the objects in the statement of Lemma~\ref{lemma:Nori-universal-property}.
\begin{lemma}\label{lemma:pointed-classifying-stack}
Let $G$ be an affine group scheme over $k$.
Let $BG$ be the associated classifying stack, pointed by the natural map $\tau \colon \Spec k \to BG$.
Then the groupoid of $G$-torsors $(Q,q) \to (X,x)$ is equivalent to the groupoid of morphisms of pointed stacks $(X,x) \to (BG,\tau)$.
\end{lemma}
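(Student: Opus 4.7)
The plan is to reduce this to the standard representability of $G$-torsors by maps to $BG$ and then carefully unwind what the pointing data contributes. Since no affine-stack machinery is really needed here, the main work is in interpreting the coslice $\Shv_*(k) = \{*\}/\Shv(k)$ correctly.

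First I would recall the classical equivalence between $\mathrm{Map}_{\Shv(k)}(X, BG)$ and the groupoid of $G$-torsors on $X$; this holds by fpqc descent and the explicit presentation of $BG$ as the stackification of the prestack of $G$-torsors (equivalently, via the simplicial bar construction on $G$). Next I would unwind the definition: a morphism $(X,x) \to (BG,\tau)$ in $\Shv_*(k)$ is the data of a morphism $f \colon X \to BG$ together with a homotopy $\alpha \colon f \circ x \simeq \tau$ in $\mathrm{Map}_{\Shv(k)}(\Spec k, BG)$. Under the correspondence above, $f$ classifies a $G$-torsor $Q \to X$, the composite $f \circ x$ classifies the pullback torsor $x^* Q \to \Spec k$, and $\tau$ classifies the trivial torsor $G \to \Spec k$. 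Hence $\alpha$ translates into the datum of a $G$-equivariant isomorphism $\beta \colon x^* Q \xrightarrow{\sim} G$ over $\Spec k$. Because $G$ acts simply transitively on itself, $\beta$ is equivalent to specifying the preimage $q$ of the identity section, i.e.\ a lift $q \colon \Spec k \to Q$ of $x \colon \Spec k \to X$ along $Q \to X$; this is precisely the data of a pointed $G$-torsor $(Q,q) \to (X,x)$.

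To conclude, I would check that morphisms match on both sides. A morphism of pointed $G$-torsors $(Q,q) \to (Q',q')$ is a $G$-equivariant map $Q \to Q'$ over $X$ carrying $q$ to $q'$; under the equivalence above this corresponds to a $2$-morphism of pointed maps in $\Shv_*(k)$, the compatibility with the chosen homotopies $\alpha, \alpha'$ being automatic from the simply transitive $G$-action on itself. I do not expect a real obstacle here: the lemma is essentially a categorified restatement of the familiar fact that pointed torsors on a pointed stack correspond to pointed maps into the classifying stack, and the only bookkeeping is ensuring the homotopy $\alpha$ and the trivialization $q$ are correctly matched at the level of $2$-morphisms.
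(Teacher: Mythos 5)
Your proposal is correct and follows essentially the same route as the paper: pull back the universal torsor along $f$ to get $Q$, interpret the pointing homotopy as a trivialization $x^*Q \simeq G$, and convert that trivialization into a $k$-rational point $q \in Q(k)$ via simple transitivity. The paper's proof is terser and does not spell out the matching of morphisms, but the content is identical.
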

\begin{proof}
Indeed, a pointed morphism $f \colon (X,x) \to (BG,\tau)$ defines a $G$-torsor $Q \to X$ obtained by pulling back the $G$-torsor $\tau \colon \Spec k \to BG$ along $f$. Let $Q_x \colonequals Q \times_{X,x} \Spec k \to \Spec k$ denote the pullback of $Q$ along $x \colon \Spec k \to X$. Now, the fact that $f$ is a pointed map (in the $(2,1)$-categorical sense) is equivalent to the data of an isomorphism of $G$-torsors $\alpha \colon Q_x \xrightarrow{\sim} G$ over $\Spec k$. But such an isomorphism corresponds to the data of a rational point $q \in Q(k)$ (the preimage of the identity element $e \in G(k)$ under $\alpha$). This gives the desired statement.
\end{proof}
\begin{remark}\label{lemma:group-homomorphism-classifying-stack}
We recall that if $G$ and $H$ are affine group schemes, then the set of morphisms of group schemes $G \to H$ is equivalent to morphisms of pointed stacks $BG \to BH$. Let us recall how to obtain an $H$-torsor from a given $G$-torsor $P \to S$ and a morphism of group schemes $\rho \colon G \to H$. One can equip $P \times H$ with the (left) $G$-action $g\cdot(p,h) \colonequals (p \cdot g^{-1},\rho(g)\cdot h)$.
Then $P \times^G H \colonequals (P \times H)/G \to S$ with the natural right $H$-action on the second factor is a $H$-torsor:
in case $P = S \times G$ with multiplication action by $G$ on the right, this follows from the fact that the morphism
\[ G \times H \xrightarrow{(\rho,\id)} H \times H \xrightarrow{\textrm{mult}} H \]
descends to an isomorphism $\alpha \colon G \times^G H \xrightarrow{\sim} H$;
the general case follows by working locally on the base.
\end{remark}

\begin{remark}
\Cref{lemma:pointed-classifying-stack} shows that the tautological pointed $\pi_1^{\mathrm{U,N}}(X,x)$-torsor $(P,p) \to (X,x)$ from \cref{torsornori} corresponds to a pointed morphism $N \colon (X,x) \to B\pi_1^{\mathrm{U,N}}(X,x)$. In the lemma below, we record a universal property of $N \colon (X,x) \to B\pi_1^{\mathrm{U,N}}(X,x)$ formulated in the category of pointed stacks. 
\end{remark}{}
\begin{lemma}\label{lem:Nori-universal-property-restated}
For any unipotent affine group scheme $G$ over $k$ and any pointed map $(X,x) \to BG$, there exists a unique morphism of pointed stacks $r \colon B\pi_1^{\mathrm{U,N}}(X,x) \to BG$ and a uniquely commutative diagram
\begin{equation}\label{equation:pointed-stack-map}
\begin{tikzcd}
(X,x) \arrow[d, "N"'] \arrow[rrd]           &  &    \\
{B\pi_1^{\mathrm{U,N}}(X,x)} \arrow[rr, "r"'] &  & BG
\end{tikzcd} \end{equation}
of pointed stacks.
\end{lemma}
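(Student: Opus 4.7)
The plan is to deduce this lemma from Nori's universal property \Cref{lemma:Nori-universal-property} by translating everything through the equivalence of \Cref{lemma:pointed-classifying-stack} between pointed maps to classifying stacks and pointed torsors. Concretely, I would proceed as follows.

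First, I would unpack the data of the commutative diagram \cref{equation:pointed-stack-map} of pointed stacks. By \Cref{lemma:pointed-classifying-stack}, the given pointed map $(X,x) \to BG$ corresponds to a pointed $G$-torsor $(Q,q) \to (X,x)$, and $N \colon (X,x) \to B\pi_1^{\mathrm{U,N}}(X,x)$ corresponds to the tautological pointed $\pi_1^{\mathrm{U,N}}(X,x)$-torsor $(P,p) \to (X,x)$. By the remark following \Cref{lemma:pointed-classifying-stack}, a morphism of pointed stacks $r \colon B\pi_1^{\mathrm{U,N}}(X,x) \to BG$ amounts to a homomorphism of group schemes $\rho \colon \pi_1^{\mathrm{U,N}}(X,x) \to G$. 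The composite $r \circ N$ then corresponds to the extended pointed torsor $(P \times^{\pi_1^{\mathrm{U,N}}(X,x)} G, [p,e]) \to (X,x)$. A 2-cell filling the triangle \cref{equation:pointed-stack-map} is therefore the same as an isomorphism of pointed $G$-torsors $P \times^{\pi_1^{\mathrm{U,N}}(X,x)} G \xrightarrow{\sim} Q$ over $X$; equivalently, via the usual adjunction, it is a morphism of pointed $X$-schemes $f \colon (P,p) \to (Q,q)$ which is $\rho$-equivariant in the sense that the diagram \cref{equation:twisted-torsor-map} commutes.

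Next, I would invoke Nori's \Cref{lemma:Nori-universal-property} applied to the unipotent affine $k$-group scheme $G$ and the pointed $G$-torsor $(Q,q) \to (X,x)$ to produce a unique such pair $(\rho,f)$. Setting $r \colonequals B\rho$ and taking the 2-cell to be the one corresponding to $f$ by the translation above yields the desired diagram \cref{equation:pointed-stack-map} and its unique filler.

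For uniqueness, suppose $(r', \eta')$ is another such pair, where $\eta'$ is a 2-cell filling \cref{equation:pointed-stack-map} with $r$ replaced by $r'$. Writing $r' = B\rho'$ for a unique $\rho'$ and translating $\eta'$ into an equivariant morphism of pointed torsors $f' \colon (P,p) \to (Q,q)$, the uniqueness clause of \Cref{lemma:Nori-universal-property} forces $(\rho',f') = (\rho,f)$, hence $(r',\eta') = (r,\eta)$. The only potential subtlety is bookkeeping the compatibility between 2-cells in $\Shv(k)$ and isomorphisms of pointed torsors, but this is exactly the content of \Cref{lemma:pointed-classifying-stack} together with the remark on $P \times^G H$, so no additional work is required beyond confirming that the translations used in both directions are mutually inverse.
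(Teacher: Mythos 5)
Your proposal is correct and follows essentially the same strategy as the paper's own proof: both translate the pointed‑map data into pointed‑torsor data via \cref{lemma:pointed-classifying-stack}, identify the 2‑cell filling \cref{equation:pointed-stack-map} with a base‑point‑preserving isomorphism $P \times^{\pi_1^{\mathrm{U,N}}(X,x)} G \xrightarrow{\sim} Q$ (equivalently a $\rho$‑equivariant map $f \colon (P,p) \to (Q,q)$), and then invoke the existence and uniqueness clauses of \cref{lemma:Nori-universal-property}.
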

\begin{proof}
This is essentially a restatement of \cref{lemma:Nori-universal-property}. A pointed map $(X,x) \to BG$ (by \cref{lemma:pointed-classifying-stack}) is equivalent to a pointed $G$-torsor $(Q,q) \to (X,x)$. By the universal property in \cref{lemma:Nori-universal-property}, we equivalently obtain a map $\rho \colon \pi_1^{\mathrm{U,N}}(X,x) \to G $ and $f \colon (P,p) \to (Q,q)$ which fits into the diagram \cref{equation:twisted-torsor-map}. By \cref{lemma:group-homomorphism-classifying-stack}, the map $\rho \colon \pi_1^{\mathrm{U,N}}(X,x) \to G$ is equivalent to a map of pointed stacks $r \colon B\pi_1^{\mathrm{U,N}}(X,x) \to BG$, providing the unique morphism of pointed stack desired in \cref{lem:Nori-universal-property-restated}. Following the contracted product construction of Lemma~\ref{lemma:group-homomorphism-classifying-stack} and using \cref{lemma:pointed-classifying-stack}, the commutativity of (\ref{equation:pointed-stack-map}) (in the $(2,1)$-categorical sense) is equivalent to the data of an isomorphism of $G$-torsors $\varphi \colon P \times^{\pi_1^{\mathrm{U,N}}(X,x)} G \xrightarrow{\sim} Q$ that maps the class of the point $(p,e)$ to $q$.
\vspace{2mm}

However, using the map $f \colon (P,p) \to (Q,q)$, one obtains a map
\[ P \times G \xrightarrow{(f,\id)} Q \times G \xrightarrow{\textrm{act}} Q, \]
which sends $(p,e)$ to $q$, is compatible with the action of $G$ on the right, and by the commutativity of the diagram (\ref{equation:twisted-torsor-map}), descends to a morphism (and hence an isomorphism) of right $G$-torsors $\varphi \colon P \times^{\pi_1^{\mathrm{U,N}}(X,x)} G \xrightarrow{\sim} Q$ that maps the class of the point $(p,e)$ to $q$. To see the uniqueness of $\varphi$, note that conversely, an isomorphism of right $G$-torsors $P \times^{\pi^{\mathrm U}_1(X,x)} G \xrightarrow{\sim} Q$ preserving base points as before defines a map
\[ f \colon P \xrightarrow{(\id,e)} P \times G \to P \times^{\pi^{\mathrm U}_1(X,x)} G \simeq Q \]
which makes the natural diagram such as (\ref{equation:twisted-torsor-map}) commutative and maps $p$ to $q$. Thus the universal property of \cref{lemma:Nori-universal-property} implies the uniqueness of $\varphi$. This finishes the proof.
\end{proof}

\begin{proposition}\label{prop:Nori-affine-pi1}
Let $(X,x)$ be a pointed cohomologically connected scheme of finite type over $k$.
Then there is a natural isomorphism of sheaves of groups $\pi_1(\UU(X), *) \xrightarrow{\sim} \pi_1^{\mathrm{U,N}}(X)$.
\end{proposition}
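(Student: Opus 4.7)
The plan is to use the universal property of $\mathbf{U}(X)$ on one side and the universal property of the Nori classifying map recorded in \cref{lem:Nori-universal-property-restated} on the other, and then to play the two uniqueness clauses against each other to get that the resulting maps are mutually inverse.

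First, I would set things up. The stack $\mathbf{U}(X) \simeq \Spec R\Gamma(X,\cO)$ is pointed via $\Spec k \xrightarrow{x} X \to \mathbf{U}(X)$, and the cohomological connectedness hypothesis $H^0(X,\cO) \simeq k$ together with \cref{connn} gives connectedness. By \cref{thmoftoen}, the sheaf $G \colonequals \pi_1(\mathbf{U}(X),*)$ is then representable by a unipotent affine group scheme, and by \cref{postnikov} the Postnikov truncation yields a canonical map $t \colon \mathbf{U}(X) \to \tau_{\le 1}\mathbf{U}(X) \simeq BG$. Since $\pi_1^{\mathrm{U,N}}(X,x)$ is unipotent by construction, \cref{thmoftoen} also shows that $B\pi_1^{\mathrm{U,N}}(X,x)$ is an affine stack.

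Next, I would construct the two comparison maps. Let $u \colon X \to \mathbf{U}(X)$ denote the universal map. For the forward direction, the Nori map $N \colon (X,x) \to B\pi_1^{\mathrm{U,N}}(X,x)$ lands in an affine stack, so the defining universal property of $\mathbf{U}(X)$ produces a unique pointed lift $\widetilde{N} \colon \mathbf{U}(X) \to B\pi_1^{\mathrm{U,N}}(X,x)$. Since the target is $1$-truncated, $\widetilde{N}$ factors uniquely through $t$ as a pointed map $s \colon BG \to B\pi_1^{\mathrm{U,N}}(X,x)$, which by \cref{lemma:group-homomorphism-classifying-stack} corresponds to a group scheme homomorphism $\varphi \colon G \to \pi_1^{\mathrm{U,N}}(X,x)$. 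For the reverse direction, the pointed map $t \circ u \colon (X,x) \to BG$ targets the classifying stack of a unipotent group scheme, so \cref{lem:Nori-universal-property-restated} produces a unique pointed map $r \colon B\pi_1^{\mathrm{U,N}}(X,x) \to BG$ satisfying $r \circ N = t \circ u$, which corresponds to a homomorphism $\psi \colon \pi_1^{\mathrm{U,N}}(X,x) \to G$.

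Finally, I would check that $s$ and $r$ are mutually inverse, which forces $\varphi$ and $\psi$ to be mutually inverse as well. On one side, $(s \circ r) \circ N = s \circ (t \circ u) = \widetilde{N} \circ u = N$, so the uniqueness clause of \cref{lem:Nori-universal-property-restated} applied to $N$ itself (taking the target group to be $\pi_1^{\mathrm{U,N}}(X,x)$ and comparing with the identity map of $B\pi_1^{\mathrm{U,N}}(X,x)$) gives $s \circ r = \id$. On the other side, $(r \circ s) \circ (t \circ u) = r \circ N = t \circ u$, and the uniqueness in the universal property of $\mathbf{U}(X)$ (applied to maps into the affine stack $BG$) forces $(r \circ s) \circ t = t$; since $t$ is the $1$-truncation map and both $r \circ s$ and $\id_{BG}$ are maps between $1$-truncated targets, the universal property of $\tau_{\le 1}$ then forces $r \circ s = \id$. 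The only real difficulty in this argument is conceptual: once one knows that both $\mathbf{U}(X)$ and $B\pi_1^{\mathrm{U,N}}(X,x)$ are pointed connected affine stacks (which ultimately rests on To\"en's classification \cref{thmoftoen}) and that Nori's construction enjoys the stacky universal property of \cref{lem:Nori-universal-property-restated}, the existence and mutual inversion of the two comparison maps is formal.
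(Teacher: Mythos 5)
Your argument is correct and rests on exactly the same ingredients as the paper's proof: connectedness via \cref{connn}, representability and affineness from \cref{thmoftoen}, the $1$-truncation from \cref{postnikov}, the universal property of $\mathbf{U}(X)$, and \cref{lem:Nori-universal-property-restated}. The paper packages this slightly more economically by directly verifying that $(X,x) \to B\pi_1(\mathbf{U}(X),*)$ satisfies the universal property of \cref{lem:Nori-universal-property-restated} (so the isomorphism follows by abstract uniqueness), whereas you construct the two comparison maps and check that they are mutually inverse; these are two presentations of the same argument.
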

\begin{proof}
Since $H^0 (X, \cO) \simeq k$, the unipotent homotopy type $\UU(X)$ is a pointed connected stack (\cref{connn}). Therefore, by \cref{thmoftoen}, the sheaf $\pi_1(\UU(X),*)$ is representable by a unipotent affine group scheme. Note that by considering $1$-truncation, we have a natural morphism $(X,x) \to \UU(X) \to \tau_{\le 1} \UU(X) \simeq B\pi_1(\UU(X),*)$ of pointed stacks.
Now, let $G$ be a unipotent affine group scheme and $f\colon X \to  BG$ be a pointed map. By \cref{thmoftoen}, $BG$ is an affine stack. By the universal property of the unipotent homotopy type (see \cref{defuht}), we have a unique factorization of $f$ through a morphism of pointed stacks $f'\colon \UU(X) \to BG$. 
Further, the stack $BG$ is $1$-truncated, so $f'$ further factors uniquely through $\UU(X) \to \tau_{\le 1} \UU(X) \simeq B\pi_1(\UU(X), *)$ to give a (unique) map $r \colon B\pi_1 (\UU(X), *) \to BG$ of pointed stacks such that we have a uniquely commutative diagram

\begin{center}
    \begin{tikzcd}
(X,x) \arrow[d] \arrow[rrd]           &  &    \\
B\pi_1 (\UU(X), *) \arrow[rr, "r"'] &  & BG
\end{tikzcd}
\end{center}
of pointed stacks. Therefore, by the universal property from \cref{lem:Nori-universal-property-restated}, we are done.
\end{proof}

\subsection{The higher unipotent homotopy group schemes}\label{posto1}

In \cref{prop:Nori-affine-pi1}, we recovered the unipotent Nori fundamental group scheme as $\pi_1$ of the unipotent homotopy type of the scheme $X$. In particular, this description of the unipotent Nori fundamental group scheme bypasses the Tannakian formalism that was used by Nori. In this subsection, our goal is to introduce and record some basic properties of the higher unipotent homotopy group schemes $\pi_i (\mathbf{U}(X))$. In particular, we will see that the higher unipotent homotopy group schemes as introduced in \cref{affinestacks} satisfy a product formula (\cref{productfor}), birational invariance (\cref{birational-invariance}) and an analogue of the Hurewicz theorem (\cref{hurewicz2}). 
\begin{notation}\label{cohconn}
We will call a higher stack $X$ over $k$ \emph{cohomologically connected} if $H^0(X,\cO)\simeq k$.
Note that if $X$ is a geometrically reduced and geometrically connected proper scheme over $k$, then it is cohomologically connected (see, e.g., \cite[\href{https://stacks.math.columbia.edu/tag/0FD2}{Tag~0FD2}]{stacks}).
\end{notation}
\begin{definition}[Unipotent homotopy group schemes]\label{affinestacks}
Let $X$ be a pointed, cohomologically connected scheme over $\Spec k$.
In this setup, the sheaves $\pi_i (\UU(X), *)$ are representable by affine unipotent group schemes over $k$ by \cref{thmoftoen} and \cref{connn}.
We define the $i$-th unipotent homotopy group schemes of $X$ to be $$\pi_i^{\mathrm{U}}(X) \colonequals \pi_i (\UU(X), *).$$

\end{definition}{}

The above definition can be formulated in the generality of higher stacks:

\begin{definition}
    Let $X$ be a pointed, cohomologically connected higher stack over $\Spec k$.
    We define the $i$-th unipotent homotopy group schemes of $X$ to be $$\pi_i^{\mathrm{U}}(X) \colonequals \pi_i (\UU(X), *).$$
\end{definition}{}

\begin{remark}
By \cref{prop:Nori-affine-pi1}, for a pointed cohomologically connected scheme $X$ over a field, $\pi^{\mathrm U}_1(X)$ agrees with the unipotent Nori fundamental group scheme $\pi_1^{\mathrm{U,N}}(X)$ from \cref{torsornori}.
However, note that the unipotent homotopy type $\UU(X)$ can be defined for any higher stack over an arbitrary base. One may therefore also contemplate the homotopy sheaves $\pi_i(\UU(X), *)$ for any pointed higher stack $X$, even though they might not be representable in general (see \cref{weirdexample}).
\end{remark}

Now we prove some basic properties of the unipotent homotopy group schemes defined above. Some of these results extend the results known for the unipotent Nori fundamental group scheme. However, the techniques we use to prove them are quite different. 
\begin{proposition}[Product formula]\label{productfor}
Let $X$ and $Y$ be two pointed and cohomologically connected quasicompact and quasiseparated schemes over $k$.
Then $\pi_i^{\mathrm{U}} (X \times Y) \simeq \pi_i^{\mathrm{U}}(X) \times \pi_i^{\mathrm{U}} (Y)$.
\end{proposition}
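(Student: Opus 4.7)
The plan is to prove the stronger statement that $\mathbf{U}(X \times_k Y) \simeq \mathbf{U}(X) \times \mathbf{U}(Y)$ as pointed connected affine stacks, and then extract the assertion on homotopy group schemes. Once we have such an equivalence of pointed affine stacks, the conclusion follows because the sheaves $\pi_i$ are defined by sheafifying the presheaf $\Spec S \mapsto \pi_i(F(\Spec S), *)$ (see \cref{pancake}), and on each $S$ the space of sections of the product is the product of spaces of sections, whose homotopy groups split as a direct product. Note that $X \times_k Y$ is itself cohomologically connected by the Künneth formula, so $\pi_i^{\mathrm{U}}(X \times_k Y)$ is defined.

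To establish the equivalence at the level of unipotent homotopy types, I would use the universal property of $\mathbf{U}$ together with the functoriality of $R\Gamma(-,\cO)$. The two projections $X \times_k Y \to X$ and $X \times_k Y \to Y$ induce a natural map of pointed stacks
\[ \Phi \colon \mathbf{U}(X \times_k Y) \longrightarrow \mathbf{U}(X) \times \mathbf{U}(Y). \]
The right-hand side is an affine stack, since affine stacks are closed under limits (\cref{hypercompletea}, \cref{cope}), and the functor $\Spec \colon (\mathrm{DAlg}^{\mathrm{ccn}}_k)^{\op} \to \Shv(k)$, being a right adjoint (\cref{cope}), sends coproducts in $\mathrm{DAlg}^{\mathrm{ccn}}_k$ (i.e., derived tensor products) to products of stacks. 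Therefore, the problem reduces to showing that $\Phi$ corresponds under $\Spec$ to an equivalence
\[ R\Gamma(X, \cO) \otimes_k R\Gamma(Y, \cO) \xrightarrow{\ \sim\ } R\Gamma(X \times_k Y, \cO) \]
in $\mathrm{DAlg}^{\mathrm{ccn}}_k$.

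This is the Künneth formula for quasi-compact, quasi-separated schemes over a field. Let $p \colon X \times_k Y \to Y$ be the second projection. Since $k$ is a field, every morphism to $\Spec k$ is flat; applying flat base change to the qcqs map $X \to \Spec k$ along $Y \to \Spec k$ yields
\[ Rp_* \cO_{X \times_k Y} \simeq R\Gamma(X, \cO) \otimes_k \cO_Y \]
as $\cO_Y$-modules (the qcqs hypothesis ensures that $R\Gamma(X, \cO)$ is well-behaved and that flat base change applies). Taking $R\Gamma(Y, -)$ then produces the desired Künneth isomorphism as $E_\infty$-algebras.

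The main subtlety, and the step I expect to demand the most care, is upgrading this $E_\infty$-algebra isomorphism to an isomorphism in $\mathrm{DAlg}^{\mathrm{ccn}}_k$. The cleanest way I see is to note that the left Kan extension description of $R\Gamma(\cdot, \cO)$ (\cref{cope}) shows it preserves all colimits in $\PShv(k)$; choosing affine open covers $\{\Spec A_\alpha\}$ of $X$ and $\{\Spec B_\beta\}$ of $Y$, one computes both sides as a limit in $\mathrm{DAlg}^{\mathrm{ccn}}_k$ over the Čech nerves, using the identification $\Spec A_\alpha \times \Spec B_\beta \simeq \Spec(A_\alpha \otimes_k B_\beta)$ at the level of affine pieces, where coproducts in $\mathrm{DAlg}^{\mathrm{ccn}}_k$ reduce to ordinary (underived, since flat) tensor products. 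This produces the comparison already inside $\mathrm{DAlg}^{\mathrm{ccn}}_k$, and the qcqs hypothesis guarantees finiteness of the covers so that the limits behave well and the resulting map is seen to be the one induced by $\Phi$.
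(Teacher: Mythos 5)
Your proof is correct and takes essentially the same approach as the paper: deduce the Künneth isomorphism $R\Gamma(X,\cO)\otimes_k R\Gamma(Y,\cO)\simeq R\Gamma(X\times Y,\cO)$ in $\mathrm{DAlg}^{\mathrm{ccn}}_k$, conclude $\UU(X\times Y)\simeq\UU(X)\times\UU(Y)$, and then pass to homotopy group schemes using that sheafification commutes with finite limits. The paper states the Künneth step more tersely, but the argument is the same.
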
{}
\begin{proof} This is a consequence of the K\"unneth formula in cohomology, which implies that $R\Gamma(X, \cO) \otimes_k R\Gamma(Y, \cO) \simeq R\Gamma (X \times Y, \cO)$ in $\mathrm{DAlg}^{\mathrm{ccn}}_k$.
Indeed, the previous isomorphism yields an isomorphism $\mathbf{U}(X\times Y) \simeq \mathbf{U}(X) \times \mathbf{U}(Y)$ of pointed stacks.
Therefore, the product formula now follows from the facts that taking homotopy groups commutes with products of spaces and that sheafification commutes with finite limits. 
\end{proof}{}

\begin{remark}We point out that as a consequence of our definition of the unipotent homotopy groups, we obtain a simpler proof of the product formula even in the case of $i=1$ which appears in \cite[Lem.~IV.8]{MR682517}.
\end{remark}{}

\begin{proposition}[Birational invariance]\label{birational-invariance}
Let $f \colon X \to Y$ be a birational morphism of smooth proper pointed schemes over $k$. Then $f$ induces isomorphisms $\pi_i^{\mathrm{U}} (X) \simeq \pi_i^{\mathrm{U}}(Y)$ of group schemes over $k$.
\end{proposition}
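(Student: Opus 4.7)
The plan is to show that the natural pullback $f^{*}\colon R\Gamma(Y,\cO) \to R\Gamma(X,\cO)$ is an equivalence in $\mathrm{DAlg}^{\mathrm{ccn}}_{k}$, compatibly with the augmentations coming from the basepoints of $X$ and $Y$. Once this is established, applying $\Spec$ gives an equivalence $\UU(X) \simeq \UU(Y)$ of pointed affine stacks (using the identification $\UU(X) \simeq \Spec R\Gamma(X,\cO)$ from \cref{defuht} and \cref{cope}), and passing to homotopy sheaves yields the desired isomorphisms $\pi_{i}^{\mathrm{U}}(X) \simeq \pi_{i}^{\mathrm{U}}(Y)$ for all $i \ge 0$.

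To prove that $f^{*}$ is an equivalence, I would exploit the properness of $f$ to write $R\Gamma(X,\cO_{X}) \simeq R\Gamma(Y, Rf_{*}\cO_{X})$, thereby reducing the claim to showing that the adjunction unit
\[ \cO_{Y} \longrightarrow Rf_{*}\cO_{X} \]
is an equivalence in the derived category of $Y$. The identity $f_{*}\cO_{X} = \cO_{Y}$ is immediate from Zariski's main theorem, since $Y$ is smooth (hence normal) and $f$ is proper and birational. The essential input is therefore the vanishing $R^{i}f_{*}\cO_{X} = 0$ for $i > 0$: in characteristic zero, this is classical via Hironaka's resolution of singularities (one factors $f$ through sequences of smooth blow-ups, for which the vanishing is well known); in positive characteristic, the corresponding statement is a theorem of Chatzistamatiou--R\"ulling on higher direct images of the structure sheaf under proper birational morphisms between smooth varieties.

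The main obstacle is exactly this higher direct-image vanishing in arbitrary characteristic---after it is granted, the rest of the argument is a formal consequence of the affine-stack formalism developed in \cref{secc2}, together with the fact that $f$ preserves basepoints (so that $f^{*}$ is automatically a map of augmented objects, hence descends to a map of pointed affine stacks). I note that no additional hypothesis on the Picard scheme or on the Nori fundamental group scheme is needed, in contrast with Tannakian approaches: the argument proceeds entirely through the birational invariance of the derived global sections of $\cO$.
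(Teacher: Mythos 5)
Your proposal is correct and follows essentially the same route as the paper: reduce to showing $\cO_Y \simeq Rf_*\cO_X$, i.e.\ $f_*\cO_X = \cO_Y$ together with $R^if_*\cO_X = 0$ for $i>0$, and cite Chatzistamatiou--R\"ulling for the latter in positive characteristic (the reference the paper gives, \cite{MR2923726}, is precisely their theorem). The remaining formal passage from an isomorphism of augmented objects $R\Gamma(Y,\cO)\to R\Gamma(X,\cO)$ to isomorphisms $\pi_i^{\mathrm U}(X)\simeq\pi_i^{\mathrm U}(Y)$ is exactly what the paper leaves implicit.
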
{}
\begin{proof} 
It suffices to prove that $R^i f_* \cO_X=0$ for $i>0$ and $\cO_Y \simeq R^0 f_* \cO_X$. Therefore, the proposition follows from \cite[Thm.~2]{MR2923726}.
\end{proof}{}
\begin{proposition}\label{simplyconnected}Let $X$ be a pointed cohomologically connected higher stack over $k$ such that $H^1(X, \cO)=0$. Then $\pi_1^{\mathrm U}(X)$ is the trivial group scheme over $k$. Conversely, if $\pi_1^{\mathrm U}(X)$ is trivial, then $H^1(X, \cO)=0$.
\end{proposition}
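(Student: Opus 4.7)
The plan is to reduce the statement to a computation of $H^1$ of the classifying stack $B\pi_1^{\mathrm{U}}(X)$. Since $X$ is cohomologically connected, $\mathbf{U}(X) \simeq \Spec R\Gamma(X,\cO)$ is a pointed connected affine stack by \cref{connn}, and the adjunction defining $\mathbf{U}(X)$ gives $H^i(\mathbf{U}(X),\cO) \simeq H^i(X,\cO)$ for all $i$. Setting $G \colonequals \pi_1^{\mathrm{U}}(X)$, which is a unipotent affine group scheme by \cref{thmoftoen}, we have $\tau_{\le 1}\mathbf{U}(X) \simeq BG$, and so part $(2)$ of \cref{postnikov} supplies a natural isomorphism $H^1(BG,\cO) \xrightarrow{\sim} H^1(\mathbf{U}(X),\cO) \simeq H^1(X,\cO)$. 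This reduces both directions to understanding when $H^1(BG,\cO)$ vanishes for $G$ unipotent.

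I would then identify $H^1(BG,\cO) \simeq \Hom_{\w{gp-sch}}(G,\mathbf{G}_a)$. By \cref{heartrep}, the heart $\QCoh(BG)$ is the category of $G$-representations, so $H^1(BG,\cO) \simeq \Ext^1_G(k,k)$ parametrizes self-extensions of the trivial representation; with trivial coefficient action these are classified by primitive elements of the Hopf algebra $\cO(G)$, equivalently by group scheme homomorphisms $G \to \mathbf{G}_a$. The converse direction of the proposition is then immediate: if $G = 1$, then $BG \simeq \Spec k$ and hence $H^1(X,\cO) = 0$.

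For the forward direction I will argue the contrapositive: assuming $G \ne 1$, I construct a nonzero homomorphism $G \to \mathbf{G}_a$. Consider the regular representation of $G$ on $\cO(G)$; its $G$-invariant subspace equals the constants $k \subset \cO(G)$. Since $G \ne 1$, the quotient $\cO(G)/k$ is a nonzero $G$-representation, so by unipotence it contains a nonzero fixed vector $\bar{x}$. Any lift $x \in \cO(G)\setminus k$ then satisfies $\Delta(x) = x\otimes 1 + 1\otimes c$ for some $c \in \cO(G)$, and a short computation using coassociativity forces $c$ to be primitive, hence to correspond to a homomorphism $G \to \mathbf{G}_a$. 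The element $c$ must be nonzero, since otherwise $x$ itself would be $G$-fixed and thus constant, contradicting $\bar{x} \ne 0$.

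The main substantive step is this lemma: a nontrivial unipotent affine group scheme admits a nontrivial homomorphism to $\mathbf{G}_a$. Extracting $c$ from the regular representation has the advantage that no finite-type or smoothness hypothesis on $G$ is required, using only the paper's definition of unipotence. Everything else in the argument is formal and combines the Postnikov tower of \cref{postnikov} with the description of quasi-coherent sheaves on a classifying stack from \cref{heartrep}.
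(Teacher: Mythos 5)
Your argument is correct and follows the same strategy as the paper: reduce to $H^1(BG,\cO) \simeq \Hom(G,\GG_a)$ for $G = \pi_1^{\mathrm U}(X)$ via the $1$-truncation $\tau_{\le 1}\mathbf{U}(X) \simeq BG$ and \cref{postnikov}. The difference is in how the two arguments discharge the forward direction: the paper simply asserts that unipotence of $G$ together with $\Hom(G,\GG_a) = 0$ forces $G$ to be trivial (treating this as standard, as it essentially appears in Nori's work), and computes $H^1(BG,\GG_a) \simeq \Hom(G,\GG_a)$ by a one-line fpqc descent argument. You instead give a complete, self-contained proof of the key fact that a nontrivial unipotent affine group scheme admits a nontrivial homomorphism to $\GG_a$: starting from a $G$-fixed vector in $\cO(G)/k$, lifting it, and reading off a primitive element from coassociativity. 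This is a genuine improvement in rigor, since the paper's definition of unipotent allows group schemes that are not of finite type, where the more familiar finite-type arguments (normal series with $\GG_a$ quotients) do not immediately apply; your Hopf-algebraic argument handles this uniformly. Your identification of $H^1(BG,\cO)$ with $\Ext^1$ in the heart via \cref{heartrep} is a slightly longer route than the paper's direct Čech-complex computation from fpqc descent along $\Spec k \to BG$, but both are valid and lead to the same conclusion.
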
{}
\begin{proof}
By \cref{connn}, the 1-truncation of $\mathbf{U}(X)$ is equivalent to $B \pi_1^{\mathrm U}(X)$.
Therefore,
$$ H^1(X, \cO) \simeq H^1(\mathbf{U}(X), \cO) \simeq H^1 (B \pi_1^{\mathrm U}(X), \GG_a ) \simeq \Hom(\pi_1^{\mathrm U}(X), \GG_a), $$
where the middle isomorphism follows from \cref{postnikov} and the last one from fpqc descent along $\Spec k \to B\pi^\rU_1(X)$.
Since $\pi_1^{\mathrm U}(X)$ is unipotent, $H^1(X, \cO)=0$ implies that $\pi_1^{\mathrm U}(X)$ must be the trivial group scheme.
The converse also follows from the above isomorphisms.
\end{proof}{}
\begin{remark}
In the case of schemes, the isomorphism $H^1 (X, \cO) \simeq \Hom(\pi_1^{\mathrm U}(X), \GG_a)$ already appears in \cite[Prop.~2]{MR682517} as a result of Tannakian principles;
this is enough to prove \cref{simplyconnected}.
We formulated our proof in the language of stacks to motivate a more general assertion appearing in \cref{hurewicz} below, in which we reinterpret this isomorphism from the perspective of the Hurewicz theorem in algebraic topology. We will begin with the following lemma.
\end{remark}

\begin{lemma}\label{gait1}Let $G$ be a commutative affine group scheme over $k$ and $m \ge 1$ be an integer. Then $H^m(K(G,m), \cO) \simeq \Hom(G, \GG_a)$ and there is a natural injection  $\Ext^1 (G, \GG_a) \to H^{m+1}(K(G,m), \cO)$ which is an isomorphism if $m \ge 2$. Also, $H^i (K(G, m), \cO) =0$ for $0 <i < m$.
\end{lemma}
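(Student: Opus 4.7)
We proceed by induction on $m \ge 1$, using the bar presentation $K(G, m) = B K(G, m-1)$ as a colimit of the simplicial affine scheme $K(G, m-1)^{\bullet}$ to reduce the case of $K(G, m)$ to that of $K(G, m-1)$.

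\emph{Base case $m = 1$:} The fpqc cover $\Spec k \to BG$ has Čech nerve $G^{\bullet}$, and faithfully flat descent identifies $R\Gamma(BG, \cO)$ with the cobar complex computing $R\Hom_{\Rep(G)}(k, k)$. Consequently $H^0(BG, \cO) = k$, and $H^1(BG, \cO) = \Ext^1_{\Rep(G)}(k, k) \cong \Hom(G, \GG_a)$ (homomorphisms of group schemes) via the standard correspondence between self-extensions of the trivial representation and additive characters of $G$. The injection $\Ext^1(G, \GG_a) \hookrightarrow H^2(BG, \cO)$ sends an extension $0 \to \GG_a \to H \to G \to 0$ to the class of the induced $B\GG_a$-gerbe $BH \to BG$ in $H^2(BG, \GG_a) = H^2(BG, \cO)$; this class vanishes precisely when the extension splits.

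\emph{Inductive step:} Assume the lemma for $m - 1$, with $m \ge 2$. The bar presentation yields $R\Gamma(K(G, m), \cO) \simeq \holim_{[p] \in \Delta} R\Gamma(K(G, m-1)^{p}, \cO)$, producing a cosimplicial spectral sequence
\[ E_1^{p, q} = H^q(K(G, m-1)^{p}, \cO) \cong \bigoplus_{q_1 + \cdots + q_p = q} \bigotimes_{i=1}^p H^{q_i}(K(G, m-1), \cO) \Longrightarrow H^{p+q}(K(G, m), \cO), \]
where the displayed isomorphism is Künneth and the $d_1$-differential is the alternating sum of pullbacks along the face maps of the bar construction. Using the inductive description of $H^{\le m}(K(G, m-1), \cO)$ together with the primitivity of its classes under the Hopf structure $\mu^* \colon H^{*}(K(G, m-1)) \to H^{*}(K(G, m-1) \times K(G, m-1))$ induced from the commutative group structure (every additive character $G \to \GG_a$ is primitive, and every central extension of $G$ by $\GG_a$ is primitive since the $\mu$-pullback of an extension coincides with the Baer sum of its two projection-pullbacks), a direct computation shows that the only nonzero $E_2$-entries in total degree $\le m$ are $E_2^{0, 0} = k$ and $E_2^{1, m-1} = \Hom(G, \GG_a)$. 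Since no higher differentials can enter or leave these positions for degree reasons, they survive to $E_\infty$, yielding $H^i(K(G, m), \cO) = 0$ for $0 < i < m$ and $H^m(K(G, m), \cO) = \Hom(G, \GG_a)$.

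The analogous analysis at the row $q = m$ gives the $\Ext^1$-assertion. For $m \ge 3$, $H^m(K(G, m-1), \cO) = \Ext^1(G, \GG_a)$ (by the inductive hypothesis) consists entirely of primitive classes, so the bar differential yields $E_2^{1, m} = \Ext^1(G, \GG_a)$, which survives to $E_\infty^{1, m} = H^{m+1}(K(G, m), \cO)$. The remaining case $m = 2$ is the main obstacle: here $H^2(K(G, 1), \cO)$ can be strictly larger than its primitive subspace $\Ext^1(G, \GG_a)$ due to decomposable products in $H^1 \otimes H^1 = \Hom(G, \GG_a)^{\otimes 2}$. A direct expansion of $\mu^*(xy) = (x \otimes 1 + 1 \otimes x)(y \otimes 1 + 1 \otimes y)$ shows that the cosimplicial differential $\delta^2$ annihilates the primitive subspace and injectively maps each decomposable $xy$ to $\pm(x \otimes y + y \otimes x) \in \Hom(G, \GG_a)^{\otimes 2} \subset E_1^{2, 2}$; hence $E_2^{1, 2} = \Ext^1(G, \GG_a)$ survives to $E_\infty^{1, 2} = H^3(K(G, 2), \cO)$, completing the induction.
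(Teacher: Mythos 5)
Your approach via the bar spectral sequence matches the paper's first-mentioned route (a spectral sequence argument in the spirit of \cite[Rmk.~3.17]{Mon21}, which the paper does not write out), whereas the paper's detailed alternative is the delooping argument: $H^{m+1}(K(G,m),\cO) = \pi_0\Map_*(K(G,m),K(\GG_a,m+1))$, which after delooping $m$ times becomes maps of $E_m$-group stacks $G \to B\GG_a$, and for $m\ge 2$ this is $\Ext^1(G,\GG_a)$ outright, with no spectral sequence needed. Your inductive step for $m\ge 3$ is sound: the inductive hypothesis forces $E_1^{1,m}=\Ext^1(G,\GG_a)$ to consist entirely of primitive classes (each coming from a classifying map of group stacks), so $\delta$ vanishes on it, and the remaining positions in total degree $\le m+1$ die as you say.

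The $m=2$ step, however, has a genuine gap. There you need $\ker\bigl(\delta\colon H^2(BG,\cO)\to E_1^{2,2}\bigr)=\Ext^1(G,\GG_a)$, and the relevant component of $\delta$ is the reduced coproduct $\bar\mu^*\colon H^2(BG)\to H^1(BG)\otimes H^1(BG)$. You argue that $\delta$ kills primitives and is injective on decomposables. Several things go wrong. First, graded commutativity gives $\bar\mu^*(xy)=x\otimes y - y\otimes x$, not $\pm(x\otimes y+y\otimes x)$. Second, and more seriously, with the correct (alternating) formula the claimed injectivity fails in characteristic $2$: there $x^2\mapsto x\otimes x - x\otimes x=0$, and $x^2$ can be a nonzero decomposable (for $G=\ZZ/2$ over $\FF_2$, $H^*(BG,\cO)=\FF_2[x]$ and $x^2\ne 0$ is decomposable, primitive, and is the image of the nontrivial class of $\Ext^1(\ZZ/2,\GG_a)$). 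Third, the argument tacitly presumes a direct sum decomposition $H^2(BG,\cO)=(\text{primitives})\oplus(\text{decomposables})$, which is not established and does not hold in this generality. Fourth, and most fundamentally, even if the map were injective on a complement of the primitives, you would still need the separate identification of the primitive subspace of $H^2(BG,\cO)$ with $\Ext^1(G,\GG_a)$; the base case provided only an injection $\Ext^1(G,\GG_a)\hookrightarrow H^2(BG,\cO)$, not that its image exhausts the primitives. That identification (a class in $H^2(BG,\GG_a)$ classifies a central extension of $G$ by $\GG_a$, and primitivity of the class is equivalent to the extension being abelian, i.e., lying in $\Ext^1(G,\GG_a)$) is a substantive fact that must be supplied for the induction to close at $m=2$; the delooping argument in the paper's proof circumvents exactly this point.
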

\begin{proof}
This can be seen by a spectral sequence argument similar to \cite[Rmk.~3.17]{Mon21}.
Alternatively, we note that $H^{m+1}(K(G,m), \cO)= \pi_0 \w{Map}(K(G,m), K(\GG_a, m+1 ))$.
The latter may also be computed as homotopy classes of maps $K(G,m) \to K(\GG_a, m+1)$ of \emph{pointed} higher stacks (e.g., using \cref{usefullemma113} and the fact that $H^n(\Spec k,\GG_a) = 0$ for all $n>0$).
By delooping repeatedly, one can also compute that as homotopy classes of maps $G \to B \GG_a$ of $E_m$-group stacks\footnote{i.e., an $E_m$-group like object in the $\infty$-category of stacks, see for e.g., \cite[Thm.~5.2.6.10]{luriehigher}.} which gives the claim that $H^{m+1}(K(G, m), \cO) = \mathrm{Ext}^1(G, \GG_a)$ for $m \ge 2$. The claims that $H^m(K(G, m), \cO) \simeq \mathrm{Hom}(G, \GG_a)$ and $H^i (K(G,m), \cO)=0$ for $0<i <m$ follow similarly.
\end{proof}{}

\begin{proposition}[Hurewicz theorem for affine stacks]\label{hurewicz}
Let $X$ be a pointed connected affine stack over $k$.
Let $n \ge 0$ be a nonnegative integer.
Then the following two statements are equivalent:
\begin{enumerate}
    \item  $H^i(X, \cO)$ is trivial for $ i \le n$.\footnote{\label{trivial-convention}
    For $n=0$, we use the convention that $H^0 (X, \cO)$ being trivial means that $H^0 (X, \cO) \simeq k$.}
    \item $\pi_i(X)$ is trivial for $ i \le n$.
\end{enumerate}
Moreover, in such a situation, we have $H^{n+1}(X, \cO) \simeq\Hom (\pi_{n+1}(X), \GG_a)$ and there is a natural injection $\Ext^1(\pi_{n+1}(X), \GG_a) \hookrightarrow H^{n+2}(X, \cO)$.
\end{proposition}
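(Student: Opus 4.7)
The plan is to prove both equivalent statements by induction on $n$, using the Postnikov tower of the affine stack $X$ from \cref{postnikov} together with the computation of the low-degree cohomology of Eilenberg--MacLane stacks in \cref{gait1}. The key observation is that when $\pi_i(X)=0$ for all $i \le n$, the connectedness of $X$ implies $\tau_{\le n}X \simeq *$, and more generally $\tau_{\le n+1}X \simeq K(\pi_{n+1}(X), n+1)$; these identifications reduce the cohomology of $X$ in degrees $\le n+2$ to explicit computations on Eilenberg--MacLane stacks via \cref{postnikov}.

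The implication $(2) \Rightarrow (1)$ is then immediate: from $\tau_{\le n}X \simeq *$ and \cref{postnikov} one gets $H^i(X,\cO) \simeq H^i(\tau_{\le n}X,\cO) = H^i(*,\cO)$ for $i \le n$, which is $k$ for $i=0$ and trivial otherwise (in accordance with footnote~\ref{trivial-convention}). For $(1) \Rightarrow (2)$, I would argue by induction. The base case $n=0$ is built into the hypotheses ($X$ connected and cohomologically connected; the latter follows from \cref{connn} since $X$ is a pointed connected affine stack). For the inductive step with $n \ge 1$, the induction hypothesis gives $\pi_i(X) = 0$ for $i \le n-1$, so $\tau_{\le n}X \simeq K(\pi_n(X), n)$. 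Applying \cref{postnikov} to $X$, I obtain $H^n(K(\pi_n(X), n), \cO) \simeq H^n(\tau_{\le n}X,\cO) \simeq H^n(X,\cO) = 0$, which by \cref{gait1} translates to $\Hom(\pi_n(X), \GG_a) = 0$. By \cref{thmoftoen}, $\pi_n(X)$ is representable by a unipotent affine group scheme; I would then conclude $\pi_n(X) = 0$ from the fact that any nonzero unipotent affine group scheme admits a nonzero homomorphism to $\GG_a$ (for $n \ge 2$, commutativity plus reduction to a finite-type quotient does the job; for $n = 1$, one first passes to the abelianization, which is nonzero for any nonzero unipotent group scheme by solvability).

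For the ``moreover'' part, granted the equivalent conditions, the same reasoning shows $\tau_{\le n+1}X \simeq K(\pi_{n+1}(X), n+1)$. Applying \cref{postnikov} with $N = n+1$ yields an isomorphism $H^{n+1}(\tau_{\le n+1}X,\cO) \xrightarrow{\sim} H^{n+1}(X,\cO)$ and an injection $H^{n+2}(\tau_{\le n+1}X,\cO) \hookrightarrow H^{n+2}(X,\cO)$. Combining these with the identifications $H^{n+1}(K(\pi_{n+1}(X), n+1), \cO) \simeq \Hom(\pi_{n+1}(X), \GG_a)$ and $\Ext^1(\pi_{n+1}(X), \GG_a) \hookrightarrow H^{n+2}(K(\pi_{n+1}(X), n+1),\cO)$ from \cref{gait1} produces the asserted isomorphism in degree $n+1$ and the asserted injection in degree $n+2$.

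The main non-formal obstacle is the structural fact invoked above: that every nonzero unipotent affine group scheme $G$ over $k$ satisfies $\Hom(G, \GG_a) \neq 0$. This requires some care because $G$ is allowed to be of infinite type, and in the $n = 1$ case possibly non-commutative; I would isolate this as a separate lemma about unipotent group schemes, deducing it by reducing through $G \twoheadrightarrow G^{\mathrm{ab}}$ to the commutative case and then to a nonzero finite-type quotient whose composition series has subquotients that embed in $\GG_a$.
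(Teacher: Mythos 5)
Your argument follows the paper's essentially line by line: the implication $(2)\Rightarrow(1)$ via the Postnikov tower, the inductive proof of $(1)\Rightarrow(2)$ by showing $\tau_{\le i}X \simeq *$ step by step using \cref{postnikov} and \cref{gait1}, and the deduction of the ``moreover'' part from $\tau_{\le n+1}X \simeq K(\pi_{n+1}(X),n+1)$ all match the paper's proof. The unipotence fact you isolate as the main obstacle in fact drops out directly from the paper's definition of a unipotent group scheme, with no detour through abelianization or finite-type quotients: if $G \neq 0$ then $\cO(G) \supsetneq k$, so some finite-dimensional subrepresentation $V$ of the regular representation satisfies $k \subsetneq V$; by unipotence $(V/k)^G \neq 0$, and lifting a nonzero invariant class to $v \in V \setminus k$ gives $g\cdot v - v \in k$ for all scheme-theoretic $g$, whence $g \mapsto g\cdot v - v$ is a nonzero homomorphism $G \to \GG_a$.
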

\begin{proof}
First, we note that since $X$ is by assumption pointed and connected, by descent along $\Spec k \to X$, it follows that $H^0 (X, \cO) \simeq k$.
Thus, the two statements are equivalent when $n=0$.
Next, we will check their equivalence when $n \ge 1$.
\vspace{2mm}

To begin, let us assume that $\pi_i(X)$ is trivial for $0\le i \le n$.
Using the assumption, it follows that the $n$-truncation $\tau_{\le n} X$ from \cref{postnikov} must be naturally isomorphic to the point.
Since $H^i (\tau_{\le n}X, \cO) \simeq H^i (X, \cO)$ for $i \le n$, we obtain the desired conclusion. \vspace{2mm}

Now conversely, we assume that $H^i (X, \cO)=0$ for $1 \le i \le n$. Let us consider the Postnikov tower $(\tau_{\le n} X)$ as in \cref{postnikov}. It would be enough to prove that $\tau_{\le n} X$ is naturally isomorphic to the point.
We will prove via induction on $i$ that $* \simeq \tau_{\le i}X$ for $0 \le i \le n$. Since $X$ is connected, the claim $*\simeq \tau_{\le 0}X$ holds. Assuming that the claim is true for $\tau_{\le {i-1}}X$, it follows that $\tau_{\le i}X \simeq K(\pi_i (\tau_{\le i}X), i) \simeq K(\pi_i(X), i)$. By \cref{postnikov}, the natural map $H^i (\tau_{\le i}X, \cO) \to H^i (X, \cO)$ is an isomorphism. Using the computation from \cref{gait1}, we obtain a natural isomorphism $\Hom(\pi_i(X) , \GG_a) \xrightarrow{\sim} H^i (X, \cO)$.
If $i \le n$, the assumption $H^i (X, \cO)=0$ then gives $\Hom(\pi_i(X) , \GG_a)=0$. Since $\pi_i(X)$ is an affine unipotent group scheme, this implies that $\pi_i(X)$ is the trivial group scheme. Therefore, we see that $\tau_{\le i}X$ is indeed naturally isomorphic to the point for $i \le n$. In particular, $\tau_{\le n}X$ is naturally isomorphic to the point. This finishes the proof of the claim.
\vspace{2mm}

Now we prove the last part of the proposition. Let $n \ge 0$ be as given. Under the above equivalent assumptions, $\tau_{\le n}X$ is naturally isomorphic to the point. Therefore, it follows that $\tau_{\le {n+1}}X \simeq K(\pi_{n+1}(X), n+1)$. By \cref{postnikov}, the map $H^{n+1}(\tau_{\le n+1}X, \cO) \to H^{n+1}(X, \cO)$ is an isomorphism and the map $H^{n+2} (\tau_{\le n+1}X, \cO) \to H^{n+2}(X, \cO)$ is an injection. Applying \cref{gait1} now finishes the proof of \cref{hurewicz}.
\end{proof}{}

\begin{corollary}[Hurewicz theorem for unipotent homotopy group schemes]\label{hurewicz2}Let $X$ be a pointed, cohomologically connected higher stack over $k$.
Let $n \ge 0$ be a nonnegative integer.
Then the following two statements are equivalent:
\begin{enumerate}
    \item  $H^i(X, \cO)$ is trivial for $i \le n$.
    \item $\pi_i^{\mathrm{U}}(X)$ is the trivial group scheme for $ i \le n$.
\end{enumerate}
Moreover, in such a situation, we have $H^{n+1}(X, \cO) \simeq\Hom (\pi_{n+1}^\mathrm{U}(X), \GG_a)$ and there is an injection $\Ext^1(\pi_{n+1}^\mathrm{U}(X), \GG_a) \hookrightarrow H^{n+2}(X, \cO)$.

\end{corollary}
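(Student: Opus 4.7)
The strategy is to reduce the corollary to \cref{hurewicz} (the Hurewicz theorem for affine stacks) applied to the unipotent homotopy type $\mathbf{U}(X)$.

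First I would verify that $\mathbf{U}(X)$ is a pointed connected affine stack. It is affine by construction. Since $X$ is pointed, $\mathbf{U}(X)$ is naturally pointed via the universal map $X \to \mathbf{U}(X)$. To see that it is connected, I would use \cref{connn}: the assumption $H^0(X,\cO) \simeq k$ together with the isomorphism $\mathbf{U}(X) \simeq \Spec R\Gamma(X,\cO)$ shows that $\mathbf{U}(X)$ is the spectrum of an augmented object $B \in \mathrm{DAlg}_k^{\mathrm{ccn}}$ with $H^0(B) \simeq k$, hence is pointed and connected.

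Next I would compare cohomology of $X$ and of $\mathbf{U}(X)$. By \cref{cope}, we have $R\Gamma(\mathbf{U}(X),\cO) \simeq R\Gamma(X,\cO)$, so $H^i(X,\cO) \simeq H^i(\mathbf{U}(X),\cO)$ for every $i \ge 0$. By \cref{affinestacks} (extended to higher stacks in the subsequent definition), we also have $\pi_i^{\mathrm{U}}(X) = \pi_i(\mathbf{U}(X),*)$ by definition.

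With these two identifications in place, both assertions of the corollary become literal translations of the corresponding assertions of \cref{hurewicz} applied to the pointed connected affine stack $\mathbf{U}(X)$: the equivalence of $(1)$ and $(2)$ follows immediately, as do the isomorphism $H^{n+1}(X,\cO) \simeq \Hom(\pi_{n+1}^{\mathrm{U}}(X), \GG_a)$ and the injection $\Ext^1(\pi_{n+1}^{\mathrm{U}}(X), \GG_a) \hookrightarrow H^{n+2}(X,\cO)$. There is essentially no obstacle here beyond unpacking the definitions; the real content was already carried out in \cref{hurewicz}, which in turn relied on the Postnikov tower description of affine stacks (\cref{postnikov}) and the cohomology computation for Eilenberg--MacLane stacks (\cref{gait1}).
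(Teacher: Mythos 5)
Your proposal is correct and is exactly the argument given in the paper, which simply states that the corollary follows from applying \cref{hurewicz} to $\mathbf{U}(X)$. You have spelled out the preliminary verifications (that $\mathbf{U}(X)$ is pointed connected affine via \cref{connn}, and that cohomology and homotopy group schemes transfer via \cref{cope}) which the paper leaves implicit, but the route is the same.
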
{}
\begin{proof}Follows from applying \cref{hurewicz} to $\mathbf{U}(X)$.
\end{proof}{}

\begin{lemma}\label{algebraicdesc} Let $X$ be a pointed connected affine stack over $k$ such that $H^0 (X, \cO) \simeq k$. Then the affine scheme $\Spec\,\pi_0 (k \otimes_{R\Gamma(X, \cO)} k)$ has a natural group scheme structure under which it is isomorphic to $\pi_1 (X)$.
\end{lemma}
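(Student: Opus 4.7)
The plan is to identify $\Spec \pi_0(k \otimes_{R\Gamma(X, \cO)} k)$ with $\pi_0$ of the loop stack $\Omega X \colonequals \Spec k \times_X \Spec k$, and then invoke the standard fact that in any $\infty$-topos the sheaf $\pi_0(\Omega X)$ of connected components of the loop object is canonically isomorphic to $\pi_1(X)$ as a sheaf of groups.

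First, since $X$ is an affine stack we have $X \simeq \Spec R\Gamma(X,\cO)$ by \cref{cope}. The functor $\Spec \colon (\mathrm{DAlg}^{\mathrm{ccn}}_k)^{\mathrm{op}} \to \Shv(k)$ sends colimits to limits (being a right adjoint), so
\[ \Omega X \simeq \Spec\bigl(k \otimes_{R\Gamma(X,\cO)} k\bigr), \]
where the tensor product is the pushout in $\mathrm{DAlg}^{\mathrm{ccn}}_k$. In particular, $\Omega X$ is itself an affine stack. Furthermore, because $X$ is pointed and connected, the Čech nerve of $\Spec k \to X$ exhibits $\Omega X$ as a group object in (affine) stacks in the usual way, and $\pi_0(\Omega X) \simeq \pi_1(X)$ as sheaves of groups (this is true for any pointed connected object of an $\infty$-topos).

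It remains to verify that for any $B \in \mathrm{DAlg}^{\mathrm{ccn}}_k$ the sheaf $\pi_0(\Spec B)$ is represented by the classical affine scheme $\Spec H^0(B) = \Spec \pi_0(B)$. This is immediate: for a discrete $k$-algebra $T$, the mapping space $\mathrm{Map}_{\mathrm{DAlg}^{\mathrm{ccn}}_k}(B, T)$ has $\pi_0$ equal to $\mathrm{Hom}_{\Alg_k}(H^0(B), T)$, since $T$ is $0$-truncated in $\mathrm{DAlg}^{\mathrm{ccn}}_k$; the resulting presheaf is already a sheaf and is visibly represented by $\Spec H^0(B)$. Applying this with $B = k \otimes_{R\Gamma(X,\cO)} k$ and transporting the group structure from $\Omega X$ through the $\pi_0$-functor gives the claimed isomorphism of group schemes.

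There is no real obstacle here; the only point to be checked carefully is that the group scheme structure on $\Spec \pi_0(k \otimes_{R\Gamma(X,\cO)} k)$ produced by this identification coincides with the natural one induced by comultiplication on the pushout $k \otimes_{R\Gamma(X,\cO)} k$, but this follows from the functoriality of $\pi_0$ and the fact that the group object structure on $\Omega X$ is induced by the simplicial object $\Spec k \times_X \cdots \times_X \Spec k$, which under $\Spec$ corresponds to the iterated pushouts of $k$ over $R\Gamma(X,\cO)$.
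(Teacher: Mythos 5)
Your overall strategy (pass to the loop stack $\Omega X$, use $\pi_0(\Omega X) \simeq \pi_1(X)$, and compute $\pi_0$ via $H^0$) is the same as the paper's, but there are two problems with the execution.

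\textbf{The tensor product.} You write $\Omega X \simeq \Spec(k \otimes_{R\Gamma(X,\cO)} k)$ ``where the tensor product is the pushout in $\mathrm{DAlg}^{\mathrm{ccn}}_k$.'' That identification is correct and formal, because $\Spec$ is a right adjoint. However, the $\otimes$ appearing in the statement of the lemma is, per the paper's conventions, the \emph{derived} ($E_\infty$) tensor product in $D(k)$, not the pushout in $\mathrm{DAlg}^{\mathrm{ccn}}_k$. These are a priori different: the remark immediately following \cref{rent14} explicitly warns that ``the $k \otimes_{R\Gamma(X,\cO)}k$ \dots\ refers to the pushout of $E_\infty$-algebras (as opposed to pushout in $\mathrm{DAlg}^{\mathrm{ccn}}_k$), so the statement does not simply follow from the fact that the functor $\Spec$ takes colimits in $\mathrm{DAlg}^{\mathrm{ccn}}_k$ to limits.'' Bridging the two requires the Eilenberg--Moore comparison of \cref{rent14}, which in turn rests on the cohomology-and-base-change machinery of \cref{sec2.3} (ultimately \cref{makiman34}). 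This is a genuine ingredient of the proof, and your argument omits it entirely; without it you have only identified $\Spec\pi_0$ of the $\mathrm{DAlg}^{\mathrm{ccn}}_k$-pushout, which is not what the lemma asserts.

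\textbf{The $0$-truncatedness claim.} The assertion that ``$T$ is $0$-truncated in $\mathrm{DAlg}^{\mathrm{ccn}}_k$'' for a discrete $k$-algebra $T$ is false. For instance, the free-forgetful adjunction gives
\[
\mathrm{Map}_{\mathrm{DAlg}^{\mathrm{ccn}}_k}(\Sym k[-1], T) \simeq \mathrm{Map}_{D(k)}(k[-1], T),
\]
which has $\pi_1 = H^0(T) = T \neq 0$; this is exactly the observation that $K(\GG_a,1)(\Spec T)$ has nontrivial $\pi_1$. (Even granting $0$-truncatedness, it would not by itself give the identification of $\pi_0$ with $\mathrm{Hom}(H^0(B),T)$.) The correct device, used by the paper, is that the affine \emph{scheme} $\Spec S$ is $0$-truncated as an object of $\Shv(k)$ (its functor of points is set-valued). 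The paper then computes $\mathrm{Hom}_{\Shv(k)}(\Omega X, \Spec S)$ in two ways: through the $\Spec\dashv R\Gamma$ adjunction and the observation that a map of $\mathrm{DAlg}^{\mathrm{ccn}}_k$ from a discrete $S$ into the coconnective $R\Gamma(\Omega X,\cO)$ factors through $H^0$, giving $\mathrm{Hom}_k(S, H^0(\Omega X,\cO))$; and through the $0$-truncatedness of $\Spec S$, giving $\mathrm{Hom}_{\Shv(k)}(\pi_0(\Omega X),\Spec S)$. Finally, it invokes the fact (from \cref{thmoftoen}) that $\pi_1(X)$ is \emph{already known} to be representable by an affine scheme, so Yoneda on affine schemes yields the isomorphism. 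Your argument instead tries to prove in one stroke that $\pi_0(\Spec B) \simeq \Spec H^0(B)$ for arbitrary coconnective $B$; even if that statement is true, one would need a different argument (the presheaf $T \mapsto \pi_0\mathrm{Map}(B,T)$ need not visibly be $\mathrm{Hom}(H^0(B),T)$), and the representability of $\pi_1(X)$ cannot be bypassed.
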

\begin{proof}
By \cref{hypercompletea} and \cref{rent14}, the loop stack $\Omega X$ is an affine stack and $R\Gamma (\Omega X, \cO) \simeq k \otimes_{R\Gamma(X, \cO)}k$, respectively.
The universal property of mapping into affine schemes shows that giving a map from $\Omega X \to \Spec S$ is equivalent to giving a map $\Spec\,H^0 (\Omega X, \cO) = \Spec\,\pi_0 (k \otimes_{R\Gamma(X, \cO)} k) \to \Spec S$.
\vspace{2mm}

On the other hand, since the affine scheme $\Spec S$ is $0$-truncated as a stack, giving a map from $\Omega X \to \Spec S$ is also equivalent to giving a map $\pi_0 (\Omega X) \to \Spec S$. Now we note that $\pi_0 (\Omega X) = \pi_1 (X)$. Since we know that $\pi_1 (X)$ is representable by a unipotent affine group scheme and $\Spec S$ was an arbitrary affine scheme, it follows that $\pi_1 (X) \simeq \Spec\,\pi_0 (k \otimes_{R\Gamma(X, \cO)} k)$ as affine schemes. This equips $\Spec\,\pi_0 (k \otimes_{R\Gamma(X, \cO)} k)$ with the structure of an affine group scheme as desired and proves the claim.
\end{proof}{}
Combining \cref{algebraicdesc} above with \cref{prop:Nori-affine-pi1}, we obtain the following explicit algebraic description of $\pi_1^{\mathrm U}(X)$.
\begin{corollary}\label{maincorro}
Let $X$ be a pointed cohomologically connected scheme (of finite type) over a field $k$.
    Then we have an isomorphism of group schemes
$\pi_1^{\mathrm U}(X) \simeq \pi_1^{\mathrm{U,N}}(X) \simeq \Spec \pi_0 (k \otimes_{R\Gamma(X, \mathscr O)}k)$.
\end{corollary}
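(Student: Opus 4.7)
The plan is to simply chain together the two prior results \cref{prop:Nori-affine-pi1} and \cref{algebraicdesc}. The first isomorphism $\pi_1^{\mathrm U}(X) \simeq \pi_1^{\mathrm{U,N}}(X)$ is literally the content of \cref{prop:Nori-affine-pi1}, so the only remaining task is to produce the second isomorphism $\pi_1^{\mathrm U}(X) \simeq \Spec \pi_0(k \otimes_{R\Gamma(X,\mathscr O)} k)$.

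For this, I would apply \cref{algebraicdesc} to the unipotent homotopy type $\mathbf{U}(X)$. Since $X$ is pointed, $\mathbf{U}(X)$ is a pointed affine stack, and since $H^0(X,\mathscr O) \simeq k$, the stack $\mathbf U(X)$ is connected by \cref{connn}. Hence the hypotheses of \cref{algebraicdesc} apply to $\mathbf U(X)$, yielding a canonical identification
\[ \pi_1(\mathbf U(X)) \simeq \Spec \pi_0\bigl(k \otimes_{R\Gamma(\mathbf U(X),\mathscr O)} k\bigr). \]
Combining this with the definition $\pi_1^{\mathrm U}(X) \colonequals \pi_1(\mathbf U(X))$ and the identification $R\Gamma(\mathbf U(X),\mathscr O) \simeq R\Gamma(X,\mathscr O)$ from \cref{cope} (which holds because $\mathbf U(X) \simeq \Spec R\Gamma(X,\mathscr O)$ by construction) completes the proof.

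There is no real obstacle here; the corollary is a purely formal consequence of the two preceding statements, and the only small point to check is that the derived global sections of $X$ and of its unipotent homotopy type agree, which is immediate from the adjunction between $\Spec$ and $R\Gamma(-,\mathscr O)$ noted in \cref{cope}.
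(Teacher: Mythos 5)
Your proof is correct and follows exactly the route the paper intends: the paper's preamble to this corollary reads ``Combining \cref{algebraicdesc} above with \cref{prop:Nori-affine-pi1}, we obtain \dots'' and your write-up simply spells out that combination, including the small but necessary observation via \cref{cope} that $R\Gamma(\UU(X),\cO) \simeq R\Gamma(X,\cO)$.
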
{}

We end this subsection with some examples.

\begin{example}Let $\PP^n_k$ be the projective $n$-space over a field $k$. Then $\pi_i^{\mathrm{U}}(\PP^n_k)$ is the trivial for $i \ge 0$. If $X$ is a hypersurface in $\PP^n_k$, then it follows from \cref{hurewicz2} that $\pi_i^{\mathrm{U}} (X)$ is trivial for $i <n-1$.
\end{example}{}

\begin{remark}
In \cite[p.~93]{MR682517}, Nori proves that any complete normal rational variety $X$ over a field $k$ has a trivial fundamental group scheme. This implies that the unipotent fundamental group scheme of such a variety is also trivial. Indeed, if $k$ has positive characteristic this follows from \cite[Prop.~IV.3]{MR682517}. If $k$ has characteristic zero, by applying the Leray spectral sequence to a resolution of singularities, it follows that $H^1(X, \cO)=0$. Thus \cref{simplyconnected} implies that $\pi_1^{\mathrm{U}}(X)$ is trivial. The following example records the fact that this need not be true for the higher unipotent homotopy group schemes $\pi_i^{\mathrm{U}}(X)$ for $i>1$. 
\end{remark}
\begin{example}
Let $k = \overline{\FF}_p$ and $C \subset \PP^2_k$ be a smooth cubic curve.
Let $f \colon X \to \PP^2_k$ be a blowup in $10$ points of $C$ and $D \subset X$ be the strict transform of $C$.
In this case, we have $(D.D) = -1$; therefore, the contraction $\pi \colon X \to Y$ of $D$ is a birational morphism onto a normal projective surface $Y$ by \cite[Thm.~2.9]{MR146182}.
\vspace{2mm}

By the theorem on formal functions, we have $R^if_* \cO_X = 0$ for all $i > 0$ and thus $R\Gamma(X,\cO_X) \simeq R\Gamma(\PP^2_k,\cO_{\PP^2_k}) = k$.
On the other hand, the Leray spectral sequence furnishes an exact sequence
\[ 0 \to H^1(Y,\pi_*\cO_X) \to H^1(X,\cO_X) \to H^0(Y,R^1\pi_*\cO_X) \to H^2(Y,\pi_*\cO_X) \to H^2(X,\cO_X). \]
Since $\pi_*\cO_X \simeq \cO_Y$ by Stein factorization (see e.g.,\ \cite[\href{https://stacks.math.columbia.edu/tag/0AY8}{Tag~0AY8}]{stacks}), $H^1(Y,\cO_Y) \simeq H^1(Y,\pi_*\cO_X) \simeq 0$.
Moreover, $R^1\pi_*\cO_X$ is the skyscraper sheaf at the point $\pi(D)$ with fibre $H^1(D,\cO_D)$. Indeed, it is supported on $\pi(D)$, and by the theorem on formal functions, 
$$ (R^1\pi_*\cO_X)^\wedge_{\pi(D)} \simeq \varprojlim_n H^1(D,\cO_X\otimes_{\cO_Y} \cO_Y/\fm^n_{\pi(D)}) \simeq \varprojlim_n H^1(D,\cO_{nD}). $$
Now, Serre duality and the identity $\deg_D \cO_D(nD) = (nD.D) = -n$ yield $H^1(D,\cO_D(-nD)) = H^0(D,\cO_D(nD)) = 0$ for all $n \in \ZZ_{>0}$, so induction on $n$ with the short exact sequence
\[ 0 \to \cO_D(-nD) \to \cO_{(n+1)D} \to \cO_{nD} \to 0 \]
proves that $H^1(D,\cO_{nD}) \simeq H^1(D,\cO_D)$.
We conclude that
\[ H^2(Y,\cO_Y) \simeq H^2(Y,\pi_*\cO_X) \simeq H^0(Y,R^1\pi_*\cO_X) \simeq H^1(D,\cO_D) \simeq k. \]
Hence, \cref{hurewicz2} shows that $\pi^{\mathrm U}_1(Y) \simeq *$, but $\pi^{\mathrm U}_2(Y) \not\simeq *$. 
This example also shows that unipotent homotopy group schemes are not birational invariants for singular varieties (\textit{cf.}~\cref{birational-invariance}). 
\end{example}

\subsection{Pro-algebraic completion of group valued sheaves}\label{proalgcompletion}
When studying potentially non-affine higher stacks, one often encounters non-representable homotopy sheaves. In this section, we study a certain kind of pro-algebraic completion of (pre)sheaves of groups.
Using this, we define in \cref{chris1} the tensor product of commutative group schemes. We also introduce a variant of the wedge product in \cref{weakwedge}, called the \emph{weak wedge square}, which plays an important role in \cref{worldcup1} in understanding certain unipotent homotopy group schemes of Calabi--Yau varieties in positive characteristic.
\vspace{2mm}

Note that for any group scheme $G$ over a field, we have a maximal unipotent quotient $G^{\mathrm{uni}}$ such that the map $G \to G^{\mathrm{uni}}$ is universal among maps from $G$ to unipotent group schemes (see e.g.,\ \cite[Lem.~1.5.4]{Toe}). We will now generalize this construction to arbitrary presheaves of groups $G$.
In what follows, we always work over a base field $k$. By the Yoneda lemma, we have a natural functor from the category of affine group schemes over $k$ to the category of presheaves of groups on $\mathrm{Aff}_k$ which is an inclusion of categories.
Since the category of affine group schemes has all limits and they are preserved by this natural functor, it must have a left adjoint which we denote by $H \to H^{\alg}$. Note that $H^{\mathrm{alg}}$ may be obtained as the inverse limit over diagrams $H \to G$, where $G$ is a finite type affine group scheme and the map is a morphism of presheaves of groups.
In general, the map $H \to H^{\mathrm{alg}}$ is not an fpqc surjection, nor does it induce an isomorphism on global sections. These phenomena can already be observed when $H$ is the constant sheaf of groups $\mathbf{Z}$; see \cref{teach}.
\vspace{2mm}

\begin{example}
If $H$ is the constant presheaf associated with a finite group $G$, then $H^{\w{alg}}$ is the induced affine group scheme structure on $\coprod_{G} \Spec k$.
\end{example}{}

Given a presheaf of groups $H$, one can similarly construct a map $H \to H^{\mathrm{uni}}$ which is initial among maps to unipotent group schemes.
It follows from the universal properties that $H^{\mathrm{uni}} \simeq (H^{\mathrm{alg}})^{\mathrm{uni}}$.
The next lemma gives an alternative way to describe $H^{\mathrm{uni}}$, which also allows us to compute its ring of global sections.
\begin{lemma}\label{garten2}
Let $H$ be any (pre)sheaf of groups. Then $\pi_1 (\mathbf{U}(BH)) \simeq H^{\mathrm{uni}}$.   
\end{lemma}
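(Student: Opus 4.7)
The plan is to verify, by a Yoneda argument in the category of unipotent affine group schemes over $k$, that $\pi_1(\UU(BH))$ satisfies the universal property defining $H^{\uni}$. Recall that $H^{\uni}$ is characterized by the natural bijection $\Hom(H^{\uni}, G) \cong \Hom_{\mathrm{Grp}}(H, G)$ for every unipotent affine group scheme $G$, where the right-hand side denotes (pre)sheaf-theoretic group homomorphisms. Hence it suffices to exhibit, naturally in $G$, a bijection
\[ \Hom_{\mathrm{GpSch}}(\pi_1(\UU(BH)), G) \cong \Hom_{\mathrm{Grp}}(H, G). \]

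First I will verify that $\UU(BH)$ is a pointed connected affine stack. Since $BH$ (after sheafification) is pointed with $\pi_0 = \ast$, descent along the effective epimorphism $\ast \to BH$ gives $H^0(BH, \cO) = k$. By \cref{cope}, $\UU(BH) \simeq \Spec R\Gamma(BH, \cO)$, so $H^0(\UU(BH), \cO) = k$; then \cref{connn} implies that $\UU(BH)$ is pointed connected. Since $G$ is unipotent affine, \cref{thmoftoen} guarantees that $BG$ is an affine stack, and since $G$ is a $0$-truncated group scheme, $BG$ is $1$-truncated. The universal property of $\UU$ together with \cref{postnikov} (which identifies $\tau_{\le 1}\UU(BH)$ with $B\pi_1(\UU(BH))$ in the pointed connected case) then produces the chain of natural identifications
\[ \Map_\ast(BH, BG) \simeq \Map_\ast(\UU(BH), BG) \simeq \Map_\ast(\tau_{\le 1}\UU(BH), BG) \simeq \Map_\ast(B\pi_1(\UU(BH)), BG). \]

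Finally, the delooping equivalence between pointed connected $1$-truncated stacks and (sheaves of) groups identifies the leftmost term with $\Hom_{\mathrm{Grp}}(H, G)$ (using that $G$ is a sheaf, so the sheafification adjunction collapses the distinction between presheaf-theoretic and sheaf-theoretic homomorphisms to $G$), and the rightmost term with $\Hom_{\mathrm{GpSch}}(\pi_1(\UU(BH)), G)$. Concatenating yields the required natural bijection, and Yoneda's lemma in the category of unipotent affine group schemes then produces the claimed isomorphism $\pi_1(\UU(BH)) \simeq H^{\uni}$. The only real subtlety is verifying pointed connectedness of $\UU(BH)$; once that is in place, the rest is a direct assembly of universal properties already established in \cref{secc2}.
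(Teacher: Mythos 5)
Your proof is correct and follows essentially the same route as the paper's: both establish pointed connectedness of $\UU(BH)$ via $H^0(BH,\cO) \simeq k$ (using descent along $\ast \to BH$), then run the chain of natural equivalences $\Map_\ast(BH, BG) \simeq \Map_\ast(\UU(BH), BG) \simeq \Map_\ast(\tau_{\le 1}\UU(BH), BG) \simeq \Map_\ast(B\pi_1(\UU(BH)), BG)$ and deloop. Your write-up is just a more explicit unpacking of the same universal-property argument.
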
{}
\begin{proof}
By analyzing universal properties of mapping to a $1$-truncated affine stack, we obtain an equivalence between pointed maps from $BH \to BG $ and pointed maps $\tau_{\le 1} (\mathbf{U}(BH)) \to BG$ for a unipotent group scheme $G$. This gives a bijection between maps of sheaves of groups $H \to G$ and maps of sheaves of groups $\pi_1(\mathbf{U}(BH)) \to G$, yielding the required statement.
In the proof, we used that $\pi_0 (\mathbf{U}(BH)) \simeq *$, which follows from the fact that $H^0 (BH, \cO) \simeq k$ since $* \to BH$ is an fpqc epimorphism.
\end{proof}{}

\begin{corollary}
Let $H$ be any (pre)sheaf of groups.
Then
\[ H^{\mathrm{uni}} \simeq \Spec \pi_0 (k \otimes_{R\Gamma(BH,\cO)}k). \]
\end{corollary}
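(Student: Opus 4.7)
The plan is to chain together the two key identifications already established in the excerpt. By \cref{garten2}, we have $H^{\mathrm{uni}} \simeq \pi_1(\mathbf{U}(BH))$, so it suffices to prove that
\[ \pi_1(\mathbf{U}(BH)) \simeq \Spec \pi_0 (k \otimes_{R\Gamma(BH,\cO)}k). \]
To this end, I would apply \cref{algebraicdesc} to the pointed connected affine stack $\mathbf{U}(BH)$.

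First, I would verify the hypotheses of \cref{algebraicdesc}. Since $\mathbf{U}(BH)$ is by construction an affine stack, it is automatic that it is hypercomplete. The map $\ast \to BH$ is an fpqc effective epimorphism because $H$ is a (pre)sheaf of groups, which (after sheafification, and using the fact that sheafification preserves effective epimorphisms) gives that $BH$ is pointed and connected; moreover, by descent along $\ast \to BH$, one has $H^0(BH,\cO) \simeq k$. The universal property of the unipotent homotopy type then ensures that $\mathbf{U}(BH)$ inherits the structure of a pointed connected stack with $H^0(\mathbf{U}(BH),\cO) \simeq k$ (this was already noted in the proof of \cref{garten2}).

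Next, \cref{algebraicdesc} yields
\[ \pi_1(\mathbf{U}(BH)) \simeq \Spec \pi_0 (k \otimes_{R\Gamma(\mathbf{U}(BH),\cO)}k). \]
Finally, by the universal property of $\mathbf{U}$ and the identification $\mathbf{U}(BH) \simeq \Spec R\Gamma(BH,\cO)$ together with \cref{cope}, the derived global sections satisfy $R\Gamma(\mathbf{U}(BH),\cO) \simeq R\Gamma(BH,\cO)$. Substituting this into the previous display and combining with \cref{garten2} gives the desired formula.

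There is no genuine obstacle here; the proof is essentially a bookkeeping exercise assembling three earlier results. The only mild subtlety worth checking is that the pushout $k \otimes_{R\Gamma(BH,\cO)} k$ is formed in the category of $E_\infty$-algebras (equivalently, it computes $R\Gamma$ of the loop stack $\Omega \mathbf{U}(BH)$ via \cref{rent14}), which is consistent with the formulation in \cref{algebraicdesc} and \cref{maincorro}.
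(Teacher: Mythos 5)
Your proposal is correct and takes exactly the same route as the paper: the paper's proof is literally ``Follows from \cref{garten2} and \cref{algebraicdesc},'' and you have simply unpacked that by verifying the hypotheses (connectedness, $H^0 \simeq k$, and the identification $R\Gamma(\mathbf{U}(BH),\cO) \simeq R\Gamma(BH,\cO)$) that make the chain go through.
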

\begin{proof}
Follows from \cref{garten2} and \cref{algebraicdesc}.
\end{proof}

\begin{lemma}\label{whoknew}
Let $G$ be any (pre)sheaf of abelian groups. Then $\pi_n (\UU(K(G,n))) \simeq (G^{\mathrm{uni}})^{\mathrm{ab}}$ for $n \ge 2$. 
\end{lemma}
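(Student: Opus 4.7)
The plan is to proceed in analogy with \cref{garten2}, combining the universal property of $\mathbf{U}$ with the Hurewicz theorem for affine stacks (\cref{hurewicz2}) and Postnikov truncation. First I would establish the cohomology vanishing $H^i(K(G,n), \cO) = 0$ for $0 < i < n$. Note that $H^i(K(G,n), \cO)$ equals $\pi_0 \mathrm{Map}(K(G,n), K(\mathbf{G}_a, i))$, and that the pointed mapping space $\mathrm{Map}_*(K(G,n), K(\mathbf{G}_a, i))$ is contractible in this range: since $K(\mathbf{G}_a, i)$ is $i$-truncated and $K(G,n)$ is $(n-1)$-connected with $i < n$, any pointed map factors through the trivial truncation $\tau_{\le i}K(G,n) \simeq *$. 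The fiber sequence
\[ \mathrm{Map}_*(K(G,n), K(\mathbf{G}_a, i)) \longrightarrow \mathrm{Map}(K(G,n), K(\mathbf{G}_a, i)) \longrightarrow K(\mathbf{G}_a, i) \]
obtained by evaluating at the base point then exhibits the middle term as equivalent to $K(\mathbf{G}_a, i)$, whose $\pi_0$ is trivial for $i \ge 1$.

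Since $R\Gamma(\mathbf{U}(K(G,n)), \cO) \simeq R\Gamma(K(G,n), \cO)$ by \cref{cope}, \cref{hurewicz2} yields $\pi_i^{\mathrm{U}}(K(G,n)) = 0$ for $0 < i < n$, and the Postnikov tower (\cref{postnikov}) then produces a natural equivalence $\tau_{\le n}\mathbf{U}(K(G,n)) \simeq K(\pi_n^{\mathrm{U}}(K(G,n)), n)$. Now let $A$ be any unipotent abelian affine group scheme over $k$. By \cref{thmoftoen}, $K(A,n)$ is an affine stack, and it is moreover $n$-truncated. Combining the universal property of $\mathbf{U}$ with truncation produces natural equivalences
\[ \mathrm{Map}_*(K(G,n), K(A,n)) \simeq \mathrm{Map}_*(\mathbf{U}(K(G,n)), K(A,n)) \simeq \mathrm{Map}_*(K(\pi_n^{\mathrm{U}}(K(G,n)), n), K(A,n)). \]
Since $n \ge 2$ and both $G$ and $A$ are abelian, passing to $\pi_0$ identifies the outer two sets with $\mathrm{Hom}_{\mathrm{Ab}}(G, A)$ and $\mathrm{Hom}_{\mathrm{Ab}}(\pi_n^{\mathrm{U}}(K(G,n)), A)$, respectively, via the $n$-fold looping adjunction.

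By Yoneda applied to unipotent abelian affine group schemes, this exhibits $\pi_n^{\mathrm{U}}(K(G,n))$ as the universal such group scheme receiving a map from $G$. For any unipotent abelian $A$, maps $G^{\mathrm{uni}} \to A$ are in bijection with maps $G \to A$ by the universal property of $(-)^{\mathrm{uni}}$ and factor uniquely through $(G^{\mathrm{uni}})^{\mathrm{ab}}$ since $A$ is abelian; thus $(G^{\mathrm{uni}})^{\mathrm{ab}}$ satisfies the same universal property, yielding the desired isomorphism. I expect the main subtlety to lie in step one, namely the cohomology vanishing and the careful passage from pointed to unpointed mapping spaces required to establish it.
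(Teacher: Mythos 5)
Your proposal is correct and follows essentially the same route as the paper's proof: establish the cohomology vanishing $H^i(K(G,n),\cO)=0$ for $i<n$, apply the Hurewicz theorem (\cref{hurewicz}/\cref{hurewicz2}) to identify $\tau_{\le n}\mathbf{U}(K(G,n)) \simeq K(\pi_n^{\mathrm{U}}(K(G,n)),n)$, and then deduce the result by delooping and the universal property of $(G^{\mathrm{uni}})^{\mathrm{ab}}$, mirroring the argument of \cref{garten2}. The paper leaves the cohomology vanishing and the delooping step implicit, whereas you spell them out via the truncation/evaluation-fibration argument and the $n$-fold looping adjunction; this is just added detail, not a different approach.
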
{}

\begin{proof}
Since $H^i (K(G,n), \cO)=0$ for $i <n$, \cref{hurewicz} implies that 
$\tau_{\le n}\mathbf{U}(K(G,n)) \simeq K (\pi_n (\mathbf{U}(K(G,n))), n)$.
The claim now follows as in the proof of \cref{garten2} from delooping and the universal property of $(G^{\mathrm{uni}})^\ab$.
\end{proof}{}

\begin{example}\label{teach}
If $k$ has characteristic $p>0$, then $\mathbf{Z}^{\mathrm{uni}}$ is the profinite affine group scheme $\mathbf{Z}_p$. 
On the other hand, if $k$ has characteristic zero, then $\mathbf{Z}^{\w{uni}}$ is $\GG_{a, k}$.
\end{example}{}

\Cref{teach} shows that the map $H \to H^{\mathrm{uni}}$ is not an fpqc surjection in general.
However, if $H \to H^{\w{alg}}$ is an fpqc surjection, then so is $H \to H^{\w{uni}}$ since $H^{\w{alg}} \to (H^{\w{alg}})^{\uni}= H^{\w{uni}}$ is a quotient map of group schemes.
Below, we give a criterion for $H \to H^{\w{alg}}$ being an fpqc surjection that is often easy to verify.

\begin{proposition}\label{rightexact}
Let $H_1 \to H_2 \to H_3 \to 0$ be an exact sequence of abelian fpqc sheaves. Then the following two sequences are exact.

\begin{enumerate}
    \item $(H_1^{\mathrm{alg}})^{\mathrm{ab}} \to (H_2^{\mathrm{alg}})^{\mathrm{ab}} \to (H_3^{\mathrm{alg}})^{\mathrm{ab}} \to 0$
    
    \item $(H_1^{\mathrm{uni}})^{\mathrm{ab}} \to (H_2^{\mathrm{uni}})^{\mathrm{ab}} \to (H_3^{\mathrm{uni}})^{\mathrm{ab}} \to 0$
\end{enumerate}{}
\end{proposition}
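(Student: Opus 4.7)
The plan is to recognize both $H \mapsto (H^{\mathrm{alg}})^{\mathrm{ab}}$ and $H \mapsto (H^{\mathrm{uni}})^{\mathrm{ab}}$ as left adjoints of the respective inclusions of the categories of commutative affine (resp.\ commutative unipotent affine) group schemes into abelian fpqc sheaves, and then invoke the formal fact that left adjoints preserve cokernels. Since an exact sequence $H_1 \to H_2 \to H_3 \to 0$ of abelian sheaves is by definition a cokernel presentation $H_3 \simeq \Coker(H_1 \to H_2)$, this will immediately yield both desired exact sequences.

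First, I would set notation: write $\mathcal{A}$ for the category of abelian fpqc sheaves on $\Aff_k$, and $\mathcal{C}$, $\mathcal{C}^{\uni}$ for its full subcategories of commutative affine and commutative unipotent affine group schemes. The inclusions $\mathcal{C}^{\uni} \hookrightarrow \mathcal{C} \hookrightarrow \mathcal{A}$ preserve all small limits, so by the adjoint functor theorem they admit left adjoints. The key identification step is then to verify via a short calculation with universal properties that these left adjoints are indeed $(\cdot)^{\mathrm{alg},\mathrm{ab}}$ and $(\cdot)^{\mathrm{uni},\mathrm{ab}}$: for $H \in \mathcal{A}$ and $A \in \mathcal{C}$, one has natural bijections
$$ \Hom_{\mathcal{C}}((H^{\mathrm{alg}})^{\mathrm{ab}}, A) \simeq \Hom_{\w{AffGpSch}}(H^{\mathrm{alg}}, A) \simeq \Hom_{\w{PShGrp}}(H, A) \simeq \Hom_{\mathcal{A}}(H, A), $$
where the last bijection uses that $H$ and $A$ are both abelian, and one argues analogously for $(\cdot)^{\mathrm{uni},\mathrm{ab}}$ using the inclusion of unipotent objects.

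With both adjunctions in hand, the desired exactness is immediate, since left adjoints preserve colimits and in particular cokernels. The one step that requires any attention---and which I expect to be the mildest of obstacles---is the routine verification that cokernels taken in $\mathcal{C}$ or $\mathcal{C}^{\uni}$ agree with cokernels computed in $\mathcal{A}$. This follows from the standard fact that quotients of commutative (unipotent) affine group schemes over a field are representable by commutative (unipotent) affine group schemes. Apart from this, the argument is purely formal.
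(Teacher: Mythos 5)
Your proposal is correct and takes essentially the same approach as the paper, whose entire proof reads "Follows from the universal properties." You have simply made explicit what that one-line proof packages: both composites are left adjoints to the inclusions of commutative (resp.\ commutative unipotent) affine group schemes into abelian fpqc sheaves, and left adjoints preserve cokernels.
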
{}

\begin{proof}
Follows from the universal properties. 
\end{proof}{}

\begin{corollary}\label{tttttt}
Let $H$ be an fpqc sheaf of abelian groups.
Assume there exists a commutative affine group scheme $G$ and a surjection of fpqc sheaves $G \to H$.
Then $H \to (H^{\mathrm{alg}})^{\mathrm{ab}}$ is a surjection of fpqc sheaves. 
Consequently, $H \to (H^{\mathrm{uni}})^{\mathrm{ab}}$ is an fpqc surjection as well.
\end{corollary}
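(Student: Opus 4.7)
The plan is to deduce the statement directly from Proposition~\ref{rightexact} applied to a short exact sequence built from the given surjection. Let $K$ denote the kernel of the surjection $G \to H$ in the category of fpqc sheaves of abelian groups, so we have an exact sequence
\[ K \longrightarrow G \longrightarrow H \longrightarrow 0. \]
Since $G$ is already a commutative affine group scheme, it is its own pro-algebraic completion: the identity map $G \to G$ satisfies the universal property defining $G^{\mathrm{alg}}$, and $G$ being commutative gives $(G^{\mathrm{alg}})^{\mathrm{ab}} = G$.

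Now I apply Proposition~\ref{rightexact}(1) to the sequence above to obtain the exact sequence
\[ (K^{\mathrm{alg}})^{\mathrm{ab}} \longrightarrow G \longrightarrow (H^{\mathrm{alg}})^{\mathrm{ab}} \longrightarrow 0 \]
of fpqc sheaves of abelian groups. In particular, the canonical map $G \to (H^{\mathrm{alg}})^{\mathrm{ab}}$ is an fpqc surjection. By the universal property of $H^{\mathrm{alg}}$, this map factors as $G \to H \to (H^{\mathrm{alg}})^{\mathrm{ab}}$, so the second arrow $H \to (H^{\mathrm{alg}})^{\mathrm{ab}}$ must itself be an fpqc surjection. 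This proves the first claim.

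For the final assertion, recall from the discussion preceding the corollary that $H^{\mathrm{uni}} \simeq (H^{\mathrm{alg}})^{\mathrm{uni}}$, and that the canonical map $H^{\mathrm{alg}} \to (H^{\mathrm{alg}})^{\mathrm{uni}} = H^{\mathrm{uni}}$ is a quotient of affine group schemes, hence an fpqc surjection. Passing to abelianizations preserves surjectivity, so $(H^{\mathrm{alg}})^{\mathrm{ab}} \to (H^{\mathrm{uni}})^{\mathrm{ab}}$ is an fpqc surjection. Composing with $H \to (H^{\mathrm{alg}})^{\mathrm{ab}}$ from the previous step yields the desired fpqc surjection $H \to (H^{\mathrm{uni}})^{\mathrm{ab}}$. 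There is no substantial obstacle here; the essential content is packaged entirely in Proposition~\ref{rightexact} together with the observation that $(G^{\mathrm{alg}})^{\mathrm{ab}} = G$ for commutative affine $G$.
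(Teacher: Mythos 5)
Your proof is correct and takes essentially the same approach as the paper: apply Proposition~\ref{rightexact} to deduce that $(G^{\mathrm{alg}})^{\mathrm{ab}} \to (H^{\mathrm{alg}})^{\mathrm{ab}}$ is surjective, observe that $G \simeq (G^{\mathrm{alg}})^{\mathrm{ab}}$ because $G$ is commutative affine, conclude that the composite $G \to H \to (H^{\mathrm{alg}})^{\mathrm{ab}}$ and hence $H \to (H^{\mathrm{alg}})^{\mathrm{ab}}$ is surjective, and then pass to $(H^{\mathrm{uni}})^{\mathrm{ab}}$ via the quotient $H^{\mathrm{alg}} \to H^{\mathrm{uni}}$. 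The only cosmetic difference is that you introduce the kernel $K$ and spell out the full exact sequence from Proposition~\ref{rightexact}, whereas the paper extracts just the surjectivity of the relevant arrow; the content is identical.
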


\begin{proof}
\Cref{rightexact} shows that $(G^{\w{alg}})^{\w{ab}} \to (H^{\w{alg}})^{\w{ab}}$ is surjective. Since $G$ is a commutative affine group scheme, the natural map $G \to (G^{\w{alg}})^{\w{ab}}$ is an isomorphism. 
Thus, the composition $G \to H \to (H^{\w{alg}})^{\w{ab}}$ is surjective. This implies that $H \to (H^{\w{alg}})^{\w{ab}}$ is surjective. The surjectivity of $H \to (H^{\w{uni}})^{\w{ab}}$ follows.
\end{proof}

Suppose that there is a surjection of sheaves $G \to G_0$ where $G$ is a commutative affine group scheme. One can then ask if $G \to {G}^{\w{alg}}_0$ is surjective and further if we can describe the kernel $K$ of $G \to G^{\w{alg}}_0$ explicitly.
The following construction gives an answer to this question in the case when the kernel of the map $G \to G_{0}$ can be generated by a map from an affine scheme $T$ to $G$. 

\begin{construction}\label{Milne}
Let $T$ be an affine scheme and $G$ be a commutative affine group scheme over $k$. 
Let $t$ be a $k$-rational point of $T$. 
Let $f \colon (T,t) \to (G, 1_{G})$ be a map of pointed schemes over $k$ whose image is stable under the inverse morphism of $G$. Then there exists an initial map $u \colon G \to H$ of affine group schemes with the property that $u \circ f = 0$:
indeed, if we let $[\mathrm{Im}(f)]$ denote the sub(pre)sheaf of groups of $G$ generated by the image of the map $f$, then $H=(G/ [\mathrm{Im}(f)])^{\mathrm{alg}}$.

\vspace{2mm}
Let us now describe an explicit construction of $H$. Let $G/ [\w{Im}(f)]\to S$ be a map of fpqc sheaves of groups where $S$ is an affine group scheme. The data of such a map is equivalent to giving a map of affine group schemes $G \to S$ whose kernel contains the scheme theoretic image of the maps
\[ T^n \to G^n \to G \]
for all $n \ge 1$, where the last maps $G^n \to G$ are induced by the multiplication on $G$.
The scheme theoretic image of the map $T^n \to G$ determines an ideal sheaf $\mathscr{I}_n$ of the affine scheme $G$. By our assumptions, it follows that $\mathscr{I}_n$ is a decreasing sequence of ideal sheaves.
Let $\mathscr{I} \colonequals \bigcap_n \cI_n$. Then $\mathscr{I}$ defines a closed subscheme $K$ of $G$, and in fact, $K$ is a subgroup scheme of $G$. Note that $K$ is still contained in the kernel of the map $G \to S$ since the kernel is a closed subgroup scheme of $G$. Therefore, we obtain a natural map $G/K \to S$ of affine group schemes.
Using the universal properties, $H = (G/[\w{Im}(f)])^{\w{alg}} = G/K$. This gives an explicit description of the kernel $K$ and shows the surjectivity of the map $G \to (G/[\w{Im}(f)])^{\w{alg}}$. Note that since $G$ is commutative, it follows that in this situation, $(((G/[\w{Im}(f)])^{\w{alg}})^{\w{ab}}) = (G/[\w{Im}(f)])^{\w{alg}}$.
\end{construction}
\begin{remark}\label{useinfreu}
The above construction can be used to understand $H^\w{alg}$ explicitly when $H$ is obtained as a quotient of a bilinear map $G_1 \times G_2 \to G$ of affine commutative group schemes. To give a more flexible treatment (see \cref{com}), we introduce the following constructions that we will need later.
Note that for a non-commutative affine group scheme $G$, \cref{Milne} still applies to the commutator map $f \colon (G \times G, 1_G \times 1_G) \to (G, 1_G)$;
this shows for example that the natural map $G \to G^\ab$ is surjective, where $G^{\mathrm{ab}} \simeq (G/[\mathrm{Im}(f)])^{\mathrm{alg}}$ is the abelianization.
\end{remark}{}

\begin{definition}[Tensor product of group schemes]\label{chris1} 
Let $G$ and $H$ be two commutative affine group schemes over $k$. Let $\underline{G}$ and $\underline{H}$ be the sheaves of abelian groups represented by $G$ and $H$ and $\underline{G} \otimes_{\mathbf{Z}} \underline{H}$ be their (ordinary) tensor product. We define $G \otimes H$ to be the affine group scheme $((\underline{G} \otimes_{\mathbf{Z}} \underline{H})^{\w{alg}})^{\w{ab}}$. It is clear that $G \otimes H$ satisfies the universal property of the tensor product in the category of commutative affine group schemes.

\end{definition}{}
\begin{definition}[Wedge product of group schemes]
Let $G$ be a commutative affine group scheme over $k$. Let $\underline{G}$ be the sheaf of abelian groups represented by $G$. We define $\wedge ^n G $ to be the affine group scheme $((\wedge ^n\underline{G})^{\w{alg}})^{\w{ab}}$.
\end{definition}{}

\begin{definition}[Weak wedge square of groups]
Let $P$ be an abelian group. There is a natural endomorphism $\varphi \colon P \otimes_{\mathbf{Z}} P \to P \otimes_{\mathbf{Z}} P$ on the (ordinary) tensor product determined by
\[ p \otimes p' \mapsto p\otimes p' + p' \otimes p. \]
We define $P \curlywedge_{\mathbf{Z}} P \colonequals \Coker(\varphi)$ and call it the \emph{weak wedge square} of $P$. There is a natural surjective map $P \curlywedge_{\mathbf{Z}} P \to P \wedge P$. 
\end{definition}{}

\begin{example}
Note that $\mathbf{Z} \curlywedge_{\mathbf{Z}} \mathbf{Z} \simeq \mathbf{Z}/ 2 \mathbf{Z}$.
\end{example}{}

\begin{remark}
If $V$ is a vector space over a field $k$ of characteristic $p>2$, one can define $V \curlywedge V$ as the cokernel of the map $V \otimes_{k} V \to V \otimes_{k} V$ determined by $v \otimes v' \mapsto v \otimes v' + v' \otimes v$.
In this case, $V \curlywedge V \simeq V \wedge V$. More generally, if $P$ is an abelian group, then $(P \curlywedge_{\mathbf Z} P) \otimes_{\mathbf{Z}} \mathbf{Z}[\frac 1 2]  \to (P \wedge_{\mathbf Z} P) \otimes_{\mathbf{Z}} \mathbf{Z}[\frac 1 2]$ is an isomorphism.
\end{remark}{}

\begin{definition}[Weak wedge square of group schemes]\label{weakwedge} Let $G$ be a commutative affine group scheme over $k$. Let $\underline{G}$ be the sheaf of abelian groups represented by $G$. We define $G \curlywedge G$ to be $((\underline{G} \curlywedge \underline{G})^{\w{alg}})^{\w{ab}}$. 
\end{definition}{}

\begin{remark}
Let $G$ be a commutative affine group scheme. Note that there is a natural bilinear map $G \times G \to G \otimes G$ of commutative affine group schemes.
\end{remark}{}

\begin{remark}\label{garten}
Let $G$ be a commutative group scheme. Let $\underline{G}$ be the sheaf of abelian groups represented by $G$.
The endomorphism of $\underline{G} \otimes _{\mathbf{Z}} \underline{G}$ that sends $g \otimes g' \mapsto g \otimes g' + g' \otimes g$ induces an endomorphism $\varphi \colon G \otimes G \to G \otimes G$. By \cref{rightexact}, it follows that $\Coker(\varphi) \simeq G \curlywedge G$.
\end{remark}{}

\begin{remark}\label{com}
Let $f \colon G_1 \times G_2 \to G$ be a bilinear map of commutative group schemes. 
Let $\underline{G_1}, \underline{G_2}$ and $\underline{G}$ be the sheaves represented by $G_1, G_2$ and $G$, respectively.
We obtain a map of abelian sheaves $\underline{f} \colon \underline{G_1} \otimes_{\mathbf{Z}} \underline{G_2} \to \underline{G}$. In the notation of \cref{Milne}, the sheaf $G/ [\w{Im}(f)]$ is isomorphic to $\Coker(\underline{f})$. By \cref{rightexact} and \cref{Milne}, it follows that $$(G/ [\w{Im}(f)])^{\w{alg}} \simeq ((G/ [\w{Im}(f)])^{\w{alg}})^{\w{ab}} \simeq ((\Coker(\underline{f}))^{\w{alg}})^{\w{ab}} \simeq \Coker(G_1 \otimes G_2 \to G).$$
\end{remark}{}
The above constructions give rise to a rich source of computations that is interesting to pursue in its own right. 
However, we do not discuss them here in detail since we only need some of these computations for our desired applications (see \cref{worldcup1}); instead, we mention only some of these examples.
\begin{example}
Even if $G$ and $H$ are unipotent commutative group schemes, their tensor product $G \otimes H$ is typically not unipotent.
For example, let $G = H = \alpha_p$ over a perfect field of characteristic $p>0$. 
Since $\alpha_p$ is self-dual under Cartier duality, there is a nontrivial bilinear pairing $\alpha_p \times \alpha_p \to \GG_m$ which yields a nontrivial map $\alpha_p \otimes \alpha_p \to \GG_m$. This implies that $\alpha_p \otimes \alpha_p$ cannot be unipotent.
\end{example}{}

\begin{example}\label{compute098}
Let us again work over a perfect field of characteristic $p >0$. We will explain how to compute the unipotent completion $(\alpha_p \otimes \alpha_p)^{\w{uni}}$ of $\alpha_p \otimes \alpha_p$.
Let $W[F]$ denote the group scheme underlying the kernel of Frobenius on the group scheme of $p$-typical Witt vectors $W$; it is also dual to the formal Lie group $\widehat{\GG}_a$ (\textit{cf.}~\cite[37.3.4]{MR2987372}). Note that $W[F]$ naturally has the structure of a non-unital ring scheme. There is also a map $[\,\cdot\,] \colon \alpha_p \to W[F]$ given by the multiplicative lift.
Sending $(x,y)$ to $[x]\cdot [y]$ gives a map $u \colon \alpha_p \times \alpha_p \to W[F]$. We claim that $u$ is bilinear. Note that $n \colonequals [x+y] - [x] - [y]$, although nonzero, lies in the kernel of the group homomorphism $W[F] \to \alpha_p$. Therefore, as observed in the proof of \cite[Lemma~B.2]{LM21}, one has $n \cdot m = 0$ for any (scheme theoretic point) $m$ of $W[F]$. In particular, it follows that $([x+y] - [x] - [y])[z]=0$, which shows that $u$ is bilinear. This constructs a map $\overline{u} \colon (\alpha_p \otimes \alpha_p)^{\w{uni}} \to W[F]$ of group schemes. Now, we note that $\dim \Hom(W[F], \GG_a)=1$ and the induced map $\Hom(W[F], \GG_a) \to \Hom(\alpha_p \otimes \alpha_p, \GG_a)$ is an isomorphism. Since $W[F]$ is unipotent, it follows that $\overline{u}$ is surjective. Since $\w{Ext}^1 (W[F], \GG_a)=0$ (\cref{fine}), by a long exact sequence chase, it also follows that $\overline{u}$ is injective. Thus $(\alpha_p \otimes \alpha_p)^{\w{uni}} \simeq W[F]$.
\end{example}{}

\begin{example}\label{zvi1}
Let $k$ be a perfect field of characteristic $p > 0$.
When $p \ne 2$, one has $(\alpha_p \curlywedge \alpha_p)^{\mathrm{uni}} = 0$ (\textit{cf.}~\cref{haircut11}). On the other hand, when $p=2$, we have $(\alpha_2 \curlywedge \alpha_2)^{\w{uni}} \simeq (\alpha_2 \wedge \alpha_2)^{\w{uni}} \simeq W[F]$. This follows for example by noting that the isomorphism $(\alpha_2 \otimes \alpha_2)^{\w{uni}} \to W[F] $ constructed above naturally factors through the surjection $(\alpha_2 \otimes \alpha_2)^{\w{uni}} \to (\alpha_2 \wedge \alpha_2)^{\w{uni}}$.
\end{example}{}

\subsection{The Freudenthal suspension theorem in unipotent homotopy theory}\label{makiman}
In this section, we prove a version of the Freudenthal suspension theorem in the context of unipotent homotopy theory. Our result gives a broad generalization of the classical Freudenthal suspension theorem \cite{Freu} for spaces. This version of the Freudenthal suspension theorem can be used to study unipotent homotopy groups of suspensions of arbitrary higher stacks and not just affinizations of spaces.
In particular, our version of the Freudenthal suspension theorem plays an important role in computing certain homotopy group schemes of Calabi--Yau varieties similar to the role played by the classical Freudenthal suspension theorem in computing homotopy groups of spheres.
The ``surjection of group schemes'' part of the Freudenthal suspension theorem in unipotent homotopy theory (see \cref{freudenthalsusp}) presents additional subtleties and a large portion of the work in \cref{secc2} and \cref{uhtfs} is used to settle this part. As preparations towards proving the theorem, we will first note the following constructions and then delve into a series of lemmas.

\begin{construction}\label{whitehead}
We recall that if $Y \in \mathcal{S}$ is a space equipped with a base point, one classically has the (functorial) bilinear maps $$W_{k,l} \colon \pi_k(Y) \times \pi_l(Y) \to \pi_{k+l-1}(Y)$$ that are called Whitehead products \cite{MR4123}. They satisfy the graded symmetry condition $W_{k,l} (u,v) = (-1)^{k\cdot l}W_{k,l}(v,u)$ for $k,l \ge 2$. These maps also satisfy a graded Jacobi identity that we do not recall here. Now let $X$ be a pointed higher stack.
The functoriality of the Whitehead brackets implies that we have (functorial) bilinear maps $$W_{k,l} \colon \pi_k(X) \times \pi_l(X) \to \pi_{k+l-1}(X).$$
We will again call these maps Whitehead products. They also satisfy similar graded symmetry conditions and a graded Jacobi identity.
\end{construction}{}

\begin{construction}[Whitehead product on unipotent homotopy groups]\label{wh}
If $X$ is a pointed higher stack, applying \cref{whitehead} to the unipotent homotopy type of $X$, we obtain Whitehead products 
$$W_{k,l} \colon \pi_k^{\w{U}}(X) \times \pi_l^{\w{U}}(X) \to \pi_{k+l-1}^{\w{U}}(X) $$ on unipotent homotopy group schemes. These maps are again bilinear and satisfy graded symmetry conditions and a graded Jacobi identity.
\end{construction}{}

\begin{lemma}\label{c}
Let $X$ and $Y$ be two pointed higher stacks.
Assume that $X$ is $m$-connected and $Y$ is $n$-connected.
Then their smash product $X \wedge Y$ is $(m+n+1)$-connected. Moreover,\footnote{one takes abelianization of $\pi_{n+1}(X)$ (resp. $\pi_{m+1}(X)$) when $n=0$ (resp. $m=0$).} $$\pi_{m+n+2}(X \wedge Y) \simeq \pi_{m+1}(X) \otimes_{\mathbf{Z}} \pi_{n+1} (Y).$$
\end{lemma}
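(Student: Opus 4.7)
The plan is to work in the hypercomplete $\infty$-topos $\Shv(A)^{\wedge}$ (which contains both $X$ and $Y$) and adapt the classical proof for pointed spaces, which proceeds via the Hurewicz and K\"unneth theorems.

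For the connectivity assertion, I would exploit the standard identification $\Sigma(X \wedge Y) \simeq X \ast Y$ of the suspension of the smash product with the join $X \ast Y \colonequals X \sqcup_{X \times Y} Y$, an identification valid in any $\infty$-topos. The connectivity of the join can then be read off from its defining pushout: the two projections $X \times Y \to X$ and $X \times Y \to Y$ have homotopy fibers (over the respective base points) equal to $Y$ and $X$, which are $n$- and $m$-connected. An application of the $\infty$-topos-theoretic Blakers--Massey theorem to the pushout square defining $X \ast Y$ then yields that $X \ast Y$ is $(m+n+2)$-connected, so that $X \wedge Y$ is $(m+n+1)$-connected.

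For the computation of $\pi_{m+n+2}(X \wedge Y)$, I would reduce to the case when $X$ and $Y$ are Eilenberg--MacLane stacks. Consider the cofiber sequence
\[ \tau_{>m+1} X \longrightarrow X \longrightarrow K(\pi_{m+1}(X), m+1) \]
coming from the $(m+1)$-truncation of $X$. Smashing with $Y$ preserves cofiber sequences, and by the connectivity statement already established, $\tau_{>m+1}X \wedge Y$ is $(m+n+2)$-connected. Hence the induced map $X \wedge Y \to K(\pi_{m+1}(X), m+1) \wedge Y$ is an isomorphism on $\pi_{m+n+2}$. Iterating the same argument in the $Y$-variable reduces the problem to computing $\pi_{m+n+2}(K(A, m+1) \wedge K(B, n+1))$ for $A \colonequals \pi_{m+1}(X)$ and $B \colonequals \pi_{n+1}(Y)$.

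For the Eilenberg--MacLane case, the universal bilinear pairing $A \times B \to A \otimes_{\mathbf{Z}} B$ of abelian fpqc sheaves induces a canonical comparison map $K(A, m+1) \wedge K(B, n+1) \to K(A \otimes_{\mathbf{Z}} B, m+n+2)$, and the plan is to verify that this is an $(m+n+2)$-equivalence. This can be done via a Hurewicz--K\"unneth analysis of the sheafified homology with $\mathbf{Z}$-coefficients (using that reduced $\mathbf{Z}$-homology satisfies $\tilde H_*(X \wedge Y) \simeq \tilde H_*(X) \otimes_{\mathbf{Z}} \tilde H_*(Y)$ in the relevant range, with $\mathrm{Tor}$ contributions confined to strictly lower total degree), or by a direct cellular/Mayer--Vietoris analysis of the pushout square defining the smash product together with the universal property of the tensor product. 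I expect this last step to be the main obstacle: while entirely classical for pointed spaces, the sheaf-theoretic analogue requires that the Hurewicz and K\"unneth theorems hold for sheafified homotopy in the hypercomplete $\infty$-topos $\Shv(A)^{\wedge}$, which must be formulated and checked carefully in the fpqc setting.
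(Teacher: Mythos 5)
Your route is genuinely different from the paper's, and it runs directly into an obstacle that the paper's proof is designed to sidestep. The paper does not invoke Blakers--Massey, truncation towers, or a sheafified Hurewicz/K\"unneth theorem. Instead it reduces everything to the classical statement for pointed spaces, applied \emph{section by section at the presheaf level}, and then sheafifies. Concretely, the paper replaces $X$ by the presheaf $\underline{X} \colonequals \mathrm{fib}\bigl(X \to \tau_{\le m}^{\mathrm{pre}} X\bigr)$, computed in presheaves of spaces; this $\underline{X}$ is sectionwise $m$-connected (not merely after sheafification) and sheafifies back to $X$. The analogous $\underline{Y}$ is sectionwise $n$-connected. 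The presheaf smash $\underline{X} \wedge \underline{Y}$ is computed sectionwise, so the classical space-level statement (connectivity plus $\pi_{m+n+2} \simeq \pi_{m+1} \otimes \pi_{n+1}$) holds at each section. Since sheafification is a left exact left adjoint it commutes with the colimits and finite limits defining the smash, and it turns the presheaf homotopy groups into the sheaf homotopy groups, so the result descends. This completely avoids proving Hurewicz or K\"unneth inside the fpqc $\infty$-topos.

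Two concrete issues with your argument as written. First, the sequence $\tau_{>m+1}X \to X \to K(\pi_{m+1}(X),m+1)$ is a \emph{fiber} sequence, not a cofiber sequence, so ``smashing with $Y$ preserves cofiber sequences'' does not directly apply; you would need a Blakers--Massey or relative Hurewicz argument to convert it to a cofiber sequence in the relevant range, which is an additional nontrivial step you did not supply. Second, and more importantly, the final Eilenberg--MacLane computation --- the one you flag as ``the main obstacle'' --- really is the hard part of your route, and as stated it is a plan, not a proof. You would need to establish that the comparison map $K(A,m+1) \wedge K(B,n+1) \to K(A\otimes_{\ZZ}B, m+n+2)$ is an $(m+n+2)$-equivalence of sheaves, which requires a sheafified Hurewicz/K\"unneth package. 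Once you notice that the smash, the tensor product, and the relevant homotopy groups can all be formed presheaf-wise and then sheafified, that entire package collapses into the ordinary statement for spaces, and the Blakers--Massey and Eilenberg--MacLane machinery becomes unnecessary. That observation is essentially the whole content of the paper's proof.
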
{}

\begin{proof}
The result follows from the same statement at the level of spaces (which for example can be seen by using the classical Hurewicz theorem and computing homology) and applying sheafification. Indeed, let $\underline{X}$ be the fibre of the map $X \to \tau_{\le m}^{\mathrm{pre}} X$ computed in the $\infty$-category of presheaves of spaces. Let $\underline{Y}$ be constructed similarly.
Then $\underline{X}$ and $\underline{Y}$ are an $m$-connected and $n$-connected presheaf, respectively.
It follows that $\underline{X} \wedge \underline{Y}$ is $(m+n+1)$-connected and $\pi_{m+n+2}(\underline{X}\wedge \underline{Y}) \simeq \pi_m(\underline{X}) \otimes_{\mathbf Z} \pi_n (\underline{Y})$ in the category of presheaves.
Applying sheafification and noting that the sheafification of $\underline{X}$ and $\underline{Y}$ recovers $X$ and $Y$, respectively, now yields the desired result.
\end{proof}{}

\begin{lemma}\label{f}Let $n \ge 0$ be a fixed integer. Let $X$ be a pointed $n$-connected higher stack. Then there are natural maps $$\pi_i (X) \to \pi_{i+1} (\Sigma X)$$which are isomorphisms for $i \le 2n$ and a surjection for $i=2n+1$.
\end{lemma}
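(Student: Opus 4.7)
The plan is to deduce the statement from the classical Freudenthal suspension theorem for topological spaces via sheafification, mirroring the strategy already used in the proof of \cref{c}. The key is to replace $X$ by a presheaf-level model whose pointwise values are $n$-connected spaces, apply the classical theorem pointwise, and then transfer the conclusion to sheaves by left-exact localization.

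First, I would construct the model. Let $\underline{X}$ denote the fibre of the natural map $X \to \tau_{\le n}^{\mathrm{pre}} X$ computed in the $\infty$-category $\PShv(k)$ of presheaves of spaces, where $\tau_{\le n}^{\mathrm{pre}}$ is the pointwise $n$-truncation. By construction, $\underline{X}(\Spec R)$ is pointed and $n$-connected for every $\Spec R \in \Aff_k$. Since $X$ is $n$-connected in $\Shv(k)$, the sheaf-level truncation $\tau_{\le n} X$ is contractible, so applying the sheafification functor $L \colon \PShv(k) \to \Shv(k)$ to the defining fibre sequence and using that $L$ is left exact yields $L\underline{X} \simeq X$.

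Next, I would apply the classical Freudenthal suspension theorem objectwise. For each $\Spec R$, the unit map $\underline{X}(\Spec R) \to \Omega \Sigma\, \underline{X}(\Spec R)$ in $\cS$ induces isomorphisms on $\pi_i$ for $i \le 2n$ and a surjection on $\pi_{2n+1}$. These assemble into a map of presheaves $\eta \colon \underline{X} \to \Omega^{\mathrm{pre}} \Sigma^{\mathrm{pre}} \underline{X}$ that is a presheaf isomorphism on $\pi_i^{\mathrm{pre}}$ for $i \le 2n$ and a presheaf surjection on $\pi_{2n+1}^{\mathrm{pre}}$, where $\Sigma^{\mathrm{pre}}$ and $\Omega^{\mathrm{pre}}$ are the pointwise suspension and loop constructions.

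Finally, I would sheafify. Because $L$ is a left exact left adjoint, it commutes with both the pushout defining $\Sigma$ and the pullback defining $\Omega$ in the $\infty$-topos $\Shv(k)_*$; combined with the identification $L\underline{X} \simeq X$, this gives $L(\Sigma^{\mathrm{pre}} \underline{X}) \simeq \Sigma X$ and $L(\Omega^{\mathrm{pre}} \Sigma^{\mathrm{pre}} \underline{X}) \simeq \Omega \Sigma X$, and identifies $L\eta$ with the unit map $X \to \Omega \Sigma X$ of the $\Sigma \dashv \Omega$ adjunction on pointed objects. Since homotopy sheaves are defined by sheafifying presheaf homotopy groups (\cref{pancake}) and $L$ sends isomorphisms (resp.\ surjections) of presheaves of groups to isomorphisms (resp.\ effective epimorphisms) of sheaves of groups, the resulting map $\pi_i(X) \to \pi_i(\Omega\Sigma X) \simeq \pi_{i+1}(\Sigma X)$ is an isomorphism for $i \le 2n$ and a surjection for $i = 2n+1$. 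The only point that genuinely requires attention is the comparison between the sheaf-level suspension/loop functors and the sheafifications of their pointwise analogues, but this is a routine consequence of the exactness of $L$; no deeper obstacle is anticipated.
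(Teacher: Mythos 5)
Your proposal is correct and follows essentially the same strategy as the paper's own proof: both construct $\underline{X}$ as the presheaf-level fibre of $X \to \tau_{\le n}^{\mathrm{pre}} X$, invoke the classical Freudenthal suspension theorem objectwise, and transfer the conclusion back to $\Shv(k)$ via left-exact sheafification. You have merely made explicit a couple of steps (the compatibility of $L$ with $\Sigma$ and $\Omega$, and the identification $L\underline{X} \simeq X$) that the paper leaves implicit.
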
{}

\begin{proof}
The natural maps above are induced via the map $X \to \Omega \Sigma X$ arising from adjunction. The desired result then follows from the classical Freudenthal suspension theorem for spaces. 
Indeed, let $\underline{X}$ be the fibre of the map $X \to \tau_{\le n}^{\mathrm{pre}}X$ computed in the $\infty$-category of presheaves of spaces. 
Then $\underline{X}$ is an $n$-connected presheaf.
By the classical Freudenthal suspension theorem for spaces, it follows that the natural maps $\pi_i (\underline{X}) \to \pi_{i+1}(\Sigma \underline{X})$ of presheaves are isomorphisms for $i \le 2n$ and a surjection for $i = 2n+1$. Applying sheafification and noting that sheafification of $\underline{X}$ recovers $X$ now yields the desired result.
\end{proof}{}

\begin{remark}
    Note that \cref{f} does not imply the Freudenthal suspension theorem in unipotent homotopy theory (\cref{freudenthalsusp}): even if $X$ is an affine stack, $\Sigma X$ can be far from being an affine stack, e.g., it can have nonrepresentable homotopy sheaves (see \cref{lentil}).
\end{remark}{}

\begin{lemma}\label{sohard}
Let $X$ be a pointed connected affine stack over $k$ such that $\pi_1(X)$ is trivial and $H^i(X, \cO)$ is finite-dimensional for all $i$. Let $Y$ be a pointed connected stack.
Then any pullback diagram 
\begin{center}
 \begin{tikzcd}
Y' \arrow[r, "f'"] \arrow[d, "g'"] & * \arrow[d, "g"] \\
Y \arrow[r, "f"]                   & X               
\end{tikzcd}
\end{center}
induces a pullback diagram
\begin{center}
    \begin{tikzcd}
\UU(Y') \arrow[r, "f'"] \arrow[d, "g'"] & * \arrow[d, "g"] \\
\UU(Y) \arrow[r, "f"]                   & \UU(X).               
\end{tikzcd}
\end{center}{}

\end{lemma}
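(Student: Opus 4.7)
The plan is to construct a natural comparison map $\alpha\colon \UU(Y') \to *\times_{\UU(X)}\UU(Y)$ via the universal property of the unipotent homotopy type, and to show it is an equivalence by reducing to \cref{sohard139} on derived global sections.

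To begin, I would observe that since affine stacks are closed under limits in $\Shv(k)$ (\cref{hypercompletea}), the pullback $P\colonequals *\times_{\UU(X)}\UU(Y)$ is itself an affine stack. The commutativity of the original pullback square together with the unit maps $Y\to\UU(Y)$ and $Y'\to\UU(Y')$ yields a canonical pointed map $Y' \to P$; by the universal property of affinization (\cref{defuht}), this factors uniquely as $Y' \to \UU(Y') \xrightarrow{\alpha} P$. We are reduced to proving that $\alpha$ is an equivalence of pointed stacks.

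Using the equivalence $\Spec\colon(\mathrm{DAlg}^{\mathrm{ccn}}_k)^{\mathrm{op}}\xrightarrow{\sim}\{\text{affine stacks}\}$ of \cref{cope}, $\alpha$ is an equivalence precisely when the induced map on derived global sections is an equivalence in $\mathrm{DAlg}^{\mathrm{ccn}}_k$. Since $\Spec$ is a right adjoint, it sends pushouts in $\mathrm{DAlg}^{\mathrm{ccn}}_k$ to pullbacks in $\Shv(k)$, whence $R\Gamma(P,\cO)\simeq k\otimes_{R\Gamma(X,\cO)} R\Gamma(Y,\cO)$ with the pushout taken in $\mathrm{DAlg}^{\mathrm{ccn}}_k$. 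Thus it suffices to show that the canonical comparison map
\[
k\otimes_{R\Gamma(X,\cO)} R\Gamma(Y,\cO)\longrightarrow R\Gamma(Y',\cO)
\]
is an equivalence in $\mathrm{DAlg}^{\mathrm{ccn}}_k$. Since the forgetful functor $\mathrm{DAlg}^{\mathrm{ccn}}_k\to D(k)$ is conservative and, on cosimplicial resolutions, the $\mathrm{DAlg}^{\mathrm{ccn}}_k$-pushout has underlying complex equal to the derived tensor product (agreeing with the $E_\infty$-pushout in the coconnective range, via the identification of $\mathrm{DAlg}^{\mathrm{ccn}}_k$ with coconnective derived commutative rings), it suffices to check this comparison at the level of $E_\infty$-algebras. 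This is precisely the content of \cref{sohard139}, whose hypotheses---$X$ a pointed connected affine stack with $\pi_1(X)$ trivial and $H^i(X,\cO)$ finite-dimensional for all $i$---coincide verbatim with ours.

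The main obstacle has already been addressed in the proof of \cref{sohard139}: the projection-formula arguments in \cref{iy3} require the cohomology sheaves $\mathscr{H}^i(g_*\cO)$ on $X$ to correspond to finite-dimensional representations of $\pi_1(X)$, and this finite-dimensionality is extracted via a Leray-type spectral sequence using the triviality of $\pi_1(X)$ and the finiteness of $H^*(X,\cO)$. Once \cref{sohard139} is granted, \cref{sohard} itself is a formal consequence of the universal property of $\UU$ and the $\Spec/R\Gamma$ adjunction.
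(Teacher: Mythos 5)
The key step in your argument is the claim that ``on cosimplicial resolutions, the $\mathrm{DAlg}^{\mathrm{ccn}}_k$-pushout has underlying complex equal to the derived tensor product,'' i.e., that the forgetful functor $\mathrm{DAlg}^{\mathrm{ccn}}_k \to D(k)$ (or to $\mathrm{CAlg}(D(k))$) preserves pushouts. This is false, and is precisely the pitfall the paper explicitly flags in the remark following \cref{rent14}. For a concrete counterexample, take $A = k[x]$ with the augmentation $A \to k$, $x\mapsto 0$. Since $\Spec A$ is a $0$-truncated sheaf, the pullback $\Spec k \times_{\Spec A} \Spec k$ in $\Shv(k)$ is simply $\Spec k$, so by the $\Spec$/$R\Gamma$ adjunction of \cref{cope} the $\mathrm{DAlg}^{\mathrm{ccn}}_k$-pushout is $k\coprod_A k \simeq k$; on the other hand $k\otimes^L_A k \simeq k\oplus k[1]$ carries a nontrivial Tor class in homological degree $1$. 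The point is that the forgetful functor out of $\mathrm{DAlg}^{\mathrm{ccn}}_k$ is a right adjoint and the bar construction computing a pushout can leave $D(k)_{\le 0}$, so there is no formal reason for the two pushouts to coincide. Your argument would therefore require a separate verification that under the hypotheses at hand the natural comparison $k\otimes_{R\Gamma(X)}R\Gamma(Y)\to k\coprod_{R\Gamma(X)}R\Gamma(Y)$ is an equivalence, and it is exactly this that the paper's proof supplies.

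The paper's proof closes this gap by a two-out-of-three device that uses \cref{sohard139} twice. Writing the composite
\[
k \otimes_{R\Gamma(X,\cO)} R\Gamma(Y,\cO) \longrightarrow k \coprod_{R\Gamma(X,\cO)} R\Gamma(Y,\cO) \longrightarrow R\Gamma(Y',\cO),
\]
\cref{sohard139} applied to $Y'=Y\times_X *$ shows the composite is an equivalence. Then one applies \cref{sohard139} a second time to the pullback $Y''\colonequals \UU(Y)\times_X *$. Since $\UU(Y)$, $*$, and $X$ are all affine and affine stacks are closed under limits, $Y''$ is affine and $R\Gamma(Y'',\cO)$ is the $\mathrm{DAlg}^{\mathrm{ccn}}_k$-pushout $k\coprod_{R\Gamma(X)}R\Gamma(\UU(Y))$; combined with $R\Gamma(\UU(Y),\cO)\simeq R\Gamma(Y,\cO)$, this yields that the first arrow above is an equivalence, and two-out-of-three gives that the second arrow is too, which is the statement of the lemma. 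So your overall strategy (reduce to an identification of the $\mathrm{DAlg}^{\mathrm{ccn}}_k$-pushout with $R\Gamma(Y')$ via \cref{sohard139}) is sound, but the single unjustified step in the middle is exactly the nontrivial content, and it cannot be dispatched by the general assertion you rely on.
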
{}

\begin{proof}
This is a consequence of \cref{sohard139}. Indeed, note that there are natural maps \begin{equation}\label{sohard11}
    k \otimes_{R\Gamma(X, \cO)} R\Gamma(Y, \cO) \to k \coprod_{R\Gamma(X, \cO)} R\Gamma (Y, \cO) \to R\Gamma(Y', \cO),
\end{equation}{}
where the middle term denotes the pushout in $\mathrm{DAlg}^{\mathrm{ccn}}_k$. 
By \cref{sohard139}, the composite map is an isomorphism. 
Replacing $Y$ by $\UU(Y)$ (which is again naturally a pointed connected higher stack) and $Y$ by $Y'' \colonequals \UU(Y) \times_{X} \left \{*\right \}$, another application of \cref{sohard139} shows that the composition $k \otimes_{R\Gamma(X, \cO)} R\Gamma(\UU(Y), \cO) \to k \coprod_{R\Gamma(X, \cO)} R\Gamma (\UU(Y), \cO) \to R\Gamma(Y'', \cO)$ is an isomorphism. However, since $\UU(Y)$ is an affine stack, it follows that the latter map $k \coprod_{R\Gamma(X, \cO)} R\Gamma (\UU(Y), \cO) \to R\Gamma(Y'', \cO)$ is an isomorphism. 
Thus, the former map $$k \otimes_{R\Gamma(X, \cO)} R\Gamma(\UU(Y), \cO) \to k \coprod_{R\Gamma(X, \cO)} R\Gamma (\UU(Y), \cO)$$ is an isomorphism as well.
Using the natural isomorphism $R\Gamma(\UU(Y), \cO)\simeq R\Gamma(Y, \cO)$ and the maps in \cref{sohard11}, we now see that the map $\displaystyle{k \coprod_{R\Gamma(X, \cO)} R\Gamma (Y, \cO) \to R\Gamma(Y', \cO)}$ in \cref{sohard11} is an isomorphism. This proves the desired statement.
\end{proof}{}

\begin{lemma}\label{advv}
Let $X$ be a pointed connected affine stack over a field $k$ such that $\pi_1(X)$ is trivial and $H^i (X, \cO)$ is a finite-dimensional $k$-vector space for all $i \ge 0$. 
Then $H^i (\tau_{\le n} X, \cO)$ is finite-dimensional for all $n$ and all $i$.
\end{lemma}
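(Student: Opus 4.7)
The proof will proceed by induction on $n$. The base cases $n \le 1$ are immediate: because $X$ is connected and $\pi_1(X)$ is trivial, $\tau_{\le n}X \simeq \ast$ for $n \le 1$, whose cohomology is concentrated in degree zero.

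For the inductive step, assume $H^i(\tau_{\le n-1}X, \cO)$ is finite-dimensional for all $i$. The crucial auxiliary object is the $(n-1)$-connected cover $F_n$, defined as the fibre of the Postnikov projection $X \to \tau_{\le n-1}X$. Since affine stacks are closed under limits, $F_n$ is again an affine stack, and $\pi_i(F_n) = 0$ for $i < n$ while $\pi_n(F_n) \simeq \pi_n(X)$. By the inductive hypothesis, $\tau_{\le n-1}X$ is a pointed connected affine stack with trivial $\pi_1$ and finite-dimensional cohomology in each degree, so \cref{sohard139} applies to the pullback square
\[
\begin{tikzcd}
F_n \arrow[r] \arrow[d] & * \arrow[d] \\
X \arrow[r] & \tau_{\le n-1}X
\end{tikzcd}
\]
and yields an isomorphism $R\Gamma(F_n, \cO) \simeq k \otimes_{R\Gamma(\tau_{\le n-1}X, \cO)} R\Gamma(X, \cO)$. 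The associated Tor spectral sequence has $E_2$-terms that are finite-dimensional in every total degree, and hence $H^i(F_n, \cO)$ is finite-dimensional for all $i$.

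To pass from $F_n$ to $\tau_{\le n}X$, I combine the previous observation with the $k$-invariant pullback
\[
\begin{tikzcd}
\tau_{\le n}X \arrow[r] \arrow[d] & * \arrow[d] \\
\tau_{\le n-1}X \arrow[r] & K(\pi_n(X), n+1),
\end{tikzcd}
\]
in which $K(\pi_n(X), n+1)$ is an affine stack with trivial $\pi_1$ by \cref{thmoftoen} (since $n+1 \ge 2$). If I can show that $H^i(K(\pi_n(X), n+1), \cO)$ is finite-dimensional in every degree, a second application of \cref{sohard139} produces $R\Gamma(\tau_{\le n}X, \cO) \simeq k \otimes_{R\Gamma(K(\pi_n(X), n+1), \cO)} R\Gamma(\tau_{\le n-1}X, \cO)$, and another Tor spectral sequence argument completes the inductive step.

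The principal obstacle is therefore verifying that $H^i(K(\pi_n(X), n+1), \cO)$ is finite-dimensional in every degree. Here I exploit the fact that $F_n$ itself is a pointed connected affine stack with trivial $\pi_1$ and finite-dimensional cohomology, hence also a valid input to the lemma. Using the identification $\tau_{\le n}F_n \simeq K(\pi_n(X), n)$, the loop-space relationship $\Omega K(\pi_n(X), n+1) \simeq K(\pi_n(X), n)$, and the Postnikov comparison $H^i(\tau_{\le n}F_n, \cO) \to H^i(F_n, \cO)$ (which is an isomorphism for $i \le n$ and injective for $i = n+1$), the problem of controlling $H^*(K(\pi_n(X), n+1), \cO)$ bootstraps from $H^*(F_n, \cO)$ via a bar-construction/Koszul-duality input (\textit{cf.}~the discussion following \cref{makiman34}). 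The main technical subtlety is organizing this nested bootstrap so as to avoid circularity --- most naturally by running an outer induction jointly on the truncation index and the connectivity of the ambient stack --- and that is where the real work of the proof lies.
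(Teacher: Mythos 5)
Your route is genuinely different from the paper's.  The paper applies the Leray-type spectral sequence of the Postnikov fibration $K(\pi_n(X),n) \to \tau_{\le n}X \to \tau_{\le n-1}X$, a \emph{first-quadrant} spectral sequence, and then handles $\Hom(\pi_{n+1}(X), \GG_a)$ by a separate contradiction argument running along the columns of that spectral sequence.  You instead use the $(n-1)$-connected cover $F_n$ together with the $k$-invariant pullback, both via the Eilenberg--Moore (Tor) spectral sequence of \cref{sohard139}.  One payoff of your route is that it dodges the paper's contradiction step: the finite-dimensionality of $\Hom(\pi_n(X),\GG_a) \simeq H^n(K(\pi_n(X),n),\cO) \simeq H^n(F_n,\cO)$ drops out directly.

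That said, there are two real gaps.  First, the assertion that "the associated Tor spectral sequence has $E_2$-terms that are finite-dimensional in every total degree" is not automatic: this is a second-quadrant spectral sequence, and for a fixed total cohomological degree $m$ the bidegrees $(p,q)$ with $q-p=m$ range over all $p \ge 0$.  It is in fact bounded here, but only because $H^1(\tau_{\le n-1}X,\cO)=0$ forces the augmentation ideal of the graded ring $H^*(\tau_{\le n-1}X,\cO)$ to start in degree $\ge 2$, which gives $q \ge 2p$ and hence $p \le m$; you neither observe this nor address strong convergence.  Second, and more seriously, your second application of \cref{sohard139} requires $H^i(K(\pi_n(X),n+1),\cO)$ to be finite-dimensional for all $i$, and the appeal to "bar-construction/Koszul-duality input" near \cref{makiman34} does not supply this.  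What is needed is exactly the paper's \cref{finiteness}: from $\dim_k\Hom(G,\GG_a) < \infty$, deduce $\dim_k H^j(K(G,m),\cO) < \infty$ for all $j,m$, whose $m=1$ case is nontrivial commutative algebra (the Betti numbers of the noetherian local ring dual to $\cO(G)$ are finite, \cref{genfunc}) and whose $m>1$ case is an induction via the bar spectral sequence.  The "bootstrap from $H^*(F_n,\cO)$" you sketch cannot replace this: the Postnikov comparison $H^j(\tau_{\le n}F_n,\cO) \to H^j(F_n,\cO)$ only controls $H^j(K(\pi_n(X),n),\cO)$ for $j \le n+1$, while the bar construction for $K(\pi_n(X),n+1)$ needs all degrees --- and obtaining those from $F_n$ via truncation is precisely an instance of \cref{advv}, the statement being proved.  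You flag this circularity yourself, but resolving it is the heart of the proof, and without \cref{finiteness} (or a substitute) your argument does not close.
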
 

\begin{proof}
The claim is immediate for $n \le 1$ by our assumptions on $X$. Our goal is to inductively prove the following statement.

\begin{itemize}
    \item For all $u \ge 2$, $\mathrm{Hom}(\pi_u(X), \mathbf{G}_a)$ is finite-dimensional and for all $i \ge 0$, $H^i (\tau_{\le u-1}X, \mathscr{O})$ is finite-dimensional.
\end{itemize}{}
Note that \cref{postnikov} and \cref{gait1} yield an isomorphism $\mathrm{Hom} (\pi_2 (X), \GG_a) \simeq H^2 (X, \cO)$. This implies base case $u=2$ of our desired assertion. For the inductive step, let us assume that the assertion is proven for $u=n$.
We first establish that $H^i (\tau_{\le n}X, \cO)$ is finite-dimensional for all $i$.
The pullback diagram
\begin{center}
    \begin{tikzcd}
{K(\pi _n(X), n)} \arrow[d] \arrow[r] & * \arrow[d]                                    \\
\tau_{\le n}X \arrow[r]               & \tau_{\le n-1}X                                
\end{tikzcd}
\end{center}
gives a spectral sequence
\begin{equation}\label{E2spect12}
    E_2^{i,j} = H^i (\tau_{\le n-1} X, \mathscr{H}^j (K(\pi_n(X), n), \cO)) \implies H^{i+j} (\tau_{\le n} X, \cO).
\end{equation}Here, $\mathscr{H}^j (K(\pi_n(X), n), \cO)$ is a quasi-coherent sheaf on $\tau_{\le n-1} X$ whose pullback to the point is the vector space ${H}^j (K(\pi_n(X), n), \cO)$.
Now we invoke the following lemma.
\begin{lemma}\label{finiteness}
Let $G$ be a commutative unipotent group scheme over $k$ such that $\mathrm{Hom}(G, \GG_a)$ is finite-dimensional.
Then $H^j (K(G,n), \cO)$ is finite-dimensional for all $j$.
\end{lemma}{}
\begin{proof}
We proceed by induction on $n$.
When $n=1$, the finiteness statement is proven later in \cref{genfunc}; it is a consequence of the fact that the dual of the Hopf algebra $\cO(G)$ (in the sense of \cref{dualizinggpsch}) is a commutative noetherian local ring and the Betti numbers of any commutative noetherian local ring are finite.
\vspace{2mm}

For the inductive step, let us assume that the statement is proven for a fixed $n$.
Denote by $G^i$ the $i$-fold self-product of $G$.
Since $K(G^i,n)$ is an affine stack for all $i \ge 0$, the K\"unneth formula shows that $H^j (K(G^i,n),\cO)$ is finite-dimensional.
By fpqc descent along $* \to K(G,n+1)$, we obtain a spectral sequence
\[ E^{i,j}_1 = H^j(K(G^i,n),\cO) \implies H^{i+j}(K(G,n+1),\cO). \]
Since all terms in the $E_1$-page are finite-dimensional by the induction hypothesis, the spectral sequence gives the desired statement for $n+1$.
\end{proof}{}
Since $\tau_{\le 1}X \simeq *$ by assumption, it follows from \cref{heartrep} that $\mathscr{H}^j (K(\pi_n(X), n), \cO) \simeq H^j(K(\pi_n(X),n),\cO) \otimes_k \cO$ as quasi-coherent sheaves on $\tau_{\le n-1}X$. On the other hand, \cref{finiteness} above shows that $H^j(K(\pi_n(X),n),\cO)$ is a finite-dimensional $k$-vector space.
Therefore, all the terms in the $E_2$-page of the spectral sequence \cref{E2spect12} are finite-dimensional by the induction hypothesis. This shows that $H^i (\tau_{\le n} X, \mathscr{O})$ is finite-dimensional as desired.
\vspace{2mm}

Now we move on to proving that $\mathrm{Hom}(\pi_{n+1}(X), \GG_a)$ is finite-dimensional.
The pullback diagram
\begin{center}
     \begin{tikzcd}
{K(\pi_{n+1}(X), n+1)} \arrow[r] \arrow[d] & * \arrow[d]   \\
\tau_{\le n+1}X \arrow[r]                 & \tau_{\le n}X
\end{tikzcd}
 \end{center}{}
gives as before a spectral sequence
$$E_2^{i,j}= H^i (\tau_{\le n}X, \mathscr{H}^j (K(\pi_{n+1}(X), n+1), \cO)) \implies H^{i+j} (\tau_{\le n+1}X, \cO).$$
It is enough to prove that $H^{n+1}(K(\pi_{n+1}(X), n+1), \cO)$ is finite-dimensional:
by \cref{gait1}, this implies that $\mathrm{Hom}(\pi_{n+1}(X), \GG_a)$ is finite-dimensional.
Assume, for the sake of contradiction, that $H^{n+1}(K(\pi_{n+1}(X), n+1), \cO)$ is infinite-dimensional.
Since $\pi_1(X)$ is trivial, that implies that $E_2^{0,n+1}$ is infinite-dimensional.
By \cref{gait1}, $\mathscr{H}^j (K(\pi_{n+1}(X), n+1), \cO))=0$ for $0<j \le n$, which gives $E_r^{i,j}=0$ for $0<j \le n$. Also, $E_r^{i,j}=0$ for $i<0$ or $j<0$.
This implies that $E_r ^{0,n+1} = E_2^{0, n+1}$ for $2 \le r \le n+2$. On the $(n+2)$-nd page of the spectral sequence we have a potentially nonzero differential $$ E_{n+2}^{0, n+1} \to E_{n+2}^{n+2, 0}.$$
Note that $E_{2}^{n+2, 0}= H^{n+2}(\tau_{\le n} X, \cO)$ is finite-dimensional, since we have already shown that $H^i (\tau_{\le n}X, \cO)$ is finite-dimensional for all $i$. Therefore, $E_{n+2}^{n+2,0}$ is finite-dimensional.
Since we assumed that $E_2^{0,n+1}$ is infinite-dimensional, this shows that $E_{n+3}^{0,n+1}$ is also infinite-dimensional.
Moreover, $E_{n+3}^{0,n+1} = E_{\infty}^{0,n+1}$ and so the latter term is infinite-dimensional as well.
By \cref{postnikov}, $H^{n+1}(\tau_{\le n+1}X, \cO) \simeq H^{n+1}(X, \cO)$, which is finite-dimensional by assumption. Since $E_{\infty}^{0,n+1}$ is now a subquotient of the finite-dimensional vector space $H^{n+1}(\tau_{\le n+1}X, \cO)$, we reach a contradiction that finishes the proof.
\end{proof}{}

\begin{proposition}\label{freu1}
Let $X$ be a pointed connected stack over a field $k$ and $n \ge 1$ be a fixed integer such that the following conditions hold:
\begin{enumerate}
    \item $\pi_1(X)$ is trivial and $H^i (X, \cO)$ is finite-dimensional for all $i$; 
    \item $\tau_{\le n} X$ is an affine stack.
\end{enumerate}{}
Then $\pi_{n+1} (\UU(X)) \simeq ((\pi_{n+1}(X))^{\mathrm{uni}})^{\mathrm{ab}}$.
\end{proposition}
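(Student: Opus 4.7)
The plan is to apply \cref{sohard} to the Postnikov fibre square
\[
\begin{tikzcd}
K(\pi_{n+1}(X), n+1) \arrow[r] \arrow[d] & \ast \arrow[d] \\
\tau_{\le n+1}X \arrow[r] & \tau_{\le n}X,
\end{tikzcd}
\]
pass it through the affinization functor $\UU$, and then extract $\pi_{n+1}$ of the resulting fibre sequence using \cref{whoknew}. The main obstacle lies in verifying the hypotheses of \cref{sohard} for the base $\tau_{\le n}X$: its triviality of $\pi_1$ and affineness are immediate, but the finite-dimensionality of $H^i(\tau_{\le n}X, \cO)$ for \emph{all} $i$ is not (hypothesis (1) together with \cref{postnikov}(2) only furnishes this for $i \le n$).

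To handle this, I would route through $\UU(X)$ and invoke \cref{advv}. The affine stack $\UU(X)$ is pointed connected by \cref{connn} (since $H^0(X,\cO)\simeq k$), its cohomology $H^i(\UU(X), \cO) \simeq H^i(X, \cO)$ is finite-dimensional by hypothesis, and $\pi_1(\UU(X)) = 0$ by \cref{simplyconnected}: indeed, $\pi_1(X) = 0$ forces $\tau_{\le 1}X \simeq \ast$, so every homotopy class of maps $X \to B\GG_a$ factors through the point, which gives $H^1(X, \cO) = 0$. Now \cref{advv} yields finite-dimensionality of $H^i(\tau_{\le m}\UU(X), \cO)$ for all $m$ and $i$. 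A standard universal-property argument identifies $\tau_{\le n}\UU(X) \simeq \tau_{\le n}X$: both are $n$-truncated affine stacks (the first by \cref{postnikov}(1), the second by hypothesis (2)), both are initial among such stacks receiving a map from $X$, and the mutually inverse comparison maps follow from the uniqueness clauses in the universal properties of $\UU$ and $\tau_{\le n}$. Consequently, $H^i(\tau_{\le n}X, \cO)$ is finite-dimensional in every degree.

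With the hypotheses of \cref{sohard} in hand, it produces the fibre sequence $\UU(K(\pi_{n+1}(X), n+1)) \to \UU(\tau_{\le n+1}X) \to \tau_{\le n}X$ after affinization, using $\UU(\tau_{\le n}X) \simeq \tau_{\le n}X$. Since $\pi_{n+1}(\tau_{\le n}X) = \pi_{n+2}(\tau_{\le n}X) = 0$, the associated long exact sequence combined with \cref{whoknew} (applicable because $n+1 \ge 2$) yields
\[
\pi_{n+1}(\UU(\tau_{\le n+1}X)) \simeq \pi_{n+1}(\UU(K(\pi_{n+1}(X), n+1))) \simeq ((\pi_{n+1}(X))^{\mathrm{uni}})^{\mathrm{ab}}.
\]
To replace $\tau_{\le n+1}X$ by $X$, I would argue once more by universal property that $\tau_{\le n+1}\UU(X) \simeq \tau_{\le n+1}\UU(\tau_{\le n+1}X)$: both are $(n+1)$-truncated affine stacks initial among such stacks receiving a map from $X$, since any such map factors uniquely through $\tau_{\le n+1}X$ by the universal property of truncation. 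Applying $\pi_{n+1}$ finally gives $\pi_{n+1}^{\mathrm{U}}(X) \simeq ((\pi_{n+1}(X))^{\mathrm{uni}})^{\mathrm{ab}}$, as required.
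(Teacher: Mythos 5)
Your proposal is correct and follows essentially the same route as the paper: the paper's proof likewise establishes the general identification $\tau_{\le r}\UU(X) \simeq \tau_{\le r}\UU(\tau_{\le r}X)$ for all $r$, specializes to $r=n$ together with the affineness of $\tau_{\le n}X$ to identify $\tau_{\le n}\UU(X)\simeq\tau_{\le n}X$, feeds finite-dimensionality from \cref{advv} applied to $\UU(X)$ into the hypotheses of \cref{sohard}, and then extracts $\pi_{n+1}$ from the affinized Postnikov square using \cref{whoknew} and $r=n+1$. Your explicit verification via \cref{simplyconnected} that $\pi_1(\UU(X))$ is trivial (needed for \cref{advv}) is a detail the paper leaves implicit, but it is the intended justification; the rest matches step for step.
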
{}

\begin{proof}
From the universal property of mapping into pointed connected $r$-truncated affine stacks for $r \ge 0$, we obtain $$\tau_{\le r}\UU(X) \simeq \tau_{\le r}\UU(\tau_{\le r} X).$$ 
By \cref{advv}, $H^i(\tau_{\le n} \UU(X), \cO)$ is finite-dimensional for all $i$. This implies that the vector space $H^i(\tau_{\le n} \UU(\tau_{\le n} X), \cO)$ is finite-dimensional for all $i$. However, $\tau_{\le n} X$ is already an affine stack by assumption. Thus, we obtain that $H^i (\tau_{\le n}X, \cO)$ is finite-dimensional for all $i$.
By \cref{sohard}, the pullback diagram
\begin{center}
   
\begin{tikzcd}
{K(\pi_{n+1}(X), n+1)} \arrow[rr] \arrow[d] &  & * \arrow[d]    \\
\tau_{\le n+1} X \arrow[rr]                 &  & \tau_{\le n} X
\end{tikzcd}
\end{center}{}
induces the following pullback diagram:
\begin{center}
\begin{tikzcd}
{\UU(K(\pi_{n+1}(X), n+1))} \arrow[rr] \arrow[d] &  & * \arrow[d]    \\
\UU(\tau_{\le n+1} X) \arrow[rr]                 &  & \UU(\tau_{\le n} X).
\end{tikzcd}
\end{center}{}
Since $\UU(\tau_{\le n} X) \simeq \tau_{\le n} X$, we obtain $$\pi_{n+1}(\UU(K(\pi_{n+1}(X), n+1))) \simeq \pi_{n+1}(\UU(\tau_{\le n+1} X)) \simeq \pi_{n+1}(\UU(X)).$$
Now, invoking \cref{whoknew} finishes the proof.
\end{proof}{}

\begin{proposition}[Freudenthal suspension theorem for affine stacks]\label{freudenthalsusp1} Let $X$ be a pointed connected affine stack over $k$ and $n \ge 0$ be a fixed integer such that the following conditions hold:

\begin{enumerate}
    \item $H^i (X, \cO)$ is finite-dimensional for all $i \ge 0$.
    
    \item $X$ is $n$-connected, i.e., $\pi_i (X)$ is trivial for $i \le n$.
\end{enumerate}{}
Let us consider the affine stack $\UU(\Sigma X)$. Then there are natural maps of group schemes
$$\pi_i (X) \to \pi_{i+1} (\UU(\Sigma X))$$ 
which are isomorphisms for $i \le 2n$ and a surjection for $i= 2n+1$.
\end{proposition}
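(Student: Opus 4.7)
The natural map $\pi_i(X) \to \pi_{i+1}(\UU(\Sigma X))$ factors as
\[ \pi_i(X) \longrightarrow \pi_{i+1}(\Sigma X) \longrightarrow \pi_{i+1}(\UU(\Sigma X)), \]
where the first arrow is the Freudenthal suspension map of \cref{f} (coming from the adjunction unit $X \to \Omega \Sigma X$) and the second is induced by the unit $\Sigma X \to \UU(\Sigma X)$. The plan is to handle the two factors separately. The first is already an isomorphism for $i \le 2n$ and a surjection for $i = 2n+1$ by \cref{f}, so the task is to analyze the second in the appropriate range. The key structural claim, which covers the isomorphism range, is that $\tau_{\le 2n+1}(\Sigma X)$ is itself an affine stack. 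Since $X$ is affine, \cref{thmoftoen} says each $\pi_j(X)$ is representable by a unipotent group scheme; via the iso $\pi_j(\Sigma X) \simeq \pi_{j-1}(X)$ of \cref{f} (valid for $j \le 2n+1$), the same holds for $\pi_j(\Sigma X)$ in that range. Combined with the $(n+1)$-connectedness of $\Sigma X$ and the fact that truncated stacks are hypercomplete, \cref{thmoftoen} gives that $\tau_{\le 2n+1}(\Sigma X)$ is affine. The universal property of $\UU$ and \cref{postnikov} then yield
\[ \tau_{\le 2n+1}\UU(\Sigma X) \simeq \tau_{\le 2n+1}\UU(\tau_{\le 2n+1}\Sigma X) \simeq \tau_{\le 2n+1}\Sigma X, \]
so $\pi_{i+1}(\Sigma X) \xrightarrow{\sim} \pi_{i+1}(\UU(\Sigma X))$ for $i \le 2n$.

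For the surjection at $i = 2n+1$, I would apply \cref{freu1} to $\Sigma X$ with parameter $n' = 2n+1$. The hypotheses are in hand: $\Sigma X$ is pointed connected with $\pi_1(\Sigma X) = *$ (from $(n+1)$-connectedness); the pushout description $R\Gamma(\Sigma X, \cO) \simeq k \times_{R\Gamma(X, \cO)} k$ together with the associated long exact sequence yields $\widetilde{H}^i(\Sigma X, \cO) \simeq \widetilde{H}^{i-1}(X, \cO)$, which is finite-dimensional for all $i$; and $\tau_{\le 2n+1}(\Sigma X)$ is affine by the previous step. Hence $\pi_{2n+2}(\UU(\Sigma X)) \simeq ((\pi_{2n+2}(\Sigma X))^{\uni})^{\ab}$. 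Now \cref{f} furnishes a surjection $\pi_{2n+1}(X) \twoheadrightarrow \pi_{2n+2}(\Sigma X)$, and because $\pi_{2n+2}(\Sigma X)$ is abelian this map factors through the commutative affine group scheme $\pi_{2n+1}(X)^{\ab}$ (using \cref{useinfreu} in the case $n = 0$, where $\pi_1(X)$ may be non-abelian). \cref{tttttt} then forces the canonical map $\pi_{2n+2}(\Sigma X) \to ((\pi_{2n+2}(\Sigma X))^{\uni})^{\ab}$ to be a surjection, and so the composite $\pi_{2n+1}(X) \to \pi_{2n+2}(\UU(\Sigma X))$ is surjective as required.

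I expect the surjectivity at $i = 2n+1$ to be the main obstacle: the isomorphism range reduces cleanly to the affineness of a Postnikov stage via Toen's classification, but in degree $2n+2$ the affinization step $\Sigma X \to \UU(\Sigma X)$ a priori risks discarding information. The resolution hinges on the explicit identification of $\pi_{2n+2}(\UU(\Sigma X))$ as $((\pi_{2n+2}(\Sigma X))^{\uni})^{\ab}$ afforded by \cref{freu1}, together with the surjectivity criterion \cref{tttttt} for such completions of abelian sheaves that arise as quotients of commutative affine group schemes. The finite-dimensionality of $H^*(\Sigma X, \cO)$, while straightforward here, is essential for invoking \cref{freu1}.
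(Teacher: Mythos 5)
Your proof proceeds by the same route as the paper: the same decomposition through $\pi_{i+1}(\Sigma X)$, the same use of \cref{f} and \cref{thmoftoen} to conclude affineness of $\tau_{\le 2n+1}\Sigma X$ and hence the isomorphism range, and the same appeal to \cref{freu1} and \cref{tttttt} for surjectivity at $i = 2n+1$ when $n \ge 1$. Up to that point the argument matches the paper's.

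The case $n = 0$, however, contains a gap. You write that the sheaf surjection $\pi_1(X) \twoheadrightarrow \pi_2(\Sigma X)$ factors through the group-scheme abelianization $\pi_1(X)^{\ab}$ because the target is abelian, and you then invoke \cref{tttttt}. But $\pi_2(\Sigma X)$ is merely an abelian fpqc sheaf, not a commutative affine group scheme, so the universal property of $\pi_1(X)^{\ab}$---which is initial among maps from $\pi_1(X)$ to commutative \emph{affine group schemes}---does not furnish such a factorization. What is true is that the map factors through the sheaf-level abelianization $\pi_1(X)/[\pi_1(X),\pi_1(X)]$, which the EHP sequence \cref{EHP} identifies with $\pi_2(\Sigma X)$ itself; the natural comparison then runs $\pi_2(\Sigma X) \to \pi_1(X)^{\ab}$ (the unit of pro-algebraic completion from \cref{Milne}), not the other way, and there is no map from $\pi_1(X)^{\ab}$ to $\pi_2(\Sigma X)$ in general. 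Lacking a surjection onto $\pi_2(\Sigma X)$ from a commutative affine group scheme, you cannot invoke \cref{tttttt}. The paper closes this case differently: the EHP sequence together with the identification of the edge map as the commutator shows $\pi_2(\Sigma X) \simeq \pi_1(X)/[\mathrm{Im}(W_{0,0})]$ as sheaves; \cref{whoknew} and \cref{Milne} then give $((\pi_2(\Sigma X))^{\uni})^{\ab} \simeq \pi_1(X)^{\ab}$, and the composite $\pi_1(X) \to \pi_2(\UU(\Sigma X))$ is identified with the canonical map $\pi_1(X) \to \pi_1(X)^{\ab}$, whose surjectivity is exactly \cref{useinfreu} (which also shows the finiteness hypothesis is unnecessary when $n = 0$). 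You cited \cref{useinfreu}, so the right tool was in view, but for $n = 0$ the argument requires this direct computation rather than a detour through \cref{tttttt}.
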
{}

\begin{proof}
First, we note that by adjunction there are natural maps $X \to \Omega \Sigma X \to \Omega \UU(\Sigma X)$ which induces the desired maps on homotopy group schemes.
\vspace{2mm}

For any pointed connected stack $Y$ and any integer $ r \ge 1$, by the universal property of mapping into pointed connected $r$-truncated affine stacks, we have
\begin{equation}\label{truncate-twice}
\tau_{\le r}\UU(Y) \simeq \tau_{\le r}\UU(\tau_{\le r} Y).
\end{equation}
By \cref{f}, the natural maps $\pi_i (X) \to \pi_{i+1}(\Sigma X)$ are isomorphisms for $i \le 2n$.
Thus, the sheaves of homotopy groups of the stack $\tau_{\le 2n+1} (\Sigma X)$ are all representable by unipotent affine group schemes, since the same holds for $X$. 
Therefore, $\tau_{\le 2n+1} \Sigma X$ is an affine stack (see \cref{thmoftoen}).
By \cref{truncate-twice}, it follows that $\tau_{\le 2n+1}(\UU(\Sigma X)) \simeq \tau_{\le 2n+1} (\Sigma X)$, which yields the first part in the claim of the proposition, i.e., $\pi_i(X) \to \pi_{i+1} (\UU(\Sigma X))$ is an isomorphism for $i \le 2n$. 
\vspace{2mm}

Now we proceed to prove that $\pi_{2n+1}(X) \to \pi_{2n+2} (\UU(\Sigma X))$ is a surjection. By \cref{f}, we have a surjection $\pi_{2n+1} (X) \to \pi_{2n+2} (\Sigma X) $ of fpqc sheaves. In fact, by Whitehead's EHP sequence \cite[Prop.~2.9]{EHP1} (\textit{cf.}~\cite{EHP2}), it extends to an exact sequence of fpqc sheaves
\begin{equation}\label{EHP}
     \pi_{2n+2} (X \wedge X) \to \pi_{2n+1}(X) \to \pi_{2n+2} (\Sigma X) \to 0.
\end{equation}
Since $X$ is $n$-connected, \cref{c} shows that the first map in \cref{EHP} induces a bilinear map which identifies with the Whitehead product $W_{n,n} \colon \pi_{n+1} (X) \times \pi_{n+1}(X) \to \pi_{2n+1} (X)$ on homotopy groups. 
When $n=0$, one can directly see that it identifies with the commutator map. Further, when $n=0$, it follows from \cref{whoknew} that $\tau_{\le 2} (\UU(\Sigma X)) \simeq K(((\pi_2(\Sigma X)^{\w{uni}})^{\w{ab}}), 2)$.
Recall that since $X$ is an affine stack, $\pi_1 (X)$ is a unipotent affine group scheme. Using the description of the map $W_{0,0}$ and the exact sequence \cref{EHP}, one sees that the natural map $\pi_1(X) \to  \pi_2(\UU (\Sigma X)) \simeq \pi_1(X)^{\mathrm{ab}}$ is surjective (see \cref{useinfreu});
this yields the surjectivity claim in the case $n=0$. Note that in the $n=0$ case, we do not need the finiteness assumption on $H^i (X, \cO)$. In the following, we assume $n \ge 1$.
\vspace{2mm}

Let us set $X' \colonequals \Sigma X$. Then $X'$ is a pointed connected stack.
Since $X$ is connected, $X'$ is $1$-connected and therefore $H^0 (X, \cO)=k$ and $H^1(X, \cO)=0$.
By adjunction and \cref{usefullemma113}, we have $H^{i}(X', \cO) \simeq H^{i-1}(X, \cO)$ for $i \ge 2$. Further, as noted before, $\tau_{\le 2n+1} X'$ is an affine stack and $\pi_1(X')$ is trivial. By \cref{freu1}, we obtain that $\pi_{2n+2} (\UU(X')) \simeq ((\pi_{2n+2}(X'))^{\w{uni}})^{\w{ab}}$.
We know from \cref{EHP} that $\pi_{2n+2}(X')$ receives a natural surjection from $\pi_{2n+1}(X)$, which is a commutative unipotent affine group scheme since $X$ is an affine stack and $n \ge 1$. Therefore, by \cref{tttttt}, the natural map $\pi_{2n+1}(X) \to ((\pi_{2n+2} (X'))^{\w{uni}})^{\w{ab}} \simeq \pi_{2n+2}(\UU(X')) \simeq \pi_{2n+2}(\UU(\Sigma X))$ is a surjection, which finishes the proof.
\end{proof}

\begin{corollary}[Freudenthal suspension theorem in unipotent homotopy theory]\label{freudenthalsusp} Let $X$ be a pointed connected affine stack over $k$ and $n \ge 0$ be a fixed integer such that the following conditions hold:
\begin{enumerate}
    \item $H^i (X, \cO)$ is finite-dimensional for all $i \ge 0$.
    
    \item  $\pi_i^{\mathrm{U}} (X)$ is trivial for $i \le n$.
\end{enumerate}{}
Then there are natural maps of group schemes
$$\pi_i^{\mathrm{U}} (X) \to \pi_{i+1}^{\mathrm{U}} (\Sigma X)$$ 
which are isomorphisms for $i \le 2n$ and a surjection for $i= 2n+1$.
\end{corollary}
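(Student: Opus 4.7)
The plan is to deduce this corollary directly from Proposition \ref{freudenthalsusp1} by observing that for a pointed connected affine stack $X$, the unipotent homotopy group schemes coincide with the ordinary homotopy sheaves.

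First, I would observe that since $X$ is itself a pointed connected affine stack, the universal property of the unipotent homotopy type (see \cref{defuht}) implies that the canonical map $X \to \mathbf{U}(X)$ is an isomorphism. Indeed, the identity $X \to X$ factors uniquely through $\mathbf{U}(X)$, and composing with $X \to \mathbf{U}(X)$ yields the identity on $\mathbf{U}(X)$ by the same uniqueness. Consequently, for every $i \ge 0$, there is a natural isomorphism
\[
\pi_i^{\mathrm{U}}(X) = \pi_i(\mathbf{U}(X), \ast) \xrightarrow{\sim} \pi_i(X, \ast).
\]
In particular, the hypothesis that $\pi_i^{\mathrm U}(X)$ is trivial for $i \le n$ is equivalent to the condition that $X$ is $n$-connected in the sense of \cref{freudenthalsusp1}.

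Next, I would invoke \cref{freudenthalsusp1} with the same integer $n$. The finite-dimensionality of $H^i(X,\cO)$ for all $i$ is assumed in both statements, and we have just observed that $X$ is $n$-connected as a pointed stack. This gives natural maps of group schemes
\[
\pi_i(X) \longrightarrow \pi_{i+1}(\mathbf{U}(\Sigma X))
\]
that are isomorphisms for $i \le 2n$ and a surjection for $i = 2n+1$.

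Finally, under the identification $\pi_i(X) \simeq \pi_i^{\mathrm{U}}(X)$ from the first step, together with the tautological identity $\pi_{i+1}(\mathbf{U}(\Sigma X)) = \pi_{i+1}^{\mathrm{U}}(\Sigma X)$ of \cref{affinestacks}, these maps translate to the desired maps $\pi_i^{\mathrm U}(X) \to \pi_{i+1}^{\mathrm U}(\Sigma X)$ with the stated properties. There is no hard step here because all of the technical content (the analysis of $\tau_{\le 2n+1}\Sigma X$, the use of the EHP sequence, Whitehead products, and the finiteness inputs from \cref{sohard} and \cref{advv}) has already been absorbed into the proof of \cref{freudenthalsusp1}; the only nontrivial point is the identification $\mathbf{U}(X) \simeq X$ in the affine case, which is immediate from the universal property.
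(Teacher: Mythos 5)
Your proof is correct and takes essentially the same route as the paper: the paper's one-line proof says to apply \cref{freudenthalsusp1} to $\UU(X)$, and since $X$ is itself a pointed connected affine stack, the universal property forces $\UU(X) \simeq X$, so this is exactly the identification you spell out. Your added remark that $R\Gamma(\Sigma X,\cO)$ (and hence $\UU(\Sigma X)$) depends only on $R\Gamma(X,\cO)$ is not needed here because the corollary already restricts to affine $X$, but it correctly explains why the authors chose to phrase the reduction via $\UU(X)$.
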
{}
\begin{proof}
    Follows by applying \cref{freudenthalsusp1} to the unipotent homotopy type $\UU(X)$ of $X$.
\end{proof}{}

\begin{remark}
In the notation of \cref{Milne} and the proof of \cref{freudenthalsusp1}, it also follows that $$(\pi_{2n+1} (X))/ [\mathrm{Im}(W_{n,n})])^{\mathrm{alg}} \simeq \pi_{2n+2} (\UU(\Sigma X)),$$ since the left-hand side is already commutative and unipotent for $n \ge 1$. Thus, the bilinear map $W_{n,n}$ gives an explicit way to understand $\pi_{2n+2}(\UU(\Sigma X))$.
\end{remark}

\begin{example}\label{lentil}
We point out that even if $X$ is a pointed connected affine stack, it is not true in general that $\Sigma X$ is again an affine stack.
Indeed, when $X = BG$ for some commutative unipotent group scheme $G$, one can compute that $\pi_3 (\Sigma BG)$ is not representable in general:
The Hopf fibre sequence 
\[ BG \to \Sigma (BG \wedge BG) \to \Sigma BG \]
proves that $\pi_3 (\Sigma BG) \simeq \pi_3 (\Sigma (BG \wedge BG))$. Since $BG \wedge BG$ is $1$-connected, it follows that $\pi_3 (\Sigma (BG \wedge BG)) \simeq \pi_2 (BG \wedge BG)$.
\Cref{c} then yields $\pi_2 (BG \wedge BG) = G \otimes_{\mathbf{Z}} G$. 
\end{example}{}

We end this subsection with the following calculation that will be useful in \cref{homotopyformalsphere}.

\begin{example}[Explicit computation of the Whitehead product]\label{computewhite}
Let $G$ be an abelian group. By using the Hopf fibre seqeuence $ BG \to \Sigma (BG \wedge BG) \to \Sigma BG$, one again computes that $\pi_2(\Sigma BG) \simeq G$ and $\pi_3 (\Sigma BG)  \simeq G \otimes G$. 
Our goal is to explicitly compute the Whitehead product $W_{2,2} \colon G \times G \to G \otimes G. $ By bilinearity, this corresponds to a map (which we again denote as) $W_{2,2} \colon G \otimes G \to G \otimes G$. In order to calculate this, we will freely make use of the results from \cite{WH1}; in particular, the machinery of the ``universal quadratic functor" introduced by Whitehead \cite[Ch.~2]{WH1}. 
For an abelian group $A$, let $Q_2(A)$ denote Whitehead's universal quadratic functor, which is the free abelian group on the generators $\gamma(a)$ subject to the following relations:
\begin{enumerate}
    \item $\gamma(-a)= \gamma(a) $ for all $a \in A$.
    \item $\gamma(a+b+c) - \gamma (a+b) - \gamma (b+c) - \gamma (a+c) + \gamma(a) + \gamma(b) + \gamma(c) = 0$ for all $a,b,c \in A$.
\end{enumerate}{}
We let $\beta(a,b) \colonequals \gamma (a+b) - \gamma(a) - \gamma(b) \in Q_2(A)$. There is a natural map $A \otimes A \to Q_2 (A)$ which is determined by sending $a \otimes b$ to $\beta (a,b)$.
If $X$ is an $r$-truncated space and $r \ge 1$, then Whitehead constructs a map $Q_2 (\pi_2(X)) \to \pi_3(X)$ (\textit{cf.}~\cite[Sec.~2.3]{MW1}) such that the composite map $\pi_2(X) \otimes \pi_2(X) \to Q_2 (\pi_2(X)) \to \pi_3(X)$ identifies with the Whitehead product. Applying this to the case of $X = \Sigma BG$, the map $q \colon Q_2 (G) \to G \otimes G$ in this case is given by $\gamma (g) \mapsto g \otimes g$ (\textit{cf}. \cite[Prop.~4.10]{Lod1}). The composite map $W_{2,2} \colon G \otimes G \to G \otimes G$ is given by $g \otimes h \mapsto q(\beta(g,h)) = q(\gamma(g+h) - \gamma (g) - \gamma(h)) = (g+h) \otimes (g+h) - g \otimes g - h \otimes h = g \otimes h + h \otimes g$.
\end{example}{}

\newpage

\section{\'Etale homotopy groups via unipotent homotopy theory}
In this section, we prove a profiniteness result (see \cref{profiniteunipotent}) for the unipotent homotopy group schemes that were introduced in \cref{posto1}. 
In \cref{posto2}, we show that the unipotent homotopy group schemes refine the \'etale homotopy groups due to Artin--Mazur \cite{etalehomo} in the $p$-adic context.
\Cref{posto3} and \cref{posto4} are devoted to developing some notions and techniques that are necessary for proving these results.

\subsection{Some Frobenius semilinear algebra}\label{posto3}

We begin by discussing some Frobenius semilinear algebra that will become important later.
\begin{lemma}\label{lem:Frobenius-invariants}
Let $k$ be a perfect field of characteristic $p>0$ and let $\varphi$ be the Frobenius on $k$.
Let $V$ be a finite-dimensional $k$-vector space and $F$ be a Frobenius semilinear endomorphism of $V$.
Then there is a decomposition $V = V_s \oplus V_n$ into $F$-invariant subspaces such that $F_s \colonequals \restr{F}{V_s}$ is bijective and $F_n \colonequals \restr{F}{V_n}$ is nilpotent. 
When $k$ is algebraically closed, the natural map $V^{F_s=\id}_s \otimes_{\FF_p} k \to V_s$ is an $F_s$-equivariant isomorphism, where the endomorphism $F_s$ on the domain is given by $\id \otimes \varphi$.
\end{lemma}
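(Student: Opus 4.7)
The plan is to first establish the Fitting-type decomposition $V = V_s \oplus V_n$ by general semilinear algebra, and then to handle the descent statement over an algebraically closed base by an Artin--Schreier style argument (equivalently, Lang's theorem for the Frobenius twist of $\mathbf{G}_a^d$).

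For the decomposition, I would begin by observing that since $k$ is perfect, $\varphi$ is a bijection of $k$, so $F$ sends $k$-subspaces of $V$ to $k$-subspaces. The descending chain $V \supseteq F(V) \supseteq F^2(V) \supseteq \cdots$ therefore consists of $k$-subspaces and stabilizes by finite-dimensionality; I set $V_s \colonequals \bigcap_{n \ge 0} F^n(V)$ and $V_n \colonequals \bigcup_{n \ge 0} \ker F^n$. Both are $F$-stable; $F|_{V_s}$ is surjective by construction and hence bijective, while $F|_{V_n}$ is nilpotent. For $m$ large enough that $V_s = F^m(V)$ and $V_n = \ker F^m$, the decomposition $V = V_s \oplus V_n$ then follows routinely: $V_s \cap V_n = 0$ because $F^m$ is injective on $V_s$, and for any $v \in V$, writing $F^m v = F^m u$ with $u \in V_s$ (by surjectivity of $F^m$ on $V_s$) gives $v = u + (v - u) \in V_s + V_n$.

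For the descent statement, I would first verify that the natural map $V_s^{F_s = \id} \otimes_{\FF_p} k \to V_s$ is $F_s$-equivariant and injective by a minimal-length-relation argument: given a nontrivial $k$-linear relation $\sum_i c_i w_i = 0$ among $F_s$-fixed vectors of minimal length, normalized so that $c_1 = 1$, applying $F_s$ and subtracting produces a strictly shorter relation whose $i$-th coefficient is $c_i - \varphi(c_i)$, forcing each $c_i \in \FF_p$ and contradicting $\FF_p$-linear independence of the $w_i$. For surjectivity, I would view $V_s$ as an affine $k$-scheme and study the morphism $\phi \colon V_s \to V_s$ defined by $w \mapsto F_s(w) - w$. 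In coordinates with $F_s(e_j) = \sum_i a_{ij} e_i$, the map $\phi$ is given by $y_i = \sum_j a_{ij} x_j^p - x_i$, so its Jacobian is the invertible matrix $-\id$ (the Frobenius contributes nothing to the derivative). Hence $\phi$ is étale, and in fact a Lang-type isogeny of degree $p^{\dim_k V_s}$. Over algebraically closed $k$, the fibre $\phi^{-1}(0) = V_s^{F_s = \id}$ therefore splits as a disjoint union of $p^{\dim_k V_s}$ copies of $\Spec k$, i.e., is an $\FF_p$-subspace of $V_s$ of dimension exactly $\dim_k V_s$; combined with the injectivity above, this yields the desired isomorphism.

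The main obstacle I foresee is the degree calculation for $\phi$: étaleness is immediate from the Jacobian computation, but concluding that the fibre has exactly $p^{\dim_k V_s}$ points requires either invoking Lang's theorem applied to the $\FF_p$-form of $V_s$ determined by $F_s$, or arguing directly by induction on $\dim_k V_s$ using the $1$-dimensional Artin--Schreier-type equation $a c^p - c = 0$ (which has exactly $p$ solutions over $\bar k$, forming an $\FF_p$-line) and quotienting by an $F_s$-invariant line produced in this way, while noting that $F_s$ remains bijective on the quotient since the line is $F_s$-stable.
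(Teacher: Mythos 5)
Your decomposition of $V$ into $V_s \oplus V_n$ is essentially the paper's argument (stabilize the image filtration, take $V_s = \im F^N$, $V_n = \Ker F^N$). For the descent statement over an algebraically closed field your route is genuinely different: the paper produces a nonzero fixed vector explicitly, by taking a minimal $F$-recursion $F^{n+1}(v) = \sum \lambda_i F^i(v)$ on an arbitrary $v$, solving a separable polynomial for the top coefficient, and back-substituting; it then quotients and inducts on dimension. You instead consider the Artin--Schreier map $\phi = F_s - \id$, note its Jacobian is $-\id$, and reduce to a Lang-type count. Your injectivity argument (apply $F_s$ to a minimal nontrivial relation and subtract, forcing all coefficients into $\FF_p$) is correct and standalone. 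Conceptually the Lang route is cleaner and makes the \'etaleness visible; the paper's route is more elementary and does not require pinning down the degree of an isogeny.

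The degree computation you flag is a genuine gap, and neither fallback you sketch closes it as stated. Lang's theorem gives surjectivity of $\phi$ (indeed $d\phi = -\id \neq 0$, but the relevant hypothesis is $dF_s = 0$), not the cardinality of $\Ker\phi$ — and that cardinality is exactly what you want. The inductive fallback is circular: to quotient by an $F_s$-stable line you must first exhibit a nonzero fixed vector $c$ (then $k\cdot c$ is automatically $F_s$-stable since $F_s(\lambda c) = \lambda^p c$), but producing such a $c$ in dimension $d>1$ is precisely what the $1$-dimensional Artin--Schreier equation alone does not give you; the paper's explicit construction of $\tilde v$ is the step that fills this hole. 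To close the degree computation honestly within your approach: in coordinates the top-degree homogeneous part of $\phi$ is $x_i \mapsto \sum_j a_{ij}x_j^p$, which has no common zero other than the origin since $(a_{ij})$ is invertible; hence $\phi$ extends to a finite morphism $\PP^d \to \PP^d$ of degree $p^d$, and since $\phi$ is \'etale over $\AA^d$, every affine fibre has exactly $p^d$ points. Combined with your injectivity this gives the isomorphism.
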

Since the statement is classical (see, e.g.,\ \cite[p.~38]{MR0098097}), we only give a brief sketch of an argument here for the convenience of the reader.
\begin{proof}
Since $k$ is perfect, $\im F^N$ is a $k$-subspace of $V$ for all $N \in \ZZ_{\ge 0}$.
Moreover, as $V$ is finite-dimensional, there exists an integer $N > 0$ such that $\im F^{N + 1} = \im F^N$.
Then $V_s \colonequals \im F^N$ and $V_n \colonequals \Ker F^N$ give the desired decomposition.
\vspace{2mm}

For the second part, without loss of generality, we assume that $F$ is bijective and thus $V_s = V$ and $F_s = F$.
We can find a fixed vector $\tilde{v} \neq 0$ of $F$ as follows:
Start with any $v \in V$.
Since $V$ is finite-dimensional, we can choose a minimal $n$ such that there exists a linear relation of the form $F^{n+1}(v) = \sum^n_{i=0} \lambda_i F^i(v)$ for some $\lambda_i \in k$.
Let $\alpha_n \neq 0$ be a root of the separable polynomial $P(z) = z - \sum^n_{i=0} \lambda^{p^{n-i}}_iz^{p^{n-i+1}}$ and define $\alpha_{i-1}$ recursively as the unique $p$-th root of $\alpha_i - \alpha^p_n \lambda_i$ for $1 \le i \le n$.
Then $\tilde{v} \colonequals \sum^n_{i=0} \alpha_i F^i(v) \neq 0$ by the minimality of $n$ and $F(\tilde{v}) = \lambda_0\alpha^p_n + \sum^n_{i=1}(\lambda_i\alpha^p_n + \alpha^p_{i-1})F^i(v) = \tilde{v}$.
Using $\tilde{v}$, the statement follows by taking quotients and induction on the dimension of $V$.
\end{proof}
The first part of \cref{lem:Frobenius-invariants} has the following consequence.
\begin{corollary}\label{Frobenius-iterates}
Let $k$ be a perfect field of characteristic $p > 0$.
Let $V$ be a finite-dimensional $k$-vector space and $F$ a Frobenius semilinear endomorphism of $V$.
Equip $V^\flat \colonequals \varprojlim_F V$ and $V_\perf \colonequals \varinjlim_F V$ with their natural $k$-vector space structures coming from the isomorphisms $\varprojlim_F k \simeq k$ and $k \simeq \varinjlim_F k$, respectively.
Then the composition of the natural maps $V^\flat \to V \to V_\perf$ is an isomorphism of $k$-vector spaces.
\end{corollary}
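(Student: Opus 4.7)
The plan is to reduce to two extreme cases via the decomposition $V = V_s \oplus V_n$ provided by Lemma~\ref{lem:Frobenius-invariants}, in which $F_s \colonequals \restr{F}{V_s}$ is bijective and $F_n \colonequals \restr{F}{V_n}$ is nilpotent. The formation of $\varprojlim_F$ and $\varinjlim_F$ commutes with finite direct sums, and the natural $k$-module structures defined via $\varprojlim_F k \simeq k$ and $\varinjlim_F k \simeq k$ do so compatibly; hence the composition $V^\flat \to V \to V_\perf$ splits as the direct sum of the analogous compositions for $V_s$ and for $V_n$. It therefore suffices to treat each summand separately.

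For the nilpotent summand, I would choose $N$ with $F_n^N = 0$. Any compatible system $(v_i)_{i \ge 0}$ in $V_n^\flat$ satisfies $v_i = F_n^N(v_{i+N}) = 0$ for all $i$, so $V_n^\flat = 0$; dually, every element of $(V_n)_\perf$ is represented by some $v \in V_n$ which becomes $F_n^N(v) = 0$ after $N$ transition maps, so $(V_n)_\perf = 0$. Thus the nilpotent part contributes trivially on both sides.

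For the bijective summand, I would write down explicit $k$-linear inverses. The projection $V_s^\flat \to V_s$ to the $0$-th coordinate is inverse to the map $V_s \to V_s^\flat$ sending $v \mapsto (F_s^{-i}(v))_{i \ge 0}$, and $k$-linearity of this inverse reduces to the identity $F_s^{-1}(\lambda v) = \varphi^{-1}(\lambda) F_s^{-1}(v)$ (obtained by inverting the $\varphi$-semilinearity relation $F_s(\varphi^{-1}(\lambda) w) = \lambda F_s(w)$), paired with the twisted scalar action on $V_s^\flat$ coming from $\varprojlim_F k \simeq k$. Symmetrically, the inclusion $V_s \to (V_s)_\perf$ at level $0$ admits the $k$-linear inverse sending the class of $v$ at level $n$ to $F_s^{-n}(v)$. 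Composing these two identifications produces the identity on $V_s$, and combined with the vanishing of the nilpotent summand this shows that the original composition is an isomorphism of $k$-vector spaces.

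There is no significant obstacle: once Lemma~\ref{lem:Frobenius-invariants} is in hand, the remaining content is bookkeeping of the twisted scalar actions. The only point requiring a little care is to check that the identifications $V_s^\flat \simeq V_s$ and $(V_s)_\perf \simeq V_s$ are $k$-linear with respect to the prescribed module structures, which ultimately rests on the perfectness of $k$ (so that $\varphi^{-1}$ is available) together with the $\varphi$-semilinearity of $F$.
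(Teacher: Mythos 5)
Your proof follows the same route as the paper's: decompose $V = V_s \oplus V_n$ via Lemma~\ref{lem:Frobenius-invariants}, observe that $\varprojlim_F V_n = \varinjlim_F V_n = 0$, and note that the composition is visibly invertible on the bijective summand. You simply make explicit the $k$-linearity bookkeeping that the paper's proof leaves implicit after reducing to bijectivity.
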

\begin{proof}
It suffices to show that $V^\flat \to V \to V_\perf$ is bijective.
By \cref{lem:Frobenius-invariants}, there exists a natural decomposition $V = V_s \oplus V_n$ such that $F$ is bijective on $V_s$ and nilpotent on $V_n$. Our claim follows from the observation that $\varprojlim_F V_n \simeq 0$ as well as $\varinjlim_F V_n \simeq 0$.
\end{proof}

\begin{corollary}\label{ch}
Let $k$ be an algebraically closed field and $V$ be a finite-dimensional vector space over $k$ equipped with a Frobenius semilinear operator $F$. Then the natural map $V ^{F= \mathrm{id}} \otimes_{\FF_p} k \to \varinjlim_{F} V$ is an isomorphism compatible with the Frobenius semilinear operators.
\end{corollary}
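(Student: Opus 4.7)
The plan is to deduce this statement directly from the two preceding results, \cref{lem:Frobenius-invariants} and \cref{Frobenius-iterates}, by transferring the map $V^{F=\mathrm{id}} \otimes_{\FF_p} k \to \varinjlim_F V$ through the semisimple--nilpotent decomposition.

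First, by \cref{lem:Frobenius-invariants} (which applies since $k$ is algebraically closed, hence perfect), I will write $V = V_s \oplus V_n$ as $F$-invariant subspaces with $F_s = \restr{F}{V_s}$ bijective and $F_n = \restr{F}{V_n}$ nilpotent. Since $F_n$ is nilpotent, every fixed vector of $F_n$ in $V_n$ is zero, so the inclusion $V_s^{F_s = \mathrm{id}} \hookrightarrow V^{F = \mathrm{id}}$ is an equality. Similarly, since filtered colimits commute with direct sums, $\varinjlim_F V \simeq \varinjlim_{F_s} V_s \oplus \varinjlim_{F_n} V_n$, and the second summand vanishes (as observed in the proof of \cref{Frobenius-iterates}). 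Because $F_s$ is bijective, the transition maps in $\varinjlim_{F_s} V_s$ are isomorphisms, so the natural map $V_s \to \varinjlim_{F_s} V_s$ is an isomorphism of $k$-vector spaces.

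Combining these identifications with the second part of \cref{lem:Frobenius-invariants}, which furnishes an $F_s$-equivariant isomorphism $V_s^{F_s = \mathrm{id}} \otimes_{\FF_p} k \xrightarrow{\sim} V_s$, I obtain a chain
\[ V^{F=\mathrm{id}} \otimes_{\FF_p} k \;=\; V_s^{F_s=\mathrm{id}} \otimes_{\FF_p} k \;\xrightarrow{\sim}\; V_s \;\xrightarrow{\sim}\; \varinjlim_{F_s} V_s \;\xrightarrow{\sim}\; \varinjlim_F V. \]
It remains to check that this composition agrees with the natural map in the statement; this is a straightforward unwinding since every arrow above is induced by $F$-equivariant inclusions or by the bijectivity of $F_s$.

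Finally, for compatibility with the Frobenius semilinear operators: on the left, Frobenius acts as $\mathrm{id} \otimes \varphi$, while on the right it is induced by $F$. Under the isomorphism $V_s^{F_s=\mathrm{id}} \otimes_{\FF_p} k \xrightarrow{\sim} V_s$ from \cref{lem:Frobenius-invariants}, $\mathrm{id} \otimes \varphi$ corresponds to $F_s$ by construction, and the transfer through $V_s \xrightarrow{\sim} \varinjlim_{F_s} V_s$ is tautologically equivariant. I do not expect any serious obstacle here; the only subtlety is ensuring that the target of the map in the statement carries a well-defined Frobenius semilinear structure (which it does, induced termwise by $F$ on each copy of $V$ in the colimit diagram).
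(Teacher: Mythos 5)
Your proof is correct and takes essentially the same approach as the paper's, which gives only the one-line argument that the corollary follows by combining \cref{lem:Frobenius-invariants} and \cref{Frobenius-iterates}; your write-up simply unpacks that combination in detail via the semisimple--nilpotent decomposition.
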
{}
\begin{proof}
This follows from combining \cref{lem:Frobenius-invariants} and \cref{Frobenius-iterates}.
\end{proof}{}

Let $\mathrm{DAlg}^{\mathrm{ccn}}_k$ denote the $\infty$-category from \cref{derivedvscosimp}.
There is a natural functor $\mathrm{DAlg}^{\mathrm{ccn}}_k \to D(k)_{\le 0}$. Any object $R \in \mathrm{DAlg}^{\mathrm{ccn}}_k$ admits a semilinear Frobenius endomorphism $F$. We let $R^{F=\id}$ denote the Frobenius fixed points of $R$, i.e., the equalizer of $F \colon R \to R$ and $\id \colon R \to R$, which is naturally an object of $\mathrm{DAlg}^{\mathrm{ccn}}_{\FF_p}$.
\begin{lemma}\label{cosimplicial-Frobenius-invariants}
Let $k$ be a perfect field of characteristic $p > 0$.
Let $R \in \mathrm{DAlg}^{\mathrm{ccn}}_k$ such that $H^i(R)$ is a finite-dimensional $k$-vector space for all $i \ge 0$.
Then the composition of the natural maps
\[ \alpha \colon R^\flat \colonequals \varprojlim_F R \to R \to R_\perf \colonequals \varinjlim_F R \]
is an isomorphism.
\end{lemma}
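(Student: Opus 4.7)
My plan is to check that $\alpha$ is an isomorphism by showing it induces an isomorphism on each cohomology group $H^i(\,\cdot\,)$, and then to reduce the statement on cohomology to the finite-dimensional statement of \cref{Frobenius-iterates}. Throughout, I will use that the forgetful functor $\mathrm{DAlg}^{\mathrm{ccn}}_k \to D(k)_{\le 0} \subset D(k)$ preserves all limits (by \cref{derivedvscosimp}) as well as filtered colimits, so that both $R^\flat$ and $R_\perf$ can be computed at the level of the underlying complexes.

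For the inverse limit, I would use the Milnor short exact sequence
\[
0 \to {\textstyle\varprojlim_F}^1\, H^{i-1}(R) \to H^i(R^\flat) \to {\textstyle\varprojlim_F}\, H^i(R) \to 0.
\]
By \cref{lem:Frobenius-invariants}, each $H^j(R)$ decomposes canonically as $H^j(R)_s \oplus H^j(R)_n$ with $F$ bijective on the first summand and nilpotent on the second. The nilpotent summand contributes zero to both the $\varprojlim$ and the $\varprojlim^1$ term, while on the semisimple summand the transition maps are surjective, hence Mittag--Leffler. Consequently $\varprojlim_F^1 H^{i-1}(R) = 0$ and the natural map $H^i(R^\flat) \to H^i(R)$ identifies $H^i(R^\flat)$ with $H^i(R)_s = \varprojlim_F H^i(R)$. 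Dually, since cohomology commutes with filtered colimits, $H^i(R_\perf) \simeq \varinjlim_F H^i(R)$, and the same decomposition shows that $H^i(R) \to H^i(R_\perf)$ is the projection onto $H^i(R)_s$.

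Composing the two identifications, $H^i(\alpha)$ is precisely the map from $\varprojlim_F H^i(R)$ to $\varinjlim_F H^i(R)$ obtained as the inclusion of the inverse limit followed by the projection to the colimit. By \cref{Frobenius-iterates}, applied to the finite-dimensional $k$-vector space $H^i(R)$ together with its Frobenius semilinear endomorphism, this map is an isomorphism for every $i \ge 0$. Hence $H^i(\alpha)$ is an isomorphism for all $i$, which forces $\alpha$ to be an equivalence in $D(k)_{\le 0}$, and therefore in $\mathrm{DAlg}^{\mathrm{ccn}}_k$.

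The step I expect to require the most care is the justification that $R_\perf = \varinjlim_F R$ computed in $\mathrm{DAlg}^{\mathrm{ccn}}_k$ agrees with the sequential colimit of the underlying complexes, so that one may commute $H^i$ past the colimit; once that point is settled, the rest of the argument is a straightforward reduction to \cref{Frobenius-iterates} term by term. The finiteness hypothesis on the $H^i(R)$ is used in two essential places: to guarantee the Mittag--Leffler condition killing the $\varprojlim_F^1$ term, and to apply the finite-dimensional Jordan-type decomposition of \cref{lem:Frobenius-invariants}.
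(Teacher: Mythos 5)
Your proposal is correct and follows essentially the same route as the paper's proof: reduce to cohomology, control $\varprojlim^1$ in the Milnor sequence, commute cohomology past the filtered colimit, and conclude via \cref{Frobenius-iterates}. The only cosmetic difference is that you kill $\varprojlim^1$ by unwinding the semisimple/nilpotent decomposition from \cref{lem:Frobenius-invariants}, whereas the paper cites finite-dimensionality directly to obtain the Mittag--Leffler vanishing and then applies \cref{Frobenius-iterates} as a black box.
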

\begin{proof}
It suffices to prove that $H^i(\alpha)$ is an isomorphism for all $i \ge 0$. Let us fix an $i$. The Frobenius $F$ induces a Frobenius semilinear endomorphism of $H^i(R)$, which we denote again by $F$.
First, we will show that the natural maps $H^i(R^\flat) \to \varprojlim_F H^i(R)$ and $\varinjlim_F H^i(R) \to H^i(R_\perf)$ are isomorphisms.
\vspace{2mm}

To prove the above assertion, we recall that the natural functor $\mathrm{DAlg}^{\mathrm{ccn}}_k \to D(k)$ preserves limits and filtered colimits. Since $H^{i-1}(R)$ is finite-dimensional, we have $R^1\varprojlim_F H^{i-1}(R) = 0$; thus, the Milnor sequence
\[ 0 \to R^1\varprojlim_F H^{i-1}(R) \to H^i(\varprojlim_F R) \to \varprojlim_F H^i(R) \to 0 \]
shows that the natural map $H^i(R^\flat) = H^i(\varprojlim_F R) \to \varprojlim_F H^i(R)$ is an isomorphism.
On the other hand, taking cohomology commutes with filtered colimits, so $\varinjlim_F H^i(R) \to H^i(R_\perf)$ is an isomorphism as well, yielding the desired assertion. Now \cref{Frobenius-iterates} implies directly that $H^i(\alpha)$ is an isomorphism. This finishes the proof.
\end{proof}

\begin{definition}
Let $k$ be a perfect field of characteristic $p>0$. We let $k_{\sigma}[F]$ denote the Frobenius twisted polynomial ring over $k$ in the variable $F$, i.e., we have the relation $F\cdot c= c^p \cdot F$.
\end{definition}{}

\begin{definition}\label{torsion}
We call a left $k_{\sigma} [F]$-module $M$ \textit{torsion} if every $m \in M$ is contained in a $k_{\sigma} [F]$-submodule $V_m$ such that $V_m$ is finite-dimensional as a $k$-vector space.
\end{definition}{}

\begin{remark}
Note that if $M$ is a torsion $k_{\sigma}[F]$-module, then every element $m \in M$ is indeed killed by an element of $k_{\sigma}[F]$. Thus, \cref{torsion} is consistent with the usual definition of torsion modules. 
Every finitely generated $k_{\sigma}[F]$-submodule of a torsion module is a finite-dimensional $k$-vector space on which $F$ acts as a semilinear operator.
\end{remark}{}

\begin{lemma}\label{torsininv}
Let $k$ be an algebraically closed field of characteristic $p > 0$.
Let $M$ be a torsion $k_{\sigma}[F]$-module. Then the natural map $M^{F= \mathrm{id}} \otimes_{\mathbf{F}_p} k \to \varinjlim_{F}M$ is an isomorphism of $k_{\sigma}[F]$-modules, where the left-hand side is equipped with the $k_{\sigma}[F]$-module structure coming from the Frobenius map on $k$.
\end{lemma}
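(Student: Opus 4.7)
The plan is to reduce the statement to the finite-dimensional case treated in \cref{ch} by expressing $M$ as a filtered colimit of its finitely generated $k_\sigma[F]$-submodules and observing that both sides of the natural comparison map commute with such colimits.

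First, using the torsion hypothesis, I would write $M \simeq \varinjlim_{i \in I} V_i$ as a filtered colimit indexed by its finitely generated $k_\sigma[F]$-submodules $V_i$. Since $M$ is torsion, each such $V_i$ is a finite-dimensional $k$-vector space equipped with a Frobenius semilinear endomorphism $F$, so \cref{ch} applies and yields a natural isomorphism $V_i^{F=\id} \otimes_{\FF_p} k \xrightarrow{\sim} \varinjlim_F V_i$ for every $i$. Naturality of these isomorphisms in $V_i$ is immediate from the construction in \cref{lem:Frobenius-invariants} and \cref{Frobenius-iterates}.

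Next I would pass to the colimit over $I$ on both sides. On the left, the functor $(-)^{F=\id}$ is an equalizer and hence a finite limit in the category of abelian groups, which commutes with filtered colimits; tensoring with $k$ over $\FF_p$ preserves all colimits, so
\[ M^{F=\id} \otimes_{\FF_p} k \simeq (\varinjlim_i V_i^{F=\id}) \otimes_{\FF_p} k \simeq \varinjlim_i (V_i^{F=\id} \otimes_{\FF_p} k). \]
On the right, filtered colimits commute with filtered colimits, so
\[ \varinjlim_F M \simeq \varinjlim_F \varinjlim_i V_i \simeq \varinjlim_i \varinjlim_F V_i. \]
Combining these identifications with the pointwise isomorphism from \cref{ch} gives that $M^{F=\id} \otimes_{\FF_p} k \to \varinjlim_F M$ is an isomorphism of abelian groups. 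Finally I would verify the claim about the $k_\sigma[F]$-module structure: the map is $k$-linear by construction since the $k$-action on the left is through the second factor, and it commutes with the Frobenius operators since $F$ acts as $\id \otimes \varphi$ on the left and via the transition maps on the right, compatibly with the finite-dimensional case.

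I do not expect any serious obstacle; the main content has already been extracted in \cref{ch}, and the remainder is a bookkeeping argument about commuting colimits with the formation of Frobenius fixed points and Frobenius colimits. The only mildly subtle point is checking that the comparison map is natural in $V_i$ (so that it extends to the colimit), but this is transparent from the construction in \cref{lem:Frobenius-invariants}.
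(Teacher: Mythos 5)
Your proof is correct and follows essentially the same route as the paper: the paper also reduces to the finite-dimensional case by writing $M$ as a filtered colimit of its finitely generated (hence finite-dimensional) $k_\sigma[F]$-submodules, notes that taking $F$-invariants commutes with filtered colimits, and invokes \cref{ch}. Your write-up simply spells out the bookkeeping (naturality, commutation of both sides with the colimit, and the $k_\sigma[F]$-linearity check) that the paper leaves implicit.
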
{}

\begin{proof}
We note that any finitely generated $k_{\sigma}[F]$-submodule of $M$ is finite-dimensional as a $k$-vector space. Since $M$ is a filtered colimit of its finitely generated submodules and taking $F$-invariants commutes with filtered colimits, we are done by \cref{ch}.
\end{proof}{}

For any object $R \in \mathrm{DAlg}^{\mathrm{ccn}}_k$, the Frobenius endomorphism $F$ of $R$ gives $H^i (R)$ for each $i \ge 0$ naturally the structure of a $k_{\sigma}[F]$-module. 
Further, since $k$ is perfect, $R_{\w{perf}} = \varinjlim_{F} R$ can naturally be viewed as an object of $\mathrm{DAlg}^{\mathrm{ccn}}_k$.

\begin{proposition}\label{jacket1}
Let $k$ be an algebraically closed field of characteristic $p > 0$. 
Let $R \in \mathrm{DAlg}^{\mathrm{ccn}}_k$ be such that $H^i (R)$ is a torsion $k_{\sigma}[F]$-module for each $i \ge 0$.
Then the natural map $R^{F= \mathrm{id}} \otimes_{\FF_p} k \to R_{\mathrm{perf}}$ in $\mathrm{DAlg}^{\mathrm{ccn}}_k$ is an isomorphism. 
\end{proposition}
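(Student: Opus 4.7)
The plan is to check that the natural map
\[ \alpha \colon R^{F=\mathrm{id}} \otimes_{\FF_p} k \longrightarrow R_{\mathrm{perf}} \]
induces an isomorphism on each cohomology group $H^i$; since the forgetful functor $\mathrm{DAlg}^{\mathrm{ccn}}_k \to D(k)$ is conservative on this level, this will prove the proposition. I will compute the two sides of $H^i(\alpha)$ separately and then match them using \cref{torsininv}.

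For the right-hand side, filtered colimits commute with cohomology in $\mathrm{DAlg}^{\mathrm{ccn}}_k$, so $H^i(R_{\mathrm{perf}}) \cong \varinjlim_F H^i(R)$. For the left-hand side, I first compute $H^i(R^{F=\mathrm{id}})$. By definition $R^{F=\mathrm{id}}$ is the equalizer of $F$ and $\mathrm{id}$ on $R$, i.e., the fiber of $F - \mathrm{id} \colon R \to R$, which produces a long exact sequence
\[ \cdots \to H^{i-1}(R) \xrightarrow{F-\mathrm{id}} H^{i-1}(R) \to H^i(R^{F=\mathrm{id}}) \to H^i(R) \xrightarrow{F-\mathrm{id}} H^i(R) \to \cdots. \]
The crucial claim is that $F - \mathrm{id}$ is surjective on each $H^j(R)$. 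Because $H^j(R)$ is a torsion $k_{\sigma}[F]$-module, any given element lies in a finite-dimensional $F$-stable subspace $V$, and by \cref{lem:Frobenius-invariants} we have $V = V_s \oplus V_n$ with $F$ bijective on $V_s$ and nilpotent on $V_n$. On $V_n$ the operator $F - \mathrm{id}$ is invertible; on $V_s$ it is surjective by the Artin--Schreier argument over the algebraically closed field $k$ (equivalently, by the second part of \cref{lem:Frobenius-invariants}, $V_s \cong V_s^{F_s=\mathrm{id}} \otimes_{\FF_p} k$, and $F - \mathrm{id}$ is visibly surjective on $k$ over $\FF_p$). Thus the long exact sequence degenerates to $H^i(R^{F=\mathrm{id}}) \cong H^i(R)^{F=\mathrm{id}}$.

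Next, since $k$ is flat over $\FF_p$, tensoring with $k$ preserves cohomology, so
\[ H^i\bigl(R^{F=\mathrm{id}} \otimes_{\FF_p} k\bigr) \cong H^i(R^{F=\mathrm{id}}) \otimes_{\FF_p} k \cong H^i(R)^{F=\mathrm{id}} \otimes_{\FF_p} k. \]
Applying \cref{torsininv} to the torsion $k_{\sigma}[F]$-module $H^i(R)$ gives a natural identification $H^i(R)^{F=\mathrm{id}} \otimes_{\FF_p} k \xrightarrow{\sim} \varinjlim_F H^i(R)$, and unwinding definitions shows this matches $H^i(\alpha)$. Hence $H^i(\alpha)$ is an isomorphism for every $i \ge 0$, which completes the proof.

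The main obstacle is the surjectivity of $F - \mathrm{id}$ on each $H^i(R)$; this is where both the torsion hypothesis (to reduce to finite-dimensional $F$-stable subspaces) and the algebraically closed assumption on $k$ (to run Artin--Schreier, i.e., to invoke \cref{lem:Frobenius-invariants}) enter crucially. The rest of the argument is bookkeeping with long exact sequences and flat base change.
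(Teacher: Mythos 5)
Your proof is correct and takes essentially the same route as the paper: reduce to checking each $H^i$, show $F - \mathrm{id}$ is surjective on $H^i(R)$ so that the fiber sequence gives $H^i(R^{F=\mathrm{id}}) \simeq H^i(R)^{F=\mathrm{id}}$, base-change flatly to $k$, and conclude with \cref{torsininv}. The only difference is that the paper dispatches the Artin--Schreier surjectivity of $F-\mathrm{id}$ by a citation to the Stacks Project, whereas you prove it directly from the $V = V_s \oplus V_n$ decomposition of \cref{lem:Frobenius-invariants}; this is a worthwhile expansion of a step the paper leaves implicit.
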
{}

\begin{proof}
It is enough to prove that the maps $H^i (R^{F= \w{id}}) \otimes_{\mathbf{F}_p} k \to H^i (R_{\w{perf}}) \simeq \varinjlim_F H^i (R)$ induced by the natural map $R^{F= \mathrm{id}} \otimes_{\FF_p} k \to R_{\mathrm{perf}}$ in $\mathrm{DAlg}^{\mathrm{ccn}}_k$ are isomorphisms for $i \ge 0$. We note that since $H^i (R)$ is torsion and filtered colimits are exact, the maps $F- \w{id} \colon H^i (R) \to H^i (R)$ are surjective \cite[\href{https://stacks.math.columbia.edu/tag/0A3L}{Tag~0A3L}]{stacks}.
It follows that for all $i \ge 0$, we have $H^i (R ^{F=\id}\otimes_{\FF_p} k) \simeq H^i (R)^{F=\id} \otimes_{\FF_p} k$. Therefore, since $H^i(R)$ is torsion, we are done by \cref{torsininv}.
\end{proof}{}

\subsection{Foundations on profinite group schemes}\label{posto4}
In this section, we discuss the theory of profinite group schemes. 
In \cref{profinitenesschar}, we establish a necessary and sufficient criterion for profiniteness of a unipotent group scheme $G$ in terms of the $k_{\sigma}[F]$-module $\mathrm{Hom}_k(G, \GG_a)$. In \cref{F-representation}, we introduce the notion of $F$-representations, which, roughly speaking, are representations of a group scheme $G$ (over a field of positive characteristic) on vector spaces equipped with a Frobenius semilinear operator that are required to satisfy the appropriate compatibility conditions. We give some examples to show that the notion of $F$-representations is a very naturally occurring notion in positive characteristic algebraic geometry since the vector spaces arising as cohomology groups are often equipped with a Frobenius semilinear operator (see \cref{chicago13}). In \cref{fixedprof}, we prove a certain ``permanence of finiteness" property of $F$-representations of profinite group schemes. These notions and results are used in \cref{profiniteunipotent}, where we prove the profiniteness of unipotent homotopy group schemes in positive characteristic.
\vspace{2mm}

Throughout this subsection, we work over a field $k$ of characteristic $p>0$. 
All group schemes are assumed to be affine (but not commutative) unless otherwise mentioned.

\begin{definition}\label{profdef}
A group scheme $G$ is called \emph{profinite} if $G$ is the inverse limit of some inverse system of finite group schemes. 
\end{definition}{}

\begin{remark}\label{proobj}
Let $H$ be a finite group scheme and let $\left \{G_i \right\} _{i \in I}$ be an inverse system of finite group schemes.
Since $H$ is affine and finitely presented, $\Hom(\varprojlim_{i \in I}G_i, H ) \simeq \varinjlim_{i \in I} \Hom(G_i, H)$. Therefore, the category of pro-objects of the category of finite group schemes embeds fully faithfully into the category of all affine group schemes. However, this property is quite special to affine algebraic geometry; for example, the category of profinite sets (i.e., the category of pro-objects of finite sets) does not embed fully faithfully into the category of all sets. This dichotomy occurs because while finite group schemes are cocompact objects in the category of affine group schemes, finite sets are not cocompact objects in the category of sets. 
\end{remark}{}

\begin{definition}
An affine group scheme $G$ over $k$ is called \emph{pro-\'etale} if $G$ is pro-\'etale as a scheme over $\Spec k$.
\end{definition}{}

\begin{remark}
Alternatively, one could also define a pro-\'etale group scheme over a field to be the inverse limit of an inverse system of finite \'etale group schemes.
By \cite[\href{https://stacks.math.columbia.edu/tag/092Q}{Tag~092Q}]{stacks}, this gives a notion equivalent to the definition above. In particular, it follows that every pro-\'etale group scheme over a field is profinite.
\end{remark}{}

\begin{remark}[Profinite group cohomology as cohomology of group schemes]\label{compare}Let $T$ be a finite set. Let $S(T) \colonequals \Spec (\coprod_T k)$. This construction extends to give a fully faithful, product preserving functor $S$ from the category of profinite sets to the category of affine schemes over $k$. More precisely, a profinite set $T$ is mapped to $S(T) \colonequals \Spec C(T,k)$, where $C(T,k)$ is the ring of locally constant functions from $T$ to the field $k$. It follows that if $G$ is a profinite group, then $S(G)$ is an affine pro-\'etale group scheme. Further, letting $C^i (G,k)$ denote the ring of locally constant functions from $G^i$ to $k$, we have an isomorphism $C^i (G, k) \simeq \cO(S(G))^{\otimes {i}}$, where $\cO(S(G))$ denotes global sections on the group scheme $S(G)$. Therefore, one notes that the complex computing $R\Gamma(B (S(G)), \cO)$ obtained via faithfully flat descent along $* \to S(G)$ is identical to the complex computing continuous group cohomology. Thus one obtains a natural isomorphism $H^i_{\w{cont}} (G, k) \simeq H^i (B(S(G)), \cO)$.
\end{remark}{}

\begin{example}[Free pro-$p$-group scheme]\label{cold91}
If $G$ is the free pro-$p$-finite group on $g$ generators, then $S(G)$ will be called the free pro-$p$ group scheme over $k$ on $g$ generators. One can similarly define the free commutative pro-$p$-group scheme on $g$ generators as well, which is simply given by the group scheme $\mathbf{Z}_p^{\oplus g}$.
\end{example}{}

\begin{construction}[Maximal profinite quotient]\label{I9}
For every group scheme $G$, the category $\mathcal{C}_G$ of arrows of group schemes $G \to H$ with the property that $H$ is a finite group scheme and the map is a surjection is cofiltered.
Since the trivial group scheme is a finite group scheme, $\mathcal{C}_G$ is nonempty.
In order to show that $\mathcal{C}_G$ is cofiltered, let us first start with two surjective maps of group schemes $f_1 \colon G \to H_1$ and $f_2 \colon G \to H_2$ whose targets are finite group schemes.
In the category of group schemes, we can form kernels as well as quotients by closed normal subgroup schemes. We note that $K \colonequals \Ker(f_1) \cap \Ker(f_2)$ is a closed normal subgroup scheme of $G$.
We have a surjection $G \to G/K$ with factors the maps $f_1$ and $f_2$.
We claim that $G/K$ is a finite group scheme. To see this, we note that $\Ker(f_1)/K$ is a closed subgroup scheme of $H_2$ and is therefore a finite group scheme. On the other hand, we have an extension of group schemes
$$0 \to \Ker(f_1)/K \to  G/K \to H_1 \to 0.$$ This shows that $G/K$ is a finite group scheme since it is an extension of finite group schemes. Lastly, we need to check that if $G \to H_1$ and $G \to H_2$ are two objects of $\mathcal{C}_G$, and $u,v \colon H_2 \to H_1$ are two maps, then there is an object $G \to H_3$ and maps $w \colon H_3 \to H_2 $ such that $uw= vw$. However, one observes that in $\mathcal{C}_G$, there is at most one arrow between any two objects; thus this condition is automatically satisfied. This shows that $\mathcal{C}_G$ is cofiltered.
\vspace{2mm}

There is a natural functor from $\mathcal{C}_G$ to the category of affine group schemes that sends $(G \to H)$ to $H$.
The limit over this diagram indexed by $\mathcal{C}_G$ in the category of affine group schemes will be called the \textit{maximal profinite quotient of $G$} and denoted by $G^{\w{pft}}$. We have a natural surjection $G \to G^{\w{pft}}$.
\end{construction}{}

\begin{proposition}\label{whywhy}
Let $G$ be an affine commutative group scheme. Then any map from $G$ to a profinite group scheme factors uniquely through $G \to G^{\mathrm{pft}}$.
\end{proposition}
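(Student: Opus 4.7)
The plan is to reduce to the case of maps to a single finite group scheme and to verify the existence and uniqueness of the factorization separately.

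For any profinite group scheme $P = \varprojlim_\alpha P_\alpha$, the universal property of inverse limits gives $\Hom(X, P) = \varprojlim_\alpha \Hom(X, P_\alpha)$ for any affine group scheme $X$. Hence it suffices to show that precomposition with $G \to G^{\mathrm{pft}}$ induces a bijection $\Hom(G^{\mathrm{pft}}, P_\alpha) \to \Hom(G, P_\alpha)$ for every finite $P_\alpha$, and to verify that these bijections are compatible with the transition maps of the inverse system.

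For surjectivity (existence), let $f \colon G \to P$ be a morphism to a finite group scheme. The scheme-theoretic image $H \colonequals \im(f)$ is a closed, hence finite, subgroup scheme of $P$, and the resulting surjection $G \twoheadrightarrow H$ is an object of $\mathcal{C}_G$ by construction. The defining universal property of $G^{\mathrm{pft}} = \varprojlim_{\mathcal{C}_G} H'$ then supplies a projection $G^{\mathrm{pft}} \to H$ whose composition with $G \to G^{\mathrm{pft}}$ recovers $G \twoheadrightarrow H$; composing further with $H \hookrightarrow P$ produces the desired lift of $f$.

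For injectivity (uniqueness), I will prove that $G \to G^{\mathrm{pft}}$ is an epimorphism in the category of affine commutative group schemes by passing to Hopf algebras. Since the forgetful functor from affine group schemes to affine schemes preserves limits, one has
\[ \cO(G^{\mathrm{pft}}) \cong \varinjlim_{(G \twoheadrightarrow H) \in \mathcal{C}_G^{\op}} \cO(H), \]
and each structure map $\cO(H) \to \cO(G)$ is injective because $G \twoheadrightarrow H$ is fpqc surjective. Filtered colimits preserve injections of modules, so the induced map $\cO(G^{\mathrm{pft}}) \to \cO(G)$ is injective. Consequently, if two morphisms $g_1, g_2 \colon G^{\mathrm{pft}} \to P$ agree after precomposition with $G \to G^{\mathrm{pft}}$, their Hopf-algebra pullbacks $g_i^* \colon \cO(P) \to \cO(G^{\mathrm{pft}})$ coincide after composition with the injection $\cO(G^{\mathrm{pft}}) \hookrightarrow \cO(G)$, forcing $g_1^* = g_2^*$ and hence $g_1 = g_2$.

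The step I expect to require the most attention is that the individual lifts $G^{\mathrm{pft}} \to P_\alpha$ produced at each stage must be compatible with the transition maps $P_\alpha \to P_\beta$ in order to assemble into a genuine morphism $G^{\mathrm{pft}} \to P$. This compatibility is, however, an automatic consequence of the uniqueness proved above: the two morphisms $G^{\mathrm{pft}} \to P_\alpha \to P_\beta$ and $G^{\mathrm{pft}} \to P_\beta$ both restrict to the same morphism $G \to P_\beta$, so they must coincide.
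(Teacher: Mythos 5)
Your proof is correct and follows essentially the same route as the paper's: reduce to maps into a finite target, obtain existence from the scheme-theoretic image (which is an object of $\mathcal{C}_G$ by construction), and obtain uniqueness from the epimorphism property of $G \to G^{\mathrm{pft}}$. The paper simply asserts at the end of \cref{I9} that $G \to G^{\mathrm{pft}}$ is a surjection and cites it for uniqueness; your Hopf-algebra argument (each $\cO(H) \hookrightarrow \cO(G)$ is injective because $G \twoheadrightarrow H$ is faithfully flat, and the filtered colimit of injections into the fixed target $\cO(G)$ remains injective) is a clean justification of the sufficient — though formally weaker — epimorphism claim, so there is no gap.
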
{}

\begin{proof} 
To prove the assertion, it is enough to show that for a finite group scheme $H$, any map $G \to H$ factors as $G \to G^{\w{pft}} \to H$ for a uniquely determined map $G^{\w{pft}} \to H$. Since the image of the map $G \to H$ is a finite group scheme that $G$ surjects onto, by construction, there is a natural map $G^{\w{pft}} \to H$ such that there is a factorization $G \to G^{\w{pft}} \to H$.
The uniqueness of the map $G^{\w{pft}} \to H$ follows immediately from the surjectivity of $G \to G^{\w{pft}}$.
\end{proof}{}

\begin{remark}\label{visa}
If $G$ is a pro-\'etale group scheme, then it follows that the absolute Frobenius on $G$ is a bijection. If $G$ is profinite, one can show that the converse is also true. Indeed, let $G \simeq \varprojlim G_i$, where each $G_i$ is a finite group scheme and let us assume that the absolute Frobenius is a bijection on $G$. One can further assume that all the transition maps $G_i \to G_j$ are surjective. This implies that the absolute Frobenius must induce an injection on the global sections of the $G_i$; 
consequently, the $G_i$ are reduced and thus must be \'etale. This shows that $G$ is pro-\'etale. \end{remark}{}

\begin{construction}[Maximal pro-\'etale quotient]\label{maximal-proetale}
 For every group scheme $G$, we can define the category $\mathcal{E}_G$ of arrows of group schemes $G \to H$ with the property that $H$ is an (affine) \'etale group scheme. Since the category of affine commutative \'etale group schemes forms an abelian subcategory of the category of affine commutative group schemes, one can argue as in \cref{I9} to check that $\mathcal{E}_G$ is cofiltered. There is a natural functor from $\mathcal{E}_G$ to the category of affine commutative group schemes that sends $(G \to H)$ to $H$. 
 The limit over this diagram indexed by $\mathcal{E}_G$ in the category of affine group schemes will be called the \textit{maximal pro-\'etale quotient of $G$} and denoted by $G^{\proet}$. We have a natural surjection $G \to G^{\proet}$.
\end{construction}{}

\begin{proposition}
Let $G$ be an affine commutative group scheme. Then any map from $G$ to an affine \'etale group scheme factors uniquely through $G \to G^{\proet}$.
\end{proposition}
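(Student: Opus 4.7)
My plan is to mimic the proof of \cref{whywhy} almost verbatim, since the universal property asserted here for $G^{\proet}$ is formally parallel to the one established there for $G^{\mathrm{pft}}$. The two crucial ingredients, both essentially built into \cref{maximal-proetale}, are: (a) every map $f\colon G\to H$ with $H$ affine \'etale can be realized (after replacing $H$ by the scheme-theoretic image of $f$) as an object of the cofiltered diagram $\mathcal{E}_G$ defining $G^{\proet}$; and (b) the structure map $G\to G^{\proet}$ is a surjection of affine group schemes.

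For existence of the factorization, given $f\colon G\to H$ with $H$ affine \'etale, let $f(G)\subset H$ denote the scheme-theoretic image. Since \'etale affine group schemes over $k$ are spectra of filtered colimits of finite products of separable extensions of $k$, any closed subscheme of $H$ is itself \'etale, so $f(G)$ is an affine \'etale group scheme and $G\to f(G)$ is a surjective object of $\mathcal{E}_G$. The universal property of the inverse limit then supplies a canonical projection $G^{\proet}\to f(G)\hookrightarrow H$ whose composition with $G\to G^{\proet}$ equals $f$.

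For uniqueness, suppose $\varphi_1,\varphi_2\colon G^{\proet}\to H$ both factor $f$. By (b), the map $G\to G^{\proet}$ is a surjection of affine group schemes, hence faithfully flat, hence an epimorphism in the category of affine group schemes; so $\varphi_1\circ(G\to G^{\proet})=\varphi_2\circ(G\to G^{\proet})=f$ forces $\varphi_1=\varphi_2$.

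The main point requiring care is (b): that the structure map $G\to G^{\proet}$ is actually a surjection rather than merely a natural morphism into a limit. One reduces to the cofinal subcategory of $\mathcal{E}_G$ consisting of surjective arrows $G\twoheadrightarrow H'$ (using the image construction above, which is functorial since everything is commutative). One then argues, as in \cref{I9}, that a cofiltered inverse limit of such surjective quotients of the commutative affine group scheme $G$ is computed on the level of Hopf algebras as a filtered colimit of injections into $\cO(G)$, so $\cO(G^{\proet})\hookrightarrow\cO(G)$ is injective and $G\to G^{\proet}$ is faithfully flat, hence surjective. Once this is in hand, the two paragraphs above complete the proof.
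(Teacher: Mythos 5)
Your proof is correct and takes essentially the same route as the paper, which simply reads "Follows in a way similar to the proof of \cref{whywhy}"; you have filled in the details that the reference to \cref{whywhy} is meant to elicit. One small remark: for the existence step the detour through the scheme-theoretic image is not needed (since the arrow $f\colon G\to H$ is itself an object of $\mathcal{E}_G$, the projection $G^{\proet}\to H$ is available directly), and the surjectivity of $G\to G^{\proet}$ — which you flag as the delicate point and then verify via the cofinal subcategory of surjective quotients — is already asserted as part of \cref{maximal-proetale}, so the paper treats it as given rather than reproving it here.
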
{}

\begin{proof}
Follows in a way similar to the proof of \cref{whywhy}.
\end{proof}{}

\begin{construction}[Perfection of group schemes]\label{group-scheme-perfection} Let $k$ be a field of chracteristic $p$ fixed as before. Let $G$ be an affine group scheme over $k$. Then $G$ is equipped with the absolute Frobenius map $F \colon G \to G$ (which is not $k$-linear). Let us define $$G^{\w{perf}}_{k_{\w{perf}}} \colonequals \varprojlim_{F} G.$$We note that $G^{\w{perf}}_{k_{\w{perf}}}$ is naturally a group scheme over $\Spec k_{\w{perf}}$. When $k$ is perfect, $G^{\w{perf}}_{k_{\w{perf}}}$ can be naturally viewed as an affine group scheme over $\Spec k$, which we will simply denote by $G^{\w{perf}}$ and call the \emph{perfection} of the group scheme $G$.
\vspace{2mm}

Alternatively, one can define $\varphi_* G$ to be a group scheme over $k$ where the structure map to $\Spec k$ is given by composing $G \to \Spec k$ with the absolute Frobenius $\varphi \colon \Spec k \to \Spec k$. Then $\varphi_* G$ is naturally equipped with the structure of a group scheme. Further, one has a $k$-linear Frobenius map $\varphi_* G \to G$, which is a morphism of group schemes. Iterating this Frobenius map gives an inverse system of group schemes over $k$, whose inverse limit is naturally isomorphic to $G^{\w{perf}}$. There is a natural map $G^{\w{perf}} \to G$ of affine group schemes over $k$.
\end{construction}{}

\begin{remark}
Note that the perfection of a group scheme need not be profinite in general. For example, $\GG_a^{\w{perf}}$ is not profinite.
\end{remark}{}
\begin{remark}\label{profet}
If $G$ is profinite, it follows that $G^{\w{perf}}$ is profinite as well. By \cref{visa}, $G^{\w{perf}}$ is actually a pro-\'etale group scheme in this case.
\end{remark}{}

\begin{remark}\label{limcolim}
Note that for any algebra $S$ of characteristic $p>0$, we can consider the \textit{tilt} of $S$ denoted as $S^\flat \colonequals \varprojlim_F S$, where $F \colon S \to S$ is the Frobenius map. If $S$ is a $k$-algebra over a perfect field $k$, then $S^\flat$ is naturally a $k$-algebra as well. Further, $\varinjlim_F S$ is also naturally a $k$-algebra and there is a $k$-algebra map $S^\flat \to \varinjlim_F S$. Let us now additionally assume that $S$ is a finite-dimensional $k$-algebra.
Then the map $S^\flat = \varprojlim_F S \to \varinjlim_F S$ is an isomorphism by \cref{Frobenius-iterates}.
Note that such an assertion is clearly false without the assumption that $S$ is finite-dimensional over $k$ as can be seen by taking $S = k[x]$.
\end{remark}{}

\begin{remark}\label{tired} For an affine group scheme $G$ over a perfect field $k$, we have constructed a natural map of affine group schemes $G ^{\w{perf}} \to G$. Let $\cO(G)$ denote the global sections of $G$. When $G$ is finite, by \cref{limcolim}, $\Spec \cO(G)^\flat$ has a group scheme structure over $k$, which we will denote by $G^\flat$. There is a natural map $G \to G^\flat$ of group schemes. Further, by \cref{limcolim}, the composition $G^\w{perf} \to G \to G^\flat$ is an isomorphism. This implies that there is a \textit{split} surjection $G \to G^{\w{perf}}$.

\end{remark}{}

\begin{proposition}\label{tired1}
Let $G$ be a profinite group scheme over a perfect field $k$. Then $G^{\mathrm{perf}}$ is isomorphic to the maximal pro-\'etale quotient of $G$.
\end{proposition}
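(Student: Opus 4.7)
My approach proceeds in three steps: constructing a natural surjection $G \twoheadrightarrow G^{\mathrm{perf}}$, verifying that $G^{\mathrm{perf}}$ is pro-\'etale, and establishing the universal property that characterizes it as the maximal pro-\'etale quotient of $G$.

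To begin, I will write $G \cong \varprojlim_{i \in I} G_i$ as a cofiltered inverse limit of finite group schemes via \cref{proobj}. For each finite $G_i$, \cref{tired} produces an isomorphism $G_i^{\mathrm{perf}} \xrightarrow{\sim} G_i^{\flat}$ and consequently a split surjection $G_i \twoheadrightarrow G_i^{\mathrm{perf}}$. These surjections are natural in $G_i$: they are induced by the Frobenius decomposition $\cO(G_i) = \cO(G_i)_s \oplus \cO(G_i)_n$ of \cref{lem:Frobenius-invariants}, and since $\cO(G_i)_n$ is the ideal of $F$-nilpotent elements while $\cO(G_i)_s = \im F^N$ for $N \gg 0$, the decomposition is functorial in ring maps commuting with $F$. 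Commuting the Frobenius limit with the inverse limit over $i$ identifies $\varprojlim_i G_i^{\mathrm{perf}} \cong G^{\mathrm{perf}}$ and yields a morphism $G \to G^{\mathrm{perf}}$. Dually, the ring map $\cO(G^{\mathrm{perf}}) \cong \varinjlim_i \cO(G_i^{\mathrm{perf}}) \to \varinjlim_i \cO(G_i) \cong \cO(G)$ is a filtered colimit of the injections dual to the split surjections at finite level, hence itself injective. Consequently, $G \to G^{\mathrm{perf}}$ is a faithfully flat surjection of affine group schemes.

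For the pro-\'etaleness, each $G_i^{\mathrm{perf}}$ is the reduced subscheme of a finite group scheme over a perfect field and therefore is finite \'etale, so $G^{\mathrm{perf}} \cong \varprojlim_i G_i^{\mathrm{perf}}$ is pro-\'etale. Alternatively, $G^{\mathrm{perf}}$ is profinite with bijective Frobenius by construction, and the claim then follows from \cref{visa}.

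To establish the universal property, let $\alpha \colon G \to H$ be any morphism to a pro-\'etale group scheme $H$. By \cref{visa}, Frobenius is bijective on $H$, so the natural map $H \to H^{\mathrm{perf}}$ is an isomorphism (by the same analysis as above). The naturality of the transformation $G \mapsto (G \to G^{\mathrm{perf}})$ yields a commutative square whose right vertical arrow is this isomorphism; inverting it produces a factorization $\tilde{\alpha} \colon G^{\mathrm{perf}} \to H$ with $\alpha = \tilde{\alpha} \circ (G \to G^{\mathrm{perf}})$. Uniqueness of $\tilde{\alpha}$ is immediate from the surjectivity of $G \to G^{\mathrm{perf}}$. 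The main obstacle in this plan lies in carefully verifying the functoriality of the Frobenius decomposition and ensuring that it interacts appropriately with the cofiltered limit over $i$; once this is handled, the remaining steps are formal consequences of \cref{tired} and \cref{visa}.
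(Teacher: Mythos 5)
Your proof is correct, but it takes a genuinely different route from the paper's. The paper works entirely through Hom-sets: for finite $G$ and \'etale $H$, the ring $\cO(H)$ is perfect, forcing any map $G \to H$ to lift uniquely along the projection $\cO(G)^\flat \to \cO(G)$, which combined with \cref{tired} gives $\Hom(G,H) \simeq \Hom(G^{\perf},H)$; the general case then follows formally from \cref{proobj} by passing to colimits, and the map $G \to G^\perf$ together with its universal property drop out of this bijection at once. You instead build the surjection $G \to G^{\perf}$ concretely, noting that the Frobenius decomposition $\cO(G_i) = \cO(G_i)_s \oplus \cO(G_i)_n$ of \cref{lem:Frobenius-invariants} is functorial, and then verify pro-\'etaleness and the universal property by exploiting that $H \to H^\perf$ is an isomorphism when $H$ is pro-\'etale. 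Your route is more explicit but has more compatibilities to check, some of which you gloss over: the projection $\cO(G_i) \to \cO(G_i)_s$ needs to be a map of \emph{Hopf algebras}, not merely of $k$-vector spaces, so one must note that $\cO(G_i)_s$ is a subring (closed under products because $F^N(a)F^N(b) = F^N(ab)$), that $\cO(G_i)_n$ equals the nilradical and is thus a Hopf ideal over a perfect field, and that the decomposition is multiplicative under $\otimes_k$ so that $\Delta$ is respected. You also need functoriality to survive the passage from the cofiltered system $\{G_i\}$ to a morphism $G \to H$ of \emph{profinite} group schemes, which implicitly uses the pro-Hom formula of \cref{proobj}. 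None of this is wrong, but the paper's Hom-set argument sidesteps it entirely. A minor citation point: the presentation $G \simeq \varprojlim_i G_i$ is by \cref{profdef}, while \cref{proobj} is what guarantees the Hom formula.
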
{}

\begin{proof} Let us first suppose that $G$ is a finite group scheme and show the claim. If $H$ is any \'etale group scheme over $k$, then $\cO(H)$ must be a perfect ring. Thus, any map $G \to H$ of group schemes must factor uniquely as $G \to G^\flat \to H$. By \cref{tired}, this implies that the map $G \to G^{\w{perf}}$ constructed before identifies with the maximal (pro-)\'etale quotient of $G$.
\vspace{2mm}

Now we return to the general case where we may assume that $G = \varprojlim_{i \in I} G_i$ for an inverse system $\left \{G_i \right\} _{i \in I}$ of finite group schemes. Let us suppose that $H$ is a finite \'etale group scheme. Then, by \cref{proobj}, we have $\Hom(\varprojlim_{i \in I} G_i, H) \simeq \varinjlim_{i \in I} \Hom(G_i, H) \simeq \varinjlim_{i \in I} \Hom(G_i^{\w{perf}}, H) \simeq \Hom(\varprojlim_{i \in I}G_i^{\w{perf}}, H)$. This constructs a natural bijection of sets $\Hom(G, H) \simeq \Hom(G^{\w{perf}}, H)$. Since $G^{\w{perf}}$ is pro-\'etale (\cref{profet}), we obtain a natural map $G \to G^{\w{perf}}$ which identifies as the maximal pro-\'etale quotient of $G$.
\end{proof}

\begin{corollary}If $G$ is a profinite group scheme over a perfect field $k$, then the map $G \to G^{\proet}$ is a split surjection.
\end{corollary}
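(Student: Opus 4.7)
The plan is to show that the natural map $G^{\w{perf}} \to G$ supplied by Construction~\cref{group-scheme-perfection} already provides the section, once one invokes the identification $G^{\proet} \simeq G^{\w{perf}}$ furnished by Proposition~\cref{tired1}. First I would reduce to the finite case. Writing $G \simeq \varprojlim_{i \in I} G_i$ with each $G_i$ a finite group scheme, the formation of $G^{\w{perf}}$ as an inverse limit along Frobenius commutes with $\varprojlim_i$, so $G^{\w{perf}} \simeq \varprojlim_i G_i^{\w{perf}}$. Likewise, both the map $G \to G^{\proet}$ and the natural map $G^{\w{perf}} \to G$ arise as the inverse limits over $i$ of their finite-level analogues, since the assignments $G \mapsto G^{\w{perf}}$, $G \mapsto G^{\flat}$, and $G \mapsto G^{\proet}$ are all functorial.

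For a finite $G_i$, the proof of Proposition~\cref{tired1} identifies the quotient map $G_i \to G_i^{\proet}$ with the composition $G_i \to G_i^{\flat} \xrightarrow{\phi_i^{-1}} G_i^{\w{perf}}$, where $\phi_i \colon G_i^{\w{perf}} \xrightarrow{\sim} G_i^{\flat}$ is the isomorphism from Remark~\cref{limcolim} (which applies since $\cO(G_i)$ is finite-dimensional over $k$). As Remark~\cref{tired} already records, the composite $G_i^{\w{perf}} \to G_i \to G_i^{\flat}$ is precisely $\phi_i$. Hence the composition $G_i^{\w{perf}} \to G_i \to G_i^{\proet} \simeq G_i^{\w{perf}}$ is $\phi_i^{-1} \circ \phi_i = \mathrm{id}$, so the natural map $G_i^{\w{perf}} \to G_i$ is a section of $G_i \to G_i^{\proet}$.

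Passing to the inverse limit over $i$ then assembles these sections into a section of $G \to G^{\proet}$. There is no real obstacle in this plan: the substantive content has already been packaged into Remark~\cref{tired} and Proposition~\cref{tired1}, and what remains is only the functoriality argument needed to upgrade the finite case to the profinite one.
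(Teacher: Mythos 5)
Your proposal is correct and takes essentially the same route as the paper: identify $G^{\proet}$ with $G^{\w{perf}}$ via \cref{tired1} and exhibit the natural map $G^{\w{perf}} \to G$ as the section. You have simply unpacked what the paper's one-line appeal to ``\cref{tired1} and its proof'' leaves implicit, namely the finite-level identity $\phi_i^{-1} \circ \phi_i = \mathrm{id}$ and the functorial passage to the inverse limit.
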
{}
\begin{proof}
Follows from \cref{tired1} and its proof. Indeed, the map to the maximal pro-\'etale quotient identifies with $G \to G^{\w{perf}}$ and a section is provided by the natural map $G^{\w{perf}} \to G$.
\end{proof}{}

Note that \cref{proobj} shows that a group scheme being profinite is a property and not any extra data. For the rest of this section, we investigate conditions on a group scheme that characterise profiniteness.

\begin{proposition}\label{tautop}
A group scheme $G$ is profinite (resp. pro-\'etale) if and only if every finite type quotient of it is a finite (resp. finite \'etale) group scheme. 
\end{proposition}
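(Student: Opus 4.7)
The proof plan has two directions, both essentially reducing to the classical structure theorem for affine group schemes over a field.

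For the forward direction, suppose first that $G$ is profinite, so $G \simeq \varprojlim_i G_i$ with each $G_i$ a finite group scheme and (without loss of generality) surjective transition maps; dually, $\cO(G) \simeq \varinjlim_i \cO(G_i)$ as Hopf algebras with injective transition maps. A surjection $G \twoheadrightarrow H$ in the category of affine group schemes corresponds to an injection of Hopf algebras $\cO(H) \hookrightarrow \cO(G)$. If $H$ is of finite type, then $\cO(H)$ is a finitely generated $k$-algebra, so by the filteredness of the colimit all of its generators lie in a single $\cO(G_{i_0})$; hence $\cO(H) \subseteq \cO(G_{i_0})$. Since $\cO(G_{i_0})$ is finite-dimensional over $k$, so is $\cO(H)$, and $H$ is finite. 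If in addition $G$ is pro-\'etale, we can take each $\cO(G_{i_0})$ to be a finite \'etale $k$-algebra, i.e., a finite product of finite separable field extensions of $k$; any $k$-subalgebra is then again reduced and finite-dimensional, hence also a product of finite separable extensions, so $H$ is finite \'etale.

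For the converse, I would invoke the classical fact (see Waterhouse, \emph{Introduction to Affine Group Schemes}, \S3.3) that every affine group scheme over a field is an inverse limit of its finite-type (``algebraic'') quotients with surjective transition maps; at the level of Hopf algebras this amounts to writing $\cO(G) \simeq \varinjlim_\alpha A_\alpha$ as the filtered union of its finitely generated Hopf subalgebras. Under the hypothesis that every finite-type quotient of $G$ is finite (resp.\ finite \'etale), this presentation exhibits $G$ as a cofiltered limit of finite (resp.\ finite \'etale) group schemes, so $G$ is profinite (resp.\ pro-\'etale) in the sense of Definition~\ref{profdef}.

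The main substantive ingredient is the pro-algebraicity statement for affine group schemes, which rests on the fundamental theorem of coalgebras (every coalgebra is the filtered union of its finite-dimensional subcoalgebras) together with the observation that the Hopf subalgebra generated by a finite-dimensional subcoalgebra remains finitely generated. I would cite this rather than reprove it. Beyond this, the only mildly delicate point is verifying that a $k$-subalgebra of a finite \'etale $k$-algebra is again finite \'etale; this follows because such a subalgebra is automatically reduced and finite-dimensional, and a finite-dimensional reduced commutative $k$-algebra whose ambient algebra is \'etale is itself a product of finite separable extensions of $k$.
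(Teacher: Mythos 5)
Your proof is correct and follows essentially the same route as the paper's (quite terse) argument: the paper's backward direction ("it follows") is your appeal to the classical fact that an affine group scheme is the cofiltered limit of its finite-type quotients, and the paper's forward direction rests on the "algebraic observation" that a finitely generated subalgebra of an ind-finite (resp.\ ind-\'etale) algebra is finite (resp.\ finite \'etale), which is precisely what you prove by chasing generators of $\cO(H)$ into some $\cO(G_{i_0})$. The only place to tighten slightly is the \'etale case: the cleanest way to see that a $k$-subalgebra $B$ of a finite \'etale algebra $A$ is again \'etale is to note that $B \otimes_k \bar{k} \hookrightarrow A \otimes_k \bar{k} \simeq \bar{k}^{\,n}$ is reduced, so $B$ is geometrically reduced and finite, hence \'etale; "reduced plus finite-dimensional" alone does not give separability over imperfect fields.
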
{}

\begin{proof}
If all the finite type quotients of $G$ are finite (resp. finite \'etale), then it follows that $G$ is profinite (resp. pro-\'etale). The converse is a consequence of the following algebraic observation: a finitely generated subalgebra of an ind-finite (resp. ind-\'etale) algebra is finite (resp. finite \'etale).
\end{proof}{}

\begin{corollary}\label{piano}
If $G$ is a group scheme over a field $k$, then $G$ is profinite (resp. pro-\'etale) if and only if the base change of $G$ to a field extension of $k$ is profinite (resp. pro-\'etale).
\end{corollary}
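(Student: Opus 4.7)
The plan is to reduce to the characterization in \cref{tautop}: a group scheme is profinite (resp.\ pro-\'etale) if and only if each of its finite type quotients is finite (resp.\ finite \'etale). Write $f \colon \Spec k' \to \Spec k$ for the base change and $G' \colonequals G \times_{\Spec k} \Spec k'$.

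For the ``only if'' direction, suppose $G \simeq \varprojlim_{i \in I} G_i$ with $G_i$ finite (resp.\ finite \'etale). Since the limit is computed as $\Spec$ of a filtered colimit of algebras and $(-) \otimes_k k'$ commutes with filtered colimits, base change yields $G' \simeq \varprojlim_{i \in I} (G_i)_{k'}$, and each $(G_i)_{k'}$ is finite (resp.\ finite \'etale). So $G'$ is profinite (resp.\ pro-\'etale).

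For the ``if'' direction, assume $G'$ is profinite (resp.\ pro-\'etale); by \cref{tautop}, it suffices to show that any finite type quotient $q \colon G \to H$ is finite (resp.\ finite \'etale). Base-changing, $q_{k'} \colon G' \to H_{k'}$ is a finite type quotient of $G'$, so $H_{k'}$ is finite (resp.\ finite \'etale) by \cref{tautop} applied to $G'$. Writing $H = \Spec A$, the algebra $A \otimes_k k'$ is then finite-dimensional (resp.\ finite \'etale) over $k'$, and by faithfully flat descent along $k \to k'$, the $k$-algebra $A$ itself is finite-dimensional (resp.\ finite \'etale). Hence $H$ has the required property, finishing the proof.

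The argument is essentially routine once \cref{tautop} is available; the only point requiring care is that one applies descent not directly to the group scheme $G$ (which may have no finite approximation to descend) but to its individual finite type quotients, which makes the descent step completely standard.
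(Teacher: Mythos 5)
Your proof is correct and follows essentially the same route as the paper, which cites \cref{tautop} directly and leaves the rest implicit. You have correctly filled in the two elided points: that a faithfully flat surjection onto a finite type group scheme base changes to one over $k'$, and that finiteness (resp.\ finite \'etaleness) of a $k$-algebra descends along the faithfully flat extension $k \to k'$.
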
{}
\begin{proof}
Follows from \cref{tautop}.
\end{proof}{}

While working over perfect fields of characteristic $p>0$, it is desirable to obtain more functorial characterizations of profiniteness involving the Frobenius operator. To this end, we first prove a permanence property under the perfection operation. This also gives a converse to \cref{profet}.
\begin{proposition}
A group scheme $G$ over a perfect field $k$ is profinite if and only if $G^{\mathrm{perf}}$ is profinite.
\end{proposition}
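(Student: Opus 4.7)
The forward direction is already noted in Remark~\ref{profet}, so the plan is to establish the converse: assuming $G^{\mathrm{perf}}$ is profinite, we will deduce that $G$ is profinite. By Proposition~\ref{tautop}, this reduces to showing that every finite type quotient $\pi \colon G \twoheadrightarrow H$ is a finite group scheme. Such a quotient corresponds to a sub-Hopf-algebra inclusion $\cO(H) \hookrightarrow \cO(G)$ in which $\cO(H)$ is finitely generated as a $k$-algebra, and the strategy will be to track $\cO(H)$ through the natural $k$-algebra map to $\cO(G^{\mathrm{perf}}) = \cO(G)_{\mathrm{perf}}$, which by hypothesis is the filtered union of its finite-dimensional sub-$k$-algebras.

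The first key step is to compute the kernel of the natural map $\cO(G) \to \cO(G)_{\mathrm{perf}} = \varinjlim_F \cO(G)$: since the transition maps in this colimit are given by Frobenius, the kernel consists of elements annihilated by some power of Frobenius, which is just the nilradical of $\cO(G)$. Hence the composite $\cO(H) \hookrightarrow \cO(G) \to \cO(G)_{\mathrm{perf}}$ has kernel equal to $\cO(H) \cap \mathrm{nilrad}(\cO(G)) = \mathrm{nilrad}(\cO(H))$, and therefore descends to an injection $\cO(H_{\mathrm{red}}) \hookrightarrow \cO(G)_{\mathrm{perf}}$, where $H_{\mathrm{red}}$ is a closed subgroup scheme of $H$ thanks to the perfectness of $k$ and the finite type hypothesis on $H$.

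From this injection we will then deduce that $\cO(H_{\mathrm{red}})$ is finite-dimensional: since $\cO(G)_{\mathrm{perf}}$ is ind-finite and $\cO(H_{\mathrm{red}})$ is finitely generated as a $k$-algebra, a finite set of $k$-algebra generators of $\cO(H_{\mathrm{red}})$ lies in some common finite-dimensional sub-$k$-algebra of $\cO(G)_{\mathrm{perf}}$, forcing $\cO(H_{\mathrm{red}})$ itself to be finite-dimensional. Thus $H_{\mathrm{red}}$ is a finite group scheme; the finiteness of $H$ then follows from the fact that $H$ is of finite type with the same underlying topological space as $H_{\mathrm{red}}$, hence only finitely many points, so $\cO(H)$ is an Artinian finite type $k$-algebra and is therefore finite-dimensional. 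The principal subtlety to handle carefully is that $\cO(G) \to \cO(G)_{\mathrm{perf}}$ fails to be injective when $G$ has nilpotents, which is what forces the detour through $H_{\mathrm{red}}$; fortunately, because the kernel is exactly the nilradical and taking nilradicals is compatible with passage to sub-Hopf algebras, no information beyond the finite type hypothesis on $H$ is lost in this reduction.
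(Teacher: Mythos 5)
Your proof is correct, and it takes a noticeably different route from the paper's. The paper argues by contradiction: given a finite-type quotient $G \twoheadrightarrow G'$ with $G'$ not finite, it passes to the surjection $G^{\mathrm{perf}} \twoheadrightarrow G'^{\mathrm{perf}}$, observes that $G'^{\mathrm{perf}}$ is then also profinite and hence Krull-dimension zero, and finally uses that $G'^{\mathrm{perf}} \to G'$ is a homeomorphism on underlying topological spaces to conclude that $G'$ is zero-dimensional and therefore finite. You instead never form the perfection of the quotient: you map $\cO(H)$ directly into $\cO(G^{\mathrm{perf}})$, identify the kernel as the nilradical, and then invoke the principle that a finitely generated sub-$k$-algebra of an ind-finite algebra is finite-dimensional --- the very principle the paper already appeals to in the proof of Proposition~\ref{tautop} --- applied to the injection $\cO(H_{\mathrm{red}}) \hookrightarrow \cO(G^{\mathrm{perf}})$. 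Your approach trades the paper's dimension-theoretic step (profinite $\Rightarrow$ zero-dimensional, plus the homeomorphism $G'^{\mathrm{perf}} \to G'$) for bookkeeping around the nilradical and the final passage from $H_{\mathrm{red}}$ finite to $H$ finite; it also avoids needing the auxiliary fact that a quotient of a profinite group scheme is profinite. One small remark: you do not actually use the fact that $H_{\mathrm{red}}$ is a subgroup scheme of $H$ (which is where perfectness of $k$ would enter that sentence) --- all you need is that $\cO(H_{\mathrm{red}}) = \cO(H)/\mathrm{nilrad}$ is a finitely generated $k$-algebra, which is automatic --- so that remark could be dropped.
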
{}
\begin{proof}
If $G$ is profinite, then $G^{\w{perf}}$ is clearly also profinite. Conversely, for the sake of contradiction, let us assume that $G^{\w{perf}}$ is profinite while $G$ is not. In this case, there must exist a surjection $G \to G'$ where $G'$ is of finite type yet not finite.
Then $G^{\w{perf}} \to G'^{\w{perf}}$ is also a surjection. Since $G^{\w{perf}}$ is profinite, by an application of \cref{tautop}, $G'^{\w{perf}}$ is also profinite. This implies that $G'^{\w{perf}}$ is zero-dimensional as an affine scheme. Since $G'^{\w{perf}} \to G'$ induces a homeomorphism on the underlying (Zariski) topological spaces, we get that $G'$ is also zero-dimensional. Since $G'$ is of finite type over a field, this implies that $G'$ must be finite, which gives the contradiction and proves the converse.
\end{proof}{}

Combining the above proposition with \cref{visa}, we obtain the following characterization of profiniteness that Bhargav Bhatt pointed out to us.
\begin{corollary}\label{bhattob}
A group scheme $G$ over a perfect field $k$ is profinite if and only if $G^{\mathrm{perf}}$ is pro-\'etale. 
\end{corollary}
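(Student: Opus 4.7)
The plan is to deduce this directly from the preceding proposition together with \cref{visa}. The preceding proposition asserts that $G$ is profinite if and only if $G^{\mathrm{perf}}$ is profinite, so it remains to identify, among profinite group schemes, those that are pro-\'etale with those on which absolute Frobenius is bijective; this is exactly the content of \cref{visa}.

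For the forward direction, suppose $G$ is profinite. Then by the preceding proposition $G^{\mathrm{perf}}$ is profinite, and by construction $G^{\mathrm{perf}} = \varprojlim_F G$ carries a bijective absolute Frobenius (since the shift map is an isomorphism on an inverse limit along $F$). Invoking the converse half of \cref{visa}, which requires the profiniteness input, concludes that $G^{\mathrm{perf}}$ is pro-\'etale. For the reverse direction, if $G^{\mathrm{perf}}$ is pro-\'etale, then it is in particular profinite, and hence $G$ is profinite by the preceding proposition.

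I do not expect any real obstacle here: the statement is essentially a two-line combination of results already in hand. The only thing to watch is to note that bijectivity of Frobenius on $G^{\mathrm{perf}}$ is automatic from the inverse limit description, so that the hypothesis of the converse direction of \cref{visa} is met.
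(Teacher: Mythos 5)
Your proof is correct and is exactly the combination the paper intends: the preceding proposition reduces the statement to profiniteness of $G^{\mathrm{perf}}$, and Remark \cref{visa} (together with the observation that Frobenius is automatically bijective on the inverse limit along Frobenius) converts profiniteness into pro-\'etaleness. This matches the paper's one-line indication of proof.
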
{}

\begin{proposition}
Any closed subgroup, quotient or extension of profinite (resp. pro-\'etale) group schemes over a field $k$ is profinite (resp. pro-\'etale). 
\end{proposition}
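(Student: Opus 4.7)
The plan is to apply the criterion of \cref{tautop} throughout: a group scheme is profinite (resp.\ pro-\'etale) if and only if every finite type quotient is finite (resp.\ finite \'etale). The three assertions---closed subgroup, quotient, extension---will be treated separately.

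The quotient case is immediate. If $G$ is profinite (resp.\ pro-\'etale) and $G \to Q$ is a surjection of affine group schemes, then every finite type quotient $Q \twoheadrightarrow Q'$ gives, by composition, a finite type quotient $G \twoheadrightarrow Q'$, which must be finite (resp.\ finite \'etale) by hypothesis. Hence $Q$ is profinite (resp.\ pro-\'etale) by \cref{tautop}.

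For the closed subgroup case, I would write $G = \varprojlim_i G_i$ as an inverse limit of finite (resp.\ finite \'etale) group schemes and let $H \hookrightarrow G$ be closed. Let $H_i$ be the scheme-theoretic image of $H$ in $G_i$. Each $H_i$ is a closed subgroup scheme of a finite group scheme, hence finite; in the pro-\'etale case, $H_i$ is also \'etale, because any closed subscheme of a finite \'etale $k$-scheme is a disjoint union of spectra of separable field extensions of $k$. A direct computation with the defining ideal $I \subset \cO(G) = \varinjlim_i \cO(G_i)$, using that $I = \varinjlim_i (I \cap \cO(G_i))$, identifies $\cO(H) = \cO(G)/I$ with $\varinjlim_i \cO(H_i)$ and therefore $H \simeq \varprojlim_i H_i$. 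Thus $H$ is profinite (resp.\ pro-\'etale).

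Finally, for an extension $1 \to H \to G \to Q \to 1$ of affine group schemes with $H$ and $Q$ profinite (resp.\ pro-\'etale), take any finite type quotient $G \twoheadrightarrow G'$ and let $H'$ be the image of $H$ in $G'$. Then $H'$ is profinite (resp.\ pro-\'etale) by the quotient case, and since $H'$ is a closed subgroup scheme of the finite type group scheme $G'$ it is itself of finite type; a profinite group scheme of finite type is finite (again by \cref{tautop}), and similarly finite \'etale in the pro-\'etale case. The quotient $G'/H'$ is a finite type quotient of $Q$ and hence is finite (resp.\ finite \'etale). Consequently, $G'$ is finite, as an extension of two finite group schemes. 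In the pro-\'etale situation, $G' \to G'/H'$ is an $H'$-torsor and hence \'etale-locally on $G'/H'$ looks like $H' \times G'/H'$; since $H'$ and $G'/H'$ are both \'etale over $k$, so is $G'$. Applying \cref{tautop} one last time gives that $G$ is profinite (resp.\ pro-\'etale). The only nonformal step is the closed subgroup case, and even there the argument reduces to the elementary observation that a closed subscheme of a finite \'etale $k$-scheme is \'etale, together with the compatibility of intersections of ideals with filtered colimits.
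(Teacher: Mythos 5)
Your proof is correct, and it takes a genuinely different route from the one in the paper. The paper first reduces to an algebraically closed base field via \cref{piano}, then in positive characteristic applies the perfection functor from \cref{group-scheme-perfection} together with \cref{bhattob} to reduce the profinite assertion to the pro-\'etale one, and finally cites a lemma from the Stacks Project for the pro-\'etale case. Your argument instead deduces everything directly from the finite-type-quotient criterion of \cref{tautop}, handling the three cases (closed subgroup, quotient, extension) by hand. This avoids the perfection machinery and the characteristic dichotomy entirely, and it also isolates what is really needed: the quotient case is formal, the extension case reduces to the fact that a finite-type profinite group scheme is finite plus the elementary closure of (finite, finite \'etale) under extensions, and the closed subgroup case reduces to commuting filtered colimits of Hopf algebras with quotients by Hopf ideals. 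The paper's proof is shorter but relies on more external input; yours is longer but more self-contained and arguably more transparent about why the statement is true. One small point worth making explicit in the closed subgroup case is that the inverse system $(G_i)$ can be taken to have surjective transition maps, so that the $\cO(G_i)$ form an increasing union inside $\cO(G)$; this is implicit in your identification $I = \varinjlim_i (I \cap \cO(G_i))$ and follows from \cref{I9}.
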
{}

\begin{proof}
We can assume by \cref{piano} that the base field $k$ is algebraically closed. If $k$ has characteristic $p>0$, we can apply the perfection functor from \cref{group-scheme-perfection} and \cref{bhattob} to reduce to only checking the assertion in the case of pro-\'etale group schemes. If $k$ is of characteristic zero, then the profinite group schemes are the same as pro-\'etale group schemes. The assertion in the case of pro-\'etale group schemes over an algebraically closed field can be checked directly and all of them follow from \cite[\href{https://stacks.math.columbia.edu/tag/0CKQ}{Tag~0CKQ}]{stacks}.
\end{proof}{}

\begin{remark}
Note that a group scheme $G$ being pro-\'etale is a condition on the scheme underlying $G$:
it means that the diagonal map $G \times_k G \to G$ is faithfully flat. Thus, \cref{bhattob} gives a characterization of profinite group schemes that can be expressed without talking about inverse systems as in \cref{profdef} or all finite type quotients as in \cref{tautop}. 
\end{remark}{}

In practice, \cref{bhattob} can be difficult to check. When the group scheme $G$ is unipotent, we give another characterization in \cref{profinitenesschar}, which is purely in terms of the Frobenius operators and linear algebra. This characterization will be used in our paper. First, let us note some lemmas.

\begin{lemma}\label{li}
Let $G$ be a finite type unipotent group scheme over an algebraically closed field $k$ such that $\dim G >0$. Then there is a surjection $G \to \GG_a$.
\end{lemma}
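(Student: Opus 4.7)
The plan is to reduce the lemma to the classical structure theorem for smooth connected unipotent group schemes over algebraically closed fields (which provides a surjection onto $\GG_a$ in positive dimension), via a sequence of reductions using abelianization and the Frobenius endomorphism.

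First, I would reduce to the case of a commutative $G$ by passing to the abelianization $G^{\mathrm{ab}} = G/[G,G]$: since $\GG_a$ is commutative, any surjection from $G^{\mathrm{ab}}$ onto $\GG_a$ automatically provides one from $G$. The essential ingredient here is the positive-dimensionality of $G^{\mathrm{ab}}$, which follows from the nilpotence of $G$ (a consequence of unipotence). For the connected identity component $G^\circ$, nilpotence forces $[G^\circ, G^\circ] \subsetneq G^\circ$ strictly (otherwise the lower central series of $G^\circ$ would never terminate), so $\dim [G^\circ, G^\circ] < \dim G^\circ$ by connectedness; a dimension count using the finiteness of $\pi_0(G)$ then yields $\dim G^{\mathrm{ab}} > 0$.

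Next, I would reduce to the smooth case via Frobenius iteration. For a sufficiently high iterate $F^n \colon G \to G$, the $p^n$-th power map on $\cO(G)$ annihilates the nilradical (which is nilpotent since $\cO(G)$ is Noetherian), so $F^n$ factors through a surjection $G \twoheadrightarrow G_{\mathrm{red}}$ of group schemes, realizing the smooth reduced scheme $G_{\mathrm{red}}$ as a quotient of the same positive dimension. Having reduced to the case of a smooth commutative $G$, the classical structure theorem provides a surjection $G^\circ \twoheadrightarrow \GG_a$; letting $K$ be its kernel (normal in $G$ by commutativity), the quotient $H \colonequals G/K$ is a smooth commutative unipotent group scheme of dimension $1$ fitting into a short exact sequence $0 \to \GG_a \to H \to \pi_0(H) \to 0$ with $\pi_0(H)$ a finite \'etale commutative $p$-group scheme.

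Finally, to produce the desired surjection $H \twoheadrightarrow \GG_a$, I would analyze the long exact sequence obtained by applying $\Hom(\,\cdot\,, \GG_a)$ to the above extension. The connecting homomorphism
\[ \partial \colon \Hom(\GG_a, \GG_a) \longrightarrow \Ext^1(\pi_0(H), \GG_a) \]
has source equal to the twisted polynomial ring $k_{\sigma}[F]$ (infinite-dimensional over $k$) and finite-dimensional target (since $\pi_0(H)$ is a finite commutative group scheme), so $\ker \partial \neq 0$. Any element of this kernel lifts to a morphism $H \to \GG_a$ whose restriction to the subgroup $\GG_a \subset H$ is a nonzero endomorphism; since every nonzero endomorphism of $\GG_a$ is a nonconstant polynomial $\mathbf{A}^1 \to \mathbf{A}^1$ over an algebraically closed field, hence surjective, the lifted map is itself a surjection, yielding the lemma.

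The hardest step will be the first reduction, specifically ensuring that $\dim G^{\mathrm{ab}} > 0$ when $G$ is disconnected. While the strict inclusion $[G^\circ, G^\circ] \subsetneq G^\circ$ is immediate from nilpotence, controlling $G^\circ \cap [G,G]$ (which may be strictly larger than $[G^\circ, G^\circ]$) requires a careful analysis of how commutators involving representatives of $\pi_0(G)$ interact with $G^\circ$, together with the observation that in the unipotent setting the relevant conjugation actions are themselves unipotent, so that commutator subgroups cannot exhaust the entire identity component.
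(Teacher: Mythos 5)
Your proof has a genuine gap in the first reduction, although the final step is a clean improvement over the paper's argument.

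\textbf{The gap.} You reduce to the commutative case by passing to $G^{\mathrm{ab}}$ and assert that $\dim G^{\mathrm{ab}} > 0$, but this is precisely what you cannot assume. The statement ``$\dim G > 0$ implies $\dim G^{\mathrm{ab}} > 0$ for unipotent $G$ of finite type'' is the paper's Lemma~\ref{piano2}, which the paper \emph{deduces from} Lemma~\ref{li}; so invoking it here would be circular, and you must produce an independent argument. The one you sketch does not work. First, since you run the abelianization \emph{before} the Frobenius reduction, the group $G^\circ$ need not be reduced, and the inference ``$[G^\circ,G^\circ] \subsetneq G^\circ$ with $G^\circ$ connected implies $\dim[G^\circ,G^\circ] < \dim G^\circ$'' fails for non-reduced connected group schemes (e.g.\ $\GG_a \subset \GG_a \times \alpha_p$ is a proper closed connected subgroup of the same dimension). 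Second, even after smoothing, you acknowledge that $G^\circ \cap [G,G]$ may be strictly larger than $[G^\circ,G^\circ]$ and wave at ``the relevant conjugation actions are themselves unipotent'' without supplying an actual bound on $\dim[G,G]$ in the disconnected case. The paper sidesteps all of this by not abelianizing globally: it first reduces (via a maximal normal series, after smoothing) to a $1$-dimensional $G$ containing $\GG_a$ as a central subgroup, and only there proves $\dim[G,G]=0$ using Baer's commutator result and the finiteness of $G/\GG_a$. Reordering your reductions (Frobenius first) would fix the first issue but not the second; in either case, you would need to supply the missing dimension estimate.

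\textbf{The final step.} Your connecting-homomorphism argument is correct and genuinely more elementary than the paper's. After reaching the extension $0 \to \GG_a \to H \to \pi_0(H) \to 0$ with $H$ smooth commutative $1$-dimensional and $\pi_0(H)$ a finite constant $p$-group, the long exact sequence for $\Hom(\,\cdot\,,\GG_a)$ gives $\partial \colon k_\sigma[F] \to \Ext^1(\pi_0(H),\GG_a)$ with infinite-dimensional source and finite-dimensional target (cf.\ \cref{genfunc}), so a nonzero element of $\ker\partial$ lifts to a map $H \to \GG_a$ that is surjective on the $\GG_a$-piece, hence surjective. This avoids both the Verschiebung reduction and the citation to the classification of $V$-annihilated commutative unipotent group schemes \cite[IV, \S~3, Thm.~6.6]{MR0302656} that the paper uses, which is a real simplification worth keeping once the first reduction is repaired.
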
{}

\begin{proof}
If $k$ has characteristic $0$, then the statement follows by considering a normal series of $G$ whose successive quotients are all $\GG_a$. From now on, we assume that $k$ has characteristic $p>0$. By considering the image of a large enough power of the Frobenius map, we can furthermore assume that $G$ is smooth.
\vspace{2mm}

Let us pick a maximal normal series $G = G_0 \supset G_1 \supset \ldots \supset G_i \supset G_{i+1} \supset \ldots = \left \{ * \right \}$, and an integer $i$ minimal with respect to the property that $G_i/G_{i+1}$ is $\GG_a$. By construction, $G_n/ G_{n+1}$ is a finite algebraic subgroup of $\GG_a$ for $n<i$. Therefore, the quotient $G/G_{i+1}$ is $1$-dimensional and $\GG_a$ is a closed normal subgroup of $G/G_{i+1}$.
Thus, we can reduce to the case where $G$ is $1$-dimensional and $\GG_a$ is a closed normal subgroup of $G$. Since the quotient $H \colonequals G/\GG_a$ is a unipotent group scheme and $\w{Aut}\,\GG_a = \GG_m$, we see that $\GG_a$ is central in $G$.
We note that $H$ is a finite discrete group scheme since we are over an algebraically closed field. Further, since $\GG_a$ is central in $G$, the commutator map $G \times G \to G$ factors through $H \times H \to G$. By a result of Baer \cite{Baer} (see \cite[\S~2, Appx.]{bor1}), there is a large enough $r$ such that image of the map $(H \times H)^{\times r} \to G^{\times r} \to G$ is the commutator subgroup $[G,G]$ of $G$.
Here, the last map is the $r$-fold multiplication map of $G$. Since $H$ is a finite group scheme, it follows that $[G,G]$ is zero-dimensional. Thus, the abelianization $G^{\w{ab}}$ of $G$ is also $1$-dimensional. Therefore, we may assume that $G$ is commutative. Now, we can also use the Verschiebung operator $V$ defined on $G$. Since $G$ is unipotent, $V^n=0$ for a suitably large $n$. Thus, since $G$ is $1$-dimensional, one obtains by devissage that $G/VG$ is also $1$-dimensional. Therefore, we can now reduce to the case where $G$ is commutative, $1$-dimensional, contains $\GG_a$ and is killed by $V$. Since $G$ is $1$-dimensional, commutative and killed by $V$, by the classification of commutative unipotent group schemes that are annihilated by the Verschiebung operator (see, e.g., \cite[IV, \S~3 Thm.~6.6]{MR0302656}), $\Hom(G, \GG_a)$ is infinite-dimensional as a $k$-vector space. Since $G/ \GG_a$ is a finite group scheme, it follows that there must exist a map $G \to \GG_a$ such that the composition $\GG_a \to G \to \GG_a$ is nonzero. However, any nonzero endomorphism of $\GG_a$ is a surjection, which finishes the proof.
\end{proof}{}

\begin{lemma}\label{annoy}
Let $G$ be a finite type unipotent group scheme over a field $k$ of characteristic $p>0$.
Then $\mathrm{Hom}(G, \GG_a)$ is a finite-dimensional $k$-vector space if and only if $G$ is a finite group scheme. 
\end{lemma}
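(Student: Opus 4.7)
My plan is to prove the two directions separately. The easy direction is that if $G$ is finite then $\Hom(G, \GG_a)$ is finite-dimensional: any group scheme homomorphism $G \to \GG_a$ corresponds to a primitive element of the Hopf algebra $\cO(G)$, so $\Hom(G, \GG_a)$ injects into $\cO(G)$, which is a finite-dimensional $k$-vector space by the finiteness of $G$.

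For the converse, I would argue by contrapositive: if $G$ is a finite type unipotent group scheme over $k$ that is not finite, then $\dim G > 0$, and I want to show $\Hom(G, \GG_a)$ is infinite-dimensional. First I would reduce to the case that $k$ is algebraically closed. Primitive elements are the kernel of a $k$-linear map $\cO(G) \to \cO(G) \otimes_k \cO(G)$, and such kernels commute with flat base change; thus $\Hom(G, \GG_a) \otimes_k \bar k \simeq \Hom(G_{\bar k}, \GG_a)$, so finite-dimensionality descends from $\bar k$ to $k$.

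Over an algebraically closed field, I would invoke \cref{li} to produce a surjection $\phi \colon G \twoheadrightarrow \GG_a$. Then I would consider the family of compositions $F^n \circ \phi \colon G \to \GG_a$ for $n \ge 0$, where $F \colon \GG_a \to \GG_a$ is the absolute Frobenius. The key claim is that these are linearly independent in $\Hom(G, \GG_a)$: any $k$-linear relation $\sum_{i=0}^n a_i (F^i \circ \phi) = 0$ would yield $\sum_i a_i F^i = 0$ as an endomorphism of $\GG_a$, because $\phi$ is an epimorphism of fpqc sheaves. But $F^i$ corresponds to the polynomial $x \mapsto x^{p^i}$, and these polynomials are $k$-linearly independent, forcing all $a_i = 0$. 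Hence $\Hom(G, \GG_a)$ is infinite-dimensional, which contradicts our assumption.

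I do not anticipate a significant obstacle here: the epimorphism property of $\phi$ needed in the last step is immediate from its defining surjectivity as a morphism of fpqc sheaves, and the base change step for primitives is purely formal. The only nontrivial input is \cref{li}, which has already been established.
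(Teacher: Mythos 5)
Your proof is correct and takes essentially the same route as the paper. The paper also reduces to $\bar k$, applies \cref{li} to produce a surjection $G \twoheadrightarrow \GG_a$, and then observes that $\Hom(\GG_a,\GG_a) \simeq k_\sigma[F]$ is infinite-dimensional; your precomposition-with-powers-of-Frobenius argument is exactly how the resulting injection $\Hom(\GG_a,\GG_a) \hookrightarrow \Hom(G,\GG_a)$ is justified (and you spell out the epimorphism point the paper leaves implicit).
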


\begin{proof}
Without loss of generality, we may assume that the base field $k$ is algebraically closed.
If $G$ is finite, then $\Hom(G, \GG_a)$ is clearly finite-dimensional. The converse follows from \cref{li} and the observation that $\Hom(\GG_a, \GG_a)$ is infinite-dimensional (spanned by powers of the Frobenius map).
\end{proof}{}

\begin{lemma}\label{piano2}
If $H$ is a finite type unipotent group scheme over $k$ such that $H^{\ab}$ is finite, then $H$ must be finite. 
\end{lemma}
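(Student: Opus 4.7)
The plan is to apply \cref{annoy} to $H$ itself. Since $H$ is a finite type unipotent group scheme over $k$ of characteristic $p>0$, that lemma tells us $H$ is finite if and only if $\Hom(H, \GG_a)$ is finite-dimensional as a $k$-vector space. So the task reduces to showing that $\Hom(H, \GG_a)$ is finite-dimensional under the hypothesis that $H^{\ab}$ is finite.

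For this, I would use that $\GG_a$ is commutative, so every homomorphism of group schemes $H \to \GG_a$ factors uniquely through the abelianization $H \to H^{\ab}$. This yields a natural identification
\[
\Hom(H, \GG_a) \simeq \Hom(H^{\ab}, \GG_a).
\]
Next, since $H^{\ab}$ is a finite group scheme, its Hopf algebra $\cO(H^{\ab})$ is a finite-dimensional $k$-vector space. Any homomorphism $H^{\ab} \to \GG_a$ corresponds to a primitive element of $\cO(H^{\ab})$, and in particular, $\Hom(H^{\ab}, \GG_a)$ embeds as a $k$-subspace of $\cO(H^{\ab})$, hence is finite-dimensional.

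Combining the two steps, $\Hom(H, \GG_a)$ is finite-dimensional, and \cref{annoy} then gives the desired conclusion that $H$ is finite. There is no real obstacle here: the only mild point to check is the naturality of the factorization $\Hom(H,\GG_a) = \Hom(H^{\ab}, \GG_a)$ in the scheme-theoretic setting, which is immediate because $\GG_a$ being commutative means $\Hom(-,\GG_a)$ annihilates the commutator subgroup scheme $[H,H]$.
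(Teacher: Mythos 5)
Your proof is correct. The identification $\Hom(H,\GG_a) \simeq \Hom(H^{\ab},\GG_a)$ (since $\GG_a$ is commutative) and the embedding of $\Hom(H^{\ab},\GG_a)$ into the finite-dimensional space $\cO(H^{\ab})$ via primitive elements are both sound, and \cref{annoy} then yields finiteness of $H$.

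The paper argues a bit differently: it assumes $\dim H > 0$ and invokes \cref{li} to produce a surjection $H \to \GG_a$, which must factor through a surjection $H^{\ab} \to \GG_a$, contradicting the finiteness of $H^{\ab}$. Both proofs pivot on the same observation that maps to $\GG_a$ factor through the abelianization, but the paper routes directly through the surjection lemma \cref{li} in a contradiction argument, whereas you route through the dimension criterion of \cref{annoy} (which is itself a consequence of \cref{li}). Your version is a touch more self-contained in that it avoids the reduction to algebraically closed $k$ — \cref{annoy} already handles that — and it gives a direct argument rather than a contradiction, at the mild cost of spelling out the primitive-element embedding. Either packaging is perfectly acceptable.
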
{}
\begin{proof}
Once again, we may assume that the base field $k$ is algebraically closed.
For the sake of contradiction, let us assume that $\dim H >0$, yet $H^{\w{ab}}$ is finite-dimensional. By \cref{li}, $H^{\w{ab}}$ must surject onto $\GG_a$, which gives a contradiction.
\end{proof}{}

\begin{example}
We point out that the unipotent assumption in \cref{piano2} is important. The lemma is false for all semisimple algebraic groups of positive dimension since their abelianization is trivial. For a concrete example, one may take $\mathrm{SL}_n$.
\end{example}{}

\begin{remark}
We recall that a unipotent group scheme is called split if it admits a subnormal series whose graded pieces are isomorphic to $\GG_a$. Note that if $G$ is a commutative unipotent group scheme of dimension $n$, then \cref{li} inductively implies that there is a surjection from $G$ to an $n$-dimensional, split unipotent group scheme. 
\end{remark}{}

\begin{lemma}\label{tvshow}
Let $G$ be a unipotent group scheme over $k$ of characteristic $p > 0$.
Then $G$ is profinite if and only if the abelianization $G^{\mathrm{ab}}$ of $G$ is profinite.
\end{lemma}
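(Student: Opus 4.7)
The proof splits into the two directions of the biconditional. The forward direction is immediate: if $G$ is profinite, then $G^{\mathrm{ab}}$, being a quotient of $G$ (in the sense of \cref{useinfreu} and \cref{Milne}), is profinite by the preservation of profiniteness under quotients established earlier in \cref{posto4}.

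For the converse, assume $G^{\mathrm{ab}}$ is profinite. By the characterization in \cref{tautop}, it suffices to show that every finite type quotient $G \twoheadrightarrow H$ is a finite group scheme. Since $G$ is unipotent, so is $H$, and since $H$ is of finite type, its abelianization $H^{\mathrm{ab}}$ (which is again a quotient of $H$) is a finite type unipotent group scheme. Moreover, the induced map $G^{\mathrm{ab}} \twoheadrightarrow H^{\mathrm{ab}}$ is surjective, so $H^{\mathrm{ab}}$ is a quotient of the profinite group scheme $G^{\mathrm{ab}}$, hence profinite. A finite type profinite group scheme is finite (e.g., by \cref{tautop} applied to itself). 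Thus $H$ is a finite type unipotent group scheme whose abelianization is finite, and \cref{piano2} forces $H$ itself to be finite, completing the proof.

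The main input is \cref{piano2}, which was established using \cref{li} (every positive-dimensional finite type unipotent group scheme over an algebraically closed field admits a surjection onto $\mathbf{G}_a$). No substantive new obstacle arises here; the argument is essentially a formal combination of existing results together with the observation that both ``profinite'' and ``unipotent'' descend through the abelianization in the sense required.
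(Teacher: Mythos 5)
Your proof is correct and follows essentially the same route as the paper: both directions are handled by reducing the converse via \cref{tautop} to finite type quotients $H$ of $G$, observing that $H^{\mathrm{ab}}$ is then a finite type quotient of the profinite $G^{\mathrm{ab}}$ and hence finite, and invoking \cref{piano2} to conclude that $H$ itself is finite. The only cosmetic difference is that you make explicit the intermediate step (a finite type profinite group scheme is finite) that the paper leaves implicit.
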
{}

\begin{proof}
If $G$ is profinite, then $G^{\mathrm{ab}}$ is also profinite, since $G^{\mathrm{ab}}$ is a quotient of $G$. For the converse, we use \cref{tautop}. Let $H$ be a finite type quotient of $G$. Then $H^{\w{ab}}$ is a finite type quotient of $G^{\w{ab}}$. Since $G^{\w{ab}}$ is profinite, it follows that $H^{\w{ab}}$ is finite. By \cref{piano2}, this means that $H$ is finite and we are done.
\end{proof}{}

As a final preparation, we recall some notations. Let $k_{\sigma}[F]$ denote the Frobenius twisted polynomial ring over $k$ in the variable $F$; i.e., we have the relation $F\cdot c= c^p \cdot F$. There is a natural isomorphism $\Hom_k(\GG_a, \GG_a) \simeq k_{\sigma}[F]$. It induces a natural $k_{\sigma}[F]$-module structure on $\Hom_{k}(G, \GG_a)$. Moreover, the functor that sends a group scheme $G$ to $\Hom_{k}(G, \GG_a)$ gives an anti-equivalence between the category of commutative unipotent group schemes of finite type that are annihilated by the Verschiebung and the category of finitely generated left $k_{\sigma}[F]$-modules \cite[IV, \S~3, Cor.~6.7]{MR0302656}.
\vspace{2mm}

We recall (see \cref{torsion}) that a left $k_{\sigma} [F]$-module $M$ is called \textit{torsion} if every $m \in M$ is contained in a $k_{\sigma} [F]$-submodule $V_m$ such that $V_m$ is finite-dimensional as a $k$-vector space.

\begin{proposition}\label{profinitenesschar}
Let $G$ be a unipotent group scheme (not assumed to be commutative or of finite type) over a field $k$ of characteristic $p>0$. Then $G$ is profinite if and only if $\mathrm{Hom}(G, \GG_a)$ is a torsion $k_{\sigma}[F]$-module.
\end{proposition}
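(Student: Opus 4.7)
The plan is to prove both directions separately. For the forward direction, write $G$ as a cofiltered limit of finite unipotent group schemes and use that $\GG_a$ is finitely presented together with \cref{annoy}. For the reverse direction, use \cref{tautop} to reduce to showing every finite type quotient is finite, then reduce to the commutative case via \cref{tvshow} and \cref{piano2}, and finally invoke the Dieudonné anti-equivalence for commutative unipotent group schemes killed by the Verschiebung.

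For the forward direction, suppose $G \simeq \varprojlim_{i \in I} G_i$ with each $G_i$ a finite group scheme; each $G_i$ is unipotent as a quotient of $G$. Since $\GG_a$ is of finite presentation as an affine scheme, the argument of \cref{proobj} gives $\Hom(G, \GG_a) \simeq \varinjlim_{i \in I} \Hom(G_i, \GG_a)$. By \cref{annoy}, each $\Hom(G_i, \GG_a)$ is a finite-dimensional $k$-vector space, and it is visibly stable under post-composition with the Frobenius of $\GG_a$. Thus every element of $\Hom(G, \GG_a)$ lies in some finite-dimensional $F$-stable $k$-subspace, so $\Hom(G, \GG_a)$ is a torsion $k_\sigma[F]$-module.

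For the converse, assume $\Hom(G, \GG_a)$ is torsion. First, since both profiniteness (\cref{piano}) and the torsion condition are preserved under faithfully flat field extensions (the formation of primitives in the Hopf algebra $\cO(G)$ commutes with base change, and a finitely generated $k_\sigma[F]$-submodule remains finite-dimensional after base change), we may assume $k$ is algebraically closed. By \cref{tautop}, it suffices to show every finite type quotient $G \twoheadrightarrow H$ is finite. Such an $H$ is unipotent of finite type, and $\Hom(H, \GG_a) \hookrightarrow \Hom(G, \GG_a)$ is torsion. Since $\GG_a$ is commutative, $\Hom(H, \GG_a) = \Hom(H^{\ab}, \GG_a)$, and by \cref{piano2} together with \cref{annoy}, it is enough to prove $\Hom(H^{\ab}, \GG_a)$ is finite-dimensional over $k$. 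We may therefore assume $H$ is commutative, unipotent, and of finite type.

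Since $V = 0$ on $\GG_a$, any homomorphism $H \to \GG_a$ factors through $H/VH$, so $\Hom(H, \GG_a) = \Hom(H/VH, \GG_a)$, and $H/VH$ is commutative unipotent finite type and killed by $V$. By the Demazure--Gabriel anti-equivalence \cite[IV, \S~3, Cor.~6.7]{MR0302656} (available because $k$ is now perfect), $\Hom(H/VH, \GG_a)$ is a finitely generated left $k_\sigma[F]$-module. The main obstacle is then the purely algebraic claim that a finitely generated torsion left $k_\sigma[F]$-module $M$ is finite-dimensional over $k$: writing $M = \sum_{j=1}^n k_\sigma[F] \cdot m_j$, each generator $m_j$ lies by hypothesis in a finite-dimensional $F$-stable $k$-subspace $V_j$, and then $k_\sigma[F] \cdot m_j \subseteq V_j$, so $M = \sum_j V_j$ is finite-dimensional. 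This shows $\Hom(H, \GG_a)$ is finite-dimensional, completing the proof via \cref{annoy} and \cref{piano2}.
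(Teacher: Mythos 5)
Your proof is correct and follows essentially the same route as the paper: \cref{tautop} to reduce to finite-type quotients, the reduction to the commutative case, the factorization through $H/VH$, the Demazure--Gabriel anti-equivalence, and \cref{annoy} to conclude. Three minor variations are worth noting. First, the paper does not spell out the forward direction; your colimit argument via the finite presentation of $\GG_a$ is a correct fleshing-out. Second, you reduce $H$ to $H^{\ab}$ directly using \cref{piano2} rather than invoking \cref{tvshow} to reduce $G$ to commutative at the outset --- this is effectively unrolling the proof of \cref{tvshow}, and both work. Third, you add a preliminary base change to $\bar{k}$; the paper does not, but the cited Demazure--Gabriel anti-equivalence does require $k$ perfect, so your extra step is a prudent (and strictly speaking necessary) safeguard rather than an omission. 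The paper also phrases the finite-dimensionality step as ``a finitely generated submodule of a torsion module is finite-dimensional,'' whereas you prove ``a finitely generated torsion module is finite-dimensional''; since submodules of torsion modules are torsion, these are equivalent and your direct argument is fine.
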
{}

\begin{proof}
Note that if $G$ is profinite, then $\Hom(G, \GG_a)$ is indeed torsion as a $k_{\sigma}[F]$-module.
\vspace{2mm}

For the converse, by \cref{tvshow}, we can assume that $G$ is commutative. Let $H$ be a finite type quotient of $G$. We will show that $H$ is finite, which will imply the proposition by \cref{tautop}. Since $H^{\w{}}$ is commutative, it is equipped with the Verschiebung operator $V$. Note that $\GG_a$ is killed by $V$. Therefore, 
$\Hom(H, \GG_a) \simeq \Hom(H^{\w{}}/V H^{\w{}}, \GG_a)$. This implies that $\Hom(H, \GG_a)$ is a finitely generated $k_{\sigma}[F]$-module. Since $\Hom(H, \GG_a)$ is also a submodule of the torsion $k_{\sigma}[F]$-module $\Hom(G, \GG_a)$, it follows that $\Hom(H, \GG_a)$ is a finite-dimensional $k$-vector space. Now \cref{annoy} implies that $H$ must be finite, so we are done.\end{proof}{}

For the remainder of this section, we record some basic properties of $k_{\sigma}[F]$-modules that will be useful to us later on. Note that a $k_{\sigma}[F]$-module is simply a $k$-vector space $V$ equipped with a Frobenius semilinear endomorphism $F$. We will consider $V$-representations of a group scheme $G$ that respects the operator $F$.

\begin{lemma}\label{eatsan1}
Let $k$ be a perfect field of characteristic $p>0$.
\begin{enumerate}
   \item If $M$ is a finitely generated, torsion free $k_{\sigma }[F]$-module, then $M$ is free.
   \item If $M$ is a finitely generated $k_{\sigma}[F]$-module, then $M$ is a direct sum of a torsion module and a free module.
\end{enumerate}{}
\end{lemma}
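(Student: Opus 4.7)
The plan is to invoke the fact that $R \colonequals k_\sigma[F]$ is a two-sided principal ideal domain when $k$ is perfect. This is established by setting up both left and right Euclidean division with respect to the degree function: given $f, g \in R$ with $g \neq 0$, one solves for leading coefficients to write $f = qg + r$ and $f = gq' + r'$ with $\deg r, \deg r' < \deg g$, where the right division uses the existence of $p$-th roots in $k$ to extract the required coefficient. Consequently $R$ is both left and right Noetherian, every one-sided ideal is principal, and in particular $R$ is an Ore domain, so that any two nonzero elements admit a common left multiple. This last observation implies that the set $M_{\mathrm{tors}}$ of torsion elements in any left $R$-module $M$ is actually an $R$-submodule.

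As a further consequence of $R$ being a left PID, every left submodule of a finitely generated free $R$-module is itself free, by induction on the rank: projection $R^n \to R$ onto the last coordinate produces a left ideal (which is principal and, as $R$ is a domain, free of rank at most one) together with a submodule of $R^{n-1}$, and the resulting short exact sequence splits because the quotient is projective. For part (1), I would then embed a finitely generated torsion-free $M$ into a finitely generated free $R$-module as follows. Choose generators $m_1, \ldots, m_n$ of $M$ and select a maximal $R$-linearly independent subset $m_{i_1}, \ldots, m_{i_r}$; for each remaining $m_j$, maximality forces a nonzero $c_j \in R$ with $c_j m_j \in N \colonequals \langle m_{i_1}, \ldots, m_{i_r}\rangle$. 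Taking a common left multiple $c$ of the $c_j$ shows $c \cdot M \subseteq N$, and since $M$ is torsion-free, left multiplication by $c$ is injective. Thus $M$ embeds into the free module $N$, and the preceding observation yields freeness of $M$.

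For part (2), I would apply (1) to $M/M_{\mathrm{tors}}$, which is finitely generated (as a quotient of $M$) and torsion-free, hence free and therefore projective. The canonical short exact sequence $0 \to M_{\mathrm{tors}} \to M \to M/M_{\mathrm{tors}} \to 0$ then splits, producing the desired decomposition. The only genuinely delicate point, and the one that could be viewed as the main obstacle, is the verification of both left and right Euclidean division in $R$—in particular the need for $p$-th roots in $k$ to perform division on the side opposite to where the coefficients naturally sit; everything else closely parallels the classical argument for modules over a commutative PID once one is attentive to the left/right conventions.
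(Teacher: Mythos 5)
Your overall setup is sound: $R \colonequals k_\sigma[F]$ is a left and right Euclidean domain (with perfectness of $k$ used exactly where you say, for division on the side opposite to the coefficients), hence a two-sided PID and in particular left and right Ore; the submodule-of-free-is-free induction and the splitting argument for part (2) are both fine. The gap is in the embedding step of part (1). From $c_j m_j \in N$ for the non-selected generators and a common left multiple $c = d_j c_j$, you correctly get $c m_j = d_j (c_j m_j) \in N$, but this does \emph{not} give $c M \subseteq N$: a general element of $M$ is $\sum_j r_j m_j$, and $c\bigl(\sum_j r_j m_j\bigr) = \sum_j (c r_j) m_j$, where you cannot move $r_j$ past $c$, so membership in $N$ does not follow from $c m_j \in N$. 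Worse, even if one knew $c M \subseteq N$, the map $m \mapsto c m$ is only additive, not $R$-linear, since $c$ is not central in $k_\sigma[F]$ (the center is tiny); so it cannot exhibit $M$ as an $R$-submodule of the free module $N$, which is what the conclusion ``$M$ is free'' requires. Here noncommutativity bites precisely in the step that is invisible over a commutative PID.

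A repair that stays within your framework: let $Q$ be the quotient skew field of $R$ (it exists since $R$ is left Ore), so that $M \to Q \otimes_R M \simeq Q^r$ is $R$-linear and injective (the kernel is the torsion submodule). The images of the finitely many generators of $M$ have coordinates in $Q$, and since $R$ is also right Ore (perfectness again), these admit a common right denominator $c \in R$. Right multiplication by $c$ on $Q^r$ is left $R$-linear and carries the image of $M$ into $R^r$, after which your submodule-of-free-is-free lemma finishes. Alternatively, the reference the paper cites, Jacobson's \emph{Theory of Rings} \S~3.8, proves an adapted-basis/Smith normal form theorem for a submodule of a finite free module over a two-sided PID, from which both parts of the lemma follow at once and which avoids these left/right subtleties entirely.
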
{}

\begin{proof}
The proof follows in a way similar to the case of $k[X]$-modules;
\textit{cf.}~\cite[\S~3.8]{MR0008601}.
\end{proof}{}

\begin{lemma}\label{torsioncolimit}
Let $k$ be a perfect field and $V$ be a $k$-vector space equipped with a Frobenius semilinear operator $F$. Then $V$ is torsion as a $k_{\sigma}[F]$-module if and only if $\varinjlim_{F} V$ is torsion as a $k_{\sigma}[F]$-module.\end{lemma}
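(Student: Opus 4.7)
The plan is to verify the two implications separately, with the forward direction being formal and the backward direction carrying the content.

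For the forward direction, I would argue as follows. Any element $\bar v \in \varinjlim_F V$ is represented by some $v \in V$ (at some finite stage of the colimit). Assuming $V$ is torsion, $v$ lies in a finite-dimensional $F$-stable $k$-subspace $W \subseteq V$. The image $\bar W$ of $W$ in $\varinjlim_F V$ remains $F$-stable, is still finite-dimensional over $k$ (a quotient of $W$, using that the $k$-structure on $\varinjlim_F V$ is compatible with the transition maps because $k$ is perfect), and contains $\bar v$. Hence $\varinjlim_F V$ is torsion.

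For the backward direction, the key observation is that the Frobenius $F$ acts invertibly on $\varinjlim_F V$: surjectivity is automatic because $[v,n] = F([v,n+1])$ in the colimit, and injectivity holds because if $F([v,n]) = [F(v),n] = 0$, then $F^{m+1}(v) = 0$ in $V$ for some $m$, forcing $[v,n] = 0$. Now suppose $\varinjlim_F V$ is torsion and fix $v \in V$ with image $\bar v$. Choose a finite-dimensional $F$-stable subspace $\bar U \ni \bar v$ in $\varinjlim_F V$. Since $\bar U$ is finite-dimensional, the sequence $\bar v, F\bar v, F^2 \bar v, \ldots$ cannot be $k$-linearly independent, producing a nonzero polynomial $p(F) \in k_\sigma[F]$ with $p(F)\bar v = 0$ in the colimit.

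The crucial step is lifting this annihilation back to $V$. By the definition of the filtered colimit, $p(F)\bar v = 0$ in $\varinjlim_F V$ means there exists $N \ge 0$ such that $F^N p(F) v = 0$ in $V$. Set $q(F) \colonequals F^N p(F)$, which is a nonzero element of $k_\sigma[F]$ of some degree $D$ with leading coefficient a unit in $k$. The relation $q(F)v = 0$ expresses $F^D v$ as a $k$-linear combination of $v, Fv, \ldots, F^{D-1}v$, and by iterated application this shows that all of $F^m v$ for $m \ge D$ lie in $\mathrm{span}_k\{v, Fv, \ldots, F^{D-1}v\}$. Therefore $k_\sigma[F]\cdot v \subseteq V$ is a finite-dimensional $F$-stable $k$-subspace containing $v$, proving that $V$ is torsion.

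The main (minor) obstacle is the lifting step in the converse direction: one needs to observe that a relation in the colimit can be pulled back to $V$ after multiplication by a sufficiently high power of $F$, which is exactly the mechanism provided by the description of the filtered colimit. Everything else is bookkeeping with the twisted polynomial ring $k_\sigma[F]$.
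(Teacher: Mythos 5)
Your argument is correct, and for the converse direction it takes a genuinely different route from the paper's. The paper appeals to the structure theory of finitely generated $k_\sigma[F]$-modules (Lemma~\ref{eatsan1}) to reduce to the case $V = k_\sigma[F]$, then identifies $\varinjlim_F k_\sigma[F]$ with $k_\sigma[F,F^{-1}]$, which is visibly not torsion. You bypass the structure theorem entirely: from a linear dependence $p(F)\bar v = 0$ inside a finite-dimensional $F$-stable subspace of the colimit, you pull back along the defining identifications of the filtered colimit to obtain $q(F)v = 0$ in $V$ with $q \colonequals F^N p$ still nonzero (here one uses that the twisted ring $k_\sigma[F]$ has no zero divisors since $k$ is a field), and the leading term of $q$ then lets you express $F^D v$ in terms of lower Frobenius iterates and, by iterating, trap all of $k_\sigma[F]\cdot v$ inside $\mathrm{span}_k\{v, Fv, \ldots, F^{D-1}v\}$, proving it is torsion. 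Your route is more elementary and self-contained, at the small cost of a bit more explicit bookkeeping with the colimit; the paper's route is shorter but leans on Lemma~\ref{eatsan1}, whose proof is itself only sketched there. One minor remark: the observation that $F$ acts invertibly on $\varinjlim_F V$, while correct, is not actually invoked anywhere in the rest of your argument and could be dropped.
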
{}

\begin{proof}
If $V$ is torsion as a $k_{\sigma}[F]$-module, then the same is true for $\varinjlim_{F} V$. For the converse, we show that if $V$ has an element that is not torsion, then $\varinjlim_{F} V$ is not a torsion $k_{\sigma}[F]$-module. By \cref{eatsan1}, we can assume that $V= k_{\sigma}[F]$ with its natural structure as a $k_{\sigma}[F]$-module. However, then $\varinjlim_{F} k_{\sigma}[F] \simeq k_{\sigma}[F, F^{-1}]$ is not a torsion module, finishing the proof.
\end{proof}{}

\begin{remark}[Tensor product of Frobenius semilinear operators]\label{tensorfrobmod}
If $V_1$ and $V_2$ are two $k$-vector spaces equipped with Frobenius semilinear operators $F_1$ and $F_2$, then $V_1 \otimes_k V_2$ is naturally equipped with a Frobenius semilinear operator that sends $v_1 \otimes v_2$ to $F_1(v_1) \otimes F_2 (v_2)$. We will denote this operator on $V_1 \otimes_k V_2$ by $F_1 \otimes F_2$. When $k$ is perfect, we have $\varinjlim_{F_1 \otimes F_2}V_1 \otimes_k V_2 \simeq \varinjlim_{F_1} V_1 \otimes_{k} \varinjlim_{F_2} V_2$.
\end{remark}{}

\begin{remark}\label{torsionkF}
If $V_1$ and $V_2$ are torsion $k_{\sigma}[F]$-modules, then $V_1 \otimes_k V_2$ is also a torsion $k_{\sigma}[F]$-module, where the $k_{\sigma}[F]$-module structure on the latter is given by the description in \cref{tensorfrobmod}.
\end{remark}{}

\begin{definition}[$F$-representation]\label{F-representation}
Let $k$ be a perfect field and $V$ be a $k$-vector space equipped with a Frobenius semilinear endomorphism $F$. Let $G$ be a group scheme over $k$ and let $r \colon G \to \w{GL}_V$ be a representation.
Then $r$ determines a map $\rho \colon V \to \cO(G) \otimes_{k} V$, where $\cO(G)$ is the Hopf algebra of global sections of $G$.
We say that $r$ defines an \textit{$F$-representation} if the following diagram commutes:

    \begin{equation}\label{F-rep1}
    \begin{tikzcd}
V \arrow[d, "F"] \arrow[rr,"\rho"] &  & \cO(G) \otimes_k V \arrow[d, "\text{Frob} \otimes F"] \\
V \arrow[rr,"\rho"]                &  & \cO(G) \otimes_k V                                   
\end{tikzcd}
\end{equation}{}
Here, $\w{Frob}$ denotes the Frobenius map on $\cO(G)$. In such a situation, we will use $(r, G, V, F)$ to denote the associated $F$-representation.
\end{definition}{}

\begin{remark}\label{F-repgeo}
Let us explain the notion of an $F$-representation from a more geometric perspective.
First, we note that if $\sigma$ denotes the Frobenius on $k$, then the data of a Frobenius semilinear operator is naturally equivalent to the data of a map $\sigma^* V \to V$. The data of a representation $V$ of the group scheme $G$ over $k$ can be viewed as the data of a quasi-coherent sheaf $\mathscr{V}$ on $BG$ equipped with an isomorphism $u^* \mathscr{V} \simeq V$, where $u\colon \Spec k \to BG$ is the natural map. Further, if the underlying $k$-vector space $V$ is equipped with a Frobenius semilinear operator $F$, then the representation $V$ of $G$ from above is an $F$-representation if there is a map $T \colon \w{Frob}^* \mathscr{V} \to \mathscr{V}$ such that when $T$ is pulled back along the natural map $u \colon \Spec k \to BG$, it naturally recovers the operator $F$ on $V$ under the isomorphism $u^* \mathscr{V} \simeq V$. Note that if there exists a map $T$ as above, it is uniquely determined; therefore, the map $T$ really captures the property that the map $\sigma^* V \to V$ is equivariant with respect to the group scheme $G$ and the Frobenius on $G$, which we have concretely spelt out in \cref{F-rep1}.
\end{remark}{}

\begin{remark}
Let $\mathscr{V}$ denote the quasi-coherent sheaf on $BG$ associated with an $F$-representation $V$ of $G$  as described in \cref{F-repgeo}. Then the cohomology groups $H^i (BG, \mathscr{V})$ for $i \ge 0$ are naturally equipped with the structure of $k_{\sigma}[F]$-modules.
\end{remark}{}

\begin{remark}
If $(r_1, G, V_1, F_1)$ is an $F_1$-representation and $(r_2, G, V_2, F_2)$ is an $F_2$-representation, then the direct sum $V_1\oplus V_2$ equipped with the semilinear operator $F_1 \oplus F_2$ is an $F_1 \oplus F_2$-representation of $G$. Also, $V_1 \otimes_{k} V_2$ equipped with the semilinear operator $F_1 \otimes F_2$ is naturally an $F_1 \otimes F_2$-representation of $G$.
\end{remark}{}

\begin{construction}[Perfection of $F$-representations]\label{perfFrep} Let $G$ be a group scheme over a perfect field $k$. Let $V$ be a $k$-vector space equipped with a Frobenius semilinear operator $F$. Further, let us assume that $V$ is equipped with the structure of an $F$-representation of $G$, denoted as $r \colon G \to \w{GL}_V$. In this case, we can consider the perfection $G^{\w{perf}}$ of $G$ (see \cref{group-scheme-perfection}), whose underlying ring of functions is given by taking the colimit over the Frobenius on $\cO(G)$. Let us denote $V_{\w{perf}} \colonequals \varinjlim_{F} V$, equipped with the structure of a $k$-vector space (coming via the isomorphism $k \simeq k_{\w{perf}}$, since $k$ is perfect).
Further, $V_{\w{perf}}$ is canonically equipped with a Frobenius semilinear operator that we denote as $F_{\w{perf}}$. By using the commutative diagram in \cref{F-rep1}, one sees that $V_{\w{perf}}$ is naturally an $F_{\w{perf}}$-representation of $G^{\w{perf}}$, which we denote by $r_{\w{perf}} \colon G^{\w{perf}} \to \w{GL}_{V_{\w{perf}}}$; we will say that $(r_{\w{perf}}, G^{\w{perf}}, V_{\w{perf}}, F_{\w{perf}})$ is the perfection of $(r, G, V, F)$.
\end{construction}{}
Before we proceed further, we record some examples showing that $F$-representations are ubiquitous in positive characteristic representation theory and algebraic geometry.

\begin{example}[Trivial representation]
The trivial $1$-dimensional representation $V$ of a group scheme $G$ over a perfect field $k$ is naturally equipped with the structure of an $F$-representation, where the Frobenius semilinear operator on the vector space $V = k$ is given by the absolute Frobenius $\sigma \colon k \to k$.
\end{example}{}

\begin{example}[Regular representation]
If $G$ is a group scheme over a perfect field $k$, then the ring of regular functions $\cO(G)$ has a natural semilinear operator $F$ given by the absolute Frobenius on $\cO(G)$. This equips the regular representation $\cO(G)$ of $G$ with the structure of an $F$-representation.
\end{example}{}

\begin{example}[Group action on geometric objects]\label{chicago13}
Let $G$ be an affine group scheme over a perfect field $k$ and let $X$ be any qcqs scheme over $k$ equipped with an action $G \times X \to X$ of $G$. For a fixed $i \ge 0$, the cohomology group $H^i (X, \cO)$ is naturally equipped with a Frobenius semilinear operator which we denote by $F$. Further, $H^i (X, \cO)$ is naturally equipped with the structure of an $F$-representation of $G$. This example remains valid even when $X$ is replaced by any higher stack $Y$ that satisfies $R\Gamma (G \times Y, \cO)  \simeq R\Gamma (Y, \cO) \otimes_{k} \cO(G)$. Examples of these types will play an important role in this paper.
\end{example}{}

\begin{proposition}\label{finitenessFmod}
Let $G$ be a profinite group scheme over a perfect field $k$. Let $\mathscr{V}$ be the quasi-coherent sheaf on $BG$ attached to an $F$-representation $(\rho, G, V, F)$. If $V$ is a torsion $k_{\sigma}[F]$-module, then $H^i (BG, \mathscr{V})$ is a torsion $k_{\sigma}[F]$-module for all $i \ge 0$.
\end{proposition}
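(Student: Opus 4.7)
The plan is a two-step reduction to the case where $V$ is a finite-dimensional representation of a finite group scheme, where the torsion conclusion becomes automatic because cohomology will then be finite-dimensional.

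The first step exhibits $V$ as a filtered colimit of finite-dimensional sub-$(F,G)$-representations. Given $v \in V$, let $W_0 \subseteq V$ be the (finite-dimensional) $G$-subrepresentation generated by $v$. Applying the commutativity of \cref{F-rep1} to $W_0$ shows that if $\rho(W_0) \subseteq \cO(G) \otimes W_0$, then $\rho(F(W_0)) \subseteq \cO(G) \otimes F(W_0)$; inductively $F^i(W_0)$ is a $G$-subrepresentation for every $i \ge 0$. Hence $W \colonequals \sum_{i \ge 0} F^i(W_0)$ is both $G$-stable and $F$-stable. Since $V$ is torsion as a $k_{\sigma}[F]$-module, each vector of a basis of $W_0$ lies in some finite-dimensional $F$-stable subspace of $V$, so $W$ is contained in their finite sum and is itself finite-dimensional. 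Because the cobar complex computing $H^i(BG, \mathscr{V})$ preserves filtered colimits in $V$ (and a filtered colimit of torsion $k_{\sigma}[F]$-modules is torsion), it suffices to prove the proposition for each such finite-dimensional $W$.

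The second step reduces to $G$ finite. Since $G$ is profinite, $\cO(G)$ is the filtered colimit of its finite-dimensional Hopf subalgebras $\cO(G_{\alpha})$ arising from finite quotients $G \to G_{\alpha}$. The coaction $W \to W \otimes \cO(G)$ lands in $W \otimes \cO(G_{\alpha})$ for some $\alpha$, so $W$ factors through $G_{\alpha}$; the semilinear operator $F$ on $W$ then automatically endows $W$ with the structure of an $F$-representation of $G_{\alpha}$ (and of any further finite quotient $G_{\beta}$ with $\beta \ge \alpha$), because the compatibility of \cref{F-rep1} with $\cO(G_{\beta}) \subseteq \cO(G)$ is inherited. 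Writing $\cO(G) = \varinjlim_{\beta \ge \alpha} \cO(G_{\beta})$, the cobar complex $C^{\bullet}(G, W) = W \otimes \cO(G)^{\otimes \bullet}$ is the filtered colimit of the sub-cochain complexes $C^{\bullet}(G_{\beta}, W)$, and each such subcomplex is stable under the Frobenius semilinear operator $F_W \otimes \mathrm{Frob}^{\otimes \bullet}$ because Frobenius preserves any subalgebra of $\cO(G)$. Taking cohomology (which commutes with filtered colimits) gives an $F$-equivariant identification $H^i(BG, \mathscr{W}) = \varinjlim_{\beta} H^i(BG_{\beta}, \mathscr{W})$.

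Finally, for each finite $G_{\beta}$, the terms of the cobar complex $W \otimes \cO(G_{\beta})^{\otimes \bullet}$ are finite-dimensional $k$-vector spaces, so $H^i(BG_{\beta}, \mathscr{W})$ is a finite-dimensional $F$-module and is therefore torsion by definition. It follows that $H^i(BG, \mathscr{W}) = \varinjlim_{\beta} H^i(BG_{\beta}, \mathscr{W})$ is torsion, and combining with the first reduction, so is $H^i(BG, \mathscr{V}) = \varinjlim_W H^i(BG, \mathscr{W})$. The principal subtlety is verifying, in the first step, that the rigidity of the $F$-representation diagram forces $F$ to send $G$-subrepresentations to $G$-subrepresentations, which is what allows the construction of a single finite-dimensional subspace stable under both operations; the rest of the argument is then a formal manipulation of filtered colimits and the cobar complex.
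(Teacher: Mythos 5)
Your proof is correct but takes a genuinely different route from the paper's short one. The paper simply observes that the cobar complex $V \otimes_k \cO(G)^{\otimes\bullet}$ obtained by descent along $\Spec k \to BG$ has torsion terms: $\cO(G)$ is torsion because $G$ is profinite, $V$ is torsion by hypothesis, and tensor products of torsion $k_\sigma[F]$-modules are torsion by \cref{torsionkF}; the cohomology is a subquotient and hence torsion. You instead perform two successive filtered-colimit reductions to the case where $V$ is finite-dimensional and $G$ is finite, where the cobar complex has finite-dimensional terms and therefore trivially torsion cohomology. The genuinely interesting content of your approach is Step~1: the defining square \cref{F-rep1} forces $F$ to carry $G$-subrepresentations to $G$-subrepresentations (with $k$ perfect ensuring that $F(W_0)$ is again a $k$-subspace), so that any $F$-representation whose underlying $k_\sigma[F]$-module is torsion is a filtered colimit of finite-dimensional $F$-subrepresentations. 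The paper does not isolate this structural fact, although the sibling observation that the torsion part $V_{\mathrm{tor}}$ is an $F$-subrepresentation is exactly the crux of the paper's proof of the subsequent \cref{fixedprof}. What the paper's route buys is brevity, by delegating torsion-preservation to the auxiliary \cref{torsionkF}; what yours buys is an explicit finite-approximation lemma for torsion $F$-representations that sidesteps \cref{torsionkF} entirely.
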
{}

\begin{proof}
Since $G$ is profinite, it follows that $\cO(G)$ is a torsion $k_{\sigma}[F]$-module where the action of $F$ is induced by the absolute Frobenius on $\cO(G)$. Since $V$ is also a torsion $k_{\sigma}[F]$-module, using \cref{torsionkF} and faithfully flat descent along $\Spec k \to BG$, we see that $H^i(BG, \mathscr{V})$ is computed as cohomology of a complex of torsion $k_{\sigma}[F]$-modules. This gives the claim.
\end{proof}{}

\begin{proposition}\label{fixedprof}
Let $G$ be a profinite unipotent group scheme over a perfect field $k$. Let $V$ be a $k_{\sigma}[F]$-module with the structure of an $F$-representation of $G$. If $V^G$ is a torsion $k_{\sigma}[F]$-module, then $V$ must be a torsion $k_{\sigma}[F]$-module. 
\end{proposition}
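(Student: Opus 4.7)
The plan is to introduce a ``socle filtration'' adapted to both the $G$- and $F$-structures and then argue inductively. Define $V_0 \colonequals 0$ and recursively $V_i \colonequals \pi_i^{-1}\bigl((V/V_{i-1})^G\bigr)$, where $\pi_i \colon V \to V/V_{i-1}$ denotes the natural projection, so that $V_1 = V^G$ and $V_i/V_{i-1} = (V/V_{i-1})^G$. I would first verify by induction on $i$ that each $V_i$ is an $F$-subrepresentation of $V$: once $V_{i-1}$ is known to be $F$-stable, $F$ descends to a Frobenius-semilinear operator on $V/V_{i-1}$ that still satisfies \cref{F-rep1}, and the $G$-fixed subspace of any $F$-representation is automatically $F$-stable since $\rho(v) = 1 \otimes v$ immediately yields
\[ \rho(F(v)) = (\mathrm{Frob} \otimes F)(1 \otimes v) = 1 \otimes F(v). \]

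Next I would show that $V = \bigcup_i V_i$. Since every representation of an affine group scheme is the filtered union of its finite-dimensional subrepresentations, it suffices to check that each finite-dimensional $G$-subrepresentation $W \subset V$ lies in some $V_n$. Any such $W$ factors through a finite quotient $G \twoheadrightarrow G'$ by \cref{tautop}, and $G'$ is a finite unipotent group scheme. By the definition of unipotence, every nonzero representation of $G'$ has a nonzero fixed vector, so every simple representation of $G'$ is trivial; in particular $W$ admits a composition series with trivial subquotients, equivalently the analogously-defined socle filtration $U_i(W) \subset W$ reaches $W$ in finitely many steps. A straightforward induction then shows $U_i(W) \subset V_i$, so $W \subset V_n$ for some $n$.

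Finally, I would show by induction on $i$ that each $V_i$ is torsion as a $k_\sigma[F]$-module. The base case $V_1 = V^G$ is the hypothesis. For the inductive step, assume $V_{i-1}$ is torsion; then \cref{finitenessFmod} guarantees that $H^1(G, V_{i-1})$ is also torsion. The long exact sequence in group cohomology associated with the short exact sequence $0 \to V_{i-1} \to V \to V/V_{i-1} \to 0$ of $F$-representations produces
\[ V^G \longrightarrow (V/V_{i-1})^G \longrightarrow H^1(G, V_{i-1}) \]
as a sequence of $k_\sigma[F]$-modules, whose outer terms are both torsion, so $V_i/V_{i-1} = (V/V_{i-1})^G$ is torsion. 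Then $V_i$ itself is torsion as an extension of torsion $k_\sigma[F]$-modules, and passing to the union over $i$ shows that $V = \bigcup_i V_i$ is torsion.

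I expect the principal obstacle to be the inductive step in the last paragraph: the entire argument hinges on controlling $H^1(G, V_{i-1})$ via \cref{finitenessFmod}, and this is precisely where the profiniteness of $G$ enters in an essential way. Without such a cohomological finiteness input there is no reason why the $G$-fixed points of $V/V_{i-1}$ should inherit any torsion property from $V^G$. The exhaustion statement $V = \bigcup_i V_i$ in the second paragraph also uses profiniteness, but only through the more elementary fact \cref{tautop} that every finite-type quotient of $G$ is finite.
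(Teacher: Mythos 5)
Your proposal is correct, and it takes a genuinely different route from the paper's. The paper argues by contradiction: it considers the submodule $V_{\w{tor}} \subset V$ of $k_\sigma[F]$-torsion elements, proves by an explicit semilinear-algebra computation that $V_{\w{tor}}$ is an $F$-subrepresentation (a step that requires first passing to the perfection of the $F$-representation via \cref{perfFrep}, since the linear-independence argument used there needs the Hopf algebra $\cO(G)$ to be perfect), and then derives a contradiction from the portion $0 \to H^0(BG,\mathscr{V}_{\w{tor}}) \to H^0(BG,\mathscr{V}) \to H^0(BG,\mathscr{V}/\mathscr{V}_{\w{tor}}) \to H^1(BG,\mathscr{V}_{\w{tor}})$ of the long exact sequence, using \cref{finitenessFmod} for the outer terms, unipotence to see $H^0(BG,\mathscr{V}/\mathscr{V}_{\w{tor}}) \ne 0$, and \cref{eatsan1} to see that this $H^0$ is torsion-free. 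Your proof instead runs a direct induction up the socle filtration; the filtration is manifestly $G$-stable, and the observation that $V^G$ is $F$-stable is a one-line computation, so you avoid the delicate $V_{\w{tor}}$-stability step and the pass-to-perfection maneuver entirely. The inputs are ultimately the same — \cref{finitenessFmod} (now for $H^1$), \cref{tautop} for the exhaustion, unipotence for the finiteness of the socle filtration on finite-dimensional pieces, and \cref{eatsan1} implicitly — but your version trades a technical representation-theoretic verification for a cleaner filtration argument. The one point worth making explicit, since you invoke it twice, is that the class of torsion $k_\sigma[F]$-modules is closed under extensions; this follows from \cref{eatsan1} by observing that a free direct summand of a finitely generated submodule of the middle term would have to inject into the (torsion) quotient, but it deserves a sentence.
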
{}
\begin{proof}

By \cref{torsioncolimit}, in order to prove the statement we can pass to the perfection of the $F$-representation in the sense of \cref{perfFrep}. Therefore, we can without loss of generality assume that $G$ is a perfect group scheme. \vspace{2mm}

For the sake of contradiction, assume that $V$ is not $k_{\sigma}[F]$-torsion.
Let $V_{\w{tor}} \subset V$ be the $k_{\sigma}[F]$-submodule of torsion elements of $V$.
By construction, $V/V_{\w{tor}}$ is a nonzero torsion free $k_{\sigma}[F]$-module.
We claim that $V_{\w{tor}}$ is a $F$-subrepresentation of $G$.
\vspace{2mm}

Let $\rho \colon V \to \cO(G) \otimes_k V$ be the natural map associated with the $F$-representation.
Showing that $V_{\w{tor}}$ is an $F$-subrepresentation amounts to showing that $\rho (V_{\w{tor}}) \subseteq \cO(G) \otimes_k V_{\w{tor}}$.
To that end, let $v \in V_{\w{tor}}$ and $\rho(v) = \sum_{i \in I} g_i \otimes v_i$, where $g_i \in \cO(G)$, $v_i \in V$ and $I$ is a finite set. Without loss of generality, one can assume that the $g_i$'s are linearly independent. Since $V$ is an $F$-representation, the map $\rho$ is a map of $k_{\sigma}[F]$-modules, where the $k_{\sigma}[F]$-module structure on the target is given by tensor product (see \cref{tensorfrobmod} and \cref{F-representation}, \cref{F-rep1}). In particular, it preserves torsion elements. That is, the set $\left \{\sum_{i \in I} \w{Frob}^n (g_i) \otimes F^n(v_i) \suchthat n \in \mathbf{N} \right \} $ spans a finite-dimensional subspace of $\cO(G) \otimes_k V$, which we will denote by $P$. We claim that the span of the set $\left \{ F^n(v_i) \suchthat i \in I,\, n \in \mathbf{N} \right \} $, which we denote as $T$, must be a finite-dimensional subspace of $V$. This would imply that $v_i \in V_{\w{tor}}$, so that $V_{\w{tor}}$ is indeed an $F$-subrepresentation of $G$. \vspace{2mm}

Indeed, to see the above claim, let us use $W$ to denote the span of $\left \{\w{Frob}^n (g_i) \suchthat n \in \mathbf{N},\, i \in I \right \}$. Then there is a natural injection of vector spaces $P \to W \otimes_k T$. Let us assume on the contrary that $T$ is infinite-dimensional. Then there exists an $l \in I$ such that the set $\left \{ F^n (v_l) \suchthat n \in \mathbf{N}\right \}$ is linearly independent. Let $T_{l,n}$ be the span of $F^n (v_l)$ for a fixed $n$. Therefore, there exists a direct sum decomposition $T \simeq (\bigoplus_{n \in \mathbf N}T_{l,n}) \oplus T'$. This induces a projection map $P \to \bigoplus_{n \in \mathbf{N}}(W \otimes_k T_{l,n})$. Since $P$ is finite-dimensional, there is a fixed $n_0 \in \mathbf{N}$ such that the induced projection maps $\pi_{l,n} \colon P \to W \otimes_k T_{l,n} \simeq W$ are zero maps for all $n \ge n_0$. Let $u_{l,n} \colon T \to T_{l,n} \simeq k$ be the projection map induced by the direct sum decomposition of $T$ above. Then $\pi_{l,n} \left (\sum _{i \in I} \w{Frob}^n (g_i) \otimes F^n (v_i)\right ) = \sum_{i \in I} u_{l,n}(F^n(v_i)) \cdot \w{Frob}^n (g_i) \in W$. However, if $n \ge n_0$, we must have $\sum_{i \in I} u_{l,n}(F^n(v_i))\cdot \w{Frob}^n (g_i)=0$. Since the $g_i$'s are linearly independent and the group scheme $G$ is perfect, this implies that $u_{l,n}(F^n (v_i))=0$ for all $i \in I$. However, by construction, $u_{l,n}(F^n (v_l)) = 1$, which gives the desired contradiction and proves the claim. 
\vspace{2mm}

As discussed in \cref{F-repgeo}, the $F$-representations $V$ and $V_{\w{tor}}$ of $G$ correspond to quasi-coherent sheaves $\mathscr{V}$ and $\mathscr{V}_{\w{tor}}$ on $BG$.
We have a short exact sequence of quasi-coherent sheaves 
\[ 0 \to \mathscr{V}_{\w{tor}} \to \mathscr{V} \to \mathscr{V}/ \mathscr{V}_{\w{tor}} \to 0. \]
Since pullback along the natural cover $u \colon \Spec k \to BG$ is exact, we have $u^* (\mathscr{V}/\mathscr{V}_{\w{tor}}) \simeq V/V_{\w{tor}}$.
On cohomology, this induces an exact sequence of $k$-vector spaces
\[ 0 \to H^0 (BG, \mathscr{V}_{\w{tor}}) \to H^0 (BG, \mathscr{V}) \to H^0 (BG, \mathscr{V}/\mathscr{V}_{\w{tor}}) \to H^1 (BG, \mathscr{V}_{\w{tor}}). \]
Further, each of the maps is a map of $k_{\sigma}[F]$-modules. By \cref{finitenessFmod}, $H^0 (BG, \mathscr{V}_{\w{tor}})$ and $H^1 (BG, \mathscr{V}_{\w{tor}})$ are torsion $k_{\sigma}[F]$-modules. Note that since $G$ is unipotent, $H^0 (BG, \mathscr{V}/\mathscr{V}_{\w{tor}}) \ne 0$. Now, $H^0 (BG, \mathscr{V}/\mathscr{V}_{\w{tor}})$ is a nonzero $k_{\sigma}[F]$-submodule of $V/V_{\w{tor}}$ and therefore must be torsion free (\cref{eatsan1}). However, that implies that $V^G \simeq H^0 (BG, \mathscr{V})$ cannot be torsion, which contradicts our hypothesis. Thus, we are done.\end{proof}{}

\begin{example}
Note that \cref{fixedprof} fails if the group scheme $G$ is not assumed to be profinite. One can see this by taking $G= \GG_a$ and $V$ to be the regular representation of $\GG_a$ equipped with the structure of an $F$-representation. In this case, $V^G$ is $1$-dimensional and yet $V$ is not torsion as a $k_{\sigma}[F]$-module.
\end{example}{}

\subsection{Profiniteness of unipotent homotopy group schemes}\label{spri} 
In this section, we use our previous work to give a proof of the following result:

\begin{proposition}[Profiniteness theorem]\label{profiniteunipotent}
Let $(X,x)$ be a proper, cohomologically connected scheme over a field $k$ of characteristic $p>0$ equipped with a $k$-rational point $x \in X(k)$. Then the unipotent homotopy group schemes $\pi_i^{\mathrm{U}}(X)$ are profinite for all $i$.
\end{proposition}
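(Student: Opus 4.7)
The plan is to proceed by induction on $i$, simultaneously establishing that (a) $\pi_j^{\mathrm{U}}(X)$ is profinite for every $j \le i$, and (b) every cohomology group $H^j(\tau_{\le i}\mathbf{U}(X), \cO)$ is torsion as a $k_\sigma[F]$-module. Claim (a) combined with \cref{thmoftoen} and \cref{profinitenesschar} yields the theorem; the strengthened hypothesis (b) is what makes the induction close up. By \cref{piano} together with flat base change of $R\Gamma(X, \cO)$, we may and do assume $k$ is algebraically closed. The functoriality of $\mathbf{U}(\cdot)$ under the absolute Frobenius of $X$ equips all the cohomology groups in question and each $V_j \colonequals \Hom(\pi_j^{\mathrm{U}}(X), \GG_a)$ with compatible Frobenius semilinear operators, so that all the natural morphisms appearing below are $F$-equivariant.

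The base case $i = 0$ is immediate since $\tau_{\le 0}\mathbf{U}(X) = \ast$. In the inductive step, write $X_n \colonequals \tau_{\le n}\mathbf{U}(X)$ and consider the natural projection $g \colon X_i \to X_{i-1}$ with fiber $K(\pi_i^{\mathrm{U}}(X), i)$ (see \cref{postnikov}). The cohomology-and-base-change tools developed in \cref{sec2.3} combined with \cref{heartrep} identify each sheaf $\mathcal{H}^q(g_*\cO)$ on $X_{i-1}$ with a representation of $\pi_1^{\mathrm{U}}(X)$ whose underlying $k$-vector space is $H^q(K(\pi_i^{\mathrm{U}}(X), i), \cO)$, and these vanish for $0 < q < i$ by \cref{gait1}. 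The Leray spectral sequence
$$ E_2^{p,q} = H^p\bigl(X_{i-1}, \mathcal{H}^q(g_*\cO)\bigr) \Longrightarrow H^{p+q}(X_i, \cO) $$
therefore furnishes an $F$-equivariant transgression $d_{i+1} \colon V_i^{\pi_1^{\mathrm{U}}(X)} = E_2^{0, i} \to E_{i+1}^{i+1, 0} \subseteq H^{i+1}(X_{i-1}, \cO)$. Its kernel $E_\infty^{0, i}$ is a quotient of the finite-dimensional group $H^i(X_i, \cO) \simeq H^i(X, \cO)$ (\cref{postnikov}(ii)), while its image lies in the torsion module $H^{i+1}(X_{i-1}, \cO)$ by hypothesis (b). Thus $V_i^{\pi_1^{\mathrm{U}}(X)}$ is an extension of torsion by finite-dimensional, hence itself torsion. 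Applying \cref{fixedprof} to the profinite unipotent group scheme $\pi_1^{\mathrm{U}}(X)$ (hypothesis (a)) promotes this to torsion-ness of the full $F$-representation $V_i$, and \cref{profinitenesschar} then delivers profiniteness of $\pi_i^{\mathrm{U}}(X)$.

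For (b) at stage $i$, once $\pi_i^{\mathrm{U}}(X)$ is known to be profinite, each $H^q(K(\pi_i^{\mathrm{U}}(X), i), \cO)$ is a filtered colimit of the finite-dimensional groups $H^q(K(G_i^\alpha, i), \cO)$ with $G_i^\alpha$ running over the finite quotients of $\pi_i^{\mathrm{U}}(X)$ (\cref{finiteness}), and is in particular torsion as a $k_\sigma[F]$-module. Since $\pi_1^{\mathrm{U}}(X)$ is profinite unipotent, every such torsion $F$-representation admits an exhaustive filtration by finite-dimensional sub-$F$-representations whose graded pieces are trivial; combined with compatibility with filtered colimits (\cref{filco}), this reduces the torsion-ness of $H^p(X_{i-1}, \mathcal{H}^q(g_*\cO))$ to the torsion-ness of $H^p(X_{i-1}, \cO)$, which holds by induction. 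The spectral sequence then delivers torsion-ness of $H^{p+q}(X_i, \cO)$, closing the induction. The main obstacle in this last step is arranging the filtration by finite-dimensional sub-representations to be simultaneously $\pi_1^{\mathrm{U}}(X)$-stable and $F$-stable, so that the reduction from cohomology-with-coefficients to trivial-coefficient cohomology genuinely takes place in the category of $F$-modules; this Frobenius-module bookkeeping on higher stacks is what aligns the proof with the mod $p$ Riemann--Hilbert correspondence rather than with Artin--Mazur's simplicial methods.
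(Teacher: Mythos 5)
Your overall plan is essentially the paper's: strong induction up the Postnikov tower, $k_\sigma[F]$-torsion bookkeeping, the spectral sequence for $K(\pi_i^{\mathrm{U}}(X),i)\to\tau_{\le i}\UU(X)\to\tau_{\le i-1}\UU(X)$, and the passage from $V_i^{\pi_1^{\mathrm{U}}(X)}$ to $V_i$ via \cref{fixedprof} and \cref{profinitenesschar}. Your step~(a) is precisely the paper's contradiction argument read forwards, and is correct. You diverge genuinely only in step~(b). The paper delegates it to \cref{chicago12}, which is proven by a second, nested Postnikov induction whose base case (\cref{finitenessFmod}) observes that the descent complex computing $H^*(B\pi_1^{\mathrm{U}}(X),\mathscr V)$ has terms $\cO(\pi_1^{\mathrm{U}}(X))^{\otimes j}\otimes V$, all torsion because $\pi_1^{\mathrm{U}}(X)$ is profinite (\cref{torsionkF}); no trivial-graded filtration ever appears. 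You instead want to exhaust the torsion $F$-representation $\mathcal H^q(g_*\cO)$ by finite-dimensional, simultaneously $F$- and $\pi_1^{\mathrm{U}}(X)$-stable subspaces with trivial graded pieces and climb back to $H^p(X_{i-1},\cO)$ through long exact sequences.

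The simultaneous stability that you flag as the main obstacle is true, so the gap is fillable, but you should supply the argument rather than assert it. Given $v$, set $W_0:=\langle\pi_1^{\mathrm{U}}(X)\cdot v\rangle$, a finite-dimensional subrepresentation. Torsion-ness of the ambient module gives a finite-dimensional $F$-stable $U\supseteq W_0$, whence $W:=\sum_{n\ge 0}F^n(W_0)\subseteq U$ is finite-dimensional, $F$-stable by construction, and $\pi_1^{\mathrm{U}}(X)$-stable because the coaction square \cref{F-rep1} shows that $F$ carries subrepresentations to subrepresentations. The invariants filtration $0\subsetneq W^{\pi_1^{\mathrm{U}}(X)}\subsetneq\cdots\subsetneq W$ is then $F$-stable for the same reason and has trivial graded pieces, closing the reduction. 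Alternatively, simply cite \cref{chicago12}, which the paper proves once for exactly this purpose and which bypasses the filtration entirely.
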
{}

When $i=1$, \cref{profiniteunipotent} recovers a result of Nori \cite[Prop.~IV.3]{MR682517}.
Nori deduced the profiniteness of $\pi_1^{\mathrm U}(X)$ by using the isomorphism $H^1 (X, \cO) \simeq \Hom(\pi_1^{\mathrm U}(X), \GG_a)$ and the fact that $H^1(X, \cO)$ is finite-dimensional since $X$ is proper.
However, a similar isomorphism does not hold for $i>1$;
this presents difficulties in proving the profiniteness of $\pi_i^{\mathrm{U}}(X)$ for $i>1$ in a similar manner. We will overcome some of these difficulties via an elaborate spectral sequence argument coming from the Postnikov tower and using the theory of quasi-coherent sheaves on affine stacks developed in our paper.
\vspace{2mm}

Another key step in the proof of the profiniteness of $\pi_i^{\mathrm{U}}(X)$ for $i \ge 0$ is the realization that in positive characteristic, for a unipotent group scheme $G$, while $\Hom(G, \GG_a)$ being a finite-dimensional $k$-vector space is enough to deduce profiniteness of $G$ (as proven and used by Nori), it is not a \emph{necessary} requirement. In \cref{profinitenesschar}, we deduced a necessary and sufficient criterion for profiniteness in terms of torsion $k_{\sigma}[F]$-modules. This suggests one to aim for a more flexible statement using torsion $k_{\sigma}[F]$-modules rather than finite-dimensional $k$-vector spaces. Further, in order to approach \cref{profiniteunipotent} by using the Postnikov tower as described in the previous paragraph, one would run into controlling certain representations of $\pi_1^{\mathrm U}(X)$. This is where the notion of $F$-representations play a crucial role---the $\pi_1^{\mathrm U}(X)$-representations one needs to control are all naturally $F$-representations and can be controlled systematically by using \cref{fixedprof}. However, we will also need a generalization of the notion of $F$-representations in the context of ``$n$-gerbes.'' To that end, we make the following definitions.

\begin{definition}[Frobenius modules on $n$-gerbes]\label{ias1}
Let $Y$ be a pointed, connected, $n$-truncated stack over a perfect field $k$. Let $\w{Frob}_Y$ denote the absolute Frobenius on $Y$. Let $\mathscr{V}$ be a quasi-coherent sheaf on $Y$ equipped with the data of a morphism $T \colon \w{Frob}_Y^* \mathscr{V} \to \mathscr{V}$. These data will be called a \emph{Frobenius module} on $Y$.
\end{definition}{}

\begin{remark}
Note that as explained in \cref{F-repgeo}, a Frobenius module on $Y$ as above may be thought of as an ``$F$-representation" of the group stack $\Omega Y$.
\end{remark}{}

\begin{definition}
Let $Y$ be a pointed, connected, $n$-truncated stack over a perfect field $k$ and let $T \colon \w{Frob}_Y^* \mathscr{V} \to \mathscr{V}$ be a Frobenius module on $Y$. We will call this Frobenius module \textit{torsion} if it is a torsion $k_{\sigma}[F]$-module after pulling back along the morphism $u \colon \Spec k \to Y$.
\end{definition}{}

\begin{example}
If $Y$ is as above, then the structure sheaf $\cO$ on $Y$, naturally viewed as a Frobenius module, is an example of a torsion Frobenius module.
\end{example}{}

Our first lemma is a generalization of  \cref{finitenessFmod}, which will be used in the proof of \cref{profiniteunipotent}.

\begin{lemma}\label{chicago12}
Let $X$ be a pointed, connected $n$-truncated affine stack over a perfect field $k$ such that $\pi_i (X)$ is representable by a profinite group scheme for all $i$. Let $\mathscr{V}$ be a torsion Frobenius module on $Y$. Then $H^i (X, \mathscr{V})$ is a torsion $k_{\sigma}[F]$-module for each $i \ge 0$.
\end{lemma}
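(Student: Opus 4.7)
The plan is to induct on the truncation level $n$. For the base case $n = 1$, the hypotheses force $Y \simeq B\pi_1(Y)$ with $\pi_1(Y)$ a profinite unipotent group scheme; a Frobenius module on $BG$ is precisely an $F$-representation of $G$ as in \cref{F-representation}, the torsion hypothesis transfers directly, and \cref{finitenessFmod} yields the conclusion.

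For the inductive step, I would assume the result for $(n-1)$-truncated stacks and let $Y$ be $n$-truncated. The Postnikov fibration
\[ K(\pi_n(Y), n) \to Y \xrightarrow{f} \tau_{\le n-1}Y \]
produces a Leray-type spectral sequence
\[ E_2^{p,q} = H^p(\tau_{\le n-1}Y, \mathscr{H}^q(f_*\mathscr{V})) \Rightarrow H^{p+q}(Y, \mathscr{V}). \]
It would suffice to show that each $\mathscr{H}^q(f_*\mathscr{V})$ is a torsion Frobenius module on $\tau_{\le n-1}Y$: the inductive hypothesis applied to $\tau_{\le n-1}Y$ (which is $(n-1)$-truncated affine with profinite homotopy groups by \cref{postnikov}) would then give that every $E_2^{p,q}$ is torsion, and stability of torsion $k_{\sigma}[F]$-modules under extensions and subquotients delivers the conclusion. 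To verify the torsion property for $\mathscr{H}^q(f_*\mathscr{V})$, I would pull back along the basepoint $u \colon \Spec k \to \tau_{\le n-1}Y$ and identify $u^*\mathscr{H}^q(f_*\mathscr{V})$ with $H^q(K(\pi_n(Y), n), \mathscr{V}')$, where $\mathscr{V}'$ is the pullback of $\mathscr{V}$ to the fibre; by construction, $\mathscr{V}'$ is again a torsion Frobenius module.

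This reduces the main argument to a sub-claim: for a profinite unipotent commutative group scheme $G$ and a torsion Frobenius module $\mathscr{W}$ on $K(G, m)$, the groups $H^q(K(G, m), \mathscr{W})$ are torsion for all $q$. I would prove this by a parallel induction on $m$. The case $m = 1$ is \cref{finitenessFmod}. For $m > 1$, I would exploit the identification $K(G, m) \simeq BK(G, m-1)$ together with the Čech cover $\ast \to K(G, m)$, whose nerve has terms $K(G, m-1)^{\times p} \simeq K(G^{\oplus p}, m-1)$, each an $(m-1)$-truncated pointed connected affine stack with profinite homotopy groups and receiving a torsion Frobenius module by pullback from $\mathscr{W}$. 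The inner inductive hypothesis would then make the $E_1$-page of the associated Čech spectral sequence consist of torsion $k_{\sigma}[F]$-modules, yielding the sub-claim.

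The main obstacle I anticipate is securing the base change identification in the Postnikov step as a Frobenius-equivariant isomorphism, since the results in \cref{sec2.3} are formulated for the structure sheaf rather than arbitrary quasi-coherent coefficients. I would tackle this by invoking the description of $D_{\mathrm{qc}}$ on weakly affine stacks from \cref{makiman34}, together with the observation that $\pi_1(\tau_{\le n-1}Y)$ is profinite unipotent, which allows one to filter the relevant pushforward $u_*\cO$ by subquotients of the trivial representation and reduce the required projection formula to the trivial-coefficient case already handled in \cref{iy5}. Once this is in place, the Frobenius-equivariance of both spectral sequences is functorial bookkeeping.
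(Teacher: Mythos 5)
Your proof follows essentially the same route as the paper: induct on $n$ with base case from \cref{finitenessFmod}, use the Postnikov pullback square and its Leray-type spectral sequence over $\tau_{\le n-1}Y$, and reduce the remaining input to a separate induction showing that $H^q(K(G,m), \mathscr{W})$ is torsion via faithfully flat descent along $\Spec k \to K(G,m)$ (the paper handles this last step in a single sentence, citing \cref{finiteness}). Your additional discussion of the Frobenius-equivariance of the base-change identification and the appeal to \cref{makiman34} and \cref{iy5} fills in a technical point the paper leaves implicit, but the underlying strategy is the same.
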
{}

\begin{proof}
We argue by induction on $n$; the case $n=1$ follows from \cref{finitenessFmod}. Let us assume that $n \ge 2$ and the result is proven for all $(n-1)$-truncated stacks. 
The pullback diagram
\begin{center}
\begin{tikzcd}
K(\pi _n(X), n) \arrow[d, "q"] \arrow[r] & * \arrow[d]                                    \\
\tau_{\le n}X \arrow[r, "p"]               & \tau_{\le n-1}X                               
\end{tikzcd}
\end{center} 
gives an $E_2$-spectral sequence $E_2^{i,j} = H^i (\tau_{\le n-1} X, \mathscr{H}^j (K(\pi_n(X), n), q^*\mathscr{V})) \implies H^{i+j} (\tau_{\le n} X, \mathscr{V})$ that naturally lives in the abelian category of $k_{\sigma}[F]$-modules (by functoriality). 
Note that $\mathscr{H}^j (K(\pi_n(X), n), q^*\mathscr{V}))$ is naturally a Frobenius module on $\tau_{\le n-1}X$ whose pullback to the point is the $k_{\sigma}[F]$-module $H^j (K(\pi_n(X), n), q^* \mathscr{V})$. Further, $q^* \mathscr{V}$ is a torsion Frobenius module on $K(\pi_n(X), n)$. Therefore, by induction and the spectral sequence, it would be enough to prove that $H^j(K(\pi_n(X), n), \mathscr{V')}$ is a torsion $k_{\sigma}[F]$-module for any torsion Frobenius module $\mathscr{V'}$ on $K(\pi_n(X), n)$. However, that follows by applying faithfully flat descent along $\Spec k \to K(\pi_n(X), n)$ and induction on $n$; \textit{cf.}~\cref{finiteness}.
\end{proof}{}

\begin{proposition}\label{chicago16}
Let $k$ be a field of characteristic $p>0$. Let $X$ be a pointed, cohomologically connected higher stack over $k$ such that $H^i (X, \cO)$ is a torsion $k_{\sigma}[F]$-module for all $i \ge 0$. Then $\pi_i^{\mathrm{U}}(X)$ is profinite for all $i$.
\end{proposition}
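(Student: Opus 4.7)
The plan is to prove the statement by induction on $i$, after replacing $X$ with its unipotent homotopy type $\UU(X)$ so that we may work with a pointed connected affine stack having the same cohomology and the same unipotent homotopy group schemes. By \cref{profinitenesschar}, it suffices to show that $\Hom(\pi_i^{\mathrm U}(X), \GG_a)$ is a torsion $k_{\sigma}[F]$-module for each $i \ge 1$. The case $i=1$ follows directly from Hurewicz (\cref{hurewicz2}), which provides an isomorphism $\Hom(\pi_1^{\mathrm U}(X), \GG_a) \simeq H^1(X, \cO)$ that is manifestly torsion by hypothesis; moreover, $\pi_1^{\mathrm U}(X)$ is unipotent by \cref{thmoftoen}, so it is profinite unipotent.

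For the inductive step, assume $\pi_j^{\mathrm U}(X)$ is a profinite unipotent group scheme for every $j \le n$. The fibre sequence
$$ K(\pi_{n+1}^{\mathrm U}(X), n+1) \xrightarrow{q} \tau_{\le n+1}\UU(X) \xrightarrow{p} \tau_{\le n}\UU(X) $$
yields an $E_2$-spectral sequence
$$ E_2^{i,j} = H^i\bigl(\tau_{\le n}\UU(X), \mathscr H^j(K(\pi_{n+1}^{\mathrm U}(X), n+1), \cO)\bigr) \Longrightarrow H^{i+j}(\tau_{\le n+1}\UU(X), \cO) $$
in the abelian category of $k_{\sigma}[F]$-modules, by functoriality of the absolute Frobenius. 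By \cref{gait1}, $\mathscr H^j = 0$ for $0 < j < n+1$, while $\mathscr H^{n+1}$ is a Frobenius module on $\tau_{\le n}\UU(X)$ whose restriction along $\Spec k \to \tau_{\le n}\UU(X)$ is the $F$-representation $\Hom(\pi_{n+1}^{\mathrm U}(X), \GG_a)$ of $\pi_1^{\mathrm U}(X)$. Using \cref{heartrep} and \cref{chicago14} to compute global sections on $\tau_{\le 1}\tau_{\le n}\UU(X) \simeq B\pi_1^{\mathrm U}(X)$, we identify $E_2^{0, n+1} \simeq \Hom(\pi_{n+1}^{\mathrm U}(X), \GG_a)^{\pi_1^{\mathrm U}(X)}$ as $k_{\sigma}[F]$-modules.

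The vanishing of $\mathscr H^j$ for intermediate $j$ forces $E_r^{0, n+1} = E_2^{0, n+1}$ for all $r \le n+1$, so the only potentially nonzero differential out of this corner is $d_{n+2} \colon E_{n+2}^{0, n+1} \to E_{n+2}^{n+2, 0}$. Its target is a subquotient of $H^{n+2}(\tau_{\le n}\UU(X), \cO)$, which by induction and \cref{chicago12} (applied to $\mathscr V = \cO$, a torsion Frobenius module) is a torsion $k_{\sigma}[F]$-module. Meanwhile, $E_\infty^{0, n+1}$ sits as a subobject of the $(n+1)$-step of the filtration on $H^{n+1}(\tau_{\le n+1}\UU(X), \cO) \simeq H^{n+1}(X, \cO)$ (via \cref{postnikov}), which is torsion by hypothesis. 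Sandwiching $E_2^{0, n+1}$ between these two torsion modules in the exact sequence
$$ 0 \to E_\infty^{0, n+1} \to E_2^{0, n+1} \xrightarrow{d_{n+2}} E_{n+2}^{n+2, 0} $$
shows that the $\pi_1^{\mathrm U}(X)$-invariants of $\Hom(\pi_{n+1}^{\mathrm U}(X), \GG_a)$ form a torsion $k_{\sigma}[F]$-module.

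The main obstacle is precisely the upgrade from invariants to the full module: here is where \cref{fixedprof} enters decisively. Since $\pi_1^{\mathrm U}(X)$ is profinite unipotent by the inductive hypothesis, and $\Hom(\pi_{n+1}^{\mathrm U}(X), \GG_a)$ carries a natural $F$-representation structure whose invariants are torsion, \cref{fixedprof} ensures the entire module is torsion. Invoking \cref{profinitenesschar} completes the induction. To apply \cref{fixedprof} one needs $k$ perfect, so at the outset we pass to the perfect closure (or to $\bar k$) via \cref{piano}, which both preserves the torsion hypothesis on cohomology and reflects profiniteness of group schemes; one then descends the conclusion back to $k$.
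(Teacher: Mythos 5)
Your proof is correct and follows essentially the same strategy as the paper's: an induction on $i$ driven by the Postnikov spectral sequence for $\tau_{\le n+1}\UU(X) \to \tau_{\le n}\UU(X)$, with \cref{chicago12} controlling $H^*(\tau_{\le n}\UU(X),\cO)$ and \cref{fixedprof} upgrading torsion of the invariants to torsion of the full $F$-representation $\Hom(\pi_{n+1}^{\mathrm U}(X),\GG_a)$, before invoking \cref{profinitenesschar}. The only difference is presentational — you argue directly by sandwiching $E_2^{0,n+1}$ between torsion modules, while the paper runs the contrapositive as a proof by contradiction — and you have a harmless off-by-one ($E_r^{0,n+1}=E_2^{0,n+1}$ in fact holds for $r \le n+2$, not just $r \le n+1$) and a slight slip ($E_\infty^{0,n+1}$ is a subquotient rather than a subobject of $H^{n+1}$), neither of which affects the argument.
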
{}

\begin{proof}
One can assume that $X$ is a pointed connected affine stack. Further, by \cref{piano}, we may assume that the field $k$ is algebraically closed. By \cref{profinitenesschar}, we need to prove that $\Hom(\pi_i (X), \GG_a)$ is a torsion $k_{\sigma}[F]$-module. Note that $\pi_0(X) = \left \{* \right \}$. When $i=1$, we have $H^1 (X, \cO) \simeq \Hom(\pi_1(X), \GG_a)$. 
Therefore, $\pi_i(X)$ is indeed profinite for $i=1$. We will use strong induction to prove the assertion that for all $u \ge 1$, $\mathrm{Hom}(\pi_u(X), \GG_a)$ is a torsion $k_{\sigma}[F]$-module. To this end, let us fix $n \ge 1$ and assume that the former assertion has been proven for all $u \le n$. We will check the assertion for $u=n+1$. We first observe that $H^i (\tau_{\le n}X, \cO)$ is a torsion $k_{\sigma}[F]$-module for all $i$. Indeed, since $\pi_u(X)$ is unipotent for all $u \ge 1$, by \cref{profinitenesschar}, it follows that for $u \le n$, the group scheme $\pi_u(X) \simeq \pi_u(\tau_{\le n} X)$ is profinite. Therefore, by \cref{chicago12} we obtain that $H^i (\tau_{\le n}X, \cO)$ is a torsion $k_{\sigma}[F]$-module for all $i$.
\vspace{2mm}

We proceed to show that $\Hom(\pi_{n+1}(X), \GG_a)$ is a torsion $k_{\sigma}[F]$-module. The diagram
\begin{center}
 \begin{tikzcd}
K(\pi_{n+1}(X), n+1) \arrow[r] \arrow[d,"q"] & * \arrow[d]   \\
\tau_{\le n+1}X \arrow[ r,"p"]                 & \tau_{\le n}X
\end{tikzcd}
 \end{center}{}
gives an $E_2$-spectral sequence in the abelian category of $k_{\sigma}[F]$-modules (by functoriality of the Leray spectral sequence)
$$E_2^{i,j}= H^i (\tau_{\le n}X, \mathscr{H}^j (K(\pi_{n+1}(X), n+1), \cO)) \implies H^{i+j} (\tau_{\le n+1}X, \cO).$$ Here, $\mathscr{H}^j (K(\pi_{n+1}(X), n+1), \cO)$ is naturally a Frobenius module on $\tau_{\le n}X$ whose pullback to the point is the $k_{\sigma}[F]$-module $H^j (K(\pi_{n+1}(X), n+1),  \cO)$. By \cref{heartrep}, it follows that $\mathscr{H}^j (K(\pi_{n+1}(X), n+1), \cO)$ corresponds to an $F$-representation of the unipotent group scheme $\pi_1(X)$ whose underlying $k_{\sigma}[F]$-module is $H^j (K(\pi_{n+1}(X), n+1),  \cO)$; the latter $F$-representation can also be described as arising from the natural action of $\pi_1(X)$ on $K(\pi_{n+1}(X), n+1)$ via \cref{chicago13}. \vspace{2mm}

It would be enough to prove that $H^{n+1}(K(\pi_{n+1}(X), n+1), \cO)$ is a torsion $k_{\sigma}[F]$-module, since by \cref{gait1}, that would imply that $\Hom(\pi_{n+1}(X), \GG_a)$ is a torsion $k_{\sigma}[F]$-module as well. We assume on the contrary that $H^{n+1}(K(\pi_{n+1}(X), n+1), \cO)$ is not torsion. By \cref{chicago14} and \cref{fixedprof} regarding $F$-representations, that implies that $E_2^{0,n+1}$ is not torsion. Note that by \cref{gait1}, $\mathscr{H}^j (K(\pi_{n+1}(X), n+1), \cO))=0$ for $0<j \le n$, which implies that $E_r^{i,j}=0$ for $0<j \le n$. 
Also, $E_r^{i,j}=0$ for $i<0$ or $j<0$. This implies that $E_r ^{0,n+1} = E_2^{0, n+1}$ for $2 \le r \le n+2$. 
On the $(n+2)$-nd page of the spectral sequence, we have a potentially nonzero differential $$ E_{n+2}^{0, n+1} \to E_{n+2}^{n+2, 0}.$$
Note that $E_{2}^{n+2, 0}= H^{n+2}(\tau_{\le n} X, \cO)$ is a torsion $k_{\sigma}[F]$-module, since $H^i (\tau_{\le n}X, \cO)$ is a torsion $k_{\sigma}[F]$-module for all $i$. Therefore, $E_{n+2}^{n+2,0}$ is torsion as well.
This implies that if one assumes that $E_2^{0,n+1}= E_{n+2}^{0,n+1}$ is not a torsion $k_{\sigma}[F]$-module, then $E_{n+3}^{0,n+1}$ would not be a torsion $k_{\sigma}[F]$-module either. We note that $E_{n+3}^{0,n+1} = E_{\infty}^{0,n+1}$ and so the latter term would also not be a torsion $k_{\sigma}[F]$-module. By \cref{postnikov}, it follows that $H^{n+1}(\tau_{\le n+1}X, \cO) \simeq H^{n+1}(X, \cO)$ and is therefore torsion by assumption. Since $E_{\infty}^{0,n+1}$ is a subquotient of the torsion $k_{\sigma}[F]$-module $H^{n+1}(\tau_{\le n+1}X, \cO)$, we reach a contradiction which finishes the proof.
\end{proof}{}

\begin{proof}[Proof of \cref{profiniteunipotent}]{}
Follows from \cref{chicago16}.
\end{proof}

\subsection{Recovering the \texorpdfstring{$p$}{p}-adic \'etale homotopy groups}\label{posto2}

In this section, we finally address the question of recovering the $p$-adic \'etale homotopy groups defined by Artin--Mazur \cite{etalehomo} from the unipotent homotopy group schemes introduced in our paper by proving the unipotent-\'etale comparison theorem (see \cref{unipotent-etale-comparison}). 
In order to do so, we give a different way to define the $p$-adic \'etale homotopy groups in \cref{defetalehom} and compare
them with the definition due to Artin--Mazur involving pro-objects.
\vspace{2mm}

To this end, let us fix a proper, cohomologically connected, pointed scheme $(X,x)$ over an algebraically closed field $k$ of characteristic $p > 0$.
 Let $\varphi$ denote the Frobenius automorphism of $k$.
 In this setup, we can consider the \'etale cohomology $R\Gamma_{\et}(X, \FF_p) \xrightarrow{x^*} R\Gamma_\et(\Spec k,\FF_p) \simeq \FF_p$ which can be viewed as an object of the $\infty$-category $(\mathrm{DAlg}^{\mathrm{ccn}}_{\mathbf{F}_p})_{/\mathbf{F}_p}$. We will omit the base point from the notation if this does not cause confusion.
 \begin{proposition}\label{discrete-etale-homotopy}
 In the above setup, the group schemes $\pi^{\mathrm U}_i(\Spec\,R\Gamma_{\et}(X, \FF_p) \otimes_{\FF_p} k)$ are pro-\'etale.
 \end{proposition}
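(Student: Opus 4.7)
The plan is to combine the profiniteness theorem \cref{chicago16} with the Frobenius characterization of pro-\'etaleness from \cref{visa}, exploiting that $R \colonequals R\Gamma_\et(X, \FF_p) \otimes_{\FF_p} k$ is base-changed from an $\FF_p$-algebra. Set $Y \colonequals \Spec R$, pointed via the augmentation $R \to k$ induced by $x$. Since $X$ is proper, each $H^i_\et(X, \FF_p)$ is a finite-dimensional $\FF_p$-vector space, so each $H^i(R) \simeq H^i_\et(X, \FF_p) \otimes_{\FF_p} k$ is finite-dimensional over $k$, and in particular torsion as a $k_\sigma[F]$-module. Cohomological connectedness of $X$ together with properness forces $X$ to be geometrically connected, so $H^0_\et(X, \FF_p) = \FF_p$ and thus $H^0(R) = k$. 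Applying \cref{chicago16} then yields that each $\pi_i^{\mathrm U}(Y)$ is a profinite unipotent group scheme over $k$.

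To upgrade profiniteness to pro-\'etaleness, I would show that the relative Frobenius $F_{\pi_i^{\mathrm U}(Y)/k}$ is an isomorphism, which by \cref{visa} (together with the fact that on perfect $k$ it is equivalent to absolute Frobenius being bijective) is sufficient. To that end, I would analyze the absolute Frobenius of $R$. Writing $R_0 \colonequals R\Gamma_\et(X, \FF_p)$, so that $R = R_0 \otimes_{\FF_p} k$, the absolute Frobenius decomposes as $F_R = F_{R_0} \otimes \varphi$. The key observation is that $F_{R_0}$ acts as the identity on each $H^i(R_0) = H^i_\et(X, \FF_p)$: in any cosimplicial model computing \'etale cohomology with $\FF_p$-coefficients, cochains take values in $\FF_p$, and by Fermat's little theorem $c^p = c$ pointwise for any $\FF_p$-valued cochain $c$, so $F_{R_0}$ is literally the identity on cochains and hence on cohomology. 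Consequently, $F_R$ acts on each $H^i(R)$ as $\id \otimes \varphi$, which is a bijection of $\FF_p$-vector spaces.

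Finally, since $F_R$ is an isomorphism on every cohomology group, it is an equivalence in $\mathrm{DAlg}^{\mathrm{ccn}}_{\FF_p}$, so the absolute Frobenius $F_Y \colon Y \to Y$ is an equivalence of affine $\FF_p$-stacks. Factoring $F_Y$ as $Y \xrightarrow{F_{Y/k}} Y^{(p)} \to Y$, where the second map is the base change of $\varphi_k$ (and hence an isomorphism), we conclude that the relative Frobenius $F_{Y/k}$ is an equivalence of pointed connected affine $k$-stacks. By the functoriality of $\pi_i^{\mathrm U}$ and its compatibility with Frobenius twists, $\pi_i^{\mathrm U}(F_{Y/k})$ coincides with $F_{\pi_i^{\mathrm U}(Y)/k}$, which is therefore an isomorphism of $k$-group schemes. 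By \cref{visa}, each profinite group scheme $\pi_i^{\mathrm U}(Y)$ is pro-\'etale. The main obstacle in the plan is the Fermat-style identity $F_{R_0} = \id$ on cohomology; once this is justified at the cosimplicial level, the remaining steps are formal consequences of functoriality.
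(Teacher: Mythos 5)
Your proof is correct and follows the same overall strategy as the paper's: apply \cref{chicago16} to obtain profiniteness (your verification of its hypotheses, via finiteness of $H^i_\et(X,\FF_p)$ and cohomological connectedness, is exactly right), then upgrade to pro-\'etaleness via the characterization of \cref{visa} in terms of Frobenius being bijective. The one place you diverge from the paper is in how you justify that the Frobenius on $R_0 = R\Gamma_\et(X,\FF_p)$ is (equivalent to) the identity. The paper appeals to the external geometric input that the absolute Frobenius on $X$ acts as the identity on \'etale cohomology with constant coefficients (citing Stacks Tag~03SN), whereas you give a direct algebraic argument: in a hypercover/\v{C}ech model the terms of the cosimplicial $\FF_p$-algebra are algebras of locally constant $\FF_p$-valued functions, on which $c \mapsto c^p$ is literally the identity by Fermat, and then naturality of Frobenius in $\mathrm{DAlg}^{\mathrm{ccn}}_{\FF_p}$ propagates this to the equivalence class. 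Your route is more self-contained and also makes the relative-versus-absolute Frobenius factorization (through the $\varphi_k$-twist) fully explicit, which the paper elides. One small imprecision to flag: the phrase ``in any cosimplicial model'' is too strong --- a generic fibrant replacement need not have $\FF_p$-valued terms. What you really need is that \emph{some} model (e.g.\ the \v{C}ech complex of an \'etale hypercover, or the Godement resolution) has levels that are $\FF_p$-subalgebras of products of copies of $\FF_p$, and then the naturality of the Frobenius endomorphism of the identity functor transports the conclusion to every equivalent object. With that adjustment the argument is sound and can be substituted for the paper's citation of Tag~03SN.
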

 \begin{proof}
 By \cref{chicago16}, the group schemes $\pi^{\mathrm U}_i(\Spec\,R\Gamma_{\et}(X, \FF_p) \otimes_{\FF_p} k)$ are profinite. 
 By \cref{bhattob}, it is therefore enough to prove that the absolute Frobenius on them is an isomorphism. 
 This absolute Frobenius is induced from the morphism $R\Gamma_\et(X,\FF_p) \otimes_{\FF_p} \varphi^*k \to R\Gamma_\et(X,\FF_p) \otimes_{\FF_p} k$ which is further induced by the absolute Frobenius on $X$. However, by \cite[\href{https://stacks.math.columbia.edu/tag/03SN}{Tag~03SN}]{stacks}), the Frobenius morphism $R\Gamma_\et(X,\FF_p) \otimes_{\FF_p} \varphi^*k \to R\Gamma_\et(X,\FF_p) \otimes_{\FF_p} k$ is an isomorphism.
 \end{proof}
 In light of \cref{discrete-etale-homotopy} and the natural isomorphism $R\Gamma_{\et}(X, \FF_p) \otimes_{\FF_p} k \xrightarrow[]{\sim} R\Gamma_{\et} (X, k)$, we can make the following definition:

 \begin{definition}[$p$-adic \'etale homotopy group schemes]\label{defetalehom}
Let $(X, x)$ be a proper, cohomologically connected, pointed scheme over $k$. We define the $i$-th \emph{$p$-adic \'etale homotopy group scheme} of $X$ to be the group scheme
 \[ \pi^\et_i(X)_p \colonequals \pi_i(\Spec\,R\Gamma_{\et}(X,k)). \]
 \end{definition}

The theory of \'etale homotopy groups was originally introduced by Artin--Mazur in \cite{etalehomo}. Since \cref{defetalehom} defines $p$-adic \'etale homotopy group (schemes) in a different way, we now discuss the compatibility with the classical notion.
 \vspace{2mm}

 Recall that for a locally noetherian scheme $X$ over $k$, Artin--Mazur construct a pro-object in the homotopy category of $\mathcal{S}$.
 In fact, using Friedlander's construction \cite{etalfry}, this pro-object can be refined to a pro-object in the $\infty$-category $\mathcal{S}$.
 We denote this pro-object by $\Et(X) \in \Pro(\mathcal{S})$ and call it the \emph{\'etale homotopy type} of $X$.
 The singular cochain functor (see \cref{cold19}) $C^*( \,\cdot\, ,\FF_p) \colon \mathcal{S} \to (\mathrm{DAlg}^{\mathrm{ccn}}_{\FF_p})^\op$ naturally extends to a functor $C^*( \,\cdot\, ,\FF_p) \colon \Pro(\mathcal{S}) \to (\mathrm{DAlg}^{\mathrm{ccn}}_{\FF_p})^\op$; concretely, for a pro-object $(Y_i)_{i \in I}$, it is given by $C^*\bigl((Y_i),\FF_p) = \varinjlim_{i \in I} C^*(Y_i,\FF_p)$.
 For the \'etale homotopy type, we have a natural isomorphism $C^*(\Et(X), \FF_p) \simeq R\Gamma_{\et}(X, \FF_p)$.
 \vspace{2mm}

 Next, we consider the pro-$p$-finite completion $\Et(X)^\wedge_p$ of $\Et(X)$, which can be defined as follows.
 Let $\mathcal{S}^{p\mhyphen\mathrm{fc}}$ denote the category of $p$-finite spaces (see, e.g., \cite[Def.~2.4.1]{lurie2011derived}).
 The natural inclusion functor $\mathcal{S}^{p\mhyphen\mathrm{fc}} \hookrightarrow \mathcal{S}$ induces a functor $\Pro(\mathcal{S}^{p\mhyphen\mathrm{fc}}) \to \Pro(\mathcal{S})$ which admits a left adjoint $(\,\cdot\,)^\wedge_p \colon \Pro(\mathcal{S}) \to \Pro(\mathcal{S}^{p\mhyphen\mathrm{fc}})$, called pro-$p$-finite completion.
 We call the pro-$p$-finite completion $\Et(X)^\wedge_p$ of $\Et(X)$ the \emph{$p$-adic \'etale homotopy type}.
 The pro-$p$-finite group $\pi_i ^{\et,\, \mathrm{AM}} (X)_p \colonequals \pi_i (\Et(X)^\wedge_p)$ will be called the $i$-th \emph{Artin--Mazur $p$-adic \'etale homotopy group} of $X$.
 \vspace{2mm}

 We recall that the category of profinite groups embeds fully faithfully into the category of affine group schemes over $k$ (\textit{cf}.~\cref{compare}). Therefore, one may attach an affine group scheme to the profinite group $\pi_i ^{\et,\, \mathrm{AM}} (X)_p$. Finally, we are ready to compare $\pi_i^{\et} (X)_p$ and $\pi_i^{\et,\, \mathrm{AM}}(X)_p$.
 \begin{proposition}\label{compareetaleuni}
Let $X$ be a proper, cohomologically connected, pointed scheme over an algebraically closed field $k$ of characteristic $p>0$.
Then the $p$-adic \'etale homotopy group scheme $\pi_i^{\et}(X)_p$ from \cref{defetalehom} is the affine group scheme corresponding to the pro-$p$-finite group $\pi_i ^{\et,\, \mathrm{AM}}(X)_p$.
 \end{proposition}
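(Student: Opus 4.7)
The plan is to realize $\Spec R\Gamma_\et(X,k)$ as a cofiltered limit of unipotent homotopy types attached to $p$-finite spaces and then commute $\pi_i^{\mathrm U}$ with this limit by means of a Milnor sequence. First, choose a cofiltered diagram $(Y_\alpha)_{\alpha \in A}$ of pointed connected $p$-finite spaces whose limit presents $\Et(X)^\wedge_p$. Since $C^*(\,\cdot\,,\FF_p)$ sends pro-finite limits to filtered colimits, the isomorphism $R\Gamma_\et(X,\FF_p) \simeq C^*(\Et(X),\FF_p)$ combined with base change to $k$ yields $R\Gamma_\et(X,k) \simeq \varinjlim_\alpha C^*(Y_\alpha,k)$ in $\mathrm{DAlg}^{\mathrm{ccn}}_k$. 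Applying $\Spec$ (which is a right adjoint, see \cref{cope}) then gives the identification $\Spec R\Gamma_\et(X,k) \simeq \varprojlim_\alpha \mathbf{U}(\underline{Y_\alpha})$ in the category of affine stacks.

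Next, I would identify $\pi_i^{\mathrm U}(\mathbf{U}(\underline{Y_\alpha}))$ with the finite \'etale group scheme $S(\pi_i(Y_\alpha))$ attached to the finite $p$-group $\pi_i(Y_\alpha)$. For $i=1$, this is essentially \cref{prop:Nori-affine-pi1} applied to the (nilpotent) fundamental group of the $p$-finite space $Y_\alpha$, together with the remark that the unipotent completion of a nilpotent finite $p$-group over $k$ is just its associated \'etale group scheme (use \cref{maincorro} and the fact that group-algebra cobar for a finite $p$-group over $\FF_p$ recovers the group). For $i \ge 2$, I would argue by induction on the Postnikov tower (\cref{postnikov}) of $Y_\alpha$, using \cref{hurewicz} in combination with the cohomological base-change results of \cref{sec2.3} applied to the fibre sequence $K(\pi_n(Y_\alpha),n) \to \tau_{\le n}Y_\alpha \to \tau_{\le n-1}Y_\alpha$. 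This is the $p$-adic/nilpotent reflection of Mandell's theorem: because each $Y_\alpha$ is nilpotent and $p$-finite, its $k$-cochains reconstruct it faithfully and each homotopy group appears as the expected unipotent homotopy group scheme.

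To pass to the limit, I would apply the Milnor-type sequence for homotopy sheaves in the replete $\infty$-topos $\Shv(k)$ established in \cite{MR100} to the tower $(\mathbf{U}(\underline{Y_\alpha}))_{\alpha \in A}$, obtaining an exact sequence
\[ 0 \to {\varprojlim_\alpha}^{\!1}\, \pi_{i+1}^{\mathrm U}(\mathbf{U}(\underline{Y_\alpha})) \to \pi_i^{\mathrm U}(\Spec R\Gamma_\et(X,k)) \to \varprojlim_\alpha \pi_i^{\mathrm U}(\mathbf{U}(\underline{Y_\alpha})) \to 0. \]
By the previous step, each $\pi_{i+1}^{\mathrm U}(\mathbf{U}(\underline{Y_\alpha}))$ is a finite \'etale group scheme, so the tower is automatically Mittag--Leffler and the $\varprojlim^1$ vanishes; the remaining $\varprojlim_\alpha S(\pi_i(Y_\alpha))$ coincides with $S(\varprojlim_\alpha \pi_i(Y_\alpha)) = S(\pi_i^{\et,\mathrm{AM}}(X)_p)$ since $S$ embeds the category of profinite sets fully faithfully into affine schemes (see \cref{compare}).

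The main obstacle is the identification $\pi_i^{\mathrm U}(\mathbf{U}(\underline{Y})) \simeq S(\pi_i(Y))$ for a general (not necessarily simply connected) $p$-finite space $Y$. In the simply connected case, this is a direct consequence of $p$-adic homotopy theory, but for general nilpotent $p$-finite $Y$ one must either invoke a nilpotent version of Mandell's theorem or, more in the spirit of the paper, climb up the Postnikov tower and apply the base-change machinery of \cref{sec2.3} (to handle the fibre squares $K(\pi_n(Y),n) \to \tau_{\le n}Y \to \tau_{\le n-1}Y$) together with \cref{hurewicz}. A secondary, but smaller, technical point is ensuring that the Milnor sequence is formulated correctly in the category of pro-\'etale group schemes---here \cref{discrete-etale-homotopy} guarantees that everything in sight is pro-\'etale, and the profiniteness results of \cref{posto4} (in particular the equivalence between profinite group schemes and pro-objects in finite group schemes, \cref{proobj}) allow us to freely pass between the homotopical and algebro-geometric formulations of the inverse limit.
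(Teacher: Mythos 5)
Your overall architecture coincides with the paper's: identify $R\Gamma_{\et}(X,k)$ with $C^*(\Et(X)^\wedge_p,k)$, present the pro-$p$-finite completion as a cofiltered system of $p$-finite spaces, deduce the homotopy group schemes of each $\mathbf{U}(\underline{Y_\alpha})$, and then pass to the limit via a Milnor sequence whose $\varprojlim^1$ terms vanish for finiteness reasons. The crucial difference is in your third step. The paper does not attempt to recover $\pi_i^{\mathrm U}(\mathbf{U}(\underline{Y}))\simeq S(\pi_i(Y))$ for a $p$-finite space $Y$ by climbing the Postnikov tower; it simply invokes To\"en's theorem \cite[Thm.~2.5.3]{Toe}, which is precisely the statement you have in mind (the $p$-adic homotopy theory of nilpotent $p$-finite spaces packaged in the language of affine stacks). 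Because of this, the paper's proof is essentially a two-line reduction, whereas you are sketching a proof of To\"en's theorem from scratch.

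The gap is therefore concentrated in your proposed Postnikov induction, and you rightly flag it as the main obstacle, but I want to be concrete about why the sketch as written does not go through. To apply the Eilenberg--Moore base change results of \cref{sec2.3} to the square $K(\pi_n(Y),n) \to \tau_{\le n}Y \to \tau_{\le n-1}Y$, you would need $\underline{\tau_{\le n-1}Y}$ (or rather its affinization) to satisfy the hypotheses of \cref{iy3}/\cref{iy5}/\cref{iy10}: one of the stacks must be pointed connected affine and the other weakly affine, or the pushforward of $\cO$ must have finite-dimensional cohomology sheaves as $\pi_1$-representations. Verifying either condition for the constant stack $\underline{\tau_{\le n-1}Y}$---which is neither a priori affine nor manifestly weakly affine in the sense of \cref{weaklyaffine}---requires a genuine argument (for instance, finding a simplicial affine scheme presentation of the constant stack associated to a $\pi$-finite space, and showing the relevant representations of the finite \'etale $\pi_1$ are finite-dimensional). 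These verifications feel close to circular: the structure of $\mathbf{U}(\underline{\tau_{\le n-1}Y})$ is precisely what you are trying to establish. One can certainly repair this by, say, first reducing to simply connected $Y$ via the universal cover and then invoking the simply connected case together with descent, but none of this is free. Citing To\"en's theorem, as the paper does, short-circuits all of it. The remaining steps of your proposal (the cochain identification and the Milnor argument) are correct and match the paper.
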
{}
 
 \begin{proof}
 The proof follows by an application of $p$-adic homotopy theory.
 By the exactness of filtered colimits, we have 
 \[ H^i (\Et(X), \mathbf{F}_p) \simeq \pi_0 \mathrm{Map}_{\Pro(\mathcal{S})}(\Et(X), K (\mathbf{Z}/p \mathbf{Z}, i)). \]
 Since the Eilenberg--MacLane space $K (\mathbf{Z}/p \mathbf{Z}, i)$ is $p$-finite, the universal property of pro-$p$-finite completion yields a natural isomorphism
 \[ C^* (\Et(X)^\wedge_p, \mathbf{F}_p) \simeq C^* (\Et(X), \mathbf{F}_p) \simeq R\Gamma_{\et}(X, \mathbf{F}_p). \]
 Therefore, it follows that $$C^* (\Et(X)^\wedge_p, k) \simeq C^* (\Et(X)^\wedge_p, \mathbf{F}_p) \otimes_{\mathbf{F}_p}k \simeq R\Gamma_{\et} (X,k).$$ Since $\Et(X)^\wedge_p$ is pro-$p$-finite, the claim now follows from \cite[Thm.~2.5.3]{Toe} and the Milnor sequences from \cite[Prop.~3.22]{MR100}.
 \end{proof}{}
 
 \begin{remark}[$p$-adic homotopy theory]\label{p-adichomotopytheory}
 Let $k$ be an algebraically closed field of characteristic $p>0$. 
 Then as a consequence of $p$-adic homotopy theory (\textit{cf.}~\cite{MR1243609,Mandell,lurie2011derived}),
 the functor
 \[ \Pro(\mathcal{S}^{p\mhyphen\mathrm{fc}}) \to (\mathrm{DAlg}^{\mathrm{ccn}}_k)^{\op}, \quad Y \mapsto C^* (Y, \mathbf{F}_p) \otimes_{\mathbf{F}_p} k \]
 is fully faithful.
 Indeed, the fact that the functor $\mathcal{S}^{p\mhyphen\mathrm{fc}} \to (\mathrm{DAlg}^{\mathrm{ccn}}_k)^{\op}$ is fully faithful follows from \cite[Thm.~2.5.1]{Toe}.
 To show the full faithfulness of $\Pro(\mathcal{S}^{p\mhyphen\mathrm{fc}}) \to (\mathrm{DAlg}^{\mathrm{ccn}}_k)^{\op}$, it would therefore be enough to show that for a $p$-finite space $X$, the object $C^*(X, k)$ is a compact object of $\mathrm{DAlg}^{\mathrm{ccn}}_k$.
 By the technique used in the proof of \cite[Lemma 2.5.11]{lurie2011derived}, it is enough to prove the claim when $X = K(\mathbf{Z}/p \mathbf Z, n)$. 
 In this case, one can use the Artin--Schreier sequence $0 \to \mathbf{Z}/p \mathbf{Z} \to \GG_a \xrightarrow[]{x \mapsto x^p -x} \GG_a \to 0$, which induces a fibre sequence of affine stacks and thus an isomorphism $C^* (K(\mathbf{Z}/p \mathbf{Z},n), k) \simeq k \coprod_{R\Gamma(K(\GG_a, n), \mathscr{ O})} R\Gamma(K(\GG_a, n), \mathscr{ O})$ in $\mathrm{DAlg}^{\mathrm{ccn}}_k$. However, $R\Gamma(K(\GG_a, n)) \simeq \Sym_k k[-n]$, which is a compact object. 
 This implies that $C^* (K(\mathbf{Z}/p \mathbf{Z},n), k)$ is also a compact object, as desired.
 \end{remark}{}

\begin{remark}
Note that there is a natural functor from $\Pro(\mathcal{S}^{p\mhyphen\mathrm{fc}}) \to \mathcal{S}$ which sends an object $Y$ to $\mathrm{Map}_{\Pro(\mathcal{S})}(*,Y)$.
This functor is not fully faithful unless one restricts to certain simply connected pro-$p$-finite spaces (see, e.g., \cite[Thm.~3.3.3]{lurie2011derived}).
Roughly speaking, the latter functor sends a pro-$p$-finite space $(Y_i)_{i \in I} \in \Pro(\mathcal{S}^{p\mhyphen\mathrm{fc}})$ to $\varprojlim_{i \in I} Y_i \in \mathcal{S}$; however, it does not remember the extra information that $\varprojlim_{i \in I} Y_i$ was an inverse limit of $p$-finite spaces. In some sense, this is analogous to forgetting the inverse limit topology on a profinite set and merely viewing it as a set. We point out that the notion of ``condensed anima" or ``pyknotic spaces" resolves this issue: 
the natural functor from $\Pro(\mathcal{S}^{p\mhyphen\mathrm{fc}})$ to condensed anima is fully faithful;
see \cite[Ex.~3.3.10]{pyknotic}.
One may therefore think of $\Et(X)^\wedge_p$ of a scheme $X$ as a condensed anima in a lossless manner.
\end{remark}{}

Now we will show that the unipotent homotopy group schemes recover the theory of $p$-adic \'etale homotopy groups as introduced by Artin--Mazur (see the discussion after \cref{defetalehom}).
In light of \cref{compareetaleuni}, it would be enough to recover $\pi_i^{\et}(X)_p$ from the unipotent homotopy group schemes $\pi^{\mathrm U}_i(X)$. To this end, we will begin by constructing natural maps $\pi^{\mathrm U}_i(X) \to \pi_i^{\et}(X)_p$ for each $i \ge 0$, which we will call \'etale comparison maps. In \cref{unipotent-etale-comparison}, we will show that one can recover $\pi_i^{\et}(X)_p$ from $\pi^{\mathrm U}_i(X)$ via these \'etale comparison maps in a precise manner.
\begin{construction}\label{etale-comparison}
The inclusion of \'etale sheaves $\FF_p \hookrightarrow \cO$ induces a map $R\Gamma_\et(X, \FF_p ) \to R\Gamma(X,\cO)$ in $\mathrm{DAlg}^{\mathrm{ccn}}_k$. Therefore, by adjunction, we obtain a map
\[ R\Gamma_\et(X,k) \simeq R\Gamma_\et(X,\FF_p)\otimes_{\FF_p}k \to R\Gamma(X,\cO) \]
in $\mathrm{DAlg}^{\mathrm{ccn}}_k$, or equivalently, a map $\Spec\,R\Gamma(X,\cO) \to \Spec\,R\Gamma_\et(X,k)$ of affine stacks over $k$.
Therefore, we get natural \emph{\'etale comparison maps}
\[ \pi^{\mathrm U}_i(X) \simeq \pi_i(\Spec\,R\Gamma(X,\cO)) \to \pi_i(\Spec\,R\Gamma_\et(X,k)) \simeq \pi^\et_i(X)_p \]
for all $i$.
\end{construction}
\begin{proposition}[Unipotent-\'etale comparison theorem]\label{unipotent-etale-comparison}
Let $(X,x)$ be a proper, cohomologically connected, pointed scheme over an algebraically closed field $k$ of characteristic $p > 0$. Then the \'etale comparison maps $\pi^{\mathrm U}_i(X) \to \pi^\et_i(X)_p$ from \cref{etale-comparison} identify $\pi^\et_i(X)_p$ with the maximal pro-\'etale quotient of $\pi^{\mathrm U}_i(X)$ from \cref{maximal-proetale} for all $i$.
\end{proposition}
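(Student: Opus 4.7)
The plan is to exploit the Artin--Schreier sequence together with the Frobenius semilinear machinery from \cref{posto3} to identify the \'etale unipotent homotopy type $\mathbf{U}^\et(X) \colonequals \Spec R\Gamma_{\et}(X,k)$ with the Frobenius perfection $\varprojlim_F \mathbf{U}(X)$, and then to invoke a Milnor-type sequence in the replete $\infty$-topos $\Shv(k)^\wedge$ (from \cite{MR100}) to read off the homotopy group schemes. First, I would apply $R\Gamma_\et(X,\,\cdot\,)$ to the Artin--Schreier sequence $0 \to \FF_p \to \cO \xrightarrow{F-\id} \cO \to 0$ to get a fibre sequence in $\mathrm{DAlg}^{\mathrm{ccn}}_{\FF_p}$ exhibiting $R\Gamma_\et(X,\FF_p) \simeq R\Gamma(X,\cO)^{F=\id}$. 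Since $X$ is proper and cohomologically connected, each $H^i(X,\cO)$ is finite-dimensional over $k$ and in particular torsion as a $k_\sigma[F]$-module, so \cref{jacket1} yields
\[ R\Gamma_\et(X,k) \simeq R\Gamma_\et(X,\FF_p) \otimes_{\FF_p} k \xrightarrow{\sim} R\Gamma(X,\cO)_\perf. \]
Applying $\Spec$ and using that it converts the colimit $\varinjlim_F$ in $\mathrm{DAlg}^{\mathrm{ccn}}_k$ to the limit $\varprojlim_F$ in $\Shv(k)^\wedge$, I obtain $\mathbf{U}^\et(X) \simeq \varprojlim_F \mathbf{U}(X)$; moreover, the \'etale comparison map $\mathbf{U}(X) \to \mathbf{U}^\et(X)$ from \cref{etale-comparison} is identified with the section of the natural projection $\mathbf{U}^\et(X) \to \mathbf{U}(X)$ provided by the inclusion $R\Gamma(X,\cO)^{F=\id} \hookrightarrow R\Gamma(X,\cO)$.

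Next I would apply the Milnor-type exact sequence of \cite{MR100} for the tower $\ldots \xrightarrow{F} \mathbf{U}(X) \xrightarrow{F} \mathbf{U}(X)$ in the replete topos, obtaining, for each $i \ge 0$, a short exact sequence of appropriate sheaves
\[ 0 \to R^1{\varprojlim}_F\, \pi_{i+1}^{\mathrm{U}}(X) \to \pi_i^{\et}(X)_p \to {\varprojlim}_F\, \pi_i^{\mathrm{U}}(X) \to 0. \]
By the profiniteness theorem (\cref{profiniteunipotent}), each $\pi_j^{\mathrm{U}}(X)$ is profinite, so as a sheaf-theoretic inverse limit we have ${\varprojlim}_F\, \pi_i^{\mathrm{U}}(X) \simeq \pi_i^{\mathrm{U}}(X)^{\mathrm{perf}}$, which by \cref{tired1} is precisely the maximal pro-\'etale quotient of $\pi_i^{\mathrm{U}}(X)$.

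To finish, the main point is to establish the vanishing $R^1{\varprojlim}_F\, \pi_{i+1}^{\mathrm{U}}(X) = 0$; this is the technical heart of the argument. Writing $\pi_{i+1}^{\mathrm{U}}(X) \simeq \varprojlim_j G_j$ as an inverse limit of finite group schemes with compatible Frobenius actions, one observes that for each finite $G_j$ the decreasing chain of images $F^n(G_j)$ stabilizes at $G_j^{\mathrm{perf}}$ (since $G_j$ is finite), so the Frobenius tower on $G_j$ satisfies Mittag--Leffler and $R^1{\varprojlim}_F\, G_j = 0$. A double-limit/spectral-sequence argument in the replete topos, using that the subsheaves $F^n(\pi_{i+1}^{\mathrm{U}}(X))$ form a decreasing system whose stable intersection is the perfection, then combines these pointwise vanishings to give the required vanishing of $R^1{\varprojlim}_F$. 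Once the Milnor sequence collapses, the identification $\pi_i^\et(X)_p \simeq \pi_i^{\mathrm{U}}(X)^{\mathrm{perf}}$ is automatic, and by functoriality and the construction of the comparison map as a section it coincides with the canonical split surjection of \cref{tired}, completing the identification with the maximal pro-\'etale quotient.

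The main obstacle is the vanishing of $R^1{\varprojlim}_F$ for a profinite (rather than merely finite) group scheme in the replete topos setting: one must carefully combine the Mittag--Leffler property at each finite stage with the inverse limit in $j$ indexing the finite quotients, and verify that no additional derived-inverse-limit obstruction appears from the profinite indexing. The rest of the argument is largely a matter of assembling \cref{jacket1}, \cref{profiniteunipotent}, \cref{tired1}, and the Milnor sequence formalism of \cite{MR100}.
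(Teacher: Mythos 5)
Your overall strategy runs parallel to the paper's for its first half: identify $\mathbf{U}^\et(X)$ with $\varprojlim_F \mathbf{U}(X)$ via the Artin--Schreier sequence, the Frobenius semilinear algebra of \cref{jacket1}, and the fact that $\Spec$ turns the filtered colimit $\varinjlim_F$ into $\varprojlim_F$. However, the proposal then diverges from the paper at exactly the point you yourself flag as "the technical heart," and the divergence introduces a genuine gap.

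The paper does \emph{not} prove $R^1{\varprojlim}_F\,\pi_{i+1}^{\mathrm U}(X) = 0$ directly, and does not need to. Instead, it observes that the composite
\[ \pi_i(\varprojlim\nolimits_F\mathbf{U}(X)) \longrightarrow \pi^{\mathrm U}_i(X)^{\mathrm{perf}} \longrightarrow \pi^{\mathrm U}_i(X) \longrightarrow \pi^\et_i(X)_p \]
is an \emph{isomorphism} (because the whole chain is induced by the identity on $\varprojlim_F\mathbf{U}(X) \simeq \mathbf{U}^\et(X)$, and the Frobenius map factors through the perfection functor since it already acts invertibly on $\pi_i(\varprojlim_F\mathbf{U}(X))$). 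This immediately forces the first arrow $\pi_i(\varprojlim_F\mathbf{U}(X)) \to \pi^{\mathrm U}_i(X)^{\mathrm{perf}}$ to be injective. Surjectivity of that arrow is then automatic from the Milnor sequence of \cite{MR100}, which exhibits $\varprojlim_F\pi_i^{\mathrm U}(X)$ as a quotient of $\pi_i(\varprojlim_F\mathbf{U}(X))$ \emph{regardless} of whether the $R^1\varprojlim$ term vanishes. Injectivity plus surjectivity give the isomorphism $\pi_i(\varprojlim_F\mathbf{U}(X)) \simeq \pi^{\mathrm U}_i(X)^{\mathrm{perf}}$, and then \cref{tired1} and \cref{profiniteunipotent} finish the identification with the maximal pro-\'etale quotient. (The vanishing of the $R^1$ term becomes a \emph{consequence} of this argument, not an input.)

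By contrast, your proposed direct proof of $R^1{\varprojlim}_F\,\pi_{i+1}^{\mathrm U}(X) = 0$ does not close: the Mittag--Leffler stabilization of Frobenius images on each finite stage $G_j$ occurs at a depth that depends on $j$, and for the genuine profinite limit $G = \varprojlim_j G_j$ the subsheaves $F^n(G) \subset G$ need not stabilize at any finite $n$. The ``double-limit/spectral-sequence argument'' you invoke is not made precise, and there is a real non-trivial interchange-of-limits issue hiding there: $R^1\varprojlim$ over the Frobenius tower of the full profinite $G$ is not obviously computable from the $R^1\varprojlim$ of the Frobenius towers on the finite quotients. You should instead exploit the splitting.

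There is also a small but confusing direction error: you describe the comparison map $\mathbf{U}(X) \to \mathbf{U}^\et(X)$ as ``the section of the natural projection $\mathbf{U}^\et(X) \to \mathbf{U}(X)$,'' i.e., you claim that the composite $\mathbf{U}(X) \to \mathbf{U}^\et(X) \to \mathbf{U}(X)$ is the identity. That is false in general; it would force $R\Gamma(X,\cO) \to R\Gamma(X,\cO)_{\mathrm{perf}}$ to split, which only happens for weakly ordinary $X$. What is true is the reverse composite: $\mathbf{U}^\et(X) \simeq \varprojlim_F \mathbf{U}(X) \to \mathbf{U}(X) \to \mathbf{U}^\et(X)$ is the identity, i.e., the natural projection is a retraction of the comparison map, and this is precisely the splitting the paper exploits.
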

\begin{proof}
Note that the absolute Frobenius on $X$ induces a Frobenius semilinear endomorphism $F$ on $R\Gamma(X,\cO)$. Using the Artin--Schreier sequence
\[ 0 \to \FF_p \to \cO_X \xrightarrow{1-F} \cO_X \to 0, \]
we see that $R\Gamma_\et(X,\FF_p) \simeq R\Gamma(X,\cO)^{F = \id}$.
Since $\Hh^i(X,\cO)$ is a finite-dimensional $k$-vector space for all $i \ge 0$, \cref{jacket1} implies that the natural map
\begin{equation}\label{sw1}
    R\Gamma_\et(X,k) \simeq R\Gamma_\et(X,\FF_p) \otimes_{\FF_p} k \to  R\Gamma(X,\cO)_\perf 
\end{equation}
is an isomorphism.
Let us define $\mathbf{U}(X)^{\mathrm{perf}} \colonequals \varprojlim_F \UU(X)$, naturally viewed as a stack over $k$.
Since $\Spec$ is a right adjoint (see \cref{cope}), we have $\Spec\,R\Gamma(X,\cO)_\perf \simeq \UU(X)^{\mathrm{perf}}$. Using \cref{etale-comparison}, \cref{sw1} implies that the composition
\begin{equation}
    \pi_i(\mathbf{U}(X)^{\mathrm{perf}}) \to \pi^{\mathrm U}_i(X) \to \pi^\et_i(X)_p
\end{equation}
is an isomorphism for all $i \ge 0$.
Since the absolute Frobenius induces an isomorphism on $\pi_i(\mathbf{U}(X)^{\mathrm{perf}})$, the perfection functor from \cref{group-scheme-perfection} yields a factorization of the above map 
$$\pi_i(\mathbf{U}(X)^{\mathrm{perf}}) \to \pi^{\mathrm U}_i(X)$$
via the natural map $\pi^{\mathrm U}_i(X)^\perf \to \pi^{\mathrm U}_i(X)$ such that the composition of the maps
\begin{equation}\label{miraculous-splitting}
 \pi_i(\mathbf{U}(X)^{\mathrm{perf}}) \to \pi^{\mathrm U}_i(X)^\perf \to \pi^{\mathrm U}_i(X) \to  \pi^\et_i(X)_p 
 \end{equation}
is again an isomorphism.
\vspace{2mm}

Since $\pi^{\mathrm U}_i(X)$ is profinite (\cref{chicago16}), in order to prove that $\pi_i^\et (X)_p$ is the maximal pro-\'etale quotient of $\pi^{\mathrm U}_i(X)$, it would be enough to show that the composite map $\pi^{\mathrm U}_i(X)^{\mathrm{perf}} \to \pi^{\mathrm U}_i(X) \to \pi_i^\et (X)_p$ is an isomorphism (see \cref{tired1}).
Therefore, it suffices to prove that the map $\pi_i(\mathbf{U}(X)^{\mathrm{perf}}) \to \pi^{\mathrm U}_i(X)^\perf$ above is an isomorphism. Note that \cref{miraculous-splitting} already implies that the latter map is injective. 
By the Milnor sequence from \cite[Prop.~3.22]{MR100}, it follows that the natural map $\pi_i(\mathbf{U}(X)^{\mathrm{perf}}) \simeq \pi_i (\varprojlim_F \UU(X)) \to \pi_i^{\mathrm{U}}(X)^{\perf}$ is also surjective. This finishes the proof.
\end{proof}

\begin{remark}[Weakly ordinary varieties]\label{ordinary-unipotent-homotopy}
A proper variety $X$ over a perfect field $k$ of characteristic $p > 0$ is called \emph{weakly ordinary} if the (absolute) Frobenius endomorphism $F$ on $X$ induces a $k$-semilinear isomorphism $F^* \colon H^i(X,\cO) \simeq H^i(X,\cO)$ for all $i \ge 0$.
For weakly ordinary $X$, the natural map $R\Gamma(X, \cO) \to R\Gamma(X, \cO)_{\mathrm{perf}}$ is an isomorphism.
If $k$ is algebraically closed, \cref{jacket1} then implies that the natural map $R\Gamma_{\et}(X, k) \to R\Gamma(X, \cO)$ must be an isomorphism. 
Therefore, the \'etale comparison maps $\pi^{\mathrm U}_i(X) \to \pi_i^{\et}(X)_p$ from \cref{unipotent-etale-comparison} are all isomorphisms when $X$ is assumed to be weakly ordinary.
However, in general, these maps are very far from being isomorphisms;
for example, this is already the case when $X$ is a supersingular elliptic curve.
\end{remark}
\begin{remark}
An important example of weakly ordinary varieties (in the sense of \cref{ordinary-unipotent-homotopy}) are ordinary varieties in the sense of Bloch--Kato \cite[Def.~7.2, Prop.~7.3]{BK86} and Illusie--Raynaud \cite[Def.~4.12, Thm.~4.13]{IR83}.
Moreover, the two notions of weakly ordinary and ordinary are equivalent for abelian varieties and K3 surfaces (\textit{cf.}~\cite[Prop.~9]{MR799251}, \cite[Lem.~1.1]{MR916481}).
However, in general, a weakly ordinary variety need not be ordinary;
see, e.g., \cite[Ex.~5.7]{MR1936581}. Let $X$ be a smooth projective variety of dimension $n$ for which $\omega_X \simeq \cO_X$.
If the pullback $F^* \colon H^n(X,\cO) \to H^n(X,\cO)$ by the absolute Frobenius is bijective, then $X$ is Frobenius split (\cite[Prop.~9]{MR799251}) and therefore $X$ is automatically weakly ordinary.\end{remark}

\newpage

\section{\texorpdfstring{$K(\pi^{\mathrm U},1)$}{K({\textpi}U,1)}-schemes: curves and abelian varieties}\label{curveabelian}
The unipotent homotopy type $\mathbf{U}(X)$ of a scheme $X$ is by definition a higher stack.
In this section, we show that when $X$ is a curve or an abelian variety over a field $k$, then $\UU(X)$ is a $1$-stack. In fact, in this case $\UU(X) \simeq B \pi_1^{\mathrm U}(X)$. In \cref{cold1}, we show that the unipotent fundamental group scheme admits a flat variation for families of curves and abelian schemes. 
The latter result is used to geometrically reconstruct the ``filtered circle" (see \cref{cold2}) from \cite{moulinos2019universal} which the authors introduced in order to explain the existence of the Hochschild--Kostant--Rosenberg filtration on Hochschild homology. In \cref{cold3}, we introduce the universal fundamental group scheme of genus $g$ curves as a group scheme over the moduli of curves. In \cref{cold6}, we construct examples to show that the unipotent fundamental group scheme is not a derived invariant for abelian varieties of dimension $\ge 3$.

\subsection{Unipotent homotopy type of curves}\label{spri1} 
In this subsection, we prove that if $X$ is a proper curve over a field $k$ (always equipped with a $k$-rational point, and assumed to be cohomologically connected but not assumed to be smooth), then $\UU(X) \simeq \Bb \pi_1^{\mathrm U}(X)$. In particular, $\pi_i ^{\mathrm U} (X)$ is the zero group scheme for $i \ge 2$. In other words, one can say that the unipotent homotopy type of the curve $X$ is
a $K(\pi^{\mathrm U},1)$. For a curve $X$, the dimension of $H^1 (X, \cO)$ as a $k$-vector space will be called genus. We begin by recalling certain definitions that will be useful in formulating and proving the results in this subsection. 

\begin{definition}[{\cite[\S~IV]{MR682517}}]\label{nonfgl}
We fix an integer $g \ge 1$.
Let $k  \llbrace \underline{X};\underline{Y}; \underline{Z} \rrbrace$ be the non-commutative formal power series ring in $X_1, \ldots, X_g, Y_1, \ldots, Y_g$, $ Z_1, \ldots, Z_g$, modulo the relations $X_i Y_j = Y_j X_i$, $X_i Z_j = Z_j X_i$, $X_i Z_j = Z_j X_i$ for all $i$ and $j$. A \textit{non-commutative formal group law} of dimension $g$ over $k$ is a collection $F(\underline{X};\underline{Y})= (F_1(\underline{X}; \underline{Y}), \ldots, F_g(\underline{X};\underline{Y}))$ with the $F_i(\underline{X},\underline{Y}) \in k\llbrace \underline{X};\underline{Y} \rrbrace$ such that 
\begin{enumerate}
    \item $F_i(\underline{X};0) = F_i (0;\underline{X}) = X_i$ for $1 \le i \le g$;
    \item $F(\underline{X};\underline{Y})= F(\underline{Y};\underline{X})$;
    \item $F(F(\underline{X};\underline{Y});\underline{Z})= F(\underline{X}; F(\underline{Y};\underline{Z})) $ in the ring $k  \llbrace \underline{X};\underline{Y}; \underline{Z} \rrbrace$.
\end{enumerate}
\end{definition}

The above definition can be interpreted more conceptually in terms of the following notion.
\begin{definition}[Linearly compact topological Hopf algebra]\label{lincompacth}
Let $k$ be a field and $\mathrm{Vect}_k^{\mathrm{fd}}$ denote the category of finite-dimensional $k$-vector spaces. Since $\mathrm{Vect}_k \simeq \mathrm{Ind} (\mathrm{Vect}_k^{\mathrm{fd}})$, the notion of duality in $\mathrm{Vect}_k^{\mathrm{fd}}$ extends to an equivalence $\Pro(\mathrm{Vect}_k^{\mathrm{fd}})^{\mathrm{op}} \simeq \mathrm{Vect}_k$. The tensor product in $\mathrm{Vect}_k$ equips the category $\Pro(\mathrm{Vect}_k^{\mathrm{fd}})$ with a symmetric monoidal structure. We define the category of linearly compact topological Hopf algebras over $k$ to be the category of Hopf algebra objects with respect to the symmetric monoidal structure on $\Pro(\mathrm{Vect}_k^{\mathrm{fd}})$.   
\end{definition}{}

\begin{remark}
    Note that $\Pro(\mathrm{Vect}_k^{\mathrm{fd}})$ is equivalent to the category of linearly compact topological vector spaces over $k$, and the symmetric monoidal structure can be identified with completed tensor product of such spaces.
    Moreover, under this identification the duality functor $\Pro(\mathrm{Vect}^{\mathrm{fd}}_k)^{\mathrm{op}} \to \mathrm{Vect}_k$ from \cref{lincompacth} is given by the continuous dual;
    see \cite[Ch.~I, \S~2]{MR332802}.
\end{remark}{}

\begin{remark}[Duality]\label{durevision}
By construction, the category of Hopf algebras over $k$ is anti-equivalent to the category of linearly compact topological Hopf algebras over $k$. In particular, under this duality, the category of affine group schemes over $k$ is equivalent to the category of cocommutative linearly compact topological Hopf algebras over $k$.
This duality will be made more explicit in \cref{dualizinggpsch} in the context of unipotent group schemes and will be used in \cref{cof0}.
\end{remark}{}

\begin{remark}
Any finite-dimensional $k$-algebra (not necessarily commutative) is an algebra object of $\Pro(\mathrm{Vect}_k^{\mathrm{fd}})$.
The non-commutative power series ring $k \llbrace \underline{X}  \rrbrace $ (or the commutative power series ring $k \llbracket \underline{X}  \rrbracket$) in the variables $X_1, \ldots, X_g$ can be naturally viewed as algebra object of $\Pro(\mathrm{Vect}_k^{\mathrm{fd}})$.
\end{remark}{}

\begin{definition}[Non-commutative formal Lie groups]\label{usethisdefasrefsays}
The category of non-commutative formal Lie groups over $k$ of dimension $g$ is defined to be the opposite category of cocommutative, linearly compact topological Hopf algebras over $k$ whose underlying algebra object of $\Pro(\mathrm{Vect}_k^{\mathrm{fd}})$ is isomorphic to the non-commutative power series ring $k \llbrace \underline{X}  \rrbrace $ in the variables $X_1, \ldots, X_g$.   
\end{definition}{}

\begin{remark}\label{revissremk}
 Note that a non-commutative formal group law in the sense of \cref{nonfgl} defines a non-commutative formal Lie group, with comultiplication given by $F$. Conversely, every non-commutative formal Lie group arises in this way.
 Under the duality from \cref{durevision}, the category of non-commutative formal Lie groups is anti-equivalent to a full subcategory of affine group schemes over $k$.
\end{remark}{}

\begin{proposition}\label{rhi1}Let $X$ be a proper curve over a field $k$. Then the dual of the unipotent group scheme $\pi_1^{\mathrm U}(X)$ is a non-commutative $g$-dimensional formal Lie group where $g= H^1(X, \cO_X)$.
\end{proposition}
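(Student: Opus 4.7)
The plan is to analyze the derived pushout $k \otimes_{R\Gamma(X,\cO)} k$ of $E_\infty$-algebras --- which by \cref{rent14} computes the functions on the loop stack $\Omega\UU(X)$ --- so as to simultaneously show that $\UU(X)$ is $1$-truncated and to identify the dual Hopf algebra of $\pi_1^{\mathrm{U}}(X)$ as that of a non-commutative formal Lie group of dimension $g$. Let $R \colonequals R\Gamma(X,\cO) \in \mathrm{DAlg}^{\mathrm{ccn}}_k$, augmented by the chosen $k$-rational point of $X$, and set $V \colonequals H^1(X,\cO)$. Since $X$ is a proper curve, $H^i(R) = 0$ for $i \ne 0, 1$, $H^0(R) = k$, and $\dim_k V = g$. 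In particular the augmentation ideal $I \colonequals \mathrm{fib}(R \to k)$ is concentrated in cohomological degree $1$, i.e., $I \simeq V[-1]$ as a $k$-module.

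\textbf{Step 1 (Bar degeneration).} I would compute $k \otimes_R k$ via the bar filtration, whose associated graded in filtration degree $n$ is $I[1]^{\otimes n}$. Since $I[1]$ is concentrated in cohomological degree $0$ with $H^0(I[1]) = V$, each $I[1]^{\otimes n}$ is concentrated in degree $0$ with $H^0(I[1]^{\otimes n}) = V^{\otimes n}$. Any $E_\infty$-multi-multiplication on the non-unital $E_\infty$-algebra $I$ yields maps $I^{\otimes n} \to I$ whose source sits in cohomological degree $n$ and target in degree $1$; such maps must vanish for $n \ge 2$ by degree considerations. Consequently, all potentially nontrivial differentials in the bar spectral sequence vanish, it degenerates at $E_1$, and $k \otimes_R k$ is cohomologically discrete with $H^0(k \otimes_R k) \cong T^c(V) \colonequals \bigoplus_{n \ge 0} V^{\otimes n}$, the shuffle Hopf algebra on $V$.

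\textbf{Step 2 ($1$-truncatedness and dualization).} Since $k \otimes_R k$ is discrete, the loop stack $\Omega\UU(X) = \Spec(k \otimes_R k)$ is a genuine affine group scheme, not merely a group stack. This forces $\pi_i^{\mathrm{U}}(X) = 0$ for all $i \ge 2$; combined with pointed connectedness of $\UU(X)$, one obtains $\UU(X) \simeq B\pi_1^{\mathrm{U}}(X)$ and hence $\cO(\pi_1^{\mathrm{U}}(X)) \cong T^c(V)$ as commutative Hopf algebras over $k$. The continuous linear dual of this shuffle Hopf algebra is the completed concatenation Hopf algebra $\widehat{T}(V^\vee) \cong k\langle\langle X_1, \ldots, X_g \rangle\rangle$, where $X_1, \ldots, X_g$ is a basis of $V^\vee$, with the dual coproduct making each $X_i$ primitive. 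Unwinding \cref{nonfgl}, this data exhibits the dual of $\pi_1^{\mathrm{U}}(X)$ as a (free) non-commutative formal Lie group of dimension $g$.

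\textbf{Main obstacle.} The main subtlety is Step 1: justifying degeneration of the bar spectral sequence in the $E_\infty$ setting, particularly in positive characteristic, where Frobenius-type divided power operations could a priori contribute nontrivial higher differentials. However, any $n$-ary $E_\infty$-operation on the augmentation ideal $I$ must map from cohomological degree $n$ to degree $1$, which is impossible for $n \ge 2$ by sheer degree reasons. This is where the $1$-dimensionality of the curve $X$ enters crucially and controls all higher homotopical data.
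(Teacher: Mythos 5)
Your route is genuinely different from the paper's, which proves \cref{rhi1} by citing Nori's universal-vector-extension argument (redeveloped for classifying stacks in \cref{bgcurve1}); you instead go through the bar construction on $k\otimes_R k$. The first half of your argument is sound: the associated graded $(I[1])^{\otimes n}\simeq V^{\otimes n}$ of the bar filtration is concentrated in cohomological degree $0$, so the spectral sequence collapses for purely degree/shape reasons, giving the discreteness of $k\otimes_R k$ --- parallel to \cref{rent13} --- and hence $\UU(X)\simeq B\pi_1^{\mathrm{U}}(X)$, which the paper records separately as \cref{curve}. The gap is the last clause of Step~1 and its use in Step~2. Degeneration identifies the \emph{associated graded} of $H^0(k\otimes_R k)=\cO(\pi_1^{\mathrm{U}}(X))$ with $T^c(V)$; it does \emph{not} give an isomorphism $\cO(\pi_1^{\mathrm{U}}(X))\cong T^c(V)$ of Hopf algebras, and it certainly does not make the generators of the dual primitive.

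Indeed, that stronger claim would force $\pi_1^{\mathrm{U}}(X)^\vee$ to depend only on $g$, which contradicts the whole point of the construction. Already for proper curves of genus $1$ over $\overline{\FF}_p$ the answer varies with the curve: for the nodal cubic (or an ordinary elliptic curve) one has $\pi_1^{\mathrm{U}}(X)\simeq\ZZ_p$, $\cO(\ZZ_p)=C(\ZZ_p,k)$, and the dual formal group is $\widehat{\GG}_m$ with group law $X+Y+XY$, so $X$ is \emph{not} primitive; whereas for the cuspidal cubic $\pi_1^{\mathrm{U}}(X)\simeq W[F]$, $\cO(W[F])\cong\Gamma^*(k)=T^c(k)$, and the dual is $\widehat{\GG}_a$. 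The rings $C(\ZZ_p,k)$ and $\Gamma^*(k)$ are not isomorphic (the former is ind-\'etale with many idempotents, the latter is local), even though their associated gradeds both equal $T^c(k)$. The group law --- which is exactly the data encoded in \cref{nonfgl} --- lives in the extensions of the bar filtration and is invisible on the $E_1$-page, so your Step~2 conclusion is unjustified. To repair the argument you would need to dualize the associated-graded statement, show that the topological algebra $A$ of \cref{dualizinggpsch} is complete for a filtration matching the $J_n$'s with $\gr A\cong k\langle X_1,\dots,X_g\rangle$, and then deduce $A\cong k\llbrace X_1,\dots,X_g\rrbrace$; that last step is exactly what the dimension count via universal vector extensions in \cref{bgcurve1} accomplishes directly, so the bar filtration would not by itself circumvent Nori's work.
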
{}

\begin{proof}This follows from {\cite[\S~IV, Prop.~4]{MR682517}} since $\pi_1^{\mathrm{U}}(X) \simeq \pi_1^{\mathrm{U,N}}(X)$.
\end{proof}{}

\begin{proposition}\label{curve}Let $X$ be a proper curve over a field $k$. Then $\UU(X) \simeq  B\pi_1^{\mathrm U}(X)$.
\end{proposition}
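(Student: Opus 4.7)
The plan is to show that the natural $1$-truncation map $\pi \colon \UU(X) \to \tau_{\le 1}\UU(X) \simeq B\pi_1^{\mathrm U}(X)$ is an equivalence of pointed connected affine stacks. Since source and target are both affine stacks, the equivalence between affine stacks and $(\mathrm{DAlg}^{\mathrm{ccn}}_k)^{\mathrm{op}}$ recorded in \cref{cope} reduces this to showing that the induced map on derived global sections, i.e.\ $H^i(B\pi_1^{\mathrm U}(X), \cO) \to H^i(\UU(X), \cO) = H^i(X, \cO)$, is an isomorphism for every $i \ge 0$.

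The low-degree cases are handled by the inputs already established. The case $i = 0$ is immediate from cohomological connectedness. For $i = 1$, \cref{postnikov} gives the isomorphism $H^1(B\pi_1^{\mathrm U}(X), \cO) \xrightarrow{\sim} H^1(\UU(X), \cO) \simeq H^1(X, \cO)$, with both sides identified with $\Hom(\pi_1^{\mathrm U}(X), \GG_a)$ via \cref{hurewicz}. For $i = 2$, \cref{postnikov} supplies an injection $H^2(B\pi_1^{\mathrm U}(X), \cO) \hookrightarrow H^2(X, \cO)$, and the target vanishes since $X$ is one-dimensional.

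The main content is the vanishing $H^i(B\pi_1^{\mathrm U}(X), \cO) = 0$ for all $i \ge 2$, which matches $H^i(X, \cO) = 0$ in this range and will complete the proof. For this we invoke the structural result \cref{rhi1}: the continuous Hopf-algebra dual $U$ of $\cO(\pi_1^{\mathrm U}(X))$ is the non-commutative formal power series ring $k \llbrace X_1, \ldots, X_g \rrbrace$ in the sense of \cref{nonfgl}. By faithfully flat descent along $\Spec k \to B\pi_1^{\mathrm U}(X)$, the cohomology $H^*(B\pi_1^{\mathrm U}(X), \cO)$ is computed by the cobar complex of the coalgebra $\cO(\pi_1^{\mathrm U}(X))$, which upon continuous dualization becomes the bar complex of $U$ computing $\mathrm{Tor}^U_*(k, k)$. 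Because $U$ is the completion of the free associative algebra on the generators $X_1, \ldots, X_g$, its augmentation ideal is free of rank $g$ as a left $U$-module, yielding the length-one free resolution $0 \to V \otimes_k U \to U \to k \to 0$ with $V$ the $k$-span of the $X_i$. Hence $\mathrm{Tor}^U_i(k,k) = 0$ for $i \ge 2$, and dually $H^i(B\pi_1^{\mathrm U}(X), \cO) = 0$ for $i \ge 2$. The main technical obstacle is making the continuous dualization between the pro-finite-dimensional coalgebra $\cO(\pi_1^{\mathrm U}(X))$ and the topological algebra $U$ precise at the level of complexes; this is tractable by passing to the finite-dimensional Hopf quotients arising in the pro-system defining $\pi_1^{\mathrm U}(X)$ and checking compatibility with the Koszul resolution on each finite level.
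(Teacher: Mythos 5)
Your proposal follows essentially the same route as the paper: reduce to showing $H^i(B\pi_1^{\mathrm U}(X), \cO) = 0$ for $i \ge 2$, identify this cohomology with a derived functor over the topological dual algebra $A = k\llbrace X_1,\ldots,X_g\rrbrace$ via Nori's structure result \cref{rhi1}, and kill the higher degrees with the length-one free resolution $0 \to A^{\oplus g} \to A \to k \to 0$ --- which is exactly \cref{nc-group-law-cohomology}.

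The place you diverge, and where your sketch leaves a gap, is the bridge from $H^*(B\pi_1^{\mathrm U}(X),\cO)$ to the algebra $A$. The paper invokes \cref{newyr}, whose proof rests on the equivalence of categories $\QCoh(BG) \simeq A\w{-nilMod}$ from \cref{c(v)}; you instead dualize the cobar complex term-by-term and propose to justify this by passing to finite-dimensional Hopf quotients and comparing with ``the Koszul resolution on each finite level.'' That fix does not hold up. First, \cref{curve} is stated over an arbitrary field, and in characteristic zero $\pi_1^{\mathrm U}(X)$ is not profinite (for an elliptic curve it is $\GG_a$), so the pro-system has no finite-dimensional Hopf quotients. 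Second, even in characteristic $p$ the free resolution $0 \to A^{\oplus g} \to A \to k \to 0$ does not descend to the Artinian quotients $A/I_n$: the residue field $k$ has \emph{infinite} projective dimension over each such quotient, so there is no ``Koszul resolution on each finite level.'' What actually makes the dualization precise is that the resolution consists of finite free $A$-modules and is therefore automatically compatible with the completed tensor products appearing in the topological bar complex --- or, more robustly, one works directly with the equivalence of categories \cref{c(v)}, as the paper does.
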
{}
\begin{proof}By definition, we have a natural map $\UU(X) \to \Bb\pi_1^{\mathrm U}(X)$. Since this is a morphism of affine stacks, to prove that they are equivalent it would be enough to show that the induced map $R\Gamma(\Bb \pi_1^{\mathrm U}(X), \cO) \to R\Gamma(\UU(X) , \cO)$ is an isomorphism. By \cref{postnikov}, this map is an isomorphism on cohomology groups $H^i(\,\cdot\,)$ for $i \le 1$. Since $X$ is a curve, $H^i (X, \cO) \simeq H^i (\UU(X), \cO) = 0 $ for $i \ge 2$. Therefore, it would be enough to show that $R\Gamma(\Bb \pi_1^{\mathrm U}(X), \cO) = 0$ for $i \ge 2$.

\vspace{2mm}
To show that $R\Gamma(\Bb \pi_1^{\mathrm U}(X), \cO) = 0$ for $i \ge 2$, we note that at the level of objects in the derived category, we have $R\Gamma(\Bb \pi_1^{\mathrm U}(X), \cO) \simeq R\Hom_{k\llbrace X_1, \ldots, X_g \rrbrace}(k,k)$ (see \cref{newyr}). Therefore, we are done by the following lemma.
\end{proof}
\begin{lemma}\label{nc-group-law-cohomology}
In the derived category, we have 
$$ R\Hom_{k\llbrace X_1, \ldots, X_g \rrbrace}(k,k) \simeq k \oplus k[-1]^{\oplus g}.$$

\end{lemma}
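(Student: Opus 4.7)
The plan is to exhibit an explicit length-$1$ free resolution of $k$ as a left module over $A \colonequals k\llbrace X_1, \ldots, X_g \rrbrace$, and then compute $R\Hom_A(k,k)$ directly. The key observation is that the augmentation ideal $I \subset A$ is free as a left $A$-module of rank $g$, with basis $X_1, \ldots, X_g$ acting by right multiplication.

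To see this, note that every noncommutative monomial $X_{i_1} X_{i_2} \cdots X_{i_n}$ with $n \geq 1$ has a unique factorization $(X_{i_1} \cdots X_{i_{n-1}}) \cdot X_{i_n}$ extracting its last letter. Grouping the (formal) monomial expansion of an arbitrary element of $I$ by the index of the last factor therefore yields a decomposition
\[
I \;=\; \bigoplus_{i=1}^{g} A \cdot X_i
\]
of left $A$-modules, where each summand $A \cdot X_i$ is isomorphic to $A$ (via $a \mapsto a X_i$). Combined with the short exact sequence $0 \to I \to A \xrightarrow{\epsilon} k \to 0$ coming from the augmentation, this produces the free resolution
\[
0 \;\longrightarrow\; A^{\oplus g} \;\xrightarrow{(a_1, \ldots, a_g)\, \mapsto\, \sum a_i X_i}\; A \;\xrightarrow{\epsilon}\; k \;\longrightarrow\; 0.
\]

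To compute $R\Hom_A(k,k)$, I would apply $\Hom_A(-, k)$ to the two-term free part, obtaining a complex $k \to k^{\oplus g}$. Unwinding the definitions, a class $\phi \in \Hom_A(A,k)$ corresponds to $\lambda \colonequals \phi(1) \in k$ via $\phi(a) = \epsilon(a)\lambda$, and the differential sends $\phi$ to $(\phi(X_1), \ldots, \phi(X_g)) = (\epsilon(X_1)\lambda, \ldots, \epsilon(X_g)\lambda) = 0$, since each $X_i$ lies in the augmentation ideal. Consequently $\Ext^0_A(k,k) = k$, $\Ext^1_A(k,k) = k^{\oplus g}$, and $\Ext^i_A(k,k) = 0$ for $i \geq 2$. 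Since $k$ is a field, the resulting complex splits and we obtain the stated isomorphism
\[
R\Hom_A(k,k) \;\simeq\; k \,\oplus\, k[-1]^{\oplus g}
\]
in the derived category.

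There is no substantive obstacle here: the argument is entirely elementary once the freeness of $I$ is identified. The only mild subtlety is making sure one works consistently with left (as opposed to right) $A$-modules, which is dictated by how $R\Hom_A(k,k)$ arises as $R\Gamma(B\pi_1^{\mathrm U}(X), \cO)$ in the preceding proposition.
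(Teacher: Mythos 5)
Your proposal is correct and follows essentially the same route as the paper: both exhibit the two-term free resolution $0 \to A^{\oplus g} \to A \to k \to 0$ (coming from the freeness of the augmentation ideal as a left module with basis $X_1,\ldots,X_g$) and read off $R\Hom_A(k,k)$ from it. You spell out the justification for the freeness (unique last-letter factorization of noncommutative monomials) and the explicit $\Hom$-complex computation, which the paper leaves implicit, but the underlying argument is the same.
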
{}

\begin{proof}We denote $A \colonequals k \llbrace X_1, \ldots, X_g \rrbrace$. We consider the module $A^{\oplus g}$ and let $u_1, \ldots, u_g$ denote a basis of $A^{\oplus g}$ over $A$. There is a map $A^{\oplus g} \to A$ which sends $u_i$ to $x_i$. This fits into an exact sequence $0 \to A^{\oplus g} \to A \to k \to 0$ which yields the claim.
\end{proof}{}

Note that in \cref{cold19}, we defined the unipotent homotopy type $\UU(X)$ for $X \in \mathcal{S}$. As a consequence of \cref{curve}, we will show that the unipotent homotopy type of a weakly ordinary curve of genus $g$ is isomorphic to the unipotent homotopy type of wedge product of $g$-circles $\bigvee^g S^1$.

\begin{lemma}\label{achin2}
Let $X$ be a proper curve over an algebraically closed field $k$ of characteristic $p > 0$. 
Then there is a natural isomorphism $R\Gamma_{\et}(X, k) \simeq R\Gamma(B\pi^{\et}_1(X)_p, \cO)$. Further, $\pi_1^{\et}(X)_p$ is the free pro-$p$-finite group scheme on $\dim H^1_{\et}(X, k)$ generators (see \cref{cold91}).
\end{lemma}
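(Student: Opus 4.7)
The plan is to combine \cref{curve} with the unipotent--\'etale comparison theorem (\cref{unipotent-etale-comparison}) to identify $\Spec R\Gamma_\et(X,k)$ with $B\pi_1^\et(X)_p$, and then use Serre's cohomological criterion for freeness of pro-$p$ groups to establish the second claim.

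First, I would observe that \cref{curve} gives $\UU(X) \simeq B\pi_1^{\mathrm U}(X)$, so $\pi_i^{\mathrm U}(X) = 0$ for $i \ge 2$. By \cref{unipotent-etale-comparison}, $\pi_i^\et(X)_p$ is the maximal pro-\'etale quotient of $\pi_i^{\mathrm U}(X)$, and therefore also vanishes for $i \ge 2$. Since $\Spec R\Gamma_\et(X,k)$ is by construction a pointed connected affine stack whose only possibly non-vanishing homotopy group scheme is $\pi_1^\et(X)_p$, the Postnikov convergence recalled in \cref{postnikov} yields an equivalence $\Spec R\Gamma_\et(X,k) \simeq B\pi_1^\et(X)_p$. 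Taking derived global sections then gives the first isomorphism $R\Gamma_\et(X,k) \simeq R\Gamma(B\pi_1^\et(X)_p,\cO)$.

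For the second assertion, I would appeal to Serre's cohomological characterization of free pro-$p$ groups: a pro-$p$ group $G$ is free if and only if $H^2_{\mathrm{cont}}(G,\FF_p) = 0$, in which case it is free on $\dim_{\FF_p} H^1_{\mathrm{cont}}(G,\FF_p)$ generators. By \cref{compareetaleuni}, $\pi_1^\et(X)_p$ corresponds to the pro-$p$-finite group $\pi_1^{\et,\mathrm{AM}}(X)_p$, and by \cref{compare} its continuous group cohomology with $k$-coefficients is identified with $H^*(B\pi_1^\et(X)_p,\cO) \simeq H^*_\et(X,k)$. From the proof of \cref{unipotent-etale-comparison}, $R\Gamma_\et(X,k) \simeq R\Gamma(X,\cO)_\mathrm{perf}$; since $H^i(X,\cO) = 0$ for $i \ge 2$ on a proper curve, this forces $H^2_\et(X,k) = 0$, and hence $H^2_{\mathrm{cont}}(\pi_1^{\et,\mathrm{AM}}(X)_p, \FF_p) = 0$ by flatness of $\FF_p \to k$. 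Serre's criterion then yields freeness, and the number of generators is $\dim_{\FF_p} H^1_{\mathrm{cont}}(\pi_1^{\et,\mathrm{AM}}(X)_p, \FF_p) = \dim_k H^1_\et(X,k)$.

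The bulk of the conceptual work has already been carried out in \cref{curve}, \cref{unipotent-etale-comparison}, and in the identification \cref{compare} of cohomology of $B\pi_1^\et(X)_p$ with continuous group cohomology; the remaining ingredient, Serre's criterion for pro-$p$ freeness, is a classical input. There is no serious obstacle: the vanishing $H^2_\et(X,k)=0$ is automatic since $X$ is a curve, and the passage from the identity $\Spec R\Gamma_\et(X,k) \simeq B\pi_1^\et(X)_p$ to the statement about $\pi_1^\et(X)_p$ being the free pro-$p$-finite group scheme (in the sense of \cref{cold91}) proceeds through the fully faithful embedding of profinite groups into affine group schemes recalled in \cref{compare}.
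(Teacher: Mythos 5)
Your proposal is correct and follows essentially the same approach as the paper: reduce via \cref{curve} and \cref{unipotent-etale-comparison} to the pro-\'etale fundamental group, and then invoke Serre's cohomological criterion for freeness of pro-$p$ groups (citing the same results, \cite[\S~4.1, Prop.~21]{Serre} and \cite[\S~4.2, Cor.~2]{Serre}) together with the vanishing $H^2_\et(X,k)=0$. The only cosmetic difference is in establishing $\Spec R\Gamma_\et(X,k) \simeq B\pi_1^\et(X)_p$: the paper passes through the explicit identification $\Spec R\Gamma_\et(X,k) \simeq \Spec R\Gamma(X,\cO)_{\mathrm{perf}} \simeq B\pi_1^{\mathrm U}(X)^{\mathrm{perf}}$ and then invokes \cref{tired1}, whereas you deduce vanishing of $\pi_i^\et(X)_p$ for $i \ge 2$ and appeal to Postnikov convergence for a pointed connected affine stack; both routes are valid and equally short.
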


\begin{proof}
As in the proof of \cref{unipotent-etale-comparison}, we have a natural isomorphism $R\Gamma_{\et}(X,k) \simeq R\Gamma (X, \cO)_{\mathrm{perf}}$. Since $R\Gamma_{\et}(X,k)$ is naturally augmented and $H^0_{\et} (X,k) \simeq k$, it follows that $\Spec R\Gamma_{\et}(X,k)$ is a pointed connected stack. Moreover, by \cref{curve}, it follows that $\Spec R\Gamma_{\et}(X,k) \simeq B \pi_1^{\mathrm{U}}(X)^{\mathrm{perf}}$. By \cref{tired1}, $\pi_1^{\mathrm{U}}(X)^{\mathrm{perf}}$ is isomorphic to the maximal pro-\'etale quotient of $\pi_1^{U}(X)$; this is further isomorphic to $\pi_1^{\et}(X)_p$ by \cref{unipotent-etale-comparison}. This shows that $R\Gamma_{\et}(X, k) \simeq R\Gamma(B\pi^{\et}_1(X)_p, \cO)$ as desired. To show that $\pi_1^{\et}(X)_p$ is the free pro-$p$ group scheme on $\dim H^1_{\et}(X,k)$ generators, we note that $H^2 (B \pi_1^{\et}(X)_p, \cO) = H^2_{\et}(X,k) = 0$. Moreover, by \cref{compareetaleuni}, $\pi_1^{\et}(X)_p$ is the affine group scheme associated to a pro-$p$-finite group. The desired claim now follows from \cref{compare}, \cite[\S~4.1, Prop.~21]{Serre} and \cite[\S~4.2, Cor.~2]{Serre}.
\end{proof}{}

\begin{proposition}\label{ordinarycurve}
Let $X$ be a proper, weakly ordinary curve of genus $g$ over an algebraically closed field $k$ of characteristic $p > 0$.
Then there is a natural isomorphism $\UU(X) \simeq \UU( \bigvee ^g S^1 )$.
\end{proposition}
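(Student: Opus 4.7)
The plan is to identify both unipotent homotopy types with the classifying stack $B\pi_1^{\et}(X)_p$, where $\pi_1^{\et}(X)_p$ is the free pro-$p$-finite group scheme on $g$ generators, and then invoke the fact that such an object is unique up to isomorphism.

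For the left-hand side, I would first apply \cref{curve} to the proper curve $X$ to obtain $\mathbf{U}(X) \simeq B\pi_1^{\mathrm{U}}(X)$. Weak ordinariness of $X$, together with \cref{ordinary-unipotent-homotopy}, implies that the natural map $R\Gamma_{\et}(X,k) \to R\Gamma(X,\mathcal{O})$ is an isomorphism; consequently, the \'etale comparison map $\pi_1^{\mathrm{U}}(X) \to \pi_1^{\et}(X)_p$ of \cref{etale-comparison} is an isomorphism. By \cref{achin2}, $\pi_1^{\et}(X)_p$ is the free pro-$p$-finite group scheme on $\dim_k H^1_{\et}(X,k) = g$ generators (the equality of dimensions again following from weak ordinariness and \cref{jacket1}).

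For the right-hand side, I would write $\bigvee^g S^1 = BF_g$ for $F_g$ the free discrete group on $g$ generators. By $p$-adic homotopy theory (\cref{p-adichomotopytheory}), the affine stack
\[ \mathbf{U}(\bigvee^g S^1) \;=\; \Spec C^*(\bigvee^g S^1, k) \]
depends only on the pro-$p$-finite completion $(BF_g)^{\wedge}_p$. Since $BF_g = \bigvee^g S^1$ is a $1$-dimensional CW complex and $\widehat{F_g}_p$ (the pro-$p$-completion of $F_g$) is a free pro-$p$-group, the $\FF_p$-cohomologies of $BF_g$ and of $\widehat{F_g}_p$ both vanish in degrees $\ge 2$ and agree in degrees $\le 1$; hence $F_g$ is cohomologically good and $(BF_g)^{\wedge}_p \simeq B\widehat{F_g}_p$. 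Via \cref{compare} and \cref{cold91}, the affine group scheme $S(\widehat{F_g}_p)$ is exactly the free pro-$p$-finite group scheme on $g$ generators, and \cref{compare} further identifies
\[ C^*(B\widehat{F_g}_p, k) \;\simeq\; R\Gamma(BS(\widehat{F_g}_p), \mathcal{O}). \]
Thus $\mathbf{U}(\bigvee^g S^1) \simeq BS(\widehat{F_g}_p)$.

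Combining these two identifications with the uniqueness of the free pro-$p$-finite group scheme on $g$ generators yields the desired isomorphism $\mathbf{U}(X) \simeq \mathbf{U}(\bigvee^g S^1)$. The main obstacle to making the argument rigorous is the identification $(BF_g)^{\wedge}_p \simeq B\widehat{F_g}_p$, i.e., verifying that $F_g$ is a good group in Serre's sense so that pro-$p$-finite completion commutes with forming the classifying space; this is the key input that allows the purely topological side and the arithmetic side to be compared through \cref{p-adichomotopytheory}.
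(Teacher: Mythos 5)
Your proposal is correct and follows essentially the same route as the paper's own proof: both sides use \cref{achin2} and \cref{ordinary-unipotent-homotopy} to identify $R\Gamma(X,\cO) \simeq R\Gamma(B\pi_1^{\et}(X)_p,\cO)$ with $\pi_1^{\et}(X)_p$ the free pro-$p$-finite group scheme on $g$ generators, and both invoke the fact that $F_g$ is a good group in the sense of Serre (the paper cites \cite[\S~2.6]{Serre}) to identify $C^*(\bigvee^g S^1,k)$ with the same cochain algebra. The step you flag as the main obstacle --- goodness of the free group --- is precisely the input the paper's proof cites from Serre, and your cohomological sketch of it is accurate.
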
{}

\begin{proof} Since $X$ is weakly ordinary, by \cref{achin2} and \cref{ordinary-unipotent-homotopy}, $\pi_1^{\et}(X)_p$ is a free pro-$p$-group scheme in $g$ generators and $R\Gamma (X, \cO) \simeq R\Gamma_{\et}(X, k) \simeq R\Gamma (B\pi_1^{\et}(X)_p, \cO)$. 
Since $\bigvee ^g S^1$ is an Eilenberg–MacLane space for the free group on $g$ generators (which is a good group in the sense of Serre \cite[\S~2.6]{Serre}), it follows that  $C^*(\bigvee ^g S^1, k) \simeq R\Gamma(B\pi_{\et}^1(X)_p, \cO)$. By combining these isomorphisms, we obtain that $\UU(X) \simeq \UU(\bigvee^g S^1)$.
\end{proof}{}

\begin{remark}\label{freegroups}
Let $X$ be a proper, weakly ordinary curve of genus $g$ over an algebraically closed field $k$ of characteristic $p > 0$. \cref{ordinarycurve} in particular implies that $\pi_1^{\mathrm U}(X)$ is the free pro-$p$ group scheme on $g$ generators. Consequently, the non-commutative formal Lie group attached to $X$ (see \cref{rhi1}) is given by $F(X;Y)= X_i + Y_i + X_i Y_i$, which can be thought of as the non-commutative analogue of $\widehat{\GG}_m^g$. 
Note that the analogue of the latter statement for ordinary abelian varieties is well-known (see \cref{fglabelian}): 
if $X$ is an ordinary abelian variety of dimension $g$, then the formal completion of $X^\vee$ at zero is representable by $\widehat{\GG}_m^g$.
\end{remark}

\begin{example}\label{usefulnoriwork}
We note that \cref{ordinarycurve} gives a generalization of 
{\cite[\S~IV, Ex.~1]{MR682517}}, since the curves obtained by identifying certain disjoint sets of points of $\PP^1_{k}$ produces a curve that is weakly ordinary. In particular, we get that an ordinary ellipitic curve or a nodal cubic curve has the same unipotent homotopy type, which is that of the circle $S^1$.
\end{example}{}

\begin{example}From {\cite[\S~IV, Ex.~2]{MR682517}}, it follows that for the cuspidal cubic curve $X$, the associated formal group law is $\widehat{\GG}_a$. 
This also shows that $\pi_1^{\mathrm U}(X) \simeq W[F]$, where $W[F]$ is the kernel of Frobenius on the ring scheme of $p$-typical Witt vectors.
\end{example}{}

\subsection{Unipotent homotopy type of abelian varieties}\label{spri2}
In this section, we prove that if $X$ is an abelian variety over a field $k$, then $\UU(X) \simeq \Bb \pi_1^{\mathrm U}(X)$. In particular, $\pi_i ^\mathrm{U} (X)$ is the zero group scheme for $i \ge 2$. In other words, one can say that the unipotent homotopy type of the abelian variety $X$ is
a $K(\pi^{\mathrm U},1)$. It follows from the product formula (\cref{productfor}) that $\pi_1^{\mathrm U}(X)$ is a commutative group scheme for an abelian variety $X$.

\begin{definition}\label{commfgl}
We fix an integer $g \ge 1$. Let $k \llbracket \underline{X};\underline{Y};\underline{Z} \rrbracket$ be the commutative formal power series ring in $X_1, \ldots, X_g, Y_1, \ldots, Y_g$, $ Z_1, \ldots, Z_g$. A \textit{commutative formal group law} of dimension $g$ over $k$ is a collection $F(\underline{X};\underline{Y})= (F_1(\underline{X}; \underline{Y}), \ldots, F_g(\underline{X};\underline{Y}))$ with the $F_i(\underline{X},\underline{Y}) \in k \llbracket \underline{X},\underline{Y} \rrbracket$ such that 
\begin{enumerate}
    \item $F_i(\underline{X};0) = F_i (0;\underline{X}) = X_i$ for $1 \le i \le g$;
    \item $F(\underline{X};\underline{Y})= F(\underline{Y};\underline{X})$;
    \item $F(F(\underline{X};\underline{Y});\underline{Z})= F(\underline{X}; F(\underline{Y};\underline{Z})) $ in the ring $k \llbracket \underline{X};\underline{Y};\underline{Z} \rrbracket$.
\end{enumerate}
\end{definition}

\begin{definition}[Commutative formal Lie groups]\label{usethisdefasrefsays2}
The category of commutative formal Lie groups over $k$ of dimension $g$ is defined to be the opposite category of commutative, cocommutative, linearly compact topological Hopf algebras over $k$ whose underlying algebra object of $\Pro(\mathrm{Vect}_k^{\mathrm{fd}})$ is isomorphic to the commutative power series ring $k \llbracket \underline{X}  \rrbracket$ in the variables $X_1, \ldots, X_g$.   
\end{definition}

\begin{remark}[Commutative formal groups and duality]\label{thisisnew}
Following \cite[\S~37.3.1]{MR2987372}, one may define the category of commutative formal groups over a field $k$ to be the opposite category of commutative, cocommutative, linearly compact topological Hopf algebras over $k$. 
Note that commutative formal groups can also be thought of as pro-representable functors from the category of finite-dimensional $k$-algebras to the category of abelian groups. By \cref{durevision}, the category of commutative affine group schemes over $k$ is anti-equivalent to the category of commutative formal groups over $k$ under duality (see also \cite[Thm.~37.3.12]{MR2987372}). 
\end{remark}

\begin{remark}
 Note that a commutative formal group law in the sense of \cref{commfgl} defines a commutative formal Lie group. 
 Conversely, every commutative formal Lie group arises in this way.
 Moreover, in dimension $1$, the notions of commutative and non-commutative formal Lie group coincide.
\end{remark}{}

\begin{proposition}\label{fglabelian}
Let $X$ be an abelian variety over a field $k$. Then the dual of the unipotent group scheme $\pi_1^{\mathrm U}(X)$ is naturally isomorphic to the commutative formal Lie group obtained by taking the formal completion of the dual abelian variety $X^\vee$ at zero.
\end{proposition}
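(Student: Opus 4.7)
The plan is to identify $\pi_1^{\mathrm U}(X)$ via its universal property and then apply Cartier duality between commutative unipotent affine group schemes and commutative connected formal groups. First I would use \cref{prop:Nori-affine-pi1} to replace $\pi_1^{\mathrm U}(X)$ with Nori's unipotent fundamental group scheme $\pi_1^{\mathrm{U,N}}(X)$. Since $X$ is a commutative group object in stacks and $\mathbf{U}$ is a left adjoint (hence preserves finite products by \cref{productfor}), the stack $\mathbf{U}(X)$ inherits a commutative group structure, so $\pi_1^{\mathrm U}(X)$ is a commutative unipotent affine group scheme. In particular, its Cartier dual $\pi_1^{\mathrm U}(X)^\vee$ is a commutative connected formal group, and it suffices to construct a natural isomorphism $\pi_1^{\mathrm U}(X)^\vee \simeq \widehat{X^\vee}$ of pointed formal groups.

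To verify this identification, I would test both sides against an arbitrary commutative unipotent affine group scheme $G$. By \cref{lem:Nori-universal-property-restated} together with \cref{lemma:pointed-classifying-stack}, there is a natural bijection between $\Hom(\pi_1^{\mathrm U}(X), G)$ and the set of isomorphism classes of pointed $G$-torsors over $(X, x)$. When $G$ is commutative and $X$ is an abelian variety, a classical theorem of Rosenlicht (see, e.g., Serre, \emph{Groupes alg\'ebriques et corps de classes}, VII) promotes every such pointed $G$-torsor to an extension of commutative group schemes, yielding a canonical identification with the abelian group $\Ext^1(X, G)$ computed in the fpqc topology.

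Next I would invoke the Weil--Barsotti formula $\underline{\Ext}^1(X,\GG_m) \simeq X^\vee$, which after Cartier duality between commutative unipotent group schemes and commutative connected formal groups, gives a natural isomorphism
\[ \Ext^1(X, G) \simeq \Hom(G^\vee, \widehat{X^\vee}), \]
where the right-hand side is the set of morphisms in the category of commutative formal groups over $k$ (see Mazur--Messing, \emph{Universal Extensions and One-dimensional Crystalline Cohomology}, Ch.~I, for this identification). Combining the above natural bijections, we obtain
\[ \Hom(\pi_1^{\mathrm U}(X), G) \simeq \Hom(G^\vee, \widehat{X^\vee}), \]
functorially in $G$. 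Since Cartier duality induces an anti-equivalence between commutative unipotent affine group schemes and commutative connected formal groups, Yoneda's lemma applied to this equivalence produces the desired isomorphism $\pi_1^{\mathrm U}(X)^\vee \simeq \widehat{X^\vee}$ of formal Lie groups.

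The main obstacle lies in the identification $\Ext^1(X, G) \simeq \Hom(G^\vee, \widehat{X^\vee})$; while this is a classical consequence of the Weil--Barsotti formula and Cartier duality, one must carefully set up formal-group Cartier duality in positive characteristic (where $\widehat{X^\vee}$ may have nontrivial height) and verify that the identification is natural in $G$. Once this has been addressed, the argument above is essentially formal. An alternative route that avoids invoking Weil--Barsotti directly would be to observe that $X$ carries a canonical universal extension by a vector group in characteristic zero (respectively by a unipotent formal group in characteristic $p$) whose Cartier dual realizes $\widehat{X^\vee}$; however, appealing to the classical literature as above gives the shortest path.
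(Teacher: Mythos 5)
Your route differs genuinely from the paper's: the paper simply cites Nori \cite[\S~IV, Props.~2 and 6]{MR682517} for the identification, after noting commutativity via \cref{productfor}, while you attempt to reprove the identification from scratch by Yoneda-testing $\pi_1^{\mathrm U}(X)$ against commutative unipotent $G$, passing through $\Ext^1(X,G)$ and Weil--Barsotti. This is a sensible plan and is close in spirit to what Nori's own proof does, so the overall architecture is sound; the uniform treatment of both characteristics is also a nice feature compared with the paper's case split.

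The genuine gap is in the step you attribute to ``Rosenlicht / Serre~VII,'' namely that every pointed $G$-torsor over the abelian variety $X$ with $G$ commutative unipotent carries a canonical group structure making $0 \to G \to P \to X \to 0$ exact, i.e.\ that $\Ext^1(X,G) \to H^1(X,G)$ is bijective. The theorem in Serre, \emph{Groupes alg\'ebriques et corps de classes}, Ch.~VII, \S 3, applies to \emph{connected} commutative linear algebraic groups (and is phrased for reduced algebraic groups, not general group schemes). But you need the statement for finite unipotent group schemes such as $\alpha_p$, $W_n[F]$, and $\ZZ/p^n\ZZ$, as well as for pro-algebraic unipotent $G$ (since $\pi_1^{\mathrm U}(X)$ is pro-algebraic, your Yoneda argument must be carried out over the whole pro-category). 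Getting from Serre's connected case to the finite and pro-algebraic cases requires a nontrivial d\'evissage (e.g.\ using $0 \to \alpha_p \to \GG_a \xrightarrow{F} \GG_a \to 0$, Verschiebung filtrations, and compatibility of $\Ext^1$ and $H^1$ with filtered limits), together with the duality $\Ext^1(X,N) \simeq \Hom(N^\vee, X^\vee)$ for finite $N$, which is \emph{not} in Serre's book (it belongs to the later fppf-cohomological duality theory of abelian varieties). This is precisely where the content of the result lies, and it is the part Nori actually proves; citing Rosenlicht and Serre alone does not close it. A secondary, smaller issue is that your final step $\Ext^1(X,G) \simeq \Hom(G^\vee, \widehat{X^\vee})$ is presented as a formal consequence of Weil--Barsotti plus Cartier duality, but the passage from $X^\vee$ to the formal completion $\widehat{X^\vee}$ (using that $G^\vee$ is a connected formal group so morphisms factor through the completion) and the derived-functor bookkeeping that makes $\Ext^1(X, -)$ and $\Hom((-)^\vee, X^\vee)$ compatible functors in $G$ should be spelled out rather than asserted.
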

\begin{proof}
 As already noted, by \cref{productfor}, $\pi^{\mathrm U}_1(X)$ is commutative. The claim in the proposition follows from \cite{MR682517}. Indeed, according to \cite[\S~IV, Prop.~6]{MR682517}, if $k$ has positive characteristic, then the dual of $\pi^{\mathrm U}_1(X)$ is given by the colimit over all local finite subgroup schemes of the dual abelian variety $X^\vee$ in the category of formal groups.
 This colimit is isomorphic to the formal completion of $X^\vee$ at zero and thus corresponds to a commutative formal Lie group of dimension $g$. If $k$ has characteristic zero, then (see \cite[\S~IV, Prop.~2]{MR682517}) we have $\pi^{\mathrm U}_1(X) \simeq \GG^g_a$ and the formal completion of $X^\vee$ is $\widehat{\mathbf G}_a^g$; thus the result follows directly.
\end{proof}{}
\begin{proposition}\label{abelianvar}Let $X$ be an abelian variety a field $k$. Then $\UU(X) \simeq B \pi_1^{\mathrm U}(X)$.
\end{proposition}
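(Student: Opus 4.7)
The plan is to mimic the strategy used for curves in \cref{curve}. By construction, the truncation provides a natural morphism of affine stacks $\mathbf{U}(X) \to \tau_{\le 1}\mathbf{U}(X) \simeq B\pi_1^{\mathrm U}(X)$. Since both sides are affine stacks, by \cref{cope} it suffices to show that the induced morphism
$$\alpha \colon R\Gamma(B\pi_1^{\mathrm U}(X),\cO) \longrightarrow R\Gamma(\mathbf{U}(X),\cO) \simeq R\Gamma(X,\cO)$$
is a quasi-isomorphism of underlying chain complexes (it is automatically a morphism in $\mathrm{DAlg}^{\mathrm{ccn}}_k$).

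The first major step is to compute the left-hand side. By \cref{fglabelian}, the commutative unipotent group scheme $\pi_1^{\mathrm U}(X)$ is Cartier dual to the formal Lie group $\widehat{X^\vee}$ of dimension $g = \dim X$, whose coordinate ring is the commutative power series ring $A \colonequals k\llbracket t_1,\ldots,t_g\rrbracket$. Arguing exactly as in the proof of \cref{curve} (that is, identifying the cobar construction coming from descent along $\ast \to B\pi_1^{\mathrm U}(X)$ with Ext over the completed group algebra, i.e.\ over $A$ by Cartier duality), one obtains an identification $R\Gamma(B\pi_1^{\mathrm U}(X),\cO) \simeq R\Hom_A(k,k)$. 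Since $A$ is a regular local ring with residue field $k$, the Koszul complex gives a minimal free resolution of $k$ over $A$, so
$$R\Hom_A(k,k) \simeq \bigoplus_{i \ge 0} \bigwedge^i T_0 \widehat{X^\vee}[-i] \simeq \bigoplus_{i \ge 0} \bigwedge^i H^1(X,\cO)[-i],$$
using the canonical identification $T_0 \widehat{X^\vee} \simeq \mathrm{Lie}(X^\vee) \simeq H^1(X,\cO)$. A standard Koszul-duality computation further identifies the Yoneda product with the exterior algebra structure on $\bigwedge^\ast H^1(X,\cO)$ in degree $1$.

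The second step is to recall the classical computation for the right-hand side: for an abelian variety $X$ of dimension $g$, the ring $H^\ast(X,\cO)$ is canonically the exterior algebra $\bigwedge^\ast H^1(X,\cO)$ on generators in degree $1$ (this follows, e.g., from Mumford's analysis using the multiplication map). Hence both $H^\ast$'s are the free exterior algebra on $H^1(X,\cO)$ sitting in degree $1$. The map $\alpha$ is a map of graded-commutative rings, and on $H^1$ it coincides with the natural isomorphism $\Hom(\pi_1^{\mathrm U}(X),\mathbf{G}_a) \simeq H^1(X,\cO)$ coming from the Hurewicz identification of \cref{hurewicz2}. Since both sides are free exterior algebras on these generators and $\alpha$ matches them, $\alpha$ is surjective; as source and target are finite-dimensional of the same dimension in each degree, it is an isomorphism, completing the proof.

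The main obstacle is the cohomological computation of $R\Gamma(B\pi_1^{\mathrm U}(X),\cO)$: one needs both the Koszul resolution (to get the underlying complex) and the Yoneda-product structure (to identify it as an exterior algebra on $H^1$ rather than merely as a graded module). Once this is in hand, the matching of the two sides via $\alpha$ is forced by the universal property and the Hurewicz comparison on $H^1$.
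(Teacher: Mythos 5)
Your proof is correct and follows essentially the same approach as the paper: reduce to showing that $R\Gamma(B\pi_1^{\mathrm U}(X),\cO)$ is the exterior algebra on $H^1(X,\cO)$, use \cref{fglabelian} to pass to the commutative formal Lie group $\widehat{X^\vee}$, compute $R\Hom_{k\llbracket x_1,\ldots,x_g\rrbracket}(k,k)$ by a Koszul complex, and match with $H^*(X,\cO) \simeq \wedge^* H^1(X,\cO)$ via the degree-$1$ Hurewicz isomorphism. The one cosmetic difference is that where you re-derive the identification $R\Gamma(B\pi_1^{\mathrm U}(X),\cO) \simeq R\Hom_A(k,k)$ by imitating the argument for curves, the paper simply cites \cref{paderborn1}, which packages exactly this step.
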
{}

\begin{proof}
We need to show that the natural truncation map $X \to B\pi_1^{\mathrm U}(X)$ induces an isomorphism $R\Gamma (B\pi_1^{\mathrm U}(X), \cO) \to R\Gamma(X, \cO)$. 
By \cref{postnikov}, this map is an isomorphism on cohomology groups $H^i(\,\cdot\,)$ for $i \le 1$. Therefore, it would be enough to prove that $H^* (B \pi_1^{\mathrm U}(X), \cO)$ is an exterior algebra generated by the elements of degree $1$. To do so, we may view $R\Gamma (B \pi_1^{\mathrm U}(X), \cO)$ as an associative algebra object in the sense of \cite{luriehigher}.
Since by \cref{fglabelian}, the dual of $\pi_1^{\mathrm U}$ is a commutative formal Lie group of dimension $g = \dim X$, the associative algebra object $R\Gamma(B\pi_1^{\mathrm U}(X),  \mathscr O)$ can be computed as $R\Hom_{k \llbracket x_1, \ldots, x_g \rrbracket}(k,k)$ (see, e.g.,~\cref{paderborn1}). The latter can be computed via a standard Koszul complex which yields the claim.
\end{proof}{}

\begin{lemma}\label{achin02}
Let $X$ be an abelian variety over an algebraically closed field $k$ of characteristic $p > 0$.
Then there are natural isomorphisms $R\Gamma_{\et}(X, k) \simeq R\Gamma(B\pi^{\et}_1(X)_p, \cO)$. Further, $\pi_1^{\et}(X)_p$ is the free commutative pro-$p$ group scheme on $\dim H^1_{\et}(X, k)$ generators (see \cref{cold91}).
\end{lemma}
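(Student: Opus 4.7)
The plan has three phases: first reduce both sides of the desired isomorphism to something of the form $\mathbf{U}(X)^{\mathrm{perf}}$; then identify $\mathbf{U}(X)^{\mathrm{perf}}$ with a classifying stack using only $\pi_1$; and finally pin down the rank of that $\pi_1$.

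First, I would combine \cref{abelianvar} with the perfection argument from the proof of \cref{unipotent-etale-comparison}. By \cref{abelianvar}, $\mathbf{U}(X) \simeq B\pi_1^{\mathrm{U}}(X)$, so $R\Gamma(X,\mathcal{O}) \simeq R\Gamma(B\pi_1^{\mathrm{U}}(X), \mathcal{O})$. Since $H^i(X,\mathcal{O})$ is finite-dimensional for all $i \geq 0$, the Artin–Schreier sequence and \cref{jacket1} give $R\Gamma_\et(X,k) \simeq R\Gamma(X,\mathcal{O})_{\mathrm{perf}}$. Applying $\Spec$ and using that $\Spec$ sends the filtered colimit along Frobenius in $\mathrm{DAlg}^{\mathrm{ccn}}_k$ to the limit along Frobenius in stacks, we obtain $\Spec R\Gamma_\et(X,k) \simeq \mathbf{U}(X)^{\mathrm{perf}}$. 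This reduces both claims to understanding the affine stack $\mathbf{U}(X)^{\mathrm{perf}}$.

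Next, I would show that $\mathbf{U}(X)^{\mathrm{perf}} \simeq B\pi_1^\et(X)_p$. The stack $\mathbf{U}(X)^{\mathrm{perf}}$ is a limit of affine stacks, hence affine, and it is pointed and connected. Its homotopy sheaves can be computed by the Milnor sequence of \cite[Prop.~3.22]{MR100} applied to the inverse system $\{\mathbf{U}(X)\}$ along Frobenius. For $i \geq 2$, \cref{abelianvar} gives $\pi_i^{\mathrm{U}}(X) = 0$, so the inverse limit (and $\varprojlim^1$) vanish and $\pi_i(\mathbf{U}(X)^{\mathrm{perf}}) = 0$. For $i = 1$, the same Milnor argument (as in the proof of \cref{unipotent-etale-comparison}) yields $\pi_1(\mathbf{U}(X)^{\mathrm{perf}}) \simeq \pi_1^{\mathrm{U}}(X)^{\mathrm{perf}}$, which by \cref{tired1} and \cref{unipotent-etale-comparison} is exactly the maximal pro-\'etale quotient of $\pi_1^{\mathrm{U}}(X)$, i.e., $\pi_1^\et(X)_p$. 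Applying \cref{thmoftoen} to the affine stack $\mathbf{U}(X)^{\mathrm{perf}}$ (whose only nontrivial homotopy group scheme is $\pi_1^\et(X)_p$) then gives $\mathbf{U}(X)^{\mathrm{perf}} \simeq B\pi_1^\et(X)_p$, establishing the first isomorphism.

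Finally, I would pin down $\pi_1^\et(X)_p$ as the free commutative pro-$p$-finite group scheme of the claimed rank. Commutativity follows from \cref{productfor}, which makes $\pi_1^{\mathrm{U}}(X)$ and hence its pro-\'etale quotient commutative. By \cref{compareetaleuni}, under the embedding of \cref{compare}, $\pi_1^\et(X)_p$ corresponds to the pro-$p$-finite group $\pi_1^{\et,\mathrm{AM}}(X)_p$, which for an abelian variety over an algebraically closed field equals the $p$-adic Tate module $T_p(X) \simeq \mathbf{Z}_p^f$, where $f$ is the $p$-rank of $X$. Hence $\pi_1^\et(X)_p \simeq \mathbf{Z}_p^{\oplus f}$ as group schemes (see \cref{cold91}), which is free commutative pro-$p$ of rank $f$. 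Matching this with $\dim H^1_\et(X,k)$ is then immediate: the identification $R\Gamma_\et(X,k) \simeq R\Gamma(B\pi_1^\et(X)_p, \mathcal{O})$ from the first claim (or directly the standard identity $H^1_\et(X,\mathbf{F}_p) \simeq \Hom(T_p X, \mathbf{F}_p) \simeq \mathbf{F}_p^f$) gives $\dim H^1_\et(X,k) = f$.

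The main obstacle is the intermediate step of upgrading the isomorphism $\pi_1(\mathbf{U}(X)^{\mathrm{perf}}) \simeq \pi_1^{\mathrm{U}}(X)^{\mathrm{perf}}$ to a genuine equivalence of stacks $\mathbf{U}(X)^{\mathrm{perf}} \simeq B\pi_1^\et(X)_p$; this requires both the vanishing of higher $\pi_i$ (supplied by \cref{abelianvar}) and affineness of the perfected stack so that \cref{thmoftoen} applies. Everything else is essentially a repackaging of classical facts about abelian varieties together with results already in the paper.
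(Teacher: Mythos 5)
Your proof is correct, and the overall skeleton (reduce to $\mathbf{U}(X)^{\mathrm{perf}}$, identify it with a classifying stack, pin down $\pi_1$) matches the paper's. The main place where you diverge is in the second claim. The paper argues intrinsically: since $\pi_1^{\mathrm{U}}(X)$ is dual to a formal Lie group by \cref{fglabelian}, so is its perfection $\pi_1^{\mathrm{U}}(X)^{\mathrm{perf}} \simeq \pi_1^{\mathrm{U}}(X)_{\mathrm{red}}$ (by Dieudonn\'e theory, passing to the \'etale part of the formal group), and the only group schemes that are simultaneously dual to a formal Lie group and associated to a pro-$p$-finite group are the $\mathbf{Z}_p^{\oplus f}$. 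You instead invoke \cref{compareetaleuni} to transfer to the classical pro-$p$ \'etale fundamental group and then quote the standard identification $\pi_1^{\et}(X)_p^{\mathrm{AM}} \simeq T_pX \simeq \mathbf{Z}_p^f$ for abelian varieties. Both are valid; the paper's route stays inside the formal-group/Dieudonn\'e dictionary that the rest of \cref{spri2} is built on, while yours imports a classical external fact and so is shorter. For the first claim, your Milnor-sequence computation of the homotopy sheaves of $\mathbf{U}(X)^{\mathrm{perf}}$ followed by To\"en's classification (\cref{thmoftoen}) is a clean way to establish $\mathbf{U}(X)^{\mathrm{perf}} \simeq B\pi_1^{\et}(X)_p$; the paper's corresponding step (referenced via \cref{achin2}) uses \cref{abelianvar} more directly but is essentially the same content, and your version makes the use of the Milnor sequence from \cite{MR100} more transparent.
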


\begin{proof}
    The isomorphism $R\Gamma_{\et}(X, k) \simeq R\Gamma(B\pi^{\et}_1(X)_p, \cO)$ follows from \cref{abelianvar} in a way similar to the proof of \cref{achin2}. Since $\pi_1^{\mathrm{U}}(X)$ is dual to a formal Lie group (\cref{fglabelian}), it follows (e.g., by Dieudonn\'e theory) that $\pi^{\et}_1(X)_p \simeq \pi_1^{\mathrm{U}}(X)^{\mathrm{perf}} \simeq \pi_1^{\mathrm{U}}(X)_{\mathrm{red}}$ is also dual to a formal Lie group. Since $\pi_1^{\et}(X)_p$ is the affine group scheme associated with a pro-$p$-finite group (see \cref{compareetaleuni}), the latter assertion also follows.
\end{proof}{}

\begin{proposition}
Let $X$ be a weakly ordinary abelian variety of dimension $g$ over an algebraically closed field $k$ of characteristic $p>0$. Then $\UU(X) \simeq \UU((S^1) ^{\times g})$.
\end{proposition}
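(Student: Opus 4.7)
The plan is to identify both sides with $B\mathbf{Z}_p^{\oplus g}$, following the template of the proof of \cref{ordinarycurve}. For the left-hand side, \cref{abelianvar} gives $\UU(X) \simeq B\pi_1^{\mathrm U}(X)$. Since $X$ is weakly ordinary, \cref{ordinary-unipotent-homotopy} says the \'etale comparison map $\pi_1^{\mathrm U}(X) \to \pi_1^{\et}(X)_p$ is an isomorphism, and \cref{achin02} identifies $\pi_1^{\et}(X)_p$ with the free commutative pro-$p$-finite group scheme on $g = \dim_k H^1(X, \cO)$ generators, namely $\mathbf{Z}_p^{\oplus g}$ viewed as a pro-\'etale group scheme through the fully faithful embedding of \cref{compare}. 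Concatenating these gives $\UU(X) \simeq B\mathbf{Z}_p^{\oplus g}$.

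For the right-hand side, I would use that $(S^1)^{\times g}$ is an Eilenberg--MacLane space $K(\mathbf{Z}^g, 1)$ and that the finitely generated abelian group $\mathbf{Z}^g$ is a good group in the sense of Serre \cite[\S~2.6]{Serre}. Goodness means that the continuous cohomology of the pro-$p$ completion $\mathbf{Z}_p^{g}$ with $\mathbf{F}_p$ coefficients coincides with the singular cohomology of $K(\mathbf{Z}^g, 1) = (S^1)^{\times g}$. Via the identification of continuous group cohomology with stack cohomology of classifying stacks from \cref{compare}, this yields an isomorphism $C^*((S^1)^{\times g}, k) \simeq R\Gamma(B\mathbf{Z}_p^{\oplus g}, \cO)$ in $\mathrm{DAlg}^{\mathrm{ccn}}_k$, and hence $\UU((S^1)^{\times g}) \simeq B\mathbf{Z}_p^{\oplus g}$. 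Combined with the first paragraph, this produces the desired equivalence $\UU(X) \simeq \UU((S^1)^{\times g})$.

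The main point requiring care is promoting the cohomological comparison to an equivalence of affine stacks, i.e., to an isomorphism in $\mathrm{DAlg}^{\mathrm{ccn}}_k$ and not merely in the derived category of $k$. One can either invoke the $p$-adic homotopy theory of \cref{p-adichomotopytheory}, which guarantees fully faithfulness of the cochain functor on pro-$p$-finite anima and thus automatically upgrades the cohomology-level comparison for the good space $(S^1)^{\times g}$ to an equivalence of $\mathrm{DAlg}^{\mathrm{ccn}}_k$-objects, or else produce explicit natural maps --- the map $\underline{K(\mathbf{Z}^g, 1)} \to K(\mathbf{Z}_p^{\oplus g}, 1)$ induced by the inclusion $\mathbf{Z} \hookrightarrow \mathbf{Z}_p$ and the map $\UU(X) \to B\mathbf{Z}_p^{\oplus g}$ from the first paragraph --- and verify that the induced maps on $R\Gamma(\,\cdot\,, \cO)$ are isomorphisms (both sides have cohomology concentrated in degrees $0$ and $g$, computable via a Koszul resolution as in the proof of \cref{abelianvar}, and the map on $H^1$ is the isomorphism $\Hom(\mathbf{Z}_p, \GG_a) \simeq \Hom(\mathbf{Z}, k)$ coming from the universal property of $\mathbf{Z}^{\mathrm{uni}} = \mathbf{Z}_p$ in \cref{teach}).
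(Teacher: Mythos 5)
Your proof is correct and follows essentially the same route as the paper's (which is stated there simply as ``follows in a way similar to the proof of \cref{ordinarycurve}''): identify $\UU(X)$ with $B\ZZ_p^{\oplus g}$ via \cref{abelianvar}, \cref{ordinary-unipotent-homotopy}, and \cref{achin02}, then match cochains using that $\ZZ^g$ is a good group in the sense of Serre. Minor typo aside --- the cohomology of $(S^1)^{\times g}$ and of $B\ZZ_p^{\oplus g}$ is the exterior algebra concentrated in degrees $0$ \emph{through} $g$, not merely $0$ and $g$ --- the argument, including your careful upgrade from a cohomology-level comparison to an equivalence in $\mathrm{DAlg}^{\mathrm{ccn}}_k$ via an explicit natural map or via \cref{p-adichomotopytheory}, is sound.
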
{}
\begin{proof}
This follows in a way similar to the proof of \cref{ordinarycurve}.
\end{proof}

\subsection{Derived equivalent abelian varieties}\label{cold6}
Since the unipotent homotopy group scheme is an invariant of schemes, it is a natural question to ask if they are in fact derived invariants. More precisely, if $X$ and $Y$ are two smooth projective algebraic varieties defined over an algebraically closed field $k$ of characteristic $p>0$ such that $D_{\mathrm{perf}}(X)$ and $D_{\mathrm{perf}}(Y)$ are equivalent as $k$-linear triangulated categories, one can ask if $\pi_i^{\mathrm{U}} (X)$ and $\pi_i^{\mathrm{U}} (Y)$ are isomorphic. We will return to this theme in \cref{worldcup1} in greater detail, where this question will be studied for Calabi--Yau varieties. The goal of this subsection is to record examples showing that the unipotent fundamental group scheme is \emph{not} a derived invariant in general.
More precisely, we give examples of abelian threefolds $X$ such that $\pi_1^{\mathrm{U}}(X)$ and $\pi_1^{\mathrm{U}}(X^\vee)$ are not isomorphic as group schemes (see \cref{worldcup3}). This case is particularly interesting because many of the interesting numerical invariants are equal for $X$ and $X^\vee$. For example, in \cite[Thm.~1.2]{AB1}, it is shown that certain numerical invariants are derived invariants for \emph{all} smooth projective varieties of dimension $\le 3$. Also, as explained in \cref{worldcup4}, one cannot find such examples of abelian varieties of dimension $\le 2$ (in odd characteristic).
\vspace{2mm}

Our construction will be described in \cref{not-derived-invariant}. Let us mention the key ideas in our construction. We begin by understanding the moduli of certain Dieudonn\'e modules of dimension $3$, extending the work of Oda and Oort \cite[Prop.~4.1]{Ooo} to certain non-supersingular cases. Slightly more precisely, we construct a family of isomorphism classes of Dieudonn\'e modules of dimension $3$, height $6$ and with symmetric Newton polygons of slopes $\frac 1 3$ and $\frac 2 3$ that is parametrized by $\GG_m$. Further, we construct this family to be stable under the duality of Dieudonn\'e modules; 
the operation of taking duals corresponds to the endomorphism $z \mapsto \frac{(-1)^{p-1}}{z}$ of $\GG_m$. This is described in \cref{a=1}.
We then construct an abelian variety $X_z$ whose Dieudonn\'e module is isomorphic to one in the family mentioned above for some $z \in \GG_m(k)$ satisfying $z^2 \ne (-1)^{p-1}$ and show that it has the desired property that $\pi_1^{\mathrm{U}}(X_z) \not\simeq \pi_1^{\mathrm{U}}(X_z^\vee)$.
In particular, this gives a general strategy for producing many examples of abelian varieties which do not admit a principal polarization by looking inside the moduli space Dieudonn\'e modules.
\vspace{2mm}

We will begin by quickly recalling some necessary background on Dieudonn\'e theory which we will need later; see \cite{YManin}, \cite[\S~III.8]{dem}, \cite[\S~6]{Fon1}, \cite[\S~1]{Ooo}, \cite[\S~1]{MR1792294} for more details on the classical Dieudonn\'e theory (see \cite{Mon21} for a recent stacky approach). For this subsection, we assume that $k$ is a perfect field of characteristic $p > 0$. Let $W(k)$ denote the ring of $p$-typical Witt vectors of $k$ with the Witt vector Frobenius denoted as $\sigma \colon W(k) \to W(k)$.
\begin{definition}
The \emph{Dieudonn\'e ring} $\mathscr{D}_k$ is the quotient of the non-commutative polynomial ring $W(k)\{F,V\}$ by the relations $FV = VF = p$ and $F x = \sigma(x)F$ and $V  x = \sigma^{-1}(x)V$ for all $x \in W(k)$.
For us, a \emph{Dieudonn\'e module} will be a left $\mathscr{D}_k$-module whose underlying $W(k)$-module is free of finite rank.
\end{definition}
\begin{remark}\label{Dieudonne-ring-sum}
The underlying $W(k)$-module of $\mathscr{D}_k$ is a free module with a set of basis elements given by $\left \{1,F,F^2, \dotsc, V, V^2, \dotsc \right \}$. Similarly, the ideal $(F,V) \mathscr{D}_k \subset \mathscr{D}_k$ is a free $W(k)$-module with basis given by $\left \{p,F,F^2, \dotsc, V, V^2, \dotsc\right \}$. 
\end{remark}
\begin{theorem}[Dieudonn\'e]\label{covariant-Dieudonne}
There is a contravariant equivalence $\mathbf{M}$ between the category of $p$-divisible groups over $k$ and the category of Dieudonn\'e modules which satisfies the following properties:
\begin{enumerate}
    \item For any $p$-divisible group $G$ over $k$, we have $\rk_{W(k)} \mathbf{M}(G) = \height(G)$.
    \item\label{covariant-Dieudonne-dual} For any $p$-divisible group $G$ over $k$ with dual $G^\vee$, the Dieudonn\'e module $\mathbf{M}(G^\vee)$ is given by the $W(k)$-module $\mathbf{M}(G)^\vee \colonequals \Hom_{W(k)}(\mathbf{M}(G),W(k))$ with actions $F \cdot \theta \colonequals \sigma \circ \theta \circ V$ and $V \cdot \theta \colonequals \sigma^{-1} \circ \theta \circ F$.
\end{enumerate}
\end{theorem}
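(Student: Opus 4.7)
The approach is to first construct $\mathbf{M}$ on the abelian category of finite flat commutative $k$-group schemes of $p$-power order, verify both properties there, and then extend to $p$-divisible groups by passage to limits: for $G = \varinjlim G_n$, set $\mathbf{M}(G) \colonequals \varprojlim_n \mathbf{M}(G_n)$. On finite flat $H$, I would use the canonical decomposition into connected and \'etale parts of both $H$ and its Cartier dual to reduce to three building blocks: \'etale $H$, multiplicative-type $H$, and infinitesimal unipotent $H$. On each block $\mathbf{M}$ is represented by a standard ind-object---respectively, by $W(\bar k)$ together with the Galois descent data of $H(\bar k)$; by Cartier duality from the \'etale case; and by $\mathrm{Hom}(H, CW_k)$, where $CW_k$ is the ind-group scheme of Witt covectors. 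In each case one verifies directly that $\mathbf{M}$ is an anti-equivalence of abelian categories onto the full subcategory of finite length $\mathscr{D}_k$-modules on which $F$ is bijective, $V$ is bijective, or both $F$ and $V$ are topologically nilpotent, respectively, and that these three sub-anti-equivalences glue to yield $\mathbf{M}$ on all finite flat commutative $p$-power group schemes.

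Property (1) is then formal: at each finite flat level the $W(k)$-length of $\mathbf{M}(H)$ equals the $k$-length of $\mathscr{O}(H)$ (easily verified in each of the three building blocks), so for a $p$-divisible group $G$ of height $h$ each $G_n$ has order $p^{nh}$, whence $\mathbf{M}(G_n)$ has $W(k)$-length $nh$, and taking the inverse limit yields a free $W(k)$-module of rank $h$. For the duality statement (2), I would exploit the tautological perfect Cartier pairing $G \times G^\vee \to \mu_{p^\infty}$. Applying $\mathbf{M}$ bifunctorially produces a perfect pairing of $W(k)$-modules $\mathbf{M}(G) \otimes_{W(k)} \mathbf{M}(G^\vee) \to \mathbf{M}(\mu_{p^\infty}) = W(k)$, and chasing the $\mathscr{D}_k$-linearity of this pairing through the explicit computation of how $F$ and $V$ act on $\mathbf{M}(\mu_{p^\infty})$ produces exactly the asserted semilinear formulas $F \cdot \theta = \sigma \circ \theta \circ V$ and $V \cdot \theta = \sigma^{-1} \circ \theta \circ F$.

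The main obstacle is the uniform construction of $\mathbf{M}$ that bypasses the three-case split and automatically encodes the duality without heavy computational bookkeeping. This is most transparently achieved by the crystalline route: one defines $\mathbf{M}(G)$ as the evaluation of the contravariant Dieudonn\'e crystal of $G$ on the trivial PD-thickening $W(k) \to k$. The three explicit representing objects above then appear as values of this crystal on the respective building blocks, assertion (1) becomes the compatibility of the crystal with the $p$-adic filtration $G[p^n] \subset G$, and assertion (2) reduces to Cartier duality for the Dieudonn\'e crystal of $\mu_{p^\infty}$---both of which are manifestly functorial and coordinate-free. Since the present paper only \emph{uses} the equivalence, one may also simply cite the classical treatment (e.g., \cite{YManin, Fon1}) and verify the specific duality formula in (2) by a direct unwinding on the finite flat pieces.
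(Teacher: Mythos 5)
The paper does not supply a proof of this theorem: it is recalled as a classical result, with references to \cite{YManin}, \cite[\S~6]{Fon1}, \cite[\S~1]{Ooo}, \cite[\S~1]{MR1792294} given in the surrounding text. Your closing remark --- that one may simply cite the classical treatment and verify the explicit duality formula in (2) by unwinding on finite flat pieces --- is therefore exactly what the paper does, and your sketch of the classical construction (trichotomy of finite flat $p$-power group schemes over a perfect field into \'etale, multiplicative, and biconnected parts, together with the standard representing objects in each case; the Cartier pairing with $\mu_{p^\infty}$ for the duality formula; or the crystalline definition to sidestep the case split entirely) is a correct outline. Two small points. First, an arithmetic slip: for finite flat $H$ of $p$-power order, the $W(k)$-length of $\mathbf{M}(H)$ equals $\log_p|H|$, not the $k$-length of $\cO(H)$, which is $\dim_k\cO(H)=|H|$; your subsequent step (length $nh$ when $|G_n|=p^{nh}$) has the right numbers, so only the stated equality is off by a logarithm. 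Second, concluding that $\varprojlim_n\mathbf{M}(G_n)$ is a \emph{free} $W(k)$-module of rank $h$ also uses the identification $\mathbf{M}(G_n)\simeq\mathbf{M}(G)/p^n\mathbf{M}(G)$ coming from $G_n=G[p^n]$, which is what makes the limit $p$-adically separated, complete, and $p$-torsion-free; this is standard, but it is the step that converts the length count into a rank statement, and a bare inverse limit of a surjective system of finite length modules would not suffice.
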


\begin{remark}
For a Dieudonn\'e module $M$, one defines $\dim M \colonequals \dim_k M/FM$.
\end{remark}{}

\begin{remark}\label{R_mn}
For positive integers $m,n$ such that $(m,n)=1$, the quotient $\mathscr{D}_{m,n} \colonequals \mathscr{D}_k/\mathscr{D}_k(V^n - F^m)$ is a Dieudonn\'e module of rank $n+m$ and dimension $n$. Note that under the notion of duality defined in \cref{covariant-Dieudonne}.\ref{covariant-Dieudonne-dual}, the dual of $\mathscr{D}_{m,n}$ is given by $\mathscr{D}_{n,m}$. The special cases when $m = 1$ and $n = 2$ and $m = 2$ and $n = 1$ will be important to us later on. Note that we have $\mathscr{D}_{1,2} \simeq W(k) \cdot 1 \oplus W(k) \cdot V \oplus W(k) \cdot F$. Similarly, $ \mathscr{D}_{2,1} \simeq W(k) \cdot 1 \oplus W(k) \cdot V \oplus W(k) \cdot F$. In $\mathscr{D}_{1,2}$, the $\mathscr{D}_k$-action satisfies $V \cdot V = F$ and $F \cdot F = pV$. Similarly, for $\mathscr{D}_{2,1}$, we have $V \cdot V = pF$ and $F \cdot F = V$.
Moreover, we have 
\[ (F,V) \mathscr{D}_{1,2} \simeq W(k) \cdot p \oplus W(k) \cdot V \oplus W(k) \cdot F; \; (F,V)^2 \mathscr{D}_{1,2} \simeq W(k) \cdot p \oplus W(k) \cdot pV \oplus W(k) \cdot F. \]
Similarly,
\[ (F,V) \mathscr{D}_{2,1} \simeq W(k) \cdot p \oplus W(k) \cdot F \oplus W(k) \cdot V; \; (F,V)^2 \mathscr{D}_{2,1} \simeq W(k) \cdot p \oplus W(k) \cdot pF \oplus W(k) \cdot V. \]
There are isomorphisms of Dieudonn\'e modules given by
\begin{equation}\label{back}
    (F,V) \mathscr{D}_{1,2} \simeq \mathscr{D}_{1,2}, \; p\mapsto F,\, F \mapsto V, \, V \mapsto 1; \quad (F,V) \mathscr{D}_{2,1} \simeq \mathscr{D}_{2,1}, \; p \mapsto V,\, V \mapsto F,\, F \mapsto 1.
\end{equation}
Consider the composition
\[ \beta \colon \mathscr{D}_k \times (F,V) \mathscr{D}_k \xrightarrow{\text{}} (F,V) \mathscr{D}_k \rightarrow pW(k) \simeq W(k) \]
in which the first morphism is the multiplication in $R$, the second morphism is the projection onto the direct summand $pW(k)$ in the decomposition arising from \cref{Dieudonne-ring-sum} and the third morphism is division by $p$.
Since $\beta(\mathscr{D}_k(V^2-F) \cdot (F,V) \mathscr{D}_k(V - F^2)) = \beta(\mathscr{D}_k(V-F^2) \cdot (F,V) \mathscr{D}_k(V^2 - F)) = 0$, $\beta$ descends to natural pairings $\bar{\beta}_1 \colon \mathscr{D}_{1,2} \times (F,V) \mathscr{D}_{2,1} \to W(k)$ and $\bar{\beta}_2 \colon \mathscr{D}_{2,1} \times (F,V) \mathscr{D}_{1,2} \to W(k)$ under which $\mathscr{D}_{1,2}$ and $\mathscr{D}_{2,1}$ are dual to each other (using \cref{back}).
\end{remark}
Up to isogeny, the category of Dieudonn\'e modules over an algebraically closed field is semisimple and the simple objects are those of \cref{R_mn}:
\begin{theorem}[Dieudonn\'e--Manin classification]\label{Dieudonne-Manin}
Assume that $k$ is algebraically closed.
The Dieudonn\'e module of each $p$-divisible group $G$ is isogenous to a direct sum $\oplus^r_{i=1} \mathscr{D}_{m_i,n_i}$ for unique pairs $(m_1,n_1),\dotsc,(m_r,n_r)$ with $(m_i,n_i) = 1$.
Moreover, there are no nonzero morphisms $\mathscr{D}_{m,n} \to \mathscr{D}_{m',n'}$ if $m \neq m'$ or $n \neq n'$.

\end{theorem}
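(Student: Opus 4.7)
My plan is to reduce the statement to the classification of isocrystals over $K \colonequals W(k)[1/p]$ by inverting $p$, and then carry out a slope decomposition argument of Dieudonné--Manin type. Since isogenies of Dieudonné modules correspond precisely to isomorphisms of the associated isocrystals, the classification up to isogeny is equivalent to the classification up to isomorphism of the isocrystal $N \colonequals \mathbf{M}(G) \otimes_{W(k)} K$, a finite-dimensional $K$-vector space equipped with a $\sigma$-semilinear bijective endomorphism $F$ (bijectivity after inverting $p$ follows from the identity $FV = p$).

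The central input is the slope decomposition. For each pair $(m,n)$ with $(m,n) = 1$ and $n > 0$, set $N_{m,n} \colonequals \mathscr{D}_{m,n}[1/p]$; this is a simple isocrystal of dimension $m+n$ whose Frobenius has ``slope'' $m/(m+n)$ in the sense of the Newton polygon. The key claim is that every isocrystal $N$ decomposes uniquely as a finite direct sum $N \simeq \bigoplus_i N_{m_i, n_i}^{\oplus e_i}$. To produce this decomposition, I would proceed by induction on $\dim_K N$: pick an $F$-stable line of smallest slope $\lambda = m/(m+n)$ (existence uses finite-dimensionality and the fact that slopes are discrete rationals), and after an appropriate rescaling produce a vector $v \in N$ with $F^{m+n}(v) = p^m v$. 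This rescaling step is where algebraic closedness of $k$ enters in an essential way: one must solve equations of the form $\sigma^{m+n}(y)/y = u$ for units $u \in W(k)^\times$, which is possible by successive Hensel-type approximations together with the surjectivity of Artin--Schreier maps $x \mapsto \sigma^{m+n}(x) - cx$ on $k$. Once such a $v$ is obtained, the sub-isocrystal $K\langle v, Fv, \dotsc, F^{m+n-1}v\rangle$ is isomorphic to $N_{m,n}$ and is a direct summand of $N$ because of the splitting of the slope filtration (established by the same rescaling argument applied to an $F$-stable complement).

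For the Hom vanishing, I would argue that a nonzero morphism $\varphi \colon \mathscr{D}_{m,n} \to \mathscr{D}_{m',n'}$ of Dieudonné modules becomes a nonzero morphism of isocrystals $\varphi[1/p] \colon N_{m,n} \to N_{m',n'}$. Both source and target are simple objects of the (semisimple, after the slope decomposition) category of isocrystals, with respective slopes $m/(m+n)$ and $m'/(m'+n')$ in lowest terms. The image of $\varphi[1/p]$ is a nonzero sub-isocrystal of $N_{m',n'}$ which must coincide with $N_{m',n'}$ (by simplicity); on the other hand, it is a quotient of $N_{m,n}$ and hence has the same slope $m/(m+n)$. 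Equality of slopes of coprime pairs then forces $(m,n) = (m',n')$.

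The main obstacle in writing this out carefully is the rescaling step: producing, for each $F$-stable line of slope $m/(m+n)$, a generator $v$ satisfying $F^{m+n}(v) = p^m v$ on the nose requires solving twisted multiplicative equations in $W(k)^\times$, which is exactly where the hypothesis that $k$ is algebraically closed becomes indispensable. Once this is done for every slope, the decomposition into standard summands, the uniqueness of the multiplicities $e_i$ (read off from the Newton polygon of $F$ acting on $N$), and the Hom vanishing all follow from straightforward slope arithmetic and semisimplicity. For these classical facts, I would refer to \cite{YManin} or \cite[\S~6]{Fon1}.
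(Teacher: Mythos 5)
The paper does not prove \cref{Dieudonne-Manin}: it is stated as a classical result (cited earlier in the subsection to \cite{YManin}, \cite[\S~6]{Fon1}, \cite[\S~1]{Ooo}) and is used without an in-text proof. So there is nothing in the paper to compare your argument against.

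That said, your sketch is a reasonable reconstruction of the standard Dieudonn\'e--Manin argument: pass to isocrystals over $K = W(k)[1/p]$, where isogeny becomes isomorphism; exhibit, for each slope $m/(m+n)$ occurring in the Newton polygon of $F$, a vector $v$ with $F^{m+n} v = p^m v$; let the $\sigma^{m+n}$-linear span of $v$ produce a copy of $N_{m,n}$; split it off; and read off uniqueness of the summands from the Newton polygon. You also correctly isolate the one step where $k$ algebraically closed is indispensable, namely the surjectivity of $y \mapsto \sigma^r(y)/y$ on $W(k)^\times$, and the Hom-vanishing via slope arithmetic and simplicity is the right argument.

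Two places in your sketch are genuine lemmas that a full proof must supply, not just side remarks. First, ``pick an $F$-stable line of smallest slope'' is not literally available: $F$ is $\sigma$-semilinear, so $F$-stable $K$-lines need not exist and the slope must be extracted from the Newton polygon of a matrix of $F$ in some basis, together with the theorem that $k$-semilinear algebra over algebraically closed $k$ allows one to realize the smallest slope by an actual eigenvector of $F^{m+n}/p^m$. Second, ``is a direct summand because of the splitting of the slope filtration'' is exactly the nontrivial semisimplicity statement being proved; one needs the fact that $\Ext^1$ between isocrystals of distinct slopes vanishes (again a consequence of the same $\sigma^r(y)/y$ surjectivity), and within a single slope the isocrystal is automatically a direct sum of copies of $N_{m,n}$. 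Neither gap is an error --- both are the expected technical content of the classical proof --- but a reader should know they are being deferred, not derived.
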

The next statement describes moduli of certain Dieudonn\'e modules $M$ of dimension $3$, height $6$ and whose slopes are given by $\frac 1 3$ and $\frac 2 3$. Note that these Dieudonn\'e modules are \emph{not} supersingular. For a similar study of moduli of supersingular abelian varieties of dimension $3$, see \cite[Prop.~4.1]{Ooo}.
In this case, we can describe the duality functor from \cref{covariant-Dieudonne}.\ref{covariant-Dieudonne-dual} explicitly and see that most such Dieudonn\'e modules are not self-dual.
In \cref{not-derived-invariant}, this will be used to construct dual (and thus derived equivalent) abelian varieties with different unipotent homotopy group schemes.
\begin{proposition}\label{a=1}
Let $k$ be an algebraically closed field and $N \colonequals \mathscr{D}_{1,2} \oplus \mathscr{D}_{2,1}$. Note that by \cref{R_mn}, $W(k)^{\oplus 2}$ can be viewed as a $W(k)$-submodule of $N$. In this setup, we have the following:
\begin{enumerate}[label=(\arabic*)]
    \item\label{a=1-a=2} $N / (F,V) N \simeq k \oplus k$.
    \item\label{a=1-Teich} Let $(F,V) N \subset M \subset N$ be inclusions of Dieudonn\'e modules such that $$\dim_k M / (F,V)N = \dim_k N / M = 1.$$
    Then $M = (F,V) N + W(k)\cdot ([x_1],[x_2]) \subset N$ for some nonzero $(x_1,x_2) \in k \oplus k$. Here, $[\,\cdot \,] \colon k \to W(k)$ is the multiplicative lift. Moreover, if $x_1,x_2 \neq 0$, the corresponding $M$ satisfies $(F,V) M = (F,V) N$.
    \item\label{a=1-morphism} Let $(F,V) N \subset M \subset N$ and $(F,V) N \subset M' \subset N$ be such that $(F,V)M= (F,V) N$ and $(F,V)M'= (F,V) N$.
    Then any isomorphism $M \xrightarrow{\sim} M'$ induces a unique isomorphism $N \xrightarrow{\sim} N$ which fits into the following commutative diagram:
    \[ \begin{tikzcd}
    (F,V) N \arrow[r,hook] \arrow[d,"\simeq"] & M \arrow[r,hook] \arrow[d,"\simeq"] & N \arrow[d,"\simeq"] \\
    (F,V)N \arrow[r,hook] & M' \arrow[r,hook] & N.
    \end{tikzcd} \]
    \item\label{a=1-flag} Reduction modulo $(F,V) N$ induces a bijection$$\chi \colon \bigl\{ (F,V) N \subset M \subset N \suchthat \dim_k M / (F,V) N = \dim_k N / M = 1,\; M \not\simeq N \bigr\}_{/_{\simeq}}  \xrightarrow{\sim} \GG_m(k)/\FF^\times_{p^3}.$$
Here, we identify $\PP(N/(F,V) N) \simeq \PP^1_k$ via the isomorphism from \ref{a=1-a=2}, and $\FF^\times_{p^3}$ acts on $\GG_m(k)$ by scalar multiplication;
    moreover, $\GG_m(k)/\FF^\times_{p^3} \simeq \GG_m(k)$ via the $(p^3-1)$-th power map.
    \item\label{a=1-dual} The dual (in the sense of \cref{covariant-Dieudonne}.\ref{covariant-Dieudonne-dual}) of a chain of inclusions $(F,V) N \subset M \subset N$ is a chain of inclusions $(F,V) N \subset M^\vee \subset N$, and the induced morphism on $\GG_m(k)$ under $\chi$ and the isomorphism $\GG_m(k)/\FF^\times_{p^3} \simeq \GG_m(k)$ is given by $z \mapsto \frac{(-1)^{p-1}}{z}$.
\end{enumerate}
\end{proposition}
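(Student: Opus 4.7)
The plan is to treat parts (1)--(4) by direct arguments from the structure theory of Dieudonn\'e modules, and to handle (5) via a careful computation of the dual pairing; (5) is the main technical obstacle.

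Parts (1) and (2) are computations using the presentations of Remark~\ref{R_mn}. The quotient $N/(F,V)N$ splits as $\mathscr{D}_{1,2}/(F,V)\mathscr{D}_{1,2}\oplus\mathscr{D}_{2,1}/(F,V)\mathscr{D}_{2,1}$, and each summand is visibly $k$ generated by the class of $1$ (the bases differ only in the entry $1$ versus $p$). For (2), any submodule $M$ as described is spanned modulo $(F,V)N$ by a nonzero class in $k\oplus k$, and Teichm\"uller lifts give the description $M=(F,V)N+W(k)\cdot m_0$ with $m_0=([x_1],[x_2])$. When $x_1,x_2\neq 0$ the Teichm\"uller lifts are units in $W(k)$, so the images of $Fm_0$ and $Vm_0$ in $(F,V)N/(F,V)^2 N\simeq k\oplus k$ span the whole quotient (using the relations $V^2=F$, $F^2=pV$ in $\mathscr{D}_{1,2}$ and the analogous ones in $\mathscr{D}_{2,1}$); combined with the automatic inclusion $(F,V)M\subset(F,V)N$, this yields $(F,V)M=(F,V)N$.

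Part (3) follows from the Dieudonn\'e--Manin classification (Theorem~\ref{Dieudonne-Manin}): since $\mathscr{D}_{1,2}$ and $\mathscr{D}_{2,1}$ are non-isomorphic simple objects in the isogeny category, any Dieudonn\'e isomorphism $\phi\colon M\xrightarrow{\sim}M'$ rationalizes to $\phi_\mathbb{Q}$ preserving the canonical isotypic decomposition $N\otimes\mathbb{Q}=(\mathscr{D}_{1,2}\otimes\mathbb{Q})\oplus(\mathscr{D}_{2,1}\otimes\mathbb{Q})$; the projections of $M$ onto each simple summand are already equal to the whole of $\mathscr{D}_{1,2}$ and $\mathscr{D}_{2,1}$ respectively (using the description in (2) with $x_1,x_2\neq 0$), so $\phi_\mathbb{Q}$ maps $N$ to $N$, producing the desired extension, which is unique since $N\otimes\mathbb{Q}=M\otimes\mathbb{Q}$. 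For (4), one uses $\Aut(N)=\Aut(\mathscr{D}_{1,2})\times\Aut(\mathscr{D}_{2,1})$ and the induced action on $N/(F,V)N$. A direct computation shows that $\mathrm{End}(\mathscr{D}_{m,n})$ is an order with residue field $\mathbb{F}_{p^{m+n}}$ in the division algebra of invariant $n/(m+n)$; thus $\Aut(\mathscr{D}_{i,3-i})$ acts on $\mathscr{D}_{i,3-i}/(F,V)\mathscr{D}_{i,3-i}\simeq k$ by scalar multiplication by $\mathbb{F}_{p^3}^\times$. Two submodules with all-nonzero parameters $(x_1,x_2)$ and $(x_1',x_2')$ are then isomorphic iff the ratios $x_2/x_1$ and $x_2'/x_1'$ agree in $\GG_m(k)/\mathbb{F}_{p^3}^\times$, yielding the bijection $\chi$; the final identification $\GG_m(k)/\mathbb{F}_{p^3}^\times\simeq\GG_m(k)$ via $(p^3-1)$-th powers uses the algebraic closedness of $k$.

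Part (5) is the main obstacle. The strategy is to compute $M^\vee$ explicitly as a lattice between $N^\vee$ and $((F,V)N)^\vee$, and then transport the result back to a chain $(F,V)N\subset M^\vee\subset N$. Since left-multiplication by $V$ gives a canonical Dieudonn\'e isomorphism $\mathscr{D}_{1,2}\xrightarrow{\sim}(F,V)\mathscr{D}_{1,2}$ (with the analogous statement via $F$ for $\mathscr{D}_{2,1}$), dualizing yields canonical Dieudonn\'e isomorphisms $((F,V)\mathscr{D}_{1,2})^\vee\simeq\mathscr{D}_{2,1}$ and $((F,V)\mathscr{D}_{2,1})^\vee\simeq\mathscr{D}_{1,2}$; together with the swap of the two simple summands (which are exchanged by duality), these identify $((F,V)N)^\vee\simeq N$ with $N^\vee\subset((F,V)N)^\vee$ corresponding to $(F,V)N\subset N$. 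The condition $\theta_\mathbb{Q}(M)\subset W(k)$ for $\theta\in((F,V)N)^\vee$ then translates, via the induced perfect pairing $N/(F,V)N\times((F,V)N)^\vee/N^\vee\to\tfrac{1}{p}W(k)/W(k)\simeq k$, into the annihilator condition on the line spanned by $(x_1,x_2)$; direct linear algebra in $k\oplus k$ identifies this annihilator as the line $k\cdot(-x_2,x_1)$ in the dual basis, which under the summand-swapping identification $((F,V)N)^\vee\simeq N$ corresponds to a chain parametrized by $(x_2,x_1)$ (up to Frobenius twists arising from the dual action $F\cdot\theta=\sigma\circ\theta\circ V$, which only affect a factor in $\mathbb{F}_{p^3}^\times$). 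The ratio for $M^\vee$ is thus $x_1/x_2=1/z$, and after passing through the $(p^3-1)$-th power identification one obtains the prescribed formula $z\mapsto(-1)^{p-1}/z$. The delicate part is bookkeeping the Frobenius twists in the dual Dieudonn\'e structure against the isomorphism $V\colon\mathscr{D}_{1,2}\simeq(F,V)\mathscr{D}_{1,2}$ (and its counterpart) precisely enough to extract the prescribed sign; this is where the bulk of the computation lies.
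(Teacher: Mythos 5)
Your parts (1)--(4) are essentially sound and broadly track the paper's argument, though your route through (3) is slightly different: the paper deduces (3) by dualizing the induced isomorphism $(F,V)N\simeq(F,V)N$ (using the perfect pairing $\bar\beta$ it constructs), whereas you observe directly that $M$ surjects onto each simple isotypic factor of $N$ and that $\phi_{\mathbb{Q}}$ preserves the isotypic decomposition by Dieudonn\'e--Manin; both work. For (4), your appeal to the structure of $\mathrm{End}(\mathscr{D}_{m,n})$ as an order with residue field $\mathbb{F}_{p^{m+n}}$ is a legitimate shortcut around the paper's explicit $a + bV + cF$ calculation.

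Part (5) is where there is a genuine gap. First, a technical misstatement: \emph{left}-multiplication by $V$ on $\mathscr{D}_{1,2}$ is only $\sigma^{-1}$-semilinear (it sends $a\mapsto \sigma^{-1}(a)V$, etc.), so it is not a morphism of Dieudonn\'e modules; the correct $W(k)$-linear isomorphism $\mathscr{D}_{1,2}\simeq(F,V)\mathscr{D}_{1,2}$ is \emph{right}-multiplication by $V$ (which is what the explicit formulas $p\mapsto F,\,F\mapsto V,\,V\mapsto 1$ in Remark~\ref{R_mn} encode). This matters because your concern about ``bookkeeping the Frobenius twists'' arises precisely from the semilinear map; if you use the $W(k)$-linear identification, no Frobenius twist appears at all, and the identification $((F,V)N)^\vee\simeq N$ becomes a genuine Dieudonn\'e isomorphism. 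With that in place, the annihilator computation you outline becomes a clean linear-algebra statement, exactly as the paper carries out via the pairing $\bar\beta\colon N\times(F,V)N\to W(k)$ and its rationalization $\bar\beta_K$: one finds directly that $M^\vee=(F,V)N+W(k)\cdot([-x_2],[x_1])$, whence the line is $k\cdot(-x_2,x_1)$ and the induced map on $\GG_m(k)/\mathbb{F}_{p^3}^\times$ is $z\mapsto -1/z$; applying the $(p^3-1)$-th power gives $w\mapsto(-1)^{p^3-1}/w=(-1)^{p-1}/w$. You should not absorb the sign into the ``Frobenius twist'' ambiguity as you do: the Frobenius twists you worry about are an artifact of the wrong choice of map, and the sign should be derived rather than asserted (though it turns out to be harmless, since $-1\in\mathbb{F}^\times_{p^3}$ and $(-1)^{p-1}=1$ in a field of characteristic $p$). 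As written, you have identified the correct strategy but have not carried out the core computation, which you acknowledge; that computation is where the content of (5) lies.
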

\begin{proof}

Since $\mathscr{D}_{1,2}/ (F,V) \mathscr{D}_{1,2} \simeq k$ and $\mathscr{D}_{2,1}/ (F,V) \mathscr{D}_{2,1} \simeq k$, we obtain \ref{a=1-a=2}.
\vspace{2mm}

Let $\pi \colon N \to k \oplus k$ denote the quotient map obtained by using \ref{a=1-a=2}. Note that there is an inclusion $W(k) \oplus W(k) \hookrightarrow N$ such that the composition with $\pi$ identifies with the map given by reduction modulo $p$.
Given $(F,V) N \subset M \subset N$, let $\overline{M} \colonequals \pi(M)$.
If $\dim_k M / (F,V) N = \dim_k N / M = 1$, we can find $0 \ne (x_1,x_2) \in k \oplus k$ such that $\overline{M} = k \cdot (x_1,x_2) \subset k \oplus k$.
Moreover, since $W(k) \cdot p \oplus W(k) \cdot p \subset (F,V) N$ (see \cref{R_mn}) and the multiplicative lift $[\,\cdot\,] \colon k \to W(k)$ defines a section of the reduction modulo $p$ map $W(k) \to k$, we obtain
\[ M = (F,V) N + W(k)\cdot ([x_1],[x_2]) \subset N. \] 
The computations in \Cref{R_mn} then show that as $W(k)$-submodules of $N$, we have
\begin{align*}
    (F,V) M 
    &= (F,V)^2 N + (F,V) W(k)\cdot ([x_1],[x_2]) \\
    &= (F,V)^2 N + W(k)\cdot ([x^p_1] F,[x^p_2] F) + W(k) \cdot([x^{1/p}_1]  V,[x^{1/p}_2] V) \\
    &\supseteq \bigl(W(k) \cdot p \oplus W(k) \cdot [x_1^{1/p}]V \oplus W(k) \cdot F\bigr) \oplus \bigl(W(k) \cdot p \oplus W(k) \cdot [x^p_2]F \oplus W(k) \cdot V \bigr).
\end{align*}
Thus, if $x_1,x_2 \neq 0$, then the elements $[x_1^{1/p}], [x_2^p] \in W(k)$ are units; therefore we have $(F,V) M = (F,V) N$, yielding \ref{a=1-Teich}.
\vspace{2mm}

Before proceeding further, we will need one additional preparation. To this end, note that we can combine the pairings $\bar{\beta}_1$ and $\bar{\beta}_2$ from \cref{R_mn} to a perfect pairing
\[ \bar{\beta} \colon N \times (F,V) N \to W(k) \]
which exhibits $N$ and $(F,V) N$ as duals in a natural way.
Tensoring with $K \colonequals W(k)[\frac 1 p]$, we obtain a perfect pairing on $N_K \colonequals N \otimes_{W(k)} K = (F,V) N \otimes_{W(k)} K$ denoted as 
\[ \bar{\beta}_K \colon N_K \times N_K \to K. \]
Since the $W(k)$-linear dual $\Lambda^\vee$ of a $W(k)$-lattice $\Lambda \subset N_K$ can be identified with $\Lambda^* \colonequals \{ y \in N_K \suchthat \bar{\beta}_K(\Lambda,y) \subseteq W(k) \}$, the chain of lattice inclusions $(F,V) N \subset M \subset N$ dualizes to $(F,V) N \subset M^\vee \subset N$ under $\bar{\beta}_K$.
Explicitly,
\begin{equation}\label{a=1-explicit-dual}\begin{aligned}
    M^\vee &\simeq \{ y \in N \suchthat \bar{\beta}_K(W(k) \cdot ([x_1],[x_2]),y) \subseteq W(k) \} \\
    &= (F,V) N + \{ (y_1 ,y_2) \in W(k) \oplus W(k)  \suchthat [x_1]y_1 + [x_2]y_2 \in pW(k) \} \\
    &= (F,V) N + W(k) \cdot ([-x_2] ,[x_1]) \subset N.
\end{aligned}\end{equation}
This shows the first part of \ref{a=1-dual}.
\vspace{2mm}

Next, let $(F,V) N \subset M \subset N$ and $(F,V) N \subset M' \subset N$ as in \ref{a=1-morphism} and $v \colon M \xrightarrow{\sim} M'$ be an isomorphism of Dieudonn\'e modules.
By \ref{a=1-Teich} and the $\mathscr{D}_k$-linearity of $v$, we have an induced isomorphism of submodules
\[ (F,V) N = (F,V) M \xrightarrow{\sim} (F,V) M' = (F,V) N. \]
By the preceding paragraph, we can apply the same reasoning to the duals and then pass to duals again to obtain an isomorphism $N \xrightarrow{\sim} N$ which restricts to $v$ (canonically identified with $v^{\vee\vee}$).
This shows \ref{a=1-morphism}.
\vspace{2mm}

Now, we move on to \ref{a=1-flag}.
Let $M$ be a Dieudonn\'e module with $(F,V) N \subset M \subset N$ such that $\dim_k M / (F,V) N = \dim_k N / M = 1$ and $M \not\simeq N$.
By the proof of \ref{a=1-Teich}, $\overline{M} = \pi(M) \subset N / (F,V) N$ is canonically identified with a line $k \cdot (x_1,x_2) \subset k \oplus k$ and hence a point of $\PP^1(k)$.
Since $\dim_k N/(F,V) N = 2$, the condition that $M \not\simeq N$ is by \ref{a=1-Teich} equivalent to $x_1,x_2 \neq 0$, so the point associated with $\overline{M}$ lies in $\PP^1(k) \smallsetminus \{ 0,\infty \} = \GG_m(k)$.
To see that $\chi$ is well-defined, we need to show that this point is invariant under isomorphisms up to $\FF^\times_{p^3}$-action. To this end, let $v \colon M \xrightarrow{\sim} M'$ be an isomorphism of Dieudonn\'e modules as above.
By \ref{a=1-morphism}, $v$ naturally extends to an isomorphism $\widetilde{v} \colon N \xrightarrow{\sim} N$.
By the Dieudonn\'e--Manin classification (\cref{Dieudonne-Manin}), $\widetilde{v} = \widetilde{v}_1 \oplus \widetilde{v}_2$ for some isomorphisms of Dieudonn\'e modules $\widetilde{v}_1 \colon \mathscr{D}_{1,2} \xrightarrow{\sim} \mathscr{D}_{1,2}$ and $\widetilde{v}_2 \colon \mathscr{D}_{2,1} \xrightarrow{\sim} \mathscr{D}_{2,1}$. 
Since $\mathscr{D}_{1,2} = \mathscr{D}_k/\mathscr{D}_k (V^2 - F)$, any left $\mathscr{D}_k$-module endomorphism of $\mathscr{D}_{1,2}$ is determined by an element $(a_1 + b_1 V + c_1 F)$ for some $a_1,b_1,c_1 \in W(k)$ under the decomposition from \cref{R_mn} which must satisfy $(V^2 - F) (a_1+ b_1V + c_1F) = 0$ in the ring $\mathscr{D}_{1,2}$.
The latter is equivalent to the conditions $\sigma^3 (a_1) = a_1$, $\sigma ^3 (b_1)= b_1$ and $\sigma^3 (c_1) = c_1$. Similarly, any left $\mathscr{D}_k$-module endomorphism of $\mathscr{D}_{2,1}$ is determined by an element $(a_2 + b_2 F + c_2 V)$ satisfying $\sigma^3 (a_2) = a_2$, $\sigma ^3 (b_2)= b_2$ and $\sigma^3 (c_2) = c_2$. Let $\overline{a}_1 \in k$ and $\overline{a}_2 \in k$ be the images of $a_1$ and $a_2$ modulo $p$, respectively. It follows that any endomorphism of $N$ induces an endomorphism $k \oplus k \to k \oplus k$ (by going modulo $(F,V) N $ and using \ref{a=1-a=2}) that sends $(x,y)$ to $(\overline{a}_1 x, \overline{a}_2 y)$. Since $\sigma^3 (a_1)= a_1$, it follows that $\overline{a}_1^{p^3} = \overline{a}_1$. Similarly, we have $\overline{a}_2^{p^3} = \overline{a}_2$. This checks that $\chi$ is well-defined.
\vspace{2mm}

Lastly, we need to define an inverse to $\chi$.
Given a line $\overline{M} \subset N / (F,V) N$, identified with $k \cdot (x_1,x_2) \subset k \oplus k$ such that $x_1,x_2 \neq 0$, set $M \colonequals \pi^{-1}(\overline{M}) = (F,V) N + W(k)\cdot ([x_1],[x_2]) \subset N$, which is naturally a $\mathscr{D}_k$-module.
For $\lambda_1,\lambda_2 \in \FF^\times_{p^3}$, multiplication by the lifts $[\lambda_1],[\lambda_2] \in W(k)$ on $N = \mathscr{D}_{1,2} \oplus \mathscr{D}_{2,1}$ defines an isomorphism of Dieudonn\'e submodules $M \xrightarrow{\sim} M'$.
Thus, the isomorphism class of $M$ does not change under the $\FF^\times_{p^3}$-action. This defines a well-defined inverse map to $\chi$ and finishes the proof of \ref{a=1-flag}.
The explicit description in \cref{a=1-explicit-dual} now shows that under $\chi$ and the isomorphism $\GG_m(k)/\FF^\times_{p^3} \simeq \GG_m(k)$, the duality functor identifies with the endomorphism of $\GG_m(k)$ given by $z \mapsto \frac{(-1)^{p^3-1}}{z} = \frac{(-1)^{p-1}}{z}$, finishing \ref{a=1-dual}.
\end{proof}

The following statement will be used in \cref{not-derived-invariant}.
Recall that the \emph{slope} of $\mathscr{D}_{m,n}$ is the rational number $\frac{n}{m+n}$.
\begin{proposition}\label{Oort}
Let $G$ be a $p$-divisible group with symmetric Newton polygon;
that is, the set of slopes $0 \le \lambda_1 \le \dotsb \le \lambda_r \le 1$ of the $\mathscr{D}_{m_i,n_i}$ appearing in the Dieudonn\'e--Manin classification of $\mathbf{M}(G)$ satisfy $\lambda_i = 1 - \lambda_{r-i+1}$.
Then there exists an abelian variety $X$ such that $X[p^\infty] \simeq G$.
\end{proposition}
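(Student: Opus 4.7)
The plan is to proceed in two steps. First, I would reduce the problem to constructing an abelian variety $Y$ over $k$ whose $p$-divisible group $Y[p^\infty]$ is merely \emph{isogenous} to $G$. Then, I would invoke the existence theorem for abelian varieties with prescribed symmetric Newton polygons, due to Oort.

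The reduction is formal. Given an abelian variety $Y$ together with an isogeny $\phi \colon Y[p^\infty] \to G$ of $p$-divisible groups, the kernel $K \colonequals \ker \phi$ is a finite flat closed subgroup scheme of $Y[p^\infty] \subset Y$, hence a finite flat closed subgroup scheme of $Y$ itself. The quotient $X \colonequals Y/K$ is therefore an abelian variety over $k$ and, by construction, satisfies
\[ X[p^\infty] \simeq Y[p^\infty]/K \simeq G, \]
as desired.

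To produce such a $Y$, I would use that by the Dieudonn\'e--Manin classification (\cref{Dieudonne-Manin}), the isogeny class of a $p$-divisible group over $k$ is determined by its Newton polygon, i.e., by the multiset of pairs $\{(m_i, n_i)\}$ appearing in the simple decomposition of $\mathbf{M}(G)$, via the correspondence $(m,n) \leftrightarrow$ slope $n/(m+n)$. By a theorem of Oort on the non-emptiness of Newton polygon strata in the Siegel modular variety $\mathcal{A}_g$, every symmetric Newton polygon of height $2g$ is realized by a principally polarized abelian variety of dimension $g$ over $k$. Applying this to the Newton polygon of $G$ (which is symmetric by hypothesis) produces an abelian variety $Y$ over $k$ with the same Newton polygon as $G$. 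The Dieudonn\'e--Manin classification then guarantees that $Y[p^\infty]$ is isogenous to $G$, and combining with the reduction step furnishes the desired $X$.

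The main obstacle is Oort's non-emptiness theorem itself (which in the supersingular case goes back to Deuring and Tate). Its proof requires substantial input from the geometry of Siegel modular varieties, such as deformation-theoretic arguments on slope filtrations, or explicit constructions via CM theory and the Honda--Tate correspondence. The remaining ingredients are elementary, ultimately because $Y[p^\infty]$ is literally the colimit of the $p^n$-torsion subgroup schemes of $Y$, so that finite subgroup schemes of $Y[p^\infty]$ are automatically finite subgroup schemes of $Y$, and quotienting by them yields an abelian variety.
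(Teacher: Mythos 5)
Your proof is correct and takes essentially the same route as the paper. The paper's argument is exactly this two-step reduction: cite the non-emptiness of the symmetric Newton polygon stratum (the paper references \cite[\S~5]{MR1792294}, which is the Oort result you invoke) together with Dieudonn\'e--Manin to produce an abelian variety $Y$ and an isogeny $\alpha \colon Y[p^\infty] \to G$, then set $X \colonequals Y/\Ker\alpha$.
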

\begin{proof}
By \cite[\S~5]{MR1792294} and \cref{Dieudonne-Manin}, there exists an abelian variety $Y$ and an isogeny $\alpha \colon Y[p^\infty] \to G$.
Let $K \colonequals \Ker\alpha$.
Then $X \colonequals Y/K$ has the desired property.
\end{proof}
\begin{proposition}\label{not-derived-invariant} 
There exists an abelian threefold $X$ over an algebraically closed field $k$ of characteristic $p>0$ such that $\pi_1^{\mathrm{U}}(X)$ and $\pi_1^{\mathrm{U}}(X^\vee)$ are non-isomorphic. 
\end{proposition}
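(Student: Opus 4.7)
The plan is to construct the abelian threefold $X$ by prescribing its $p$-divisible group as the one attached to a carefully chosen Dieudonn\'e submodule of $N = \mathscr{D}_{1,2}\oplus\mathscr{D}_{2,1}$, and then to read off the non-isomorphism $\pi_1^{\mathrm U}(X)\not\simeq \pi_1^{\mathrm U}(X^\vee)$ from the failure of the chosen module to be self-dual. The key translation is \cref{fglabelian}: the Cartier dual of $\pi_1^{\mathrm U}(X)$ is the formal completion $\widehat{X^\vee}$ at the origin. Since we will arrange that $X[p^\infty]$ has only the Newton slopes $\tfrac13$ and $\tfrac23$, both $X[p^\infty]$ and $X^\vee[p^\infty]$ are connected, so $\widehat X\simeq X[p^\infty]$ and $\widehat{X^\vee}\simeq X^\vee[p^\infty]$ as formal groups. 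Consequently, by Dieudonn\'e theory, $\pi_1^{\mathrm U}(X)\simeq\pi_1^{\mathrm U}(X^\vee)$ would force the Dieudonn\'e module $M\colonequals\mathbf M(X[p^\infty])$ to be isomorphic to its dual $M^\vee$ in the sense of \cref{covariant-Dieudonne}.\ref{covariant-Dieudonne-dual}.

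First, I would pick $z\in\GG_m(k)$ with $z^2\ne(-1)^{p-1}$, and let $M$ be the Dieudonn\'e module with $(F,V)N\subset M\subset N$ whose class under the bijection $\chi$ of \cref{a=1}.\ref{a=1-flag} corresponds to $z\in\GG_m(k)/\FF_{p^3}^\times$. Since the duality endomorphism is $z\mapsto(-1)^{p-1}/z$ by \cref{a=1}.\ref{a=1-dual}, our choice guarantees that the classes of $z$ and $(-1)^{p-1}/z$ are distinct in $\GG_m(k)/\FF_{p^3}^\times$, and hence $M\not\simeq M^\vee$ as Dieudonn\'e modules. (Such a $z$ exists: we can in fact take any $z\in\GG_m(k)$ whose $2(p^3-1)$-th power differs from $(-1)^{p-1}$.)

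Second, the Dieudonn\'e module $M$ is isogenous to $N=\mathscr{D}_{1,2}\oplus\mathscr{D}_{2,1}$, so it has rank $6$, dimension $3$, and symmetric Newton polygon with slopes $\tfrac13$ and $\tfrac23$. By \cref{covariant-Dieudonne}, $M$ is the (contravariant) Dieudonn\'e module of a $p$-divisible group $G$ of height $6$ and dimension $3$, and by \cref{Oort} there exists an abelian variety $X$ with $X[p^\infty]\simeq G$; necessarily $\dim X=3$.

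Finally, since the slopes $0$ and $1$ do not appear, $X[p^\infty]$ is a connected $p$-divisible group, so combining the previous two paragraphs we obtain that the Dieudonn\'e modules of the formal groups $\widehat X$ and $\widehat{X^\vee}$ are respectively $M$ and $M^\vee$, which are non-isomorphic. By \cref{fglabelian}, this means that $\pi_1^{\mathrm U}(X)^\vee\not\simeq\pi_1^{\mathrm U}(X^\vee)^\vee$, and therefore $\pi_1^{\mathrm U}(X)\not\simeq\pi_1^{\mathrm U}(X^\vee)$. The hard part of the argument has in fact already been carried out in \cref{a=1}, where the isomorphism classes of Dieudonn\'e submodules with $(F,V)N\subset M\subset N$ are parametrized explicitly and the duality functor is computed on this moduli; given that classification, the present proposition is a quick formal consequence.
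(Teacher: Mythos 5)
Your proof follows essentially the same route as the paper's: choose a class $t$ (your $z$) in the $\GG_m(k)$ parametrizing the Dieudonn\'e modules of \cref{a=1} which is not fixed by the duality involution $z\mapsto(-1)^{p-1}/z$, realize the resulting $p$-divisible group inside an abelian threefold via \cref{Oort}, use connectedness (slopes $\tfrac13,\tfrac23$) to identify it with the formal group, and conclude from \cref{fglabelian}. The only minor wrinkle is a wavering between stating the non-fixed-point condition in $\GG_m(k)/\FF_{p^3}^\times$ versus in the final $\GG_m(k)$ after the $(p^3-1)$-th power identification; your parenthetical gives the correct representative-level condition, and the paper simply works on the final $\GG_m(k)$ (its $f(t)=t^2+(-1)^p\ne0$ is precisely $t^2\ne(-1)^{p-1}$).
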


\begin{proof}
In \cref{a=1}, we constructed a set of isomorphism classes of Dieudonn\'e modules in bijection with $\GG_m(k)$. Let $f(z) \in k[z]$ denote the polynomial $z^2 + (-1)^{p}$. Let $t \in k^*$ be such that $f(t) \ne 0$. Let $M$ be the Dieudonn\'e module whose isomorphism class corresponds to $t$ under \cref{a=1}. By \cref{covariant-Dieudonne}, there exists a $p$-divisible group $G$ such that $\mathbf{M}(G) \simeq M$. By construction, $G$ has a symmetric Newton polygon. Therefore, by \cref{Oort}, there exists an abelian variety $X$ such that $X[p^\infty] \simeq G$. Since $\dim M = 3$, we have $\dim X = 3$. By choice of $t$ and \cref{a=1}.\ref{a=1-dual}, it follows that $G$ and $G^\vee$ are non-isomorphic. Since $G^\vee \simeq X^\vee [p^\infty]$, it follows that $X[p^\infty]$ and $X^\vee [p^\infty]$ are not isomorphic. Since the slopes of $G$ are $\frac 1 3 $ and $\frac 2 3$, it follows that $G$ and $G^\vee$ are connected. This implies that the formal Lie groups obtained by formally completing $X$ and $X^\vee$ are non-isomorphic. Therefore, we are done by \cref{fglabelian}.
\end{proof}{}

\begin{remark}[Derived equivalent abelian threefolds]\label{worldcup3} Since the Fourier--Mukai transform associated with the Poincar\'e bundle defines a derived equivalence $D_\mathrm{perf}(X) \simeq D_\mathrm{perf}(X^\vee)$, \cref{not-derived-invariant} shows that the unipotent fundamental group scheme for abelian threefolds (or equivalently, the Dieudonn\'e module $H^1(X, W)$) is not a derived invariant.
\end{remark}{}
\begin{remark}\label{worldcup4}
Note that, at least in odd characteristic, the unipotent fundamental group scheme is a derived invariant for abelian varieties of dimension $\le 2$. To see this, we recall a few facts. By \cite[Thm.~2.19]{MR1921811}, two derived equivalent abelian varieties $X$ and $Y$ are related by an isomorphism $X \times X^\vee \simeq Y \times Y^\vee$.
Further, any abelian variety is isogenous to its dual. Consequently, if $X$ and $Y$ are derived equivalent elliptic curves, then they are are either both ordinary or both supersingular and therefore the claim holds.
Also, if $X$ and $Y$ are derived equivalent abelian surfaces, then the slopes of their associated $p$-divisible groups are identical and there are three possibilities for the slopes:
\begin{enumerate}
\item All the slopes are $0$ and $1$; in that case, $X^\vee$ and $Y^\vee$ are ordinary and therefore the associated formal Lie groups are isomorphic.
\item The slopes are $0$, $\frac{1}{2}$ and $1$; in that case $(X^\vee)[p^\infty] \simeq \QQ_p/\ZZ_p \times \mu_{p^\infty} \times H[p^\infty]$ where $H$ is a $1$-dimensional formal Lie group of height $2$, and similarly for $Y$.
But over an algebraically closed field, up to isomorphisms, there is only one formal Lie group of dimension $1$ and height $2$. Therefore, the formal Lie groups of $X^\vee$ and $Y^\vee$ are isomorphic.
\item The only slope is $\frac{1}{2}$; in that case $X$ and $Y$ are supersingular, derived equivalent abelian surfaces over a field of odd characteristic and therefore, according to \cite[Thm.~4.6]{LZ21a}, are isomorphic.
\end{enumerate}
\end{remark}

\begin{example}
A $5$-dimensional example similar to \cref{not-derived-invariant} can be obtained in a relatively easier way from the $p$-divisible groups appearing in \cite[\S~5]{MR1703336}.
For any coprime $m,n \in \ZZ_{>0}$, the authors define isogenous $p$-divisible groups $G_{m,n}$ (whose contravariant Dieudonn\'e module is the $\mathscr{D}_{n,m}$ from \cref{R_mn}) and $H_{m,n}$ of height $m+n$ which satisfy $G^\vee_{m,n} \simeq G_{n,m}$ and $H^\vee_{m,n} \simeq H_{n,m}$ \cite[\S\S~5.2--5.3]{MR1703336}. However, $G_{2,3}$ and $H_{2,3}$ are not isomorphic because their types are the semi-modules $\{0\} \cup (2 + \ZZ_{\ge 0})$ and $1 + \ZZ_{\ge 0}$, which are not integral translates of one another;
\textit{cf.}~\cite[\S\S~5.6--5.7]{MR1703336}.
Thus, $G \colonequals G_{2,3} \oplus H_{3,2}$ is a connected $p$-divisible group of height $10$ with $G^\vee \simeq G_{3,2} \oplus H_{2,3}$.
By the Dieudonn\'e--Manin classification, any isomorphism $G \xrightarrow{\sim} G^\vee$ would restrict to an isomorphism $G_{2,3} \xrightarrow{\sim} H_{2,3}$, which is impossible.
Thus, we can proceed as in \cref{not-derived-invariant} to construct an abelian variety $X$ of $\dim X = 5$ such that $\pi^{\mathrm U}_1(X) \not\simeq \pi^{\mathrm U}_1(X^\vee)$.
\end{example}

\subsection{Families of \texorpdfstring{$K(\pi^{\mathrm U},1)$}{K({\textpi}U,1)}-schemes}\label{cold1}In \cite{MR682517}, Nori considered the question of whether there is a flat variation of the unipotent fundamental group scheme for algebraic varieties defined in a family. Note that unlike the case of affine stacks over a field \cite[Thm.~2.4.5]{Toe}, the following example shows that the sheaf of homotopy groups of a general family of affine stacks is not representable.

\begin{example}[To\"en]\label{weirdexample} Let $\mathbf{Z}_p$ denote the ring of $p$-adic integers. Let $K$ denote the complex of $\mathbf{Z}_p$-modules given by $\mathbf{F}_p[1] \simeq (\mathbf{Z}_p \xrightarrow[]{\times p} \mathbf{Z}_p)$, where the $\mathbf{Z}_p$ on the right is in homological degree $1$.
It follows that the functor that sends an affine scheme $\Spec A$ over $\mathbf{Z}_p$ to $A \otimes_{\mathbf{Z}_p} \mathbf{F}_p [1]$ is naturally a pointed connected affine stack, which we denote as $X$. In this situation, one sees that $\pi_1 (X)$ is not representable. Let us mention the work of Hirschowitz \cite{Hirs1} and Simpson \cite{Simp11} which studies certain non-representable sheaf of groups in a related context.
\end{example}{}
In what follows, we construct a flat variation of the unipotent fundamental group scheme for families of curves and abelian varieties; see \cref{norisquestion}. The existence of such a construction in the case of curves was stated in \cite{MR682517} (without proof), and relying on it, a construction of a non-commutative formal Lie group defined in a family was sketched. Our method of construction uses the higher algebraic description of the unipotent fundamental group scheme obtained in \cref{algebraicdesc}.
\begin{proposition}\label{rent13}
Let $f \colon X \to \Spec A$ be a family of curves\footnote{i.e., $f\colon X \to \Spec A$ is a flat proper finitely presented morphism of relative dimension $1$ whose fibres are cohomologically connected.} of genus $g$ or abelian varieties of dimension $g$.
Let $\sigma \colon \Spec A \to X$ be a section of $f$, which equips $R\Gamma(X,\cO)$ with the structure of an augmented $E_\infty$-algebra over $A$.
Then the $E_\infty$-algebra $A \otimes_{R\Gamma(X,\cO)} A$ over $A$ is a discrete flat $A$-algebra.
\end{proposition}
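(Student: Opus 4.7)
My plan is to reduce the claim to a fiberwise statement via cohomology and base change, and then to compute the resulting pushout explicitly using formality of $R\Gamma(X, \cO)$ over $A$.

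First, since $f \colon X \to \Spec A$ is proper and flat with fibers of constant cohomology dimension --- namely $(h^0, h^1) = (1, g)$ for curves of genus $g$ and $h^i = \binom{g}{i}$ for abelian varieties of dimension $g$ --- the classical cohomology-and-base-change theorem shows that $B \colonequals R\Gamma(X, \cO)$ is a perfect $A$-module whose formation commutes with arbitrary base change, with each $H^i(B)$ locally free of the indicated rank. Since the pushout $C \colonequals A \otimes_B A$ in $E_\infty$-$A$-algebras is itself a colimit, it likewise commutes with base change, yielding for any residue field $k(s)$ of $\Spec A$:
\[ k(s) \otimes_A^L C \simeq k(s) \otimes_{R\Gamma(X_s, \cO)} k(s). \]
By \cref{curve} and \cref{abelianvar} applied to the fiber $X_s$, we have $\mathbf{U}(X_s) \simeq B \pi_1^{\mathrm U}(X_s)$, and the loop stack of $B \pi_1^{\mathrm U}(X_s)$ is the classical affine group scheme $\pi_1^{\mathrm U}(X_s)$; combined with \cref{rent14}, this gives $k(s) \otimes_A^L C \simeq \cO(\pi_1^{\mathrm U}(X_s))$, a discrete and faithfully flat $k(s)$-algebra.

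To promote the fiberwise conclusion to a global one, I would establish that $B$ is formal as an augmented $E_\infty$-$A$-algebra and then compute $C$ directly. For curves, $B$ is concentrated in cohomological degrees $0$ and $1$ with augmentation ideal $V[-1]$, where $V \colonequals H^1(B)$ is locally free of rank $g$; for degree reasons, the $E_\infty$-structure on the augmentation ideal must be trivial, giving $B \simeq A \oplus V[-1]$ as an augmented $E_\infty$-$A$-algebra. A direct bar-complex computation then identifies $C$ with the tensor algebra $T_A(V) = \bigoplus_{n \geq 0} V^{\otimes n}$, concentrated in cohomological degree $0$ and flat over $A$. For abelian varieties, the group law on $X$ over $\Spec A$ equips $B$ with an $E_\infty$-Hopf algebra structure whose space of primitives is $V[-1]$. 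A derived Milnor--Moore-type argument should then identify $B$ with the derived symmetric algebra $\mathrm{Sym}^{E_\infty}_A(V[-1])$, and Koszul duality would yield $C \simeq \mathrm{Sym}^{E_\infty}_A(V^\vee)$, again discrete and flat over $A$.

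The hardest part will be this last step, particularly in the abelian variety case: one must carefully deploy the group structure on the total space $X$ (rather than merely on its fibers) to equip $B$ with its Hopf $E_\infty$-algebra structure and then apply a derived Milnor--Moore argument to establish formality. For curves, by contrast, the required formality is immediate from degree counting, and the bar-complex computation is then standard.
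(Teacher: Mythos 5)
Your plan has a genuine gap in the formality step, and the abelian variety branch is incorrect as stated. There are two separate problems.

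First, you write $B \simeq \mathrm{Sym}^{E_\infty}_A(V[-1])$. In characteristic $p>0$, the \emph{free $E_\infty$-algebra} on a generator in cohomological degree $1$ is not the exterior algebra $\wedge^* V$: it carries Dyer--Lashof operations and has cohomology in arbitrarily high degrees. What you want is the \emph{derived symmetric power} $\mathrm{Sym}^n$ in the sense of derived/cosimplicial commutative rings (the $\mathrm{DAlg}^{\mathrm{ccn}}$ of the paper), for which d\'ecalage gives $\mathrm{Sym}^n(V[-1]) \simeq (\wedge^n V)[-n]$. These two notions of ``$\mathrm{Sym}$'' coincide only in characteristic zero; the superscript $E_\infty$ is exactly where the argument goes wrong, and a Milnor--Moore-type argument in the $E_\infty$ category would produce the wrong object. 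Second, even granting the formality of $B$, the identification of $C$ via Koszul duality is off: $A\otimes_B A$ is not the Koszul dual of $B$ --- that would be $R\Hom_B(A,A)$. The bar construction $A\otimes_B A$ for $B$ with $H^*(B) \simeq \wedge^* V$ produces the divided power algebra $\Gamma^*_A(V)$ (not $\mathrm{Sym}_A(V^\vee)$), and $\Gamma^n \neq \mathrm{Sym}^n$ in positive characteristic. So even the target of the computation is wrong.

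There is also a softer issue: the formality claim for curves --- ``the $E_\infty$-structure on the augmentation ideal must be trivial for degree reasons'' --- needs an argument. For $E_1$ (or $A_\infty$) the degree count indeed forces all $m_n$ to vanish, but $E_\infty$-structure is more data; what one should say is that the augmentation ideal is concentrated in a single degree and the augmentation is split, so $B$ is a split square-zero extension. This is true but isn't quite ``immediate.''

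The paper avoids both difficulties by never invoking formality: it filters the bar resolution of $A\otimes_B A$ by coconnective truncation, identifies the $n$-th graded piece as the realization of a simplicial object $C_n$ whose terms are $H^n$ of tensor powers of $B$, and then shows (by triviality of cup products for curves, and by $H^* \simeq \wedge^* H^1$ plus d\'ecalage for abelian varieties) that $C_n$ realizes to $V^{\otimes n}[n]$ resp.\ $(\Gamma^n V)[n]$. Shifting by $[-n]$ yields discrete graded pieces, hence a discrete colimit. This sidesteps the $E_\infty$ vs.\ derived-ring distinction entirely and is what makes the argument work in positive characteristic. Your fiberwise reduction in the first half is correct but, as you note, does not conclude on its own --- $A\otimes_B A$ is a priori neither bounded nor coconnective, so fiberwise discreteness does not propagate.
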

\begin{proof}
The cohomological connectedness of the fibres gives $H^0 (X, \cO) \simeq A$ (see, e.g., \cite[\href{https://stacks.math.columbia.edu/tag/0E6B}{Tag~0E6B}]{stacks}). Further, note that the discreteness of $A \otimes_{R\Gamma(X,\cO)} A$ over a general base $\Spec A$ automatically implies flatness, by base changing to $\Spec A/I$ for every ideal $I \subset A$ and using \cite[\href{https://stacks.math.columbia.edu/tag/00M5}{Tag~00M5}]{stacks}.
\vspace{2mm}

We proceed by considering the bar resolution, which gives us the following simplicial object in $D(A)$
\begin{equation}\label{bar-resolution}
     \xymatrix{
 \cdots   R\Gamma(X,\cO) \otimes_A R\Gamma(X,\cO) \ar[r]<3pt>\ar[r]\ar[r]<-3pt>  &   R\Gamma(X,\cO) \ar[r]<1.5pt>\ar[r]<-1.5pt> & \ A\,
}
\end{equation}{}whose colimit (in the $\infty$-category $D(A)$) is the tensor product $A \otimes_{R\Gamma(X,\cO)} A$. Note that the terms of \cref{bar-resolution} are coconnective objects of $D(A)$ and have compatible increasing exhaustive $\mathbf N$-indexed filtrations coming from the truncation functors.
By taking colimits, we obtain an increasing exhaustive $\mathbf N$-indexed filtration on $A \otimes_{R\Gamma(X,\cO)} A$. One sees that the $n$-th graded piece of this filtration is given by $B_n [-n]$, where $B_0 \simeq A$ and for $n \ge 1$, $B_n$ is the colimit of the following simplicial object\begin{equation*}\label{bar-resolution-graded}
    C_n \colonequals \xymatrix{
 \cdots  H^n(R\Gamma (X, \cO)^{\otimes 3}) \ar[r]<4.5pt>\ar[r]<1.5pt>\ar[r]<-4.5pt>\ar[r]<-1.5pt> &  H^n(R\Gamma (X, \cO)^{\otimes 2})  \ar[r]<3pt>\ar[r]\ar[r]<-3pt>  &  H^n(X, \cO)  \ar[r]<1.5pt>\ar[r]<-1.5pt> & 0\ .
}
\end{equation*}We note that $V \colonequals  H^1 (X, \cO)$ is a projective module over $A$ of rank $g$ (e.g., \cite[\href{https://stacks.math.columbia.edu/tag/0E1J}{Tag~0E1J}]{stacks} in the case of curves and \cite[Lem.~2.5.3]{BBM} in the case of abelian varieties) and the simplicial object $C_1$ is of the form
\[  \xymatrix{
 \cdots V^{\oplus 3} \ar[r]<4.5pt>\ar[r]<1.5pt>\ar[r]<-4.5pt>\ar[r]<-1.5pt> &  V^{\oplus 2} \ar[r]<3pt>\ar[r]\ar[r]<-3pt>  &  V \ar[r]<1.5pt>\ar[r]<-1.5pt> & 0\ .
}\]
Considering $V$ as an abelian group, we observe that the above simplicial object is isomorphic to the classifying object $B V$. Now we divide the proof into two cases.
\vspace{2mm}

\noindent
\textit{Case 1.} If $f \colon X \to \Spec A$ is a family of curves of genus $g$, then by using the fact that the cup product structure on $H^* (X, \cO)$ is trivial, we see that the simplicial object corresponding to $V^{\otimes{n}}[n]$ under the Dold--Kan correspondence is homotopy equivalent to $C_n$. This implies that $B_n \simeq V^{\otimes{n}}[n]$ for $n \ge 1$. Therefore, $B_n[-n] \simeq  V^{\otimes{n}}$, which is, in particular, discrete.
Thus, the graded pieces of the filtration on $A \otimes_{R\Gamma(X, \cO)}A$ are all discrete. Hence $A \otimes_{R\Gamma(X, \cO)}A$ must also be discrete.
\vspace{2mm}

\noindent
\textit{Case 2.}
Let us now suppose that $f \colon X \to \Spec A$ is a family of abelian varieties of dimension $g$. 
Since $H^* (X, \cO) \simeq \wedge^* H^1 (X, \cO)$ (e.g., by \cite[Prop.~2.5.2.(ii)]{BBM}), it also follows that $H^* (R\Gamma(X, \cO)^{\otimes{k}}) \simeq \wedge^* H^1 (R\Gamma(X, \cO)^{\otimes{k}})$. Thus, we see that $C_n$ is computed by applying $\wedge^n$ termwise to the simplicial object $C_1$. Since $C_1 \simeq BV$, it follows that $B_n \simeq \wedge^n (V[1])$, where the $\wedge^i$ in the latter is taken in the derived sense. 
By the d\'ecalage formula (see \cite[Prop.~I.4.3.2.1]{Ill71}), we have $\wedge^n (V[1]) \simeq (\Gamma^n V) [n]$. Thus, we see that $B_n[-n] \simeq \Gamma^n V$, which is discrete. Similar to the above case, now we can conclude that $A \otimes_{R\Gamma(X, \cO)}A$ is discrete. This finishes the proof.
\end{proof}
\begin{proposition}\label{norisquestion}
Let $f \colon X \to \Spec A$ be a family of curves of genus $g$ or abelian varieties of dimension $g$ equipped with a section $\sigma \colon \Spec A \to X$. Then there exists a flat affine group scheme denoted as $\pi_1^{\mathrm{U}}(X/ \Spec A)$ over $\Spec A$ such that for all $s \in \Spec A$, we have $\pi_1^{\mathrm{U}}(X_s) \simeq \pi_1^{\mathrm{U}}(X/ \Spec A)_s$.
\end{proposition}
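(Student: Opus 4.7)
The plan is to lift the explicit algebraic description of the unipotent fundamental group scheme over a field from \cref{maincorro} to the relative setting. Define
$$ \pi_1^{\mathrm U}(X/\Spec A) \colonequals \Spec\bigl( A \otimes_{R\Gamma(X,\cO)} A \bigr), $$
where the pushout is formed in the $\infty$-category of $E_\infty$-algebras with both structure maps $R\Gamma(X,\cO) \to A$ induced by $\sigma$. By \cref{rent13}, this $E_\infty$-algebra is a discrete flat $A$-algebra, so $\pi_1^{\mathrm U}(X/\Spec A)$ is at least a flat affine scheme over $\Spec A$.

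To endow it with the structure of an affine group scheme, I would identify it with the relative loop space $\Omega_{\Spec A}\, \mathbf U(X/\Spec A)$, where $\mathbf U(X/\Spec A) \colonequals \Spec R\Gamma(X,\cO)$ over $\Spec A$ is pointed by $\sigma$. As the loop space of a pointed stack, $\Omega_{\Spec A}\, \mathbf U(X/\Spec A)$ is canonically a group object over $\Spec A$; a relative version of \cref{rent14}, obtained by rerunning the same Čech nerve argument in the $\infty$-category of $E_\infty$-algebras, identifies its derived global sections with $A \otimes_{R\Gamma(X,\cO)} A$. Since this $E_\infty$-algebra is discrete and flat by \cref{rent13}, the loop space is represented by the classical flat affine scheme $\pi_1^{\mathrm U}(X/\Spec A)$, which inherits a flat affine group scheme structure over $\Spec A$.

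For the fibrewise identification, fix $s \in \Spec A$ with residue field $\kappa(s)$. For both families of curves and abelian varieties, $Rf_*\cO_X$ is a perfect complex of $\cO_{\Spec A}$-modules with flat cohomology in every degree (generated by the locally free $A$-module $H^1(X,\cO)$ and its exterior powers), so cohomology and base change gives an isomorphism $R\Gamma(X_s,\cO) \simeq R\Gamma(X,\cO) \otimes^{L}_A \kappa(s)$ of augmented $E_\infty$-algebras over $\kappa(s)$. Because $A \otimes_{R\Gamma(X,\cO)} A$ is flat over $A$, base change commutes with the defining $E_\infty$-pushout, and we obtain
$$ \pi_1^{\mathrm U}(X/\Spec A)_s \simeq \Spec\bigl(\kappa(s) \otimes_{R\Gamma(X_s,\cO)} \kappa(s)\bigr). $$
By \cref{curve} and \cref{abelianvar}, $\UU(X_s) \simeq B\pi_1^{\mathrm U}(X_s)$, so $\Omega \UU(X_s) \simeq \pi_1^{\mathrm U}(X_s)$; and \cref{rent14} identifies $R\Gamma(\Omega \UU(X_s),\cO)$ with $\kappa(s) \otimes_{R\Gamma(X_s,\cO)} \kappa(s)$. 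Thus $\pi_1^{\mathrm U}(X/\Spec A)_s \simeq \pi_1^{\mathrm U}(X_s)$ as affine schemes, and since both group structures arise from the same relative loop-space construction, the isomorphism is one of group schemes.

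The main obstacle will be making precise the relative version of \cref{rent14} (the derived global sections of the relative loop space agree with the $E_\infty$-pushout) and verifying that cohomology and base change for $Rf_*\cO_X$ respects the augmented $E_\infty$-algebra structure. Both are expected to follow from a careful rerun of the Čech nerve/cobar argument over $\Spec A$ combined with \cref{rent13}, whose flatness conclusion ensures that $E_\infty$-pushouts stay discrete and flat and reduces the remaining verifications to standard commutative algebra.
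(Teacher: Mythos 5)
Your proposal is correct and follows essentially the same route as the paper: define $\pi_1^{\mathrm U}(X/\Spec A)$ as $\Spec$ of the $E_\infty$-pushout $A \otimes_{R\Gamma(X,\cO)} A$, invoke \cref{rent13} for discreteness and flatness, and identify fibres via \cref{rent14} applied to $\Omega\,\UU(X_s)$. The one place where the paper is more economical is the group structure: rather than proving a relative analogue of \cref{rent14} for the loop space over $\Spec A$, the paper simply observes that $A \otimes_{R\Gamma(X,\cO)} A$ corepresents the functor $B \mapsto \Omega_\sigma \Map_{\CAlg_{A}}(R\Gamma(X,\cO),B)$ on $E_\infty$-$A$-algebras, which is automatically grouplike $E_1$-valued, so discreteness immediately yields a Hopf algebra; this sidesteps the ``relative Čech-nerve'' lemma you flag as the main obstacle. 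Your extra invocation of \cref{curve}/\cref{abelianvar} to write $\UU(X_s) \simeq B\pi_1^{\mathrm U}(X_s)$ is harmless but not needed, since $\pi_1(Y) \simeq \pi_0(\Omega Y)$ holds for any pointed connected affine stack $Y$ and $\Omega Y$ is already $0$-truncated once its ring of functions is discrete. The cohomology-and-base-change step you spell out is implicit in the paper's ``we see that'' and is a genuine (if routine) ingredient; you are right to make it explicit.
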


\begin{proof}
By \cref{rent13}, the $E_\infty$-ring $A \otimes_{R\Gamma(X, \cO)}A$ is a discrete flat algebra over $A$. Note that the functor correpresented by $A \otimes_{R\Gamma(X, \cO)}A$ in the category of $E_\infty$-algebras over $A$ is naturally valued in grouplike $E_1$-spaces. Since $A \otimes_{R\Gamma(X, \cO)}A$ is discrete, we see that $A \otimes_{R\Gamma(X, \cO)}A$ naturally has the structure of a Hopf algebra over $A$. We let $\pi_1^{\mathrm{U}}(X/ \Spec A) \colonequals \Spec (A \otimes_{R\Gamma(X, \cO)}A)$, equipped with the natural structure of a flat group scheme over $A$. For every map $s \colon \Spec k \to \Spec A$, where $k$ is a field, we see that $\pi_1^{\mathrm{U}}(X/\Spec A)_s$ is naturally isomorphic to $\Spec (k \otimes_{R\Gamma(X_s, \cO)}k)$, when the latter is also equipped with its natural group scheme structure over $k$. Let $Y \colonequals \mathbf{U}(X_s)$. Then $Y$ is naturally a pointed connected stack over $k$, and by \cref{rent14}, it follows that $k \otimes_{R\Gamma(X_s, \cO)}k \simeq R\Gamma (\Omega Y, \cO)$. However, by construction, $Y$ is an affine stack. Therefore it follows that $\Omega Y \simeq \Spec (k \otimes_{R\Gamma(X_s, \cO)}k)$. Thus, we have isomorphisms of group schemes $$\pi_1^{\mathrm{U}} (X_s) \simeq \pi_1 (Y) \simeq \pi_0 (\Omega Y) \simeq \Spec (k \otimes_{R\Gamma(X_s, \cO)}k).$$ This finishes the proof.
\end{proof}{}

\begin{remark}\label{akhil78}
The construction of $\pi_1^{\mathrm{U}}(X/\Spec A)$ above is functorial and glues to yield a flat group scheme $\pi_1^{\mathrm{U}}(X/S)$ for any flat family $X \to S$ of curves of genus $g$ (equipped with a section) or abelian schemes of dimension $g$. It is also true that $\mathbf{U}(X) \simeq B \pi_1^{\mathrm{U}}(X/\Spec A)$; however, since we are working over a general base, the conclusion does not follow from \cref{connn}. Instead, the claim follows from the notion of ``coconnectively faithfully flat" maps introduced in \cite[Prop.~2.10, Prop.~3.3]{soon}. In particular, it would imply that $\mathbf{U}(X)$ is a pointed connected $1$-stack.
\end{remark}

\begin{example}[The filtered circle]\label{cold2}
Let $\mathscr{X} \to \mathbf{A}^1$ denote the standard flat family of curves that degenerates nodal curves to a cusp (given by the equation $y^2z = x^3 + txz^2$,  where $t$ denotes the chosen coordinate of $\mathbf{A}^1$). Note that $\mathrm{Pic}^0$ of this family provides a degeneration of $\mathbf{G}_m$ to $\mathbf{G}_a.$ In particular, it follows that in this case, $\pi_1^{\mathrm{U}}(\mathscr{X}/\mathbf{A}^1)$ from \cref{norisquestion} is the affine group scheme dual to the formal group law $X+ Y+ tXY$ over $\mathbf{A}^1$. This implies that $\pi_1^{\mathrm{U}}(\mathscr{X}/ \mathbf{A}^1) \to \mathbf{A}^1$ is $\GG_m$-equivariant. Therefore, one obtains a group scheme $\pi_1^{\mathrm{U}}(\mathscr{X}/ \mathbf{A}^1)_{\mathrm{Fil}}$ over the stack $[\mathbf{A}^1/ \GG_m]$ whose classifying stack recovers the key definition of the \emph{filtered circle} from \cite{moulinos2019universal}, which the authors introduced in order to explain the existence of the Hochschild--Kostant--Rosenberg filtration on Hochschild homology. In other words, one sees that the unipotent homotopy type of the family of curves $\mathscr{X} \to \mathbf{A}^1$, when viewed with the natural structure of a filtered group stack, recovers the filtered circle. This gives a simple new realization of the filtered circle in the world of algebraic geometry, even though it cannot be directly realized as a space. In \cite[\S~3.3]{moulinos2019universal}, the authors identify the underlying stack of the filtered circle as affinization of the circle $\SSS^1$: in our construction, this can be seen directly from \cref{usefulnoriwork}. 
\end{example}

\begin{example}[Moduli of curves]\label{cold3}
Let $\overline{\cM}_{g,1}$ be the moduli space of stable curves with one marked point and let $q_1 \colon \overline{\cC}_{g,1} \to \overline{\cM}_{g,1}$ be the universal curve over it.
The marking gives rise to a natural section $\sigma \colon \overline{\cM}_{g,1} \to \overline{\cC}_{g,1}$.
For any affine $\Spec A \to \overline{\cM}_{g,1}$, \cref{norisquestion} gives a group scheme $\pi^\rU_1\bigl((\overline{\cC}_{g,1} \times \Spec A)/\Spec A\bigr)$ over $\Spec A$.
Since this association is functorial in $A$, we can glue the various group schemes to obtain a unipotent fundamental group scheme $\pi^\rU_1(\overline{\cC}_{g,1}/\overline{\cM}_{g,1})$ over $\cM_{g,1}$, whose fibre over a point $s \in \overline{\cM}_{g,1}(k)$ corresponding to a marked stable curve $(C,c)$ over $k$ is $\pi^\rU_1(\overline{\cC}_{g,1}/\overline{\cM}_{g,1})_s \simeq \pi^\rU_1(C)$. A natural filtration on $\pi^\rU_1(\overline{\cC}_{g,1}/\overline{\cM}_{g,1})$ is induced by the lower central series
\[ \pi^\rU_1(\overline{\cC}_{g,1}/\overline{\cM}_{g,1}) = \pi^\rU_1(\overline{\cC}_{g,1}/\overline{\cM}_{g,1})^{[1]} \ge \pi^\rU_1(\overline{\cC}_{g,1}/\overline{\cM}_{g,1})^{[2]} \ge \dotsb \ge \pi^\rU_1(\overline{\cC}_{g,1}/\overline{\cM}_{g,1})^{[n]} \ge \dotsb,\]
where we recursively define  $\pi^\rU_1(\overline{\cC}_{g,1}/\overline{\cM}_{g,1})^{[n+1]} \colonequals \bigl[\pi^\rU_1(\overline{\cC}_{g,1}/\overline{\cM}_{g,1})^{[n]},\pi^\rU_1(\overline{\cC}_{g,1}/\overline{\cM}_{g,1})\bigr]$.
The associated graded sheaf of rings $\gr^* \pi^\rU_1(\overline{\cC}_{g,1}/\overline{\cM}_{g,1}) \colonequals \bigoplus_{n \in \NN} \gr^n \pi^\rU_1(\overline{\cC}_{g,1}/\overline{\cM}_{g,1})$ with
\[ \gr^n \pi^\rU_1(\overline{\cC}_{g,1}/\overline{\cM}_{g,1}) \colonequals \pi^\rU_1(\overline{\cC}_{g,1}/\overline{\cM}_{g,1})^{[n]} / \pi^\rU_1(\overline{\cC}_{g,1}/\overline{\cM}_{g,1})^{[n+1]} \]
is again generated by $\gr^1 \pi^\rU_1(\overline{\cC}_{g,1}/\overline{\cM}_{g,1}) \simeq R^1q_{1*}\cO$.
It would be interesting to find a description of $\gr^* \pi^\rU_1(\overline{\cC}_{g,1}/\overline{\cM}_{g,1})$ in terms of the geometry of $\overline{\cC}_{g,1} \to \overline{\cM}_{g,1}$.

\end{example}

\newpage

\section{Formal Lie groups via  unipotent homotopy theory}
In this section, we begin by establishing several foundational results on unipotent group schemes which extend certain results from \cite{Serre} for pro-$p$-finite groups (see \cref{cof0}). These results are used in \cref{cof1} to obtain a purely algebraic perspective on (potentially non-commutative) formal Lie groups and construct new formal Lie groups by dualizing unipotent homotopy group schemes of certain higher stacks (see \cref{cof6}). In \cref{cof3}, we show that the formal Lie groups we construct recover the ones constructed by Artin--Mazur \cite{MR457458} in many cases of interest.

\subsection{Some homological results on unipotent group schemes}\label{cof0}

Let $G$ be a unipotent affine group scheme over a field $k$. In this section, our first goal is to establish necessary and sufficient criteria such that the dual of $G$ defines a non-commutative formal Lie group according to \cref{nonfgl}. In \cref{bgcurve1}, we show that the dual of $G$ is a non-commutative formal Lie group of dimension $g$ if and only if $\dim H^1 (BG, \cO)=g$ and $\dim H^2 (BG, \cO) =0$. Note that if $k$ has characteristic $p>0$, and $G$ is an affine group scheme corresponding to a pro-$p$-finite group, then according to \cref{bgcurve1}, the conditions $\dim H^1(BG, \cO)=g$ and $H^2(BG, \cO)=0$ are equivalent to $G$ being the free pro-$p$ group on $g$ generators (\cref{cold91}); this recovers \cite[\S~4.2, Cor.~2]{Serre} (\textit{cf}.~\cite[\S~4.1, Prop.~21]{Serre}). Our strategy for proving \cref{bgcurve1} is to formally treat $X = BG$ as a \textit{curve of genus} $g$ and adapt Nori's proof of the fact that dual of the unipotent fundamental group scheme of a curve is a non-commutative formal Lie group (see {\cite[\S~IV, Prop.~4]{MR682517}}). 
\vspace{2mm}

 In \cref{fine}, we show that the dual of a commutative unipotent group scheme $G$ is representable by a commutative formal Lie group of dimension $g$ if and only if $\dim H^1 (BG, \cO)=g$ and $\mathrm{Ext}^1 (G, \GG_a)=0$; for another criterion, see \cref{commgp}. The proof, in this case, is very different from the non-commutative case. We make use of the cotangent complex \cite{Ill71,Ill72}, the related notion of the co-Lie complex (\cite{Il1}), as well as some inputs from commutative algebra regarding Betti numbers for complete local rings (\cite{Gull1}, \cite{Gull2}).
\vspace{2mm}

 We will begin by describing the dual of an affine group scheme as a linearly compact topological Hopf algebra (\cref{lincompacth}) following \cite{{MR682517}}.

\begin{construction}[Dualizing group schemes]\label{dualizinggpsch}
Let $G$ be an affine group scheme over  a field $k$. Let $G = \Spec R$ for a $k$-algebra $R$. Let $\mu \colon R \to R \otimes_k R$ denote the comultiplication. We look at the vector space dual $A \colonequals R^*$ which has a (not necessarily commutative) algebra structure induced from the comultiplication on $R$. 
Let $V$ be a finite-dimensional subspace of $R$ such that $\mu (V) \subseteq V \otimes V$. One defines $V^{\perp} \colonequals \left \{ a \in A \suchthat a(v) = 0 \text{ for all } v \in V \right \}$. We note that there is a natural map $A = R^* \to V^*$ obtained by restricting a linear functional to the subspace $V$. This map is surjective with kernel $V^\perp$. 
Thus $A/V^{\perp}$ is finite-dimensional. Since $V$ is also $\mu$-stable it follows that $A/V^{\perp}$ is a finite-dimensional algebra. The collection of all such $\mu$-stable finite-dimensional subspaces $V$ of $R$ provides a system of neighbourhoods of zero in the algebra $A$ and $A$ is complete with respect to the topology induced from this, i.e., $A = \varprojlim_{V} A/ V^\perp$. In particular, by taking $V=k$, we obtain a two-sided ideal of $A$ given by $k^\perp$ which will be denoted as $\mathfrak{m}$. It follows that $A/ \mathfrak{m} \simeq k$. 
If $G$ is further assumed to be unipotent, it follows that $A/V^{\perp}$ is an artinian local ring with maximal ideal $\mathfrak{m}$. We set $J_n \colonequals \bigcap _{V} \mathfrak{m}^n + V^{\perp}$. Since $\mathfrak{m}= k^{\perp}$, it follows that $J_1 = \mathfrak{m}$. We have the following proposition.

\end{construction}

\begin{proposition} Let $G = \Spec R$ be a unipotent affine group scheme over $k$ such that $\dim H^1 (BG, \cO)= \dim \Hom(G, \GG_a)= g$ for some $g$. Then (in the above notations), we have 
\begin{enumerate}
\item Each $J_n$ is an open two sided ideal of $A= R^*$ and $A = \varprojlim_{n} A/J_n$.
\item $(J_1/J_2)^* = H^1 (BG, \cO)$.
\item $\dim A/J_n \le 1 + g+ g^2 + \ldots + g^{n-1}$.
\end{enumerate}
\end{proposition}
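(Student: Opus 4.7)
I plan to prove the three parts in the order (2), (3), (1), since each builds on the previous. The main device is the Pontryagin-type duality between the linearly compact topological vector space $A = R^*$ (topologized by the ideals $V^\perp$) and its discrete continuous dual $R$: closed subspaces $W \subseteq A$ correspond bijectively to annihilators $W^\perp \subseteq R$ with $(A/W)^*_{\mathrm{cts}} \cong W^\perp$, and $W$, $A/W$ are finite-dimensional precisely when $W^\perp$ is. Under this duality, $J_1 = \mathfrak{m} = k^\perp$ has $J_1^\perp = k$. For (2), note that since $J_2 = \bigcap_V(\mathfrak{m}^2 + V^\perp)$ is the closure of $\mathfrak{m}^2$ in $A$ and every element of $R$ is a continuous functional on $A$, we have $J_2^\perp = (\mathfrak{m}^2)^\perp$. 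Unwinding the multiplication on $A$ in terms of $\mu$, the condition $r \in (\mathfrak{m}^2)^\perp$ becomes $\mu(r) \in (\mathfrak{m} \otimes \mathfrak{m})^\perp = k \otimes R + R \otimes k$; a short calculation with the counit axioms then shows this is equivalent to $r = c \cdot 1 + r_0$ with $r_0$ primitive, so that $J_2^\perp = k \oplus P(R)$. Combined with $J_1^\perp = k$, this yields $(J_1/J_2)^* \cong J_2^\perp/J_1^\perp \cong P(R) = \Hom(G, \GG_a) = H^1(BG, \cO)$, and in particular pins down $\dim(J_1/J_2) = g$.

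For (3), I will pick $\xi_1, \ldots, \xi_g \in \mathfrak{m}$ lifting a basis of $J_1/J_2$ furnished by (2), and fix a $\mu$-stable finite-dimensional subspace $V \subseteq R$. The quotient $B_V \colonequals A/(\mathfrak{m}^n + V^\perp)$ is a finite-dimensional artinian local $k$-algebra whose maximal ideal $\overline{\mathfrak{m}}_V$ satisfies $\overline{\mathfrak{m}}_V^n = 0$. Since $J_2 \subseteq \mathfrak{m}^2 + V^\perp$, the natural surjection $\mathfrak{m}/J_2 \twoheadrightarrow \overline{\mathfrak{m}}_V/\overline{\mathfrak{m}}_V^2$ shows that $\overline{\mathfrak{m}}_V/\overline{\mathfrak{m}}_V^2$ is spanned by the images of the $\xi_i$; propagating this through the filtration $B_V \supset \overline{\mathfrak{m}}_V \supset \overline{\mathfrak{m}}_V^2 \supset \cdots \supset 0$ (each graded piece $\overline{\mathfrak{m}}_V^j/\overline{\mathfrak{m}}_V^{j+1}$ is then spanned by length-$j$ monomials in the $\xi_i$) gives $\dim_k B_V \le 1 + g + g^2 + \cdots + g^{n-1}$. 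Since $A/J_n$ embeds in $\varprojlim_V B_V$ and this inverse system of surjections has uniformly bounded dimensions (hence stabilizes), the same bound passes to $A/J_n$.

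Part (1) then follows rather mechanically. $J_n$ is a two-sided ideal as the intersection of the two-sided ideals $\mathfrak{m}^n + V^\perp$, and openness follows from (3): the stabilization of $\{B_V\}$ supplies some $V_0$ with $J_n = \mathfrak{m}^n + V_0^\perp \supseteq V_0^\perp$. The isomorphism $A \cong \varprojlim_n A/J_n$ then comes from the fact that the filtrations $\{J_n\}$ and $\{V^\perp\}$ induce the same topology on $A$ — every $V^\perp$ contains $J_N$ for $N$ large (the maximal ideal in the artinian local ring $A/V^\perp$ is nilpotent), and every $J_n$ contains some $V^\perp$ by the openness just established — combined with the completeness of $A$ with respect to the $V^\perp$-topology. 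The main obstacle will be the duality computation in (2): carefully using the comultiplication on $R$ and the counit axioms to identify $J_2^\perp$ explicitly as $k \oplus P(R)$ is the conceptually substantive step. Once this identification is secured, parts (3) and (1) reduce to fairly standard manipulations of inverse limits of finite-dimensional artinian local algebras generated, modulo a power of the maximal ideal, by $g$ elements.
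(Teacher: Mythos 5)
Your argument is correct; the paper offers no self-contained proof here, deferring entirely to Nori's Lemma~3 in \cite{MR682517}, and what you have written is a sound reconstruction along the natural lines that lemma takes. The one place worth stating more explicitly is the duality step in (2): the identification $(J_1/J_2)^* \cong J_2^\perp/J_1^\perp$ implicitly requires knowing $J_1/J_2$ is finite-dimensional, which you should extract from the \emph{conclusion} (its continuous dual $J_2^\perp/J_1^\perp \cong P(R) \cong \Hom(G,\GG_a)$ is $g$-dimensional by hypothesis, forcing $J_1/J_2$ itself to be $g$-dimensional) rather than assume up front; and the verification that $(\mathfrak m^2)^\perp = k \oplus P(R)$ is cleanest done inside a single $\mu$-stable finite-dimensional $V \ni r$, where the pairing $(A/V^\perp)\otimes(A/V^\perp)$ with $V\otimes V$ is perfect and ordinary finite-dimensional linear algebra applies. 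With that made explicit, parts (3) and (1) go through exactly as you say, the only further point being that since $A$ is possibly noncommutative the monomial count in (3) must be the noncommutative one, $g^j$ ordered length-$j$ words, which is indeed what the stated bound reflects.
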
{}
\begin{proof} This follows from \cite[Lem.~IV.3]{MR682517} (and its proof).
\end{proof}{}A quasi-coherent sheaf on $BG$ is the same as an $R$-comodule. Any quasi-coherent sheaf on $BG$ is a filtered colimit of vector bundles, which corresponds to a finite-dimensional representation of the affine group scheme $G$. This implies that quasi-coherent sheaves on $BG$ is the same as left modules over $A$ such that each element of the module is killed by some $V^\perp$ where $V$ again varies over finite-dimensional $\mu$-stable subspaces of $R$. Denoting the category of such $A$-modules by $A\w{-nilMod}$, we have an equivalence of categories $C \colon \QCoh(BG) \simeq A\w{-nilMod}$. Note that for a vector bundle $\mathscr{V}$ on $BG$, we have $\rk \mathscr{V} = \dim_k C (\mathscr{V})$. We will begin by recalling certain universal constructions on vector bundles on $BG$.

\begin{definition}[Universal vector extensions]
Let $\mathscr{V}$ be a vector bundle on $BG$, where $G$ is unipotent and $H^1 (BG, \cO)$ is assumed to be finite-dimensional. An exact sequence $ 0 \to \cO^m \to U(\mathscr{V}) \to \mathscr{V} \to 0$ is called a universal vector extension of $\mathscr{V}$, if for every exact sequence $0 \to \cO^n \to W' \to \cV \to 0$ there is a unique map $f$ which fits into a diagram 
\begin{center}
\begin{tikzcd}
0 \arrow[r] & {\cO^m} \arrow[r] \arrow[d, "f"] & U(\mathscr{V}) \arrow[r] \arrow[d] & \cV \arrow[r] \arrow[d,equals] & 0 \\
0 \arrow[r] & \mathscr O ^n \arrow[r]                                   & W' \arrow[r]                       & \cV \arrow[r]                 & 0
\end{tikzcd}
\end{center}{}

We note that under our assumptions, every vector bundle $\mathscr{V}$ on $BG$ has a universal vector extension. Indeed, since $\mathscr{V}^*$ corresponds to a representation of $G$, and $G$ is unipotent, it follows that $\mathscr{V}^*$ is an iterated extension of $\cO$ as vector bundle on $BG$. Therefore, by induction, it follows that $H^1 (BG, \mathscr{V}^*)$ is finite-dimensional. Let us choose $v_1, \ldots, v_m$ as a basis of $H^1 (BG, \mathscr{V}^*)$. Then $\theta= (v_1, \ldots, v_m) \in H^1 (BG, \mathscr{V}^*)^{\oplus m}= H^1 (BG, \mathscr{H}om (\mathscr{V}, \cO^m))= \Ext^1(\mathscr{V}, \cO^m)$ defines the required extension $ 0 \to \cO^m \to U(\mathscr{V}) \to \mathscr{V} \to 0$. More canonically, one can write this extension as $0 \to H^1(BG, \mathscr{V}^*)^* \otimes \cO \to U(\mathscr{V}) \to \mathscr{V}\to 0$. 
\end{definition}{}
\begin{remark}\label{univvanish}
By a long exact sequence chase, it follows from the construction that the map $H^1 (BG, \mathscr{V}^*) \to H^1 (X, U(\mathscr{V})^*)$ is the zero map. 
The assumptions that $G$ is unipotent and that $H^1(BG, \cO)$ is finite-dimensional both play an important role in the construction of universal vector extensions for an arbitrary vector bundle on $BG$.
\end{remark}{}

Similarly, for $A$-modules $M$, one can define a universal vector extension $0 \to k^n \to U(M) \to M \to 0$ to be an extension with analogues universal properties. Let $\mathscr{V}$ be a vector bundle on $BG$ as above. Let $C(\mathscr{V})$ be an object of $A\w{-nilMod}$ via the equivalence of categories $C \colon \QCoh(BG) \simeq A\w{-nilMod}$. Then it follows from the categorical equivalence that $U (C(\mathscr{V})) \simeq C (U(\mathscr{V}))$. Let us inductively define a sequence of vector bundles on $BG$ starting with $\mathscr{V}_1 \colonequals \cO$ and $\mathscr{V}_{n+1} \colonequals U(\mathscr{V}_n)$. We note the following proposition from Nori.

\begin{proposition}\label{c(v)}Under the equivalence of categories $C \colon \QCoh(BG) \simeq A\mathrm{-nilMod}$, it follows that $C (\mathscr{V}_n)= A/J_n$, where $\mathscr{V}_n$ is as constructed above.

\end{proposition}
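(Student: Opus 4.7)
The plan is to proceed by induction on $n$. The base case is immediate: $\mathscr{V}_1 = \cO$ corresponds under $C$ to the trivial $A$-module $k = A/\mathfrak{m} = A/J_1$. For the inductive step, assuming $C(\mathscr{V}_n) \simeq A/J_n$, I would show that the short exact sequence
\[ 0 \to J_n/J_{n+1} \to A/J_{n+1} \to A/J_n \to 0 \]
lives in $A\w{-nilMod}$ and realizes the universal vector extension of $A/J_n$ there. By the essential uniqueness of universal vector extensions (which is built into the defining universal property), this would force $C(\mathscr{V}_{n+1}) \simeq A/J_{n+1}$.

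To place the sequence inside $A\w{-nilMod}$, I would first observe that each $V^{\perp}$ is a two-sided ideal of $A$ (as the kernel of the algebra map $A \to A/V^{\perp} \simeq V^*$), so $\mathfrak{m} J_n \subseteq \bigcap_V (\mathfrak{m}^{n+1} + \mathfrak{m} V^{\perp}) \subseteq J_{n+1}$; in particular $J_n/J_{n+1}$ is killed by $\mathfrak{m}$ and is hence a $k$-vector space. Since $\dim_k A/J_{n+1} < \infty$ by the preceding dimension bound and $A/J_{n+1}$ injects into the product $\prod_V A/(\mathfrak{m}^{n+1} + V^{\perp})$, finite-dimensionality lets me reduce to finitely many factors indexed by some $V_1, \dotsc, V_N$. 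Taking $V \colonequals V_1 + \dotsb + V_N$ (still $\mu$-stable and finite-dimensional) produces a single $V^{\perp}$ that kills $A/J_{n+1}$, so $A/J_{n+1} \in A\w{-nilMod}$.

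The heart of the argument is verifying the universal property, for which I would show that the connecting map $(J_n/J_{n+1})^{\vee} \to \Ext^1(A/J_n, k)$ (with $\Ext$ computed in $A\w{-nilMod}$) is an isomorphism. For injectivity, any $A$-linear map $A/J_{n+1} \to k$ factors through $A/\mathfrak{m}$ and therefore restricts to zero on $J_n/J_{n+1}$ (since $J_n \subseteq \mathfrak{m}$), so only the zero functional yields a split pushout. For surjectivity, a non-split extension $0 \to k \to E \to A/J_n \to 0$ in $A\w{-nilMod}$ is necessarily cyclic of the form $E \simeq A/I$ with $\mathfrak{m} J_n \subseteq I \subsetneq J_n$ and $\dim_k J_n/I = 1$; using $\mathfrak{m}^n \subseteq J_n$ gives $\mathfrak{m}^{n+1} E \subseteq \mathfrak{m} \cdot k = 0$, and combined with $V^{\perp} \subseteq I$ for some $V$ (because $E \in A\w{-nilMod}$), this forces $\mathfrak{m}^{n+1} + V^{\perp} \subseteq I$ and hence $J_{n+1} \subseteq I$, exhibiting $E$ as a pushout of our extension along $J_n/J_{n+1} \twoheadrightarrow J_n/I \simeq k$. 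The main difficulty I anticipate is the careful bookkeeping of the interplay between the topology on $A$, the $\mu$-stability requirement on $V \subseteq R$, and the nilpotency condition defining $A\w{-nilMod}$, particularly in verifying that each intermediate candidate object remains inside this subcategory.
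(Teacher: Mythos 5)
Your argument is correct and is essentially the content that the paper delegates: the paper's proof consists of observing that the statement ``amounts to saying $U(A/J_n) = A/J_{n+1}$'' and then citing Lemma~7 of Nori's paper \cite{MR682517}; your inductive step establishes precisely this identity. The overall strategy---identify the candidate sequence $0 \to J_n/J_{n+1} \to A/J_{n+1} \to A/J_n \to 0$ and check the universal property by proving the pushforward map $(J_n/J_{n+1})^\vee \to \Ext^1(A/J_n,k)$ is bijective---is the natural one and almost certainly matches Nori's.

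Two small points of bookkeeping are worth tightening, though neither is a gap. First, the phrase ``connecting map'' is a slight misnomer: the map $(J_n/J_{n+1})^\vee \to \Ext^1(A/J_n,k)$ is the pushforward $\lambda \mapsto \lambda_*\xi$ of the extension class $\xi$, not a boundary map from a long exact sequence, and one should note explicitly (which is a quick diagram-chase over $k$) that bijectivity of this map for $W' = k$ implies bijectivity of $\Hom(J_n/J_{n+1}, W') \to \Ext^1(A/J_n, W')$ for all finite-dimensional trivial $W'$, which is what the definition of universal vector extension literally demands. Second, the reduction to finitely many factors $V_1, \dotsc, V_N$ in showing $A/J_{n+1} \in A\text{-nilMod}$ is stated a bit loosely; the clean justification is that the images $(\mathfrak{m}^{n+1}+V^\perp)/J_{n+1}$ in the finite-dimensional space $A/J_{n+1}$ form a downward-directed family of subspaces with zero intersection, so one of them must already be zero (or, if you want to take a finite subfamily, by DCC a finite intersection is already minimal). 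With those clarifications the argument is complete.
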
{}

\begin{proof}The $n=1$ case is clear. For $n \ge 1$, this amounts to saying that $U(A/J_n)= A/J_{n+1}$. The proposition is proven in \cite[Lem.~IV.7]{MR682517}.
\end{proof}{}

Before delving into determining cohomological criteria for the representability of duals of unipotent group schemes by a formal Lie group, we record the following calculations.

\begin{proposition}\label{newyr} 
Let $G$ be a group scheme over $k$ whose dual is a non-commutative formal Lie group of dimension $g$. Then in $D(k)$, we have an isomorphism $$R\Gamma(BG, \cO) \simeq R\Hom_{k\llbrace X_1, \ldots, X_g \rrbrace}(k,k).$$
\end{proposition}{}

\begin{proof}
Using the equivalence of categories states in \cref{c(v)} and \cref{nc-group-law-cohomology}, it would be enough to prove that $H^i (BG,\cO)=0$ for $i>1$. Note that there is an explicit complex which computes $R\Gamma(BG, \cO)$ obtained by applying fpqc descent along the map $* \to BG$. Further, that complex depends only on the coalgebra structure of the Hopf algebra underlying the ring of global sections on $G$. Therefore, it is enough to prove the cohomology vanishing assertion in the special case when $G$ is the dual of the non-commutative formal group law $F_i (X,Y) \colonequals X_i+Y_i +X_iY_i$. In this case, $G$ is the free pro-$p$ group scheme on $g$ generators. Thus the claim follows from the classical computation of group cohomology of free groups and \cref{compare}.
\end{proof}{}

\begin{proposition}\label{paderborn1}
Let $G$ be a group scheme over $k$ whose dual is a commutative formal Lie group of dimension $g$. Then we have an isomorphism
$$R\Gamma(BG, \cO) \simeq R\Hom_{k \llbracket x_1, \ldots, x_g \rrbracket}(k,k)$$ of associative algebra objects in $D(k)$. 
\end{proposition}{}
\begin{proof}
The equivalence of categories from \cref{c(v)} extends to a fully faithful functor from the $\infty$-category of dualizable objects of $D_{\mathrm{qc}}(BG)$ to the derived $\infty$-category $D(k \llbracket x_1, \ldots, x_g \rrbracket)$. Under this categorical equivalence, the structure sheaf $\cO$ is sent to the $k \llbracket x_1, \ldots, x_g \rrbracket$-module $k$. This yields the desired result.
\end{proof}{}

\begin{remark}\label{retire4}
Let $G$ be a group scheme over $k$ whose dual is a commutative formal Lie group of dimension $g$. Then $\mathrm{Ext}^1(G, \mathbf{G}_a)=0$. In order to see this, note that by \cref{gait1}, $\mathrm{Ext}^1(G, \mathbf{G}_a) \simeq H^3 (K(G,2), \cO)$. By \cref{paderborn1}, $H^* (BG, \cO) \simeq \wedge^* H^1 (BG, \cO)$. Applying descent along $\Spec k \to K(G, 2)$, and directly carrying out the strategy of \cite[\S~3.3]{Mon21}, it follows that the cohomology ring $H^*(K(G,2), \cO)$ is a symmetric algebra in degree $2$. In particular, $H^3 (K(G,2), \cO)=0$, which gives the desired claim.
The same argument carries over to any affine base;
therefore, we also obtain $\mathscr{E}xt^1(G,\GG_a) = 0$.
\end{remark}{}

Now we are ready to prove the following.
\begin{proposition}\label{bgcurve1}Let $G$ be a unipotent affine group scheme over $k$ such that $\dim H^1 (BG, \cO)=g$. Then the dual of $G$ is a non-commutative formal Lie group of dimension $g$ if and only if $ H^2 (BG, \cO) =0$.
\end{proposition}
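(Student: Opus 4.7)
The implication from left to right follows immediately from combining \cref{newyr} with \cref{nc-group-law-cohomology}: if the dual of $G$ is a non-commutative formal Lie group of dimension $g$, then $R\Gamma(BG,\cO) \simeq R\Hom_{k\llbrace X_1,\ldots,X_g\rrbrace}(k,k) \simeq k \oplus k[-1]^{\oplus g}$, whence $H^2(BG,\cO) = 0$. The plan for the converse is to adapt Nori's argument for proper curves to our setting, working entirely within the framework of \cref{dualizinggpsch} and the iterated universal vector extensions $\mathscr{V}_1 = \cO$, $\mathscr{V}_{n+1} = U(\mathscr{V}_n)$, which by \cref{c(v)} satisfy $C(\mathscr{V}_n) = A/J_n$ and in particular $\dim_k A/J_n = \rk \mathscr{V}_n$.

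The crux will be an induction on $n \ge 1$ proving simultaneously that $H^2(BG,\mathscr{V}_n^*) = 0$ and $\dim_k H^1(BG,\mathscr{V}_n^*) = g^n$. The base case is given by the two hypotheses applied to $\mathscr{V}_1^* = \cO$. For the inductive step, I would dualize the defining exact sequence $0 \to \cO^{m_n} \to \mathscr{V}_{n+1} \to \mathscr{V}_n \to 0$ (with $m_n = g^n$ by induction) to obtain $0 \to \mathscr{V}_n^* \to \mathscr{V}_{n+1}^* \to \cO^{m_n} \to 0$, and invoke \cref{univvanish} to deduce that the connecting map $H^0(BG,\cO)^{m_n} \to H^1(BG,\mathscr{V}_n^*)$ is an isomorphism. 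Combined with the induction hypothesis and the standing assumption $H^2(BG,\cO) = 0$, the long exact sequence then forces $\dim_k H^1(BG,\mathscr{V}_{n+1}^*) = g \cdot g^n = g^{n+1}$ and $H^2(BG,\mathscr{V}_{n+1}^*) = 0$. Telescoping ranks then yields the equality $\dim_k A/J_n = 1 + g + g^2 + \cdots + g^{n-1}$, saturating the upper bound recalled after \cref{dualizinggpsch}.

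To complete the argument, I would lift a $k$-basis of $J_1/J_2 = \mathfrak{m}/J_2$ to elements $x_1,\ldots,x_g \in A$ and define a continuous algebra map $\phi \colon k\llbrace X_1,\ldots,X_g\rrbrace \to A$ by $X_i \mapsto x_i$. Since $\phi$ sends the augmentation ideal into $\mathfrak{m}$, it descends to maps $\phi_n \colon k\llbrace X_1,\ldots,X_g\rrbrace/(X_1,\ldots,X_g)^n \to A/J_n$ whose source and target both have $k$-dimension $1 + g + \cdots + g^{n-1}$. Once $\phi_n$ is known to be surjective it must therefore be an isomorphism, and passing to the inverse limit exhibits $A$ as topologically isomorphic to the completed non-commutative polynomial algebra; transporting the cocommutative Hopf algebra structure on $A$ through $\phi$ then yields the desired non-commutative formal Lie group law of dimension $g$.

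The hardest part of the argument will be the surjectivity of $\phi_n$, which amounts to showing that the associated graded $\bigoplus_n J_n/J_{n+1}$ is generated in degree one by $J_1/J_2$. This is not formal: it relies on the comultiplication of $A$ together with the defining property of the universal vector extensions $\mathscr{V}_{n+1} = U(\mathscr{V}_n)$ via an inductive Nakayama-type argument, which is essentially the content of \cite[\S~IV, Lem.~9]{MR682517}. Once this is granted, the dimension computation from the second paragraph upgrades each $\phi_n$ automatically to an isomorphism, finishing the proof.
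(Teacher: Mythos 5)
Your proof is correct and follows the same route as the paper's: the ``only if'' direction via \cref{newyr} and \cref{nc-group-law-cohomology}, and the converse via the same induction on the universal vector extensions $\mathscr{V}_n$, arriving at $\dim_k A/J_n = 1 + g + \cdots + g^{n-1}$ and then identifying $A$ with $k\llbrace X_1,\ldots,X_g\rrbrace$ by a dimension count. One point of emphasis is off, however. You flag the surjectivity of $\phi_n \colon k\llbrace X_1,\ldots,X_g\rrbrace/(X_1,\ldots,X_g)^n \to A/J_n$ as ``the hardest part'' and tie it to the universal vector extensions. In fact surjectivity is unconditional and makes no use of $H^2(BG,\cO)=0$ or the $\mathscr{V}_n$; it is precisely the content of the bound $\dim_k A/J_n \le 1 + g + \cdots + g^{n-1}$ recalled after \cref{dualizinggpsch} (Nori's Lemma~3, which the paper simply invokes with the phrase ``since these maps are already surjective''). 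The reason it is easy: since $J_n$ is open, it contains $V_0^\perp$ for some $\mu$-stable finite-dimensional $V_0$, and $J_2 = \bigcap_V(\mathfrak{m}^2 + V^\perp) \subseteq \mathfrak{m}^2 + V_0^\perp \subseteq \mathfrak{m}^2 + J_n$, so $J_2 = \mathfrak{m}^2 + J_n$; thus in the finite-dimensional local ring $A/J_n$, whose maximal ideal $\bar{\mathfrak{m}}$ is nilpotent (as $\mathfrak{m}^n \subseteq J_n$), the images $\bar{x}_i$ span $\bar{\mathfrak{m}}/\bar{\mathfrak{m}}^2 = J_1/J_2$ and Nakayama gives the generation. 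The genuinely load-bearing step, the one where $H^2(BG,\cO)=0$ enters, is the reverse inequality $\dim_k A/J_n \geq 1 + g + \cdots + g^{n-1}$, which is exactly the induction you run in your second paragraph and is identical to the paper's. One small imprecision there: \cref{univvanish} on its own only gives that $H^1(BG,\mathscr{V}_n^*) \to H^1(BG,\mathscr{V}_{n+1}^*)$ vanishes, hence that the connecting map out of $H^0(BG,\cO^{m_n})$ is surjective; that it is an isomorphism then follows from the inductive dimension count, but this fact is not actually needed — surjectivity of that connecting map, plus $H^2(BG,\mathscr{V}_n^*)=0$, already forces $H^1(BG,\mathscr{V}_{n+1}^*) \cong H^1(BG,\cO^{m_n})$.
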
{}

\begin{proof}
The only if direction follows from \cref{nc-group-law-cohomology} and \cref{newyr}.
\vspace{2mm}

We now prove the converse. Let $B \coloneqq k \llbrace X_1, \ldots, X_g \rrbrace$. Let $\mathfrak{m}^\prime$ denote the two sided ideal $(X_1,..., X_g)$ of $B$. 
We choose $x_1, \ldots, x_g \in J_1$ such that they are a basis of $J_1/J_2$ (see \cref{dualizinggpsch}). Sending $X_i$ to $x_i$ defines a surjection $B \twoheadrightarrow A$, where $A$ is defined as the dual of the Hopf algebra underlying $G$ as in \cref{dualizinggpsch}. We will be done if we prove that $B/ \mathfrak{m}^{\prime n} \to A/J_n$ is an isomorphism for all $n \ge 1$. Since these maps are already surjective and $\dim B/ \mathfrak{m}^{\prime n} = 1+ g + \ldots g^{n-1}$, it would be enough to show that under our assumptions, $\dim A/J_n = 1+ g + \ldots + g^{n-1}$.

\vspace{2mm}
Now we will make use of our preparation involving universal vector extensions. Defining $\mathscr{V}_n$ as in the set up of \cref{c(v)}, we note that (as a consequence of \cref{c(v)}), we have that $\rk \mathscr{V}_n= \dim A/J_n$. We note the following lemma.

\begin{lemma}In this set up, $H^2 (BG, \mathscr{V}_n^*)=0$ for all $n \ge 1$.

\end{lemma}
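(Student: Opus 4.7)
The plan is to prove the vanishing by induction on $n$, using the universal vector extension sequence dualized, together with the hypothesis $H^2(BG,\cO)=0$.

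For the base case $n=1$, we have $\mathscr{V}_1 = \cO$, so $\mathscr{V}_1^\ast = \cO$ and the vanishing $H^2(BG,\mathscr{V}_1^\ast)=0$ is precisely the hypothesis of the proposition.

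For the inductive step, assume $H^2(BG,\mathscr{V}_n^\ast)=0$. The universal vector extension takes the form
\[ 0 \to H^1(BG,\mathscr{V}_n^\ast)^\ast \otimes \cO \to \mathscr{V}_{n+1} \to \mathscr{V}_n \to 0, \]
and since $H^1(BG,\mathscr{V}_n^\ast)$ is finite-dimensional (as $G$ is unipotent and $H^1(BG,\cO)$ is finite-dimensional, so iterated extensions of $\cO$ have finite-dimensional $H^1$), the first term is a finite direct sum of copies of $\cO$. Dualizing this short exact sequence of vector bundles gives
\[ 0 \to \mathscr{V}_n^\ast \to \mathscr{V}_{n+1}^\ast \to H^1(BG,\mathscr{V}_n^\ast) \otimes \cO \to 0. \]
I would then run the associated long exact sequence in cohomology on $BG$. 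The relevant portion reads
\[ H^2(BG,\mathscr{V}_n^\ast) \to H^2(BG,\mathscr{V}_{n+1}^\ast) \to H^1(BG,\mathscr{V}_n^\ast) \otimes H^2(BG,\cO). \]
The left term vanishes by the inductive hypothesis, and the right term vanishes because $H^2(BG,\cO)=0$ is exactly the standing assumption of the proposition. Hence $H^2(BG,\mathscr{V}_{n+1}^\ast)=0$, completing the induction.

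There is no real obstacle here: the argument is a formal consequence of the construction of $\mathscr{V}_n$ as an iterated universal vector extension and the hypothesis $H^2(BG,\cO)=0$. The only minor point to verify cleanly is that each $\mathscr{V}_n^\ast$ is again a vector bundle (so that dualizing the short exact sequence preserves exactness), which follows inductively since extensions of vector bundles by vector bundles on $BG$ are vector bundles. This vanishing lemma will then feed back into the main argument of \cref{bgcurve1}: one uses it together with the long exact sequence coming from $0 \to H^1(BG,\mathscr{V}_n^\ast)^\ast\otimes \cO \to \mathscr{V}_{n+1} \to \mathscr{V}_n \to 0$ to inductively compute the rank of $\mathscr{V}_n$, matching the upper bound $\dim A/J_n \le 1+g+\cdots+g^{n-1}$ against $\dim B/\mathfrak{m}'^n$ to deduce that $B/\mathfrak{m}'^n \to A/J_n$ is an isomorphism for every $n$, and therefore that $A \simeq k\llbrace X_1,\dots,X_g\rrbrace$.
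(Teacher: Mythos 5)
Your proof is correct and follows essentially the same route as the paper: induct on $n$, dualize the universal vector extension $0 \to \cO^m \to \mathscr{V}_{n+1} \to \mathscr{V}_n \to 0$, and apply the long exact sequence in cohomology together with the standing hypothesis $H^2(BG,\cO)=0$.
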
{}

\begin{proof}This follows by induction on $n$ by using the assumption $H^2(BG, \cO)=0$ and the exact sequence $0 \to \cO^m \to \mathscr{V}_{n+1} \to \mathscr{V}_n \to 0$.
\end{proof}{}
We note that in the exact sequence $0 \to \cO^m \to \mathscr{V}_{n+1} \to \mathscr{V}_n \to 0$, we must have $m = \dim H^1 (BG, \mathscr{V}_n^*)$ by construction. Therefore, in order to prove that $\rk \mathscr{V}_n = 1+g + \ldots + g^{n-1}$, by using induction on $n$, it would be enough to prove that $\dim H^1 (BG, \mathscr{V}_n^*)= g^n$ for all $n \ge 1$. To show this, we look at the dual exact sequence $0 \to \mathscr{V}_n^* \to \mathscr{V}_{n+1}^* \to \cO^m \to 0$. As noted in \cref{univvanish}, the map $H^1(BG, \mathscr{V}_n^*) \to H^1(BG, \mathscr{V}_{n+1}^*)$ is zero. Therefore, by using the long exact sequence in cohomology and the above lemma, we get that $\dim H^1 (BG, \mathscr{V}_{n+1}^*)= \dim H^1 (BG, \cO ^m)= m \cdot \dim H^1(BG, \cO)= mg= g \dim H^1 (BG, \mathscr{V}_n^*)$.
This inductively shows that indeed $H^1(BG, \mathscr{V}_n^*)= g^n$, which finishes the proof.
\end{proof}{}

Note that in \cref{bgcurve1}, $G$ is not assumed to be commutative. In fact, if $G$ were assumed to be commutative, then under the assumptions, the dual of $G$ will still be forced to be a non-commutative formal Lie group of dimension $g$, which implies that $g=1$. We also note that if $G$ is commutative, then $BG$ has the structure of an abelian group stack. In such a case, the fact that $g$ is forced to be $1$ is formally analogous to the fact that if a smooth curve $X$ of genus $g\ge 1$ has the structure of an abelian variety, then $g=1$. Indeed, $X$ being an abelian variety formally forces the canonical line bundle to be trivial, which implies $g= \dim H^1(X, \cO)= \dim H^0 (X, \cO)=1$

\vspace{2mm}
However, in the case where $G$ is a commutative group scheme, one can look at $\Ext^1 (G, \GG_a)$ instead of $H^2 (BG, \cO)$. In this case, $\Ext^1 (G, \GG_a)$ embeds inside $H^2 (BG, \cO)$. One can further formulate and prove the following proposition.

\begin{proposition}\label{onedimensionalfgl}
Let $G$ be a unipotent commutative affine group scheme over a field $k$. We assume that $\dim H^1 (BG, \cO)=1$ and $\Ext^1 (G, \GG_a)=0$. Then the dual of $G$ is a commutative formal Lie group of dimension $1$.
\end{proposition}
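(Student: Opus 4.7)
The goal is to show that the topological dual Hopf algebra $A \colonequals R^*$ from \cref{dualizinggpsch} is isomorphic, as a topological ring, to $k \llbracket x \rrbracket$. The hypothesis $\dim H^1(BG, \cO) = 1$, combined with the identification $J_1/J_2 \simeq H^1(BG, \cO)^*$ appearing in the proof of \cref{bgcurve1}, says that $A$ has embedding dimension one. Consequently $A$ is a complete noetherian local ring with residue field $k$, and choosing a lift of a generator of $J_1/J_2$ yields a continuous surjection $k \llbracket x \rrbracket \twoheadrightarrow A$. The task therefore reduces to proving that this surjection is injective, equivalently that $A$ itself is a regular local ring.

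To prove regularity, the plan is to invoke the cotangent-complex criterion (Quillen, later developed by Lichtenbaum--Schlessinger and Avramov): a noetherian local ring $A$ with residue field $k$ is regular if and only if $H^{-1}(L_{A/k} \otimes^L_A k) = 0$. Writing $\ell_A \colonequals L_{A/k} \otimes^L_A k$ for the co-Lie complex at the identity of the formal scheme $\mathrm{Spf}\, A$, the task becomes to show that $H^{-1}(\ell_A) = 0$.

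The decisive input is to relate $H^{-1}(\ell_A)$ to $\Ext^1(G, \GG_a)$. The expected identification takes the form
\[ H^{-i}(\ell_A) \simeq \Ext^i(G, \GG_a)^* \qquad \text{for } i \ge 0, \]
or at least admits such a map in low degree; for $i=0$ this already recovers the Nakayama-type identification $\mathfrak{m}/\mathfrak{m}^2 \simeq \Hom(G, \GG_a)^*$ used above. Such an identification should arise from a Cartier-type duality between the commutative unipotent affine group scheme $G$ and its formal dual $\mathrm{Spf}\, A$: the co-Lie complex of the formal dual at the identity dualizes to a complex computing derived homomorphisms from $G$ into $\GG_a$. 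The hypothesis $\Ext^1(G, \GG_a) = 0$ would then give $H^{-1}(\ell_A) = 0$, so $A \simeq k \llbracket x \rrbracket$. Since $G$ is commutative, the induced comultiplication on $A$ defines a commutative formal group law on $k \llbracket x \rrbracket$, producing a commutative formal Lie group of dimension one.

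The main technical obstacle is making the duality between $\ell_A$ and $R\Hom(G, \GG_a)^*$ precise at the derived level. A natural route is to present $G$ as a filtered colimit of finite flat commutative subquotient group schemes (over which genuine Cartier duality applies) and to pass to the limit using the pro-finite presentation $A \simeq \varprojlim_n A/J_n$ of \cref{dualizinggpsch}. An alternative and more hands-on route would use the equivalence between nilpotent $A$-modules and $\QCoh(BG)$, which gives $\dim_k \mathrm{Tor}_i^A(k, k) = \dim_k H^i(BG, \cO)$, so that the Poincar\'e series $P_A(t) = \sum_i \dim_k \mathrm{Tor}_i^A(k, k)\, t^i$ equals $1 + t$ precisely when $A \simeq k \llbracket x \rrbracket$; combined with Gulliksen's results on Betti numbers of complete local rings, this reduces the problem to verifying $H^i(BG, \cO) = 0$ for $i \ge 2$. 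The latter step is subtle in small characteristic owing to divided-power operations on the $E_\infty$-algebra $R\Gamma(BG, \cO)$, and is controlled by the hypothesis $\Ext^1(G, \GG_a) = 0$ via its injection into $H^2(BG, \cO)$ from \cref{gait1}.
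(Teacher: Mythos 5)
Your structural outline agrees with the paper up to the point where it matters: both proofs reduce to showing the continuous surjection $\alpha \colon k\llbracket x\rrbracket \twoheadrightarrow A = R^*$ is injective, and both want to feed the hypothesis $\Ext^1(G,\GG_a)=0$ into cotangent-complex/co-Lie data. But the key step you flag as "the main technical obstacle" -- a duality $H^{-i}(\ell_A) \simeq \Ext^i(G,\GG_a)^*$ for the pro-artinian ring $A$ -- is exactly where the proposal has a real gap, and neither of the two routes you sketch to close it is complete. Route (a) (limit over a pro-finite presentation) does not work as stated, because the quotients $A/J_n$ of \cref{dualizinggpsch} are finite-dimensional quotient \emph{algebras}, not Hopf-algebra quotients, so they are not Cartier duals of finite flat group schemes and one cannot simply invoke finite flat Cartier duality termwise before passing to the limit. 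Route (b) (Poincar\'e series plus Gulliksen) requires knowing that $H^2(BG,\cO)=0$ when $g=1$ and $\Ext^1(G,\GG_a)=0$; the injection $\Ext^1(G,\GG_a)\hookrightarrow H^2(BG,\cO)$ from \cref{gait1} alone does not give this, because in positive characteristic $H^2(BG,\cO)$ also receives a contribution from $\Hom(G\wedge G,\GG_a)$. Making this precise is the content of \cref{ext1}, whose proof is nontrivial (one passes to a perfect field, uses the structure of local finite group schemes, and applies Gulliksen's complete-intersection results); you are implicitly assuming it.

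The paper's proof sidesteps the general duality entirely by a contradiction argument that exploits the DVR structure of $k\llbracket x\rrbracket$: if $\alpha$ is not injective, its kernel is $(x^n)$, so $A\simeq k\llbracket x\rrbracket/x^n$ is \emph{finite-dimensional}, $G$ is finite flat, and its Cartier dual $G^\vee$ is literally $\Spec k[x]/x^n$. One can then compute the co-Lie complex $\ell_{G^\vee}$ by hand (it is $k[1]\oplus k[0]$), apply Grothendieck's formula $R\Hom_k(\ell_{G^\vee},\cO)\simeq\tau_{\ge-1}R\Hom(G,\GG_a)$ -- which is available off the shelf for finite flat group schemes -- and read off $\Ext^1(G,\GG_a)\simeq k\neq 0$, a contradiction. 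This is strictly more elementary than what you propose and needs no formal-group version of the duality. You should rewrite the injectivity step along these lines. Also note that proving $\alpha$ is a ring isomorphism is not quite enough: the statement asserts an isomorphism of \emph{topological} rings, and the paper has a short separate argument (comparing the $J_n$ filtration with the $\mathfrak m$-adic filtration once $A$ is known to be a DVR) that your write-up omits.
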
{}

\begin{proof}
Since $\dim H^1 (BG, \cO)=1$, we can construct (as in the proof of \cref{bgcurve1}) a natural continuous surjection $\alpha \colon k \llbracket x \rrbracket \twoheadrightarrow A$, where $A$ is the dual of the Hopf algebra underlying $G$ (see \cref{dualizinggpsch}). In the case where $\alpha$ is an injective map of rings, we will show that $\alpha$ in fact happens to be an isomorphism of topological rings, which would prove that the dual of $G$ is a formal Lie group. Indeed, if $\alpha$ is injective, it would be a ring theoretic isomorphism. To prove that it is an isomorphism of topological rings, it would be enough to prove that $k\llbracket x \rrbracket/x^n \to A/J_n$ is an isomorphism for all $n \ge 1$. We already know that it is a surjection and hence $\dim A/J_n \le n$. Thus, it would be enough to show that $\dim A/J_n = n$. Since $\alpha$ is an isomorphism of rings, $A$ is a discrete valuation ring with maximal ideal $\mathfrak{m}= k^{\perp}$. By construction it follows that $\mathfrak{m}^n \subseteq J_n=\mathfrak{m}^n + E$ for any small enough open ideal $E$ in $A$. However, that implies that $\mathfrak{m}^n = J_n$ and thus $\dim A/J_n = \dim A/ \mathfrak{m}^n = n$.
\vspace{2mm}

Now we show that $\alpha$ must always be injective, which will finish the proof of the proposition. Otherwise, assume for the sake of contradiction that $\alpha$ is not injective.
Since $k \llbracket x \rrbracket$ is a discrete valuation ring with uniformizer $x$, we must then have $\Ker\alpha = (x^n)$ and $A \simeq k\llbracket x \rrbracket/x^n$ (as $k$-algebras) for some $n \in \NN$.
In other words, $G$ must be a finite flat unipotent group scheme over $k$ whose Cartier dual $G^\vee$ has underlying scheme $\Spec k[x]/x^n$.
Since $H^1(BG,\cO)=1$, we must furthermore have $n>1$.
\vspace{2mm}

Let $\LL_{G^\vee/k}$ denote the cotangent complex. Next, we will compute the co-Lie complex $\ell_{G^\vee} \colonequals Le^* \LL_{G^\vee/k}$, where $e \colon \Spec k \to G^\vee$ is the unit section \cite{Il1}.
Since $G^\vee$ is a complete intersection over $\Spec k$, its cotangent complex is given by the complex
\[ \LL_{G^\vee} \simeq (A \cdot x^n \to A \cdot dx) \] with boundary map $a \cdot x^n \mapsto nax^{n-1} \cdot dx$.
The morphism $k[x]/x^n \to k$ corresponding to the unit section $e$ must contain nilpotents in its kernel and is thus given by $x \mapsto 0$.
Therefore,
\[ \ell_{G^\vee} \simeq e^* (A \cdot x^n \to A \cdot dx) \simeq (k \xrightarrow{0} k) \simeq k[1] \oplus k[0]. \]
By Grothendieck's formula \cite[\S~14.1]{grobeau},
\[ R\Hom_k(\ell_{G^\vee},\cO) \simeq \tau_{\ge -1} R\Hom(G,\GG_a) \]
we must then have $\Ext^1(G,\GG_a) \simeq \Hom(k,k) \simeq k$, contradicting the assumption.
\end{proof}{}

We will now generalize the above result when $H^1(BG, \cO)= g >1$ in the case where $G$ is a commutative unipotent affine group scheme $G$. We note that unlike the non-commutative case studied in \cref{bgcurve1}, in the commutative case, it is \textit{impossible} to have $H^2 (BG, \cO)=0$, i.e., adding the assumption that $G$ is commutative increases the size of $H^2(BG, \cO)$. In fact, as we will see later, if $G$ is commutative then $\dim H^2(BG, \cO) \ge \binom{g}{2}$. In fact, $\dim H^n (BG, \cO) \ge \binom{g}{n}$. In this case, we have the following proposition.

\begin{proposition}\label{commgp}Let $G$ be a unipotent commutative affine group scheme over a field $k$. We assume that $\dim H^1 (BG, \cO) = g$. Then $\dim H^n (BG, \cO) \ge \binom{g}{n}$. Further, if $$\binom{g}{n} \le \dim H^n (BG, \cO) < \sum_{i \ge 0} \binom{g}{n-2i}$$ for \textit{some} $n>1$, then dual of $G$ is a commutative formal Lie group of dimension $g$; moreover, in such a case $\dim H^n (BG, \cO) = \binom{g}{n}$ for all $n$.
\end{proposition}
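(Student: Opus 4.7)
The plan is to translate the cohomology of $BG$ into an $\Ext$ computation for a commutative complete local ring, and then invoke classical results on Betti numbers of the residue field. First, I would invoke \cref{dualizinggpsch}: let $A \colonequals \cO(G)^{*}$ be the continuous $k$-linear dual Hopf algebra. Since $G$ is commutative and unipotent, $A$ is a commutative complete local $k$-algebra with residue field $k$ and maximal ideal $\mathfrak{m} = J_1$. The identification $(J_1/J_2)^{*} \simeq H^1(BG, \cO)$ recorded after \cref{dualizinggpsch} yields $\dim_k \mathfrak{m}/\mathfrak{m}^2 = g$; lifting a basis of $\mathfrak{m}/\mathfrak{m}^2$ to $\mathfrak{m}$ produces a surjection $k\llbracket x_1, \ldots, x_g \rrbracket \twoheadrightarrow A$, so $A$ is Noetherian of embedding dimension $g$.

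The next step will be to derive the commutative analogue of \cref{newyr} and \cref{paderborn1}: by extending the equivalence of categories in \cref{c(v)} to the derived setting (using that every vector bundle on $BG$ is an iterated extension of $\cO$ and that $C(\cO) = k$), one obtains an identification $R\Gamma(BG, \cO) \simeq R\Hom_A(k, k)$, reducing the proposition to a statement about the Betti numbers $\beta_n \colonequals \dim_k \Ext^n_A(k, k)$ of the residue field of the Noetherian complete local ring $(A, \mathfrak{m}, k)$.

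With this translation in hand, the inequality $\dim H^n(BG, \cO) \ge \binom{g}{n}$ will follow from Serre's classical bound: comparing a minimal free resolution of $k$ over $A$ with the Koszul complex on a system of generators of $\mathfrak{m}$ gives $\beta_n \ge \binom{g}{n}$, with equality for all $n$ if and only if $A$ is regular. For the second assertion, I would invoke the Tate--Gulliksen product formula for the Poincar\'e series (\cite{Gull1, Gull2}):
\[ P_A(t) \colonequals \sum_{n \ge 0} \beta_n\, t^n \;=\; \prod_{i \ge 1} \frac{(1+t^{2i-1})^{\epsilon_{2i-1}(A)}}{(1-t^{2i})^{\epsilon_{2i}(A)}}, \]
where $\epsilon_i(A) \in \ZZ_{\ge 0}$ are the deviations, $\epsilon_1(A) = g$, and $A$ is regular precisely when $\epsilon_i(A) = 0$ for all $i \ge 2$. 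If $A$ is not regular, then $\epsilon_2(A) \ge 1$, so coefficient-wise
\[ P_A(t) \;\ge\; \frac{(1+t)^g}{1-t^2} \;=\; \sum_{n \ge 0}\Bigl(\sum_{i \ge 0} \binom{g}{n-2i}\Bigr) t^n. \]
By contrapositive, if $\beta_n < \sum_{i \ge 0} \binom{g}{n-2i}$ for some $n > 1$, then $A$ must be regular; equivalently, $A \simeq k\llbracket x_1, \ldots, x_g \rrbracket$, so that $\mathrm{Spf}\, A$ is a commutative formal Lie group of dimension $g$, identified via \cref{dualizinggpsch} with the dual of $G$. In this regular case $P_A(t) = (1+t)^g$, giving $\dim H^n(BG, \cO) = \binom{g}{n}$ for all $n$.

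The main obstacle I anticipate is the derived extension of \cref{c(v)} together with the completion bookkeeping needed to identify $R\Gamma(BG, \cO)$ with $R\Hom_A(k,k)$ for the (topological) algebra $A$; the commutative algebra inputs, namely Serre's Koszul lower bound and the Tate--Gulliksen refinement for non-regular local rings, enter as black boxes once this translation is secured.
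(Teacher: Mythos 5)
Your approach is correct and proves the statement, but it substitutes a different commutative-algebra engine for the one the paper uses. The paper's proof of \cref{commgp} simply cites \cref{fine} and \cref{genfunc}; the heavy lifting is in \cref{genfunc}, whose proof first shows that $H^{*}(BG,\cO)$ is generated in degrees $1$ and $2$ (by reducing to finite local group schemes, Cartier duality, and group cohomology of cyclic $p$-groups), deduces polynomial growth of the Betti numbers of $A$, invokes Gulliksen's complete-intersection criterion to obtain the closed form $p(t)=(1+t)^{g}/(1-t^{2})^{l}$, and then identifies $l=\dim\Ext^{1}(G,\GG_a)$ via \cref{ext1}. You instead go straight to Serre's Koszul lower bound $\dim\Ext^{n}_{A}(k,k)\ge\binom{g}{n}$ and the Tate--Gulliksen product formula for the Poincar\'e series of an arbitrary Noetherian local ring, and detect non-regularity ($\epsilon_{2}(A)\ge 1$) by a coefficient-wise comparison with $(1+t)^{g}/(1-t^{2})$. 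This bypasses both the generation-in-low-degrees argument and \cref{ext1}, and is leaner in that respect; each route uses comparably heavy commutative-algebra input (Gulliksen's CI characterization vs.\ the full deviation product formula), but yours packages the dichotomy more directly.

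The one step you elide is the passage from ``$A$ is a regular complete Noetherian local ring, hence $A\simeq k\llbracket x_{1},\dots,x_{g}\rrbracket$ as a ring'' to ``the dual of $G$ is a formal Lie group of dimension $g$.'' The topology on $A$ coming from \cref{dualizinggpsch} is the one given by the ideals $J_{n}$ (equivalently the $V^{\perp}$), and a formal Lie group requires $A\simeq k\llbracket x_{1},\dots,x_{g}\rrbracket$ as a \emph{topological} Hopf algebra, i.e., one must check that the $(J_{n})$-topology coincides with the $\fm$-adic one. In the paper this is exactly what the Verschiebung argument in the proof of \cref{fine} supplies (the ideals $\fm^{[p^{i}]}$ cut out the sub-coalgebras dual to $G/V^{i}G$, which shows the $(V^{\perp})$-topology equals the $\fm$-adic topology). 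You should either reproduce that verification, or, having shown $A$ regular, note that the Koszul resolution gives $\dim\Ext^{2}_{A}(k,k)=\binom{g}{2}$, deduce $\Ext^{1}(G,\GG_a)=0$ from \cref{ext1}, and close by citing \cref{fine}.
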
{}

We will defer the proof of \cref{commgp} since it will be deduced from the somewhat finer proposition proven below. As before, instead of looking at $H^2 (BG, \cO)$, one can contemplate the subspace $\Ext^1 (G, \GG_a)$ and prove the following.

\begin{proposition}\label{fine}Let $G$ be a unipotent affine commutative group scheme over a field $k$ such that $\dim H^1 (BG, \cO) = g$. Then the dual of $G$ is a commutative formal Lie group of dimension $g$ if and only if $\Ext^1 (G, \GG_a)=0$.

\end{proposition}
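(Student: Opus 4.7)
The ``only if'' direction is \cref{retire4}. I will focus on the ``if'' direction, generalizing the strategy of \cref{onedimensionalfgl} from $g = 1$ to arbitrary $g$. Let $A$ denote the complete pseudocompact topological Hopf algebra dual to $\cO(G)$ from \cref{dualizinggpsch}, with augmentation ideal $\fm$. Since $\dim (J_1/J_2)^* = \dim H^1(BG, \cO) = g$, I may lift a basis of $J_1/J_2$ to elements $x_1, \dotsc, x_g \in \fm$ and thereby obtain a continuous surjection
$$\alpha \colon k\llbracket x_1, \dotsc, x_g \rrbracket \twoheadrightarrow A$$
of topological $k$-algebras. The entire task reduces to showing that $\alpha$ is an isomorphism, equivalently that its kernel $I$ vanishes.

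The central input is an appropriate form of Grothendieck's formula \cite[\S~14.1]{grobeau} applied to the Cartier-dual formal group scheme $G^\vee = \mathrm{Spf}\, A$:
$$R\Hom_k(\ell_{G^\vee}, k) \simeq \tau_{\geq -1} R\Hom(G, \GG_a),$$
where $\ell_{G^\vee} = Le^* \LL_{G^\vee/k}$ denotes the co-Lie complex. Under the hypothesis $\Hom(G, \GG_a) \simeq k^g$ and $\Ext^1(G, \GG_a) = 0$, the right-hand side is $k^g$ concentrated in degree $0$, whence $H^0(\ell_{G^\vee}) \simeq k^g$ and $H^{-1}(\ell_{G^\vee}) = 0$. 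To establish the formula in our (possibly infinite) setting, I first note that $\dim \Hom(G, \GG_a) < \infty$ forces $G$ to be profinite in characteristic $p > 0$ by \cref{profinitenesschar}, while in characteristic zero $G \simeq \GG_a^g$ and the formula can be verified directly. Writing $G \simeq \varprojlim G_i$ as an inverse limit of finite quotients, $G^\vee = \varinjlim G_i^\vee$ is a filtered colimit of finite Cartier duals, and applying the classical Grothendieck formula to each $G_i^\vee$ and passing to the appropriate limit (controlling the relevant $R^1 \varprojlim$-obstructions via the finiteness of each $\Hom(G_i, \GG_a)$) yields the displayed identification.

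With the cohomology of $\ell_{G^\vee}$ controlled, I exploit the fundamental cofibre triangle of cotangent complexes for the tower $k \to k\llbracket x_1, \dotsc, x_g \rrbracket \xrightarrow{\alpha} A$, pulled back to the origin:
$$ k^g \longrightarrow \ell_{G^\vee} \longrightarrow Le^* \LL_{A/k\llbracket x_1, \dotsc, x_g \rrbracket}. $$
The leftmost term is concentrated in degree $0$, and the long exact sequence in cohomology together with $H^{-1}(\ell_{G^\vee}) = 0$ and the standard identification $H^{-1}(Le^* \LL_{A/k\llbracket \underline{x} \rrbracket}) \simeq I/\fm I$ produces an exact sequence
$$ 0 \longrightarrow I/\fm I \longrightarrow k^g \longrightarrow H^0(\ell_{G^\vee}) \simeq k^g \longrightarrow 0. $$
The rightmost map is the identification of cotangent spaces at the origin induced by $\alpha$, which is an isomorphism by the very choice of the $x_i$. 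Hence $I/\fm I = 0$, and Nakayama's lemma, applied to the closed ideal $I$ in the Noetherian complete local ring $k\llbracket x_1, \dotsc, x_g \rrbracket$, yields $I = 0$. Thus $A \simeq k\llbracket x_1, \dotsc, x_g \rrbracket$ as topological rings, and $G^\vee$ is a formal Lie group of dimension $g$.

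The main obstacle will be the rigorous promotion of Grothendieck's formula from the finite to the pro-finite (resp.\ formal) setting: one must verify that the inverse system $\{R\Hom(G_i, \GG_a)\}$ has vanishing $R^1 \varprojlim$ in degrees $\leq 1$---which should follow from a Mittag--Leffler condition induced by the finiteness of each $\Hom(G_i, \GG_a)$---and that the co-Lie construction commutes with the relevant limits. Once these technicalities are settled, the remainder of the argument proceeds in close parallel to the proof of \cref{onedimensionalfgl}.
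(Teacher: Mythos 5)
Your ``only if'' direction agrees with the paper. The ``if'' direction, however, takes a genuinely different route. After constructing the surjection $\alpha\colon k\llbracket x_1,\dotsc,x_g\rrbracket \twoheadrightarrow A$, the paper establishes that $A$ is $\fm$-adically complete and noetherian (via a Verschiebung argument) and then invokes Gulliksen's characterisation of regular local rings in terms of the Tor-algebra, combined with \cref{ext1}; the latter is proven by first reducing to the case where $G$ is \emph{finite} and only then applying Grothendieck's co-Lie formula. Your argument instead feeds $\alpha$ directly into the cotangent-complex fibre sequence and uses Grothendieck's formula to get $H^{-1}(\ell_{G^\vee})=0$, hence $I/\fm I = 0$ and then $I = 0$ by Nakayama. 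This bypasses Gulliksen and \cref{ext1} entirely and is in some sense the dual argument (vanishing of the first Andr\'e--Quillen obstruction instead of the first deviation of the Tor-algebra), and if made rigorous would arguably be cleaner.

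The genuine gap, which you flag but do not close, is that Grothendieck's formula as cited (\cite[\S 14.1]{grobeau}) is stated for \emph{finite flat} group schemes, whereas you apply it to the formal group $G^\vee = \mathrm{Spf}\,A$ with $A$ a possibly infinite complete local ring; every use of the formula in the paper itself first reduces to the finite case. To make your version rigorous one needs either a continuous/pseudocompact cotangent-complex formalism or a careful pro-finite limit argument (the $\varprojlim^1$ terms do vanish since everything is a system of finite-dimensional vector spaces, but the compatibility of the fibre triangle with the limits is not free). In particular, one must justify that $Le^*\LL_{k\llbracket\underline x\rrbracket/k}$ is concentrated in degree $0$, which is not automatic: $k\llbracket\underline x\rrbracket$ is not smooth of finite type over $k$ and its algebraic K\"ahler module is enormous, so this requires the topological/continuous version of the cotangent complex or an appeal to a regularity criterion for $\LL$ that is of the same depth as the Gulliksen input the paper uses. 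A secondary gap is that you take the existence of $\alpha$ as a topological surjection for granted; this relies on $A$ being $\fm$-adically complete, which is precisely what the paper's Verschiebung argument supplies and is not a formal consequence of Nori's construction.
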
{}

\begin{proof} 
The only if part follows from \cref{retire4}. For the converse, let $G= \Spec R$ and $A \colonequals R^*$. Similar to the argument at the beginning of the proof of \cref{bgcurve1}, we have a surjection $k \llbracket x_1, \ldots, x_g \rrbracket \to A$; thus it follows that $A$ is a commutative noetherian local $k$-algebra.  Since $G$ is a commutative group scheme, it also follows that $A$ is a quotient of the power series ring $k \llbracket x_1, \ldots, x_g \rrbracket$ by a regular sequence. Let us write $A = k \llbracket x_1, \ldots, x_g \rrbracket / I$ for an ideal $I$. The maximal ideal $\mathfrak{m}$ of $A$ is $(\overline{x_1}, \ldots, \overline{x_g})$. Let $\mathfrak{m}^{[p^i]} \colonequals (\overline{x_1}^{p^n}, \ldots, \overline{x_g}^{p^n})$. We note that $\Spec A/ \mathfrak{m}^{[p^i]}$ is naturally a commutative group scheme over $k$ and the induced maps $\Spec A/ \mathfrak{m}^{[p^i]} \to \Spec A/\mathfrak{m}^{[p^{i+1}]}$ are group scheme homomorphisms. The latter claim follows from the observation that $\Spec A/ \mathfrak{m}^{[p^i]}$ is dual to the group scheme $G/V^i$, where $V$ is the Verschiebung operator on $G$. Since $G$ is unipotent, it follows that $G \simeq \varprojlim G/V^i$. This implies that $A \simeq \varprojlim_{i} A/ \mathfrak{m}^{[p^i]}$. Since $\mathfrak{m}$ is finitely generated, the system of ideals $(\mathfrak{m}^{[p^i]})_{i \ge 1}$ generate the same topology as the $\mathfrak{m}$-adic topology on $A$. This implies that $A$ is $\mathfrak{m}$-adically complete. 
\vspace{2mm}

We note that $H^i (BG, \cO) \simeq \Ext^i_A (k,k)$ by duality (\textit{cf.}~\cref{c(v)}). In this situation, by \cite{Gull2}, $\dim \Ext^2_A (k,k)= \dim \w{Tor}_2(k,k) = \binom{g}{2} + b$ for some integer $b \ge 0$. Therefore, if $\mathrm{Ext}^1(G, \GG_a)=0$, then by \cref{ext1} proven below, it follows that $b=0$. Further, if $b=0$, then (see \cite{Gull2}) $A$ is a regular complete local ring of dimension $\dim \mathrm{Tor}_1(k,k) = \dim \mathrm{Ext}^1_A(k,k) = \dim H^1(BG, \cO) = g$. This implies that the surjection $k \llbracket x_1, \ldots, x_g \rrbracket \to A$ constructed above must be an isomorphism (of complete local rings), which finishes the proof.
\end{proof}
\begin{lemma}\label{ext1} Let $G$ be a unipotent commutative group scheme over $k$ such that $\dim H^1(BG, \cO)=g$. Then $\dim H^2(BG, \cO)= \binom{g}{2} + \dim \Ext^1 (G, \GG_a). $

\end{lemma}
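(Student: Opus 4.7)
The plan is to compute both sides of the claimed equality in terms of the complete commutative local Noetherian $k$-algebra $A = \cO(G)^*$ from \cref{dualizinggpsch}. As observed in the proof of \cref{fine}, the hypothesis $\dim H^1(BG, \cO) = \dim \Hom(G, \GG_a) = g$ means that $A$ is commutative Noetherian complete local with residue field $k$ and embedding dimension $g$, so by choosing lifts of a basis of $\mathfrak{m}/\mathfrak{m}^2$ we obtain a minimal surjection $R \colonequals k\llbracket x_1, \ldots, x_g \rrbracket \twoheadrightarrow A$ with kernel $I \subseteq \mathfrak{m}_R^2$. Under the equivalence $\QCoh(BG) \simeq A\text{-nilMod}$ recorded in \cref{dualizinggpsch}, the structure sheaf $\cO$ corresponds to the trivial module $k = A/\mathfrak{m}$, which yields an identification $H^n(BG, \cO) \simeq \mathrm{Ext}^n_A(k, k)$: the right-hand side can be computed by a projective resolution of $k$ consisting of free $A$-modules, and the resulting complex $\mathrm{Hom}_A(-, k)$ of $k$-vector spaces identifies with the one computing $H^*(BG, \cO)$ through bar–cobar duality for the Hopf algebra $\cO(G)$ (\textit{cf.}\ \cref{c(v)}). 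The task is thus reduced to proving
\[ \dim_k \mathrm{Ext}^2_A(k, k) \;=\; \binom{g}{2} + \dim_k \Ext^1(G, \GG_a). \]

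For the left-hand side, I will apply Gulliksen's theorem \cite{Gull2}, which gives
\[ \dim_k \mathrm{Ext}^2_A(k, k) \;=\; \binom{g}{2} + \mu(I), \qquad \mu(I) \colonequals \dim_k I/\mathfrak{m}_R I. \]
Alternatively, this can be verified by inspecting the first terms of a minimal free resolution of $k$ over $A$: the second syzygy module is minimally generated by the $\binom{g}{2}$ Koszul-type relations $x_i x_j - x_j x_i$ (imposed by commutativity of $A$) together with $\mu(I)$ additional relations lifting a minimal system of generators of $I$ through $R \twoheadrightarrow A$.

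The crux of the proof will therefore be the identification $\mu(I) = \dim_k \Ext^1(G, \GG_a)$, which I will extract from Grothendieck's formula
\[ R\mathrm{Hom}_k(\ell_{G^\vee}, k) \;\simeq\; \tau_{\geq -1}\, R\mathrm{Hom}(G, \GG_a) \]
from \cite[\S~14.1]{grobeau} already invoked in the proof of \cref{onedimensionalfgl}, where $\ell_{G^\vee} \colonequals e^* \mathbb{L}_{G^\vee/k}$ is the co-Lie complex at the unit section $e$. Since $G^\vee \simeq \Spec A$ with $A = R/I$ and $I \subseteq \mathfrak{m}_R^2$, the cotangent complex $\mathbb{L}_{A/k}$ is represented by the two-term complex $[I/I^2 \to \Omega_{R/k} \otimes_R A]$ placed in cohomological degrees $-1$ and $0$; pulling back along $e$ gives $\ell_{G^\vee} \simeq [I/\mathfrak{m}_R I \to \mathfrak{m}/\mathfrak{m}^2]$ with zero differential (because $I \subseteq \mathfrak{m}_R^2$ forces the boundary to land in $\mathfrak{m}_R \cdot \Omega_{R/k} \otimes_R k = 0$). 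Dualizing yields $H^1 R\mathrm{Hom}_k(\ell_{G^\vee}, k) \simeq (I/\mathfrak{m}_R I)^*$, which has dimension $\mu(I)$, and Grothendieck's formula delivers the required identification. The main technical subtlety I expect is justifying the identification $H^n(BG, \cO) \simeq \mathrm{Ext}^n_A(k, k)$ when $A$ is not of finite type; this can be handled by writing $\cO(G)$ as a filtered colimit of its finite-dimensional sub-coalgebras, expressing $A$ as the corresponding inverse limit of finite-dimensional Hopf algebra duals, and passing to the limit in the bar–cobar identification.
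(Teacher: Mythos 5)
Your route is genuinely different from the paper's: instead of the paper's maps $s_G$ and $t_G$ into $\Hom(G \wedge G, \GG_a)$ and the reduction of the lemma to the identity $\w{Im}(s_G)=\w{Im}(t_G)$, you compute $\dim H^2(BG,\cO)$ and $\dim \Ext^1(G,\GG_a)$ separately via $\Ext^*_A(k,k)$, Gulliksen, and the co-Lie complex. However, there is a genuine gap: both the two-term presentation $\LL_{A/k} \simeq [I/I^2 \to \Omega_{R/k}\otimes_R A]$ and Grothendieck's formula $R\Hom_k(\ell_{G^\vee},\cO) \simeq \tau_{\ge -1} R\Hom(G,\GG_a)$ are applied outside the domain in which the paper (and \cite{grobeau}) establish them. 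When $G$ is not finite, $A = \cO(G)^*$ is a genuinely complete local ring, the comultiplication on $A$ takes values in a \emph{completed} tensor product, so $\Spec A$ is not a group scheme and $G^\vee$ must be read as a formal group; moreover $R = k\llbracket x_1,\dotsc,x_g\rrbracket$ is not a polynomial $k$-algebra, and the K\"ahler module $\Omega_{R/k}$ is not free of rank $g$, so the naive two-term complex for $\LL_{A/k}$ does not apply without further justification. Your closing remark about limits addresses only the bar--cobar identification $H^*(BG,\cO) \simeq \Ext^*_A(k,k)$ (which the paper also uses without comment), not these two steps.

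The repair is exactly the reduction the paper uses: since $\dim_k H^1(BG,\cO) < \infty$ forces $G$ to be profinite by \cref{profinitenesschar}, write $G$ as a limit of finite quotients $G_i$ and argue that $\Hom(G,\GG_a)$, $\Ext^1(G,\GG_a)$, and $H^2(BG,\cO)$ are all attained as stabilized filtered colimits of the corresponding finite-dimensional groups for $G_i$, so that the lemma reduces to finite $G$. For $G$ finite, $A$ is a finite-dimensional $k$-algebra, $R$ may be taken to be a genuine polynomial ring, $G^\vee$ is a bona fide finite flat group scheme, and your Gulliksen/co-Lie argument then goes through --- after the harmless base change to $k$ perfect, which your proposal also omits and the paper performs at the outset so that $\cO(G^\vee)$ becomes a complete intersection, giving $\mu(I) = g = \dim_k \Ext^1(G,\GG_a)$ explicitly. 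The advantage of the paper's $s_G, t_G$ packaging over the direct stabilization argument is precisely that the identity $\w{Im}(s_G)=\w{Im}(t_G)$ passes to the limit without any bookkeeping about dimensions.
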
{}

\begin{proof}In order to prove this, we can base change to a perfect field. Therefore, without loss of generality, we will now assume that $k$ is perfect. We note that there is a natural map $$s_G \colon H^2(BG, \cO) \to \Hom (G \wedge G, \GG_a)$$ which concretely sends a $2$-cocycle $u \colon G \times G \to \GG_a$ to the associated alternating bilinear form $v \colon G \times G \to \mathbf G_a$ defined as $v(x,y) \colonequals u(x,y) - u(y,x)$ for scheme theoretic points $x,y$ of $G$.
The kernel of this map is naturally identified with $\Ext^1 (G, \GG_a)$. On the other hand, there is a map $\w{Bil} (G \times G, \GG_a) \to H^2(BG , \cO)$ which just regards a bilinear form as a $2$-cocycle. Composition with $s_G$ yields a map $$t_G \colon \w{Bil} (G \times G, \GG_a) \to \Hom(G \wedge G, \GG_a).$$ Since $\dim \Hom (G, \GG_a)=g$, it follows that $\dim \w{Im}(t_G) = \binom{g}{2}$. Therefore, we would be done if we prove that $\w{Im}(t_G) = \w{Im}(s_G)$ as subspaces of $\Hom(G \wedge G, \GG_a)$. We note that when $k$ has characteristic $2$, it can happen that $\dim \Hom(G \wedge G,\GG_a) > \binom{g}{2}$. Thus the equality $\w{Im}(t_G) = \w{Im}(s_G)$ does not simply follow from dimension count in that case. Instead, we argue as below.
\vspace{2mm}

Since $\dim H^1 (BG, \cO)$ is finite, by \cref{profinitenesschar}, it follows that $G$ is profinite. Therefore, by (co)limit considerations, in order to prove that $\mathrm{Im}(t_G) = \mathrm{Im}(s_G)$, we can additionally assume that $G$ is a finite group scheme. Therefore the Cartier dual $G^\vee$ is a local finite group scheme over a perfect field $k$ and it follows that $\displaystyle{\cO(G^\vee) \simeq \frac{k[x_1, \ldots, x_g]}{(x_1^{p^{v_1}}, \ldots, x_s^{p^{v_g}})}}$ (\cite[Sec. 14.4]{waterhouse}); here $g = \dim \Hom(G, \GG_a)= \dim T_{e}(G^\vee)= \dim H^1(BG, \cO)= \dim \mathrm{Ext}^1_{\cO(G^\vee)}(k,k) = \dim \mathrm{Tor}_1(k,k)$. Since $\cO(G^\vee)$ is an Artinian local ring, we have $\dim H^2(BG, \cO) = \dim \mathrm{Ext}^2_{\cO(G^\vee)}(k,k) = \dim \mathrm{Tor}_2(k,k) = \binom{g}{2} + b$, where the last equality follows from \cite{Gull2}. Since $\cO(G^\vee)$ is also a complete intersection, it follows from \cite[Rmk.~3.2]{Gull2} that $0 = \dim \cO(G^\vee) = g-b$. On the other hand, by Grothendieck's formula \cite[\S~14.1]{grobeau}, $$ R\Hom_k(\ell_{G^\vee},\cO) \simeq \tau_{\ge -1} R\Hom(G,\GG_a).$$ Since $\displaystyle{\cO(G^\vee) \simeq \frac{k[x_1, \ldots, x_g]}{(x_1^{p^{v_1}}, \ldots, x_s^{p^{v_g}})}}$, it follows from explicitly computing the co-Lie complex $\ell_{G^\vee}$ that $\dim \mathrm{Ext}^1(G, \GG_a) = g. $ Combining the equalities obtained so far, we see that $\dim H^2(BG, \cO) = \binom{g}{2} + b = \binom{g}{2} + g = \binom{g}{2} + \dim \mathrm{Ext}^1(G, \GG_a)$. The latter implies that $\mathrm{Im}(t_G) = \mathrm{Im}(s_G)$, which finishes the proof.
\end{proof}

\begin{proposition}\label{genfunc}
Suppose that $G$ be a unipotent commutative group scheme over $k$ such that $\dim H^1 (BG, \cO)= g$. Then $H^n(BG, \cO)$ is finite-dimensional for all $n$.
Moreover, the generating function
$$p(t) \colonequals \sum_{n=0}^{\infty} \dim H^n (BG, \cO) \cdot t^n$$
has the closed-form expression
$$ p(t) = \frac{(1+t)^{g}}{(1-t^2)^{\dim \mathrm{Ext}^1(G, \GG_a)}}.$$
\end{proposition}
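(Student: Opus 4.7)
The plan is to identify $p(t)$ with the Poincar\'e series of the complete local Noetherian ring $A \colonequals \cO(G)^*$ via the standard isomorphism $H^n(BG, \cO) \simeq \Ext^n_A(k, k)$, to show that $A$ is a complete intersection, and then to invoke Tate's classical formula for the Poincar\'e series of such rings. Finite-dimensionality of each $H^n(BG, \cO)$ will then follow automatically from finiteness of the coefficients of the resulting rational function.

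As in the proof of \cref{fine}, $A$ is a commutative complete Noetherian local $k$-algebra with residue field $k$ and embedding dimension $g$. Fixing a surjection $B \colonequals k \llbracket x_1, \ldots, x_g \rrbracket \twoheadrightarrow A$ with kernel $I \subseteq \fm_B^2$, standard commutative algebra identifies the minimal number of generators $\mu(I) = \dim_k I/\fm_B I$ with $\dim_k \Tor_2^A(k, k) - \binom{g}{2}$, which by \cref{ext1} equals $r \colonequals \dim \Ext^1(G, \GG_a)$. The crucial step is to promote this to a complete intersection structure, i.e., to show that $I$ is generated by a regular sequence of length $r$. Equivalently, the co-Lie complex $\ell_G$ should be concentrated in cohomological degrees $\{-1, 0\}$, with $H^0(\ell_G)$ of dimension $g$ and $H^{-1}(\ell_G)$ of dimension $r$ (the latter values coming from Grothendieck's formula $\tau_{\ge -1} R\Hom(G, \GG_a) \simeq R\Hom_k(\ell_G, k)$ of \cite[\S~14.1]{grobeau}). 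To obtain the vanishing $H^i(\ell_G) = 0$ for $i \leq -2$, one reduces to the finite case: by \cref{profinitenesschar} we may write $G = \varprojlim G_j$ as a cofiltered limit of finite commutative unipotent group schemes, and after base changing to a perfect field each $A_j = \cO(G_j)^*$ is of the complete intersection form $k[x_1, \ldots, x_{g_j}]/(x_1^{p^{n_1}}, \ldots, x_{g_j}^{p^{n_{g_j}}})$ by \cite[Sec.~14.4]{waterhouse}, so each $\ell_{G_j}$ is concentrated in $\{-1, 0\}$; a careful passage to the limit (exploiting the known values of $H^0(\ell_G)$ and $H^{-1}(\ell_G)$ to control pro-isomorphisms) propagates this vanishing to $\ell_G$ itself.

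Once $A$ is established as a complete intersection of embedding dimension $g$ with $r$ relations, Tate's theorem \cite{Tate57}, refined by Gulliksen \cite{Gull1, Gull2}, yields the Poincar\'e series
$$ P_A(t) = \sum_{n \geq 0} \dim_k \Ext^n_A(k, k) \cdot t^n = \frac{(1+t)^g}{(1-t^2)^r}, $$
and the equality $p(t) = P_A(t)$ completes the proof. The main obstacle is justifying the complete intersection property of $A$ in the profinite setting: the embedding dimension at each finite level can differ from that of the limit, so one cannot simply ``take limits'' of the complete intersection presentations of the $A_j$. Instead, the argument must either control the pro-system of cotangent complexes $\ell_{G_j}$ directly, or verify $A$ is a complete intersection via Gulliksen's characterization through the vanishing of the higher deviations $\epsilon_i(A)$ for $i \geq 3$ using the commutative Hopf algebra structure on $A$.
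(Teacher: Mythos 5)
Your proposal takes a genuinely different route from the paper's, and the divergence is precisely where your acknowledged gap sits. The paper does not try to establish the complete intersection property of $A \colonequals \cO(G)^\ast$ directly. Instead it proves that the cohomology ring $H^\ast(BG,\cO)$ is generated in degrees $1$ and $2$: after reducing to finite $G$ (where cohomology is a filtered colimit along quotients $G \twoheadrightarrow G_j$, a passage that is unproblematic because cohomology commutes with filtered colimits), one identifies $H^\ast(BG_j,\cO)$ with the group cohomology of a finite abelian $p$-group and invokes Tate resolutions. Generation in degrees $1$ and $2$ yields polynomial growth of the Betti numbers of $A$, and Gulliksen's theorem \cite{Gull1,Gull2} then delivers both the complete intersection property \emph{and} the Poincar\'e series $(1+t)^g/(1-t^2)^l$ in one stroke, after which $l$ is identified via \cref{ext1}. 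Your proposal, by contrast, uses Tate's forward direction (CI $\Rightarrow$ Poincar\'e series) and therefore must first establish that $A$ is a complete intersection independently.

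The gap you flag is real and not easily repaired along the lines you sketch. The co-Lie complex does not pass through the relevant limit: the ind-scheme $G^\vee = \varinjlim G_j^\vee$ is built from closed immersions whose defining ideals change with $j$, so $Le^\ast\LL_{G^\vee/k}$ is not the levelwise limit of the $\ell_{G_j}$, and the pro-system $(\ell_{G_j})_j$ can be pro-trivial in a degree without vanishing at any finite stage. A concrete example makes the difficulty unavoidable: take $G = W[F]$ (so $G^\vee = \widehat{\GG}_a$ and $A \simeq k\llbracket x\rrbracket$). The finite quotients give $A_j \simeq k[x]/(x^{p^j})$, each a $0$-dimensional complete intersection with one relation, so $\dim_k H^{-1}(\ell_{G_j}) = 1$ for every $j$; yet $A$ is regular and $\Ext^1(W[F],\GG_a)=0$, so $H^{-1}(\ell_G) = 0$. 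Thus the number of relations genuinely drops in the limit, and "propagating the vanishing of $H^{\le -2}$ to $\ell_G$" requires controlling a pro-system that is not eventually constant in the degrees you need. Nothing in your sketch supplies that control. Your second alternative --- verifying vanishing of the higher deviations $\epsilon_i(A)$ for $i \ge 3$ from the Hopf algebra structure --- is closer in spirit to the paper's approach but is likewise not carried out; the paper's clean route through the graded cohomology ring and polynomial growth is exactly what makes that verification unnecessary.
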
{}

\begin{proof}
If $k$ has characteristic zero, the claim follows directly (see \cite[IV, \S~2, Prop.~4.2.b)]{MR0302656});
therefore we will assume that $k$ has characteristic $p > 0$.
By base change, we can moreover assume that $k$ is perfect.

\vspace{2mm}
First, we claim that $H^*(BG, \cO)$ is generated by elements of degree $1$ and $2$. Since $H^1 (BG, \cO)$ is finite-dimensional, by \cref{profinitenesschar}, it follows that $G$ is profinite. Therefore, by considering filtered colimits, it is enough to show that $H^*(BG, \cO)$ is generated by elements of degree $1$ and $2$ when $G$ is additionally assumed to be finite. The Cartier dual $G^\vee$ of $G$ is a local finite group scheme over $k$. Since $k$ is perfect, it follows that $\displaystyle{\cO(G^\vee) \simeq \frac{k[x_1, \ldots, x_s]}{(x_1^{p^{v_1}}, \ldots, x_s^{p^{v_s}})}}$ (\cite[Sec. 14.4]{waterhouse}). The latter is isomorphic to the group algebra of the finite abelian group $\mathbf{Z}/p^{v_1}\mathbf{Z} \times \ldots \times \mathbf{Z}/p^{v_s}\mathbf{Z}$. Therefore, by Cartier duality, $$H^* (BG, \cO) \simeq \mathrm{Ext}^*_{\frac{k[x_1, \ldots, x_s]}{(x_1^{p^{v_1}}, \ldots, x_s^{p^{v_s}})}}(k,k) \simeq H^* (B (\mathbf{Z}/p^{v_1}\mathbf{Z} \times \ldots \times \mathbf{Z}/p^{v_s}\mathbf{Z}), \cO).$$
It is thus enough to prove that the group cohomology $H^* (\mathbf{Z}/p^{v_1}\mathbf{Z} \times \ldots \times \mathbf{Z}/p^{v_s}\mathbf{Z}, k)$ is generated in degree $1$ and $2$. One can further reduce to the case of $H^* (\mathbf{Z}/p^v \mathbf{Z},k )$, where the claim follows by considering Tate resolutions (see, e.g., \cite[Prop.~4.5.1]{carl}). 
\vspace{2mm}

Returning to the setting of the proposition, let us write $G = \Spec R$ and $A = R^*$. As in the proof of \cref{fine}, one knows that $A$ is a noetherian complete local ring with residue field $k$. By duality, $H^* (BG, \cO) \simeq \mathrm{Ext}^*_A(k,k)$. In particular, $H^i (BG, \cO)$ is a finite-dimensional vector space. Since $H^* (BG, \cO)$ is generated by elements of degree $1$ and $2$, it follows that the function $i \mapsto \dim H^i (BG, \cO)$ has polynomial growth, i.e., there exists a polynomial $f(t) \in \mathbf{Z}[t]$ such that $\dim H^i (BG, \cO) \le f(i)$ for $i \gg 0$.\footnote{
For example, if $\dim H^1(BG, \cO)=g$ and $\dim H^2 (BG, \cO) = h$ and $p \ne 2$, one can find such a polynomial $f(t)$ such that $\deg f(t) < h$; if $p =2$, one can find such a polynomial $f(t)$ satisfying $\deg f(t) < g+h$.}
\vspace{2mm}

Since we have $\dim \mathrm{Tor}_i (k,k) = \dim \mathrm{Ext}^i_A(k,k) = \dim H^i (BG, \cO)$, we conclude that the function $i \mapsto \dim \mathrm{Tor}_i (k,k)$ has polynomial growth. By \cite[Sec.~2,~Thm.~2.3]{Gull2} and \cite{Gull1}, it follows that we must have $\displaystyle{\sum_{i=0}^{\infty} \dim \mathrm{Tor}_i (k,k)t^i = \frac{(1+t)^g}{(1-t^2)^l}}$. Therefore, $\dim H^2(BG, \cO) = \dim \mathrm{Ext}^2_A(k,k) = \dim \mathrm{Tor}_2(k,k) = \binom{g}{2} +l$. By \cref{ext1}, we have $l= \dim \mathrm{Ext}^1(G, \GG_a)$; thus we obtain $$\displaystyle{\sum_{i=0}^{\infty} \dim H^i (BG, \cO)t^i = \frac{(1+t)^g}{(1-t^2)^{\dim \mathrm{Ext}^1(G, \GG_a)}}}.$$ This finishes the proof.
\end{proof}{}

\begin{proof}[Proof of \cref{commgp}] Follows immediately from \cref{fine} and \cref{genfunc}. Indeed, \cref{genfunc} implies that for an arbitrary $n>1$, $\dim H^n (BG, \cO) \ge \binom{g}{n}$ with equality if and only if $\mathrm{Ext}^1(G, \GG_a)=0$. Also, by \cref{genfunc}, $\dim H^n(BG, \cO) < \sum_{i \ge 0} \binom{g}{n-2i}$ implies that $\dim \mathrm{Ext}^1(G, \GG_a)=0$.
\end{proof}{}

\begin{example}
Suppose that $k$ has characteristic $2$. Let $G$ be dual of the formal Lie group $\widehat{\GG}_a$. Then there is a surjection $G \to \alpha_2$. In this case, it follows that $\dim \mathrm{Hom}(G\wedge G, \GG_a)=1$ (see \cref{zvi1}). However, we also have $H^2 (BG, \cO)=0$. Since (e.g., by using a spectral sequence analogous to \cite[Prop.~3.12]{Mon21}) one has an exact sequence $0 \to \mathrm{Ext}^1 (G, \GG_a)\to H^2 (BG, \GG_a) \to \mathrm{Hom}(G \wedge G, \GG_a) \to \mathrm{Ext}^2 (G, \GG_a)$ it follows that $\mathrm{Ext}^2 (G, \GG_a) \ne 0$ in this case.
\end{example}{}

\begin{example}\label{coffee150}
Let $G$ be a commutative group scheme over a perfect field $k$ whose dual is a $1$-dimensional formal Lie group of height $>1$. Then it follows that the Verschiebung operator on $G^\vee$ induces the zero map on tangent space of $G^\vee$. This implies that $\Hom(G^{\w{perf}}, \GG_a) = 0$. Since $G^\w{perf}$ is unipotent, it follows that $G^\w{perf}$ is trivial. In other words, the maximal pro-\'etale quotient of $G$ is trivial.
\end{example}{}

We end this subsection with the following lemma that will be used in \cref{worldcup1}.

\begin{lemma}\label{newyr2}
Let $G$ be a unipotent affine group scheme over a field $k$ whose dual is a non-commutative formal Lie group.
Let $H$ be a unipotent affine commutative group scheme over $k$.
Then $H^i(BG,H) = 0$ for all $i > 2$.
\end{lemma}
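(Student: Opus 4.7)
My plan is to proceed by dévissage on $H$, reducing through successively more general classes of commutative unipotent group schemes to the base case $H = \mathbf{G}_a$. The base case is immediate from the preceding results: by \cref{newyr} combined with (the proof of) \cref{nc-group-law-cohomology}, we have $R\Gamma(BG,\cO) \simeq k \oplus k[-1]^{\oplus g}$, so $H^i(BG,\mathbf{G}_a) = H^i(BG,\cO) = 0$ for every $i \ge 2$, and each $H^i(BG,\mathbf{G}_a)$ is finite-dimensional over $k$.

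For the case when $H$ is of finite type, I would first reduce to $k = \bar{k}$ via flat base change, which preserves cohomology with affine coefficients. A finite-type commutative unipotent $H$ over $\bar{k}$ admits a composition series whose simple graded pieces are among $\mathbf{G}_a$, $\alpha_p$, and $\mathbf{Z}/p\mathbf{Z}$ (only $\mathbf{G}_a$ occurs in characteristic zero). For the Frobenius kernel and the Artin--Schreier kernel, the short exact sequences
\[ 0 \to \alpha_p \to \mathbf{G}_a \xrightarrow{F} \mathbf{G}_a \to 0 \quad \text{and} \quad 0 \to \mathbf{Z}/p\mathbf{Z} \to \mathbf{G}_a \xrightarrow{F-1} \mathbf{G}_a \to 0 \]
yield via the long exact sequence in cohomology that $H^i(BG,\alpha_p) = H^i(BG,\mathbf{Z}/p\mathbf{Z}) = 0$ for $i > 2$, using the base case (note that $H^2$ may be nonzero, appearing as a Frobenius cokernel on $H^1(BG,\cO)$, which is precisely why the bound weakens from $i \ge 2$ to $i > 2$). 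Iterated dévissage along the composition series via the long exact sequence in cohomology then extends the vanishing to every finite-type $H$, and a parallel induction shows each $H^i(BG,H)$ remains finite-dimensional.

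For an arbitrary commutative unipotent $H$, I would express $H \simeq \varprojlim_\alpha H_\alpha$ as the cofiltered inverse limit of its finite-type quotients with surjective transitions (corresponding to $\cO(H) = \varinjlim_\alpha \cO(H_\alpha)$ as a filtered union of finitely generated sub-Hopf algebras). Since $R\Gamma(BG, -)$ is a right adjoint, it commutes with derived inverse limits, so $R\Gamma(BG,H) \simeq R\varprojlim_\alpha R\Gamma(BG,H_\alpha)$. Each $R\Gamma(BG,H_\alpha)$ is concentrated in cohomological degrees $[0,2]$ with finite-dimensional cohomology, and for any surjection $H_\beta \twoheadrightarrow H_\alpha$ with kernel $K$, the finite-type vanishing $H^3(BG,K) = 0$ forces $H^2(BG,H_\beta) \to H^2(BG,H_\alpha)$ to be surjective. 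Combined with the Mittag--Leffler condition (after passing to a cofinal countable subsystem), this yields a Milnor-type sequence from which one extracts $H^i(BG,H) = 0$ for $i > 2$.

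The main obstacle will be the inverse-limit step in the last paragraph: one must rule out contributions from higher derived inverse limits $R^j\varprojlim$ with $j \ge 2$ that could in principle arise from a cofinal system of uncountable cardinality. The finite-dimensionality of each $H^i(BG,H_\alpha)$ together with the surjectivity of transitions on top cohomology are the essential inputs making the Milnor-type reduction work.
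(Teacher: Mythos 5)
Your dévissage for finite-type $H$ is fine and genuinely different from the paper's: you use a composition series over $\bar k$ with graded pieces among $\GG_a$, $\alpha_p$, $\ZZ/p\ZZ$ and the Frobenius/Artin--Schreier sequences, whereas the paper instead filters by powers of the Verschiebung and, for the $V=0$ pieces, uses the presentation $0 \to H \to \GG_a^I \to \GG_a^J \to 0$ of \cite[IV, \S~3, Thm.~6.6, Cor.~6.8]{MR0302656}, so it never needs finite type at all. Both routes establish $H^i(BG,H)=0$ for $i>2$ once $H$ is Verschiebung-nilpotent (equivalently, once you have some finite ladder), and both rely on the same base computation $R\Gamma(BG,\cO)\simeq k\oplus k[-1]^{\oplus g}$.

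The genuine gap is exactly where you flag it, in the last paragraph, and your proposed fix does not work. The cofiltered system of all finite-type quotients of $H$ can have no countable cofinal subsystem: already for $H=\GG_a^I$ with $I$ uncountable, the finite-type quotients are essentially indexed by the finite subsets of $I$, a directed poset of uncountable cofinality. So you cannot ``pass to a cofinal countable subsystem,'' and over an uncountable directed poset one must genuinely worry about $R^j\varprojlim$ for $j\ge 2$ (surjectivity of transitions controls only $\varprojlim^1$ over $\mathbf N$-towers). There is a second, unaddressed issue at the same step: you need the identification $H\simeq R\varprojlim_\alpha H_\alpha$ of fpqc sheaves, which is what licenses $R\Gamma(BG,H)\simeq R\varprojlim_\alpha R\Gamma(BG,H_\alpha)$ in the first place; the repleteness statement \cite[Prop.~3.1.10]{MR3379634} the paper invokes is specifically for $\mathbf N$-indexed towers with surjective transitions. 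The paper sidesteps both problems by taking the limit over the canonical Verschiebung tower $H_j \colonequals H/V^j(H^{(p^j)})$, which is $\mathbf N$-indexed from the start: the individual $H_j$ are handled by the finite Verschiebung filtration (not by finite-type dévissage, since the $H_j$ need not be of finite type), repleteness applies verbatim, and the Milnor sequence together with Mittag--Leffler (which does hold for $\mathbf N$-towers with surjective transitions) closes the argument. If you wish to keep your cofiltered-limit approach, you would need an argument that a cofiltered system of finite-dimensional $k$-vector spaces with surjective transitions has vanishing $\varprojlim^j$ for all $j\ge 1$ over an arbitrary directed poset (a linear-compactness statement), plus a repleteness-type argument for the full cofiltered system; replacing the index category by the Verschiebung tower is substantially simpler.
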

\begin{proof}
First, we consider the case $H \simeq \GG^{ I}_a$ for some index set $I$.
By \cref{newyr} and \cref{nc-group-law-cohomology}, we then have
\[ H^i(BG,H) \simeq H^i(BG,\cO)^{ I} \simeq \Ext^i_{k\llbrace X_1, \ldots, X_g \rrbrace}(k,k)^{ I} \simeq 0 \]
for all $i > 1$.
If $k$ is of characteristic zero, any unipotent affine commutative group scheme is of this form by \cite[IV, \S~2, Prop.~4.2.b)]{MR0302656}, so we are done.
Thus, we may from now on assume that $k$ has characteristic $p>0$.
\vspace{2mm}

Next, we consider the case where the Verschiebung $V \colon H^{(p)} \to H$ is $0$.
Then $H$ fits into a short exact sequence of affine commutative group schemes
\[ 0 \to H \to \GG^{ I}_a \to \GG^{ J}_a \to 0 \]
for some index sets $I$ and $J$ by \cite[IV, \S~3, Thm.~6.6, Cor.~6.8]{MR0302656}.
Since $H^i(BG,\GG_a^{ I}) = H^i(BG,\GG_a^{ J}) = 0$ for all $i > 1$ by the previous case, the associated long exact sequence in cohomology shows that $H^i(BG,H) = 0$ for all $i > 2$.
\vspace{2mm}

Now, we consider the case when the $n$-fold Verschiebung $V^{n} \colon H^{(p^n)} \to H$ is $0$ for some $n > 0$.
Then $H$ admits a finite decreasing filtration
\[ H \supseteq V(H^{(p)}) \supseteq \dotsb \supseteq V^{(n-1)}(H^{(p^{n-1})}) \supseteq V^{n}(H^{(p^n)}) = 0, \]
in which the objects $V^{ j}(H^{(p^j)})$ and the graded pieces $V^{ j}(H^{(p^j)})/V^{(j+1)}(H^{(p^{j+1})})$ are unipotent affine commutative group schemes that are annihilated by $V^{n-j}$ and $V$, respectively.
Using the long exact sequences in cohomology for the short exact sequences
\[ 0 \to V^{(j+1)}(H^{(p^{j+1})}) \to V^{ j}(H^{(p^j)}) \to V^{ j}(H^{(p^j)})/V^{(j+1)}(H^{(p^{j+1})}) \to 0 \]
and induction on $n$, we obtain that $H^i(BG,H) = 0$ for all $i > 2$.
\vspace{2mm}

Finally, we consider the general case of unipotent group schemes over a field $k$ of characteristic $p>0$.
As before, we consider the decreasing filtration
\[ H \supseteq V(H^{(p)}) \supseteq \dotsb \supseteq V^{(N-1)}(H^{(p^{N-1})}) \supseteq V^{ j}(H^{(p^j)}) \supseteq \dotsb.\]
Let $H_j \colonequals H/V^{ j}(\pi^{(p^j)})$. Since $H$ is unipotent, we have $H \simeq \varprojlim_{j}H_j$. Since the transition morphisms $H_{j+1} \to H_j$ are surjective for all $j$, we have $\varprojlim_{j} H_j \simeq R\varprojlim_{j} H_j$ by \cite[Ex.~3.1.7, Prop.~3.1.10]{MR3379634}.
We conclude that$$R\Gamma(BG,H) \simeq R\Gamma(BG,R\varprojlim_{j}H_j) \simeq R\varprojlim_{j}R\Gamma(BG,H_j);$$the associated Milnor sequences take the form
\begin{equation}\label{Milnor-exact-sequence}
0 \to {\varprojlim_j}^1H^{i-1}(BG,H_j) \to H^i(BG,H_j) \to \varprojlim_{j }H^i(BG,H_j) \to 0
\end{equation}
for all $i \ge 0$.
Since the Verschiebung acts as zero on the unipotent affine commutative group schemes $V^{ j}(H^{(p^j)})/V^{(j+1)}(H^{(p^{j+1})})$, the previous case applied to the long exact sequence in cohomology for the short exact sequences
\[ 0 \to V^{ j}(H^{(p^j)})/V^{(j+1)}(H^{(p^{j+1})}) \to H_{j+1} \to H_j \to 0 \]
shows that the map $H^{i-1}(BG,H_{j+1}) \to H^{i-1}(BG,H_j)$ is surjective for all $i > 2$ and all $j \in \NN$.
In particular, the $H^{i-1}(BG,H_j)$ form a Mittag-Leffler system for any fixed $i > 2$; thus we see that ${{{\varprojlim^1}}}H^{i-1}(BG,H_j) = 0$. Now, again by the previous case, it follows that $H^i(BG,H_j) = 0$ for $i>2$. Therefore, by \cref{Milnor-exact-sequence}, it follows that $H^i (BG, H)=0$ for $i>2$, which finishes the proof.
\end{proof}

\subsection{Construction of formal Lie groups}\label{cof1}
Let $k$ be a field. 
In this section, we establish equivalences between certain kinds of objects of $\mathrm{DAlg}^{\mathrm{ccn}}_k$ and formal Lie groups (see \cref{classifynoncommfgl}, \cref{classifycommfgl}, \cref{classifycommfgl2}). In particular, \cref{classifynoncommfgl} proposes an answer to a problem posed by Nori \cite[p.~75]{MR682517}. In a different context, when $k= \mathbf{F}_p$, a classification for differential graded algebras with exterior (co)homology was obtained earlier by Dwyer, Greenlees and Iyengar \cite[Thm.~1.1]{Iye} in terms of complete discrete valuation rings with residue field $\mathbf{F}_p$. In the $g=1$ case, \cref{classifynoncommfgl} provides a refinement of \cite[Thm.~1.1]{Iye}: the differential graded algebra $R\mathrm{Hom}_{\mathbf{F}_p\llbracket x\rrbracket} (\mathbf{F}_p, \mathbf{F}_p)$ can further be equipped with the structure of a different (augmented) ``commutative algebra'' structure for each $1$-dimensional formal Lie group over $\mathbf{F}_p$. The proofs of \cref{classifynoncommfgl}, \cref{classifycommfgl} and \cref{classifycommfgl2} use a geometric approach. More specifically, we use the notion of affine stacks and the results regarding the representability of duals of unipotent group schemes by formal Lie groups established in \cref{cof0}. 

\vspace{2mm}

In \cref{cof6}, we will construct formal Lie groups as duals of unipotent homotopy group schemes of certain higher stacks that will be used in \cref{cof3} to recover the Artin--Mazur formal groups \cite{MR457458} in many cases of interest.

\begin{proposition}\label{classifynoncommfgl}
The full subcategory of the $\infty$-category $(\mathrm{DAlg}^{\mathrm{ccn}}_k)_{/k}$ spanned by $B \in (\mathrm{DAlg}^{\mathrm{ccn}}_k)_{/k} $ that satisfies $H^0 (B)= k$, $\dim H^1(B) = g$ and $H^i(B)=0$ for $i >1$ is equivalent to the category of non-commutative formal Lie groups over $k$ of dimension $g$ via the functor that sends a non-commutative formal Lie group $$E \mapsto R\Gamma (B E^\vee, \cO).$$
\end{proposition}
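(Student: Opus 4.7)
The plan is to exhibit an explicit inverse equivalence $\Psi$ to the functor $\Phi \colon E \mapsto R\Gamma(BE^\vee, \cO)$, constructed by taking $\pi_1$ of affine stacks.

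First, I would verify that $\Phi$ actually lands in the specified full subcategory. Given a non-commutative formal Lie group $E$ of dimension $g$, Construction~\ref{dualizinggpsch} yields a unipotent affine group scheme $E^\vee$. Combining \cref{newyr} (which identifies $R\Gamma(BE^\vee,\cO)$ with $R\Hom_{k\llbrace X_1,\dots,X_g\rrbrace}(k,k)$) and the Koszul calculation in \cref{nc-group-law-cohomology} shows that $R\Gamma(BE^\vee,\cO)$ has $H^0=k$, $\dim H^1=g$, and $H^i=0$ for $i\ge 2$; the canonical base point $\ast\to BE^\vee$ supplies the augmentation to $k$, so $\Phi(E)$ lies in the desired subcategory.

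Next, I would define the candidate inverse $\Psi$ by sending $B$ in the subcategory to the Cartier dual of $\pi_1(\Spec B)$ (taking duals in the sense of \cref{dualizinggpsch}). The hypothesis $H^0(B)=k$ together with \cref{connn} guarantees that $\Spec B$ is a pointed connected affine stack, so by \cref{thmoftoen} the group scheme $G_B\colonequals\pi_1(\Spec B)$ is unipotent affine. To apply \cref{bgcurve1} and conclude that $G_B^\vee$ is a non-commutative formal Lie group of dimension $g$, I need (i) $\dim H^1(BG_B,\cO)=g$ and (ii) $H^2(BG_B,\cO)=0$. Both follow from the Postnikov tower for affine stacks: \cref{postnikov} says the natural map $H^i(BG_B,\cO)=H^i(\tau_{\le 1}\Spec B,\cO)\to H^i(\Spec B,\cO)$ is an isomorphism for $i\le 1$ and injective for $i=2$, and by hypothesis $\dim H^1(B)=g$ and $H^2(B)=0$.

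To see $\Psi$ is a two-sided inverse, the composition $\Psi\circ\Phi$ is trivially the identity since $\pi_1(BE^\vee)\simeq E^\vee$ and Cartier double duality recovers $E$. For $\Phi\circ\Psi$, the canonical $1$-truncation map $\Spec B\to \tau_{\le 1}\Spec B\simeq BG_B$ gives an augmented map $\Phi(\Psi(B))=R\Gamma(BG_B,\cO)\to B$ in $(\mathrm{DAlg}^{\mathrm{ccn}}_k)_{/k}$. By \cref{newyr} and \cref{nc-group-law-cohomology} the source has the same cohomology as $B$, and the induced map on cohomology is an isomorphism in degree~$\le 1$ by the Hurewicz comparison (\cref{postnikov}) and trivially in degree~$\ge 2$ since both sides vanish there. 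A quasi-isomorphism is an equivalence in $\mathrm{DAlg}^{\mathrm{ccn}}_k$, so $\Phi\circ\Psi\simeq\id$. Finally, contravariant functoriality of $\Psi$ in morphisms of augmented $\mathrm{DAlg}^{\mathrm{ccn}}_k$-objects is immediate from the functoriality of $\Spec$ and $\pi_1$, and matches that of $\Phi$ on the nose.

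The main obstacle is step three, i.e., verifying $H^2(BG_B,\cO)=0$ from the algebraic hypothesis $H^2(B)=0$; this is not a formal consequence of the pointed-connected structure of $\Spec B$ but requires the injectivity statement in the Postnikov comparison of \cref{postnikov}, which is precisely what makes affine stacks behave well enough for this kind of reconstruction to succeed. Once this is secured, \cref{bgcurve1} delivers the formal Lie group structure and the equivalence falls out.
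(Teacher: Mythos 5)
Your argument follows the paper's proof essentially verbatim: land $\Phi$ in the subcategory via \cref{newyr} and \cref{nc-group-law-cohomology}, build the inverse from $\pi_1(\Spec B)$, and use the Postnikov injectivity in \cref{postnikov} together with \cref{bgcurve1} to detect the non-commutative formal Lie group structure. The only slip is in the final sentence --- $\Psi$ is covariant, not contravariant (a map $B_1 \to B_2$ induces $\Spec B_2 \to \Spec B_1$, hence $\pi_1(\Spec B_2) \to \pi_1(\Spec B_1)$, and Cartier duality reverses once more to give $\Psi(B_1) \to \Psi(B_2)$), so it matches the covariance of $\Phi$; this does not affect the substance of the argument.
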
{}

\begin{proof}
Given a non-commutative formal Lie group $E$ of dimension $g$, one can dualize and obtain a possibly non-commutative affine unipotent group scheme $G \colonequals E^\vee$. We set $B \colonequals R\Gamma(BG, \cO)$. The map $ \Spec k \to BG$ provides a natural augmentation $B \to k$. Further, it also follows from \cref{newyr} that $H^0(B) = k$, $\dim H^1(B) = g$ and $H^i (B) = 0$ for $i>1$. This shows that the functor described in the above proposition indeed lands in the desired subcategory of $(\mathrm{DAlg}^{\mathrm{ccn}}_k)_{/k}$.

\vspace{2mm}
Now we describe a functor in the opposite direction. Given a $B \in (\mathrm{DAlg}^{\mathrm{ccn}}_k)_{/k}$ satisfying the properties in the proposition, one can look at $\Spec B$ as an affine stack; this is naturally a pointed and connected higher stack. It follows that $\pi_1 (\Spec B)$ is representable by a possibly non-commutative unipotent affine group scheme $H$. 
There is a natural map of pointed stacks $\Spec B \to BH$ arising via the $1$-truncation. By \cref{postnikov}, the natural map $H^1(BH, \cO) \to H^1(B)$ is an isomorphism and $H^2(BH, \cO) \to H^2(B)=0$ is injective. This implies that $\dim H^1(BH, \cO) = g$ and $\dim H^2 (BH, \cO) = 0$. Therefore by \cref{bgcurve1}, it follows that dual of $H$ is a non-commutative formal Lie group. By the discussion in the previous paragraph, it also follows that $H^n (BH, \cO)=0$ for $n>1$ so that the map $\Spec B \to BH$ induces an isomorphism on cohomology with coefficients in the structure sheaf. 
This implies that the natural map $\Spec B  \to BH$ is an isomorphism since the target is an affine stack. Using \cref{lemma:group-homomorphism-classifying-stack}, one sees that the full subcategory of $(\mathrm{DAlg}^{\mathrm{ccn}}_k)_{/k}$ as in the proposition is equivalent to an ordinary $1$-category. Therefore, the functor $$B \mapsto \pi_1 (\Spec B)^\vee$$ gives an inverse to the functor described in the proposition, proving the desired equivalence.
\end{proof}{}

\begin{proposition}\label{classifycommfgl}
The full subcategory of the $\infty$-category $(\mathrm{DAlg}^{\mathrm{ccn}}_k)_{/k}$ spanned by $B \in (\mathrm{DAlg}^{\mathrm{ccn}}_k)_{/k} $ that satisfies $H^0 (B)= k,\, \dim_k H^1(B) = g$, $ H^*(B) \simeq \wedge^* H^1(B)$ and $\pi_1 (\Spec B)$ is commutative is equivalent to the category of commutative formal Lie groups over $k$ of dimension $g$ via the functor that sends a formal Lie group $$E \mapsto R\Gamma (B E^\vee, \cO).$$
\end{proposition}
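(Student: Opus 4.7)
The plan is to follow the same geometric strategy as in \cref{classifynoncommfgl} but substitute the commutative representability results from \cref{cof0}. First, I would verify that the proposed functor $E \mapsto R\Gamma(BE^\vee, \cO)$ lands in the target subcategory: given a commutative formal Lie group $E$ of dimension $g$, set $G \colonequals E^\vee$; then $B \colonequals R\Gamma(BG, \cO)$ is naturally an object of $(\mathrm{DAlg}^{\mathrm{ccn}}_k)_{/k}$ via the augmentation $\Spec k \to BG$, and \cref{paderborn1} identifies $B$ with $R\Hom_{k\llbracket x_1,\dotsc,x_g\rrbracket}(k,k)$. The usual Koszul computation then gives $H^*(B) \simeq \wedge^* H^1(B)$ with $\dim_k H^1(B) = g$, and $\pi_1(\Spec B) \simeq G$ is commutative.

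For the inverse direction, I would proceed as follows. Given $B$ in the specified subcategory, \cref{connn} ensures that $\Spec B$ is a pointed connected affine stack. Let $H \colonequals \pi_1(\Spec B)$, which is a commutative unipotent affine group scheme by assumption, and consider the $1$-truncation map $\Spec B \to BH$. By \cref{postnikov}, this map induces an isomorphism $H^1(BH,\cO) \xrightarrow{\sim} H^1(B)$ (so $\dim_k H^1(BH,\cO) = g$) and an injection $H^2(BH,\cO) \hookrightarrow H^2(B)$.

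The main step, and the heart of the argument, is showing that $H^\vee$ is a commutative formal Lie group of dimension $g$. The hypothesis $H^*(B) \simeq \wedge^* H^1(B)$ gives $\dim_k H^2(B) = \binom{g}{2}$, so the injection yields $\dim_k H^2(BH,\cO) \le \binom{g}{2}$. On the other hand, \cref{ext1} applied to the commutative unipotent group scheme $H$ gives
\[ \dim_k H^2(BH,\cO) = \binom{g}{2} + \dim_k \Ext^1(H,\GG_a), \]
which forces $\Ext^1(H,\GG_a) = 0$. Applying \cref{fine}, we conclude that $H^\vee$ is indeed a commutative formal Lie group of dimension $g$. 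This is where the commutativity of $\pi_1$ and the exterior-algebra hypothesis on $H^*(B)$ are both essential; without either one, the key inequality $\dim_k H^2(BH,\cO) \le \binom{g}{2}$ would not give the right conclusion.

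Once this is established, \cref{paderborn1} applied to $H$ yields $H^*(BH,\cO) \simeq \wedge^* H^1(BH,\cO)$, which matches $H^*(B)$. Hence the map $\Spec B \to BH$ induces an isomorphism $R\Gamma(BH,\cO) \xrightarrow{\sim} B$ in $\mathrm{DAlg}^{\mathrm{ccn}}_k$; since $BH$ is an affine stack and $\Spec$ is fully faithful on $(\mathrm{DAlg}^{\mathrm{ccn}}_k)^{\op}$ (see \cref{cope}), the map $\Spec B \to BH$ is itself an isomorphism. As in the proof of \cref{classifynoncommfgl}, the full subcategory is $1$-truncated and essentially $1$-categorical, so $B \mapsto \pi_1(\Spec B)^\vee$ provides the desired quasi-inverse, concluding the proof.
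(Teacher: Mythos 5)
Your proof is correct and follows essentially the same route as the paper. The only substantive difference is that where the paper invokes \cref{commgp} to deduce that $H^\vee$ is a formal Lie group of dimension $g$ from the inequality $\dim_k H^2(BH,\cO)\le\binom{g}{2}$, you instead go directly through \cref{ext1} (to force $\Ext^1(H,\GG_a)=0$) and then \cref{fine}; since \cref{commgp} is itself proved from \cref{fine} and \cref{genfunc} (with \cref{genfunc} relying on \cref{ext1}), this is the same argument unpacked one level, not a genuinely different approach. The remaining steps — verifying the forward functor via \cref{paderborn1}, using \cref{postnikov} on the truncation map, comparing exterior algebras to upgrade the degree-one isomorphism to an isomorphism of all cohomology, and concluding via affineness of $BH$ — match the paper.
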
{}

\begin{proof}
By \cref{paderborn1}, we see that the functor described in the above proposition indeed lands in the desired subcategory of $(\mathrm{DAlg}^{\mathrm{ccn}}_k)_{/k}$.
\vspace{2mm}

Now we describe a functor in the opposite direction. Given a $B \in (\mathrm{DAlg}^{\mathrm{ccn}}_k)_{/k}$ satisfying the properties in the proposition, we look at $\Spec B$ which is naturally equipped with the structure of a pointed connected higher stack. It follows from the hypothesis that $\pi_1 (\Spec B)$ is representable by a commutative unipotent affine group scheme $G$. There is a natural map of pointed stacks $\Spec B \to BG$ arising via $1$-truncation. By \cref{postnikov}, the natural map $H^1 (BG, \cO) \to H^1 (B)$ is an isomorphism and $H^2 (BG, \cO) \to H^2 (B)$ is injective. Therefore, $\dim_k H^2 (BG, \cO) \le \binom{g}{2}$. By \cref{commgp}, it follows that $G^\vee$ is a formal Lie group of dimension $g$. By \cref{paderborn1}, it follows that the natural map $\Spec B \to BG$ induces an isomorphism on cohomology with coefficients in the structure sheaf and therefore is an isomorphism since the target is an affine stack. Using \cref{lemma:group-homomorphism-classifying-stack}, one sees that the full subcategory of $(\mathrm{DAlg}^{\mathrm{ccn}}_k)_{/k}$ as in the proposition is equivalent to an ordinary $1$-category. Therefore, the functor $$B \mapsto \pi_1 (\Spec B)^\vee$$ gives an inverse to the functor described in the proposition, proving the desired equivalence.
\end{proof}{}

\begin{proposition}\label{classifycommfgl2}The full subcategory of the $\infty$-category $(\mathrm{DAlg}^{\mathrm{ccn}}_k)_{/k}$ spanned by $B \in (\mathrm{DAlg}^{\mathrm{ccn}}_k)_{/k} $ that satisfies $H^0(B) = k$, $\dim_k H^2 (B) = g$ and $H^* (B) \simeq \Sym^*H^2 (B)$ is equivalent to the category of commutative formal Lie groups over $k$ of dimension $g$ via the functor that sends a formal Lie group $$ E \mapsto R\Gamma(K(E^\vee, 2), \cO).$$
\end{proposition}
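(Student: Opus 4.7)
The plan is to construct a quasi-inverse $B \mapsto \pi_2(\Spec B)^\vee$ and verify that both compositions are naturally equivalent to the identity. Given $B$ in the prescribed full subcategory, consider the pointed connected affine stack $\Spec B$ (note that $H^0(B) \simeq k$ ensures pointed connectedness by \cref{connn}). Since $H^*(B) \simeq \Sym^* H^2(B)$ is supported in even degrees, $H^1(B) = 0$ and $H^3(B) = 0$. Applying the Hurewicz theorem for affine stacks (\cref{hurewicz}) therefore gives $\pi_1(\Spec B) = 0$, a natural isomorphism $\Hom(\pi_2(\Spec B), \GG_a) \simeq H^2(B)$, and an injection $\Ext^1(\pi_2(\Spec B), \GG_a) \hookrightarrow H^3(B) = 0$. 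By \cref{fine}, $\pi_2(\Spec B)^\vee$ is a commutative formal Lie group of dimension $g$, yielding the candidate inverse functor.

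To check that $E \mapsto R\Gamma(K(E^\vee, 2), \cO)$ does land in the prescribed subcategory, note that $H^*(K(E^\vee, 2), \cO) \simeq \Sym^* H^2(K(E^\vee, 2), \cO)$ with $\dim H^2 = g$ by \cref{retire4}, and $H^0 = k$ by pointed connectedness. Starting from a formal Lie group $E$, the composite of the two functors yields $\pi_2(K(E^\vee, 2))^\vee \simeq (E^\vee)^\vee \simeq E$, which is naturally an equivalence.

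Conversely, starting from $B$ and setting $G \colonequals \pi_2(\Spec B)$, the vanishing $\pi_1(\Spec B) = 0$ forces $\tau_{\le 2} \Spec B \simeq K(G, 2)$, and the truncation map $\Spec B \to K(G, 2)$ induces a morphism $\alpha \colon R\Gamma(K(G, 2), \cO) \to B$ in $\mathrm{DAlg}^{\mathrm{ccn}}_k$. By \cref{postnikov}, $\alpha$ is an isomorphism on $H^i$ for $i \le 2$; in particular on $H^2$, where both sides are naturally identified with $\Hom(G, \GG_a)$. Since $H^*(K(G, 2), \cO)$ is the free symmetric algebra on $H^2$ by \cref{retire4}, and $H^*(B) \simeq \Sym^* H^2(B)$ by hypothesis, $\alpha$ is a map of graded $k$-algebras between two polynomial algebras that is an isomorphism on generators, hence an isomorphism on all cohomology. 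As the forgetful functor $\mathrm{DAlg}^{\mathrm{ccn}}_k \to D(k)$ is conservative, $\alpha$ itself is an equivalence, so $\Spec B \simeq K(G, 2)$ in affine stacks, and the composite recovers $B$. As in the proofs of \cref{classifynoncommfgl} and \cref{classifycommfgl}, the prescribed subcategory is a $1$-category because pointed mapping spaces between affine stacks of the form $K(G, 2)$ are discrete and biject with homomorphisms of the underlying group schemes (equivalently, of formal Lie groups), which upgrades the object-level bijection to an equivalence of categories.

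The main technical point to watch is the passage from the graded-algebra isomorphism on cohomology to an equivalence in $\mathrm{DAlg}^{\mathrm{ccn}}_k$; here the combination of the polynomial structure of $H^*(B)$ concentrated in even degrees (so that no nontrivial higher operations can appear) together with the conservativity of the forgetful functor to $D(k)$ sidesteps any potential formality issues for the underlying $E_\infty$-algebra structure.
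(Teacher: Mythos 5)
Your proof is correct and follows essentially the same path as the paper's: both reduce to showing $\tau_{\le 2}\Spec B \simeq K(G,2)$ for $G = \pi_2(\Spec B)$, apply \cref{fine} to see $G^\vee$ is a formal Lie group, and conclude via the cohomology calculation $H^*(K(G,2),\cO) \simeq \Sym^* H^2$ that the truncation map is an equivalence; the only cosmetic difference is that you cite \cref{hurewicz} and \cref{retire4} rather than re-deriving the relevant facts from \cref{postnikov}, \cref{gait1}, and a descent spectral sequence as in the paper's proof. One small note: your closing caveat about formality and ``higher operations'' is a red herring --- you already have an explicit morphism $\alpha\colon R\Gamma(K(G,2),\cO)\to B$ coming from the truncation of stacks, so once $\alpha$ is a quasi-isomorphism, conservativity of $\mathrm{DAlg}^{\mathrm{ccn}}_k \to D(k)$ does all the work and no formality input is required.
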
{}

\begin{proof}
Let $G= E^\vee$. First we show that $H^* (K(G,2), \cO) \simeq \Sym^* H^2 (K(G,2), \cO)$. Note that by \cref{paderborn1}, we have $H^* (BG, \cO) \simeq \wedge^* H^1 (BG, \cO)$. By applying faithfully flat descent along $\Spec k \to K(G,2)$, we obtain a spectral sequence $$E_{1}^{i,j} = H^j ((BG)^i, \cO) \implies H^{i+j} (K(G,2), \cO),$$ where $(BG)^i$ denotes the $i$-fold fibre product of $BG$ with itself. One can analyze this spectral sequence in a manner entirely analogous to \cite[\S~3.3]{Mon21}. Indeed, similar to \cite[Lem. 3.25]{Mon21}, we obtain that the complex $E_1^{\bullet, 1} \simeq H^1 (BG)[-1]$. Similar to \cite[Lem. 3.27]{Mon21}, one obtains that $E_1^{\bullet, n} \simeq \Sym^n (H^1 (BG, \cO))[-n]$. Thus we obtain the desired calculation $H^* (K(G,2), \cO) \simeq \Sym^* H^2 (K(G,2), \cO)$. The object $R\Gamma(K(G,2), \cO) \in \mathrm{DAlg}^{\mathrm{ccn}}_k$ is also naturally augmented via the map induced from $\Spec k \to K(G,2)$. This shows that the functor $E \mapsto R\Gamma (K(E^\vee, 2), \cO)$ indeed lands in the desired subcategory of $(\mathrm{DAlg}^{\mathrm{ccn}}_k)_{/k}$.
\vspace{2mm}

Now we describe a functor in the opposite direction. Given a $B \in (\mathrm{DAlg}^{\mathrm{ccn}}_k)_{/k}$ satisfying the properties in the proposition, we look at $\Spec B$, which is naturally a pointed connected higher stack. By \cref{hurewicz}, since $H^1(B)=0$, it follows that $\tau_{\le 2} \Spec B \simeq K(H,2)$ for some unipotent commutative group scheme $H$. 
By \cref{postnikov}, it follows that $H^3 (K(H,2), \cO)$ naturally embeds into $H^3 (B)$. However, by assumption the latter is zero. Therefore $H^3 (K(H,2), \cO) = 0$. By \cref{gait1}, this implies that $\mathrm{Ext}^1 (H, \GG_a)=0$ and $\dim_k \mathrm{Hom}(H, \GG_a) = \dim_k H^2 (K(H,2), \cO) = \dim_k H^2 (B)=g$. By \cref{fine}, it follows that $H^\vee$ is a formal Lie group of dimension $g$. By the calculation of cohomology of $K(H,2)$ when $H^\vee$ is a formal Lie group discussed in the previous paragraph, it follows that the natural map $\Spec B \to K(H,2)$ is an isomorphism since the target is an affine stack. 
By delooping and using \cref{lemma:group-homomorphism-classifying-stack}, one sees that the full subcategory of $(\mathrm{DAlg}^{\mathrm{ccn}}_k)_{/k}$ as in the proposition is equivalent to an ordinary $1$-category. Therefore, the functor $$B \mapsto \pi_2 (\Spec B)^\vee$$ gives an inverse to the functor described in the proposition, which proves the desired equivalence.
\end{proof}{}

\begin{remark}
In \cref{classifycommfgl2}, $B \colonequals R\Gamma (K(E^\vee, 2), \mathscr{O})$ acquires a natural coalgebra structure, which is induced from the coalgebra structure of $\mathscr{O}(E^\vee)$ via a two-fold application of the cobar construction \cite[\S~5.2.3]{luriehigher}. To identify this coalgebra structure, note that there is a canonical isomorphism of cocommutative coalgebras $\mathscr{O}(E^\vee) \simeq \Gamma^* (V)$ where $V$ is the vector space of dimension $g$ given by the tangent space of the formal group $E$. This equivalence, under the two-fold cobar construction, produces an isomorphism $B \simeq \Gamma^* (V[-2])$ of coalgebras.\footnote{
Here we use that $\Gamma^* (V_1 \oplus V_2) \simeq \Gamma^* (V_1) \otimes_k \Gamma^* (V_2)$, and the two-fold cobar construction applied to $V$ viewed as a cogroup via the diagonal map is equivalent to $V[-2]$.}
Since the tangent space as an abstract vector space does not depend on the formal group, the cocommutative coalgebra structure of $\Gamma^*(V)$ is also independent of the formal group. As a consequence, the isomorphism class of the coalgebra structure on $B$ does not depend on the formal group $E$ and is given by $\Gamma^* (V[-2]) \simeq \Gamma^* (H^2(B)[-2])$. Roughly speaking, this should be seen as arising from the fact that the topological algebra structure of the formal group $E$ is $\Sym^* (V)^\wedge$, which does not depend on the formal group $E$. However, the augmented \emph{algebra} structure of $B$ depends crucially on the formal group $E$, as \cref{classifycommfgl2} shows. Similarly, in the context of \cref{classifycommfgl}, the coalgebra structure on $B'\colonequals R\Gamma (BE^\vee, \mathscr{O})$ is given by $\Gamma^* (H^1 (B')[-1])$.
\end{remark}{}

\begin{proposition}\label{formalgroup}
Let $n \ge 1$ be a fixed integer. Let $B \in (\mathrm{DAlg}^{\mathrm{ccn}}_k)_{/k}$ be such that $H^0(B) = k, H^i(B) = 0$ for $0<i < n$, $\dim H^n (B)= g$ and $H^{n+1}(B) = 0$. Then the dual of $\pi_n (\Spec B)$ is a commutative formal Lie group of dimension $g$ for $n>1$ and is a non-commutative formal Lie group of dimension $g$ if $n=1$.
\end{proposition}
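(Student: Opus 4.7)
The plan is to reduce the statement to the representability criteria for duals of unipotent group schemes proved in the previous subsection, namely \cref{bgcurve1} for $n=1$ and \cref{fine} for $n>1$, by using the Hurewicz theorem for affine stacks (\cref{hurewicz}) together with the Postnikov tower analysis from \cref{postnikov} to transport cohomological information from $B$ onto the relevant Eilenberg--MacLane stack.

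For the case $n=1$, I would first observe that $\Spec B$ is a pointed connected affine stack, so $\pi_1(\Spec B)$ is represented by a (possibly non-commutative) unipotent affine group scheme $G$. The $1$-truncation furnishes a map $\Spec B \to \tau_{\le 1}\Spec B \simeq BG$. By \cref{postnikov}, the induced morphism $H^1(BG,\cO)\to H^1(B)$ is an isomorphism and $H^2(BG,\cO)\to H^2(B)$ is injective. The hypotheses $\dim H^1(B)=g$ and $H^2(B)=0$ therefore give $\dim H^1(BG,\cO)=g$ and $H^2(BG,\cO)=0$, so \cref{bgcurve1} shows that $G^{\vee}=\pi_1(\Spec B)^{\vee}$ is a non-commutative formal Lie group of dimension $g$.

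For the case $n>1$, the vanishing $H^i(B)=0$ for $0<i<n$ combined with \cref{hurewicz} implies that $\pi_i(\Spec B)$ is trivial for $i\le n-1$, so the Postnikov tower yields $\tau_{\le n}\Spec B\simeq K(\pi_n(\Spec B),n)$, where $\pi_n(\Spec B)$ is automatically a commutative unipotent affine group scheme. Still by \cref{hurewicz}, we have the natural isomorphism $H^n(B)\simeq \Hom(\pi_n(\Spec B),\GG_a)$ and the injection $\Ext^1(\pi_n(\Spec B),\GG_a)\hookrightarrow H^{n+1}(B)$. The hypotheses $\dim H^n(B)=g$ and $H^{n+1}(B)=0$ then translate into $\dim\Hom(\pi_n(\Spec B),\GG_a)=g$ and $\Ext^1(\pi_n(\Spec B),\GG_a)=0$. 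Since $\dim H^1(B\pi_n(\Spec B),\cO)=\dim\Hom(\pi_n(\Spec B),\GG_a)=g$ by fpqc descent along $\Spec k\to B\pi_n(\Spec B)$, \cref{fine} applies and identifies $\pi_n(\Spec B)^{\vee}$ with a commutative formal Lie group of dimension $g$.

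There is essentially no substantial obstacle once the Hurewicz principle is available; the only thing to be careful about is that in the $n=1$ case one must not assume commutativity and invoke the non-commutative representability criterion \cref{bgcurve1} rather than its commutative analogue \cref{fine}. All the finiteness hypotheses needed to apply the classification results (finite-dimensionality of $H^1$ for \cref{bgcurve1}, finite-dimensionality of $\Hom(G,\GG_a)$ together with vanishing of $\Ext^1(G,\GG_a)$ for \cref{fine}) are exactly the input provided by the cohomological hypotheses on $B$ through the Hurewicz and Postnikov formalism.
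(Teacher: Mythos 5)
Your argument is correct and follows essentially the same route as the paper's proof: use \cref{hurewicz} and \cref{postnikov} to identify $\tau_{\le n}\Spec B$ with $K(\pi_n(\Spec B),n)$ and transport the cohomological hypotheses on $B$ to the conditions $\dim\Hom(\pi_n,\GG_a)=g$ and $\Ext^1(\pi_n,\GG_a)=0$ (resp.\ $H^2(B\pi_1,\cO)=0$), then invoke \cref{fine} for $n>1$ and \cref{bgcurve1} for $n=1$. The only difference is presentational — you split the two cases up front, while the paper handles them uniformly before branching at the final step — so the content is the same.
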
{}

\begin{proof} 
It follows that $\Spec B$ is naturally a pointed and connected affine stack and $\pi_n (\Spec B)$ is representable by a unipotent affine group scheme $G$, which is commutative when $n>1$.
Since $H^i (B) = 0$ for all $0<i<n$, \cref{hurewicz} shows that $\pi_i (\Spec B)$ is trivial for all $0 \le i < n$. In other words, $\tau_{\le n}\Spec B \simeq K(G,n)$. Therefore, we have a natural map $\Spec B \to K(G,n)$. Further, by \cref{postnikov}, the induced map $H^n (K(G,n), \cO) \to H^n (B)$ is an isomorphism and $H^{n+1}(K(G,n), \cO) \to H^{n+1}(B)=0$ is injective. This implies that $H^{n+1} (K(G,n), \cO)=0$. By \cref{gait1}, it follows that $\dim \Hom(G, \GG_a)=g$ and if $n>1$, then $\Ext^1 (G, \GG_a)=  H^{n+1}(K(G,n), \cO) =0$. By applying \cref{bgcurve1} when $n=1$ and \cref{fine} when $n>1$, we obtain the desired statements.
\end{proof}{}

\begin{construction}[Formal Lie groups via unipotent homotopy group schemes]\label{cof6}
Let $n \ge 1$ be a fixed positive integer. Let $X$ be a pointed higher stack over $k$ such that $H^0 (X, \cO) \simeq k, H^i (X, \cO)=0$ for $0 <i < n$, $\dim H^n (X, \cO)= g$ and $H^{n+1}(X, \cO) = 0$. By letting $B \colonequals R\Gamma(X, \cO)$, as a consequence of \cref{formalgroup}, it follows that $\pi_n(X)^{\vee}$ is a commutative formal Lie group of dimension $g$ for $n>1$ and is a non-commutative formal Lie group of dimension $g$ if $n=1$.
\end{construction}{}

\subsection{Recovering the Artin--Mazur formal groups}\label{cof3}
In this section, we show that our unipotent homotopy group schemes recover the Artin--Mazur formal groups from \cite{MR457458} for many cases of interest.
Throughout this section, we fix an integer $n \ge 1$. Further, let $X$ be a proper scheme over an algebraically closed field $k$ of characteristic $p > 0$ (equipped with a $k$-rational point) satisfying the conditions
\begin{equation}\label{1/2-CY}
 H^0 (X, \cO)\simeq k, \quad  H^i(X,\cO) = 0 \text{ for all } 0 < i < n, \quad \text{and} \quad H^{n+1}(X, \cO)=0.\tag{$\ast$}
\end{equation}
Examples of such varieties include curves, Calabi--Yau varieties of dimension $n$ (see \cref{CY}) and their blow-ups. When $X$ is a surface and $n=2$, the condition $(*)$ translates to the condition that the irregularity of $X$ is $0$. One can also obtain more examples by considering suitable products of schemes that satisfy (\ref{1/2-CY}). Let us now begin by recalling the relevant formal Lie groups constructed by Artin--Mazur \cite{MR457458}.
\begin{definition}[{\cite[\S~II.1, \S~II.4]{MR457458}}]\label{latex}
Let $n \ge 1$ be an integer and $X$ be a scheme satisfying (\ref{1/2-CY}).
Then the contravariant functor 
\[ \Phi^n_X \colon (\Art/k)^{\op} \to \Ab, \quad S \mapsto \Ker\bigl(H^n_{\et}(X_S,\GG_m) \to H^n_{\et}(X,\GG_m)\bigr) \]
from the category of finite Artinian $k$-schemes to the category of abelian groups is pro-representable by a commutative formal Lie group over $k$ of dimension $\dim_kH^n(X,\cO_X)$.
We call $\Phi^n_X$ the \emph{Artin--Mazur formal Lie group of X}.
\end{definition}

Let us now recall the following construction based on unipotent homotopy group schemes.
\begin{construction}\label{paderborn2}
Let $n \ge 1$ be a fixed integer and let $X$ be a proper scheme as in (\ref{1/2-CY}). By \cref{cof6}, $\pi_n^\mathrm{U}(X)^{\vee}$ is a commutative formal Lie group of dimension $\dim_kH^n (X,\cO)$ for $n>1$ and a non-commutative formal Lie group of dimension $\dim_kH^n (X,\cO)$ if $n=1$.
\end{construction}{}
 We will show in \cref{cycomp} that \cref{paderborn2} recovers the Artin--Mazur formal Lie group. One of the concrete advantages of  \cref{paderborn2} is that it does not require using the \'etale cohomology groups; in particular, when $n=1$, dual of $\pi_1^\mathrm{U}(X)$ gives a \textit{non-commutative} formal Lie group attached to $X$, which would not be seen by the Artin--Mazur formal Lie group. In other words, \cref{paderborn2} recovers the commutative formal Lie group constructed by Artin--Mazur \cite{MR457458} as well as the non-commutative formal Lie group constructed by Nori (for curves) in {\cite[\S~IV, Prop.~4]{MR682517}}.
\vspace{2mm}

To state the main result about Artin--Mazur formal Lie groups that we will use, we need the following notions:
\begin{enumerate}
    \item Serre's \emph{Witt vector cohomology} $\Hh^n(X,W) \colonequals \varprojlim_r \Hh^n_{\mathrm{Zar}}(X,W_r)$ of a proper scheme $X$ from \cite[\S~5]{MR0098097}, where $W_r$ is the sheaf of truncated Witt vectors of length $r$ on $X$
    \item \emph{Duality} between the categories of commutative formal groups and commutative affine group schemes over a field from \cref{thisisnew}.
\end{enumerate}
Using these notions, one can describe the Dieudonn\'e module of the Artin--Mazur formal Lie group, as we note below. First, we recall that for a commutative formal Lie group $E$ over a perfect field $k$ of characteristic $p>0$, its Dieudonn\'e module is defined to be (\textit{cf}.~\cite[Thm.~4.15]{MR767090})
\begin{equation}\label{defofdmfgl}
\mathbf{M}(E)\colonequals \Hom(\widehat{W},E),    
\end{equation}where $\widehat{W}$ is the infinite-dimensional, commutative formal group obtained from formally completing the group scheme of $p$-typical Witt vectors $W$ at zero. The following proposition expresses the Dieudonn\'e module of the Artin--Mazur formal Lie group of $X$ in terms of Serre's Witt vector cohomology.
\begin{proposition}[{\cite[Cor.~4.3]{MR457458}}]
Let $n \ge 1$ be an integer and $X$ be a proper scheme as in (\ref{1/2-CY}). Then there is a natural isomorphism $\mathbf{M}(\Phi^n_X) \simeq \Hh^n(X,W)$.
\end{proposition}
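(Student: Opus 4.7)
The plan is to unwind both sides using Cartier--Dieudonn\'e theory together with a cohomological description of $\Phi^n_X$, and to bridge them via the Artin--Hasse exponential. First, I would use that for a commutative formal Lie group $E$ over a perfect field, the Dieudonn\'e module $\mathbf{M}(E) = \Hom(\widehat{W}, E)$ can be expressed as a suitable inverse limit $\varprojlim_r E(W_r(k))$, with the Frobenius and Verschiebung operators on $\mathbf{M}(E)$ coming from the standard Frobenius and Verschiebung on $W$. Applied to $E = \Phi^n_X$, this reduces the problem to computing $\Phi^n_X(W_r(k))$ in a manner compatible with the transition maps and the Witt operators.

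Second, to compute $\Phi^n_X(W_r(k))$, I would use the short exact sequence of \'etale sheaves
\begin{equation*}
1 \to 1 + \mathfrak{m}_r \cO_{X_{W_r(k)}} \to \cO^*_{X_{W_r(k)}} \to \cO^*_X \to 1
\end{equation*}
on $|X|$, where $\mathfrak{m}_r \subset W_r(k)$ is the maximal ideal. The vanishing assumptions in (\ref{1/2-CY}), namely $H^i(X, \cO) = 0$ for $0 < i < n$, combined with a devissage along the $\mathfrak{m}_r$-adic filtration of $1 + \mathfrak{m}_r \cO_{X_{W_r(k)}}$ (whose graded pieces are copies of $\cO_X$), would identify $\Phi^n_X(W_r(k))$ with $H^n\bigl(X, 1 + \mathfrak{m}_r \cO_{X_{W_r(k)}}\bigr)$, up to understanding any residual contribution from the boundary map out of $H^{n-1}(X, \GG_m)$.

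Third, I would invoke the Artin--Hasse exponential, which provides a natural isomorphism of sheaves of abelian groups between $1 + \mathfrak{m}_r \cO_{X_{W_r(k)}}$ and a truncated Witt vector sheaf $W_{r-1} \cO_X$ (with an appropriate index shift), compatibly with Frobenius, Verschiebung, and restriction maps in $r$. Taking $H^n$, passing to the inverse limit in $r$, and invoking Serre's definition of Witt vector cohomology would then yield the chain of isomorphisms
\begin{equation*}
\mathbf{M}(\Phi^n_X) \simeq \varprojlim_r \Phi^n_X(W_r(k)) \simeq \varprojlim_r H^n(X, W_{r-1} \cO_X) \simeq H^n(X, W),
\end{equation*}
where the Mittag--Leffler condition allowing one to commute the inverse limit with $H^n$ is ensured by the surjectivity of the restriction maps $W_r \cO_X \twoheadrightarrow W_{r-1} \cO_X$ together with the finite-dimensionality of $H^{n-1}(X, W_r \cO_X)$ coming from properness of $X$.

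The main obstacle I anticipate is in the second step: carefully controlling the boundary contribution from $H^{n-1}(X, \GG_m)$ into $H^n\bigl(X, 1 + \mathfrak{m}_r \cO_{X_{W_r(k)}}\bigr)$, which for $n = 2$ requires understanding the failure of surjectivity of $\Pic(X_{W_r(k)}) \to \Pic(X)$; I would expect to argue that, after passing to the kernel $\Phi^n_X(W_r(k))$ and to the inverse limit over $r$, these contributions either vanish or can be absorbed using the hypotheses in (\ref{1/2-CY}). A secondary obstacle is verifying that the Artin--Hasse identification in the third step intertwines the Dieudonn\'e-module structures on both sides (and not merely the underlying $W(k)$-module structures), which will require explicitly tracking how the Frobenius and Verschiebung on $W$ act compatibly with $p$-th power and norm-type maps on $1 + \mathfrak{m}_r \cO$.
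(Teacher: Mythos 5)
This statement is \emph{cited} from Artin--Mazur (loc.~cit.), not proved in the paper; the paper instead re-derives and strengthens it later in \cref{cycomp} via a rather different route, namely first rewriting $\mathbf{M}(E) \simeq \Hom(E^\vee, W) \simeq \varprojlim_r \Hom(E^\vee, W_r)$ (see \cref{dieudonnemodu12}) and then computing the groups $\Hom(\pi^{\mathrm U}_n(X), W_r)$ using the unipotent homotopy type and the Postnikov tower, avoiding the Artin--Hasse exponential entirely. So there is no proof in the paper to compare against directly; what follows is an assessment of the proposal's correctness.

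Your proposal has a fundamental gap at the very first step, and the rest of the argument inherits it. The formal Lie group $E = \Phi^n_X$ is a functor on Artinian local \emph{$k$-algebras}, and the scheme $X$ lives over $\Spec k$; but for $r > 1$ the ring $W_r(k)$ has characteristic $p^r \ne p$, so it is not a $k$-algebra. Consequently $E(W_r(k))$, $\Phi^n_X(W_r(k))$, the base change $X_{W_r(k)}$, and the sheaf $\cO_{X_{W_r(k)}}$ are all undefined as written. Moreover, even under a generous reinterpretation (say, by lifting $E$ and computing formal-group points in $p W_r(k)$), the formula $\mathbf{M}(E) \simeq \varprojlim_r E(W_r(k))$ cannot be correct: such a computation would give the same abelian group $W(k)$ for every one-dimensional formal Lie group (e.g.\ for both $\widehat{\GG}_m$ and $\widehat{\GG}_a$), whereas the Dieudonn\'e modules of these two groups are quite different. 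The correct Cartier-theoretic input is $\mathbf{M}(E) = \Hom_{\text{formal groups}/k}(\widehat{W}, E)$, i.e.\ homomorphisms from the $p$-typical Witt vector \emph{formal group} over $k$, equivalently the $p$-typical curves in $E$.

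That said, your instinct that the Artin--Hasse exponential is the bridge between the multiplicative data $1 + \mathfrak{m}_A \cO_{X_A}$ (used in the definition of $\Phi^n_X$) and additive Witt vector data $W\cO_X$ is the right one and is indeed how Artin--Mazur argue; it just has to be deployed with $A$ ranging over genuine Artinian $k$-algebras (so the relevant exact sequence is $1 \to 1 + \mathfrak{m}_A \cO_{X_A} \to \cO^*_{X_A} \to \cO^*_X \to 1$), and the passage to the Dieudonn\'e module must go through homomorphisms out of $\widehat{W}$ rather than through evaluation at $W_r(k)$. Your concerns about the $H^{n-1}(X,\GG_m)$-boundary and about Mittag--Leffler are reasonable once the foundational step is fixed, but as written the argument does not get off the ground.
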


Before we proceed further, let us note the following lemma, which expresses the Dieudonn\'e module of a formal Lie group in terms of its dual group scheme.

\begin{lemma}\label{dieudonnemodu12}
Let $\mathbf{M}(E)$ denote the Dieudonn\'e module of a commutative formal Lie group $E$ over a perfect field $k$ of characteristic $p>0$. Then we have a natural isomorphism 
    \[ \mathbf{M}(E)  \simeq \Hom(E^\vee,W), \]where $E^\vee$ is the (unipotent) group scheme dual to $E$.
\end{lemma}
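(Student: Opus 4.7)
The plan is to reduce the statement to the Cartier--Hazewinkel duality $(\cdot)^\vee$ between the categories of commutative formal Lie groups and commutative affine unipotent group schemes over $k$ that is already recalled in \cref{dualizinggpsch} via \cite[Thm.~37.3.12]{MR2987372}. This duality is understood to be extended in the usual way to the appropriate pro/ind-systems of such objects, so that both $W = \varprojlim_n W_n$ and $\widehat{W}$ lie in its scope.

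First I would apply $(\cdot)^\vee$ to the defining formula for the Dieudonn\'e module, which produces a natural isomorphism
\[ \mathbf{M}(E) = \Hom(\widehat{W}, E) \xrightarrow{\sim} \Hom(E^\vee, \widehat{W}^\vee). \]
The lemma then reduces to the identification $\widehat{W}^\vee \simeq W$ as commutative pro-unipotent group schemes over $k$: the formal $p$-typical Witt vectors and the affine ring scheme of $p$-typical Witt vectors are exchanged by the Cartier--Hazewinkel duality under the pairing induced from the comultiplication on Witt vectors. This is one of the foundational outputs of Cartier--Dieudonn\'e theory, carried out in detail in \cite[\S~37]{MR2987372}, and I would simply invoke it.

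The main subtlety to keep track of is the compatibility of this identification with the $\mathscr{D}_k$-module structures on both sides, which is required to promote the natural bijection to an isomorphism of Dieudonn\'e modules. The point is that the anti-isomorphism $\mathrm{End}(\widehat{W}) \simeq \mathrm{End}(W)^{\op}$ induced by duality interchanges the operators $F$ and $V$ in precisely the manner dictated by the definition of $\mathscr{D}_k$, so that the induced actions on $\Hom(\widehat{W},E)$ and $\Hom(E^\vee,W)$ agree under the displayed isomorphism. Beyond this classical ingredient, the argument is purely formal.
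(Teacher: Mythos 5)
Your proposal is correct and takes essentially the same route as the paper: both reduce the claim to the identification $\widehat{W}^\vee \simeq W$. Where you invoke this as a foundational fact from \cite[\S~37]{MR2987372}, the paper verifies it explicitly by computing $A$-points and applying the Artin--Hasse exponential to identify $\mathrm{Hom}(\widehat{W}_A,(\widehat{\GG}_m)_A)$ with $W(A)$; your remark about tracking the $F$--$V$ exchange under duality is a real point that the paper leaves implicit.
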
{}

\begin{proof}
This amounts to the statement that $(\widehat{W})^\vee \simeq W$. In order to see this, let $A$ be an arbitrary $k$-algebra. Then the group of $p$-typical curves on
$({\widehat{\mathbf G}}_m)_A$ is naturally
isomorphic to $\mathrm{Hom}(\widehat{W}_A, (\widehat{\GG}_{m})_A)$, where the homomorphisms are taken in the category of formal groups. Now, by the Artin--Hasse exponential (\cite[Prop.~15.3.8, Prop.~17.4.23]{MR2987372}), one has a natural isomorphism $W(A) \simeq \mathrm{Hom}(\widehat{W}_A, (\widehat{\GG}_{m})_A)$. This proves the desired statement.
\end{proof}{}

\begin{remark} 
Let $E$ be a commutative formal group of finite height (i.e., $p \colon E \to E$ is an isogeny of height $h$) over a perfect field $k$ of characteristic $p>0$. Then $\mathbf{M}(E)$ from \cref{defofdmfgl} is naturally isomorphic to $\sigma^*\mathbf{M}(E[p^\infty]^\vee)$ from \cref{covariant-Dieudonne} where $E[p^\infty]$ is the $p$-divisible group $(E[p^n])_{n \ge 1}$ of height $h$ obtained from $E$; see the corollary on \cite[p.~255]{Fon1} or \cite[Thm.~3.34]{MR4190563}.

\end{remark}{}

\begin{proposition}\label{cycomp}
Let $n \ge 1$ be an integer and let $X$ be a proper scheme as in (\ref{1/2-CY}). 
Then if $n > 1$, $\Phi^n_X$ is naturally isomorphic to the dual $\pi^{\mathrm{U}}_n(X)^\vee$ of the $n$-th unipotent homotopy group scheme of $X$. 
If $n=1$, $\Phi^n_X$ is naturally isomorphic to $(\pi_1^{\mathrm{U}}(X)^{\mathrm{ab}})^{\vee}$.
\end{proposition}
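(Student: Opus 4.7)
The plan is to compare both sides via their Dieudonn\'e modules. Set $G_n \colonequals \pi_n^{\mathrm U}(X)$ for $n \geq 2$ and $G_1 \colonequals \pi_1^{\mathrm U}(X)^{\mathrm{ab}}$, so that in both cases $G_n^\vee$ is a commutative formal Lie group of dimension $g = \dim_k H^n(X, \cO)$ (by \cref{cof6} for $n \geq 2$, and by applying \cref{fine} to the abelianization for $n = 1$). By Dieudonn\'e theory, two commutative formal Lie groups over an algebraically closed field of characteristic $p > 0$ are isomorphic if and only if their Dieudonn\'e modules are. Combining the quoted Artin-Mazur identification $\mathbf{M}(\Phi_X^n) \simeq H^n(X, W)$ with the identification $\mathbf{M}(G_n^\vee) \simeq \Hom(G_n, W)$ from \cref{dieudonnemodu12}, the task reduces to constructing a natural $\mathscr{D}_k$-equivariant isomorphism $\Hom(G_n, W) \simeq H^n(X, W)$.

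To construct this isomorphism, I would first work at finite truncation level. For each $r \geq 1$, $W_r$ is a commutative unipotent group scheme, so by \cref{thmoftoen} the Eilenberg-MacLane stack $K(W_r, n)$ is an affine stack. The hypothesis $H^0(X, \cO) \simeq k$ and $H^i(X, \cO) = 0$ for $0 < i < n$, together with \cref{hurewicz2}, yields $\pi_i^{\mathrm U}(X) = *$ for $0 \leq i < n$, whence $\tau_{\leq n}\UU(X) \simeq K(\pi_n^{\mathrm U}(X), n)$ for $n \geq 2$ and $\tau_{\leq 1}\UU(X) \simeq B\pi_1^{\mathrm U}(X)$ for $n = 1$. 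Combining the universal property of $\UU(X)$ (applicable because $K(W_r, n)$ is affine) with the $n$-truncatedness of $K(W_r, n)$ then produces a chain of natural isomorphisms
\[ H^n(X, W_r) \simeq [X, K(W_r, n)]_* \simeq [\UU(X), K(W_r, n)]_* \simeq [\tau_{\leq n}\UU(X), K(W_r, n)]_* \simeq \Hom(G_n, W_r), \]
where for $n = 1$ the commutativity of $W_r$ forces maps out of $\pi_1^{\mathrm U}(X)$ to factor through the abelianization $G_1$. Here one implicitly uses $H^n_{\mathrm{fpqc}}(X, W_r) = H^n_{\mathrm{Zar}}(X, W_r)$, which holds since $W_r$, viewed as an abelian sheaf, is an iterated extension of copies of $\cO$.

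Passing to the inverse limit in $r$, since $\Hom(G_n, -)$ preserves inverse limits and $H^n(X, W) = \varprojlim_r H^n(X, W_r)$ is the definition of Serre's Witt vector cohomology, one obtains the desired isomorphism $\Hom(G_n, W) \simeq H^n(X, W)$. The hard part is the verification of $\mathscr{D}_k$-equivariance: the endomorphisms $F, V \colon W \to W$ induce, via post-composition, operators on $\Hom(G_n, W)$, and via coefficient functoriality, operators on $H^n(X, W)$. One must then check that, under the natural isomorphism just constructed, these match the classical Frobenius and Verschiebung structure on $\mathbf{M}(\Phi_X^n)$ as used by Artin-Mazur. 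This amounts to tracing through the Artin-Hasse-type identification $\mathbf{M}(E) \simeq \Hom(E^\vee, W)$ underlying \cref{dieudonnemodu12} together with the proof of $\mathbf{M}(\Phi_X^n) \simeq H^n(X, W)$, and showing the compatibility is natural in $W$ with respect to its ring scheme structure.
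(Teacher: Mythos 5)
Your overall strategy coincides with the paper's: reduce to an identification of Dieudonn\'e modules via \cref{dieudonnemodu12}, identify $\mathbf{M}(\Phi_X^n)$ with $H^n(X,W)$ via Artin--Mazur, construct the comparison at finite Witt-vector level $W_r$ using the universal property of $\UU(X)$ and the $n$-truncatedness of $K(W_r,n)$, and pass to the inverse limit. The one place where you and the paper genuinely diverge is the step $[\tau_{\le n}\UU(X), K(W_r,n)]_* \simeq \Hom(G_n, W_r)$, which you declare part of a ``chain of natural isomorphisms'' without further argument. This is the content of \cref{lemmaaboutstuff}, which appears only in \cref{worldcup1} and is itself proved rather quickly from a delooping argument; the paper instead establishes that $v_r$ is an isomorphism by a hands-on induction on $r$ using the short exact sequence $0 \to W_r \to W_{r+1} \to \GG_a \to 0$, which has the added benefit of simultaneously producing the vanishing statements $H^{n+1}(X,W_r)=0$ and $\Ext^1(\pi_n^{\mathrm U}(X),W_r)=0$. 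Your route is fine if you explicitly cite \cref{lemmaaboutstuff} (and \cref{lemma:group-homomorphism-classifying-stack} for $n=1$), but as written it conceals the one step that actually requires work; conversely, the $\mathscr{D}_k$-equivariance that you flag as ``the hard part'' is essentially automatic from the naturality of the $v_r$ in the coefficient group scheme $W_r$ and the fact that $F$ and $V$ act by endomorphisms of the ind-system $(W_r)$, which is how the paper disposes of it implicitly.
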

\begin{proof}
Let us first suppose that $n > 1$. It will suffice to exhibit a natural isomorphism $$\mathbf{M}(\pi^{\mathrm U}_n(X)^\vee) \xrightarrow{\sim} \mathbf{M}(\Phi^n_X) \simeq \Hh^n(X,W)$$ of the associated Dieudonn\'e modules.
Since (by \cref{dieudonnemodu12}) we have $$\mathbf{M}(\pi^{\mathrm U}_n(X)^\vee) \simeq \Hom(\pi^{\mathrm U}_n(X),W) \simeq \varprojlim_r \Hom(\pi^{\mathrm U}_n(X),W_r),$$ it is in fact enough to give compatible natural isomorphisms
\[ v_r \colon \Hom(\pi^{\mathrm U}_n(X),W_r) \xrightarrow{\sim} \Hh^n_{}(X,W_r), \]
where the homomorphisms are taken in the category of commutative group schemes over $k$ and $\Hh^n_{}(X,W_r)$ denotes $\Hh^n_{\mathrm{Zar}}(X,W_r) \simeq \Hh^n_{\mathrm{fpqc}}(X,W_r)$.
\vspace{2mm}

We begin with the construction of $v_r$.
By \cref{hurewicz2}, the assumption (\ref{1/2-CY}) guarantees that $\tau_{\le n} \mathbf{U}(X) \simeq K(\pi^{\mathrm U}_n(X),n)$.
Since $K(W_r,n)$ is $n$-truncated, we obtain maps
\begin{IEEEeqnarray*}{rCl}
 \Hom(\pi^{\mathrm U}_n(X),W_r) & \to & \pi_0 \Map(K(\pi^{\mathrm U}_n(X),n),K(W_r,n)) \simeq \pi_0 \Map(\UU(X),K(W_r,n))
\end{IEEEeqnarray*}
Since $W_r$ is unipotent, $K(W_r, n)$ is an affine stack (\cref{thmoftoen}). Therefore, we have $$\pi_0 \Map(\UU(X),K(W_r,n)) \simeq \pi_0 \Map(X,K(W_r,n)) \simeq H^n _{\mathrm{}}(X, W_r).$$ Composing these maps, we obtain the desired maps $$v_r \colon \mathrm{Hom}(\pi_n^{\mathrm{U}}(X), W_r) \to H^n_{\mathrm{}}(X, W_r), $$
which are compatible in $r$. 
We will now inductively show that
\begin{enumerate}[label=($\alph*_r$)]
    \item\label{Witt-cohomology} $\Hh^{n+1}(X,W_r) = 0$,
    \item\label{Witt-Ext} $\Ext^1(\pi^{\mathrm U}_n(X),W_r) = 0$, and 
    \item\label{Witt-isom} $v_r$ is an isomorphism
\end{enumerate}
for all $r \ge 1$.
For $r=1$, we have $W_1 \simeq \GG_a$; therefore the assertions follow directly from the fact that $H^{n+1}(X, \cO)=0$, \cref{fine} and \cref{hurewicz2}. For the induction step, let us assume that \ref{Witt-cohomology}, \ref{Witt-Ext} and \ref{Witt-isom} hold.
The short exact sequences of commutative group schemes
\begin{equation}\label{Wittsequence}
0 \to W_r \to W_{r+1} \to \GG_a \to 0
\end{equation} 
induces an exact sequence 
\[ \Hh^{n+1}(X,W_r) \to \Hh^{n+1}(X,W_{r+1}) \to \Hh^{n+1}(X,\GG_a), \]
which yields ($a_{r+1}$). The short exact sequence \cref{Wittsequence} also induces an exact sequence
\[ \Ext^1(\pi^{\mathrm U}_n(X),W_r) \to \Ext^1(\pi^{\mathrm U}_n(X),W_{r+1}) \to \Ext^1(\pi^{\mathrm U}_n(X),\GG_a), \]
giving $(b_{r+1})$. We also obtain a diagram
\begin{center}
\[ \begin{tikzcd}
0 \arrow[r]  & \Hom(\pi^{\mathrm U}_n(X),W_r) \arrow[r] \arrow[d,"v_r"] & \Hom(\pi^{\mathrm U}_n(X),W_{r+1}) \arrow[d,"v_{r+1}"] \arrow[r] & \Hom(\pi^{\mathrm U}_n(X),\GG_a) \arrow[d,"v_1"] \arrow[r] & 0  \\
0 \arrow[r] & \Hh^n(X,W_r) \arrow[r] & \Hh^n(X,W_{r+1}) \arrow[r] & \Hh^n(X,\cO) \arrow[r] & 0.
\end{tikzcd} \]
\end{center}{}

In order to see that the rows above are indeed short exact, we recall that $\Hh^{n-1}(X,\cO) = 0$ by assumption (\ref{1/2-CY}) and $\Ext^1(\pi^{\mathrm U}_n(X),W_r) = \Hh^{n+1}(X,W_r) = 0$ by \ref{Witt-Ext} and \ref{Witt-cohomology}.
Thus, $v_{r+1}$ is an isomorphism, which yields $(c_{r+1})$ and finishes the proof. The case $n=1$ follows exactly the same way by working with $\pi_1^{\mathrm{U}}(X)^{\mathrm{ab}}$.
\end{proof}

\begin{construction}\label{amhclass}Let $X$ be a variety over an algebraically closed field $k$ equipped with a $k$-rational point $x$ satisfying the conditions as in (\ref{1/2-CY}). Let $E= \Phi_X^n$. By the proof of \cref{cycomp}, we obtain maps $X \to \UU(X) \xrightarrow{} \tau_{\le n} \UU(X) \xrightarrow{\sim}K(\pi_n^{\mathrm{U}}(X),n) \xrightarrow{\sim} K(E^\vee, n)$ of higher stacks. This defines a canonical class (does not depend on the choice of $x$)  $\xi^{\mathrm{AM}}_{\mathrm{H}} \in \pi_0 \mathrm{Maps} (X, K(E^\vee,n)) \simeq H^n (X, E^\vee)$ that we call the \emph{Artin--Mazur--Hurewicz} class. By construction, the class $\xi^{\mathrm{AM}}_{\mathrm{H}}$ induces a canonical isomorphism $\tau_{\le n}\UU(X) \simeq K(E^\vee , n)$ in the homotopy category of higher stacks.
\end{construction}{}

\begin{remark}\label{multidR}Note that the proof of \cref{cycomp} shows more generally that in the situation of \cref{cof6}, the Dieudonn\'e module of the (abelianization) of the formal Lie group $\pi_n^{\mathrm{U}}(X)^\vee$ is given by $H^n (X, W)$.
    \end{remark}{}

\begin{construction}[Formal groups associated to de Rham cohomology]\label{multideRham1}
Let $X$ be a smooth proper scheme over an algebraically closed field $k$ of characteristic $p > 0$ and $n \ge 1$ be an integer such that $H^i_{\mathrm{dR}} (X)=0$ for $0<i<n$ and $H^{n+1}_{\mathrm{dR}}(X)=0$;
for $n=2$, an interesting class of examples is supplied by $K3$ surfaces over $k$.
Let $X^{\mathrm{dR}}$ be the mod $p$ reduction of the stack $W(X^{\mathrm{perf}})/\mathscr{G}$, where we use $X^{\mathrm{perf}}$ to denote the perfection of $X$ and $\mathscr{G}$ is the flat affine groupoid from \cite{drinfeld2018stacky}. By \cite[Thm.~2.4.2.(iii)]{drinfeld2018stacky}, $H^i_{\mathrm{dR}}(X) \simeq H^i (X^{\mathrm{dR}}, \cO)$.
Therefore, by \cref{cof6}, (the abelianization of) $\pi_n^{\mathrm{U}}(X^{\mathrm{dR}})^{\vee}$ is a formal Lie group of dimension $\dim_k H^n_{\mathrm{dR}}(X)$, whose Dieudonn\'e module is given by $H^n (X^{\mathrm{dR}},W)$.
\end{construction}

\begin{remark}\label{multiderhamargumentrevise}
In \cite[\S~III]{MR457458}, Artin and Mazur constructed certain formal groups $\Phi^n_{\mathrm{DR}}(X/k, \mathbf{G}_m)$ from the multiplicative de Rham complex. Our \cref{multideRham1} recovers these in the above situation, i.e., we have $\Phi^n_{\mathrm{DR}}(X/k, \mathbf{G}_m) \simeq (\pi_n^{\mathrm{U}}(X^{\mathrm{dR}})^{\mathrm{ab}})^{\vee}$.
To prove this, it would be enough to show that the Dieudonn\'e module of $\Phi^n_{\mathrm{DR}}(X/k, \mathbf{G}_m)$ is isomorphic to $H^n (X^{\mathrm{dR}}, W)$. Since this does not appear in \cite{MR457458} or elsewhere in the literature, we sketch an argument that relies on some $p$-adic Hodge theory. 
To this end, let $B$ denote a smooth $k$-algebra and let $F_{\mathrm{dR}^\times (B)} (k[t]/t^r)$ denote the complex of abelian groups 
\begin{equation}\label{multicomplexderham}
  (1+ t (k[t]/t^r)\otimes_k B) \xrightarrow{\mathrm{dlog}} t (k[t]/t^r) \otimes_k \Omega^1_{B/k} \xrightarrow{\mathrm{d}}  t (k[t]/t^r) \otimes_k \Omega^2_{B/k} \xrightarrow{\mathrm{d}} \dotsb. 
\end{equation}
Set $\mathfrak{m} \colonequals t k[t]/t^r$. Note that for any non-unital commutative ring $I$ over $k$, $(1+ \mathfrak{m}\otimes_k I)$ has a natural abelian group structure induced from multiplication as polynomials, and we view $(1+ \mathfrak{m}\otimes_k B)$ above with this group structure. 
By Cartier's theory of $p$-typical curves (see e.g.\ \cite[\S~I.3]{MR457458}), for the desired isomorphism of Dieudonn\'e modules, it would be enough to show that 
\[ F_{\mathrm{dR}^\times (B)}(k[t]/t^r) \simeq R\Gamma ((\Spec B)^{\mathrm{dR}}, 1 + tk[t]/t^r \mathscr{O}) \equalscolon R\Gamma ((\Spec B)^{\mathrm{dR}}, 1 + \mathfrak{m} \cO) \]
in the derived category of abelian groups. 
Note that we have a fiber sequence $$1 + \mathfrak{m} \Fil^1_{\mathrm{Hodge}} \Omega^\bullet_B \to R\Gamma ((\Spec B)^{\mathrm{dR}}, 1 +\mathfrak{m} \mathscr{O}) \to 1 +\mathfrak{m} B. $$
By \cref{revisionlemma} below, in the derived category, we have an isomorphism $\exp \colon \mathfrak{m} \Fil^1_{\mathrm{Hodge}} \Omega^\bullet_B \simeq 1 + \mathfrak{m} \Fil^1_{\mathrm{Hodge}} \Omega^\bullet_B$.
This gives a natural map 
\[ 1+ \mathfrak{m}B \to \mathfrak{m} \Fil^1_{\mathrm{Hodge}} \Omega^\bullet_B[1] \simeq \Bigl[ \mathfrak{m} \otimes_k \Omega^1B \xrightarrow{\mathrm{d}} \mathfrak{m} \otimes_k \Omega^2B \xrightarrow{\mathrm{d}} \dotsb \Bigr], \]
necessarily induced by a map of abelian groups $1+\mathfrak{m} B \to \mathfrak{m} \otimes_k \Omega^1B$ that identifies with $\mathrm{dlog}$. 
This identifies the fiber of the map $1 + \mathfrak{m}B \to \mathfrak{m}\Fil^1_{\mathrm{Hodge}} \Omega^\bullet_B[1]$ with the complex \cref{multicomplexderham}, which finishes the proof.
\end{remark}

In the above remark, $\Fil^1_{\mathrm{Hodge}} \Omega^\bullet_B$ can be realized as a cosimplicial non-unital ring since by descent along $B \to B_{\mathrm{perf}}$ (\textit{cf}.~\cite[Rmk.~8.15]{BMS2}) one has 
\begin{equation}\label{cosimprevision}
\Fil^1_{\mathrm{Hodge}}\Omega^\bullet_B \simeq \lim \left (\xymatrix{
 \Fil^1_{\mathrm{PD}} \AA_{\mathrm{crys}}(B_{\mathrm{perf}})/p \ar[r]<1.5pt>\ar[r]<-1.5pt>   & \Fil^1_{\mathrm{PD}} \AA_{\mathrm{crys}}(B_{\mathrm{perf}}\otimes_B B_{\mathrm{perf}})/p   \ar[r]<3pt>\ar[r]\ar[r]<-3pt> & 
 \dotsb} \right);
\end{equation}
see \cite[Prop.~8.12]{BMS2} and \cite[Ex.~3.5.5]{Monda}. 
This equips $1+ \mathfrak{m}\Fil^1_{\mathrm{Hodge}} \Omega^\bullet_B$ with the structure of a cosimplicial abelian group, which can be viewed as an object of the derived category, as used in \cref{multiderhamargumentrevise}. The following lemma was also used above.

\begin{lemma}\label{revisionlemma} In the above set up, there is a natural isomorphism
\[ \exp \colon \mathfrak{m} \Fil^1_{\mathrm{Hodge}} \Omega^\bullet_B \simeq 1 + \mathfrak{m} \Fil^1_{\mathrm{Hodge}} \Omega^\bullet_B. \]   
\end{lemma}{}
\begin{proof}
Let $A$ be a commutative $k$-algebra and $I \subset A$ be an ideal equipped with divided powers. This induces a divided power structure $\gamma_i \colon I[t]/t^r \to A[t]/t^r$ on the ideal $I[t]/t^r\subset A[t]/t^r$.
We therefore obtain a map $\exp \colon tI[t]/t^r \to 1+ tI[t]/t^r$ defined by $\exp(c) \colonequals \sum_{i \ge 0} \gamma_i (c)$, which is well-defined since $t$ is nilpotent in $A[t]/t^r$. 
The map $1+ tI[t]/t^r \to tI[t]/t^r$ given by $1+ c \mapsto \log (1+c) \colonequals \sum_{i \ge 1} (-1)^{i+1} (i-1)!\, \gamma_i (x)$ is an inverse to $\exp$. Setting $\mathfrak{m} \colonequals tk[t]/t^r$ as before, this gives an isomorphism $\exp \colon \mathfrak{m}I \simeq 1+ \mathfrak{m}I$. 

Now let $S$ be a quasi-regular semiperfect ring (\cite[Def.~4.20]{BMS2}). Then $\Fil^1 \AA_{\mathrm{crys}}(S)/p$ is an ideal of $\AA_{\mathrm{crys}}(S)/p$ equipped with divided powers. Since the rings $B_{\mathrm{perf}}\otimes_B \dotsb \otimes_B B_{\mathrm{perf}}$ appearing in \cref{cosimprevision} are all quasi-regular semiperfect, applying the discussion from the previous paragraph yields the desired isomorphism.
\end{proof}{}

\newpage

\section{Calabi--Yau varieties and homotopy of spheres}

\subsection{Unipotent homotopy groups of weakly ordinary Calabi--Yau varieties}\label{spri4}
In this section, we apply the unipotent homotopy theory developed in our paper to the study of Calabi--Yau varieties. 
We will begin by proving that the $p$-adic \'etale homotopy type of a weakly ordinary $n$-dimensional Calabi--Yau variety is equivalent to the $p$-completed $n$-sphere. 
This implies (\cref{ordinaryCYhom}) that the unipotent homotopy groups of a weakly ordinary Calabi--Yau variety are isomorphic to (the affine group schemes associated with) the unstable $p$-adic homotopy groups of the $n$-sphere.

\begin{definition}\label{CY}
Let $k$ be an algebraically closed field of characteristic $p > 0$.
A Calabi--Yau variety over $k$ is a smooth, projective variety $X$ over $k$ such that
\begin{enumerate}
    \item the canonical bundle $\omega_X \simeq \cO_X$;
    \item $\Hh^i(X,\cO) = 0$ for $0 < i < \dim X$.
\end{enumerate}
\end{definition}

\begin{remark}
A Calabi--Yau variety is weakly ordinary if and only if the Artin--Mazur formal Lie group $\Phi_X^n$ is isomorphic to $\widehat{\mathbf G}_m$; this follows from the isomorphism $H^n(X, \cO) \simeq \mathrm{Hom}((\Phi_X^n)^\vee, \mathbf{G}_a)$ and \cref{coffee150} (by using duality).
\end{remark}{}

The following result shows that the $p$-adic \'etale homotopy type of a weakly ordinary Calabi--Yau variety is isomorphic to the $p$-completed $n$-sphere $(S^n)^\wedge_p$; our proof will freely make use of the notions introduced in \cite{etalehomo};
see also the discussion following \cref{defetalehom}.
\begin{proposition}\label{ordinaryCY}
Let $X$ be a weakly ordinary Calabi--Yau variety of dimension $n \ge 2$.
Then $\Et(X)_p^{\wedge}$ is isomorphic to the $p$-completed $n$-sphere $(\SSS^{n})_p^{\wedge }$, considered as an object of $\Pro(\mathcal{S}^{p\mhyphen\mathrm{fc}})$.
\end{proposition}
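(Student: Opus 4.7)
The plan is to reduce the statement to an equivalence of augmented $E_\infty$-algebras via $p$-adic homotopy theory and then identify both sides.

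First I would apply \cref{p-adichomotopytheory}: since the functor $Y \mapsto C^*(Y,\mathbf{F}_p) \otimes_{\mathbf{F}_p} k$ from $\Pro(\mathcal{S}^{p\mhyphen\mathrm{fc}})$ to $(\mathrm{DAlg}^{\mathrm{ccn}}_k)^{\op}$ is fully faithful, producing the desired isomorphism amounts to producing an isomorphism $R\Gamma_{\et}(X,k) \simeq C^*(S^n,k)$ in $\mathrm{DAlg}^{\mathrm{ccn}}_k$. The weakly ordinary hypothesis combined with \cref{ordinary-unipotent-homotopy} gives $R\Gamma_{\et}(X,k) \simeq R\Gamma(X,\cO)$, so the task reduces to producing an equivalence of pointed connected affine stacks $\mathbf{U}(X) \simeq \mathbf{U}(\underline{S^n})$ over $k$.

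Next I would identify $\mathbf{U}(\underline{S^n})$. By \cref{usefulnoriwork}, $\mathbf{U}(\underline{S^1}) \simeq B\mathbf{Z}_p$, where $\mathbf{Z}_p$ denotes the pro-\'etale unipotent group scheme associated via \cref{compare} to the profinite group $\mathbf{Z}_p$. Since $\mathbf{U}$ is a left adjoint by \cref{cope}, it commutes with colimits of pointed higher stacks, and in particular with pointed suspensions. Combined with the identity $\underline{S^n} \simeq \Sigma^{n-1}\underline{S^1}$, this yields $\mathbf{U}(\underline{S^n}) \simeq \Sigma^{n-1}_{\mathrm{aff}} B\mathbf{Z}_p$, where $\Sigma_{\mathrm{aff}}$ denotes the pointed suspension taken in affine stacks (equivalently, the affinization of the suspension in higher stacks). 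Iterating the suspension formula $\tilde{H}^i(\Sigma Y,\cO) \simeq \tilde{H}^{i-1}(Y,\cO)$ from \cref{usefullemma113}, one checks that the cohomology of this stack is $k \oplus k[-n]$.

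For $\mathbf{U}(X)$, the Calabi--Yau hypothesis combined with Serre duality gives $H^*(X,\cO) \simeq k \oplus k[-n]$, so $\mathbf{U}(X)$ is $(n-1)$-connected by \cref{hurewicz2}. Since $X$ is weakly ordinary, $\Phi^n_X \simeq \widehat{\mathbf{G}}_m$, and \cref{cycomp} then yields $\pi^{\mathrm U}_n(X) \simeq \widehat{\mathbf{G}}_m^\vee \simeq \mathbf{Z}_p$. Consequently $\tau_{\le n}\mathbf{U}(X) \simeq K(\mathbf{Z}_p,n) \simeq \tau_{\le n}\Sigma^{n-1}_{\mathrm{aff}} B\mathbf{Z}_p$, and both sides now share the same $n$-truncation, the same $\pi_n$, and the same cohomology.

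The hardest step will be promoting this matching of $n$-truncations to a full isomorphism of affine stacks. My plan is to proceed inductively on the Postnikov tower, using the Artin--Mazur--Hurewicz class from \cref{amhclass} to produce a canonical comparison map $\mathbf{U}(X) \to K(\mathbf{Z}_p,n)$ that pins down the $n$-th stage, and then at each successive stage $m > n$ matching the $k$-invariants by exploiting the vanishing of cohomology in degrees greater than $n$ for both stacks. The rigidity argument controlling the higher $k$-invariants via this simple cohomology profile is the technical core; once $\mathbf{U}(X) \simeq \Sigma^{n-1}_{\mathrm{aff}}B\mathbf{Z}_p$ is established in affine stacks, the original claim in $\Pro(\mathcal{S}^{p\mhyphen\mathrm{fc}})$ follows immediately from the first paragraph.
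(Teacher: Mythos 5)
Your overall plan is sound in outline and would lead to a correct proof if completed, but there is a real gap in the final step, and it is worth noting that the paper's own proof avoids the difficulty entirely by a different route.

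The reduction to an isomorphism of augmented cochain algebras via \cref{p-adichomotopytheory} is correct, though you should be explicit that the functor of \cref{p-adichomotopytheory} sends $\Et(X)^\wedge_p$ to $R\Gamma_{\et}(X,k)$ and $(\SSS^n)^\wedge_p$ to $C^*(\SSS^n,k)$; both follow from the universal property of pro-$p$-finite completion as in the proof of \cref{compareetaleuni}. Your identification $\UU(\underline{\SSS^n}) \simeq \Sigma^{n-1}_{\mathrm{aff}} B\ZZ_p$ is also fine, as is the observation that this equals $\UU(\SSS^n_{\widehat{\GG}_m})$. So the whole statement does reduce to producing an isomorphism $\UU(X) \simeq \UU(\SSS^n_{\widehat{\GG}_m})$.

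The gap is in your "technical core." You propose to match $k$-invariants along the Postnikov tower "by exploiting the vanishing of cohomology in degrees greater than $n$ for both stacks." This is not the relevant vanishing, and the $k$-invariant-matching formulation is also harder than what is needed. The obstruction to lifting a map $\SSS^n_E \to \tau_{\le n+r}\UU(X)$ along $\tau_{\le n+r+1}\UU(X) \to \tau_{\le n+r}\UU(X)$ lives (after looping down the suspension) in $H^{r+3}(BE^\vee, \pi^{\mathrm U}_{n+r+1}(X))$, i.e., cohomology of $BE^\vee$ with coefficients in the \emph{unipotent group scheme} $\pi^{\mathrm U}_{n+r+1}(X)$, which is not $\cO$-cohomology of either stack. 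The vanishing of $H^i(X,\cO)$ and $H^i(\SSS^n_E,\cO)$ for $i>n$ does not give this. What actually closes the induction is \cref{newyr2}: for $G$ dual to a one-dimensional formal Lie group, $H^i(BG,H)=0$ for $i>2$ and $H$ any unipotent commutative affine group scheme. This vanishing, together with \cref{usefullemma1}, is the nontrivial input; without identifying it, the proposal does not establish the needed lift. What you are proposing to prove "inductively on the Postnikov tower" is exactly \cref{hach}, and that proposition's proof contains exactly the machinery you would need to supply.

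The paper's proof of \cref{ordinaryCY} is interestingly different: it never constructs $\UU(X) \simeq \UU(\SSS^n)$ as affine stacks and never touches the Postnikov tower. Instead, it works directly on the pro-$p$-finite side. From $\pi_n(\Et(X)^\wedge_p) \simeq \ZZ_p$ (via \cref{cycomp} and \cref{compareetaleuni}), the paper takes the map $\SSS^n \to \Et(X)^\wedge_p$ in $\Pro(\cS)$ classified by the generator $1 \in \ZZ_p$, completes it to a map $(\SSS^n)^\wedge_p \to \Et(X)^\wedge_p$, and checks it is a cohomology isomorphism; this last check is cheap since both cohomology rings are $k \oplus k[-n]$, so one only needs nonvanishing in degree $n$, which follows from the Artin--Mazur comparison between pro-homotopy and pro-homology. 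The paper's route is shorter and independent of \cref{hach} and \cref{newyr2}; indeed \cref{hach} appears later in the paper and its proof quotes \cref{ordinaryCY} only via the general philosophy, not logically, so the paper's arrangement is not circular, whereas your proposal would either need to reproduce the \cref{hach} argument from scratch or risk a forward reference.
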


\begin{proof}
Since the $X$ is weakly ordinary, by \cref{ordinary-unipotent-homotopy}, we have
\[ \Spec R\Gamma_{\et}(X, k) \simeq \Spec R\Gamma (X, \cO) \simeq \mathbf{U}(X). \]
As $H^i (X, \cO) = 0$ for $0 < i <n$, it follows from \cref{hurewicz2} that $\pi_i^\mathrm{U}(X) \simeq \pi^\et_i(X)_p$ is the trivial group scheme for $0 <i <n$.
Combined with \cref{compareetaleuni}, this implies that $\pi_i (\Et(X)_p^{\wedge})$ is trivial  as a pro-$p$-finite group for $0<i<n$.
Further, we have $\mathrm{Hom}(\pi_n^\mathrm{U}(X), \GG_a) \simeq H^n (X, \cO)$, which is nonzero. By \cref{compareetaleuni}, this implies that the pro-$p$-finite group $\pi_n(\Et(X)_p^{\wedge })$ is nonzero. Moreover, since $X$ is a weakly ordinary Calabi--Yau variety, it follows that the Artin--Mazur formal Lie group of $X$ is isomorphic to $\widehat{\mathbf G}_m$. By \cref{cycomp}, this implies that $\pi^{\mathrm U}_n(X)$ is isomorphic to the profinite group scheme $\mathbf{Z}_p$. 
Another application of \cref{compareetaleuni} yields that $\pi_n(\Et(X)_p^{\wedge })$ is isomorphic to the profinite group $\mathbf{Z}_p$. 
\vspace{2mm}

This implies that there is a map $\SSS^n \to \Et(X)_p^{\wedge}$ in $\Pro(\mathcal{S})$ which induces the map $\mathbf{Z} \xrightarrow[]{1 \mapsto 1} \mathbf{Z}_p$ on $\pi_n$; 
here, $\SSS^n$ is considered as a constant pro-object. 
By the universal property of pro-$p$-finite completion, we get a map $(\SSS^n)_p^{\wedge} \to \Et(X)_p^{\wedge}$ in $\Pro(\mathcal{S}^{p\mhyphen\mathrm{fc}})$. This induces a map $C^* (\Et(X)_p^{\wedge}, k) \to C^* ((\SSS^n)_p^{\wedge}, k)$. We will show that this map is an isomorphism. To do so, we recall that $H^i(\Et(X)_p^{\wedge}, k) \simeq H_{\et}^i (X, k) \simeq H^i (X, \cO)$ for all $i \ge 0$. Therefore, it would be enough to show that the induced map $H^n (\Et(X)_p^{\wedge}, k) \to H^n ((\SSS^n)_p^{\wedge}, k)$ is nonzero (since they are both $1$-dimensional $k$-vector spaces). For the latter, it would be enough to show that the composition $H^n (\Et(X)_p^{\wedge}, k) \to H^n ((\SSS^n)_p^{\wedge}, k) \to H^n (\SSS^n, k)$ is nonzero. However, we recall that by construction, the map $\SSS^n \to \Et(X)_p^{\wedge}$ induces the map $\mathbf{Z} \xrightarrow[]{1 \mapsto 1} \mathbf{Z}_p$ of pro-homotopy groups. By \cite[Cor.~4.5]{etalehomo}, this implies that the map on pro-homology groups is also given by $\mathbf{Z} \xrightarrow[]{1 \mapsto 1} \mathbf{Z}_p$. Since $H_{n-1}(\Et(X)_p^{\wedge})$ and $H_{n-1}(\SSS^n)$ are both zero as pro-objects, it follows from the universal coefficient theorem that $H_{n}(\Et(X)_p^{\wedge}, k)$ and $H_{n}(\SSS^n, k)$ are both pro-isomorphic to $k$. Further, the map $H_{n}(\SSS^n, k) \to H_{n}(\Et(X)_p^{\wedge}, k)$ is identified with the map $k \xrightarrow[]{1 \mapsto 1} k$ of $k$-vector spaces. This implies that the map $H^n(\Et(X)_p^{\wedge}, k) \to H^n (\SSS^n, k)$ is nonzero, which establishes that the map $C^* (\Et(X)_p^{\wedge}, k) \to C^* ((\SSS^n)_p^{\wedge}, k)$ is an isomorphism. Now, applying \cref{p-adichomotopytheory} proves the claim in the proposition. 
\end{proof}

\begin{remark}\label{ordinaryCYhom}
Via \cref{ordinary-unipotent-homotopy}, \cref{ordinaryCY} has the following consequence:
Let $X$ be a weakly ordinary Calabi--Yau variety of dimension $n \ge 2$. 
Then $\pi^{\mathrm U}_i(X)$ is the group scheme associated with the profinite group $\pi_i((S^{n})_p^\wedge)$ for all $i \ge 0$. 
In fact, over an algebraically closed field $k$ of characteristic $p>0$, the unipotent homotopy type of an ordinary Calabi--Yau variety of dimension $n$ is isomorphic to the unipotent homotopy type of the $n$-sphere.
\end{remark}{}

\subsection{Derived equivalent Calabi--Yau varieties.}\label{worldcup1}
Let $X$ and $Y$ be two smooth, projective algebraic varieties over a field $k$ such that the categories $D_{\mathrm{perf}}(X)$ and $D_{\mathrm{perf}}(Y)$ of perfect complexes on $X$ and $Y$ are equivalent as $k$-linear triangulated categories. In such a case, we say that $X$ and $Y$ are \textit{derived equivalent} or Fourier--Mukai
equivalent. A very natural question arising from the foundational work of Bondal--Orlov \cite{MR1818984} is 
the following: if $X$ and $Y$ are derived equivalent, what can we say about the algebraic varieties $X$ and $Y$? 
Bondal and Orlov proved that if two smooth projective varieties $X$ and $Y$ with ample (anti-)canonical bundle are derived equivalent, then $X$ and $Y$ are isomorphic as varieties.

\vspace{2mm}
We say that an invariant $h$ of algebraic varieties is a \textit{derived invariant} if $h(X) \simeq h(Y)$ whenever $X$ and $Y$ are derived invariant. There are many questions and conjectures regarding whether certain invariants of algebraic varieties, such as the Hodge numbers $h^{i,j}$, are derived invariants or not. 
Such questions can be asked not only for numerical but also for categorical invariants.
For example, a conjecture of Orlov \cite{Orlov2} asks whether the rational Chow motive $M(\cdot)_{\mathbf
Q}$ of a smooth projective variety is a derived invariant.

\vspace{2mm}
Since our paper introduces the notion of unipotent homotopy types of algebraic varieties, it is natural to ask the following question.
\begin{question}\label{orlov?}
For what class of smooth projective varieties $X$ and $Y$ over an algebraically closed field $k$ does $D_{\mathrm{perf}}(X) \simeq D_{\mathrm{perf}} (Y)$ as $k$-linear triangulated categories imply that $\mathbf{U}(X) \simeq \mathbf{U}(Y)?$
\end{question} 
If $X$ and $Y$ have ample (anti-)canonical bundles, then the answer is clearly yes because $X$ and $Y$ are isomorphic as algebraic varieties by \cite{MR1818984}.
However, the following example coming from the recent work of Addington--Bragg \cite{examplebyAB} would quickly show that this question is too strong to have a positive answer in general.

\begin{example}[Addington--Bragg]\label{Addington-Bragg}
There exist smooth projective threefolds $X$ and $M$ over the field $\overline{\mathbf{F}}_{3}$ which are derived equivalent but satisfy $\dim H^{i} (X, \mathscr O)=0$ whereas $\dim H^i (M, \cO)=1$ for $1 \le i \le 2$.
This shows that $\mathbf{U}(X) \simeq \mathbf{U}(M)$ cannot be true since $R\Gamma(X, \cO)$ cannot be isomorphic to $R\Gamma(Y, \cO)$.
In fact, in the language of unipotent fundamental group schemes, it follows that in this case, $\pi_1^{\mathrm U}(X)$ is trivial, whereas $\pi_1^{\mathrm U}(M)$ is nontrivial. 
\end{example}

We remind the reader that according to \cref{CY}, the variety $M$ is not a Calabi--Yau variety.
Moreover, as we saw in \cref{worldcup3}, \cref{orlov?} is too strong even under the additional assumption that $\dim H^i(X,\cO) = \dim H^i(Y,\cO)$.
The main goal of this subsection is to prove \cref{thmCY} which addresses \cref{orlov?} affirmatively for Calabi--Yau varieties, and \cref{thmCY1}, which gives a much stronger functorial refinement of \cref{thmCY}.

\begin{theorem}\label{thmCY}
Let $X$ and $Y$ be two Calabi--Yau varieties of dimension $n$ (\textit{cf.} \cref{CY}) over an algebraically closed field $k$ of characteristic $p>0$ such that $D_\mathrm{perf}(X) \simeq D_\mathrm{perf}(Y)$. Then there is an isomorphism $\mathbf{U}(X) \simeq \mathbf{U}(Y)$ of unipotent homotopy types of $X$ and $Y$.
\end{theorem}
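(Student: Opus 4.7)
My plan is to reduce the theorem to two facts: the derived invariance of the Artin--Mazur formal group of a Calabi--Yau variety and the fact that the unipotent homotopy type of a Calabi--Yau variety of dimension $n$ depends (noncanonically) only on this formal group. Concretely, I will show $\mathbf{U}(X) \simeq \mathbf{U}(S^n_{\Phi^n_X})$, where $S^n_E \colonequals \Sigma^{n-1}BE^\vee$ is the formal $n$-sphere associated with the $1$-dimensional formal Lie group $E$ (this is the content of \cref{hach}), and then check that $\Phi^n_X \simeq \Phi^n_Y$ under a derived equivalence.

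For the derived invariance, recall from \cref{cycomp} that $\Phi^n_X$ is canonically isomorphic to the Cartier dual $\pi^{\mathrm U}_n(X)^\vee$, whose Dieudonné module is Serre's Witt vector cohomology $H^n(X,W)$. By the observation of Antieau--Bragg, for Calabi--Yau varieties $H^n(X,W)$ can be extracted from $\mathrm{TR}$ of the derived category (via the constructions of \cite{TR}), and therefore $\Phi^n_X \simeq \Phi^n_Y$ as formal Lie groups whenever $D_{\mathrm{perf}}(X) \simeq D_{\mathrm{perf}}(Y)$. Combined with the isomorphism $S^n_{\Phi^n_X} \simeq S^n_{\Phi^n_Y}$ in the category of higher stacks (since $S^n_{(\cdot)}$ is a functor in the formal group), this passes to unipotent homotopy types: $\mathbf{U}(S^n_{\Phi^n_X}) \simeq \mathbf{U}(S^n_{\Phi^n_Y})$.

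The main work lies in establishing \cref{hach}: that $\mathbf{U}(X) \simeq \mathbf{U}(S^n_{\Phi^n_X})$. The strategy here is to observe that both sides have the same cohomology as graded vector spaces (since by Serre duality, for a Calabi--Yau of dimension $n$, $H^*(X,\cO)$ is $k$ concentrated in degrees $0$ and $n$, and the same holds for $S^n_E$ by an application of \cref{paderborn1} to $BE^\vee$ together with the suspension formulas provided by \cref{usefullemma113}), and then to match their affinizations. Concretely, the Artin--Mazur--Hurewicz class constructed in \cref{amhclass} produces a canonical map $X \to K((\Phi^n_X)^\vee, n) \simeq \tau_{\le n}\mathbf{U}(X)$, which combined with the adjunction $\Sigma \dashv \Omega$ and the identification $\Omega S^n_{\Phi^n_X} \simeq \Sigma^{n-2} B(\Phi^n_X)^\vee$ (for $n \ge 2$) yields a map between the universal affinizations; checking that this map induces an isomorphism of the underlying $E_\infty$-algebras amounts to a cohomological comparison that is already forced by the Hurewicz-type considerations in \cref{hurewicz2} and the formal sphere computations in \cref{cof6}.

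The main obstacle is the step $\mathbf{U}(X) \simeq \mathbf{U}(S^n_{\Phi^n_X})$ itself; although the cohomology groups agree, the identification as $E_\infty$-algebras requires showing that all potential higher cohomological obstructions vanish. In the noncanonical version needed here, this follows because the cohomology ring forces $R\Gamma(X,\cO)$ to sit in the image of the functor from $1$-dimensional formal Lie groups of \cref{classifycommfgl2} (after a degree shift realized by the suspension construction), and the resulting formal group is precisely $\Phi^n_X$. Given \cref{hach}, the theorem now follows by concatenating the isomorphisms $\mathbf{U}(X) \simeq \mathbf{U}(S^n_{\Phi^n_X}) \simeq \mathbf{U}(S^n_{\Phi^n_Y}) \simeq \mathbf{U}(Y)$.
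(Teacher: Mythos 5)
Your top-level argument for the theorem itself matches the paper's proof exactly: cite \cref{hach} to get $\mathbf{U}(X)\simeq\mathbf{U}(S^n_{\Phi^n_X})$ (after choosing $k$-rational points, which exist since $k$ is algebraically closed), invoke the Antieau--Bragg identification of the Dieudonn\'e module of $\Phi^n_X$ with $\pi_{-n}\mathrm{TR}(X)$ to see that $\Phi^n_X\simeq\Phi^n_Y$, and concatenate $\mathbf{U}(X)\simeq\mathbf{U}(S^n_{\Phi^n_X})\simeq\mathbf{U}(S^n_{\Phi^n_Y})\simeq\mathbf{U}(Y)$. That is the entire proof in the paper, and if you stop there the argument is correct.

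However, your sketch of how you would establish \cref{hach} contains steps that would fail. First, the asserted identification $\Omega S^n_{\Phi^n_X}\simeq\Sigma^{n-2}B(\Phi^n_X)^\vee$ is false: $\Omega$ and $\Sigma$ are adjoint, not inverse, so $\Omega\Sigma Z$ is in general far from $Z$ (the unit $Z\to\Omega\Sigma Z$ is what the Freudenthal theorem controls on a finite range of homotopy groups only). Second, the appeal to \cref{classifycommfgl2} does not apply here: that result requires $H^*(B)\simeq\Sym^*H^2(B)$, i.e.\ a polynomial cohomology ring generated in degree $2$, whereas for a Calabi--Yau $n$-fold one has $H^*(X,\cO)\simeq k\oplus k[-n]$, which for $n>2$ is neither generated in degree $2$ nor a symmetric algebra; there is no ``degree shift realized by the suspension construction'' that reduces to that classification. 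The paper's actual proof of \cref{hach} is an obstruction-theoretic argument: starting from the map $S^n_E\to\tau_{\le n}\mathbf{U}(X)\simeq K(E^\vee,n)$ furnished by the Artin--Mazur--Hurewicz class, one lifts inductively along the Postnikov tower $\tau_{\le n+r+1}\mathbf{U}(X)\to\tau_{\le n+r}\mathbf{U}(X)$ using that the obstruction groups $H^{r+3}(BE^\vee,\pi^{\mathrm U}_{n+r+1}(X))$ vanish for $r\ge 0$ by \cref{newyr2} (a nontrivial cohomology vanishing for classifying stacks of group schemes dual to non-commutative formal Lie groups), and then verifies that the resulting map $S^n_E\to\mathbf{U}(X)$ is a cohomology isomorphism. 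If you intend to rely on \cref{hach} as a black box your proof is complete; if you intend to prove it, you need this Postnikov/obstruction argument, not a loop--suspension cancellation.
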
{}

\begin{theorem}\label{thmCY1}
Let $k$ be an algebraically closed field of characteristic $p>2$ and $n>2$ be an integer. Let $\UU \colon \mathrm{CY}_n \to \mathrm{h}\mathrm{Shv}(k)$ be the functor obtained by sending $X$ to the unipotent homotopy type $\UU(X)$. Then there is a canonical functor $\widetilde{\UU} \colon \mathcal{N}\mathrm{CY}_n^{\mathrm{op}}\to \mathrm{h}\mathrm{Shv}(k)$ such that $\widetilde{\UU}\circ\mathcal{N}$ is naturally equivalent to $\UU$. (See \cref{defncy}.)
\end{theorem}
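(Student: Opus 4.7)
The plan is to construct $\widetilde{\mathbf{U}}$ as a composition of three steps. For each Calabi--Yau variety $X$ of dimension $n$, we first exhibit a canonical isomorphism $\mathbf{U}(X) \xrightarrow{\sim} \mathbf{U}(S^n_{\Phi^n_X})$ in $\mathrm{h}\mathrm{Shv}(k)$ depending only on $X$. Second, we build a functor $\Phi \colon \mathcal{N}\mathrm{CY}^{\op}_n \to \mathrm{FLG}_k$ sending $X \mapsto \Phi^n_X$ on objects and sending each Fourier--Mukai kernel $K \in D_{\mathrm{perf}}(Y \times_k X)$ to a morphism of Artin--Mazur formal Lie groups $\Phi^n_X \to \Phi^n_Y$. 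Postcomposing $\Phi$ with the formal-sphere construction $E \mapsto S^n_E = \Sigma^{n-1}BE^\vee$ and then with $\mathbf{U}$ will give $\widetilde{\mathbf{U}}$, while the canonical isomorphisms of step one identify $\widetilde{\mathbf{U}} \circ \mathcal{N}$ with $\mathbf{U}$.

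For the first step, the Artin--Mazur--Hurewicz class $\xi^{\mathrm{AM}}_{\mathrm{H}} \in H^n(X,(\Phi^n_X)^\vee)$ from \cref{amhclass} canonically presents $\tau_{\le n}\mathbf{U}(X) \simeq K((\Phi^n_X)^\vee,n)$, which agrees with $\tau_{\le n}\mathbf{U}(S^n_{\Phi^n_X})$ by construction of the formal sphere. I would then promote this canonical $n$-truncated equivalence to a canonical equivalence of the full unipotent homotopy types. By \cref{hach}, such an equivalence exists noncanonically; to pin it down, one runs Postnikov obstruction theory for the affine stacks $\mathbf{U}(X)$ and $\mathbf{U}(S^n_{\Phi^n_X})$. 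For $p > 2$ and $n > 2$, \cref{retire1} (applied to both $X$ and the formal sphere $S^n_{\Phi^n_X}$, whose unipotent homotopy agrees with that of $X$ on $\pi_{n+1}$) gives $\pi^{\mathrm U}_{n+1} = 0$, which together with \cref{freudenthalsusp} kills the first obstruction and forces the lift of $\xi^{\mathrm{AM}}_{\mathrm{H}}$ to a map $\mathbf{U}(X) \to \mathbf{U}(S^n_{\Phi^n_X})$. Iterating, the higher obstructions sit in Ext groups computed on $K((\Phi^n_X)^\vee,n)$, which by the formal-sphere computation in \cref{haircut12} contribute trivially, yielding the unique canonical lift. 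Functoriality of $\xi^{\mathrm{AM}}_{\mathrm{H}}$ in $X$ makes this canonical isomorphism natural in $X \in \mathrm{CY}_n$.

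For the second step, I would mimic the strategy of \cite{AB1}, using the functor $\mathrm{TR}$ from \cite{TR}. Given $K \in D_{\mathrm{perf}}(Y \times_k X)$, the associated Fourier--Mukai functor $D_{\mathrm{perf}}(Y) \to D_{\mathrm{perf}}(X)$ induces a morphism of $\mathrm{TR}$-spectra $\mathrm{TR}(Y) \to \mathrm{TR}(X)$. The top Hochschild--Kostant--Rosenberg-type summand of $\mathrm{TR}$ of an $n$-dimensional Calabi--Yau variety computes the Dieudonn\'e module $\mathbf{M}(\Phi^n_X) \simeq H^n(X,W)$ (compare \cref{cycomp}), and the CY-trivialization of the canonical bundle is used to extract this summand functorially. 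The resulting morphism $\mathbf{M}(\Phi^n_X) \to \mathbf{M}(\Phi^n_Y)$ of Dieudonn\'e modules lifts uniquely to a morphism of formal Lie groups, giving $\Phi$ on morphisms. Multiplicativity of $\mathrm{TR}$ with respect to convolution of Fourier--Mukai kernels (as established in \cite{TR}) ensures $\Phi$ respects composition and identities, so that $\Phi$ is indeed a functor.

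Finally, combining the two steps yields $\widetilde{\mathbf{U}} \colonequals \mathbf{U} \circ S^n_{(\cdot)} \circ \Phi$. To identify $\widetilde{\mathbf{U}} \circ \mathcal{N}$ with $\mathbf{U}$, we verify that for a morphism $f \colon X \to Y$ of Calabi--Yau varieties, the kernel $\cO_{\Gamma_f}$ induces via $\mathrm{TR}$ the morphism $\Phi^n_X \to \Phi^n_Y$ coming from pullback along $f$ on $H^n(\,\cdot\,,W)$; this is a standard trace-compatibility of $\mathrm{TR}$ on graphs, after which the canonical identifications of step one assemble into the required natural equivalence. The hardest part of the argument will be step two, specifically showing that the $\mathrm{TR}$-based construction of $\Phi$ is functorial under convolution of kernels and selects the Artin--Mazur summand compatibly; this relies delicately on the CY-trivializations on both $X$ and $Y$ being transported correctly by $K$. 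Step one's canonicity, while comparatively softer, is the reason one must impose $p > 2$ and $n > 2$ — without the vanishing of $\pi^{\mathrm U}_{n+1}$, the lift of the Artin--Mazur--Hurewicz class would suffer from a $\ZZ/2\ZZ$ or $W[F]$ ambiguity by \cref{retire1}, and no canonical functor could be constructed.
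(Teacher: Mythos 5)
Your proposal follows essentially the same approach as the paper: use the functoriality of $\pi_{-n}\mathrm{TR}(\,\cdot\,)$ on $\mathcal{N}\mathrm{CY}_n$ to produce a functor valued in Dieudonn\'e modules (hence formal Lie groups), and combine it with the \emph{unique} isomorphism $\mathbf{U}(S^n_{\Phi^n_X}) \simeq \mathbf{U}(X)$ determined by $\xi^{\mathrm{AM}}_{\mathrm{H}}$, which exists because the torsor in \cref{hach}(2) under $H^2(B\pi^{\mathrm U}_n(X),\pi^{\mathrm U}_{n+1}(X))$ is trivial once \cref{retire1} gives $\pi^{\mathrm U}_{n+1}(X)=0$ for $p>2$, $n\ge 3$. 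A couple of your intermediate references and variances are slightly off but harmless — the vanishing of higher Postnikov obstructions beyond $H^2$ comes from \cref{newyr2} (not \cref{freudenthalsusp} or \cref{haircut12}, which feed into \cref{retire1} itself), and since $\mathrm{TR}$ is covariant in the category and $\mathbf{M}(E)\simeq\mathrm{Hom}(E^\vee,W)$ is covariant on formal Lie groups while $S^n_{(\cdot)}$ is contravariant, the induced map should read $\mathbf{M}(\Phi^n_Y)\to\mathbf{M}(\Phi^n_X)$, i.e.\ $\Phi^n_Y\to\Phi^n_X$, which then correctly composes with the contravariant $S^n_{(\cdot)}$ to a covariant $\widetilde{\mathbf{U}}$.
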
{}

\begin{remark}
   Note that since the canonical bundle of a Calabi--Yau variety is trivial, the techniques from \cite{MR1818984} do not yield useful conclusions. Instead, the techniques we use to prove the above two results are more of a homotopical nature and work more generally for smooth proper $n$-dimensional varieties that satisfy the conditions (\ref{1/2-CY}) from \cref{cof3}. 
\end{remark}{}

Let us give an outline of the proof of \cref{thmCY} (\cref{thmCY1} will need additional ingredients that will be discussed later). In their recent work \cite{AB1}, Antieau and Bragg prove that if $X$ and $Y$ are derived equivalent Calabi--Yau varieties of dimension $n$, then the Artin--Mazur formal Lie groups $\Phi^n_X$ and $\Phi^n_Y$ are naturally isomorphic. The key ingredient in \cite{AB1} is the use of topological Hochschild homology, which attaches a $p$-typical cyclotomic spectrum $\mathrm{THH}(X)$ to the variety $X$. By definition, $\mathrm{THH}(X)$ is manifestly a derived invariant since its definition only depends on $D_\mathrm{perf}(X)$. 
To any $p$-typical cyclotomic spectrum $\mathcal C$, there is a construction of Hesselholt \cite{TR} that attaches a new spectrum $\mathrm{TR}(\mathcal C)$ with an $\SSS^1$-action and natural endomorphisms $F$ and $V$. For the Calabi--Yau variety $X$, one sets $\mathrm{TR}(X) \colonequals  \mathrm{TR}(\mathrm{THH}(X))$. In their paper, using the descent spectral sequence, Antieau and Bragg observe that the Dieudonn\'e module of $\Phi^n_X$ is naturally isomorphic to $\pi_{-n}\mathrm{TR}(X)$, which shows that $\Phi^n_X$ is a derived invariant.

\vspace{2mm}
In our paper, we gave a reconstruction of $\Phi^n_X$ based on the unipotent homotopy theory developed here. In particular, \cref{cycomp} shows that $\Phi^n_X$ is dual to the unipotent group scheme $\pi_n^\w{U} (X)$. When $X$ is a ordinary Calabi--Yau, one knows that $\pi_n^\w{U} (X)$ is the profinite group scheme $\mathbf{Z}_p$, which is dual to the formal Lie group $\widehat{\GG}_m$. By \cref{ordinaryCY}, we know that in this case $\mathbf{U}(X)$ is isomorphic to the unipotent homotopy type of the ($p$-completed) $n$-sphere $\SSS^n$. Motivated by this, we take the following approach in order to prove the derived invariance of the unipotent homotopy type of Calabi--Yau varieties.

\begin{enumerate}
    \item Construct a notion of ``formal $n$-sphere" $\SSS^n_{E}$ for every $1$-dimensional formal Lie group $E$ defined over $k$. $\SSS^n_{E}$ will be defined to be a pointed higher stack.
    \item Prove that $\mathbf{U}(X) \simeq \mathbf{U}(\SSS^n_{\Phi_n(X)})$. 
\end{enumerate}{}

We now proceed towards realizing the two steps above. First, we focus on some preliminaries. As before, let $\Shv(k)$ denote the category of (higher) stacks over $k$. Let $\Shv(k)_*$ denote the category of pointed (higher) stacks over $k$. Recall that there is a functor
\[ \Omega \colon \Shv(k)_* \to \Shv(k)_*, \quad X \mapsto \Omega X \colonequals * \times_X *. \]
This functor has a left adjoint $\Sigma \colon \Shv(k)_* \to \Shv(k)_*$, called the suspension functor.

\begin{lemma}\label{usefullemma113} Let $G$ be a commutative group scheme over $k$. Let $X \in \Shv(k)_*$. Suppose that $H^n (\Spec k, G)=0$ for all $n>0$. Then $H^n (X, G) = \pi_0 \mathrm{Map}_{\Shv(k)_* } (X, K(G, n))$ for $n>0$.
\end{lemma}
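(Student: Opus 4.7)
The plan is to derive the statement from a split fibre sequence of mapping spaces. The basepoint $\sigma \colon \Spec k \to X$ induces a fibre sequence of spaces
\[ \mathrm{Map}_{\Shv(k)_*}(X, K(G,n)) \longrightarrow \mathrm{Map}_{\Shv(k)}(X, K(G,n)) \xrightarrow{\sigma^*} K(G,n)(\Spec k), \]
and the evaluation map $\sigma^*$ admits a natural section given by pullback along the structure map $\tau \colon X \to \Spec k$, since $\tau \circ \sigma = \mathrm{id}_{\Spec k}$ by terminality of $\Spec k$ in $\Shv(k)$.

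Next, I would compute the homotopy groups of the base $K(G,n)(\Spec k)$. As an Eilenberg--MacLane object, its homotopy groups at the basepoint are $\pi_i \simeq H^{n-i}(\Spec k, G)$ for $0 \le i \le n$, and by hypothesis these vanish for $i < n$. Plugging this into the long exact sequence
\[ \pi_1 K(G,n)(\Spec k) \to \pi_0 \mathrm{Map}_{\Shv(k)_*}(X, K(G,n)) \to \pi_0 \mathrm{Map}_{\Shv(k)}(X, K(G,n)) \to \pi_0 K(G,n)(\Spec k), \]
the rightmost term vanishes because $n > 0$. The section $\tau^*$ furthermore forces $\sigma^*$ to be split surjective on all homotopy groups, so that the connecting map $\pi_1 K(G,n)(\Spec k) \to \pi_0 \mathrm{Map}_{\Shv(k)_*}(X, K(G,n))$ is zero by exactness. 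Combining these two observations yields an isomorphism $\pi_0 \mathrm{Map}_{\Shv(k)_*}(X, K(G,n)) \xrightarrow{\sim} \pi_0 \mathrm{Map}_{\Shv(k)}(X, K(G,n))$.

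Finally, I would invoke the standard representability of sheaf cohomology by Eilenberg--MacLane stacks in $\Shv(k)$ to identify the right-hand side with $H^n(X, G)$, completing the proof. There is no serious obstacle to this argument, but the use of the section $\tau^*$ is essential: without it, one would be unable to exclude the possibility that $\pi_1 K(G,n)(\Spec k)$ contributes a nontrivial kernel to the comparison between pointed and unpointed homotopy classes, an issue that is most visible in the case $n = 1$ where $\pi_1 K(G,1)(\Spec k) = G(\Spec k)$ is typically nonzero even under the standing hypothesis.
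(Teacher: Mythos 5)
Your proof is correct and is, modulo a one-step rotation, the same fibre sequence argument that the paper uses. The paper realizes the fibre sequence indirectly: it first constructs the cofibre sequence $X \coprod \{*\} \to X \to \SSS^1$ in $\Shv(k)_*$ and applies $\mathrm{Map}_{\Shv(k)_*}(\,\cdot\,, K(G,n+1))$, which yields precisely your fibre sequence shifted by one degree; it then splits into the cases $n > 1$ (where $\pi_1(K(G,n)(\Spec k)) = H^{n-1}(\Spec k, G)$ vanishes outright by hypothesis) and $n = 1$ (where it appeals to surjectivity of $\pi_0\mathrm{Map}_{\Shv(k)}(X,G) \to G(k)$, which is your section argument in disguise). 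Your version applies the fibre sequence directly at level $n$ and uses the section $\tau^*$ to kill the connecting map uniformly for all $n>0$, thereby avoiding both the detour through the cofibre sequence and the casework. The one point worth making explicit in a write-up: to pass from vanishing of the connecting map to injectivity of $\pi_0\mathrm{Map}_{\Shv(k)_*}(X, K(G,n)) \to \pi_0\mathrm{Map}_{\Shv(k)}(X, K(G,n))$, observe that all three terms in the fibre sequence are grouplike $E_\infty$-spaces (since $K(G,n)$ is an abelian group object of $\Shv(k)$), so the long exact sequence is an exact sequence of abelian groups through the $\pi_0$ terms, not merely of pointed sets.
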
{}

\begin{proof} We know that $H^n (X, G) = \pi_0 \mathrm{Map}_{\Shv(k)} (X, K(G, n))$ for $n \ge 0$. Note that there is a natural forgetful functor $\Shv(k)_* \to \Shv(k)$ which forgets the base point. This functor has a left adjoint $\Shv(k) \to \Shv(k)_*$ which adds a disjoint base point, i.e., it sends $Y$ to $Y \coprod \left \{ * \right \}$.

\vspace{2mm}
We note that there is a natural map $\mathrm{Map}_{\Shv(k)_*} (X, K(G, n)) \to \mathrm{Map}_{\Shv(k)} (X, K(G, n))$. By adjunction, we have $\mathrm{Map}_{\Shv(k)_*} (X \coprod \left \{ * \right \}, K(G, n)) \simeq \mathrm{Map}_{\Shv(k)} (X, K(G, n))$. This defines a map 
\begin{equation}\label{forgetbase}
    \mathrm{Map}_{\Shv(k)_*} (X, K(G, n)) \to \mathrm{Map}_{\Shv(k)_*} (X \coprod \left \{ * \right \}, K(G, n)).
\end{equation}Since $X$ is pointed, there is a natural map $\left \{ * \right \} \to X$ which induces a map
\begin{equation}\label{forgetbase1}
X \coprod \left \{ *  \right \} \to X.    
\end{equation}in $\Shv(k)_*$. We note that the map in \cref{forgetbase} is induced by the map \cref{forgetbase1}.
The pushout diagram $$\left \{ * \right \} \coprod_{X \coprod \left \{ * \right \}} X \simeq \SSS^1$$ in $\Shv(k)_*$ therefore induces a fibre sequence
\[ \mathrm{Map}_{\Shv(k)_*} (\SSS^1, K(G, n+1)) \to \mathrm{Map}_{\Shv(k)_*} (X, K(G, n+1)) \to \mathrm{Map}_{\Shv(k)} (X, K(G, n+1)). \]
We note that $\mathrm{Map}_{\Shv(k)_*} (\SSS^1, K(G, n+1)) \simeq \mathrm{Map}_{\Shv(k)_*} (\Sigma (\left \{ * \right \} \coprod  \left \{ * \right \})  , K(G, n+1))$. By adjunction, the latter term is isomorphic to $\mathrm{Map}_{\Shv(k)}(\Spec k, K(G, n) )$.
Thus, for $n>1$, we obtain that the map $\pi_1\mathrm{Map}_{\Shv(k)_*} (X, K(G, n+1)) \to \pi_1\mathrm{Map}_{\Shv(k)} (X, K(G, n+1))$ is an isomorphism since $H^{r}(\Spec k, G)=0$ for $r>0$. This implies that the map $\pi_0\mathrm{Map}_{\Shv(k)_*} (X, K(G, n)) \to \pi_0\mathrm{Map}_{\Shv(k)} (X, K(G, n))$ is an isomorphism for $n>1$. 
For $n=1$, since the map
$$\pi_0 \mathrm{Map}_{\Shv(k)}(X, G) \to \pi_0 \mathrm{Map}_{\Shv(k)}(\Spec k, G ) \simeq G(k)$$ is surjective, the long exact sequence of homotopy groups associated to the above fibre sequence shows that the map
$\pi_0\mathrm{Map}_{\Shv(k)_*} (X, K(G, n)) \to \pi_0\mathrm{Map}_{\Shv(k)} (X, K(G, n))$ is an isomorphism. 
This finishes the proof.
\end{proof}{}

\begin{lemma}\label{cohomologyvan}
Let $k$ be an algebraically closed field and $G$ be a commutative unipotent affine group scheme over $k$. Then $H^n(\Spec k, G)=0$ for $n>0$.
\end{lemma}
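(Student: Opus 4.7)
The argument proceeds by devissage. In characteristic zero, every commutative unipotent affine group scheme is isomorphic to a product of copies of $\GG_a$ by \cite[IV, \S~2, Prop.~4.2.b)]{MR0302656}, and since fpqc cohomology commutes with products we get
$$ H^n(\Spec k, G) \simeq H^n(\Spec k, \GG_a)^I = 0 $$
for $n \ge 1$ by Serre's affine vanishing; hence I assume $\charac k = p > 0$ from now on.

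First I would handle the case where $G$ is of finite type. Every such $G$ admits a finite filtration whose graded pieces lie among the simple objects $\GG_a$, $\alpha_p$, and $\mathbf{Z}/p\mathbf{Z}$: the connected-\'etale sequence splits off a finite \'etale part, which is constant since $k$ is algebraically closed and hence an iterated extension of $\mathbf{Z}/p\mathbf{Z}$'s, while the connected part can be filtered by dimension using \cref{li} to produce surjections onto $\GG_a$ in the positive-dimensional case and by the classification of finite connected commutative unipotent group schemes over $k$ to produce composition factors $\alpha_p$ in the remaining case. Long exact sequences in cohomology then reduce the problem to the three base cases. The vanishing $H^n(\Spec k, \GG_a) = 0$ for $n \ge 1$ is Serre's affine vanishing; for $\mathbf{Z}/p\mathbf{Z}$ the fpqc cohomology agrees with \'etale cohomology of the constant sheaf, and the latter is trivial since $k$ is algebraically closed; and for $\alpha_p$ the Artin--Schreier short exact sequence $0 \to \alpha_p \to \GG_a \xrightarrow{F} \GG_a \to 0$ combined with the vanishing for $\GG_a$ and the surjectivity of the $p$-th power map on $k$ yields the claim.

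For a general $G$, I would write $G \simeq \varprojlim_{i \in I} G_i$ as a cofiltered inverse limit of its finite-type commutative unipotent quotients, with surjective transition maps. Since the fpqc topos is replete \cite[Ex.~3.1.7]{MR3379634}, \cite[Prop.~3.1.10]{MR3379634} gives $R\varprojlim_i G_i \simeq G$, so $R\Gamma(\Spec k, G) \simeq R\varprojlim_i R\Gamma(\Spec k, G_i)$, and the associated Milnor exact sequences
$$ 0 \to R^1\varprojlim_i H^{n-1}(\Spec k, G_i) \to H^n(\Spec k, G) \to \varprojlim_i H^n(\Spec k, G_i) \to 0 $$
combined with the finite-type vanishing immediately yield $H^n(\Spec k, G) = 0$ for $n \ge 2$. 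For $n = 1$ the sequence reduces to $H^1(\Spec k, G) \simeq R^1\varprojlim_i G_i(k)$, and to show that this vanishes I would verify the Mittag--Leffler condition: each kernel $K_{ij}$ of a surjective transition $G_i \twoheadrightarrow G_j$ is again a finite-type commutative unipotent group scheme, so $H^1(\Spec k, K_{ij}) = 0$ by the finite-type case, and the associated long exact sequence on rational points then shows $G_i(k) \twoheadrightarrow G_j(k)$. The main technical point is the structural devissage in the finite-type case, which rests on the classification of simple commutative unipotent group schemes over an algebraically closed field.
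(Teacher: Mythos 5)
The finite-type part of your argument (filtration with graded pieces $\GG_a$, $\alpha_p$, $\mathbf{Z}/p\mathbf{Z}$, then direct verification for each) is essentially the same as the paper's and is correct, modulo the small naming issue that $0 \to \alpha_p \to \GG_a \xrightarrow{F} \GG_a \to 0$ is the Frobenius-kernel sequence rather than the Artin--Schreier sequence. The gap lies in the reduction of the general case to the finite-type case. You write $G \simeq \varprojlim_{i \in I} G_i$ over the cofiltered system of all finite-type quotients and then invoke \cite[Prop.~3.1.10]{MR3379634} and a Milnor sequence. But that proposition, and the two-term Milnor sequence, are stated for $\NN$-indexed towers; the system of finite-type quotients of $G$ is a general cofiltered poset that need not have countable cofinality (for instance $G = \GG_a^I$ with $I$ uncountable has no countable cofinal subsystem of finite-type quotients). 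For such systems, $R\varprojlim$ can have nonvanishing $R^j\varprojlim$ for $j \ge 2$ even when all transition maps are surjective, and these higher terms are not addressed by the Mittag--Leffler condition you verify. Your observation that $G_i(k) \twoheadrightarrow G_j(k)$ follows from $H^1(\Spec k, K_{ij}) = 0$ is a nice way to avoid the Tannakian argument of \cref{pas2}, but it only controls $\varprojlim^1$.

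The paper sidesteps the cofiltered-limit issue entirely: rather than writing $G$ as an inverse limit, it embeds $G$ into the product $\prod_{i \in I} G_i$ of its finite-type quotients and studies the short exact sequence $0 \to G \to \prod_i G_i \to \Coker(f) \to 0$. Exactness of products (AB4*) in the replete fpqc topos gives $H^j(\Spec k, \prod_i G_i) \simeq \prod_i H^j(\Spec k, G_i) = 0$ for $j > 0$ by the finite-type case, while $\Coker(f)$ is again commutative unipotent, so one can induct on the cohomological degree. The base case $n = 1$ needs the surjectivity of $\prod_i G_i(k) \to \Coker(f)(k)$, which is the content of \cref{pas2} (a Tannakian argument valid for all commutative affine group schemes over an algebraically closed field), and the inductive step follows from the long exact sequence. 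To repair your proof along the lines you sketched, you would need to replace the cofiltered system of finite-type quotients by an $\NN$-indexed one, e.g., the Verschiebung filtration $G \simeq \varprojlim_n G/V^n(G^{(p^n)})$; but then the terms $G/V^n(G^{(p^n)})$ are no longer finite type, so the finite-type case does not apply directly and you would still need a separate devissage for (possibly infinite-type) groups killed by $V$.
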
{}

\begin{proof}
First, we show the claim when $G$ is of finite type over $k$.
If $G$ is of finite type and unipotent, then there is a filtration on $G$ where the graded pieces are isomorphic to $\mathbf{Z}/p \mathbf{Z}$, $\alpha_p$ or $\GG_a$, for which the claim can be checked directly. Now if $G$ is not assumed to be finite type, one still has an injection $f \colon G \to \prod_{i \in I} G_i$, where $(G_i)_{i \in I}$ are all the finite type quotients of $G$. Consider the resulting exact sequence 
\begin{equation}\label{pas1}
    0 \to G \xrightarrow{f} \prod_{i \in I} G_i \to \Coker(f) \to 0.
\end{equation}
By \cref{pas2} and the fact that $H^1 (\Spec k, \prod_{i \in I} G_i)=0$ for $i>0$, we obtain $H^1 (\Spec k, G)=0$. 
Let $n \ge 1$ be an integer and assume that $H^i(\Spec k, G)=0$ has been proven for any unipotent group scheme $G$ for $i \le n$. Since $\Coker(f)$ is unipotent, we must have $H^n (\Spec k, \Coker(f))=0$. Since $H^i (\Spec k, \prod_{i \in I} G_i)=0$ for all $i>0$, we obtain $H^{n+1}(\Spec k, G)=0$. Therefore, we are done by induction.
\end{proof}{}
The following lemma was used in the proof above.
\begin{lemma}\label{pas2} Let $k$ be an algebraically closed field. Let $\varphi \colon G \to H$ be a surjection of commutative group schemes over $k$.
Then the map $G(k) \to H(k)$ on $k$-valued points is surjective.
\end{lemma}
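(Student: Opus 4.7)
The plan is to reduce the surjectivity of $G(k)\to H(k)$ to the statement that any fpqc torsor over $\Spec k$ under an affine commutative group scheme $K$ has a $k$-point, and then to establish the latter by a pro-algebraic limit argument combined with the Nullstellensatz. Given $h\in H(k)$, I would consider the fiber $F\colonequals G\times_{H,h}\Spec k$, which is a nonempty closed subscheme of $G$ (closed since $h$ is a closed point of $H$, nonempty since $\varphi$ is fpqc surjective as a map of sheaves) and is naturally a torsor under $K\colonequals\ker\varphi$; a lift of $h$ to $G(k)$ is exactly a $k$-point of $F$.

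First I would handle the case where $G$ and $H$ are of finite type: then $F$ is a nonempty finite type affine $k$-scheme, so by the Nullstellensatz any closed point of $F$ has residue field equal to $k$ (using that $k$ is algebraically closed), producing the desired $k$-point. For the general case, I would write $K\simeq\varprojlim_\alpha K_\alpha$ as a cofiltered limit of its finite type commutative affine quotients, which is a standard structural result for affine group schemes (\textit{cf.}~\cite[\S~3.3]{waterhouse}). Setting $K'_\alpha\colonequals\ker(K\to K_\alpha)$, the quotient $F_\alpha\colonequals F/K'_\alpha$ is then a finite type $K_\alpha$-torsor, so $F_\alpha(k)\neq\emptyset$ by the finite type case; applying the finite type case again to the torsor fibrations $F_\alpha\to F_\beta$ (under the surjection of finite type group schemes $K_\alpha\to K_\beta$ for $\alpha\ge\beta$) shows that the transition maps $F_\alpha(k)\to F_\beta(k)$ are surjective.

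Finally, since $F\simeq\varprojlim_\alpha F_\alpha$ as affine schemes, one has $F(k)\simeq\varprojlim_\alpha F_\alpha(k)$, which is nonempty as a cofiltered limit of nonempty sets with surjective transition maps (Mittag--Leffler). The main point requiring care will be the pro-algebraic bookkeeping---identifying $F$ with $\varprojlim_\alpha F_\alpha$ and verifying that the induced torsor structures on the $F_\alpha$ are compatible across the pro-system---but once this setup is in place, both the Nullstellensatz in the finite type case and the Mittag--Leffler argument for the limit are routine.
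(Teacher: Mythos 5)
Your overall strategy is sound and genuinely differs from the paper's: you reduce directly to showing that a torsor under the kernel $K$ has a $k$-point, then try to descend to the finite-type case via the cofiltered system of finite-type quotients of $K$, whereas the paper sets up a Zorn's-lemma argument on intermediate quotients $G\twoheadrightarrow \tilde H\to H$ equipped with lifted $k$-points and uses Tannakian duality to show the maximal such quotient must be $G$ itself. The preliminary steps in your argument are fine: $F$ is a $K$-torsor, $K\simeq\varprojlim_\alpha K_\alpha$ with $K_\alpha$ of finite type, $F_\alpha\colonequals F/K'_\alpha$ is representable by a finite-type affine $k$-scheme (by fpqc descent for affine schemes along the faithfully flat cover $F\to\Spec k$), and $F(k)\simeq\varprojlim_\alpha F_\alpha(k)$ since $\cO(F)=\varinjlim_\alpha\cO(F_\alpha)$.

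However, the final step is a genuine gap. The assertion that a cofiltered limit of nonempty sets with surjective transition maps is nonempty is \emph{false} when the index category is uncountable, and the poset of finite-type quotients of $K$ can be uncountable (e.g.\ for $K=\prod_{i\in I}\GG_a$ with $I$ uncountable). A standard counterexample: index by the countable ordinals $\alpha<\omega_1$, with $X_\alpha$ the set of strictly increasing maps $\alpha\to\QQ$; every $X_\alpha$ is nonempty, the restriction maps are surjective, yet $\varprojlim_\alpha X_\alpha=\emptyset$. ``Mittag--Leffler'' conditions only give vanishing of $\varprojlim^1$ for systems indexed by $\NN$ (or admitting a countable cofinal subsystem). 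In terms of your torsor, after choosing arbitrary base points $p_\alpha\in F_\alpha(k)$, the obstruction to a compatible family lives in something like $\varprojlim^1_\alpha K_\alpha(k)$, which need not vanish over an uncountable index. The paper sidesteps exactly this by running Zorn's lemma on \emph{chains} (totally ordered families), where passage to the limit of compatible $k$-points is unproblematic, and then using Tannakian duality to show that a non-maximal element can always be strictly enlarged by a \emph{finite-type} extension (this is where the Nullstellensatz enters). Your argument can be repaired in the same spirit: replace the Mittag--Leffler step by a Zorn argument on pairs $(K',y)$ with $K'\subseteq K$ a closed subgroup and $y\in (F/K')(k)$, ordered by inclusion of $K'$ and compatibility of $y$; a minimal element must have $K'=0$, since otherwise one can shrink $K'$ by a finite-type step and lift $y$ using the Nullstellensatz on the finite-type fiber. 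Without such a transfinite-induction device, the current final step does not go through.
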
{}

\begin{proof}
We thank Peter Scholze for suggesting the idea of the proof. Let us fix $x \in H(k)$. Let $\mathcal{C}$ denote the category whose objects are diagrams of group schemes \begin{center}
    \begin{tikzcd}
                                                                           & \tilde{H} \arrow[d, "u"] \\
G \arrow[r, "\varphi", two heads] \arrow[ru, "\tilde{\varphi}", two heads] & H  ,                     
\end{tikzcd}
\end{center}where $\tilde{H}$ is further equipped with a $k$-rational point $\tilde{x}$ such that $u(\tilde{x}) = x$. We will simply denote this data by $(G \twoheadrightarrow \tilde{H})$ if no confusion is likely to occur. The morphisms between $(G \twoheadrightarrow \tilde{H_1}) \to (G \twoheadrightarrow \tilde{H_2})$ are group homomorphisms $\tilde{H_1} \to \tilde{H_2}$ which are compatible with all the extra data (including the $k$-rational points). We note that the category $\mathcal{C}$ is essentially a partially ordered set. By considering inverse limits in the category of affine group schemes, it follows that every chain has an upper bound. Therefore, by Zorn's lemma, $\mathcal{C}$ must have maximal elements. Let $(G \twoheadrightarrow H')$ denote such a maximal element. We note that the map $\varphi' \colon G \twoheadrightarrow H'$ is faithfully flat. The induced functor $\mathrm{Rep}_k(H') \to \mathrm{Rep}_k (G)$ of the category of finite-dimensional representations is fully faithful. If this functor was also an equivalence, then the map $\varphi'$ would be an isomorphism, which would finish the proof of the lemma. Therefore, we assume on the contrary that the functor $\mathrm{Rep}_k(H') \to \mathrm{Rep}_k (G)$ is not an equivalence. Let $X \in \mathrm{ob}(\mathrm{Rep}_k (G))$ that is not in the essential image of the latter functor. We let $\mathcal{N}$ denote the neutral Tannakian category (see \cite[Def.~2.19]{tannaka}) generated by the essential image of $\mathrm{Rep}_k(H') \to \mathrm{Rep}_k (G)$ and $X$ under tensor, dual, direct sum and subquotients. It follows that there is an affine group scheme $\underline{H}'$ such that $\mathrm{Rep}(\underline{H}') \simeq \mathcal{N}$ and the fully faithful functors $\mathrm{Rep}_k(H') \to \mathcal{N} \to \mathrm{Rep}_k (G)$ are induced by surjections $G \twoheadrightarrow \underline{H}' \twoheadrightarrow H'$ (see \cite[Cor.~2.9,~Thm.~2.11,~and~Prop.~2.21]{tannaka}). Let $K$ denote the kernel of the map $\underline{H}' \to H'$. Then it follows from construction that $\mathrm{Rep}_k (K)$ is generated by one object under tensor, dual, direct sum and subquotients. Therefore, by \cite[Prop.~2.20]{tannaka}, $K$ is a finite type group scheme over $k$. Let $\underline{H}'_{x'}$ denote the scheme theoretic fibre of the map $\underline{H}' \to H'$ over the $k$-rational point $x' \in H'(k)$. Since $K$ is finite type, it follows that $\underline{H}'_{x'}$ is also finite type over $k$. But since $k$ is algebraically closed, there exists a $k$-rational point of $\underline{H}'_{x'}$; this defines a $k$-rational point $\underline{x}' \in \underline{H}'(k)$ that maps to $x'$ under the map $\underline{H}' \to H'$. But this contradicts the maximality of $(G \twoheadrightarrow H')$, which finishes the proof.
\end{proof}{}

Combining the above two lemmas, we obtain the following proposition, which will be used frequently in the following parts of the paper.

\begin{proposition}\label{usefullemma1}
Let $G$ be a commutative unipotent affine group scheme over an algebraically closed field $k$. Let $X \in \Shv(k)_*$. Then $H^n (X, G) = \pi_0 \mathrm{Map}_{\Shv(k)_* } (X, K(G, n))$ for $n>0$.
\end{proposition}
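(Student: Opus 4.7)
The proof will be an immediate combination of the two preceding lemmas, so the plan is essentially a one-line assembly. First I would invoke \cref{cohomologyvan}, which guarantees that $H^n(\Spec k, G) = 0$ for all $n > 0$ whenever $G$ is a commutative unipotent affine group scheme over an algebraically closed field $k$. This verifies the hypothesis needed to apply \cref{usefullemma113}.

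Next, I would apply \cref{usefullemma113} directly to our $G$ and $X$: the hypothesis $H^n(\Spec k, G) = 0$ for $n > 0$ has just been checked, so we obtain the desired identification
\[
H^n(X, G) \simeq \pi_0 \Map_{\Shv(k)_*}(X, K(G, n))
\]
for all $n > 0$. Since both ingredients are already established above, no additional obstacle arises and the proof is complete in a single line.
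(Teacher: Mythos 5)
Your proposal is correct and matches the paper's proof exactly: the paper likewise combines \cref{cohomologyvan} (to verify the vanishing hypothesis) with \cref{usefullemma113} to conclude. Nothing further is needed.
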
{}
\begin{proof}
Follows directly from \cref{usefullemma113} and \cref{cohomologyvan}.
\end{proof}{}

\begin{lemma}\label{lemmaaboutstuff} Let $G$ and $H$ be commutative affine group schemes over an algebraically closed field $k$. Assume that $H$ is unipotent. Then $\tau_{\le n-1} \mathrm{Map} (K(G,n), K(H,n)) \simeq  \mathrm{Hom} (G, H)$ for $n \ge 1$. In other words, $\tau_{\le n-1} \mathrm{Map} (K(G,n), K(H,n))$ is $0$-truncated.
\end{lemma}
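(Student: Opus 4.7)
The first step is to reduce the computation to the pointed mapping space. The evaluation fibration
\[ \mathrm{Map}_*(K(G,n), K(H,n)) \to \mathrm{Map}(K(G,n), K(H,n)) \to K(H,n) \]
at the basepoint of $K(G,n)$ gives a long exact sequence of homotopy groups. Since $\pi_i K(H,n) = 0$ for $0 \le i \le n-1$, this immediately yields isomorphisms $\pi_i \mathrm{Map} \simeq \pi_i \mathrm{Map}_*$ for $1 \le i \le n-2$; at the top of the range $(i=n-1)$ the connecting map lands in $\pi_{n-1}\mathrm{Map}_*$, and at $i=0$ one obtains the quotient of $\pi_0 \mathrm{Map}_*$ by the action of $\pi_1 K(H,n) = H$ (for $n=1$) or by a trivial target (for $n \ge 2$). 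The strategy is to prove that $\pi_i \mathrm{Map}_* = 0$ for $1 \le i \le n-1$ and $\pi_0 \mathrm{Map}_* \simeq \mathrm{Hom}(G,H)$; for the residual $H$-action in the $n=1$ case, commutativity of $H$ makes the action on $\mathrm{Hom}(G,H)$ trivial (this is the same phenomenon as in \cref{lemma:pointed-classifying-stack}).

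Next, I would use the loop-suspension adjunction and \cref{usefullemma1} (which applies because $H$ is commutative unipotent over an algebraically closed field) to identify, for $0 \le i \le n-1$,
\[ \pi_i \mathrm{Map}_*(K(G,n), K(H,n)) \simeq \pi_0 \mathrm{Map}_*(K(G,n), K(H, n-i)) \simeq H^{n-i}(K(G,n), H). \]
The lemma thus reduces to two cohomological statements, naturally in $H$:
\begin{enumerate}
    \item[(i)] $H^j(K(G,n), H) = 0$ for $1 \le j \le n-1$;
    \item[(ii)] $H^n(K(G,n), H) \simeq \mathrm{Hom}(G, H)$,
\end{enumerate}
generalising \cref{gait1}, which handled the case $H = \GG_a$.

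The third step is a devissage that reduces (i) and (ii) to $H = \GG_a$. I would follow the pattern of the proof of \cref{newyr2}: first treat $H = \GG_a^I$ using the fact that $R\Gamma(K(G,n), -)$ preserves products, and combine it with \cref{gait1}; then handle the case when the Verschiebung $V \colon H^{(p)} \to H$ vanishes by exploiting the short exact sequence $0 \to H \to \GG_a^I \to \GG_a^J \to 0$ from \cite[IV, \S~3, Thm.~6.6]{MR0302656} and the associated long exact cohomology sequence; next, iterate along the finite filtration by $V^j(H^{(p^j)})$ to cover all $H$ with $V^N = 0$ for some $N$; and finally reach the general case via the decreasing filtration $H \simeq \varprojlim_r H_r$ with $H_r \colonequals H/V^r(H^{(p^r)})$, together with the Milnor exact sequences
\[ 0 \to {\varprojlim_r}^{\!1} H^{j-1}(K(G,n), H_r) \to H^j(K(G,n), H) \to \varprojlim_r H^j(K(G,n), H_r) \to 0. \]
Naturality of the identification in (ii) is preserved throughout, because at each stage the functorial construction $K(-, n)$ produces a natural map $\mathrm{Hom}(G, -) \to H^n(K(G, n), -)$ that commutes with the relevant long exact sequences.

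The main obstacle will be controlling the last devissage step: one must verify the Mittag--Leffler property for $(H^{j-1}(K(G,n), H_r))_r$ in the relevant range to kill the $\varprojlim^1$ term, and also check that $\varprojlim_r$ matches $\mathrm{Hom}(G, H) = \varprojlim_r \mathrm{Hom}(G, H_r)$ in the $j=n$ case. Both amount to showing that the transition maps $H^{j-1}(K(G,n), H_{r+1}) \to H^{j-1}(K(G,n), H_r)$ are surjective for $1 \le j \le n$, which via the short exact sequence $0 \to V^r(H^{(p^r)})/V^{r+1}(H^{(p^{r+1})}) \to H_{r+1} \to H_r \to 0$ is implied by the vanishing $H^{j-1}(K(G,n), V^r(H^{(p^r)})/V^{r+1}(H^{(p^{r+1})})) = 0$, i.e., by the $V=0$ step of the devissage applied in one lower cohomological degree.
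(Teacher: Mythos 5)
Your overall strategy is sound and would produce a correct proof, but it is considerably heavier than necessary, and most of the machinery you set up in steps (iii)--(iv) is not needed. The key simplification you miss is a connectivity argument: for $1 \le j \le n-1$ there is nothing to dévisser, because $K(G,n)$ is $(n-1)$-connected and $K(H,j)$ is $j$-truncated with $j \le n-1$, so $\mathrm{Map}(K(G,n), K(H,j)) \simeq \mathrm{Map}(\tau_{\le j}K(G,n), K(H,j)) \simeq \mathrm{Map}(\Spec k, K(H,j))$, giving $H^j(K(G,n),H) \simeq H^j(\Spec k, H)$, which vanishes by \cref{cohomologyvan}. This is exactly what the paper's proof does after rewriting $\pi_i \mathrm{Map}(K(G,n),K(H,n)) \simeq \pi_0 \mathrm{Map}(K(G,n),K(H,n-i)) \simeq H^{n-i}(\Spec k, H)$; there is no occasion to mimic the multi-stage Verschiebung dévissage of \cref{newyr2}, nor to worry about $\varprojlim^1$ and Mittag--Leffler conditions.

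The other place where your proposal is more elaborate than the paper's is the reduction from unpointed to pointed maps via the evaluation fibration, and the separate treatment of the residual $H$-action in the $n=1$ case. The paper bypasses this entirely by observing that $\mathrm{Map}(K(G,n),K(H,n))$ is a grouplike $E_\infty$-space (since $K(H,n)$ is a commutative group stack), so its homotopy groups are translation-invariant and it suffices to check $\pi_i = 0$ for $0 < i < n$ at the basepoint. Your detour is correct (and your appeal to commutativity of $H$ to trivialize the conjugation action is the right observation for $n=1$), but the grouplike structure handles both issues at once. In short: your plan proves the lemma, but the whole of steps (iii)--(iv) collapses to one line once you exploit that $K(G,n)$ is $(n-1)$-connected, and the evaluation fibration is subsumed by the $E_\infty$-group structure.
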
{}

\begin{proof}
Note that $\pi_0 \mathrm{Map} (K(G,n), K(H,n)) \simeq \mathrm{Hom}(G, H)$, by \cref{usefullemma1}. Thus we have a natural map $$\tau_{\le n-1} \mathrm{Map} (K(G,n), K(H,n)) \to  \mathrm{Hom} (G, H).$$ Since $\mathrm{Map} (K(G,n), K(H,n))$ is naturally a grouplike $E_\infty$-space, it is enough to show that $\pi_i \mathrm{Map} (K(G,n), K(H,n))$ is trivial for $0<i<n$. To this end, we note that $$\pi_i \mathrm{Map} (K(G,n), K(H,n)) \simeq \pi_0 \mathrm{Map} (K(G,n), K(H, n-i)) \simeq H^{n-i} (\Spec k, H);$$ the latter is trivial by \cref{cohomologyvan}, as desired.
\end{proof}{}

Finally, we are ready to introduce the key construction of the formal $n$-sphere.
\begin{construction}[Formal $n$-sphere] Let $E$ be a $1$-dimensional commutative formal Lie group. Let us consider the unipotent affine group scheme $E^\vee$. The classifying stack $BE^\vee$ is naturally a pointed connected stack. We set $$\SSS^n_E \colonequals \Sigma^{n-1} B E^\vee. $$ It follows that $\SSS^n_E$ is a pointed connected stack.
\end{construction}{}

\begin{remark} The construction of the formal sphere is motivated by the fact that $\SSS^n \simeq \Sigma^{n-1} \SSS^1 \simeq \Sigma^{n-1} B \mathbf{Z}$.
\end{remark}{}

\begin{proposition}[Cohomology of the formal sphere]\label{usefullemma2} Let $E$ be a $1$-dimensional formal Lie group and $\SSS^n_E$ be the associated formal $n$-sphere. Then $H^0 (\SSS^n_E, \cO) \simeq k$, $H^n (\SSS^n_E, \cO) \simeq k$ and $H^i (\SSS^n_E, \cO) = 0$ for $n \ne \left \{0,n  \right \}$. 
\end{proposition}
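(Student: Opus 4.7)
The plan is to reduce the computation to the cohomology of $BE^\vee$, already handled by \cref{paderborn1}, and then iterate a suspension-shift identity $(n-1)$ times using the $(\Sigma, \Omega)$-adjunction.

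First, I would compute the base case. Since $E$ is a $1$-dimensional formal Lie group, \cref{paderborn1} gives $R\Gamma(BE^\vee, \cO) \simeq R\Hom_{k\llbracket x\rrbracket}(k, k)$. A Koszul resolution then identifies this with the exterior algebra on a class in degree $1$, so $H^0(BE^\vee, \cO) \simeq k$, $H^1(BE^\vee, \cO) \simeq k$, and $H^i(BE^\vee, \cO) = 0$ for $i \geq 2$. Note that $BE^\vee$ is a pointed connected stack; since $\pi_0$ commutes with colimits in $\Shv(k)$, each iterated suspension $\Sigma^j BE^\vee$ is again pointed and connected, so in particular $H^0(\Sigma^j BE^\vee, \cO) \simeq k$ for all $j \geq 0$.

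Next I would prove the following suspension-shift lemma: for any pointed connected stack $Y$ with $H^0(Y, \cO) \simeq k$, one has
\[
H^0(\Sigma Y, \cO) \simeq k, \qquad H^1(\Sigma Y, \cO) = 0, \qquad H^i(\Sigma Y, \cO) \simeq H^{i-1}(Y, \cO) \text{ for } i \geq 2.
\]
The first identity is immediate from connectedness. For the remaining ones, \cref{usefullemma113} applies (since $H^j(\Spec k, \GG_a) = 0$ for $j > 0$), giving $H^i(\Sigma Y, \cO) \simeq \pi_0\Map_{\Shv(k)_*}(\Sigma Y, K(\GG_a, i))$. The $(\Sigma, \Omega)$-adjunction rewrites this as $\pi_0\Map_{\Shv(k)_*}(Y, K(\GG_a, i-1))$. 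For $i \geq 2$, a second application of \cref{usefullemma113} identifies this with $H^{i-1}(Y, \cO)$. For $i = 1$, the mapping space in question is the fibre of the restriction $H^0(Y, \cO) \to H^0(*, \cO)$ over the basepoint, i.e.\ the kernel of an isomorphism $k \to k$, which vanishes.

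Finally, I would iterate the shift lemma $(n-1)$ times, starting from $Y = BE^\vee$. Each suspension preserves both pointed connectedness and the property $H^0 \simeq k$, so the inductive step applies at every stage. At each step the copy of $k$ in degree $1$ of the previous stack moves up by one and any putative new $H^1$ gets killed. After $(n-1)$ iterations we obtain
\[
H^0(\SSS^n_E, \cO) \simeq k, \quad H^n(\SSS^n_E, \cO) \simeq H^1(BE^\vee, \cO) \simeq k, \quad H^i(\SSS^n_E, \cO) = 0 \text{ otherwise},
\]
as claimed. There is no real obstacle here: the argument is a formal unwinding of the suspension-loop adjunction, and the only input requiring genuine computation is the exterior-algebra cohomology of $BE^\vee$ supplied by \cref{paderborn1}.
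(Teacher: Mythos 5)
Your proof is correct and follows essentially the same route as the paper: reduce to the exterior-algebra cohomology of $BE^\vee$ and then propagate it upward by the $(\Sigma,\Omega)$-adjunction together with \cref{usefullemma113}. The only cosmetic differences are that you isolate the suspension-shift step as a standalone lemma and argue $H^1(\Sigma Y,\cO)=0$ via $\Omega K(\GG_a,1)\simeq\GG_a$ and $H^0(Y,\cO)\simeq k$, whereas the paper notes $\tau_{\le 1}\SSS^n_E\simeq *$ for $n\ge 2$; and you invoke \cref{paderborn1} for the base case where the paper uses \cref{classifynoncommfgl} — both apply since a $1$-dimensional formal Lie group is in particular a $1$-dimensional non-commutative one.
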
{}

\begin{proof}
We note that $H^0 (\SSS^n_{E}, \cO) \simeq k$ since the formal $n$-sphere is connected as a stack. When $n=1$, the desired computation is a special case of \cref{classifynoncommfgl} since $\SSS^1_E \simeq BE^\vee$. For $n \ge 2$, we have $$H^1 (\SSS^n_E, \mathscr O) \simeq \pi_0 \w{Map}_{\Shv(k)_*}(\SSS^n_E, K(\GG_a,1)).$$ However, since $n \ge 2$, it follows that $\tau_{\le 1} \SSS^n_E \simeq \left \{ * \right \}$. Therefore, $H^1 (\SSS^n_E, \cO)=0$, when $n \ge 2$. Further, for $n \ge 2$ and $i \ge 2$, we have
$$H^i (\SSS^n_{E}, \cO) \simeq \pi_0 \w{Map}_{\Shv(k)_*}(\Sigma \, \SSS^{n-1}_{E}, K(\GG_a,i)) \simeq \pi_0 \w{Map}_{\Shv(k)_*}(\SSS^{n-1}_{E}, \Omega\,K(\GG_a,i)) \simeq H^{i-1}(\SSS^{n-1}_E, \cO).$$ Therefore, by induction on $n$, we obtain the computation as desired.
\end{proof}{}

\begin{proposition}[Unipotent homotopy type of a Calabi--Yau variety]\label{hach} Let $X$ be a Calabi--Yau variety of dimension $n \ge 2$ over an algebraically closed field $k$ equipped with a $k$-rational point. Let $\Phi_X^n$ be the Artin--Mazur formal Lie group of $X$. Then

\begin{enumerate}
    \item There exists an isomorphism $$  \mathbf{U}(\SSS^n_{\Phi_X^n}) \simeq \mathbf{U}(X)$$ of higher stacks over $k$; in the homotopy category, this isomorphism can be chosen to induce the canonical isomorphism $ \tau_{\le n}  \mathbf{U}(\SSS^n_{\Phi_X^n}) \simeq \tau_{\le n}\UU(X)$ induced by the Artin--Mazur--Hurewicz class $\xi^{\mathrm{AM}}_{\mathrm{H}}$ (see \cref{amhclass}) on the $n$-truncations.
    
    \item The set of isomorphisms $$  \mathbf{U}(\SSS^n_{\Phi_X^n}) \simeq \mathbf{U}(X)$$ in the homotopy category of higher stacks over $k$ that induces the isomorphism induced by $\xi^{\mathrm{AM}}_{\mathrm{H}}$ on the $n$-truncations is a torsor under the group $H^2 (B \pi_n^{\mathrm{U}}(X), \pi_{n+1}^\mathrm{U}(X))$.
\end{enumerate}{}
\end{proposition}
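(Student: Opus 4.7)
Set $E \colonequals \Phi^n_X$ and write $Y \colonequals \UU(X)$, $Z \colonequals \UU(\SSS^n_E)$. By Serre duality together with the Calabi--Yau hypothesis, $H^i(X, \cO) = 0$ for $i \notin \{0, n\}$ with $H^0(X,\cO) \simeq H^n(X,\cO) \simeq k$; by \cref{usefullemma2} the stack $\SSS^n_E$ has exactly the same cohomology. Thus both $X$ and $\SSS^n_E$ satisfy the hypotheses of \cref{cof6}, and combined with \cref{cycomp} and direct computation, $\pi^{\mathrm U}_n(X)^\vee \simeq \pi^{\mathrm U}_n(\SSS^n_E)^\vee \simeq E$. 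By \cref{hurewicz2}, $Y$ and $Z$ are both $(n-1)$-connected, and \cref{amhclass} supplies a canonical identification
\[ \tau_{\le n} Y \simeq K(E^\vee, n) \simeq \tau_{\le n} Z. \]

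The strategy for part~(1) is to lift this identification inductively through the Postnikov towers of $Y$ and $Z$ provided by \cref{postnikov}. At each stage $m \ge n$, the obstruction to lifting an iso $\tau_{\le m} Y \simeq \tau_{\le m} Z$ to $\tau_{\le m+1}$ is controlled by cohomology of $K(E^\vee, n)$-style Eilenberg--MacLane stacks with coefficients in the relevant $\pi_{m+1}$, together with a comparison of $k$-invariants on both sides. The key inputs are: the concentration of both $R\Gamma(Y, \cO)$ and $R\Gamma(Z, \cO)$ in degrees $0$ and $n$; an iterated application of the Freudenthal suspension theorem (\cref{freudenthalsusp}) to $BE^\vee$, which controls the Postnikov stages of $Z = \UU(\Sigma^{n-1} BE^\vee)$ in the stable range; and the cohomology of $BE^\vee$ from \cref{cof0}. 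Together these force the two Postnikov towers to match, and taking the inverse limit via \cref{postnikov} produces the isomorphism $Y \simeq Z$ lifting the canonical one on $n$-truncations.

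For part~(2), once the canonical iso on $\tau_{\le n}$ is fixed, two lifts to $\tau_{\le n+1}$ differ by a class in the obstruction group $H^{n+1}(K(E^\vee, n), \pi^{\mathrm U}_{n+1}(X))$ by standard obstruction theory for maps of stacks whose $k$-invariants have been matched. In the range $n \ge 2$ we are in the stable range where the transgression/stabilization identification $K(E^\vee, n) \simeq B^{n-1}(BE^\vee)$ together with the Freudenthal-type equivalence for cohomology of iterated classifying stacks yields
\[ H^{n+1}(K(E^\vee, n), \pi^{\mathrm U}_{n+1}(X)) \simeq H^2(BE^\vee, \pi^{\mathrm U}_{n+1}(X)) = H^2(B\pi^{\mathrm U}_n(X), \pi^{\mathrm U}_{n+1}(X)). \]
One then verifies that at stages $m > n$, the analogous obstruction groups $H^{m+1}(\tau_{\le m} Y, \pi^{\mathrm U}_{m+1}(X))$ classifying further lifts vanish or act trivially, so that once the lift to $\tau_{\le n+1}$ is chosen, the extension to higher truncations is essentially unique, making the total torsor of lifts isomorphic to $H^2(B\pi^{\mathrm U}_n(X), \pi^{\mathrm U}_{n+1}(X))$.

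The principal obstacle is verifying the matching of $k$-invariants at Postnikov stages beyond $\tau_{\le n+1}$ in part~(1), and the vanishing of higher-stage torsors in part~(2). Both require careful analysis of the cohomology of $K(E^\vee, m)$ with coefficients in unipotent group schemes, drawing on the homological machinery for duals of formal Lie groups from \cref{cof0}, the Freudenthal suspension theorem (\cref{freudenthalsusp}), and the cohomology-and-base-change results of \cref{sec2.3} (notably \cref{iy5}) to transport the stability phenomena between $\UU(X)$ and $\UU(\SSS^n_E) = \UU(\Sigma^{n-1} BE^\vee)$.
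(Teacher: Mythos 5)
Your proposal identifies the right starting point (the Hurewicz-type identification $\tau_{\le n}\UU(X) \simeq K(E^\vee,n) \simeq \tau_{\le n}\UU(\SSS^n_E)$) but then takes a strategy that runs into a genuine circularity. You propose to compare the Postnikov towers of $Y = \UU(X)$ and $Z = \UU(\SSS^n_E)$ stage by stage, ``matching $k$-invariants on both sides,'' and you cite the Freudenthal suspension theorem to ``control the Postnikov stages of $Z$.'' But to match the towers you must already know that $\pi_m(Y) \simeq \pi_m(Z)$ and that the $k$-invariants agree for all $m > n$ — which is precisely what the isomorphism $Y \simeq Z$ would give you. There is no a priori comparison of the higher unipotent homotopy group schemes of $X$ and $\SSS^n_E$, so ``these force the two Postnikov towers to match'' is not justified.

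The paper's actual argument avoids this by never comparing Postnikov towers directly. Instead it constructs a single map $\SSS^n_E \to \UU(X)$ by lifting the composite $\SSS^n_E \to \tau_{\le n}\UU(X)$ up the Postnikov tower of $\UU(X)$ \emph{alone}. The obstruction at stage $r$ lives in $\pi_0\Map(\SSS^n_E, K(\pi_{n+r+1}^\mathrm{U}(X), n+r+2))$, and here the key is the suspension structure of the \emph{source}: since $\SSS^n_E = \Sigma^{n-1}BE^\vee$, the $(\Sigma,\Omega)$-adjunction together with \cref{usefullemma1} identifies this obstruction group with $H^{r+3}(BE^\vee, \pi_{n+r+1}^\mathrm{U}(X))$, which vanishes by \cref{newyr2} because $BE^\vee$ is the classifying stack of a group whose dual is a (non-commutative) formal Lie group. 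Once the map exists, the isomorphism $\UU(\SSS^n_E)\simeq\UU(X)$ follows by checking it is an isomorphism on $H^\bullet(-,\cO)$, using \cref{usefullemma2} and the Calabi--Yau vanishing. Your proposal's invocation of a ``transgression/stabilization identification $H^{n+1}(K(E^\vee,n), M) \simeq H^2(BE^\vee,M)$'' is not what the paper uses and is not established anywhere in the paper; the cohomology of $K(G,n)$ with $\cO$-coefficients is a symmetric algebra (far larger than that of $BG$), so such an identification is not available in the naive form you state. The same adjunction-to-$BE^\vee$ mechanism, together with a Milnor sequence analysis and the vanishings from \cref{newyr2}, is what gives the torsor structure under $H^2(BE^\vee,\pi_{n+1}^\mathrm{U}(X))$ in part~(2) — not a cohomology computation on $K(E^\vee,n)$.
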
{}

\begin{proof}
We note that $\pi_i^{\mathrm{U}}(X)$ is trivial for $i <n$ by \cref{hurewicz}. For simplicity, let us use $E$ to denote the formal Lie group $\Phi_X^n$. The class $\xi^{\mathrm{AM}}_{\mathrm{H}}$ (see \cref{amhclass}) induces an isomorphism $\tau_{\le n} \mathbf{U}(X) \simeq K(E^\vee, n)$ that is canonical in the homotopy category. By \cref{f}, it also follows that $\tau_{\le n} \SSS^n_{E} \simeq K(E^\vee, n)$. Therefore, we obtain the following maps $$\SSS^n_E \to \mathbf{U}(\SSS^n_E) \to \tau_{\le n} \SSS^n_{E} \xrightarrow[]{\sim} \tau_{\le n} \mathbf{U}(X).$$ Suppose that we are given a map $G_r \colon \SSS^n_E \to \tau_{\le n+r} \mathbf{U}(X)$ for some $r \ge 0$. We will lift the given map $G_r \colon \SSS^n_E \to \tau_{\le n+r}\mathbf U(X) $ along the Postnikov truncation $\tau_{\le n+r+1}\mathbf U(X) \to \tau_{\le n+r}\mathbf U(X)$. \vspace{2mm}

Now, we note that $\pi_1^{\mathrm U}(X) \simeq \pi_1 (\mathbf{U}(X)) \simeq \left \{ * \right \}$ which implies (by the discussion before \cite[Prop.~1.2.2]{Toe} for example) that for every $r \ge 0$, there is a pullback diagram 
\begin{equation}\label{ddlj1}
\begin{tikzcd}
\tau_{\le n+r+1} \mathbf{U}(X) \arrow[rr] \arrow[d] &  & \tau_{\le n+r} \mathbf{U}(X) \arrow[d] \\
* \arrow[rr]                                        &  & {K (\pi_{n+r+1}\mathbf{U}(X), n+r+2).}              
\end{tikzcd}
\end{equation}
This gives an exact sequence
$$\pi_0 \mathrm{Map} (\SSS^n_E, \tau_{\le n+r+1} \mathbf{U}(X)) \to \pi_0 \mathrm{Map} (\SSS^n_E, \tau_{\le n+r} \mathbf{U}(X)) \to \pi_0 \mathrm{Map}(\SSS^n_E,  K (\pi_{n+r+1}\mathbf{U}(X), n+r+2)).$$In order to prove that $G_r$ lifts along $ \tau_{\le n+r+1} \mathbf{U}(X) \to \tau_{\le n+r} \mathbf{U}(X)$, it is thus enough to prove that \begin{equation}\label{ddlj2}
    \pi_0 \mathrm{Map}(\SSS^n_E,  K (\pi_{n+r+1}\mathbf{U}(X), n+r+2)) \simeq H^{r+3} (BE^\vee, \pi_{n+r+1}^{\mathrm U} (X))
\end{equation}
vanishes; here, the isomorphism follows from \cref{usefullemma1} and the adjunction of $(\Sigma , \Omega)$. To see the desired vanishing, we may note that $\pi_{n+r+1}^\mathrm{U} (X)$ is a unipotent group scheme and apply \cref{newyr2}, which gives that $H^{r+3} (BE^\vee, \pi_{n+r+1}^\mathrm{U} (X))= 0$ since $r \ge 0$. Therefore, we can lift the morphism $G_r \colon \SSS_E^n \to \tau_{\le n+r} \mathbf{U}(X)$ to obtain a map $G_{r+1} \colon \SSS_E^n \to \tau_{\le n+r+1} \mathbf{U}(X)$. In other words, we have the following commutative diagram in the homotopy category of stacks over $k$ 
\begin{center}
    \begin{tikzcd}
                                                                &  & \tau_{\le n+r+1}\mathbf{U}(X) \arrow[d] \\
\SSS^n_E \arrow[rr, "G_r"] \arrow[rru, "G_{r+1}"] &  & \tau_{\le n+r}\mathbf{U}(X).            
\end{tikzcd}
\end{center}{}
Proceeding by induction on $r$, we may obtain a map $G \colon \SSS^n_E \to \varprojlim_{r} \tau_{\le n+r} \mathbf{U}(X)$. Since $\mathbf{U}(X) \simeq \varprojlim_{r} \tau_{\le n+r} \mathbf{U}(X)$, this constructs a map $\SSS^n_E \to \mathbf{U}(X)$. By construction and \cref{usefullemma1}, it follows that the induced map $H^i (\mathbf{U}(X), \mathscr O) \to H^i (\SSS^n_E, \cO)$ is an isomorphism for $i \le n$. 
However, since $X$ is a Calabi--Yau variety of dimension $n$, the cohomology groups $H^i (\mathbf{U}(X), \cO) \simeq H^i (X, \mathscr O) =0$ for $i >n$. On the other hand, $H^i (\SSS^n_E, \cO)=0$ for $i >n$ by \cref{usefullemma2}. This implies that the induced maps $H^i (\mathbf{U}(X), \mathscr O) \to H^i (\SSS^n_E, \cO)$ are isomorphisms for all $i \ge 0$. Thus the map $\SSS^n_E \to \mathbf{U}(X)$ induce an isomorphism $\mathbf{U}(\SSS^n_E) \simeq \mathbf{U}(X)$. This proves the first part of the proposition. \vspace{2mm}

Now we prove the second part of the proposition. Note that $\mathbf{U}(X) \simeq \varprojlim_{r} \tau_{\le n+r} \mathbf{U}(X)$. Therefore, $$ \w{Map}(\SSS^n_E, \mathbf{U}(X)) \simeq \varprojlim_{r}\w{Map} (\SSS^n_E, \tau_{\le n+r} \mathbf{U}(X)).$$Let $y \in \pi_0 \mathrm{Map} (\SSS^n_E, \mathbf{U}(X))$. 
We can lift $y$ to obtain a map $\widetilde{y} \colon \left \{* \right \} \to \mathrm{Map}(\SSS^n_E, \UU(X))$, which induces maps $\widetilde{y}_{r} \colon \left \{* \right \} \to \mathrm{Map}(\SSS^n_E, \tau_{\le n+r} \UU(X))$.
By using the associated Milnor sequence, we have an exact sequence of pointed sets
$$0 \to {\varprojlim_{r}}^1\pi_1(\w{Map} (\SSS^n_E,  \tau_{\le n+r} \mathbf{U}(X)),\widetilde{y}_r) \to \pi_0 \w{Map}(\SSS^n_E, \mathbf{U}(X)) \to \varprojlim_{r} \pi_0 \w{Map} (\SSS^n_E, \tau_{\le n+r} \mathbf{U}(X)) \to 0.$$Using the diagram \cref{ddlj1}, we obtain a pullback diagram
\begin{equation}\label{ddlj10}
\begin{tikzcd}
\mathrm{Map} (\SSS^n_E, \tau_{\le n+r+1} \mathbf{U}(X)) \arrow[rr] \arrow[d] &  & \mathrm{Map}(\SSS^n_E,\tau_{\le n+r} \mathbf{U}(X) )\arrow[d] \\
* \arrow[rr]                                        &  & \mathrm{Map}(\SSS^n_E, {K (\pi_{n+r+1}\mathbf{U}(X), n+r+2)).}              
\end{tikzcd}
\end{equation}
We claim that for $r \ge 1$, the maps $$\pi_1 (\mathrm{Map} (\SSS^n_E, \tau_{\le n+r+1} \mathbf{U}(X)), \widetilde{y}_r) \to \pi_1 \mathrm{Map} ((\SSS^n_E, \tau_{\le n+r} \mathbf{U}(X)), \widetilde{y}_r)$$ are surjective. Given the claim, we have $\varprojlim_{r}^1 \pi_1(\mathrm{Map} (\SSS^n_E,  \tau_{\le n+r} \mathbf{U}(X)), \widetilde{y}_r) \simeq 0$, which implies that we have an isomorphism \begin{equation}\label{ddlj3}
    \pi_0 \w{Map}(\SSS^n_E, \mathbf{U}(X)) \simeq \varprojlim_{r} \pi_0 \w{Map} (\SSS^n_E, \tau_{\le n+r} \mathbf{U}(X)).
\end{equation}In order to see the surjectivity as in the above claim, we apply the long exact sequence in homotopy groups associated to \cref{ddlj10}. Indeed, since \cref{ddlj2} vanishes, it would be enough to show that for $r \ge 1$, 
\begin{equation}\label{ddlj9}
  \pi_1 \mathrm{Map}(\SSS^n_E,  K (\pi_{n+r+1}\mathbf{U}(X), n+r+2)) \simeq 0. 
\end{equation}To this end, note that $$\pi_1 \mathrm{Map}(\SSS^n_E,  K (\pi_{n+r+1}\mathbf{U}(X), n+r+2)) \simeq \pi_0 \mathrm{Map}(\SSS^n_E,  K (\pi_{n+r+1}\mathbf{U}(X), n+r+1));$$by applying \cref{usefullemma1} and the $(\Sigma, \Omega)$-adjunction, the latter is $H^{r+2} (B E^\vee, \pi_{n+r+1}\UU(X))$, which vanishes for $r \ge 1$ by \cref{newyr2}, as desired. A similar argument using \cref{ddlj10} and the vanishing of \cref{ddlj2} and \cref{ddlj9}
proves that for $r \ge 1$, the maps $$\pi_0 \mathrm{Map} (\SSS^n_E, \tau_{\le n+r+1} \mathbf{U}(X)) \to \pi_0 \mathrm{Map} (\SSS^n_E, \tau_{\le n+r} \mathbf{U}(X))$$ are isomorphisms. Together with \cref{ddlj3}, this implies that  $$\pi_0 \w{Map}(\SSS^n_E, \mathbf{U}(X)) \simeq \pi_0 \w{Map} (\SSS^n_E,  \tau_{\le n+1} \mathbf{U}(X)).$$ Using the long exact sequence associated with \cref{ddlj10} for $r=0$, we obtain an exact sequence$$\pi_1 \mathrm{Map}(\SSS^n_E, K(\pi_{n+1}^\mathrm{U}(X), n+2)) \to \pi_0 \mathrm{Map} (\SSS^n_E, \tau_{\le n+1} \mathbf{U}(X)) \to \pi_0 \mathrm{Map} (\SSS^n_E, \tau_{\le n} \mathbf{U}(X));$$ the last map is surjective by the vanishing of \cref{ddlj2}. The first map is injective since by \cref{lemmaaboutstuff} $$\tau_{\le 1} \mathrm{Map} (\SSS^n_E, \tau_{\le n} \UU(X)) \simeq \tau_{\le 1} \mathrm{Map} (K(E^\vee, n), K(E^\vee,n)) \simeq \mathrm{Hom}(E^\vee, E^\vee)$$ is $0$-truncated. Finally, by applying \cref{usefullemma1} and the $(\Sigma, \Omega)$-adjunction again, we have $\pi_1 \mathrm{Map}(\SSS^n_E, K(\pi_{n+1}^\mathrm{U}(X), n+2)) \simeq  H^2 (BE^\vee, \pi_{n+1}^{\mathrm{U}}(X))$. This finishes the proof.
\end{proof}{}

\begin{proof}[Proof of \cref{thmCY}] 
This is a consequence of \cref{hach}. Indeed, the Dieudonn\'e module of $\Phi_X^n$ is naturally isomorphic to $H^n (X, W)$, which is identified with $\pi_{-n} \mathrm{TR}(X)$ via the descent spectral sequence (see \cite[Def.~3.5(a), Thm.~5.1]{AB1}). Similarly, the Dieudonn\'e module of $\Phi_Y^n$ is isomorphic to $\pi_{-n}\mathrm{TR}(Y)$. By construction, $\mathrm{TR}(\,\cdot \,)$ is a derived invariant; therefore, $\pi_{-n}\mathrm{TR}(X) \simeq \pi_{-n}\mathrm{TR}(Y)$. Since we are working over an algebraically closed field $k$, by choosing $k$-rational points on $X$ and $Y$ and applying \cref{hach}, it follows that $\UU(X) \simeq \UU(Y)$.
\end{proof}{}

In order to prove \cref{thmCY1}, we need to obtain a concrete understanding of certain unipotent homotopy group schemes of Calabi--Yau varieties. This will require some additional preparations. 

\begin{proposition}[Some homotopy groups of the formal sphere]\label{homotopyformalsphere}Let $\SSS^n_E$ be the formal $n$-sphere for a fixed formal Lie group $E$ over $k$. Then

\begin{enumerate}
    \item We have $\pi_{i}(\mathbf{U}(\SSS^n_E)) \simeq \left \{* \right \} $ for $i <n$ and $\pi_{n}(\mathbf{U}(\SSS^n_E))\simeq E^\vee$.
    
    \item Further, $\pi_{3}(\mathbf{U}(\SSS^2_E)) \simeq (E^\vee \otimes E^\vee)^{\mathrm{uni}}$ and $$\pi_{n+1}(\mathbf{U}(\SSS^n_E)) \simeq (E^\vee \curlywedge E^\vee)^{\mathrm{uni}}$$for all $n \ge 3$ (\textit{cf.}~\cref{weakwedge}). 
\end{enumerate}{}

\end{proposition}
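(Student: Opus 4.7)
The plan is to treat the two parts separately, invoking the Hurewicz theorem (\cref{hurewicz2}), the cohomology computation for formal spheres (\cref{usefullemma2}), Whitehead's EHP machinery (as employed in the proof of \cref{freudenthalsusp1}), and the Freudenthal suspension theorem in unipotent homotopy theory.

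For part (1), \cref{usefullemma2} gives $H^i(\SSS^n_E,\cO) = 0$ outside $\{0,n\}$ with $H^n \simeq k$. Applying the Hurewicz theorem \cref{hurewicz2} to $\SSS^n_E$ immediately yields $\pi_i^{\mathrm U}(\SSS^n_E) = *$ for $i<n$, and for $G := \pi_n^{\mathrm U}(\SSS^n_E)$ produces $\dim_k\Hom(G,\GG_a) = 1$ together with $\Ext^1(G,\GG_a)\hookrightarrow H^{n+1}(\SSS^n_E,\cO) = 0$; by \cref{onedimensionalfgl} this forces $G^\vee$ to be a $1$-dimensional formal Lie group. To identify $G$ with $E^\vee$, I would use the canonical composition $\SSS^n_E \to \mathbf{U}(\SSS^n_E) \to \tau_{\le n}\mathbf{U}(\SSS^n_E) \simeq K(G,n)$ together with the $(\Sigma,\Omega)$-adjunction applied $(n-1)$ times (since $\SSS^n_E = \Sigma^{n-1}BE^\vee$) to produce a homomorphism $E^\vee \to G$; a diagram chase shows that it induces the identification $\Hom(G,\GG_a) \simeq H^n(\SSS^n_E,\cO) \simeq H^1(BE^\vee,\cO) \simeq \Hom(E^\vee,\GG_a)$, hence the dual map between $1$-dimensional formal Lie groups is an isomorphism on tangent spaces and therefore itself an isomorphism.

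For part (2) with $n=2$, I would use the Hopf fibre sequence $BE^\vee \to \Sigma(BE^\vee\wedge BE^\vee) \to \Sigma BE^\vee$ combined with \cref{c} and classical Freudenthal (\cref{f}) to identify the sheaves $\pi_2(\SSS^2_E) \simeq E^\vee$ and $\pi_3(\SSS^2_E) \simeq \pi_2(BE^\vee\wedge BE^\vee) \simeq \underline{E^\vee}\otimes_{\mathbf Z}\underline{E^\vee}$. In particular $\tau_{\le 2}\SSS^2_E \simeq K(E^\vee,2)$ is an affine stack, and \cref{freu1} together with \cref{chris1} then gives $\pi_3^{\mathrm U}(\SSS^2_E) \simeq ((\underline{E^\vee}\otimes_{\mathbf Z}\underline{E^\vee})^{\mathrm{uni}})^{\mathrm{ab}} \simeq (E^\vee\otimes E^\vee)^{\mathrm{uni}}$. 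For $n\ge 4$, I would apply the Freudenthal suspension theorem \cref{freudenthalsusp1} to the $(n-2)$-connected pointed connected affine stack $\mathbf{U}(\SSS^{n-1}_E)$ (using that $H^i$ is finite-dimensional by \cref{usefullemma2}): the map $\pi_n^{\mathrm U}(\SSS^{n-1}_E) \to \pi_{n+1}^{\mathrm U}(\SSS^n_E)$ is then an isomorphism whenever $n \le 2(n-2)$, so descending inductively reduces these cases to the base $n=3$.

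The main obstacle is thus the case $n=3$. Since the unit $\SSS^2_E \to \mathbf{U}(\SSS^2_E)$ is a cohomology equivalence, so is its suspension, whence $\mathbf{U}(\SSS^3_E) \simeq \mathbf{U}(\Sigma X)$ for $X := \mathbf{U}(\SSS^2_E)$, which is $1$-connected by part (1). Whitehead's EHP sequence as used in the proof of \cref{freudenthalsusp1} then supplies
\[ \pi_4(X\wedge X) \to \pi_3(X) \to \pi_4(\Sigma X) \to 0,\]
in which by \cref{c} the left term is $\underline{E^\vee}\otimes_{\mathbf Z}\underline{E^\vee}$, by the $n=2$ case the middle term is $(E^\vee\otimes E^\vee)^{\mathrm{uni}}$, and the first arrow is the Whitehead product $W_{2,2}$. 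By \cref{computewhite} and functoriality, $W_{2,2}$ is given on points by $g\otimes h \mapsto g\otimes h + h\otimes g$; applying the right-exact unipotent completion functor (\cref{rightexact}) therefore identifies the cokernel with $(E^\vee \curlywedge E^\vee)^{\mathrm{uni}}$ in view of \cref{weakwedge} and \cref{garten}. Finally $\Sigma X$ is $2$-connected with $\pi_3(\Sigma X) \simeq E^\vee$, so $\tau_{\le 3}\Sigma X \simeq K(E^\vee,3)$ is an affine stack; then \cref{freu1} yields the desired $\pi_4^{\mathrm U}(\SSS^3_E) \simeq (E^\vee\curlywedge E^\vee)^{\mathrm{uni}}$, completing the argument.
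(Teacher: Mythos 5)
Your proof of part (2) tracks the paper's argument essentially verbatim: the $n=2$ case via the Hopf fibre sequence/\cref{lentil} and \cref{freu1}, reduction to $n=3$ via Freudenthal, and the $n=3$ case via the EHP sequence, the explicitly computed Whitehead product from \cref{computewhite}, and \cref{garten}. That part is fine.

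For part (1) you take a genuinely different, and considerably longer, route. The paper's argument is a one-liner: $\tau_{\le n}\SSS^n_E \simeq K(E^\vee,n)$ (iterated suspension), this is already an affine stack, so the universal property of $\UU$ (mapping into $n$-truncated affine stacks, as in the proof of \cref{freudenthalsusp1}) gives $\tau_{\le n}\UU(\SSS^n_E) \simeq K(E^\vee,n)$ on the nose. Your route instead deduces abstractly from \cref{hurewicz2}, \cref{usefullemma2} and \cref{onedimensionalfgl} that $\pi_n^{\mathrm{U}}(\SSS^n_E)$ is dual to \emph{some} $1$-dimensional formal Lie group, then separately constructs a comparison map $E^\vee \to \pi_n^{\mathrm{U}}(\SSS^n_E)$ by $(\Sigma,\Omega)$-adjunction and verifies it is an isomorphism by checking cotangent spaces. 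This is correct — the ``iso on cotangent spaces implies iso'' step is valid for formal Lie groups of the same dimension — but it duplicates work: once you have the comparison map and know it induces an isomorphism on $\Hom(\cdot,\GG_a)$, the abstract representability input from \cref{onedimensionalfgl} is not really needed, and the paper's direct identification of the $n$-truncation gives both facts at once without any diagram chase.

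One small gap worth flagging: for $n=1$ you invoke \cref{onedimensionalfgl}, which requires the group scheme $G = \pi_1^{\mathrm U}(\SSS^1_E)$ to be commutative, but the Hurewicz isomorphism by itself does not supply commutativity for $\pi_1$. In this case the issue evaporates because $\SSS^1_E = BE^\vee$ is already an affine stack (so $\UU(\SSS^1_E) \simeq BE^\vee$ directly), but you should either treat $n=1$ separately or adopt the paper's more uniform identification of $\tau_{\le n}\SSS^n_E$ with $K(E^\vee,n)$.
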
{}

\begin{proof}
The first part follows from the fact that $\tau_{\le n} (\SSS^n_E) \simeq K(E^\vee, n)$; since the latter is an affine stack, we obtain that $\tau_{\le n}(\mathbf{U}(\SSS^n_E)) \simeq K(E^\vee, n)$.
\vspace{2mm}

We proceed to prove the second part. We begin by showing that $\pi_{3}(\mathbf{U}(\SSS^2_E)) \simeq (E^\vee \otimes E^\vee)^{\mathrm{uni}}$. We note that $\tau_{\le 2}(\SSS^2_E) \simeq K(E^\vee, 2)$, which is an affine stack. Therefore, we can appeal to \cref{freu1} to compute $\pi_{3}(\mathbf{U}(\SSS^2_E))$. Indeed, by \cref{lentil}, $\pi_3 (\SSS^2_E) \simeq E^\vee \otimes_{\mathbf{Z}} E^\vee$. Therefore, by \cref{freu1}, it follows that $\pi_{3}(\mathbf{U}(\SSS^2_E)) \simeq (E^\vee \otimes E^\vee)^{\mathrm{uni}}$, as desired.
\vspace{2mm}

By construction of the formal $n$-sphere and the Freudenthal suspension theorem in unipotent homotopy theory (\cref{freudenthalsusp}), it is enough to prove the claim when $n=3$,
i.e., we only need to prove that $\pi_4 (\mathbf{U}(\SSS^3_E)) \simeq (E^\vee \curlywedge E^\vee)^{\mathrm{uni}}$. By the proof of \cref{freudenthalsusp1} and \cref{c}, we have an exact sequence of group schemes 
\begin{equation}\label{bauhaus}
\left (\pi_2 (\mathbf{U}(\SSS^2_E)) \otimes \pi_2 (\mathbf{U}(\SSS^2_E)) \right )^{\mathrm{uni}} \xrightarrow[]{W} \pi_3 (\mathbf{U}(\SSS^2_{E})) \xrightarrow[]{} \pi_4 (\mathbf{U}(\Sigma (\mathbf{U}(\SSS^2_E)))) \xrightarrow[]{} 0.
\end{equation}
Further, the map $W$ is induced by the Whitehead product 
$$W_{2,2} \colon \pi_2^{\mathrm{U}}(\SSS^2_E) \times  \pi_2^{\mathrm{U}}(\SSS^2_E) \to \pi_3^{\mathrm{U}} (\SSS^2_E)$$ constructed in \cref{wh}. Since $\pi_{3}(\mathbf{U}(\SSS^2_E)) \simeq (E^\vee \otimes E^\vee)^{\mathrm{uni}}$, by the explicit computation of the Whitehead product from \cref{computewhite}, it follows that the map $$W \colon \left (\pi_2 (\mathbf{U}(\SSS^2_E)) \otimes \pi_2 (\mathbf{U}(\SSS^2_E)) \right )^{\mathrm{uni}} \xrightarrow[]{} \pi_3 (\mathbf{U}(\SSS^2_{E}))$$ above identifies with the map $(E^\vee \otimes E^\vee)^{\mathrm{uni}} \to (E^\vee \otimes E^\vee)^{\mathrm{uni}} $ that is induced by $x \otimes y \mapsto x \otimes y + y \otimes x$. Therefore, by the exact sequence \cref{bauhaus} above and \cref{garten}, $\pi_4 (\mathbf{U}(\SSS^3_E)) \simeq \pi_4 (\mathbf{U}(\Sigma (\mathbf{U}(\SSS^2_E)))) \simeq (E^\vee \curlywedge E^\vee)^{\mathrm{uni}}$. This finishes the proof.
\end{proof}{} 

\begin{corollary}[Some homotopy groups of Calabi--Yau varieties]Let $X$ be a Calabi--Yau variety of dimension $n$ and $\Phi_X^n$ be the Artin--Mazur formal Lie group of $X$. As a consequence of \cref{hach} and \cref{homotopyformalsphere}, we obtain the following results.

\begin{enumerate}
    \item If $X$ is a Calabi-Yau variety of dimension $2$, i.e., if $X$ is a K3 surface, then $$\pi_3^{\mathrm{U}} (X) \simeq ((\Phi_X^n)^\vee \otimes (\Phi_X^n)^\vee)^{\mathrm{uni}}.$$
    
    \item 
 If $X$ is a Calabi--Yau variety of $\dim X = n \ge 3$, then $$\pi_{n+1}^{\mathrm U}(X) \simeq ((\Phi_X^n)^\vee \curlywedge (\Phi_X^n)^\vee)^{\mathrm{uni}}.$$ 
\end{enumerate}{}

\end{corollary}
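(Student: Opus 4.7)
The statement is presented as a direct corollary, so the plan is short: assemble the two previously established inputs. Given a Calabi--Yau variety $X$ of dimension $n \ge 2$ over $k$, \cref{hach} provides an isomorphism $\UU(X) \simeq \UU(\SSS^n_{\Phi_X^n})$ of higher stacks over $k$. Consequently, taking homotopy sheaves gives
\[ \pi_i^{\mathrm{U}}(X) = \pi_i(\UU(X)) \simeq \pi_i(\UU(\SSS^n_{\Phi_X^n})) \]
as group schemes over $k$ for every $i \ge 0$. Thus it suffices to read off the right-hand side from the formal $n$-sphere for the specific indices of interest.

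For the first assertion ($n=2$), I would set $i = 3$ and $E \colonequals \Phi_X^2$ and apply the first half of part~(2) of \cref{homotopyformalsphere}, which yields $\pi_3(\UU(\SSS^2_E)) \simeq (E^\vee \otimes E^\vee)^{\mathrm{uni}}$. Substituting $E = \Phi_X^2$ produces the claimed formula. For the second assertion ($n \ge 3$), I would set $i = n+1$ and $E \colonequals \Phi_X^n$ and apply the second half of part~(2) of \cref{homotopyformalsphere}, which gives $\pi_{n+1}(\UU(\SSS^n_E)) \simeq (E^\vee \curlywedge E^\vee)^{\mathrm{uni}}$.

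There is essentially no obstacle here: the two ingredients have already been established, and the corollary is purely a matter of matching indices and substituting $E = \Phi_X^n$. The conceptual content of the argument lies entirely in \cref{hach} (identifying $\UU(X)$ with the unipotent homotopy type of a formal sphere, using that the Artin--Mazur formal group of $X$ is recovered by $\pi_n^{\mathrm{U}}(X)^\vee$ via \cref{cycomp}) and in \cref{homotopyformalsphere} (which itself relies on the Freudenthal suspension theorem in unipotent homotopy theory \cref{freudenthalsusp} and on the explicit computation of the Whitehead product from \cref{computewhite}). All of the ``hard work'' has therefore already been done before this corollary is stated, and nothing further needs to be verified.
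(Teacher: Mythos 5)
Your proposal is correct and follows exactly the route the paper intends: the corollary is stated as a consequence of \cref{hach} and \cref{homotopyformalsphere} with no further proof given, and your argument is precisely the expected assembly of those two inputs, using the isomorphism $\UU(X) \simeq \UU(\SSS^n_{\Phi_X^n})$ from \cref{hach} and reading off $\pi_3$ (for $n=2$) or $\pi_{n+1}$ (for $n \ge 3$) from \cref{homotopyformalsphere}(2). Nothing is missing.
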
{}

We will show that $\pi_3 (\mathbf{U}(\SSS^2_E))^{\vee}$ as computed by \cref{homotopyformalsphere} is a formal Lie group of dimension $1$. To this end, we will prove a more general assertion. 

\begin{proposition}\label{mathew1}
Let $E_1$ and $E_2$ be two commutative formal Lie groups of dimension $m$ and $n$ over a perfect field $k$ of characteristic $p>0$. Then $((E_1^\vee \otimes E_2^\vee)^{\mathrm{uni}})^{\vee}$ is a commutative formal Lie group of dimension $mn$.
\end{proposition}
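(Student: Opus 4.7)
The plan is to verify the cohomological criterion of \cref{fine}: setting $G \colonequals (E_1^\vee \otimes E_2^\vee)^{\mathrm{uni}}$, it suffices to show $\dim_k H^1(BG, \cO) = mn$ and $\Ext^1(G, \GG_a) = 0$. For the dimension count, I would unwind the universal properties: by the construction of the tensor product in \cref{chris1} together with the universal property of the unipotent completion, $H^1(BG, \cO) \simeq \Hom(G, \GG_a)$ is in bijection with the space of bilinear morphisms $E_1^\vee \times E_2^\vee \to \GG_a$ of commutative group schemes. Since $\Hom(E_i^\vee, \GG_a)$ is the tangent space $\mathrm{Lie}(E_i)$ of the formal Lie group $E_i$, a vector space of $k$-dimension $\dim E_i$, bilinearity identifies this with $\mathrm{Lie}(E_1) \otimes_k \mathrm{Lie}(E_2)$, which has dimension $mn$.

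For the Ext vanishing, I would relate $G$ to the pointed higher stack $Y \colonequals BE_1^\vee \wedge BE_2^\vee$. Since $BE_i^\vee$ is pointed connected, $Y$ is $1$-connected, and $\pi_2(Y) \simeq E_1^\vee \otimes_{\mathbf{Z}} E_2^\vee$ by \cref{c}. Using the cofiber sequence $BE_1^\vee \vee BE_2^\vee \to BE_1^\vee \times BE_2^\vee \to Y$ together with the K\"unneth formula (applicable because $BE_i^\vee$ are affine stacks) and \cref{paderborn1}, and setting $V_i \colonequals H^1(BE_i^\vee, \cO)$, one computes
$$H^n(Y, \cO) \simeq \bigoplus_{\substack{i+j=n \\ i,j \ge 1}} \wedge^i V_1 \otimes_k \wedge^j V_2.$$
In particular, each $H^i(Y, \cO)$ is finite-dimensional, $H^0(Y, \cO) \simeq k$, $H^1(Y, \cO) = 0$, and $\tau_{\le 1}Y \simeq *$. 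Therefore \cref{freu1} applies with $n = 1$ and yields $\pi^{\mathrm{U}}_2(Y) \simeq ((\pi_2(Y))^{\mathrm{uni}})^{\mathrm{ab}} \simeq G$. Applying \cref{hurewicz2} to $Y$ then gives an injection $\Ext^1(G, \GG_a) \hookrightarrow H^3(Y, \cO)$.

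In the base case $m = n = 1$---which is in fact the case needed for the application to \cref{homotopyformalsphere}---both $V_1$ and $V_2$ are one-dimensional, so $\wedge^2 V_i = 0$ and consequently $H^3(Y, \cO) = 0$. This immediately gives $\Ext^1(G, \GG_a) = 0$, finishing the proof when $m = n = 1$. For general $(m, n)$, $H^3(Y, \cO)$ is nonzero and the argument must be supplemented by an induction on $m + n$. Choosing a short exact sequence $0 \to E_1' \to E_1 \to E_1'' \to 0$ of formal Lie groups with $\dim E_1',\, \dim E_1'' < m$ (which exists by a standard structure result over the perfect base field), dualizing gives $0 \to (E_1'')^\vee \to E_1^\vee \to (E_1')^\vee \to 0$ of commutative unipotent group schemes. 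Applying the right-exact functor $(-\otimes E_2^\vee)^{\mathrm{uni}}$ via \cref{rightexact} produces a filtration of $G$ whose graded pieces are $((E_1')^\vee \otimes E_2^\vee)^{\mathrm{uni}}$ and $((E_1'')^\vee \otimes E_2^\vee)^{\mathrm{uni}}$, which by the inductive hypothesis are dual to formal Lie groups of dimensions $(\dim E_1')n$ and $(\dim E_1'')n$, respectively. Since extensions of formal Lie groups are formal Lie groups, $G^\vee$ is itself a formal Lie group of dimension $mn$.

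The main obstacle in this last step is establishing that $(-\otimes E_2^\vee)^{\mathrm{uni}}$ is left-exact on the relevant sequence: the right-exact piece obtained from \cref{rightexact} only gives a surjection from $((E_1'')^\vee \otimes E_2^\vee)^{\mathrm{uni}}$ onto the kernel $K$ of $G \twoheadrightarrow ((E_1')^\vee \otimes E_2^\vee)^{\mathrm{uni}}$, and one must check this surjection is an isomorphism. I expect to handle this by combining the dimension count of $\Hom(-, \GG_a)$ (which matches on both sides by the first step) with a rigidity argument for unipotent group schemes dual to formal Lie groups---alternatively, one may pass to Cartier--Dieudonn\'e modules over the perfect base field, where the tensor-product functor becomes the ordinary tensor of modules over the Dieudonn\'e ring and exactness is transparent.
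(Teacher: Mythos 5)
Your overall plan — reduce to \cref{fine} by computing $\Hom(G,\GG_a)$ and showing $\Ext^1(G,\GG_a)=0$ — is the same one the paper uses, and your dimension count is fine. Your argument for the Ext vanishing when $m=n=1$, via the smash product $Y = BE_1^\vee \wedge BE_2^\vee$, the computation of $H^*(Y,\cO)$ from the cofibre sequence $BE_1^\vee \vee BE_2^\vee \to BE_1^\vee \times BE_2^\vee \to Y$, the identification $\pi_2^{\mathrm U}(Y)\simeq G$ via \cref{freu1}, and the injection $\Ext^1(G,\GG_a)\hookrightarrow H^3(Y,\cO)$ from \cref{hurewicz2}, is correct and takes a genuinely different, topologically flavoured route. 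This is a nice observation; the paper does not use the smash product here.

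However, the induction you propose for general $(m,n)$ does not work, and the problem is not the left-exactness issue you flag but the first step: the claimed ``standard structure result'' that every formal Lie group $E_1$ of dimension $m\ge 2$ fits into a short exact sequence $0\to E_1'\to E_1\to E_1''\to 0$ of formal Lie groups with both $\dim E_1'<m$ and $\dim E_1''<m$ is false. If the Dieudonn\'e module of $E_1$ is isogenous to a single simple block $\mathscr{D}_{a,b}$ with $\gcd(a,b)=1$ and the dimension $\ge 2$ (an \emph{isoclinic} formal Lie group), then any nonzero sub-Dieudonn\'e module has full rank after inverting $p$, so its quotient is $W(k)$-torsion and cannot be the Dieudonn\'e module of a formal Lie group. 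For a concrete example one can take the formal Lie group $G_{2,3}$ of dimension $2$ and height $5$ referenced in \cref{cold6} of the paper (from \cite{MR1703336}); it has no proper nontrivial formal Lie subgroup. So the inductive reduction collapses already at the first step, and no rigidity or Dieudonn\'e-module argument can repair it.

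The paper avoids induction entirely. It works in the derived category of abelian fpqc sheaves, sets $K=\underline{E_1^\vee}\otimes^L\underline{E_2^\vee}$, and uses the derived tensor--hom adjunction to write $R\Hom(K,\GG_a)\simeq R\Hom\bigl(\underline{E_1^\vee},R\mathscr{H}\mathrm{om}(\underline{E_2^\vee},\GG_a)\bigr)$; truncating the inner sheaf RHom to $\tau_{\ge -1}$ and invoking \cref{retire4} twice (once to kill $\mathscr{E}xt^1(E_2^\vee,\GG_a)$, once to kill $\Ext^1(E_1^\vee,\GG_a^n)$) gives $H^1(R\Hom(K,\GG_a))=0$ and hence $\Ext^1(\underline{E_1^\vee}\otimes\underline{E_2^\vee},\GG_a)=0$. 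The Postnikov argument (via \cref{postnikov} and \cref{whoknew}) then embeds $\Ext^1(G,\GG_a)$ into this, forcing it to vanish. This handles all $(m,n)$ uniformly with no case distinction. If you want to keep your smash-product formulation, you would need to replace your $H^3(Y,\cO)=0$ step by an analysis of which classes in $H^3(Y,\cO)$ actually survive to $\Ext^1(G,\GG_a)$ under the Hurewicz injection; the paper's adjunction calculation is in effect doing exactly that, but on the algebraic side.
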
{}

\begin{proof}We work in the derived category of abelian sheaves in the fpqc site. Let $\underline{E_1^\vee}$ and $\underline{E_2^\vee}$ denote the sheaves represented by $E_1^\vee$ and ${E_2^\vee}$ and $\underline{E_1^\vee} \otimes \underline{E_2^\vee}$ denote their ordinary tensor product. Let $K \colonequals (\underline{E_1^\vee} \otimes^{L} \underline{E_2^\vee} )$. Computing $H^1 (R\Hom(K, \GG_a))$ via a spectral sequence (\cite[\href{https://stacks.math.columbia.edu/tag/07A9}{Tag~07A9}]{stacks}), we see that $\w{Ext}^1 (\underline{E_1^\vee} \otimes \underline{E_2^\vee}, \GG_a)$ is a subspace of $H^1 (R\Hom(K, \GG_a))$.
On the other hand, the latter term is isomorphic to $$H^1 (R\Hom(\underline{E_1^\vee}, \tau_{\ge -1}R{\mathscr{H}\w{om}}(\underline{E_2^\vee}, \GG_a))).$$ By \cref{retire4}, we see that $$H^1 (R\Hom(\underline{E_1^\vee}, \tau_{\ge -1}R{\mathscr{H}\w{om}}(\underline{E_2^\vee}, \GG_a)))=0.$$ Consequently, it follows that $\w{Ext}^1 (\underline{E_1^\vee} \otimes \underline{E_2^\vee}, \GG_a) \simeq 0$. Note that $$\w{Ext}^1 (\underline{E_1^\vee} \otimes \underline{E_2^\vee}, \GG_a) \simeq H^3 (B^2(\underline{E_1^\vee} \otimes \underline{E_2^\vee}), \cO).$$ 
However, by \cref{postnikov}.\cref{postnikov-cohom} (and \cref{whoknew}), the vector space $$H^3 (\tau_{\le 2} \UU(B^2(\underline{E_1^\vee} \otimes \underline{E_2^\vee})), \cO )\simeq H^3 (B^2 (E_1^\vee \otimes E_2^\vee)^{\mathrm{uni}}, \cO) \simeq  \w{Ext}^1 ((E_1^\vee \otimes E_2^\vee)^{\w{uni}}, \GG_a)$$ embeds into $H^3 (B^2(\underline{E_1^\vee} \otimes \underline{E_2^\vee}), \cO) \simeq \w{Ext}^1 (\underline{E_1^\vee} \otimes \underline{E_2^\vee}, \GG_a) = 0$. Therefore, $\w{Ext}^1 ((E_1^\vee \otimes E_2^\vee)^{\w{uni}}, \GG_a) = 0$. Finally, we note that $\dim_k \mathrm{Hom}(E_1^\vee \otimes E_2^\vee, \GG_a) = \dim_k \mathrm{Hom}(E_1^\vee, \GG_a) \otimes_{k} \mathrm{Hom}(E_2^\vee, \GG_a) = mn$. Thus, we are done by applying \cref{fine}.
\end{proof}{}

\begin{remark}\label{heightformula}
Commutative formal Lie groups over a perfect field can be classified by their Dieudonn\'e modules. At the level of Dieudonn\'e modules, the equivalent statement of \cref{mathew1} was also observed by Drinfeld and Mathew.  More precisely, if $M_1$ and $M_2$ denote the covariant Dieudonn\'e module of $E_1$ and $E_2$ respectively, then the Dieudonn\'e module of $((E_1^\vee \otimes E_2^\vee)^{\mathrm{uni}})^{\vee}$ is isomorphic to $M_1 \widehat{\boxtimes} M_2$, where we refer to \cite[Sec.~4]{ANI} for the definition of $M_1 \widehat{\boxtimes} M_2$. We thank Drinfeld for sharing a proof of the latter fact. The tensor product $M_1 \widehat{\boxtimes} M_2$ was originally introduced in \cite{Goe} and has been studied in \cite{BL07} and \cite{TM18}. We point out that in \cite{ANI}, the tensor product $M_1 \widehat{\boxtimes} M_2$ has been shown to be induced from the symmetric monoidal structure on cyclotomic spectra (see \cite[Sec.~4.1]{ANI}). \cref{mathew1} gives a direct geometric approach to these constructions by using group schemes and higher stacks without passing to associated Dieudonn\'e modules.
\end{remark}{}

\begin{definition}\label{retire2}
If $E_1$ and $E_2$ are two commutative formal Lie groups over a perfect field $k$, we define $$E_1 \boxtimes E_2 \colonequals ((E_1^\vee \otimes E_2^\vee)^{\w{uni}})^{\vee}.$$ By \cref{mathew1}, we have $\dim (E_1 \boxtimes E_2) = \dim E_1 \cdot \dim E_2$.
\end{definition}{}

\begin{remark}\label{haircutbad}
We note that $(\, \cdot \,) \boxtimes (\, \cdot\,)$ defines a symmetric monoidal operation on the category of commutative formal Lie groups over $k$. One can check (for example, by \cref{dieudonnemodu12} and Dieudonn\'e theory) that the formal Lie group $\widehat{\GG}_m$ is the unit object under this operation. In particular, $\widehat{\GG}_m \boxtimes \widehat{\GG}_m \simeq \widehat{\GG}_m$.
\end{remark}{}

\begin{proposition}\label{haircut}
Let $E_1$ and $E_2$ be two $1$-dimensional formal Lie groups over a perfect field $k$ such that height of $E_1$ and $E_2$ are both $>1$. Then $E_1 \boxtimes E_2 \simeq \widehat{\GG}_a$.
\end{proposition}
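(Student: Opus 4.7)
The plan is to construct a closed immersion $\widehat{\GG}_a \hookrightarrow E_1 \boxtimes E_2$ and then deduce equality from the fact that $E_1 \boxtimes E_2$ is a $1$-dimensional formal Lie group by \cref{mathew1}.

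The starting observation will be that for a $1$-dimensional formal Lie group $E_i$ of height $h_i > 1$, the kernel of Frobenius $E_i[F]$ is isomorphic to $\alpha_p$. Indeed, $E_i[F]$ is a connected finite commutative group scheme of order $p$ and is thus either $\alpha_p$ or $\mu_p$; the latter possibility is excluded because $\mu_p \hookrightarrow E_i$ would force $\mu_{p^\infty} \hookrightarrow E_i[p^\infty]$, contradicting the absence of a multiplicative subgroup in a connected $p$-divisible group of dimension $1$ and height $\ge 2$ (and in the case $h_i = \infty$, $E_i = \widehat{\GG}_a$, so $E_i[F] = \alpha_p$ directly). Dualizing the resulting closed immersion $\alpha_p \hookrightarrow E_i$ via the equivalence of \cref{dualizinggpsch} gives a surjection of unipotent group schemes $E_i^\vee \twoheadrightarrow \alpha_p^\vee = \alpha_p$.

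I would then tensor these two surjections together and apply the maximal unipotent quotient functor, which is a left adjoint and hence right exact (\textit{cf.}~\cref{rightexact}), to obtain a surjection
\[ (E_1^\vee \otimes E_2^\vee)^{\mathrm{uni}} \twoheadrightarrow (\alpha_p \otimes \alpha_p)^{\mathrm{uni}} \simeq W[F], \]
where the last isomorphism is \cref{compute098}. Dualizing this surjection back through \cref{dualizinggpsch} yields the desired closed immersion $\widehat{\GG}_a = W[F]^\vee \hookrightarrow E_1 \boxtimes E_2$. Since $\widehat{\GG}_a$ is annihilated by $[p]$ and the inclusion is a group homomorphism, we have $\widehat{\GG}_a \subseteq (E_1 \boxtimes E_2)[p]$; but a $1$-dimensional formal Lie group of finite height $h$ has $p$-torsion of finite order $p^h$, so $E_1 \boxtimes E_2$ must have infinite height. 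As it is $1$-dimensional, it must be $\widehat{\GG}_a$.

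The step I expect to require the most care is confirming the compatibility of the Cartier-type duality of \cref{dualizinggpsch} with tensor products and unipotent completion, so that dualizing the surjection onto $W[F]$ really produces a closed immersion of formal Lie groups of the expected form. Since \cref{compute098} has already carried out this bookkeeping for the prototypical case $\alpha_p \otimes \alpha_p$, what remains is essentially functoriality of the duality equivalence.
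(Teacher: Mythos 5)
Your proof is correct, and it runs parallel to the paper's through the surjection $(E_1^\vee \otimes E_2^\vee)^{\mathrm{uni}} \twoheadrightarrow W[F]$; the opening and the endgame differ in the details. For the opening, the paper derives $E_i^\vee \twoheadrightarrow \alpha_p$ from \cref{coffee150}, whereas you identify $E_i[F] \simeq \alpha_p$ and dualize --- this is equivalent, though your exclusion of $\mu_p$ via ``$\mu_p \hookrightarrow E_i$ would force $\mu_{p^\infty} \hookrightarrow E_i[p^\infty]$'' is not a literal implication; it is cleaner to note directly that Frobenius is topologically nilpotent on the Dieudonn\'e module of a connected $p$-divisible group but bijective on that of $\mu_p$, so $\mu_p$ cannot be a closed subgroup of $E_i$. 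For the endgame, the paper shows the surjection onto $W[F]$ is already an isomorphism via the $\Ext^1(W[F],\GG_a)=0$ argument of \cref{compute098} and then dualizes once; you dualize first, obtain a closed immersion $\widehat{\GG}_a \hookrightarrow E_1 \boxtimes E_2$, and close with a height count using \cref{mathew1}. Both work, but your final step can be shortened: a closed immersion of one-dimensional formal Lie groups is a local surjection $k\llbracket y\rrbracket \twoheadrightarrow k\llbracket x\rrbracket$ of complete local $k$-algebras, which is automatically an isomorphism, so neither the finiteness of $(E_1\boxtimes E_2)[p]$ nor the classification of height-$\infty$ one-dimensional formal Lie groups over a general (not necessarily algebraically closed) perfect field is actually needed. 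Lastly, the ``compatibility'' worry you flag is moot: $\boxtimes$ is \emph{defined} by dualizing $(E_1^\vee\otimes E_2^\vee)^{\mathrm{uni}}$ (\cref{retire2}), and all you use is that the duality of \cref{dualizinggpsch} is a contravariant equivalence, hence sends surjections to closed immersions.
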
{}

\begin{proof}
Since $E_1$ and $E_2$ both have height $>1$, then there are surjections ${E_1}^\vee \twoheadrightarrow \alpha_p$ and ${E_2}^\vee \twoheadrightarrow \alpha_p$; for example, this can be deduced from \cref{coffee150}.
These maps induce a surjection $({E_1}^\vee \otimes {E_2}^\vee)^{\mathrm{uni}} \twoheadrightarrow (\alpha_p \otimes \alpha_p)^{\mathrm{uni}} \simeq W[F]$. Here, the latter isomorphism follows from \cref{compute098}. Moreover, since $\mathrm{Ext}^1(W[F], \GG_a)=0$, a similar argument as in \cref{compute098} shows that the surjection $({E_1}^\vee \otimes {E_2}^\vee)^{\mathrm{uni}} \twoheadrightarrow W[F]$ must be an isomorphism. Now, taking dual yields the claim in the proposition.
\end{proof}{}

\begin{corollary}\label{example09}
Let $E$ be a $1$-dimensional formal Lie group over $k$. Then 
$$
\pi_3^{\mathrm{U}}(\SSS^2_E) \simeq
\begin{cases}
  \mathbf{Z}_p & \text{if height of}\,\, E=1,\,\, \text{i.e.,}\,\, E = \widehat{\GG}_m,\\

W[F] & \text{if height of} \,\,E\,\, \text{is}\,\, >1.
\end{cases}
$$
\end{corollary}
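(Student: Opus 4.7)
The plan is to combine the computation of $\pi_3^{\mathrm U}(\SSS^2_E)$ from \cref{homotopyformalsphere} with the multiplicative structure of $\boxtimes$ established in \cref{haircut} and \cref{haircutbad}, and then dualize.

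First, I would invoke \cref{homotopyformalsphere}(2), which gives $\pi_3^{\mathrm U}(\SSS^2_E) \simeq (E^\vee \otimes E^\vee)^{\mathrm{uni}}$. By \cref{retire2}, this is precisely $(E \boxtimes E)^\vee$. So the problem reduces to identifying $E \boxtimes E$ as a formal Lie group in each case.

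For the height $1$ case, $E \simeq \widehat{\GG}_m$, and by \cref{haircutbad} the formal Lie group $\widehat{\GG}_m$ is the unit for $\boxtimes$, so $E \boxtimes E \simeq \widehat{\GG}_m$. Dualizing gives $\widehat{\GG}_m^\vee \simeq \ZZ_p$ (the standard Cartier duality between $\widehat{\GG}_m$ and $\ZZ_p$, which is also a consequence of the general duality set up in \cref{dualizinggpsch}). For the height ${>}1$ case, \cref{haircut} gives directly $E \boxtimes E \simeq \widehat{\GG}_a$, so $\pi_3^{\mathrm U}(\SSS^2_E) \simeq \widehat{\GG}_a^\vee \simeq W[F]$, where the identification $\widehat{\GG}_a^\vee \simeq W[F]$ was already recorded in \cref{compute098}.

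There is no real obstacle here: the statement is essentially a bookkeeping exercise combining the three inputs above. The conceptual content sits in the earlier propositions — especially \cref{homotopyformalsphere} (which in turn uses the Freudenthal suspension theorem in unipotent homotopy theory and the explicit formula for the Whitehead bracket) and \cref{haircut} (which collapses tensor products of height ${>}1$ formal groups onto $\widehat{\GG}_a$ via the surjection $E^\vee \twoheadrightarrow \alpha_p$ and the computation $(\alpha_p\otimes\alpha_p)^{\uni}\simeq W[F]$).
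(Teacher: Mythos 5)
Your argument is correct and follows exactly the same route as the paper, which simply cites \cref{homotopyformalsphere}, \cref{haircutbad}, and \cref{haircut} without spelling out the bookkeeping. Your unwinding of $(E^\vee\otimes E^\vee)^{\uni}$ as $(E\boxtimes E)^\vee$ and the subsequent dualizations ($\widehat{\GG}_m^\vee\simeq\ZZ_p$, $\widehat{\GG}_a^\vee\simeq W[F]$) is exactly what is meant.
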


\begin{proof}
This follows from \cref{homotopyformalsphere}, \cref{haircutbad} and \cref{haircut}.
\end{proof}{}

\begin{remark}
Note that if $E = \widehat{\GG}_m$, then the fact that $\pi_3 ^{\mathrm{U}}(\SSS^2_E) \simeq \mathbf{Z}_p$ is consistent with the calculation from the $p$-adic (unstable) homotopy group of the topological $2$-sphere $\SSS^2$. However, \cref{example09} shows that in general, $\pi_3^{\mathrm{U}}(\SSS^2_E)$ behaves quite differently.
\end{remark}{}
\begin{corollary}
Let $X$ be a K3 surface over an algebraically closed field $k$ of characteristic $p>0$. Then
$$
\pi_3^{\mathrm{U}}(X) \simeq
\begin{cases}
  \mathbf{Z}_p & \text{if}\,\, X\,\, \text{is weakly ordinary,} \\

W[F] & \text{otherwise.}
\end{cases}
$$
\end{corollary}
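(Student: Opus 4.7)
The proof will be a direct assembly of results already established in the paper. The plan is to first reduce from the K3 surface to a formal $2$-sphere, and then invoke the computation of $\pi_3^{\mathrm U}$ of formal $2$-spheres.

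First, since a K3 surface is by definition a Calabi--Yau variety of dimension $n=2$, \cref{hach} gives an isomorphism $\mathbf{U}(X) \simeq \mathbf{U}(\SSS^2_{\Phi^2_X})$ of higher stacks over $k$, where $\Phi^2_X$ is the Artin--Mazur formal Brauer group of $X$. In particular, this yields an isomorphism of group schemes
\[ \pi_3^{\mathrm U}(X) \simeq \pi_3^{\mathrm U}(\SSS^2_{\Phi^2_X}). \]
Next, I would apply \cref{example09} to the $1$-dimensional formal Lie group $E = \Phi^2_X$ over $k$. That corollary asserts that $\pi_3^{\mathrm U}(\SSS^2_E) \simeq \mathbf{Z}_p$ when $E$ has height $1$ (i.e.\ $E \simeq \widehat{\mathbf{G}}_m$), and $\pi_3^{\mathrm U}(\SSS^2_E) \simeq W[F]$ when $\mathrm{height}(E) > 1$.

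It remains to match up the dichotomy in \cref{example09} with the dichotomy in the present statement. By definition, $X$ is weakly ordinary iff the absolute Frobenius acts bijectively on $H^2(X,\cO) \simeq \mathrm{Lie}(\Phi^2_X)$; equivalently, by the discussion immediately preceding \cref{ordinaryCY}, this happens precisely when $\Phi^2_X \simeq \widehat{\mathbf{G}}_m$, i.e.\ when $\Phi^2_X$ has height $1$. Combining this with the previous paragraph gives the claimed formula:
\[ \pi_3^{\mathrm U}(X) \simeq \begin{cases} \mathbf{Z}_p & \text{if } X \text{ is weakly ordinary,} \\ W[F] & \text{otherwise.} \end{cases} \]

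There is no real obstacle here; the entire content is packaged in \cref{hach} (reducing a Calabi--Yau to a formal sphere) and \cref{example09} (the computation for formal $2$-spheres, which ultimately rests on the weak wedge/tensor computations of \cref{mathew1} and \cref{haircut} together with the Freudenthal-based calculation of \cref{homotopyformalsphere}). The only small subtlety is the translation between ``weakly ordinary'' for $X$ and ``height one'' for $\Phi^2_X$, which is immediate from the description of $\mathrm{Lie}(\Phi^2_X)$ as $H^2(X,\cO)$ with its Frobenius action.
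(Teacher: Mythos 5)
Your proof is correct and takes exactly the same route as the paper, which simply cites \cref{hach} and \cref{example09}; you've just spelled out the small translation step (weakly ordinary $\Leftrightarrow$ $\Phi^2_X \simeq \widehat{\GG}_m$) that the paper leaves implicit but records in the remark preceding \cref{ordinaryCY}.
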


\begin{proof}
Follows from \cref{hach} and \cref{example09}.
\end{proof}{}

Next, we proceed towards computing the weak wedge product (\cref{weakwedge}) in order to obtain an understanding of $\pi_{n+1}(\mathbf{U}(\SSS^n_E))$ for $n \ge 3$. Before we do that, let us begin by understanding the wedge product first in this context.

\begin{proposition}\label{mpi1}
Let $G$ be a $1$-dimensional formal Lie group over a perfect field $k$ of characteristic $p >0$. Then$$
(G^\vee \wedge G^\vee)^{\mathrm{uni}} \simeq
\begin{cases}
W[F] & \text{if} \,\,p=2 \,\text{and height of} \,\,G\,\, \text{is}\,\, >1, \\

0 & \text{otherwise.}
\end{cases}
$$
\end{proposition}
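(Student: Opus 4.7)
The plan is to reduce the computation of $(G^\vee \wedge G^\vee)^{\mathrm{uni}}$ to the computation of $(G^\vee \otimes G^\vee)^{\mathrm{uni}}$ carried out in \cref{mathew1} and \cref{haircut}, by passing through the weak wedge $\curlywedge$. By \cref{mathew1}, \cref{haircut} and \cref{haircutbad}, one has $(G^\vee \otimes G^\vee)^{\mathrm{uni}} \simeq (G \boxtimes G)^\vee$, which equals $\mathbf{Z}_p$ when $G \simeq \widehat{\mathbf{G}}_m$ and $W[F]$ when $\mathrm{height}(G) > 1$. Unwinding these proofs together with \cref{compute098}, I would identify the universal bilinear map $\mu \colon G^\vee \times G^\vee \to (G^\vee \otimes G^\vee)^{\mathrm{uni}}$ as $(a,b) \mapsto ab$ for $G \simeq \widehat{\mathbf{G}}_m$, and as $(g,g') \mapsto [\bar g] \cdot [\bar g']$ via the surjection $G^\vee \twoheadrightarrow \alpha_p$ and the multiplicative lift $[\cdot] \colon \alpha_p \to W[F]$ for $\mathrm{height}(G) > 1$. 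In both cases $\mu$ is symmetric by commutativity of multiplication.

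By \cref{garten}, $(G^\vee \curlywedge G^\vee)^{\mathrm{uni}}$ is the cokernel of the endomorphism $\phi^{\mathrm{uni}}$ of $(G^\vee \otimes G^\vee)^{\mathrm{uni}}$ induced by the symmetrization $a \otimes b \mapsto a \otimes b + b \otimes a$. Since $\mu$ is symmetric, $\phi^{\mathrm{uni}}$ is multiplication by $2$. When $p \neq 2$, the integer $2$ acts as a unit on both $\mathbf{Z}_p$ and $W[F]$, so $\phi^{\mathrm{uni}}$ is an isomorphism and $(G^\vee \curlywedge G^\vee)^{\mathrm{uni}} = 0$. Since there is a natural surjection $(G^\vee \curlywedge G^\vee)^{\mathrm{uni}} \twoheadrightarrow (G^\vee \wedge G^\vee)^{\mathrm{uni}}$, this yields the desired vanishing in the case $p \neq 2$.

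For $p = 2$, I would analyze multiplication by $2$ on the two possibilities for $(G^\vee \otimes G^\vee)^{\mathrm{uni}}$ separately. On $\mathbf{Z}_2$ it is injective with cokernel the constant group scheme $\mathbf{Z}/2\mathbf{Z}$; on $W[F]$, the Witt-vector identity $VF = p$ together with $F(w) = 0$ for $w \in W[F]$ gives $2w = V(F(w)) = 0$, so multiplication by $2$ vanishes and $(G^\vee \curlywedge G^\vee)^{\mathrm{uni}} \simeq W[F]$. The passage from $\curlywedge$ to $\wedge$ further quotients by the subsheaf generated by the sections $a \otimes a$, whose images under $\mu$ are $\mu(a,a) = a^2$. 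For $G \simeq \widehat{\mathbf{G}}_m$, $\mu(1,1) = 1$ already generates $\mathbf{Z}/2\mathbf{Z}$ as a subsheaf, so $(G^\vee \wedge G^\vee)^{\mathrm{uni}} = 0$. For $\mathrm{height}(G) > 1$, the image $\bar a \in \alpha_2$ satisfies $\bar a^2 = 0$, so $\mu(a,a) = [\bar a]^2 = [\bar a^2] = 0$ in $W[F]$ by multiplicativity of $[\cdot]$; no further quotient occurs, and $(G^\vee \wedge G^\vee)^{\mathrm{uni}} \simeq W[F]$.

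The main obstacle I expect is producing sufficiently explicit descriptions of the universal bilinear map $\mu$ (and of the unipotentifications of $G^\vee \otimes G^\vee$) to perform the symmetry, cokernel, and image computations above cleanly; here the key inputs are the multiplicative-lift description from \cref{compute098} together with the standard Witt-vector identities in characteristic $p$.
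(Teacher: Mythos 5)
Your proposal is correct but takes a genuinely different route from the paper's. The paper argues directly: for $p\neq 2$ it shows that any nonzero bilinear map $u\colon G^\vee\times G^\vee\to\GG_a$ has the form $u(x,y)=cf(x)f(y)$ with $f$ a group homomorphism, and the alternating condition $f(x)^2=0$ forces $0=f(x+y)^2=2f(x)f(y)$, hence $u=0$ when $2$ is invertible; for $p=2$ and height $>1$ it identifies $(G^\vee\otimes G^\vee)^{\mathrm{uni}}\simeq W[F]$ and checks that the universal bilinear map $(g,g')\mapsto[\bar g][\bar g']$ is already alternating (via $[s]^2=F([s])$); for $p=2$ and $G=\widehat{\GG}_m$ it uses that $\ZZ_2$ is reduced, so no nonzero homomorphism $\ZZ_2\to\GG_a$ can square to zero. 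You instead funnel all three cases through the weak wedge square: you first compute $(G^\vee\curlywedge G^\vee)^{\mathrm{uni}}$ as $\Coker(\varphi^{\mathrm{uni}})$ and observe that, because the universal bilinear map is symmetric (multiplication in a commutative ring scheme), $\varphi^{\mathrm{uni}}$ is multiplication by $2$; then you pass to $\wedge$ by further quotienting by the images of $\mu(a,a)$. Notably, this inverts the paper's logical order: the paper derives the $p=2$, height $>1$ case of \cref{haircut11} from \cref{mpi1}, whereas you reprove the $\curlywedge$ computation from scratch and deduce \cref{mpi1} from it---so there is no circularity. The upshot of your route is a more uniform treatment of the three cases and a conceptual explanation of the role of $2$; the paper's route avoids the extra bookkeeping of the $\curlywedge\to\wedge$ quotient.

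Two small points you should tighten. First, deducing $\varphi^{\mathrm{uni}}=2\cdot\mathrm{id}$ from symmetry of $\mu$ is correct, but the cleanest justification is the universal property: the two maps $\varphi^{\mathrm{uni}}$ and $2\cdot\mathrm{id}$ on $(G^\vee\otimes G^\vee)^{\mathrm{uni}}$ agree after precomposing with $\underline{G^\vee}\otimes_\ZZ\underline{G^\vee}\to(G^\vee\otimes G^\vee)^{\mathrm{uni}}$, and the adjunction defining $(\cdot)^{\mathrm{alg}}$ (resp.\ $(\cdot)^{\mathrm{uni}}$) makes this precomposition an epimorphism in the relevant sense; a naive ``the images generate'' argument is not literally available since the canonical map need not be an fpqc surjection. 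Second, to apply \cref{garten} at the level of unipotent completions you need the right-exactness of $(\cdot)^{\mathrm{uni}}$ (a variant of \cref{rightexact}) to pass from $\Coker(\varphi)\simeq G^\vee\curlywedge G^\vee$ to $\Coker(\varphi^{\mathrm{uni}})\simeq(G^\vee\curlywedge G^\vee)^{\mathrm{uni}}$; this is routine but worth stating. Your identification of multiplication by $2$ as zero on $W[F]$ via $VF=p$ (valid over $\FF_p$-algebras) is correct and matches the computation the paper does implicitly in \cref{haircut11}.
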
{}

\begin{proof}
First, we recall that $\dim _k \Hom(G^\vee, \GG_a)=1$. Therefore, the space of bilinear maps $G^\vee \times G^\vee \to \GG_a$ is also $1$-dimensional.
As a consequence, any nonzero bilinear map $u \colon G^\vee \times G^\vee \to \GG_a$ is necessarily given by $u(x,y) = c f(x) f(y)$ where $c\in k^\times$ and $f \colon G^\vee \to \GG_a$ is a nonzero map of group schemes. The bilinear map $u$ is alternating if and only if $u(x,x) = 0$ for all scheme theoretic points $x$ of $G^\vee$. In such a case, we must have $f(x) ^2 = 0$ for all $x$. However, $f$ is also a group homomorphism. Therefore, for all $x$ and $y$, we must have
$$0 = f(x+y)^2 = (f(x) + f(y))^2 = 2 f(x)f(y)= 2 c^{-1} u(x,y). $$In the case when $p \ne 2$, this implies that any alternating bilinear map $u \colon G^\vee \times G^\vee \to \GG_a$ must be zero. In other words, $\Hom(G^\vee \wedge G^\vee, \GG_a)=0$, implying $(G^\vee \wedge G^\vee)^{\mathrm{uni}}=0$, as desired.
\vspace{2mm}

Now let us suppose that $p=2$ and the height of $G$ is $>1$. In that case, we have a surjective map $\varphi \colon G^\vee \to \alpha_2$. By \cref{rightexact} and \cref{compute098}, we have a surjection $(G^\vee \otimes G^\vee)^{\w{uni}} \twoheadrightarrow (\alpha_2 \otimes \alpha_2)^{\w{uni}} = W[F]$. Since $\mathrm{Ext}^1(W[F], \GG_a)=0$, it follows that the latter map is an isomorphism. 
Consider the induced bilinear map $G^\vee \times G^\vee \to W[F]$. Since we are working in the case when $p=2$, by \cref{compute098}, it follows that this bilinear map is alternating.\footnote{
Here, we use that for any $\FF_2$-algebra $S$ and any $s \in S$, we have $[s]^2 = F([s])$.}
This implies that the isomorphism $(G^\vee \otimes G^\vee)^{\mathrm{uni}} \simeq W[F]$ factors through the surjection $(G^\vee \otimes G^\vee)^{\mathrm{uni}} \twoheadrightarrow (G^\vee \wedge G^\vee)^{\mathrm{uni}}$. This shows that $(G^\vee \wedge G^\vee)^{\mathrm{uni}} \simeq W[F]$.
\vspace{2mm}

In the remaining case when $p=2$ and $G = \widehat{\GG}_m$, we know that $G^\vee \simeq \mathbf{Z}_2$, which is a reduced group scheme.
In particular, there is no nonzero map of group schemes $f \colon \ZZ_2 \to \GG_a$ with $f(x)^2 = 0$ for all scheme theoretic points $x$ of $\mathbf{Z}_2$; because then $f$ would factor through a map $\ZZ_2 \to \alpha_2$, which is necessarily zero.
Therefore, by the discussion in the first paragraph of the proof, we see that $(\mathbf{Z}_2 \wedge \mathbf{Z}_2)^{\mathrm{uni}} =0$.
\end{proof}{}

\begin{proposition}\label{haircut11}
Let $G$ be a $1$-dimensional formal Lie group over a perfect field $k$ of characteristic $p >0$. Then$$
(G^\vee \curlywedge G^\vee)^{\mathrm{uni}} \simeq
\begin{cases}
W[F] & \text{if} \,\,p=2 \,\text{and height of} \,\,G\,\, \text{is}\,\, >1, \\
\mathbf{Z}/2\mathbf{Z} & \text{if} \,\,p=2 \,\text{and} \,\,G \simeq \widehat{\GG}_m, \,\text{i.e., height of} \,\,G=1, \\

0 & \text{otherwise.}
\end{cases}
$$

\end{proposition}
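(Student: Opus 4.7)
The plan is to use the universal property that $(G^\vee \curlywedge G^\vee)^{\mathrm{uni}}$ represents the functor of \emph{weakly alternating bilinear maps} $G^\vee \times G^\vee \to H$ (bilinear maps $u$ with $u(x, y) + u(y, x) = 0$) on commutative unipotent group schemes $H$, combined with the computations from \cref{mpi1}, \cref{compute098}, \cref{zvi1}, and \cref{haircut}. The three cases in the statement will be handled separately.

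When $p > 2$, any weakly alternating bilinear map $u \colon G^\vee \times G^\vee \to H$ into a commutative unipotent $H$ satisfies $2 u(x, x) = 0$. Since multiplication by $2$ is an isomorphism on every commutative unipotent group scheme over $k$ in characteristic $p > 2$ (by devissage through subquotients among $\GG_a$, $\alpha_p$, $\ZZ/p\ZZ$, on each of which $[2]$ is clearly invertible), we conclude $u(x, x) = 0$, so every weakly alternating map is already alternating. Hence $(G^\vee \curlywedge G^\vee)^{\mathrm{uni}} \simeq (G^\vee \wedge G^\vee)^{\mathrm{uni}}$, which vanishes by \cref{mpi1}.

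When $p = 2$ and $G \simeq \widehat{\GG}_m$, we have $G^\vee \simeq \ZZ_2$ by \cref{cold91}. A bilinear map $\ZZ_2 \times \ZZ_2 \to H$ to a commutative unipotent $H$ is $\ZZ_2$-bilinear and is therefore determined by the image $h = u(1, 1) \in H$; the weakly alternating condition becomes $2h = 0$, i.e., $h \in H[2]$. Thus $(\ZZ_2 \curlywedge \ZZ_2)^{\mathrm{uni}}$ represents the functor $H \mapsto H[2]$ on commutative unipotent group schemes, and is therefore isomorphic to $\ZZ/2\ZZ$, which is itself unipotent in characteristic $2$.

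When $p = 2$ and the height of $G$ exceeds $1$, the surjection $\pi \colon G^\vee \twoheadrightarrow \alpha_2$ combined with the Teichmüller-lift pairing of \cref{compute098} produces a bilinear map $u \colon G^\vee \times G^\vee \to W[F]$, $(x, y) \mapsto [\pi(x)][\pi(y)]$; since $[\pi(x)]^2 = [\pi(x)^2] = 0$ in $W[F]$, this map is alternating and therefore descends to a morphism $\overline{u} \colon (G^\vee \curlywedge G^\vee)^{\mathrm{uni}} \to W[F]$. Following the strategy of the proofs of \cref{compute098} and \cref{haircut}, I would show $\overline{u}$ is an isomorphism by verifying: (i) the space of weakly alternating bilinear maps $G^\vee \times G^\vee \to \GG_a$ is $1$-dimensional over $k$ (in characteristic $2$ every bilinear map is automatically weakly alternating, and the proof of \cref{mpi1} classifies such bilinear maps as $c f(x) f(y)$ for a fixed nonzero $f \colon G^\vee \to \GG_a$); (ii) the induced map $\Hom(W[F], \GG_a) \to \Hom((G^\vee \curlywedge G^\vee)^{\mathrm{uni}}, \GG_a)$ between two $1$-dimensional $k$-vector spaces is an isomorphism, yielding surjectivity of $\overline{u}$; and (iii) injectivity of $\overline{u}$ follows from $\mathrm{Ext}^1(W[F], \GG_a) = 0$ (by \cref{fine}) via a long exact sequence chase. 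The most delicate step will be (i), where one has to justify that the bilinear analysis from the proof of \cref{mpi1} still controls the relevant Hom-space even though $G^\vee$ may have arbitrarily complicated (infinitesimal) structure beyond $\alpha_2$.
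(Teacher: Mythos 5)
Your proposal is correct and, in the first two cases, follows essentially the same route as the paper. For $p > 2$ the paper shows directly that any symmetric bilinear map $u = c\,f(x)f(y)$ with $2u = 0$ vanishes, which gives $\Hom(G^\vee \curlywedge G^\vee,\GG_a)=0$; you instead observe that invertibility of $[2]$ on unipotent groups forces the weakly alternating and alternating conditions to agree, so the answer is inherited from \cref{mpi1}. Both amount to the same calculation. For $G \simeq \widehat{\GG}_m$ your universal-property description of $(\ZZ_2 \curlywedge \ZZ_2)^{\mathrm{uni}}$ as representing $H \mapsto H[2]$ is a repackaging of the paper's argument via \cref{garten} and the ring structure on $\ZZ_2$.

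In the third case ($p=2$, height $>1$), your route genuinely differs. The paper uses a sandwich argument: the second paragraph of the proof of \cref{mpi1} already produces an isomorphism $(G^\vee \otimes G^\vee)^{\mathrm{uni}} \xrightarrow{\sim} (G^\vee \wedge G^\vee)^{\mathrm{uni}}$, and since this factors through the surjection onto $(G^\vee \curlywedge G^\vee)^{\mathrm{uni}}$, the middle term is pinned down with no further work. You instead propose to reprove the isomorphism from scratch by the strategy of \cref{compute098}: construct the map to $W[F]$ via the Teichm\"uller lift, check dimension $1$ on $\Hom(\,\cdot\,,\GG_a)$, and finish with $\Ext^1(W[F],\GG_a)=0$. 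This is valid and self-contained, but involves redoing work that is already available. Your worry about step (i) is unfounded: the first paragraph of the proof of \cref{mpi1} classifies all bilinear maps $G^\vee \times G^\vee \to \GG_a$ as $c\,f(x)f(y)$ using only $\dim_k\Hom(G^\vee,\GG_a)=1$ (i.e.\ that $G$ is $1$-dimensional), with no restriction on height or on the infinitesimal structure beyond $\alpha_2$, so the relevant $\Hom$-space is unconditionally $1$-dimensional. One small slip: your parenthetical claim that ``in characteristic $2$ every bilinear map is automatically weakly alternating'' is not a general fact; it holds here only because every bilinear map to $\GG_a$ from $G^\vee \times G^\vee$ is symmetric, which is exactly what the $c\,f(x)f(y)$ classification supplies, and in characteristic $2$ symmetric and weakly alternating coincide.
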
{}

\begin{proof}
As noted in the proof of \cref{mpi1}, any nonzero bilinear map $u \colon G^\vee \times G^\vee \to \GG_a$ is necessarily given by $u(x,y) = c f(x)f(y)$ where $c \in k^\times$ and $f \colon G^\vee \to \GG_a$ is a nonzero map of group schemes. Since $\GG_a$ is a commutative ring scheme, it follows that the bilinear map $u$ is symmetric. This bilinear map would correspond to an element of $\mathrm{Hom}(G^\vee \curlywedge G^\vee, \GG_a)$ if and only if $u(x,y) + u(y,x) = 0$. However, since $u$ is symmetric, the latter condition implies that $2 u(x,y) =0$. In the case when $p \ne 2$, this implies that $u(x,y) = 0$, which further implies that $\mathrm{Hom}(G^\vee \curlywedge G^\vee, \GG_a)=0$. Therefore, $(G^\vee \curlywedge G^\vee)^{\mathrm{uni}} = 0$, as desired. \vspace{2mm}

Now let us suppose that $p=2$ and height of $G$ is $>1$. The second paragraph of the proof of \cref{mpi1} shows that the natural map $(G^\vee \otimes G^\vee)^\mathrm{uni} \to (G^\vee \wedge G^\vee)^\mathrm{uni} $ is an isomorphism and they are both further isomorphic to $W[F]$. However, the natural map $(G^\vee \otimes G^\vee)^\mathrm{uni} \to (G^\vee \wedge G^\vee)^\mathrm{uni} $ factors through the natural surjection $(G^\vee \otimes G^\vee)^\mathrm{uni} \twoheadrightarrow (G^\vee \curlywedge G^\vee)^\mathrm{uni}$, which shows that $(G^\vee \curlywedge G^\vee)^\mathrm{uni} \simeq W[F]$, as well. \vspace{2mm}

Now we are left with the case when $p =2$ and $G = \widehat{\GG}_m$. In this case, $G^\vee \simeq \mathbf{Z}_2$. Note that $\mathbf{Z}_2$ naturally has the structure of a commutative ring scheme which induces a map $m \colon (\mathbf{Z}_2 \otimes \mathbf{Z}_2)^{\mathrm{uni}} \to \mathbf{Z}_2$ that is an isomorphism (\cref{haircutbad}).
\Cref{garten} implies that $(\mathbf{Z}_2 \curlywedge \mathbf{Z}_2)^{\mathrm{uni}}$ is identified (under the isomorphism $m$) with the cokernel of the multiplication by $2$ map on $\mathbf{Z}_2$. This shows that $(\mathbf{Z}_2 \curlywedge \mathbf{Z}_2)^{\mathrm{uni}} \simeq \mathbf{Z}/2 \mathbf{Z}$, as desired.
\end{proof}{}

\begin{corollary}\label{haircut12}
Let $E$ be a $1$-dimensional formal Lie group over a perfect field $k$ of characteristic $p >0$. Then, for $n \ge 3$,$$ \pi_{n+1}^{\mathrm{U}}(\SSS^n_{E})
 \simeq
\begin{cases}
W[F] & \text{if} \,\,p=2 \,\text{and height of} \,\,E\,\, \text{is}\,\, >1, \\
\mathbf{Z}/2\mathbf{Z} & \text{if} \,\,p=2 \,\text{and} \,\,E \simeq \widehat{\GG}_m, \,\text{i.e., height of} \,\,E=1, \\

0 & \text{otherwise, i.e., if}\,\, p \ne 2.
\end{cases}
$$

\end{corollary}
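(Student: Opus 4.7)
The plan is to derive Corollary \ref{haircut12} as an immediate consequence of the two main ingredients already established in the excerpt: the homotopy-group computation of the formal $n$-sphere in Proposition \ref{homotopyformalsphere}, and the explicit determination of the unipotent completion of the weak wedge square in Proposition \ref{haircut11}. These together reduce the statement to a matter of bookkeeping.

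More concretely, the second part of Proposition \ref{homotopyformalsphere} identifies, for every $n \ge 3$, the group scheme $\pi_{n+1}(\UU(\SSS^n_E))$ with $(E^\vee \curlywedge E^\vee)^{\mathrm{uni}}$. By definition $\pi_{n+1}^{\mathrm{U}}(\SSS^n_E) = \pi_{n+1}(\UU(\SSS^n_E))$, so there is nothing more to check on the unipotent-homotopy side. On the weak-wedge side, Proposition \ref{haircut11}, applied to the $1$-dimensional formal Lie group $G = E$, gives the three-case answer: $(E^\vee \curlywedge E^\vee)^{\mathrm{uni}}$ is $W[F]$ when $p = 2$ and $\height E > 1$, is $\mathbf{Z}/2\mathbf{Z}$ when $p = 2$ and $E \simeq \widehat{\mathbf{G}}_m$, and is trivial otherwise.

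Assembling these two identifications yields precisely the three cases in the statement of Corollary \ref{haircut12}. There is no real obstacle here, since the hard work has been done in proving Proposition \ref{homotopyformalsphere} (which requires the Freudenthal suspension theorem in unipotent homotopy theory, together with Whitehead's explicit formula for the quadratic Whitehead product from Example \ref{computewhite}) and in proving Proposition \ref{haircut11} (which analyses alternating bilinear forms on $E^\vee$ using $\dim_k \mathrm{Hom}(E^\vee, \mathbf{G}_a) = 1$ together with the computation $(\alpha_2 \otimes \alpha_2)^{\mathrm{uni}} \simeq W[F]$ from Example \ref{compute098}). The only thing left is to record the combination and to observe that the case distinction matches up on the nose.
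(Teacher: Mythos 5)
Your proposal is correct and takes exactly the same route as the paper: the paper's proof of Corollary \ref{haircut12} is the one-line combination of Proposition \ref{homotopyformalsphere} (which identifies $\pi_{n+1}^{\mathrm{U}}(\SSS^n_E)$ with $(E^\vee \curlywedge E^\vee)^{\mathrm{uni}}$ for $n \ge 3$) with Proposition \ref{haircut11} (which evaluates this weak wedge square in the three cases). Your version merely spells out the bookkeeping more explicitly.
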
{}

\begin{proof}
Follows from \cref{homotopyformalsphere} and \cref{haircut11}.
\end{proof}{}

\begin{remark}
Note that if $E = \widehat{\GG}_m$ and $n \ge 3$, then the fact that $\pi_{n+1} ^{\mathrm{U}}(\SSS^n_E) \simeq \mathbf{Z}/2 \mathbf{Z}$ when $p=2$ and is zero when $p \ne 2$ is consistent with the calculation from the homotopy group of the topological $n$-sphere $\SSS^n$. Indeed, $\pi_{n+1}(\SSS^n) \simeq \mathbf{Z}/2 \mathbf{Z}$ for $n \ge 3$. As we saw above, the computation of the $(n+1)$-th unipotent homotopy group scheme of a general formal $n$-sphere requires a significant amount of additional ingredients, which includes establishing a more general form of the Freudenthal suspension theorem in unipotent homotopy theory (\cref{freudenthalsusp}) as we did in our paper. 
\end{remark}{}

\begin{corollary}\label{retire1}
Let $X$ be a Calabi--Yau variety of dimension $n \ge 3$ over an algebraically closed field $k$ of characteristic $p>0$. Then 
$$ \pi_{n+1}^{\mathrm{U}}(X)
 \simeq
\begin{cases}
W[F] & \text{if} \,\,p=2 \,\text{and} \,\,X\,\, \text{is not weakly ordinary,}\,\, \\
\mathbf{Z}/2\mathbf{Z} & \text{if} \,\,p=2 \,\text{and} \,\,X \,\text{is weakly ordinary,} \\

0 & \text{otherwise, i.e., if}\,\, p \ne 2.
\end{cases}
$$

\begin{proof}
Follows from \cref{hach} and \cref{haircut12}.
\end{proof}{}

\end{corollary}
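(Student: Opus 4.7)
The plan is to deduce the statement as a direct consequence of two results already established in the paper: the noncanonical identification of $\mathbf{U}(X)$ with the unipotent homotopy type of a formal sphere (Proposition~\ref{hach}), and the explicit computation of $\pi_{n+1}^{\mathrm{U}}(\SSS^n_E)$ for a $1$-dimensional formal Lie group $E$ (Corollary~\ref{haircut12}). Since we only care about the isomorphism class of $\pi_{n+1}^{\mathrm{U}}(X)$ and not any canonical identification, the noncanonicity in Proposition~\ref{hach} is harmless at this stage.

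First, I would choose a $k$-rational point of $X$ (which exists because $k$ is algebraically closed); this equips $X$ with the structure of a pointed proper scheme satisfying the hypotheses of Proposition~\ref{hach}, i.e.\ conditions \eqref{1/2-CY} with $\dim_k H^n(X,\cO) = 1$ since $\omega_X \simeq \cO_X$ and $X$ is Calabi--Yau of dimension $n$. Proposition~\ref{hach} then yields an isomorphism $\mathbf{U}(X) \simeq \mathbf{U}(\SSS^n_{\Phi^n_X})$ in the homotopy category of higher stacks over $k$, where $\Phi^n_X$ is the Artin--Mazur formal Lie group of $X$, which is $1$-dimensional by construction. Taking $\pi_{n+1}$ on both sides gives
\[ \pi_{n+1}^{\mathrm{U}}(X) \simeq \pi_{n+1}^{\mathrm{U}}(\SSS^n_{\Phi^n_X}). \]

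Next, I would invoke Corollary~\ref{haircut12}, which (using $n \ge 3$) gives a complete description of $\pi_{n+1}^{\mathrm{U}}(\SSS^n_E)$ for an arbitrary $1$-dimensional formal Lie group $E$ over the perfect field $k$: it is $W[F]$ if $p=2$ and $\height(E) > 1$, it is $\mathbf{Z}/2\mathbf{Z}$ if $p=2$ and $E \simeq \widehat{\GG}_m$, and it vanishes in all other cases. Applied to $E = \Phi^n_X$, this yields the three cases in the statement, provided we match the height condition on $\Phi^n_X$ with the notion of weakly ordinary for $X$.

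To finish, I would recall the observation made right after Definition~\ref{CY}: a Calabi--Yau variety $X$ of dimension $n$ is weakly ordinary precisely when its Artin--Mazur formal Lie group $\Phi^n_X$ is isomorphic to $\widehat{\GG}_m$, equivalently when $\height(\Phi^n_X) = 1$. Substituting this equivalence into the case analysis from Corollary~\ref{haircut12} reproduces the trichotomy in the statement. There is no real obstacle here; the proof is essentially a bookkeeping exercise assembling the two deep inputs, with all of the actual work having been done in Proposition~\ref{hach} (the sphere identification, via a Postnikov tower argument and the vanishing of $H^{r+3}(BE^\vee,\pi^{\mathrm U}_{n+r+1}(X))$ from Lemma~\ref{newyr2}) and in Corollary~\ref{haircut12} (the weak wedge computation, via the Freudenthal suspension theorem in unipotent homotopy theory, Theorem~\ref{freudenthalsusp}, combined with the explicit determination of $(G^\vee \curlywedge G^\vee)^{\mathrm{uni}}$ in Proposition~\ref{haircut11}).
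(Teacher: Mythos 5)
Your proposal is correct and follows exactly the same route as the paper: apply Proposition~\ref{hach} to reduce to the formal sphere $\SSS^n_{\Phi^n_X}$, then read off $\pi_{n+1}^{\mathrm{U}}$ from Corollary~\ref{haircut12} and translate the height-$1$ condition into weak ordinariness via the remark after Definition~\ref{CY}. You have simply spelled out the bookkeeping (choice of rational point, harmlessness of the noncanonicity) that the paper leaves implicit.
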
{}
Now we are ready to prove the final result of our paper.
\begin{proof}[Proof of \cref{thmCY1}]
Note that sending $X \mapsto \pi_{-n} \mathrm{TR}(X)$ induces a functor from $\mathcal{N}CY$ to the category of Dieudonn\'e modules. By \cref{hach} and \cref{retire1}, there is a unique isomorphism $\mathbf{U}(\SSS^n_{\Phi_X^n}) \simeq \mathbf{U}(X)$ determined by the Artin--Mazur--Hurewicz class. This finishes the proof (\textit{cf.} the proof of \cref{thmCY}).
\end{proof}{}

\bibliographystyle{amsalpha}
\bibliography{references1}

\end{document}